\newcommand{\ol}{\overline}
\newcommand{\mr}{\mathring}
\newcommand{\wt}{\widetilde}
\newcommand{\wh}{\widehat}
\renewcommand{\Z}{\mathbb{Z}}
\newcommand{\hyp}{\mathbb{H}}
\renewcommand{\R}{\mathbb{R}}
\newcommand{\U}{\mathscr{U}}
\renewcommand{\J}{\mathscr{J}}
\newcommand{\F}{\mathcal{F}}
\newcommand{\LL}{\mathcal{L}}
\newcommand{\mc}{\mathcal}
\newcommand{\ms}{\mathscr}
\newcommand{\into}{\hookrightarrow}
\newcommand{\onto}{\twoheadrightarrow}
\newcommand{\sut}{\rightsquigarrow}
\newcommand{\cut}{\backslash\!\backslash}
\renewcommand{\phi}{\varphi}
\newcommand{\p}{\mathbf{p}}
\newcommand{\m}{\mathbf{m}}
\renewcommand{\b}{\mathbf{b}}
\renewcommand{\u}{\mathbf{u}}
\newcommand{\s}{\mathbf{s}}
\newcommand{\del}{\partial}
\DeclareMathOperator{\intr}{int}
\DeclareMathOperator{\ind}{index}
\DeclareMathOperator{\brloc}{brloc}
\DeclareMathOperator{\graft}{graft}
\DeclareMathOperator{\pers}{pers}
\DeclareMathOperator{\sink}{sink}
\numberwithin{equation}{section}
\newtheorem*{thm:preperiodic}{\Cref{thm:preperiodic}}
\newtheorem*{thm:depthonetovbs}{\Cref{thm:depthonetovbs}}
\newtheorem*{thm:hmvbsunique}{\Cref{thm:hmvbsunique}}
\newtheorem*{thm:folcone}{\Cref{thm:folcone}}
\newtheorem*{thm:VBStoDP}{\Cref{thm:VBStoDP}}
\newtheorem*{cor:hmveeringdynamicpair}{\Cref{cor:hmveeringdynamicpair}}
\declaretheorem[style=definition,qed=$\lozenge$,sibling=equation]{construction}
\declaretheorem[style=definition,qed=$\lozenge$,sibling=equation]{example}
\declaretheorem[style=definition,qed=$\lozenge$,sibling=equation]{definition}
\declaretheorem[style=definition,qed=$\lozenge$,sibling=equation]{remark}
\declaretheorem[style=definition,qed=$\lozenge$,sibling=equation]{convention}
\declaretheorem[style=plain,sibling=equation]{lemma}
\declaretheorem[style=plain,sibling=equation]{proposition}
\declaretheorem[style=plain,sibling=equation]{theorem}
\declaretheorem[style=plain,sibling=equation]{corollary}
\declaretheorem[style=plain,sibling=equation,name=Proposition/Definition]{propdefn}
\theoremstyle{definition}
\newtheorem*{question*}{Question}
\theoremstyle{remark}
\begin{document}
\title{Endperiodic maps, splitting sequences, and branched surfaces}
\author{Michael P. Landry}
\address{Saint Louis University}
\email{michael.landry@slu.edu}
\author{Chi Cheuk Tsang}
\address{University of California Berkeley}
\email{chicheuk@math.berkeley.edu}

\begin{abstract}
We strengthen the unpublished theorem of Gabai and Mosher that every depth one sutured manifold contains a very full dynamic branched surface by showing that the branched surface can be chosen to satisfy an additional property we call veering. To this end we prove that every endperiodic map admits a periodic splitting sequence of train tracks carrying its positive Handel-Miller lamination.  
This completes step one of Gabai-Mosher's unpublished two-step proof that every taut finite depth foliation of a compact, oriented, atoroidal  3-manifold is almost transverse to a pseudo-Anosov flow.
Further, a veering branched surface in a sutured manifold is a generalization of a veering triangulation, and we extend some of the theory of veering triangulations to this setting. In particular we show that the branched surfaces we construct are unique up to a natural equivalence relation, and give an algorithmic way to compute the foliation cones of Cantwell-Conlon. 
\end{abstract}

\maketitle

\tableofcontents
\section{Introduction}

\subsection{Context}
Let $\mc F$ be a finite depth cooriented foliation obtained from a sutured hierarchy, i.e. a sequence of sutured manifold decompositions
\[
M=Q_0 \sut Q_1\sut \cdots \sut Q_n\sut Q_{n+1}
\]
where $M$ is a compact $3$-manifold with toral boundary and $Q_{n+1}$ is a product sutured manifold (see \cite{Gab83}). The last decomposition $Q_n \sut Q_{n+1}$ induces a \emph{depth one foliation} on $Q_n$ by the well-known ``spinning" construction.

A depth one foliation of a sutured manifold is a cooriented foliation whose only compact leaves are the positive and negative tangential boundary $R_\pm$, and whose noncompact leaves accumulate only on $R_\pm$. In this situation, the complement of $R_\pm$ fibers over $S^1$ with fibers the noncompact leaves $L$ of the foliation, and the monodromy of the fibration determines an \textit{endperiodic map} $f:L \to L$. 
The foliated sutured manifold is a compactification of the mapping torus of $f$.

In the 1980s, Handel and Miller developed a theory of \textit{positive} and \textit{negative laminations} for endperiodic maps, analogous to the Nielsen-Thurston picture of laminations for pseudo-Anosov mapping classes; this theory was definitively exposited only recently by Cantwell, Conlon, and Fenley \cite{CCF19}. Morally, an endperiodic map expands the leaves of the positive lamination $\Lambda_+$ and contracts the leaves of the negative lamination $\Lambda_-$ up to isotopy. Moreover, the isotopy class of the endperiodic map contains a \textit{Handel-Miller representative} which preserves the laminations on the nose. 
By suspending the 1-dimensional positive and negative Handel-Miller laminations to the compactified mapping torus, one obtains 2-dimensional \textit{unstable} and \textit{stable Handel-Miller laminations}.

Subsequent to the work of Handel and Miller, Gabai developed a theory to build a pseudo-Anosov flow in $M$ almost transverse to the original finite depth foliation $\mc F$, essentially by extending the unstable/stable Handel-Miller laminations in $Q_n$ up through the hierarchy. This theory was not written down.

In the 1990s, Mosher set out to prove the existence of Gabai's flows. He introduced the notion of a \textit{dynamic pair}, which is a combinatorial analogue of a pseudo-Anosov flow together with its unstable and stable foliations. Roughly speaking, a dynamic pair consists of a pair of branched surfaces $B^u$ and $B^s$ intersecting transversely whose complementary regions are particularly nice. In Part I of the monograph \cite{Mos96}, Mosher proves that a dynamic pair in $M$ transverse to $\mc F$ gives rise to a pseudo-Anosov flow almost transverse to $\mc F$, thus reducing the construction of a flow in $M$ to the construction of a dynamic pair from a sutured manifold hierarchy.

Mosher outlined his plans for Part II of the monograph in his introduction. Essentially Part II would have been an inductive argument consisting of two parts: a \emph{base step} and a \emph{gluing step}. In the base step one builds a dynamic pair in $Q_n$ combinatorializing the Handel-Miller laminations. In the {gluing step}, one aims to show that for each $i$, a dynamic pair in $Q_{i+1}$ can be promoted to a dynamic pair in $Q_i$. Inducting up the sutured hierarchy, this would produce a dynamic pair on $M=Q_0$.

The Gabai--Mosher construction has proven useful. For example, in \cite{Mos92b} Mosher uses it to show that there exist pseudo-Anosov flows dynamically representing top-dimensional, nonfibered faces of the Thurston norm ball; in the same paper he constructs a pseudo-Anosov flow which does not represent an entire face of the Thurston norm ball. 
In \cite{FM01} Fenley and Mosher prove that pseudo-Anosov flows almost transverse to finite depth foliations in hyperbolic 3-manifolds are quasigeodesic, and appeal to the construction to say such flows are abundant.
In \cite{CD03}, Calegari and Dunfield use the construction to show that certain Dehn fillings of torally bounded hyperbolic 3-manifolds admit pseudo-Anosov flows, allowing them to apply their results on universal circles. In \cite{Cal06} Calegari uses the construction for one direction of his proof that the unit ball of the dual Thurston norm of a closed hyperbolic 3-manifold is exactly the convex hull of the Euler classes of quasigeodesic flows. 

Unfortunately, thus far a complete proof of Gabai and Mosher's result has not appeared in the literature.

\subsection{Summary of results}

This paper is part of an attempt to revisit Mosher's program using ideas from the theory of \textit{veering triangulations}. Veering triangulations were introduced by Agol \cite{Ago10}, and were recently shown to correspond robustly to pseudo-Anosov flows \cite[Introduction]{FSS19}. The second author introduced the notion of \textit{(unstable) veering branched surfaces} in compact 3-manifolds with toral boundary, and showed that these are dual to veering triangulations \cite[Proposition 3.2]{Tsa23}. Inspired by Mosher's dynamic pairs, we generalize veering branched surfaces to the setting of sutured manifolds. See \Cref{defn:vbs} for the precise formulation.

Using this tool of veering branched surfaces, we prove the following theorem that substitutes for Mosher's base step.

\begin{thm:depthonetovbs}
Let $Q$ be an atoroidal sutured manifold with a depth one foliation $\mathcal{F}$. Then $Q$ contains an unstable veering branched surface carrying the unstable Handel-Miller lamination associated to $\mathcal{F}$.
\end{thm:depthonetovbs}

In \Cref{thm:VBStoDP} we show that \Cref{thm:depthonetovbs} in fact implies Mosher's base step.
We remark that one can symmetrically define a stable veering branched surface and prove the symmetric version of \Cref{thm:depthonetovbs}, and in fact of all the results that we state. That we mainly work with unstable veering branched surfaces in this paper is just a choice.

This theorem can be thought of as a generalization of Agol's work in \cite{Ago10} showing that the mapping torus of a pseudo-Anosov map (on a finite-type surface) contains an unstable veering branched surface carrying the unstable foliation of the suspension flow. Indeed, our strategy of proof resembles Agol's proof, though there are some essential differences.
In the finite type case, Agol uses a projectively invariant positive measure for the expanding lamination of the pseudo-Anosov monodromy to get a measured train track carrying the lamination. He then performs \textit{maximal splittings}, i.e. splitting moves on the branches of maximal weight. He shows that the resulting sequence of train tracks is eventually periodic modulo the monodromy, hence can be suspended to obtain a branched surface in the mapping torus. This branched surface carries the unstable lamination of the monodromy and satisfies our definition of veering. 

In our setting, this idea does not work since Handel-Miller laminations do not in general carry projectively invariant measures of full support (see \cite[\S 5]{Fen97} and \Cref{eg:fenleyexample}).
Instead, we perform \textit{core splittings} on endperiodic train tracks carrying the positive Handel-Miller lamination. 
This means performing every possible splitting within a certain \emph{core} of the surface. 
To show that this operation is well-defined and gives an eventually periodic sequence, we develop the theory of \textit{spiraling train tracks} (\Cref{defn:spiraltt}) and utilize properties of the complementary regions of Handel-Miller laminations. In \Cref{thm:preperiodic} we prove the resulting sequence is eventually periodic modulo the monodromy.

\begin{thm:preperiodic}
Let $f\colon L\to L$ be endperiodic, and let $\tau_0$ be an efficient $f$-endperiodic train track carrying the positive Handel-Miller lamination. Consider the sequence of train tracks $(\tau_n)$ where $\tau_{n+1}$ is a core split of $\tau_{n}$. For sufficiently large $n$, we have $\tau_{n+1}=f(\tau_n)$.
\end{thm:preperiodic}

 To prove \Cref{thm:depthonetovbs} we essentially suspend a period of this sequence to get a branched surface, which we prove is veering.

We also investigate the uniqueness of the veering branched surface. In the finite type case, it is known that the veering branched surface carrying the unstable lamination is unique up to isotopy (this follows from results in \cite{Ago10} and \cite{LMT24}; see \Cref{thm:closeduniqueness}). In the endperiodic case the situation is more subtle because two veering branched surfaces $B_1$ and $B_2$ carrying the unstable Handel-Miller lamination can have different boundary train tracks $B_1 \cap R_+(Q)$ and $B_2 \cap R_+(Q)$. However, this is the only obstruction to uniqueness.

\begin{thm:hmvbsunique} 
Let $Q$ be an atoroidal sutured manifold with depth one foliation $\mathcal{F}$, and let $\mathcal{L}$ be the unstable Handel-Miller lamination associated to $\mathcal{F}$. Any veering branched surface $B$ compatibly carrying $\mc L$ is determined up to isotopy by $B\cap R_+(Q)$.
\end{thm:hmvbsunique}

Moreover, from our study of spiraling train tracks, we know that any two veering branched surfaces carrying $\mc L$ must intersect $R_\pm(Q)$ in two train tracks differing by a sequence of shifts. From this sequence of shifts we have an explicit description of how $B_1$ and $B_2$ are related. See \Cref{subsec:shiftmoves} for details.

Finally, we generalize the theory of cones associated to veering triangulations. We briefly review this theory for context. A veering triangulation has an associated \textit{dual graph} and \textit{flow graph}. These are finite directed graphs embedded inside of the triangulation. The dual graph is easily describable as the branch locus of the dual veering branched surface. The flow graph, introduced in \cite{LMT24}, has a more complicated description.

In \cite{LMT23a}, it is shown that the cone in $H_1$ generated by directed cycles of the dual graph equals that of the flow graph. In \cite{Lan22} and \cite{LMT24}, it is shown that this cone is the cone over a (not necessarily top-dimensional) face of the Thurston norm unit ball. When the veering triangulation is one whose dual veering branched surface carries the unstable foliation of a pseudo-Anosov mapping torus as above, then its associated cone is dual to the cone over the Thurston fibered face containing the fibration. In particular this provides an algorithmic way to compute fibered faces.

In \Cref{subsec:dualgraphflowgraph} we generalize the dual graph and flow graph to the sutured setting. We then prove the following theorem.

\begin{thm:folcone}
Let $Q$ be an atoroidal sutured manifold with depth one foliation $\mathcal{F}$, and let $\mathcal{L}$ be the unstable Handel-Miller lamination associated to $\mathcal{F}$. Let $\mathcal{C}_{\mathcal{F}} \subset H_2(Q, \partial Q)$ be the foliation cone containing the class of $\mathcal{F}$. Meanwhile, let $B$ be an unstable veering branched surface compatibly carrying $\mathcal{L}$, with dual graph $\Gamma$ and flow graph $\Phi$. Let $\mathcal{C}_\Gamma$ and $\mathcal{C}_\Phi$ be the cones in $H_1(M)$ positively generated by the cycles of $\Gamma$ and $\Phi$, respectively.
Then 
\[
\mathcal{C}_{\mathcal{F}}^\vee = \mathcal{C}_\Gamma = \mathcal{C}_\Phi.
\]
\end{thm:folcone}

We refer to \Cref{subsec:reviewfolcone} for a review of the theory of foliation cones. Here it suffices to say that these are finitely many rational polyhedral cones in $H_2(Q, \partial Q)$ that contain the classes of depth one foliations transverse to Handel-Miller semiflows, mirroring Fried's theory of Thurston fibered faces in compact 3-manifolds \cite{Fri79}. Our result provides an algorithmic way to compute foliation cones.

The equality $\mathcal{C}_\Gamma = \mathcal{C}_\Phi$ is shown by generalizing the notion of dynamic planes, introduced in \cite{LMT23a}, to the sutured setting. The condition `\textit{compatibly carrying}' arises naturally when considering the extra complications in the combinatorics of these dynamic planes, see \Cref{defn:compatiblycarry}. When the unstable veering branched surface compatibly carries an unstable Handel-Miller lamination $\mathcal{L}$ as in the theorem, the dynamic planes are in one-to-one correspondence with the lifts of leaves of $\mathcal{L}$ to the universal cover of $Q$. This allows us relate the dynamics of the Handel-Miller semiflow to that of the dual graph and flow graph, and show that $\mathcal{C}_\Gamma = \mathcal{C}_\Phi = \mathcal{C}_{\mathcal{F}}^\vee$.

In general, even if an unstable veering branched surface does not carry the unstable lamination associated to a depth one foliation, it is still true that $\mathcal{C}_\Gamma = \mathcal{C}_\Phi$. 

There are other aspects of veering triangulation theory that we expect to admit generalizations to the sutured setting. We mention a few in \Cref{sec:questions}.

We also include an appendix, the purpose of which is to explain the connection between our veering branched surfaces and Mosher's base step, and to prove results that will be used in our sequel paper \cite{LT23}. The main result is the following:

\begin{thm:VBStoDP}
If an atoroidal sutured manifold $Q$ admits an unstable veering branched surface, then it admits a dynamic pair.
\end{thm:VBStoDP}

In the case when the unstable veering branched surface carries the unstable lamination associated to a depth one foliation, we can in fact pick the unstable branched surface of the dynamic pair to be the given unstable veering branched surface. Hence we have the following corollary, which recovers Mosher's base step with a little extra information.

\begin{cor:hmveeringdynamicpair}
Let $Q$ be the compactified mapping torus of an endperiodic map $f:L \to L$. If $Q$ is atoroidal, then there is a dynamic pair $(B^u, B^s)$ on $Q$ such that:
\begin{itemize}
    \item $B^u$ is an unstable veering branched surface
    \item $B^u$ compatibly carries the unstable Handel-Miller lamination
    \item The boundary train track $B^u \cap R_+$ is efficient
\end{itemize}
\end{cor:hmveeringdynamicpair}

\subsection{Future work}
The next step of our project is to revisit Mosher's gluing step using veering branched surfaces. The basic idea is that given a sutured decomposition $Q \sut Q'$ and a veering branched surface $B'$ on $Q'$, one would like to construct a veering branched surface $B$ on $Q$.

Applying such a construction to a sutured hierarchy $M=Q_0 \sut \cdots 
\sut Q_{n+1}$, one would get a veering branched surface on $M=Q_0$, which is dual to a veering triangulation. One could then apply \cite[Theorem A]{Lan22} to recover an entire face of the Thurston norm ball on $M$, the cone over which contains $[S]$; there is also a combinatorial Euler class associated to the triangulation that computes the norm in this cone.
Alternatively, using the correspondence between veering triangulations and pseudo-Anosov flows mentioned before, one could obtain a pseudo-Anosov flow $\phi$ \textit{without perfect fits} on $M$. By \cite[Flows represent faces]{Mos92b}, one could then recover an entire face of the Thurston norm ball using $\phi$ and compute the norm in the cone over this face using $\phi$'s normal Euler class. While the Gabai-Mosher construction implies that the Thurston norm may be computed by pseudo-Anosov flows, it is a longstanding open question of Mosher whether finitely many pseudo-Anosov flows suffice. 

However, there is a nontrivial obstruction to the construction of such a flow for a given hierarchy: the hierarchy must satisfy a property we refer to as No Oppositely Oriented Parallel Orbits, or \emph{NOOPO} for short. Our program aims to show this is the only obstruction. We discuss this in more detail in \Cref{sec:questions}.

\subsection{Outline of paper} \Cref{sec:ttlam}, \Cref{sec:endperiodic}, and \Cref{sec:splitseq} deal exclusively with surfaces. In \Cref{sec:ttlam} we develop the theory of \emph{spiraling train tracks} and \emph{spiraling laminations}. This includes existence and uniqueness statements for splitting sequences of spiraling  train tracks carrying spiraling laminations, which play a key role in \Cref{sec:splitseq}. In \Cref{sec:endperiodic} we give a condensed treatment of Handel-Miller theory and prove some useful lemmas. 
In \Cref{sec:splitseq} we construct periodic train track splitting sequences for endperiodic maps using the notion of \emph{core splittings}. We also investigate the uniqueness of such a splitting sequence. 

In \Cref{sec:endperiodictosutured} we recall the relation between endperiodic maps and depth one foliations on sutured manifolds. 
In \Cref{sec:vbs} we define veering branched surfaces on sutured manifolds. We also define the dual graph and flow graph, and extend the theory of dynamic planes from \cite{LMT23a} to our setting. These dynamic planes play a key role in \Cref{sec:folcone}.

In \Cref{sec:hmvbs} we prove \Cref{thm:depthonetovbs} by suspending the splitting sequences constructed in \Cref{sec:splitseq}. We also translate the uniqueness statements about the splitting sequence into uniqueness statements about the veering branched surfaces obtained via this construction.
In \Cref{sec:folcone} we prove \Cref{thm:folcone}. In \Cref{sec:hmvbsunique} we prove \Cref{thm:hmvbsunique}. Given the uniqueness results in \Cref{sec:hmvbs}, the main task is to show that a veering branched surface as in the theorem must be `layered', i.e. come from the suspension of a splitting sequence of train tracks.

In \Cref{sec:eg} we discuss some examples of veering branched surfaces. Some examples are obtained from suspending a splitting sequence as in \Cref{sec:hmvbs}, while others are constructed directly and may not carry the Handel-Miller lamination associated to a depth one foliation.
In \Cref{sec:questions} we discuss some future directions coming out of this paper, including some more details on what we anticipate for the gluing step. 

In \Cref{sec:dynamicpairs} we define dynamic pairs and prove \Cref{thm:VBStoDP} and \Cref{cor:hmveeringdynamicpair}, showing that our results imply Mosher's base step.

\medskip

\noindent\textbf{Notational conventions.} In this paper we adopt the following notational conventions.
\begin{itemize}
    \item $X \cut Y$ denotes the metric completion of $X \backslash Y$ with respect to the induced path metric from $X$. We refer to this operation as cutting $X$ along $Y$. In addition, we will call the components of $X \cut Y$ the \textbf{complementary regions} of $Y$ in $X$.
    \item All coefficients for homology and cohomology groups are in $\R$ unless otherwise stated.
    \item A ``cycle" in a directed graph refers to a \emph{directed} cycle unless otherwise stated.
\end{itemize}

\section{Train tracks and laminations} \label{sec:ttlam}

\subsection{Review: train tracks, laminations, and index} \label{subsec:reviewttlam}

In this section, a surface will mean an orientable surface with (possibly empty) boundary.

Let $S$ be a surface. A \textbf{train track} on $S$ is an embedded graph $\tau$ in $S$ whose vertices have degree 3 or degree 1, such that the vertices of degree 1 are mapped to $\del S$, the edges are $C^1$ embedded, every point in $\tau$ has a well-defined tangent space, and at each vertex of degree 3 there are two edges tangent to one side and one edge tangent to the other side. The vertices of degree 3 are called \textbf{switches}, the vertices of degree 1 are called \textbf{stops}, and the edges are called \textbf{branches}.

We define a vector field on the set of switches and stops of $\tau$ by requiring that it points into the side of the tangent space meeting only one end of a branch at each switch, and that it points out of the surface at each stop. This is called the \textbf{maw vector field}. If $b$ is a non-circular branch such that the maw vector field points into $b$ at both its ends, we say $b$ is \textbf{large}. If the maw vector field points out of $b$ at both ends, $b$ is \textbf{small}. Otherwise, $b$ is \textbf{mixed}.

A \textbf{lamination} $\Lambda$ on a surface $S$ is a partition of a closed subset of $S$ into connected 1-manifolds, such that each point $x$ on $S$ has a neighborhood $\mathbb{R} \times \mathbb{R}$ with elements of the partition intersecting the neighborhood of the form $\mathbb{R} \times C$ (if $x$ is in the interior of $S$) or a neighborhood $[0,\infty) \times \mathbb{R}$ with elements of the partition intersecting the neighborhood in sets of the form $[0,\infty) \times C$ for some closed set $C$ (if $x$ is on the boundary of $S$). The elements of the partition are called the \textbf{leaves} of $\Lambda$. We will often conflate a lamination with the union of its leaves.

Let $\tau$ be a train track in $S$ and let $\Lambda$ be a lamination on $S$. A \textbf{standard neighborhood} of $\tau$ is a closed regular neighborhood $N$ of $\tau$ which is foliated by line segments called \textbf{ties} such that each line segment meets the branches of $\tau$ transversely. By collapsing the ties, we get a projection map $N \to \tau$. We say that $\tau$ \textbf{carries} $\Lambda$ if $\tau$ has a standard neighborhood $N$ such that $\Lambda$ is embedded in $N$ in a way so that its leaves are transverse to the ties. In this case we say that the map $\Lambda \hookrightarrow N \to \tau$ is the \textbf{carrying map} and we say that $N$ is a \textbf{standard neighborhood} of $\Lambda$. Further, we say that $\tau$ \textbf{fully carries} $\Lambda$ if the carrying map is onto, i.e. $\Lambda$ intersects every tie of $N$.

\begin{figure}
    \centering
    \fontsize{6pt}{6pt}\selectfont
    \resizebox{!}{2.3in}{\import{basecase-fig}{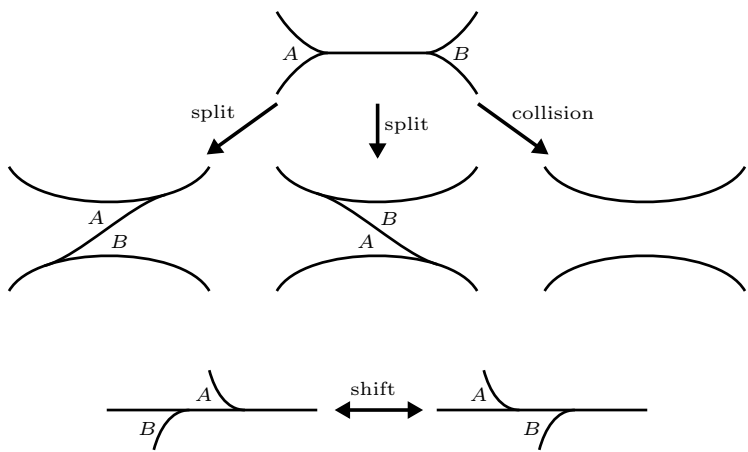}}
    \caption{Illustration of split, collision, shift.}
    \label{fig:ttmoves}
\end{figure}

There are a few standard local operations on train tracks which are described below and pictured in \Cref{fig:ttmoves}. Let $b$ be a branch of $\tau$ that meets two switches $A$ and $B$ of $\tau$ at its ends.
\begin{itemize}
\item \textbf{Split}: If $b$ is a large branch, then a split alters a small neighborhood of $b$ by either pushing $A$ and $B$ past each other, or resolving $A$ and $B$ as shown. The latter type of split is also called a \textbf{collision}. Including collisions, there are three types of splits that can be performed on $b$ up to isotopy.
\item \textbf{Fold}: A fold is the inverse of a split.
\item \textbf{Shift}: If $b$ is mixed, then a shift moves $A$ and $B$ past each other.
\end{itemize}
Note that in our descriptions of these moves we are implicitly making use of a natural identification between the switches of a train track before and after all moves except collisions and their inverses. We will use this identification in the future.
In practice our train tracks will carry laminations, and the data of the laminations will control which types of splits we perform. 

\begin{figure}
\centering
\resizebox{!}{1.8in}{\import{basecase-fig}{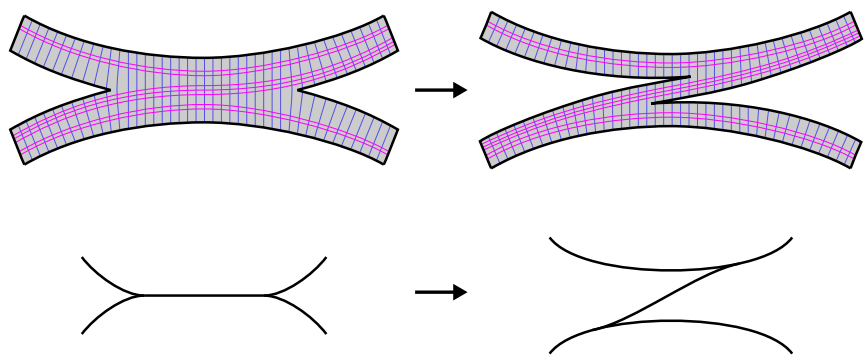}}
\caption{An example of a $\Lambda$-split. Note that collisions are also possible.}
\label{fig:lambdasplit}
\end{figure}

Let $\tau$ be a train track, with standard neighborhood $N$. A \textbf{train route} is a sequence of branches of $\tau$ traversed by a $C^1$-immersed copy of $\R$, $[0,\infty)$ or $[0,1]$. Any curve immersed in $N$ transverse to the ties of $T$ naturally determines a train route. If the sequences associated to two such curves $\gamma, \gamma'$ are equal, or if one is a subsequence of the other, we say that $\gamma$ and $\gamma'$ \textbf{fellow travel} in $N$. When there is no danger of confusion, we may say that $\gamma$ and $\gamma'$ fellow travel in $\tau$. If the sequences associated to $\gamma,\gamma'$ are \emph{eventually} equal, we say they \textbf{eventually fellow travel}.

If $\tau$ is a train track fully carrying a lamination $\Lambda$, any large branch of $\tau$ can be split in a unique way so that the splitting also fully carries $\Lambda$. We call this a \textbf{$\Lambda$-split}; see \Cref{fig:lambdasplit}. 
Any switch of $\tau$ naturally determines a train route, which may be finite or infinite, as follows. Let $c$ be a switch of $\tau$, let $I$ be the tie through $c$ in a standard neighborhood of $\tau$, and let $p_1, p_2$ be the two endpoints of the component of $I\cut\Lambda$ containing $c$. Let $\rho_1$ and $\rho_2$ be the train routes determined by rays in $\Lambda$-leaves starting at $p_1, p_2$ respectively and traveling in the direction determined by the maw vector field at $c$. If $\rho_1$ and $\rho_2$ are equal, define $\rho_c:=\rho_1=\rho_2$. Otherwise, define $\rho_c$ to be the longest route which is an initial subroute of $\rho_1$ and $\rho_2$. See \Cref{fig:cusproute}. We call $\rho_c$ the \textbf{$\Lambda$-route of $c$}. 

More generally, a \textbf{$\Lambda$-route} is any train route determined by a parameterization of a part of a leaf of $\Lambda$. The \textbf{length} of a train route is the number of branches the route traverses counted with multiplicity.

\begin{figure}
    \centering
    \resizebox{!}{2in}{\import{basecase-fig}{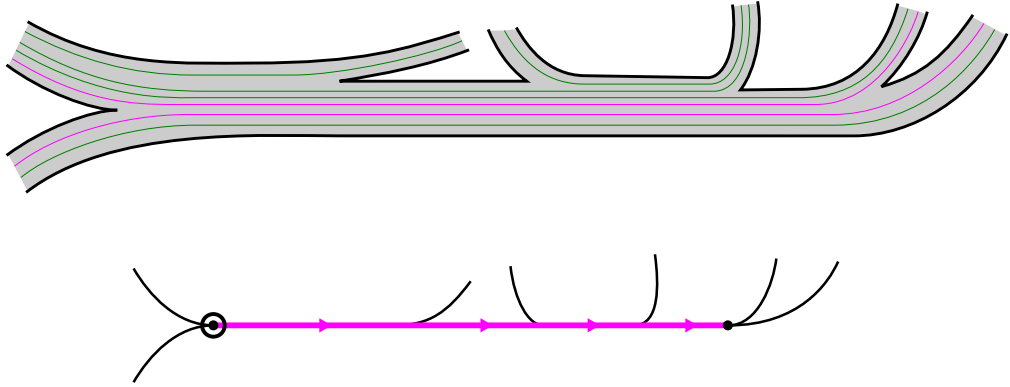}}
    \caption{If the train track on the bottom carries $\Lambda$ as shown on the top, the $\Lambda$-route from the circled cusp is the pink segment.}
    \label{fig:cusproute}
\end{figure}

\begin{definition}[Cusps, corners]
A \textbf{surface with cusps and corners} is a surface $S$ along with the data of two disjoint finite sets of points on $\partial S$, called \textbf{cusps} and \textbf{corners}. The motivation for considering surfaces with cusps and corners comes from considering a complementary region $C$ of a train track on a surface, where we take the set of the cusps to be the nonsmooth points of $\del C$ corresponding to switches, and the set of corners to be the nonsmooth points corresponding to stops. Henceforth we will consider complementary regions of train tracks to be surfaces with cusps and corners in this way, unless specified otherwise. Since the switches of a train track are natural bijection with the cusps of its complementary regions, we will sometimes conflate the two objects.
\end{definition}

\begin{definition}[Index of a surface with corners and cusps]
The \textbf{index} of a compact surface with cusps and corners $S$ is defined to be
\[
\ind(S)=\chi^{\text{top}}(S)-\frac{1}{2}\text{\#cusps}-\frac{1}{4}\text{\#corners}
\]
where $\chi^{\text{top}}$ denotes the Euler characteristic of the underlying topological surface.
\end{definition}

For example, the (orientable) surfaces with cusps and corners that have index $0$ are exactly: tori, annuli, \textbf{cusped bigons}, \textbf{1-cusped triangles}, and \textbf{rectangles}. See \Cref{fig:ind0surfaces} for an illustration of the last three.

Another common surface with cusps and corners is the \textbf{bigon}, which is a topological disk with two corners in its boundary, and has index $\frac{1}{2}$.

Notice that the index is additive, that is, if $\tau$ is a train track on a surface $S$, then $\ind(S)=\ind(S \cut \tau)$.

\begin{definition}[Index of complementary regions of a lamination]
The complementary regions of a lamination on a compact surface are non-compact surfaces with boundary, but here the non-compactness can be cut off. Precisely, for such a complementary region $C$, there exists a finite collection of arcs that cut $C$ up into a union $K \cup V_1 \cup ... \cup V_n$, such that $K$ is a surface with corners and each $V_i \cong [0,\infty) \times [0,1]$. See, for example, \cite[Section 5.2]{CanCon00}. We define the \textbf{index} of $C$ to be the index of $K$.

Alternatively, one can complete $C$ by adding a point at infinity for each of its ends, and treating the completion $\overline{C}$ as a surface with cusps and corners by taking the added points as cusps. Then the index of $C$ will be equal to the index of $\overline{C}$.
\end{definition}

\begin{figure}
    \centering
    \fontsize{12pt}{12pt}\selectfont
    \resizebox{!}{1.6cm}{
\begingroup%
  \makeatletter%
  \providecommand\color[2][]{%
    \errmessage{(Inkscape) Color is used for the text in Inkscape, but the package 'color.sty' is not loaded}%
    \renewcommand\color[2][]{}%
  }%
  \providecommand\transparent[1]{%
    \errmessage{(Inkscape) Transparency is used (non-zero) for the text in Inkscape, but the package 'transparent.sty' is not loaded}%
    \renewcommand\transparent[1]{}%
  }%
  \providecommand\rotatebox[2]{#2}%
  \newcommand*\fsize{\dimexpr\f@size pt\relax}%
  \newcommand*\lineheight[1]{\fontsize{\fsize}{#1\fsize}\selectfont}%
  \ifx\svgwidth\undefined%
    \setlength{\unitlength}{358.34322391bp}%
    \ifx\svgscale\undefined%
      \relax%
    \else%
      \setlength{\unitlength}{\unitlength * \real{\svgscale}}%
    \fi%
  \else%
    \setlength{\unitlength}{\svgwidth}%
  \fi%
  \global\let\svgwidth\undefined%
  \global\let\svgscale\undefined%
  \makeatother%
  \begin{picture}(1,0.14997776)%
    \lineheight{1}%
    \setlength\tabcolsep{0pt}%
    \put(0,0){\includegraphics[width=\unitlength,page=1]{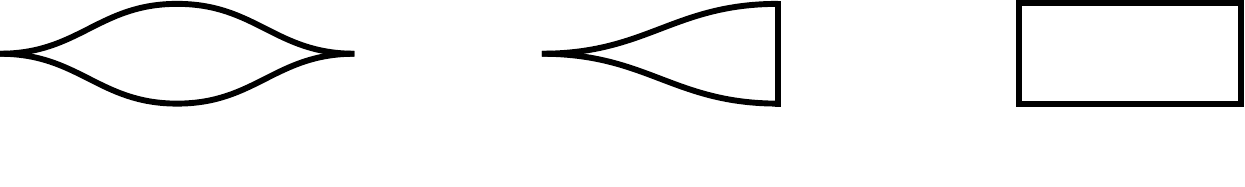}}%
    \put(0.04297323,0.0069166){\color[rgb]{0,0,0}\makebox(0,0)[lt]{\lineheight{1.25}\smash{\begin{tabular}[t]{l}cusped bigon\end{tabular}}}}%
    \put(0.41600984,0.0069166){\color[rgb]{0,0,0}\makebox(0,0)[lt]{\lineheight{1.25}\smash{\begin{tabular}[t]{l}1-cusped triangle\end{tabular}}}}%
    \put(0.83838583,0.0069166){\color[rgb]{0,0,0}\makebox(0,0)[lt]{\lineheight{1.25}\smash{\begin{tabular}[t]{l}rectangle\end{tabular}}}}%
  \end{picture}%
\endgroup%
}
    \caption{The cusped bigon, the 1-cusped triangle, and the rectangle are among the surfaces with cusps and corners that have index 0. Note that if these arise as complementary regions of a train track on $S$, the unique side of a 1-cusped triangle not incident to a cusp must lie along $\del S$, and that any rectangle has two nonadjacent sides lying along $\del S$.}
    \label{fig:ind0surfaces}
\end{figure}

\begin{definition}[Reeb annuli] \label{defn:Reebannulus}
Let $S$ be a surface. Let $A$ be a subsurface of $S$ that is homeomorphic to an annulus. 
We say that $A$ is \textbf{carried} by a train track $\tau$ on $S$ if $\partial A$ is smoothly embedded along $\tau$. 
We say that $A$ is a \textbf{Reeb annulus} if $A \cap \tau$ is the union of $\partial A$ and a nonempty collection of arcs with endpoints attached onto both components of $\partial A$ along a consistent direction. (The arcs need not be disjoint from one another.)
Similarly, we say that an annulus $A \subset S$ is a \textbf{half Reeb annulus} if one component $l_1$ of $\partial A$ is smoothly embedded along $\tau$ while the other component $l_2$ lies along $\partial S$, and if $A \cap \tau$ is the union of $l_1$ and a nonempty collection of arcs with one endpoint attached onto $l_1$ along a consistent direction and with the other endpoint on $l_2$. (Here, again, the arcs need not be mutually disjoint.) See \Cref{fig:reeb} top row.

Similarly, we say that $A$ is carried by a lamination $\Lambda$ on $S$ if $\partial A$ is smoothly embedded along $\Lambda$. We say that $A$ is a \textbf{Reeb annulus} if $A \cap \Lambda$ is the union of $\partial A$ and a nonempty collection of non-compact leaves with ends spiraling onto both components of $\partial A$ along a consistent direction. Similarly, we say that an annulus $A \subset S$ is a \textbf{half Reeb annulus} if one component $l_1$ of $\partial A$ is smoothly embedded along $\Lambda$ while the other component $l_2$ lies along $\partial S$, and if $A \cap \Lambda$ is the union of $l_1$ and a nonempty collection of non-compact leaves with an end spiraling onto $l_1$ along a consistent direction and with an endpoint on $l_2$. See \Cref{fig:reeb} bottom row. 
\end{definition}

\begin{figure}
    \centering
    \fontsize{12pt}{12pt}\selectfont
    \resizebox{!}{6cm}{
\begingroup%
  \makeatletter%
  \providecommand\color[2][]{%
    \errmessage{(Inkscape) Color is used for the text in Inkscape, but the package 'color.sty' is not loaded}%
    \renewcommand\color[2][]{}%
  }%
  \providecommand\transparent[1]{%
    \errmessage{(Inkscape) Transparency is used (non-zero) for the text in Inkscape, but the package 'transparent.sty' is not loaded}%
    \renewcommand\transparent[1]{}%
  }%
  \providecommand\rotatebox[2]{#2}%
  \newcommand*\fsize{\dimexpr\f@size pt\relax}%
  \newcommand*\lineheight[1]{\fontsize{\fsize}{#1\fsize}\selectfont}%
  \ifx\svgwidth\undefined%
    \setlength{\unitlength}{217.52742582bp}%
    \ifx\svgscale\undefined%
      \relax%
    \else%
      \setlength{\unitlength}{\unitlength * \real{\svgscale}}%
    \fi%
  \else%
    \setlength{\unitlength}{\svgwidth}%
  \fi%
  \global\let\svgwidth\undefined%
  \global\let\svgscale\undefined%
  \makeatother%
  \begin{picture}(1,0.95086094)%
    \lineheight{1}%
    \setlength\tabcolsep{0pt}%
    \put(0,0){\includegraphics[width=\unitlength,page=1]{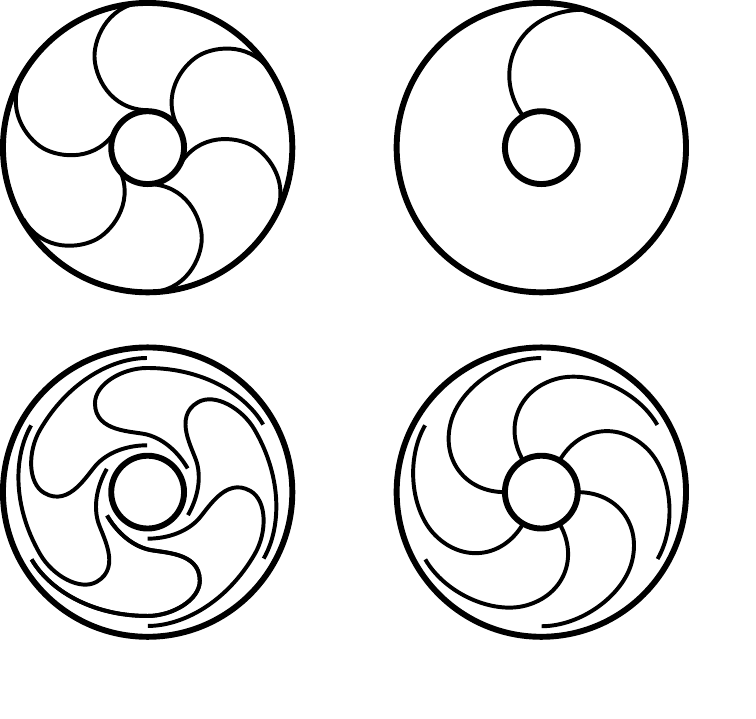}}%
    \put(0.04480413,0.00083502){\color[rgb]{0,0,0}\makebox(0,0)[lt]{\lineheight{1.25}\smash{\begin{tabular}[t]{l}Reeb annulus\end{tabular}}}}%
    \put(0.50687792,0.00083502){\color[rgb]{0,0,0}\makebox(0,0)[lt]{\lineheight{1.25}\smash{\begin{tabular}[t]{l}half Reeb annulus\end{tabular}}}}%
    \put(0,0){\includegraphics[width=\unitlength,page=2]{reeb.pdf}}%
  \end{picture}%
\endgroup%
}
    \caption{Reeb and half-Reeb annuli.}
    \label{fig:reeb}
\end{figure}

There are various definitions of `nice' train tracks in the literature. In this paper, it is convenient for us to make the following two definitions.

\begin{definition}[Good train tracks and laminations] \label{defn:essentialttlam}
A train track $\tau$ on $S$ is \textbf{good} if all of its complementary regions have nonpositive index, except possibly for bigons, and if it does not have Reeb annuli nor half Reeb annuli. 

A lamination $\Lambda$ on $S$ is \textbf{good} if all of its complementary regions have nonpositive index, except possibly for bigons, and if it does not have Reeb annuli nor half Reeb annuli. 
\end{definition}

\begin{definition} [Efficient train tracks]
\label{defn:efficienttt}
A train track $\tau$ is \textbf{efficient} if it is good, has no complementary regions that are cusped bigons, and does not carry an annulus.
\end{definition}

The motivation for the above terminology is the fact that a curve or a properly embedded arc can be carried by an efficient train track in at most one way, up to homotopy (rel endpoints).

\begin{remark}
In particular, an efficient train track can have only two types of index 0 complementary regions: rectangles and 1-cusped triangles. See \Cref{fig:ind0surfaces}.
\end{remark}

\subsection{Spiraling train tracks and laminations} \label{subsec:spiralttlam}

In this paper we will encounter two special types of laminations on compact surfaces: (i) laminations such that each end of a noncompact leaf limits on one of finitely many closed leaves, and (ii) laminations on surfaces with boundary for which all leaves are properly embedded compact intervals. These two types of laminations share many important properties, so we will consider them to be in the same class of `spiraling laminations' (see \Cref{defn:spirallam}).

Beyond this paper, spiraling laminations will also be important in our future work described in the introduction.

\begin{definition}[Source orientation] \label{defn:sourceorientation}
A \textbf{source orientation} on a 1-manifold (possibly with boundary) $l$ is a choice of points $p_1,...,p_k$ and an orientation on each component of $l \backslash \{p_1,...,p_k\}$ which points away from $p_1,...,p_k$. The points $p_1,...,p_k$ are called the \textbf{sources}.
\end{definition}

\begin{definition}[Spiraling train tracks] \label{defn:spiraltt}
Let $\tau$ be a good train track, and let $V$ be a continuous vector field tangent to $\tau$. We say $(\tau, V)$ is \textbf{spiraling} if $V$ restricts to the maw vector field on the set of switches and stops, and induces a source orientation on each branch of $\tau$. 
\end{definition}

\begin{lemma} \label{lemma:nolargebranchmeansspiraling}
Let $\tau$ be a good train track. Then $\tau$ has no large branches if and only if there exists a vector field $V$ on $\tau$ such that $(\tau,V)$ is spiraling.
\end{lemma}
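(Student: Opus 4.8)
The plan is to prove both directions by a direct analysis of what a tangent vector field on $\tau$ can do on each branch, using the constraint that at switches and stops it must agree with the maw vector field. Recall that at a switch, the maw vector field points into the side of the tangent space meeting only one end of a branch; so at each switch exactly one of the three incident branch-ends has the maw vector pointing \emph{into} the branch, and the other two have it pointing \emph{out}. At a stop the maw vector field points out of the surface, hence out of the unique incident branch-end.

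For the forward direction, suppose $\tau$ has no large branches. I would define $V$ branch by branch. Each non-circular branch $b$ is either small or mixed. If $b$ is small, the maw vector field points out of $b$ at both ends, and I can choose a single interior source point on $b$ and orient the two halves away from it; this is a source orientation on $b$ and matches the maw vector field at both endpoints. If $b$ is mixed, the maw vector field points into $b$ at one end and out at the other; then I orient all of $b$ consistently from the ``in'' end toward the ``out'' end, with no interior source — again a source orientation (with zero sources) agreeing with the maw data at the endpoints. For circular branches (which, having no endpoints, are neither large nor small nor mixed in the stated trichotomy): a circular branch must meet a switch (else $\tau$ has a closed component disjoint from everything, and being good rules out such an annulus-carrying configuration — or more carefully, a lone circle is a closed leaf bounding an annular complementary region, but let me instead note the circular branch is attached to the rest of $\tau$ at a switch, where the maw data forces a direction of travel, and I orient the whole circle compatibly, placing one source if and only if the maw data at that switch points out on both of the circular branch's local ends). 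Since the choices on different branches only need to be compatible at switches/stops, and the maw vector field is a single well-defined vector field there, these local choices glue to a global $V$ with $(\tau, V)$ spiraling.

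For the converse, suppose $(\tau, V)$ is spiraling and, for contradiction, that $\tau$ has a large branch $b$ with endpoints at switches $A$ and $B$. By definition of large, the maw vector field points \emph{into} $b$ at both $A$ and $B$, so $V$ points into $b$ at both ends. But $V$ induces a source orientation on $b$: away from finitely many source points. An orientation on the interval $b$ that points inward at \emph{both} endpoints cannot point away from its sources consistently — tracing from $A$, the orientation points away from $A$ into the interior, and from $B$ it points away from $B$ into the interior, so somewhere in the interior the orientation reverses in a way incompatible with ``pointing away from the sources'' unless there is a source in between; but even with a source $p$ in the interior, a source orientation points \emph{away} from $p$, i.e.\ \emph{toward} the endpoints near $p$, which contradicts $V$ pointing \emph{into} $b$ (away from the endpoints) at $A$ and $B$. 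Hence no large branch exists. I should phrase this last step as: a source orientation on an interval necessarily points \emph{outward} (toward the nearer endpoint) in a neighborhood of each endpoint, whereas ``large'' forces $V$ to point \emph{inward} at both endpoints — an immediate contradiction.

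The main obstacle I anticipate is bookkeeping around degenerate cases rather than the core argument: circular branches and branches both of whose endpoints are the same switch need care, as does making sure the ``good'' hypothesis is genuinely used (it should rule out closed-leaf / Reeb-type configurations that would otherwise obstruct the construction or the trichotomy small/mixed/large on which the converse relies). I would handle these by remarking explicitly at the start that, for a good train track, every branch that is not circular is small, mixed, or large, and every circular branch meets at least one switch, and then dispatch the circular case by the same local rule at that switch. Everything else is the two short arguments above.
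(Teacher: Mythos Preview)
Your approach is essentially the same as the paper's, and the core of both directions is correct: branch-by-branch extension of the maw data in the forward direction, and the observation that a source orientation on an interval points outward near each endpoint in the converse. However, your treatment of circular branches is muddled and contains a false claim.

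A circular branch, by definition, is a component of $\tau$ minus its switches and stops that is a circle; it therefore meets \emph{no} switch (if it did, removing that switch would make it an interval, not a circle). Your assertion that ``a circular branch must meet a switch'' is simply wrong, and the subsequent attempt to use the maw data at such a switch to orient the circle does not make sense. The paper handles this case in one line: on a circular branch there are no endpoint constraints at all, so one chooses any nonsingular vector field (equivalently, either orientation of the circle, with no sources). You should replace your circular-branch paragraph with this.

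Relatedly, your worry about whether the ``good'' hypothesis is genuinely used is misplaced. It is not used in the argument; it appears only because ``spiraling'' is defined to require ``good,'' so the hypothesis is needed for the statement to be well-formed. You do not need to invoke goodness to rule out isolated circles or Reeb configurations---isolated circle components are permitted and are dealt with as above.
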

\begin{proof}
If $\tau$ has no large branches, we can define a vector field $V$ as follows. For each branch $b$ which is not a circle, we can extend the maw vector field on its endpoints to a vector field on $b$ which has at most one singularity. Since $\tau$ has no large branches, a singularity arises only when the maw vector field points out of $b$ at each endpoint, in which case the singularity will be a source. For each circular branch, we may choose any nonsingular vector field on that branch. This vector field makes $\tau$ a spiraling train track.

Conversely, suppose $\tau$ has a large branch $b$. If $V$ is a vector field on $\tau$ extending the maw vector field, then the vector field $V|_b$ points into $b$ at both endpoints and therefore cannot induce a source orientation on $b$. It follows that there is no vector field making $\tau$ spiraling.
\end{proof}

\begin{definition}
Let $(\tau,V)$ be a spiraling train track. Notice that the forward trajectory of any point $x$ in $\tau$ under $V$ is well-defined (possibly) up to the point where it exits the surface through a stop. If $x$ is not a singularity of $V$ and its forward trajectory does not exit the surface, then the trajectory is an infinite ray carried by $\tau$, with limit set equal to a circle carried by $\tau$, the circle being oriented by $V$. 
The \textbf{sink} of $(\tau,V)$ is defined to be the union of the stops of $\tau$ and the oriented circles which are the limit sets of the infinite forward trajectories.
\end{definition}

Our choice of the term ``spiraling" is motivated by the case when $\tau$ has no stops, where any ray smoothly immersed in $\tau$ eventually periodically traverses a loop in the sink. If a spiraling track $\tau$ has stops, then this particular behavior is not necessarily present but we will see in \Cref{subsec:spiralfacts} that other important properties remain.

\begin{definition}[Spiraling laminations] 
\label{defn:spirallam}
A good lamination $\Lambda$ is said to be \textbf{spiraling} if:
\begin{enumerate}
    \item There are finitely many closed leaves $\ell_1,...,\ell_k$ in $\Lambda$, each with a fixed orientation.
    \item For all noncompact $\ell\subset \Lambda$, 
    if $\gamma\colon[0,\infty)\to \ell$ is a proper embedding then $\gamma$ spirals onto some $\ell_i$ in the direction specified by the orientation of $\ell_i$.\qedhere
\end{enumerate}
\end{definition}

It may be useful to think of these as ``depth $\le1$ laminations" in analogy with finite depth foliations.

The following shows that spiraling laminations naturally decompose into sets of two types, which we define after stating the result.

\begin{proposition} \label{prop:spirallam}
Let $S$ be a compact surface and $\Lambda$ a spiraling lamination on $S$. Then $\Lambda$ can be decomposed into a finite union of sets of the following form:
\begin{enumerate}
    \item A good lamination restricted to a (possibly degenerate) annulus carried by $\Lambda$
    \item A pocket of non-circular leaves
\end{enumerate}
\end{proposition}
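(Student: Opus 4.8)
The plan is to partition $\Lambda$ by looking at how leaves accumulate onto the finitely many closed leaves $\ell_1,\dots,\ell_k$. First I would handle the closed leaves themselves. For each $\ell_i$, consider the union $A_i$ of $\ell_i$ together with all ends of noncompact leaves spiraling onto $\ell_i$ from either side. By the definition of a spiraling lamination, each noncompact leaf $\ell$ has each of its two proper rays spiraling onto some $\ell_j$, so $A_i$ naturally carries the structure of a lamination contained in a small annular (or, when $\ell_i$ is one-sided, Möbius-band — but we have fixed orientations, so I will assume two-sided, flagging the degenerate case) neighborhood of $\ell_i$; concretely, $A_i$ is a good lamination restricted to an annulus carried by $\Lambda$, where the annulus may degenerate to the core curve $\ell_i$ itself if nothing spirals onto it. I would define this annulus by taking a closed regular neighborhood of $\ell_i$ and intersecting with $\Lambda$, shrinking until the only leaves met are $\ell_i$ and the spiraling ends; goodness is inherited because a complementary region of $A_i$ in its annular neighborhood either is a complementary region of $\Lambda$ (so has index $\le 0$ or is a bigon) or is a half-open strip, which has index $0$, and Reeb/half-Reeb annuli of $A_i$ would be Reeb/half-Reeb annuli of $\Lambda$.

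Next I would deal with what is left over, namely $\Lambda' := \Lambda \setminus \bigcup_i A_i$, which consists of compact leaves that are properly embedded arcs (in case (ii) of the discussion preceding \Cref{defn:spirallam}) together with the ``middle parts'' of noncompact leaves after their spiraling ends have been removed — but actually any noncompact leaf has both ends in some $A_j$, so after removing the $A_i$ what remains of such a leaf is a compact sub-arc, and similarly the already-compact-arc leaves stay whole. So $\Lambda'$ is a sublamination all of whose leaves are compact arcs. I would then cut $S$ along $\bigcup_i \ell_i$ (or along the core curves of the annular neighborhoods) and observe that $\Lambda'$, being a compact lamination by arcs with endpoints on $\partial S$, is carried by finitely many disjoint ``pockets'': maximal sub-bundles of parallel arcs, i.e. the pockets of non-circular leaves in the statement. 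The standard way to see this is that a compact lamination by properly embedded arcs on a compact surface has finitely many isotopy classes of leaves, each class forming a pocket, because the complementary regions between consecutive parallel classes of arcs are finite in number (bounded via the index/Euler characteristic count, using that $\Lambda$ is good so there are no bigon-pocket pathologies beyond the allowed ones).

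The one genuine subtlety — and I expect it to be the main obstacle — is bookkeeping the overlap and making the decomposition genuinely a partition (or at worst a partition up to the shared closed leaves $\ell_i$, which lie in the annular pieces): a noncompact leaf contributes its two ends to (possibly two different) annular pieces $A_i$, $A_j$ and its compact core to a pocket, so I must choose the cutting arcs defining where ``spiraling end'' stops and ``core'' begins consistently and finitely. The clean way is: fix once and for all a standard neighborhood and a finite collection of ties/arcs cutting each $A_i$ off from the rest (these exist by the local product structure of the lamination near $\ell_i$ together with compactness of $S$), and declare $A_i$ to be the closure of the part of $\Lambda$ on the $\ell_i$-side of these arcs; then everything on the far side is $\Lambda'$, a compact arc-lamination, and finiteness of the pocket decomposition of $\Lambda'$ follows from an index/Euler characteristic bound on complementary regions exactly as for measured or good train tracks. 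I would also remark that the degenerate annulus case ($A_i = \ell_i$) is exactly what happens for an isolated closed leaf with nothing spiraling onto it, and that when $S$ has boundary the half-open strips cut off in the definition of index of complementary regions of a lamination account for the properly embedded ends of leaves meeting $\partial S$, so the index computations go through unchanged.
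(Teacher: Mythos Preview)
Your decomposition is not the one the paper intends, and in fact does not match the definitions. The key misreading is of ``annulus carried by $\Lambda$'': by \Cref{defn:Reebannulus}, such an annulus must have $\partial A$ smoothly embedded \emph{along} $\Lambda$, i.e.\ both boundary circles are themselves closed leaves of $\Lambda$. Your $A_i$ is instead a tubular neighborhood of a single closed leaf whose boundary circles are transverse to the spiraling leaves, so it is not a set of type~(1). Relatedly, the pockets in the paper are allowed to consist of \emph{full} noncircular leaves: the model is $R\times C$ with $R\in\{\mathbb{R},[0,\infty),[0,1]\}$ and the embedding $R\times[0,1]\to S$ explicitly not required to be proper, so a pocket can contain the spiraling ends. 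There is therefore no need --- and no license --- to amputate them.

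The paper's argument is accordingly much shorter than yours. It groups the closed leaves by isotopy class: each maximal such class sits in a (possibly degenerate) annulus carried by $\Lambda$ whose boundary is the outermost pair of leaves in that class. It then groups all remaining (noncircular) leaves by proper homotopy class, each class forming a single pocket. Finiteness of both collections follows directly from compactness of $S$. No cutting of leaves, no index computation, and none of your overlap bookkeeping is needed; the decomposition is leaf-by-leaf throughout.
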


We have defined an annulus carried by $\Lambda$ in \Cref{defn:Reebannulus}. A \textbf{degenerate annulus} carried by $\Lambda$ is just a circular leaf of $\Lambda$. A \textbf{pocket} of noncircular leaves is a subset $\Lambda'\subset \Lambda$ such that there exists a (not necessarily proper) embedding $R\times[0,1]\to S$ where $R$ is equal to either $\R$, $[0,\infty)$, or $[0,1]$, and a closed $C\subset [0,1]$ such that $R\times C$ maps one-to-one onto $\Lambda'$.

\begin{proof}
Consider the family $F$ of closed curves in $S$ comprised of the closed leaves of $\Lambda$. A maximal collection of these curves in the same isotopy class must be contained in a (possibly degenerate) annulus carried by $\Lambda$.

All remaining leaves of $\Lambda$ are either compact or spiral onto compact leaves. Since $S$ is compact, there can only be finitely many proper homotopy classes of these leaves. Taking maximal collections of properly homotopic leaves, we obtain finitely many pockets.
\end{proof}

\begin{definition} [Consistent lamination] \label{defn:finitespirallam}
A spiraling lamination $\Lambda$ is \textbf{consistent} if no two (oriented) closed leaves cobound an annulus. 
\end{definition}

Equivalently, a spiraling lamination $\Lambda$ is consistent if whenever $\Lambda$ carries an annulus $A$, $\Lambda \cap A$ is a disjoint union of parallel closed leaves.

In this paper, we will be concerned with efficient spiraling train tracks carrying consistent spiraling laminations. Observe that if a train track with no large branches carries a consistent spiraling lamination, there is an (essentially) unique vector field making it into a spiraling train track, so that the spiraling of forward trajectories is compatible with the orientations of the closed leaves of the lamination. We will thus freely refer to such train tracks as spiraling train tracks.

In \Cref{lemma:spiralttexists} below, we  show that any consistent spiraling lamination is carried by an efficient spiraling train track. In \Cref{subsec:spiralfacts}, we will see that an efficient train track carrying a consistent spiraling lamination can be split into a spiraling train track in an essentially unique way.

We make an observation which we will use repeatedly for the rest of this paper. Suppose $\tau$ is an efficient spiraling train track fully carrying a consistent spiraling lamination $\Lambda$. The image of each closed leaf of $\Lambda$ in $\tau$ under the carrying map is embedded. Also the images of parallel leaves coincide, while the images of non-parallel leaves are disjoint. Hence if we let $[\lambda_1],...,[\lambda_k]$ be the parallel classes of the closed leaves of $\Lambda$, each $[\lambda_i]$ will correspond to some circular component of the sink of $\tau$. Conversely, a circular component of the sink of $\tau$ must carry some closed leaf of $\Lambda$. Hence there is a natural one-to-one correspondence between the parallel classes of the closed leaves of $\Lambda$ and the circular components of the sink of $\tau$.

Working towards \Cref{lemma:spiralttexists}, we make the following definition.

\begin{definition}
We say that a finite collection of points $\{x_i\}$ on $\partial S$ \textbf{carries} $\Lambda \cap \partial S$ if there exists a closed regular neighborhood $N$ of $\{x_i\}$ on $\partial S$ such that $\Lambda \cap \partial S \subset N$. In this case we say that the map $\Lambda \cap \partial S \hookrightarrow N \to \{x_i\}$ is the \textbf{carrying map}. Further, we say that $\{x_i\}$ fully carries $\Lambda \cap \partial S$ if the carrying map is surjective.
\end{definition}

\begin{lemma} \label{lemma:spiralttexists}
Let $\Lambda$ be a consistent spiraling lamination. Let $\{x_i\}$ be a finite collection of points on $\partial S$ fully carrying $\Lambda \cap \partial S$, with carrying map $\rho:\Lambda \cap \partial S \to \{x_i\}$. Then there exists an efficient spiraling train track $\tau$ with the set of stops equal to $\{x_i\}$ such that $\tau$ fully carries $\Lambda$ and the restriction of the carrying map $\Lambda \to \tau$ extends $\rho$ on the boundary.
\end{lemma}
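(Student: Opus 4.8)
The plan is to build $\tau$ directly from the structure of $\Lambda$ provided by Proposition \ref{prop:spirallam}, which decomposes $\Lambda$ into finitely many pieces that are either good laminations on carried annuli or pockets of non-circular leaves. First I would take a standard neighborhood of $\Lambda$ — a union of "bands" in the complement of $\Lambda$'s closed leaves, together with annular neighborhoods of the closed leaves — and collapse it in the direction transverse to the leaves, much as one collapses a train track neighborhood. The subtlety, compared to the classical (finite-type, filling lamination) situation, is threefold: (i) the collapse must be arranged so that the resulting graph is a genuine train track with no large branches, i.e. already spiraling; (ii) the stops must land exactly on the prescribed points $\{x_i\}$, with the carrying map extending $\rho$; (iii) we must verify efficiency, i.e. control the complementary regions and rule out Reeb and half-Reeb annuli and carried annuli.

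Here is the order I would carry it out. \emph{Step 1: handle each closed leaf.} For each parallel class $[\lambda_i]$ of closed leaves, collapse the annular neighborhood of $\lambda_i$ to a single circular branch, oriented consistently with the fixed orientation on $\lambda_i$; this will become a circular component of the sink. \emph{Step 2: handle the non-circular leaves.} Each non-circular leaf is a proper arc or a ray spiraling onto some $\lambda_i$ (or a bi-infinite line spiraling onto closed leaves at both ends). Since there are finitely many proper homotopy classes of such leaves (Proposition \ref{prop:spirallam}), collapse each pocket in the transverse direction; along the spiraling ends the collapsed arc merges smoothly into the relevant circular branch, which is exactly the maw-vector-field-into-a-mixed-branch picture — so no large branch is created. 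At the boundary $\partial S$, arrange the collapse so that the leaves of $\Lambda \cap \partial S$ lying in the neighborhood $N$ of a point $x_i$ all get sent to a stop placed at $x_i$; this is possible precisely because $\{x_i\}$ fully carries $\Lambda \cap \partial S$ via $\rho$, and it forces the carrying map $\Lambda \to \tau$ to restrict to $\rho$ on $\partial S$. \emph{Step 3: assemble and smooth.} Glue the collapsed pieces along the switches created where a pocket limits onto a closed-leaf neighborhood; by construction the tangencies are consistent, every non-circular branch inherits a source orientation from the transverse collapse, and every switch carries the maw vector field, so $(\tau, V)$ is spiraling, hence (Lemma \ref{lemma:nolargebranchmeansspiraling}) has no large branches. \emph{Step 4: achieve efficiency.} The complementary regions of $\tau$ correspond to complementary regions of $\Lambda$ together with the collapsed bands; since $\Lambda$ is good, its complementary regions have nonpositive index or are bigons, and index is additive under collapsing, so $\tau$'s complementary regions have nonpositive index or are bigons. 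Cusped bigon complementary regions, if any appear, can be removed by folding (an inverse split) or by a small isotopy merging the two ties — this does not destroy the spiraling property or change the carrying data. A Reeb or half-Reeb annulus carried by $\tau$ would, pulling back via the carrying map, produce one carried by $\Lambda$, contradicting goodness of $\Lambda$; similarly $\tau$ carrying an (essential) annulus would, by consistency of $\Lambda$, force $\Lambda$ to carry one too, again a contradiction. Finally confirm fullness: the collapse is onto, so $\Lambda$ meets every tie.

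The main obstacle I expect is Step 2 together with the efficiency bookkeeping in Step 4: one must collapse the non-circular leaves in a way that is simultaneously compatible with the spiraling behaviour at both ends (so that no large branch and no stray bigon is created near the sink circles) and compatible with the prescribed stops $\{x_i\}$ on the boundary — and then argue that the finitely many pockets can be collapsed coherently across the whole surface without introducing Reeb-type pathologies. The delicate point is that a naive collapse could create a large branch where two spiraling leaves approach a closed leaf from opposite sides; one resolves this by choosing the width of the collapsing bands to taper toward the sink, so the closed leaf's neighborhood genuinely absorbs the incoming arcs. Once the local models near each $\lambda_i$ and near each $x_i$ are fixed, the rest is a routine (if lengthy) verification using additivity of index and the goodness/consistency hypotheses on $\Lambda$.
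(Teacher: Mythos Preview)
Your approach is essentially the paper's: use Proposition~\ref{prop:spirallam} to decompose $\Lambda$, take one circular branch per parallel class of closed leaves and one arc-branch per pocket (attached to the circles compatibly with the spiraling orientation), and then zip up at the prescribed stops $\{x_i\}$. The one place the paper is slightly cleaner is your Step~4 cusped-bigon removal---folding to kill a bigon can reintroduce a large branch, so instead the paper simply takes the pockets to be \emph{maximal} (subject to each end spiraling onto a single closed leaf or mapping to a single $x_i$ under $\rho$) from the outset, which guarantees no two arc-branches are parallel and hence no cusped bigons ever appear.
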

\begin{proof}
Let $\Lambda$ be a spiraling lamination. We construct a spiraling train track $(\tau,V)$ according to \Cref{prop:spirallam} in the following way.

From each parallel class of closed leaves of $\Lambda$, take one closed leaf to be part of $\tau$ with a nonsingular vector field defining its orientation as in item (1) of \Cref{defn:spirallam}. This uses the consistency of $\Lambda$.

By subdivision of pockets, we can assume that each end of a pocket spirals around some closed leaf of $\Lambda$ or is mapped to a single point in the finite collection $\{x_i\}$ by the carrying map, and that each pocket is maximal with respect to this property. For each pocket, we add a branch between the corresponding closed leaves and/or boundary points. When attaching a branch to one of the closed leaves, we smooth so that the branching is compatible with the orientation.

Finally we slightly zip up the branches at the stops and perturb so that switches are trivalent to get the desired spiraling train track. 
\end{proof}

\subsection{Splitting train tracks carrying spiraling laminations} \label{subsec:spiralfacts}

For this subsection $\Lambda$ will denote a consistent spiraling lamination, $S$ a compact surface, and $\tau$ a train track fully carrying $\Lambda$.

We say a cusp $c$ of $\tau$ is \textbf{persistent} if the maximal $\Lambda$-route from $c$ does not terminate in $\intr(S)$, and denote the set of persistent cusps of $\tau$ by $\pers(\tau)$.

Let $\lambda_1,...,\lambda_k$ be the closed leaves of $\Lambda$. Each $\lambda_i$ determines a bi-infinite periodic train route.
If the maximal $\Lambda$-route from a cusp $c$ fellow travels $\lambda_i$ in $\tau$, then we call $c$ a \textbf{graft point} for $\lambda_i$. We let $\graft(\lambda_i)$ be the (finite) set of graft points of $\lambda_i$ and circularly order this set according to the order in which $\lambda_i$ meets the cusps.

If the maximal $\Lambda$-route from a cusp $c$ \emph{eventually} fellow travels $\lambda_i$, we define the \textbf{reduced $\Lambda$-route} for $c$ to be the initial segment before its maximal $\Lambda$-route fellow travels $\lambda_i$. If $c$ is a graft point, then by convention the reduced $\Lambda$-route for $c$ is just the constant path $c$.

If instead the maximal $\Lambda$-route from a cusp $c$ terminates at a stop, we define the \textbf{reduced $\Lambda$-route} for $c$ to be the same as its maximal $\Lambda$-route.

We define a partial order $\preceq$ on the set $\pers(\tau)$ as follows. If $c_1$ and $c_2$ are two persistent cusps with reduced $\Lambda$-routes $\rho_{c_1}$ and $\rho_{c_2}$, then $c_2\succeq c_1$ if and only if $\rho_{c_2}$ is a concatenation 
\[
\rho_{c_2}=\gamma*\rho_{c_1},
\]
where $\gamma$ is some initial train route. 
See \Cref{fig:partialorder}.

Suppose $\tau$ and $\tau'$ are train tracks carrying $\Lambda$. If there is a poset isomorphism $\pers(\tau)\to \pers(\tau')$ which also respects the circular orders on $\graft(l_i)$ for each $i$, then we say that the map $\pers(\tau)\to \pers(\tau')$ is \textbf{order preserving}.

\begin{figure}
    \centering
    \fontsize{8pt}{8pt}\selectfont
    \resizebox{!}{2in}{\import{basecase-fig}{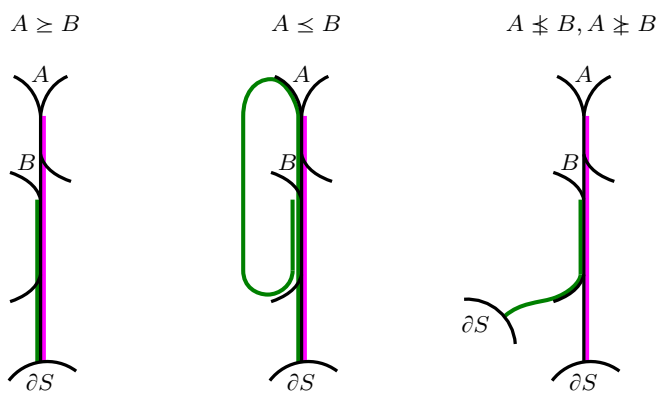}}
    \caption{The pink lines represent the $\Lambda$-route from $A$, and the green lines represent possible $\Lambda$-routes from $B$. On the left we have $A\succeq B$, and in the center $A\preceq B$. On the right $A$ and $B$ are not comparable.}
    \label{fig:partialorder}
\end{figure}

\begin{lemma} \label{lemma:splitpreserveorder}
Any $\Lambda$-split is order preserving. More precisely, if $\tau$ carries a spiraling lamination $\Lambda$ and $\tau'$ is obtained from $\tau$ by a $\Lambda$-split, then the natural identification $\pers(\tau)\to \pers(\tau')$ is order preserving.
\end{lemma}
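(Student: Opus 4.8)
The plan is to analyze the effect of a single $\Lambda$-split on maximal $\Lambda$-routes, reduced $\Lambda$-routes, and the induced partial order, using the natural identification of switches before and after the split. Recall that the split in question is not a collision (a $\Lambda$-split on a train track fully carrying $\Lambda$ is the unique split keeping $\Lambda$ fully carried, and since $\Lambda$ is fully carried this cannot be a collision), so switches before and after are canonically identified. First I would fix the large branch $b$ being split, with endpoints at switches $A$ and $B$, and set up a correspondence between train routes in $\tau$ and in $\tau'$: away from $b$ the branches are untouched, and a train route through $b$ in $\tau$ corresponds to a unique train route in $\tau'$ obtained by the local modification. The key bookkeeping point is that the $\Lambda$-split can only shorten or lengthen $\Lambda$-routes by a bounded amount near $b$, and — crucially — it preserves which switch a route emanates from and whether a route terminates in $\intr(S)$, at a stop, or fellow travels a closed leaf $\lambda_i$.

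The core of the argument is then to show three things. First, the set $\pers(\tau)$ is identified with $\pers(\tau')$: a cusp $c$ is persistent iff its maximal $\Lambda$-route does not terminate in $\intr(S)$, and this property is preserved because the $\Lambda$-split does not create or destroy interior terminations of routes (the route's behavior at infinity — spiraling into some $\lambda_i$ or hitting a stop — is unchanged, and a route terminating in the interior at $\tau$ still terminates in the interior at $\tau'$, just possibly one branch sooner or later). Second, I would check that the reduced $\Lambda$-route of a persistent cusp $c$ is the ``same'' under the split, in the precise sense that there is a length-preserving identification of the branches traversed by $\rho_c$ in $\tau$ with those in $\tau'$ respecting concatenation — i.e. if $\rho_{c_2} = \gamma * \rho_{c_1}$ in $\tau$, then the corresponding decomposition holds in $\tau'$. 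The cleanest way is to note that the reduced $\Lambda$-route is determined by the leaf-segment up to where it starts to fellow travel a $\lambda_i$ (or terminates at a stop), and a $\Lambda$-split is a homeomorphism of a neighborhood of $\tau$ to a neighborhood of $\tau'$ carrying $\Lambda$ to $\Lambda$, so the leaf segments themselves are unchanged; only their combinatorial encoding as branch-sequences changes, and it changes compatibly for all cusps simultaneously. Third, I would verify the circular order on $\graft(\lambda_i)$ is respected: the graft points of $\lambda_i$ are the cusps whose maximal $\Lambda$-route fellow travels $\lambda_i$, and the cyclic order is the order in which the (unchanged) leaf $\lambda_i$ passes the corresponding cusps in the surface — this is a geometric datum on $S$ untouched by the split.

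Concretely I would organize the proof as: (1) record that the $\Lambda$-split is realized by a carrying-preserving homeomorphism $N(\tau) \to N(\tau')$ that is the identity outside a neighborhood of $b$, hence induces the identity on cusps/switches and maps $\Lambda$-leaves to $\Lambda$-leaves; (2) deduce that for each cusp $c$ the maximal $\Lambda$-route, as a parameterized leaf-ray in $S$, is literally the same ray, so its qualitative fate (interior termination / stop / fellow-traveling $\lambda_i$) is preserved, giving $\pers(\tau) = \pers(\tau')$ and preservation of graft points with their cyclic order; (3) observe that ``$\rho_{c_2}$ is a concatenation $\gamma * \rho_{c_1}$ of train routes'' is equivalent to an inclusion/nesting relation of the corresponding leaf-segments in $S$ up to fellow-traveling, which again is preserved by the homeomorphism; conclude that $c_2 \succeq c_1$ in $\tau$ iff $c_2 \succeq c_1$ in $\tau'$.

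The main obstacle I anticipate is step (3): making precise and correct the claim that the partial order is a property of the leaf-segments themselves rather than of their branch-encodings. Subtleties arise because the reduced $\Lambda$-route has a case distinction (graft point vs. eventually-fellow-traveling vs. terminating at a stop), and because the split can change the length of the common initial segment of the two rays $\rho_1, \rho_2$ defining $\rho_c$, so one must check that the definition of $\rho_c$ as ``longest common initial subroute'' transforms correctly — in particular that the split doesn't artificially merge or separate $\rho_1$ and $\rho_2$ near $b$. I would handle this by a careful local analysis at the switch $c$ and at $b$, using that $\tau$ fully carries $\Lambda$ so the ties meeting $b$ are actually crossed by leaves, which pins down the local picture of the split and shows the common-initial-segment length changes by at most one branch in a controlled way. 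Everything else is routine once the homeomorphism in step (1) is in hand.
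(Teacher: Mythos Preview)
Your overall strategy---reduce to a leaf-level statement invariant under the carrying-preserving homeomorphism $N(\tau)\to N(\tau')$---is reasonable and different from the paper's, but there is one factual error and one genuine gap.

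The factual error: your opening claim that a $\Lambda$-split cannot be a collision when $\tau$ fully carries $\Lambda$ is false; the paper explicitly notes (see the caption to \Cref{fig:lambdasplit}) that collisions are possible. This is not fatal---in a collision the two switches that vanish have $\Lambda$-routes which terminate in $\intr(S)$ at one another, hence are nonpersistent, so the identification $\pers(\tau)\to\pers(\tau')$ is still well-defined---but you should handle this case rather than exclude it.

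The gap is in step (3). You propose to reinterpret ``$\rho_{c_2}=\gamma*\rho_{c_1}$'' as a nesting relation on leaf-segments, but this relation is genuinely combinatorial: the terminal point of a reduced route is the first graft point the route passes through, and both ``graft point'' and ``passes through'' are defined via the branch-encoding, which changes under the split. Your homeomorphism preserves the underlying rays as subsets of $S$, but the question of whether the route of $c_2$ \emph{in $\tau'$} passes through the switch $c_1$ and thereafter coincides with the route of $c_1$ \emph{in $\tau'$} is not obviously a leaf-level statement, since routes in $S$ need not be embedded. Your proposed ``careful local analysis at $b$'' would have to track, for every position of $b$ relative to the routes of $c_1$ and $c_2$, how the branch-sequences transform---this is doable but not routine.

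The paper sidesteps exactly this difficulty by lifting to the universal cover $\wt S$, where $\wt\tau$ is a tree (\cite[Cor.~1.1.2]{PenHar92}). There the reduced route from $\wt{c_2}$ to the terminal point $\wt C$ is an \emph{embedded} path, $\wt{c_1}$ is a vertex on it, and a $\Lambda$-split lifts to disjoint local moves on the tree that manifestly cannot alter which vertices lie between which. The same observation handles the cyclic order on $\graft(\lambda_i)$: a lift $\wt\gamma$ of the closed route is an embedded oriented line, and the induced linear order on lifted graft points is preserved by local moves. The tree structure is precisely what lets the paper replace your case analysis by a one-line observation; your approach can be completed, but you are reinventing in $S$ what is automatic in $\wt S$.
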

\begin{proof}
Suppose that $\tau'$ is obtained by performing a $\Lambda$-split on a branch of $\tau$. Let $\wt S$ be the universal cover of $S$, and let $\wt \Lambda$ and $\wt \tau$ be the lifts of $\Lambda$ and $\tau$ to $\wt S$.

First we explain why the property of being a graft point is preserved by a $\Lambda$-split, as is the circular order on the graft points of any circular leaf of $\Lambda$. If $\gamma$ is a closed loop in $\tau$ following the same route as a closed leaf $\lambda_i$, let $\wt\gamma$ be a lift to $\wt S$. By the argument in \cite[Cor 1.1.2]{PenHar92}, $\wt\tau$ is a tree so $\wt \gamma$ is an oriented line embedded in $\wt \tau$. Let $\wt{\graft(\lambda_i)}$ be the set of lifts of points in $\graft(\lambda_i)$ lying on $\wt \gamma$. The orientation of $\wt \gamma$ induces a linear order $\wt{\graft(\lambda_i)}$. Any $\Lambda$-split on $\tau$ lifts to infinitely many disjoint $\wt\Lambda$-splits, and each point of $\wt{\graft(\lambda_i)}$ is involved in at most one of these splits. It is easy to see that no finite collection of splits can reverse the linear order of any two points in $\wt{\graft(\lambda_i)}$, or remove any point in $\wt{\graft(\lambda_i)}$ from $\wt \gamma$. It follows that any $\Lambda$-split of $\tau$ preserves graft points and the circular order on $\graft(\lambda_i)$ for each circular leaf $\lambda_i$.

Next, let $A$ and $B$ be persistent cusps of $\tau$, and suppose that $A\succeq B$. Hence if $\gamma_A$ and $\gamma_B$ are the reduced $\Lambda$-routes from $A$ and $B$ respectively we have 
\[
\rho_A=\gamma*\rho_B
\]
for some initial train route $\gamma$. Let $C$ be the terminal point of $\rho_A$ and $\rho_B$. As above, a lift of $\rho_A$ to the universal cover $\wt S$ is embedded. This makes it clear that no $\Lambda$-split can reverse the order of $A$ and $B$. The same ideas show that any fold on $\tau'$ preserves the relation $A\succeq B$. Since any $\Lambda$-split can be reversed by a fold, this shows that $A\succeq B$ in $\pers(\tau)$ if and only if $A\succeq B$ in $\pers(\tau')$.
\end{proof}

We will show in \Cref{lemma:maximalsplitting} that a train track fully carrying a spiraling lamination can be $\Lambda$-split to a spiraling train track. To do so, we need a measure of complexity that decreases after each $\Lambda$-split. To this end we introduce the following definition.

\begin{definition}
If $A$ and $B$ are cusps of $\tau$, a \textbf{large $\Lambda$-biroute} connecting $A$ and $B$ is a train route $\gamma$ in $\tau$ from $A$ to $B$ such that: 
\begin{itemize}
    \item the maw vector field points into $\gamma$ at both $A$ and $B$, and
    \item $\gamma$ is an initial subroute of the $\Lambda$-route from $A$, and the reverse of $\gamma$ is an initial subroute of the $\Lambda$-route from $B$.
\end{itemize}
We consider two large $\Lambda$-biroutes to be the same if they differ by reversing orientation.
\end{definition}

\begin{lemma}\label{lem:finitebiroutes}
Let $\Lambda$ be a consistent spiraling lamination. Suppose $\tau$ is a train track fully carrying $\Lambda$. Then the number of large $\Lambda$-biroutes is finite.
\end{lemma}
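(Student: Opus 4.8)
\textbf{Proof proposal for \Cref{lem:finitebiroutes}.}

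The plan is to bound the length of any large $\Lambda$-biroute in terms of data intrinsic to $\tau$ and $\Lambda$, which combined with the fact that $\tau$ has only finitely many branches will force finiteness. First I would observe that a large $\Lambda$-biroute $\gamma$ from $A$ to $B$ is, by definition, simultaneously an initial subroute of the $\Lambda$-route $\rho_A$ from $A$ and the reverse of an initial subroute of $\rho_B$. Recall that the maximal $\Lambda$-route from a cusp either terminates at a stop, terminates in $\intr(S)$, or is infinite and eventually fellow travels some closed leaf $\lambda_i$ (by the structure of consistent spiraling laminations, \Cref{defn:spirallam} and \Cref{prop:spirallam}). The key point is that $\gamma$ being a \emph{large} biroute means the maw vector field points \emph{into} $\gamma$ at both ends, so $\gamma$ "turns around" — this geometric obstruction is what prevents $\gamma$ from running along a closed leaf forever in a consistent direction, which is what would be needed for $\gamma$ to be unboundedly long.

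The main step is to rule out arbitrarily long biroutes. Suppose toward a contradiction that there is a sequence of large $\Lambda$-biroutes of unbounded length. Lift the picture to the universal cover $\wt S$, where $\wt\tau$ is a tree (by \cite[Cor.~1.1.2]{PenHar92}, as used in the proof of \Cref{lemma:splitpreserveorder}); a biroute lifts to an embedded arc in $\wt\tau$ which is carried by a leaf of $\wt\Lambda$. Since $\gamma$ is an initial segment of $\rho_A$ and its reverse is an initial segment of $\rho_B$, a long $\gamma$ must, in its middle portion, fellow travel a lift of some closed leaf $\lambda_i$ for a long time; but then near the $A$-end it must be running along $\wt\lambda_i$ in the direction of the orientation of $\lambda_i$ (since it eventually agrees with the $\Lambda$-route of $A$, which spirals onto $\lambda_i$ in that direction), and near the $B$-end it must likewise be running along in the $\lambda_i$-direction because the reverse is an initial segment of $\rho_B$. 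Reversing orientation, this says $\gamma$ runs along $\wt\lambda_i$ in \emph{both} directions, which is absurd for an embedded arc unless $\gamma$ actually closes up — but a closed $\Lambda$-biroute would be a closed leaf, contradicting that the maw vector field points into $\gamma$ at genuine cusps $A,B$. Hence there is a uniform bound $N$ on the length of any large $\Lambda$-biroute.

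Once the length bound $N$ is in hand, finiteness is immediate: a train route of length at most $N$ is determined by its starting branch (one of finitely many, since $\tau$ is a finite graph) together with, at each of at most $N$ switches traversed, a choice among the finitely many ways to continue smoothly; so there are only finitely many train routes of bounded length, and \emph{a fortiori} finitely many large $\Lambda$-biroutes. The step I expect to be the main obstacle is making the middle argument fully rigorous — in particular, carefully arguing that a sufficiently long biroute is forced to fellow travel a closed leaf of $\Lambda$ in a fixed direction near each of its two ends, using that $\rho_A$ and $\rho_B$ are \emph{maximal} $\Lambda$-routes and that $\Lambda$ is \emph{consistent} (so distinct closed leaves do not cobound annuli and the spiraling directions are unambiguous). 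This likely requires a compactness argument on $S$ together with the local product structure of $\Lambda$ near its closed leaves, and care about the case where $\rho_A$ or $\rho_B$ terminates at a stop rather than spiraling, where instead one uses that $\tau$ has finitely many branches and good complementary regions to bound the relevant segment directly.
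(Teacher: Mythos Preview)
Your overall strategy---bound the length of large $\Lambda$-biroutes, then count---is sound, and you have correctly identified that consistency of $\Lambda$ is the crucial ingredient. However, the contradiction you reach in the middle paragraph is not right as stated. You claim that near the $B$-end, $\gamma$ ``must likewise be running along in the $\lambda_i$-direction because the reverse is an initial segment of $\rho_B$,'' and then conclude that $\gamma$ ``runs along $\wt\lambda_i$ in both directions.'' This does not follow: knowing that the reverse of $\gamma$ is an initial subroute of $\rho_B$ tells you that $\rho_B$, starting at $B$, runs \emph{backward} along $\gamma$---that is, in the \emph{negative} $\lambda_i$-direction. There is no contradiction with embeddedness here; a single embedded arc in a tree simply goes one way. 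The actual obstruction is not geometric in $\wt\tau$ but dynamical in $\Lambda$: since $\Lambda$ is consistent, no closed leaf traverses the route of $\lambda_i$ with the opposite orientation, so the $\Lambda$-route $\rho_B$ cannot follow $\lambda_i$ negatively for arbitrarily long---it must eventually diverge from $\lambda_i$. That is the real bound, and it is what you need to say.

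The paper's proof is organized more directly and avoids the universal cover entirely. Rather than seeking a uniform length bound by contradiction, it fixes a cusp $A$ and counts large $\Lambda$-biroutes with an endpoint at $A$. If $\rho_A$ is finite this is immediate. If $\rho_A$ eventually periodically traverses a closed route $\rho$ (the image of some $\lambda_i$), then any cusp $B$ on $\rho$ forming a large biroute with $A$ has its maw vector pointing \emph{against} the orientation of $\rho$; consistency then says $\rho_B$ can wrap around $\rho$ only finitely many times before leaving, so there are only finitely many biroutes connecting $A$ to $B$. This is essentially the same mechanism you are reaching for, but applied cusp-by-cusp rather than globally, which sidesteps the need to argue that a long biroute must fellow travel a single closed leaf and makes the role of consistency transparent.
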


\begin{proof}
Fix a cusp $A$ of $\tau$. It is enough to show that there are finitely many large $\Lambda$-biroutes with an endpoint on $A$. 

Since $\Lambda$ is spiraling, the $\Lambda$-route $\rho_A$ from $A$ either has finite length or eventually periodically traverses a closed oriented route in $\tau$. 
If $\rho_A$ has finite length, it is clear there are only finitely many large $\Lambda$-biroutes ending at $A$. 

If $\rho_A$ eventually fellow travels a closed route $\rho$ in $\tau$, then there are at most finitely many large $\Lambda$-biroutes connecting $A$ to any cusp not lying along $\rho$.
If $B$ is any cusp on the closed route $\rho$ such that there is a large $\Lambda$-biroute connecting $A$ and $B$, then the maw vector field at $B$ points against the orientation of $\rho$. 
Since $\Lambda$ is consistent, there cannot a closed leaf of $\Lambda$ traversing $\rho$ in the opposite direction, so the $\Lambda$-route $\rho_B$ from such a cusp $B$ can traverse $\rho$ at most finitely many times before leaving $\rho$. 
In particular, there are only finitely many initial subroutes of $\rho_A$ which are initial subroutes of $\rho_B$ when reversed, so there are only finitely many large $\Lambda$-biroutes connecting $A$ and $B$.
\end{proof}

\begin{lemma} \label{lemma:maximalsplitting}
Let $\Lambda$ be a consistent spiraling lamination. 
Suppose $\tau$ is a train track fully carrying $\Lambda$. Then any sequence of $\Lambda$-splits $\tau=\tau_0 \to\tau_1\to\tau_2\to\dots$ must terminate in a spiraling train track. 
In particular, there exists a sequence of $\Lambda$-splits $\tau=\tau_0 \to \cdots \to \tau_n$ such that $\tau_n$ is spiraling.
\end{lemma}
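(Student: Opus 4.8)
\textbf{Proof proposal for \Cref{lemma:maximalsplitting}.}

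The plan is to exhibit a nonnegative integer-valued complexity that strictly decreases under every $\Lambda$-split, together with the fact that a train track fully carrying $\Lambda$ is spiraling precisely when it has no large branches (\Cref{lemma:nolargebranchmeansspiraling}). If I can find such a complexity, then any sequence of $\Lambda$-splits must terminate, and a terminal train track admits no further $\Lambda$-split; since $\tau$ fully carries $\Lambda$, every large branch of a carrier of $\Lambda$ can be $\Lambda$-split, so a terminal track has no large branch, hence is spiraling. The natural candidate for the complexity is the number of large $\Lambda$-biroutes, which is finite by \Cref{lem:finitebiroutes}. So the core of the argument is to show that performing a $\Lambda$-split on a branch $b$ of $\tau$ strictly decreases the number of large $\Lambda$-biroutes.

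First I would observe that a large branch $b$ of $\tau$ is itself a large $\Lambda$-biroute (of length one) connecting the two cusps that sit opposite its endpoints, since $\tau$ fully carries $\Lambda$ forces the $\Lambda$-route from each of those cusps to begin by traversing $b$. After the $\Lambda$-split on $b$, this particular biroute is destroyed: in the split track $\tau'$ the branch $b$ is gone and the local picture near the old large branch contains no large branch, so there is no large $\Lambda$-biroute supported there. This accounts for the strict decrease, provided no \emph{new} large $\Lambda$-biroutes are created. To control this I would work in the universal cover $\wt S$, as in the proof of \Cref{lemma:splitpreserveorder}: $\wt\tau$ is a tree, a $\Lambda$-split lifts to infinitely many disjoint $\wt\Lambda$-splits, and each large $\wt\Lambda$-biroute is a finite embedded arc in $\wt\tau$. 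A large $\wt\Lambda$-biroute of $\tau'$ either is disjoint from all the split regions — in which case it persists verbatim in $\wt\tau$ as a large $\wt\Lambda$-biroute of $\tau$ — or it runs through one or more split regions, and I must check that following it backward through the $\Lambda$-split operation (i.e. applying the folds that undo the split) yields a large $\wt\Lambda$-biroute of $\tau$ that is distinct from the destroyed one. The key local fact is that a $\Lambda$-split (including a collision) never turns a mixed or small branch into a large branch and never lengthens the set of cusps whose $\Lambda$-route passes through a given branch in a way that manufactures the two-sided ``maw points inward'' condition; this is exactly the kind of local check done for the order-preserving statement, and combined with the carrying maps $\wt\Lambda \to \wt\tau'$ and $\wt\Lambda \to \wt\tau$ being compatible with the fold, it shows the map on large $\Lambda$-biroutes induced by the fold $\tau' \to \tau$ is injective with the biroute at $b$ not in its image. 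Hence the count strictly drops.

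The main obstacle I anticipate is the bookkeeping around collisions and around biroutes that traverse the split region multiple times (possible since the ambient train route of a closed leaf of $\Lambda$ can revisit $b$). For collisions, the switch identification used elsewhere in the paper breaks down, so I would handle this case by comparing complementary regions directly: a collision on $b$ merges the structure near $A$ and $B$ and strictly fewer large $\Lambda$-biroutes meet the affected cusps afterward, using that $\Lambda$ is fully carried to pin down which route each cusp follows. For multiply-traversing biroutes, the tree structure of $\wt\tau$ is what saves the day — in the cover each traversal of $b$ happens at a different lift, the splits there are disjoint, and an embedded arc in $\wt\tau'$ maps to an embedded arc in $\wt\tau$ under the fold — so no cancellation or self-overlap can occur. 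Once the strict-decrease lemma is in hand, the ``in particular'' clause is immediate: start splitting and the process halts at a spiraling track after at most $\#\{\text{large }\Lambda\text{-biroutes of }\tau\}$ steps.
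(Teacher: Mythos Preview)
Your proposal is correct and follows essentially the same approach as the paper: use the number of large $\Lambda$-biroutes (finite by \Cref{lem:finitebiroutes}) as a complexity, observe that the large branch $b$ being split is itself a large $\Lambda$-biroute that disappears, and argue that no new biroutes appear, so the count drops by exactly one. The paper is more terse than your outline---rather than passing to the universal cover and worrying separately about collisions and multiple traversals, it simply asserts (with a picture) a natural bijection $\sigma\colon BR(\tau)\setminus\{b\}\to BR(\tau')$; your anticipated obstacles are real but dissolve once one writes down this bijection explicitly at the level of train routes.
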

\begin{proof}
Let $BR(\tau)$ be the set of large $\Lambda$-biroutes of $\tau$. By \Cref{lem:finitebiroutes}, $\# BR(\tau)<\infty$. 
It is clear that $BR(\tau)$ is nonempty if and only if $\tau$ has large branches. Let $b$ be a large branch of $\tau$, and $\tau'$ be the track obtained by performing a $\Lambda$-split on $b$. Note that $b$ itself is an element of $BR(\tau)$, and there is a natural bijection 
\[
\sigma: (BR(\tau)-\{b\})\to BR(\tau').
\]
(See \Cref{fig:biroutesplit} for a visual description of $\sigma$ and $\sigma^{-1}$).
\begin{figure}
    \centering
    \fontsize{8pt}{8pt}\selectfont
    \resizebox{!}{1.5in}{
\begingroup%
  \makeatletter%
  \providecommand\color[2][]{%
    \errmessage{(Inkscape) Color is used for the text in Inkscape, but the package 'color.sty' is not loaded}%
    \renewcommand\color[2][]{}%
  }%
  \providecommand\transparent[1]{%
    \errmessage{(Inkscape) Transparency is used (non-zero) for the text in Inkscape, but the package 'transparent.sty' is not loaded}%
    \renewcommand\transparent[1]{}%
  }%
  \providecommand\rotatebox[2]{#2}%
  \newcommand*\fsize{\dimexpr\f@size pt\relax}%
  \newcommand*\lineheight[1]{\fontsize{\fsize}{#1\fsize}\selectfont}%
  \ifx\svgwidth\undefined%
    \setlength{\unitlength}{265.06650346bp}%
    \ifx\svgscale\undefined%
      \relax%
    \else%
      \setlength{\unitlength}{\unitlength * \real{\svgscale}}%
    \fi%
  \else%
    \setlength{\unitlength}{\svgwidth}%
  \fi%
  \global\let\svgwidth\undefined%
  \global\let\svgscale\undefined%
  \makeatother%
  \begin{picture}(1,0.37143848)%
    \lineheight{1}%
    \setlength\tabcolsep{0pt}%
    \put(0,0){\includegraphics[width=\unitlength,page=1]{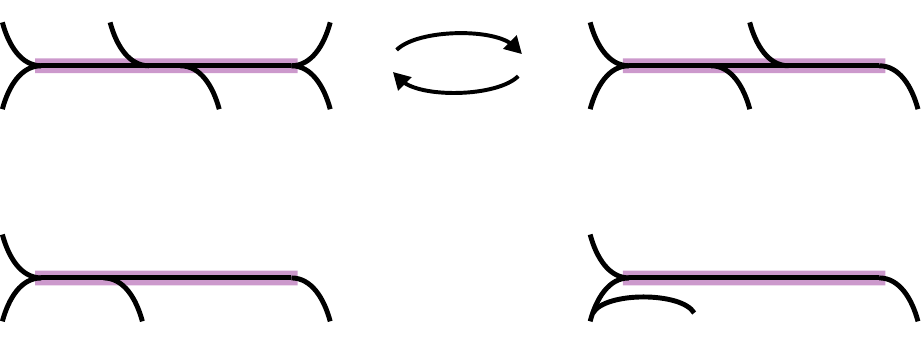}}%
    \put(0.47798888,0.34821318){\color[rgb]{0,0,0}\makebox(0,0)[lt]{\lineheight{1.25}\smash{\begin{tabular}[t]{l}$\sigma$\end{tabular}}}}%
    \put(0.47172183,0.23825112){\color[rgb]{0,0,0}\makebox(0,0)[lt]{\lineheight{1.25}\smash{\begin{tabular}[t]{l}$\sigma^{-1}$\end{tabular}}}}%
    \put(0,0){\includegraphics[width=\unitlength,page=2]{fig_biroutesplit.pdf}}%
    \put(0.47798888,0.11619571){\color[rgb]{0,0,0}\makebox(0,0)[lt]{\lineheight{1.25}\smash{\begin{tabular}[t]{l}$\sigma$\end{tabular}}}}%
    \put(0.47172183,0.0062337){\color[rgb]{0,0,0}\makebox(0,0)[lt]{\lineheight{1.25}\smash{\begin{tabular}[t]{l}$\sigma^{-1}$\end{tabular}}}}%
    \put(0,0){\includegraphics[width=\unitlength,page=3]{fig_biroutesplit.pdf}}%
  \end{picture}%
\endgroup%
}
    \caption{If $\tau'$ is obtained by $\Lambda$-splitting a branch $b$, then there is a bijection $\sigma\colon BR(\tau)-\{b\}\to BR(\tau)$. On the top and bottom we see $\Lambda$-biroutes highlighted in purple before and after a $\Lambda$-split.}
    \label{fig:biroutesplit}
\end{figure}
Hence a $\Lambda$-split decreases the number of large $\Lambda$-biroutes by one, so any sequence of $\Lambda$-splits must terminate in a train track with no large $\Lambda$-biroutes, i.e. one with no large branches.
\end{proof}

A splitting sequence $\tau_0\to\cdots \to \tau_n$ where $\tau_n$ has no large branches, as in \Cref{lemma:maximalsplitting}, is said to be a \textbf{maximal splitting sequence} for $\tau_0$. The train track $\tau_n$ is a \textbf{maximal splitting} of $\tau_0$.
It turns out that a maximal splitting is unique up to isotopy, and a maximal splitting sequence is unique up to a natural equivalence. 

If $b_1$ and $b_2$ are disjoint large branches of $\tau$, then the moves splitting $b_1$ and $b_2$ commute with each other, and we call the operation of swapping their order a \textbf{commutation}.

\begin{lemma}
 \label{lemma:differbycommutations}
If $\tau$ is a train track fully carrying a consistent spiraling lamination $\Lambda$, then any two maximal splitting sequences for $\tau$ are related by commutations. In particular any two maximal splitting sequences have the same length and end in the same train track.
\end{lemma}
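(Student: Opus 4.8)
The plan is to run a standard diamond/confluence argument on the poset of maximal splitting sequences, using \Cref{lemma:maximalsplitting} for termination and a local commutation lemma for the diamonds. First I would set up the combinatorial framework: fix $\tau$ and consider the directed graph $\mathcal{G}$ whose vertices are (isotopy classes of) train tracks reachable from $\tau$ by a finite sequence of $\Lambda$-splits, with an edge $\sigma \to \sigma'$ whenever $\sigma'$ is obtained from $\sigma$ by a single $\Lambda$-split. By \Cref{lemma:maximalsplitting}, every directed path in $\mathcal{G}$ is finite, and every maximal (non-extendable) path ends at a train track with no large branches, i.e.\ at a \emph{maximal splitting} of $\tau$. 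So the lemma reduces to showing: (a) $\mathcal{G}$ has a unique sink, and (b) any two maximal paths from $\tau$ to that sink are related by commutations.

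The key local input is a \textbf{local confluence (one-step diamond) claim}: if $\tau$ has two distinct large branches $b_1, b_2$ and $\tau \to \sigma_1$ is the $\Lambda$-split along $b_1$ while $\tau \to \sigma_2$ is the $\Lambda$-split along $b_2$, then there is a train track $\rho$ and $\Lambda$-splits $\sigma_1 \to \rho$, $\sigma_2 \to \rho$ completing the square, and moreover the two length-two paths $\tau\to\sigma_1\to\rho$ and $\tau\to\sigma_2\to\rho$ differ by a single commutation. If $b_1$ and $b_2$ have disjoint neighborhoods in a standard neighborhood of $\tau$, this is immediate — the two split moves are supported in disjoint regions, hence literally commute, and $\rho$ is obtained by performing both. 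The remaining case is when $b_1$ and $b_2$ share an endpoint switch; here one checks by a finite local picture analysis that after splitting $b_1$ the branch $b_2$ is still large (its $\Lambda$-split is still forced, since $\tau$ still fully carries $\Lambda$ — splitting preserves full carrying), splitting it yields the same $\rho$ as in the other order, and since the two supports become disjoint after the first move this still counts as a commutation in the sense defined just before the lemma. (One should also record the degenerate subcase where $b_1 = b_2$ is impossible by hypothesis, and the subcase where a split turns a formerly large branch into a non-large one, which only helps termination.)

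Granting the local diamond, I would finish with Newman's Lemma: a terminating, locally confluent rewriting system is confluent, hence has unique normal forms — this gives uniqueness of the maximal splitting up to isotopy, proving (a). For (b), the standard argument is an induction on the length of the longest path out of $\tau$ (finite by \Cref{lemma:maximalsplitting}): given two maximal splitting sequences starting $\tau\to\sigma_1\to\cdots$ and $\tau\to\sigma_2\to\cdots$, if $\sigma_1=\sigma_2$ apply the inductive hypothesis to the shorter tail; otherwise use the local diamond to get $\rho$ with $\sigma_i\to\rho$, complete $\rho$ to a maximal splitting sequence $P$, and observe that (by the inductive hypothesis applied at $\sigma_1$ and at $\sigma_2$, whose maximal path lengths have dropped) the first given sequence is related by commutations to $(\tau\to\sigma_1\to P)$, which is related by one commutation to $(\tau\to\sigma_2\to P)$, which is related by commutations to the second given sequence. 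Equality of lengths then follows since a commutation preserves length. The one genuine obstacle is the shared-endpoint case of the local diamond: one must verify carefully, from the definition of a $\Lambda$-split on a branch incident to a switch that is also an endpoint of another large branch, that the forced split directions are mutually compatible and that the square literally closes up to isotopy — everything else is formal rewriting-theory bookkeeping.
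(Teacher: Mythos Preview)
Your confluence/Newman's-lemma framework is sound, but you have overlooked the elementary observation that collapses the whole argument: \emph{two distinct large branches of a trivalent train track are automatically disjoint}. Indeed, at each endpoint of a large branch $b$ the maw vector field points into $b$, which by definition means $b$ is the unique branch on the one-sheeted side of that switch; so no other large branch can meet that switch. Thus your ``shared-endpoint case'' is vacuous, and the local diamond is always the trivially disjoint one. (Your handling of that case is also not quite right as stated: had two large branches genuinely shared a switch, swapping their splits would not be a commutation in the sense defined just before the lemma, which requires disjointness.)

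With this observation the paper's proof is much shorter than the full rewriting-theory setup. One inducts on the maximum length of the two sequences. Let $b$ be the branch split first in the first sequence. Since every split in the second sequence before $b$ is split is on a large branch disjoint from $b$, the branch $b$ remains large throughout those steps; since the second sequence terminates at a track with no large branches, $b$ must eventually be split in it. Now commute that split of $b$ past all the earlier (disjoint) splits to the front of the second sequence, and apply the inductive hypothesis to the two tails starting at $\tau_1$. Your approach arrives at the same conclusion but with more scaffolding than necessary.
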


\begin{proof}
Let the two splitting sequences be $\tau=\tau_0 \to \cdots \to \tau_k$ and $\tau=\tau'_0 \to\cdots\to \tau'_{k'}$. We induct on $\max\{k, k'\}$. When $\max\{k,k'\}=0$, the statement is clear.

Suppose branch $b$ is split in $\tau_0 \to \tau_1$. Locate the term in the other splitting sequence where $b$ is split. The terms before that are performed on branches disjoint from $b$, so we can move the splitting of $b$ to the beginning of the sequence via commutations, then apply the inductive hypothesis to $\tau_1$. 
\end{proof}

\begin{definition}[$\Lambda$-compatible]\label{def:lambdacomp}
Let $\Lambda$ be a consistent spiraling lamination, and let $\tau$ be an efficient train track carrying $\Lambda$. Each complementary region $C$ of $\Lambda$ is a surface with boundary, possibly noncompact. Its boundary $\partial C$ can be decomposed into $\partial_v C$, which lies along $\partial S$, and $\partial_h C$, which lies along leaves of $\Lambda$. We say that a component of $\partial_v C$ is \textbf{associated} to a stop $v$ of $\tau$ if both of its endpoints are mapped by the carrying map into $v$. Note that not every component of $\partial_v C$ need be associated to a stop; see \Cref{fig:verticalcomponent}.

If $\tau'$ is another efficient train track carrying $\Lambda$, then we say $\tau$ and $\tau'$ are \textbf{$\Lambda$-compatible} if the set of stops of $\tau$ equals the set of stops of $\tau'$, and for each complementary region of $\Lambda$, a component of $\partial_v C$ is associated to a stop of $\tau$ if and only if it is associated to the same stop of $\tau'$.
\end{definition}

Note that if $\del S=\varnothing$, then the $\Lambda$-compatibility condition is vacuous.

\begin{definition}[$\Lambda$-identification]
Suppose $\tau_1$ and $\tau_2$ are $\Lambda$-compatible. Each complementary region $T$ of $\tau_i$ corresponds to a complementary region $C$ of $\Lambda$. Since $\tau_i$ is efficient, $C$ either has negative index, or some component of $\partial_v C$ is not associated to any stop. Each persistent cusp of $\tau_i$ in $T$ corresponds to an end of $C$ or a component of $\partial_v C$ that is associated to a stop of $\tau_i$. Conversely, if $C$ is a complementary region of $\Lambda$ of negative index or has some component of $\partial_v C$ not associated to a stop of $\tau_i$, then each end of $C$ and each component of $\partial_v C$ that is associated to a stop of $\tau_i$ corresponds to some persistent cusp of $\tau_i$. From this, we get a natural identification between $\pers(\tau_1)$ and $\pers(\tau_2)$, which we call the \textbf{$\Lambda$-identification}.
\end{definition}

\begin{figure}
    \centering
    \fontsize{12pt}{12pt}\selectfont
    \resizebox{!}{4cm}{
\begingroup%
  \makeatletter%
  \providecommand\color[2][]{%
    \errmessage{(Inkscape) Color is used for the text in Inkscape, but the package 'color.sty' is not loaded}%
    \renewcommand\color[2][]{}%
  }%
  \providecommand\transparent[1]{%
    \errmessage{(Inkscape) Transparency is used (non-zero) for the text in Inkscape, but the package 'transparent.sty' is not loaded}%
    \renewcommand\transparent[1]{}%
  }%
  \providecommand\rotatebox[2]{#2}%
  \newcommand*\fsize{\dimexpr\f@size pt\relax}%
  \newcommand*\lineheight[1]{\fontsize{\fsize}{#1\fsize}\selectfont}%
  \ifx\svgwidth\undefined%
    \setlength{\unitlength}{146.47959606bp}%
    \ifx\svgscale\undefined%
      \relax%
    \else%
      \setlength{\unitlength}{\unitlength * \real{\svgscale}}%
    \fi%
  \else%
    \setlength{\unitlength}{\svgwidth}%
  \fi%
  \global\let\svgwidth\undefined%
  \global\let\svgscale\undefined%
  \makeatother%
  \begin{picture}(1,0.79521805)%
    \lineheight{1}%
    \setlength\tabcolsep{0pt}%
    \put(0,0){\includegraphics[width=\unitlength,page=1]{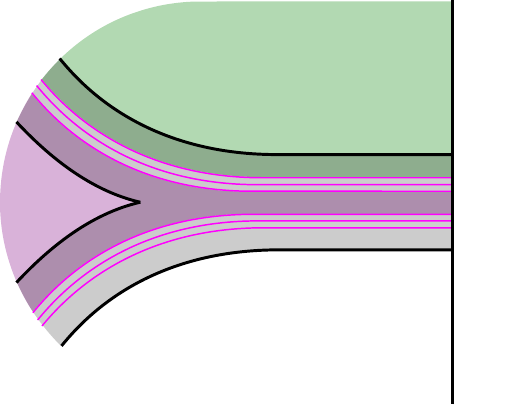}}%
    \put(0.04812412,0.37121263){\color[rgb]{0,0,0}\makebox(0,0)[lt]{\lineheight{1.25}\smash{\begin{tabular}[t]{l}$A$\end{tabular}}}}%
    \put(0.39929842,0.62669241){\color[rgb]{0,0,0}\makebox(0,0)[lt]{\lineheight{1.25}\smash{\begin{tabular}[t]{l}$B$\end{tabular}}}}%
    \put(0.91954403,0.17227757){\color[rgb]{0,0,0}\makebox(0,0)[lt]{\lineheight{1.25}\smash{\begin{tabular}[t]{l}$\partial S$\end{tabular}}}}%
  \end{picture}%
\endgroup%
}
    \caption{We have drawn a standard neighborhood (gray) of the pink lamination, and shaded two complementary regions $A$ and $B$. There is a component of $\del_vA$ which is associated to the stop obtained by collapsing the standard neighborhood, while the component of $\del_vB$ shown lying on $\del S$ is not associated to a stop.}
    \label{fig:verticalcomponent}
\end{figure}

\begin{lemma}\label{lem:diffbyshifts}
Let $\Lambda $ be a consistent spiraling lamination on $S$ and let $\tau_1$ and $\tau_2$ be two efficient spiraling train tracks fully carrying $\Lambda$. 
\begin{enumerate}[label=(\alph*)]
\item If $\tau_1$ and $\tau_2$ are $\Lambda$-compatible, then up to isotopy they are related by a collection of shifts.

\item If the $\Lambda$-identification between their cusps preserves orders, then up to isotopy $\tau_1$ and $\tau_2$ are related by Dehn twists around curves isotopic to closed leaves of $\Lambda$. The effect of these Dehn twists can be achieved by shifts along the circular components of $\sink(\tau_i)$.
\end{enumerate}
\end{lemma}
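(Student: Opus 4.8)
The plan is to prove (a) first, by a normalization-then-induction argument, and then to deduce (b) by tracking how shifts act on the poset of persistent cusps. Two correspondences recorded earlier are used throughout: the bijection (noted just before \Cref{lemma:spiralttexists}) between parallel classes of closed leaves of $\Lambda$ and the circular components of $\sink(\tau_i)$, and the correspondence between complementary regions of $\tau_i$ and complementary regions of $\Lambda$, refined to the $\Lambda$-identification $\pers(\tau_1)\cong\pers(\tau_2)$. Note that, unlike in \Cref{lemma:differbycommutations}, the ambiguity here is genuine and is not generated by splits, so the argument must produce shifts directly.

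For (a), first normalize. I would isotope $\tau_1$ and $\tau_2$ so that, for each parallel class $[\lambda_i]$, the two corresponding circular sink components agree as a single embedded curve $C_i$ (the $C_i$ are disjoint, so these isotopies can be performed simultaneously), and so that the two tracks agree near each complementary region of $\Lambda$ of negative index. This is exactly where $\Lambda$-compatibility is used: it guarantees that $\tau_1$ and $\tau_2$ associate the same vertical boundary arcs of each complementary region of $\Lambda$ to the same stops, so there is no obstruction to matching the complementary regions up. After this, $\tau_1$ and $\tau_2$ may be taken to lie in a common standard neighborhood $N$ of $\Lambda$ and to differ only in how branches are routed through the half-open ``pocket'' pieces of $N\cut\Lambda$.

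Next I would set up a correspondence between branches. Decomposing $\Lambda$ into closed leaves and pockets via \Cref{prop:spirallam} and subdividing pockets as in the proof of \Cref{lemma:spiralttexists}, each branch of $\tau_i$ not lying on a sink circle is the carrier of a single sub-pocket, and the $\Lambda$-identification matches these branches between $\tau_1$ and $\tau_2$. The key claim is that any disagreement between corresponding branches is local. Passing to the universal cover $\wt S$ (where each $\wt\tau_i$ is a tree and every leaf of $\wt\Lambda$ embeds as a path, as in the proof of \Cref{lemma:splitpreserveorder}), a first place where $\wt\tau_1$ and $\wt\tau_2$ route a corresponding pair of branches differently is either a mixed branch at which two sub-pockets are stacked in opposite orders, or an attachment of a branch to some $C_i$ on the wrong side of an adjacent attachment; in either case a single shift of $\tau_1$ deletes that discrepancy. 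Since $\Lambda$ is spiraling, the amount of spiraling of leaves onto each $\lambda_i$ is fixed, so $\tau_1$ and $\tau_2$ cannot differ by arbitrarily many Dehn twists about $C_i$; hence the number of discrepancies is finite and an induction on it proves (a). The hard part will be this branch-correspondence step: showing precisely, from $\Lambda$-compatibility and the $\Lambda$-identification, that every disagreement is a single-switch phenomenon removed by one shift, and --- most delicately --- that removing one such disagreement strictly decreases the total count without creating new ones, so that the induction is well-founded; bounding the Dehn-twisting difference via the spiraling hypothesis is the subtle input that keeps the count finite.

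For (b), apply (a) to obtain a finite sequence of shifts $\tau_1=\sigma_0\to\cdots\to\sigma_m=\tau_2$ of minimal length. A shift either leaves the poset $\pers(\sigma_j)$ with its order $\preceq$ and the cyclic orders on $\graft(\lambda_i)$ unchanged, or changes this data by one elementary swap. Because the $\Lambda$-identification between $\pers(\tau_1)$ and $\pers(\tau_2)$ preserves orders, and a minimal sequence cannot make a change only to undo it, no shift of the second type occurs; so each $\sigma_j\to\sigma_{j+1}$ slides a branch past an adjacent attachment along a circular sink component $C_i$. Order preservation further forces these shifts, grouped by the $C_i$ they occur on, to be coherent --- a non-coherent collection along $C_i$ would alter the cyclic order on $\graft(\lambda_i)$ --- so along each $C_i$ they compose to an integer power of the Dehn twist about a curve isotopic to $\lambda_i$, which is the assertion of (b).
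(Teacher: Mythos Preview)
Your approach to (a) is in the right spirit but remains a sketch at the key step you yourself flag: you never define ``discrepancy'' precisely or prove that a single shift strictly decreases the count without creating new ones. The paper sidesteps this by a structural decomposition rather than an induction: it takes a neighborhood $N$ of the sink (a union of annuli and bigon disks), observes that in $S'=S\cut N$ each $\tau_i\cap S'$ is a disjoint union of arcs with no parallel pairs (by efficiency), hence $\tau_1$ and $\tau_2$ can be isotoped to agree on $S'$, and then analyzes each component of $N$ separately. In a disk component both tracks can be shifted to a canonical linearly ordered form; in an annulus component both can be shifted to have all cusps on the sink circle, after which they visibly differ by Dehn twists realizable as shifts. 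This avoids any well-foundedness issue.

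Your argument for (b) has genuine gaps. First, the claim that in a minimal shift sequence no shift changes the order data is unjustified: you would need a monotone complexity (like an inversion count) that every order-changing shift increases, and you do not supply one. Second, even granting that every shift in the sequence preserves order, your conclusion that each such shift ``slides a branch past an adjacent attachment along a circular sink component'' is false --- shifts of divergent neighbors (see \Cref{cor:anyorder}) preserve the poset and need not lie anywhere near a circular sink. The paper instead argues (b) directly from the decomposition in (a): in each disk component of $N$ the poset $\preceq$ determines the track up to isotopy rel stops, and in each annulus component the graft-point data pins down the tracks up to wrapping number around the sink circle, i.e.\ up to Dehn twists about a curve parallel to the closed leaf.
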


\begin{figure}
     \centering
     \fontsize{10pt}{10pt}\selectfont
     \resizebox{!}{1.5in}{
\begingroup%
  \makeatletter%
  \providecommand\color[2][]{%
    \errmessage{(Inkscape) Color is used for the text in Inkscape, but the package 'color.sty' is not loaded}%
    \renewcommand\color[2][]{}%
  }%
  \providecommand\transparent[1]{%
    \errmessage{(Inkscape) Transparency is used (non-zero) for the text in Inkscape, but the package 'transparent.sty' is not loaded}%
    \renewcommand\transparent[1]{}%
  }%
  \providecommand\rotatebox[2]{#2}%
  \newcommand*\fsize{\dimexpr\f@size pt\relax}%
  \newcommand*\lineheight[1]{\fontsize{\fsize}{#1\fsize}\selectfont}%
  \ifx\svgwidth\undefined%
    \setlength{\unitlength}{224.26007568bp}%
    \ifx\svgscale\undefined%
      \relax%
    \else%
      \setlength{\unitlength}{\unitlength * \real{\svgscale}}%
    \fi%
  \else%
    \setlength{\unitlength}{\svgwidth}%
  \fi%
  \global\let\svgwidth\undefined%
  \global\let\svgscale\undefined%
  \makeatother%
  \begin{picture}(1,0.48294546)%
    \lineheight{1}%
    \setlength\tabcolsep{0pt}%
    \put(0,0){\includegraphics[width=\unitlength,page=1]{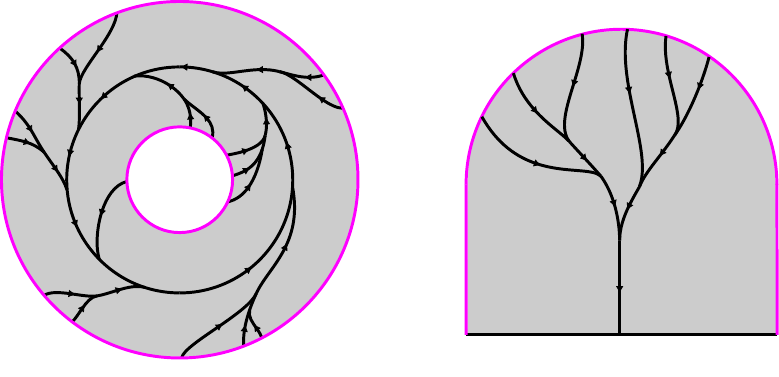}}%
    \put(0.68258404,0.00897047){\color[rgb]{0,0,0}\makebox(0,0)[lt]{\lineheight{1.25}\smash{\begin{tabular}[t]{l}$\partial S$\end{tabular}}}}%
    \put(0,0){\includegraphics[width=\unitlength,page=2]{fig_spiralingpieces.pdf}}%
  \end{picture}%
\endgroup%
}
     \caption{Picture from the proof of \Cref{lem:diffbyshifts}.}
     \label{fig:spiralingpieces}
\end{figure}
\begin{proof}
Since $\tau_1$ and $\tau_2$ are $\Lambda$-compatible spiraling train tracks carrying $\Lambda$, there is a natural identification of their sinks (recall that the sink of $\tau_i$ is a collection of circles and stops).

Let $N$ be the closure of a small neighborhood of the sink of $\tau_1$ and $\tau_2$. Thus $N$ is a collection of annuli and disks, where each disk has two corners and half of its boundary on $\del S$ and half of its boundary in $\intr(S)$. See \Cref{fig:spiralingpieces}. Let $S'= S\cut N$.

By initial isotopies, we can arrange so that all switches of $\tau_1$ and $\tau_2$ lie in $N$.

We observe that $S'$ is a surface with boundary, and that $N\cap S'$ is a collection of intervals and circles contained in $\del S'$. Each leaf of the lamination $\Lambda\cap S'$ is a properly embedded interval, both of whose endpoints lie in a component of $N\cap S'$. 

Let $i=1$ or $2$. Each component of the train track $\tau_i \cap S'$ is also a properly embedded segment with endpoints in $N\cap S'$. Note that $\tau_i \cap S'$ has no components which are isotopic rel $\del N$. If such components existed, there would be a rectangle $R$ with cyclically ordered edges $a,b,c,d$ such that $a, c\subset \tau_i$ and $b,d\subset S'\cap N$. See \Cref{fig:norects}. 
 \begin{figure}
     \centering
     \resizebox{!}{1.5in}{
\begingroup%
  \makeatletter%
  \providecommand\color[2][]{%
    \errmessage{(Inkscape) Color is used for the text in Inkscape, but the package 'color.sty' is not loaded}%
    \renewcommand\color[2][]{}%
  }%
  \providecommand\transparent[1]{%
    \errmessage{(Inkscape) Transparency is used (non-zero) for the text in Inkscape, but the package 'transparent.sty' is not loaded}%
    \renewcommand\transparent[1]{}%
  }%
  \providecommand\rotatebox[2]{#2}%
  \newcommand*\fsize{\dimexpr\f@size pt\relax}%
  \newcommand*\lineheight[1]{\fontsize{\fsize}{#1\fsize}\selectfont}%
  \ifx\svgwidth\undefined%
    \setlength{\unitlength}{165.67880004bp}%
    \ifx\svgscale\undefined%
      \relax%
    \else%
      \setlength{\unitlength}{\unitlength * \real{\svgscale}}%
    \fi%
  \else%
    \setlength{\unitlength}{\svgwidth}%
  \fi%
  \global\let\svgwidth\undefined%
  \global\let\svgscale\undefined%
  \makeatother%
  \begin{picture}(1,0.58615147)%
    \lineheight{1}%
    \setlength\tabcolsep{0pt}%
    \put(0,0){\includegraphics[width=\unitlength,page=1]{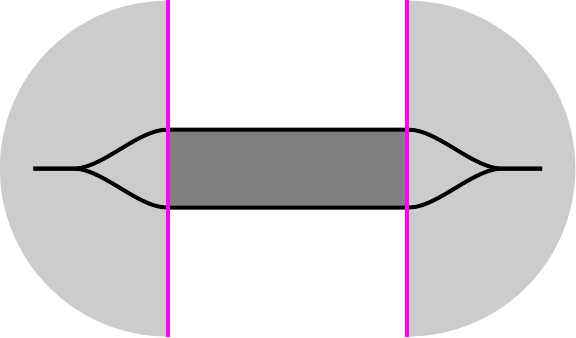}}%
    \put(0.13456305,0.44760534){\color[rgb]{0,0,0}\makebox(0,0)[lt]{\lineheight{1.25}\smash{\begin{tabular}[t]{l}$N$\end{tabular}}}}%
    \put(0.45229584,0.50259531){\color[rgb]{0,0,0}\makebox(0,0)[lt]{\lineheight{1.25}\smash{\begin{tabular}[t]{l}$S'$\end{tabular}}}}%
    \put(0.80755458,0.44759946){\color[rgb]{0,0,0}\makebox(0,0)[lt]{\lineheight{1.25}\smash{\begin{tabular}[t]{l}$N$\end{tabular}}}}%
    \put(0.4825065,0.38883029){\color[rgb]{0,0,0}\makebox(0,0)[lt]{\lineheight{1.25}\smash{\begin{tabular}[t]{l}$a$\end{tabular}}}}%
    \put(0.48250729,0.16527917){\color[rgb]{0,0,0}\makebox(0,0)[lt]{\lineheight{1.25}\smash{\begin{tabular}[t]{l}$c$\end{tabular}}}}%
    \put(0.23953834,0.27431666){\color[rgb]{0,0,0}\makebox(0,0)[lt]{\lineheight{1.25}\smash{\begin{tabular}[t]{l}$d$\end{tabular}}}}%
    \put(0.71893788,0.27431725){\color[rgb]{0,0,0}\makebox(0,0)[lt]{\lineheight{1.25}\smash{\begin{tabular}[t]{l}$b$\end{tabular}}}}%
  \end{picture}%
\endgroup%
}
     \caption{Components of $\tau_i\cap S'$ which are isotopic in $S'$ rel $S'\cap N$ force the existence of cusped bigon complementary components of $\tau_i$ for $i=1,2$.}
     \label{fig:norects}
 \end{figure}
Since $\Lambda$ is spiraling, this would give rise to a cusped bigon complementary region of $\tau_i$, contradicting efficiency.
It follows that the branches of $\tau_i \cap S'$ are in one-to-one correspondence with the isotopy classes rel $S'\cap N$ of the leaves of $\Lambda\cap S'$. 
Fix such an isotopy class, $P$. The leaves of $P$ lie in a closed rectangle $R_P$ with two opposite sides on $S'\cap N$, and two opposite sides formed by leaves in $P$. Up to isotopy we can assume that the corresponding component of $\tau_i \cap S'$ lies inside $R_P$.
Hence we can perform an isotopy of $S$ supported in a neighborhood of $\Lambda$ so that $\tau_1\cap S'=\tau_2\cap S'$.

Now we turn our attention to $\tau_1\cap N$ and $\tau_2\cap N$.

Let $C$ be a disk component of $N$. Then $\tau_1\cap C$ and $\tau_2\cap C$ are both tracks with no large branches whose sets of stops are equal. Their cusps are also identified via the  $\Lambda$-identification; we will call these cusps $c_1,\dots, c_n$. There is a distinguished stop $v\in \del C$ such that the $\Lambda$-route from each $c_j$ ends at $v$. It is not hard to show that $\tau_1\cap C$ and $\tau_2\cap C$ are related by shifts and isotopy rel boundary; we will do so by showing that each $\tau_i$ can be shifted to obtain the same track.

Each $c_j$ is associated to a complementary region $R_j$ of $\Lambda\cap C$ that has a vertical component associated to the stop $v$. If we choose an orientation of $\del C$, we may assume up to relabeling that the vertical components of $R_1,\dots, R_n$ move from left to right in a small neighborhood of $v$. By performing shifts on each $\tau_i\cap C$, we can arrange so that $c_1\preceq c_2\preceq\cdots\preceq c_n$. See \Cref{fig:forexample}.
\begin{figure}
    \centering
    \fontsize{8pt}{8pt}\selectfont
    \resizebox{!}{1.5in}{\import{basecase-fig}{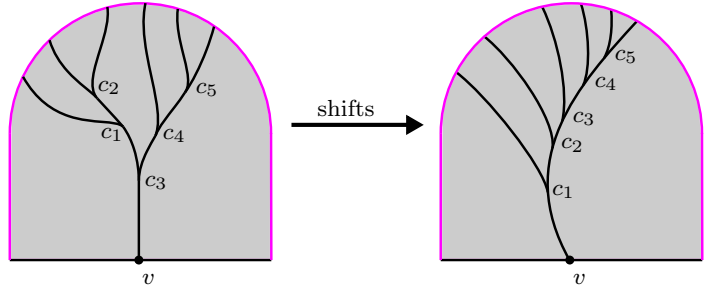}}
    \caption{When $C$ is a disk, we can shift both $\tau_1\cap C$ and $\tau_2\cap C$ so that, for example,  $c_1\preceq\cdots\preceq c_n$. Hence the two tracks differ by shifts and isotopy rel stops.}
    \label{fig:forexample}
\end{figure}
We conclude that $\tau_1\cap C$ and $\tau_2\cap C$ are related by shifts.

If the $\Lambda$-identification between the cusps of $\tau_1$ and $\tau_2$ preserves orders, then an induction on the number of cusps of $\tau_1\cap C$ and $\tau_2\cap C$ shows that the two tracks are isotopic rel stops; that is, no shifts are necessary above.

Next, suppose that $A$ is an annulus component of $N$ containing a circular sink component $\ell$. For $i=1,2$ we perform shifts on $\tau_i$ to obtain a track $\tau_i'$ whose only cusps lie on $\ell$. We then perform shifts and isotopy on $\tau_1'$ and $\tau_2'$, obtaining new tracks $\tau_1''$ and $\tau_2''$, so that the circular order on the cusps of $\tau_1''$ and $\tau_2''$ is the same. A further isotopy can arrange so that the sinks and corresponding cusps of $\tau_1''$ and $\tau_2''$ agree as points in $A$. At this point $\tau_1''$ and $\tau_2''$ differ only by the application of some number of Dehn twists around curves parallel to $\ell$. The effect of applying these Dehn twists can be achieved by shifts and isotopy. We have completed the proof of (a).

Next we suppose that the $\Lambda$-identification between the cusps of $\tau_1$ and $\tau_2$ preserves orders. We assume that we have already isotoped $\tau_1$ and $\tau_2$ so that all their cusps lie in $N$ and so that they agree outside $N$.
In a disk component $C$ of $N$, we have already observed in the proof of (a) that the partial order $\preceq$ determines each $\tau_i\cap C$ up to isotopy rel stops. Hence we need consider only the annulus components of $N$.

Now let $A$ be an annulus component of $N$ containing a circular sink $\ell$. Up to isotopy we can assume that $\tau_1\cap A$ and $\tau_2\cap A$ agree on the circular sink $\ell$. Since the $\Lambda$-identification preserves orders and the graft points are minimal in the partial order, $\tau_1$ and $\tau_2$ have the same collection of graft points along $\ell$. Let $v$ be one of these graft points, and let $\tau_i^v$ be the component of $(\tau_i\cap A)\cut v$ not containing $\ell$. Since the $\Lambda$-identification preserves orders and $\tau_1^v$, $\tau_2^v$ have the same sets of stops, we see that (up to isotopy rel stops) they can differ only in how many times they wrap around $A$. Applying this analysis to each cusp along $\ell$ gives that $\tau_1\cap A$ and $\tau_2\cap A$ con only differ by isotopy and by twisting around a curve parallel to $\ell$. The twisting can clearly be achieved by performing shift moves involving branches incident to $\ell$.
\end{proof}

When restricted to a subclass of spiraling laminations called $I$-laminations, this lemma has the important \Cref{cor:anyorder}, which will play a role later. We first define $I$-laminations.

\begin{definition}[$I$-laminations]
Let $K$ be a compact surface with boundary. An \textbf{$I$-lamination} is a lamination in which every leaf is a compact, properly embedded arc in $K$. 
\end{definition}

Notice that every $I$-lamination is spiraling: condition (2) in \Cref{defn:spirallam} concerning noncompact leaves is vacuous. Furthermore, any $I$-lamination is consistent since it does not carry any annuli.

Let $\tau$ be a train track fully carrying an $I$-lamination $\lambda$, and suppose $a$ and $b$ are cusps of $\tau$ at two ends of a mixed branch $m$ as shown in \Cref{fig:incomparableshift}. We say that $a$ and $b$ are \textbf{divergent neighbors} if either $a$ or $b$ is nonpersistent, or if both $a$ and $b$ are persistent and $\rho_a\ne m*\rho_b$, where as in \Cref{subsec:spiralfacts} $\rho_c$ denotes the maximal $\Lambda$-route from a cusp $c$. If $a$ and $b$ are divergent neighbors then we call the shift along $m$ a \textbf{shift of divergent neighbors}.

\begin{corollary}\label{cor:anyorder}
Let $S$ be a compact surface, $\Lambda$ an $I$-lamination in $S$, and $\sigma$, $\tau$ two $\Lambda$-compatible efficient train tracks which fully carry $\Lambda$.

Let $\sigma'$ and $\tau'$ be maximal splittings of $\sigma$ and $\tau$ respectively. Then $\sigma'$ and $\tau'$ are isotopic rel stops if and only if the $\Lambda$-identification $\pers(\sigma)\to \pers(\tau)$ is an isomorphism of partially ordered sets.

Furthermore, if $\sigma$ and $\tau$ differ by a sequence of shifts of divergent neighbors, then $\sigma'$ and $\tau'$ are isotopic rel stops.
\end{corollary}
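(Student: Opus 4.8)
The plan is to deduce both assertions from \Cref{lem:diffbyshifts}(b), applied not to $\sigma,\tau$ directly but to their maximal splittings $\sigma',\tau'$, using \Cref{lemma:splitpreserveorder} to carry order information between the two pairs. First I would record the basic facts about $\sigma'$ and $\tau'$: by \Cref{lemma:maximalsplitting} they have no large branches, hence are spiraling train tracks (their sinks consist only of stops, since $\Lambda$ has no closed leaves); being obtained from $\sigma,\tau$ by $\Lambda$-splits they still fully carry $\Lambda$; and since a $\Lambda$-split is supported in the interior of $S$, away from the stops and from $\tau\cap\del S$, the tracks $\sigma'$ and $\tau'$ are $\Lambda$-compatible with $\sigma$ and $\tau$ respectively, hence with each other. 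One must also check (a routine verification I would relegate to a remark) that a $\Lambda$-split of an efficient train track fully carrying a consistent spiraling lamination is again efficient, so that $\sigma',\tau'$ are efficient. Since $\Lambda$ is an $I$-lamination it has no closed leaves, so the Dehn-twist indeterminacy appearing in \Cref{lem:diffbyshifts}(b) is empty, and that lemma reduces, for our pair $\sigma',\tau'$, to the statement: $\sigma'$ and $\tau'$ are isotopic rel stops if and only if the $\Lambda$-identification $\pers(\sigma')\to\pers(\tau')$ preserves orders.

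Next I would show that the $\Lambda$-identification $\pers(\sigma)\to\pers(\tau)$ preserves orders if and only if the $\Lambda$-identification $\pers(\sigma')\to\pers(\tau')$ does, which together with the previous paragraph gives the ``if and only if'' of the corollary. The point is that the $\Lambda$-identification is transitive and is compatible with $\Lambda$-splits: by construction it matches a persistent cusp with whichever end of a complementary region of $\Lambda$ (or $\del_v$-component associated to a stop) its maximal $\Lambda$-route limits to, and a $\Lambda$-split, being a bounded local modification, does not change this limiting feature; moreover the two cusps resolved in a collision have $\Lambda$-routes terminating at once in the interior, so they are non-persistent, and the identification of persistent cusps is defined throughout any maximal splitting sequence. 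Thus the $\Lambda$-identification $\pers(\sigma)\to\pers(\tau)$ is the composite of the natural identification $\pers(\sigma)\to\pers(\sigma')$, the $\Lambda$-identification $\pers(\sigma')\to\pers(\tau')$, and the inverse of the natural identification $\pers(\tau)\to\pers(\tau')$ coming from the splitting sequences; by \Cref{lemma:splitpreserveorder} the first and last are poset isomorphisms, so one is a poset isomorphism if and only if the other is.

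For the final sentence, by the ``if and only if'' just proved it suffices to show that a single shift of divergent neighbors induces an order isomorphism on persistent cusps (a shift, like a split, is supported away from $\del S$, so it preserves $\Lambda$-compatibility and the $\Lambda$-identification is available). So suppose $\tau$ is obtained from $\sigma$ by a shift along a mixed branch $m$ with cusps $a,b$ at its ends which are divergent neighbors. One first checks that this shift preserves persistence of cusps, inducing an identification $\pers(\sigma)\to\pers(\tau)$ that agrees with the $\Lambda$-identification; the substance is that it preserves the partial order. A shift acts coherently on all $\Lambda$-routes, so it preserves every containment relation $\rho_{c_2}=\gamma*\rho_{c_1}$ except possibly when $\{c_1,c_2\}=\{a,b\}$, because the routes from $a$ and $b$ emanate from the two switches the shift moves past each other and stand in opposite relations to $m$. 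And since $\Lambda$ is an $I$-lamination the reduced $\Lambda$-route of a persistent cusp is its maximal $\Lambda$-route, so (in the local configuration where, say, it is $a$ whose route runs into $m$) $a$ and $b$ are $\preceq$-comparable precisely when $\rho_a=m*\rho_b$ --- which is exactly the configuration the divergent-neighbors condition forbids, its only other clause being that one of $a,b$ is not persistent and hence not in the poset. Therefore the shift leaves the poset $\pers$ unchanged, the $\Lambda$-identification $\pers(\sigma)\to\pers(\tau)$ is an order isomorphism, and $\sigma',\tau'$ are isotopic rel stops by the first part.

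The step I expect to be the main obstacle is this last local analysis: confirming, from the explicit picture of a shift at the mixed branch $m$, that persistence is preserved and that the only comparability the shift can disturb is that between $a$ and $b$ themselves, so that the precise divergent-neighbors hypothesis is both sufficient and exactly what is needed. The verification that maximal splittings preserve efficiency is a secondary, routine point.
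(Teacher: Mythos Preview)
Your argument for the biconditional is essentially the paper's: both use \Cref{lemma:splitpreserveorder} to transport the poset structure from $\sigma,\tau$ to $\sigma',\tau'$, and then invoke \Cref{lem:diffbyshifts}(b), noting that an $I$-lamination has no closed leaves so the Dehn-twist ambiguity vanishes.

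For the ``furthermore'' clause, however, your local analysis has a gap. You assert that a shift of divergent neighbors can disturb the comparability only of the pair $\{a,b\}$ itself, and then dispose of that case. But this is not true without already using the divergent-neighbors hypothesis. The paper's case analysis reveals that the delicate case is $c\succeq b$ for an \emph{arbitrary} persistent cusp $c$ (not just $c=a$): if the route $\rho_c$ enters the shift region through the branch on the far side of $a$ from $m$ (the branch the paper calls $x$), then after the shift $\rho_c$ need no longer end in $\rho_b$. What saves this is precisely the divergent-neighbors condition: if $\rho_c$ enters through $x$ it thereafter fellow-travels $\rho_a$, so $c\succeq b$ would force $\rho_a=m*\rho_b$, which is excluded. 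Your claim that ``routes from cusps other than $a,b$ are unchanged'' conflates the underlying $\Lambda$-leaves (which indeed do not move) with the combinatorial train routes (which are re-expressed in the shifted track and can change their suffix structure relative to $\rho_b$). So the step you flag as the main obstacle is indeed one, and requires the finer case breakdown the paper carries out; the hypothesis is used not only to handle $a$ versus $b$ but to rule out this $c\succeq b$ subcase for general $c$.
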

\begin{proof}
By \Cref{lemma:splitpreserveorder}, the natural identifications of $\pers(\sigma)$ with $\pers(\sigma')$ and of $\pers(\tau)$ with $\pers(\tau')$ are poset isomorphisms. If $\sigma'$ and $\tau'$ are isotopic, then the $\Lambda$-identification $\pers(\sigma') \cong \pers(\tau')$ is a poset isomorphism, thus the $\Lambda$-identification $c: \pers(\sigma) \to \pers(\tau)$, being a composition of these identifications, is a poset isomorphism.

Conversely, if the $\Lambda$-identification $\pers(\sigma)\to \pers(\tau)$ is a poset isomorphism, then the $\Lambda$-identification $\pers(\sigma') \cong \pers(\tau')$ is a poset isomorphism by the above reasoning. Hence by \Cref{lem:diffbyshifts}, $\sigma'$ and $\tau'$ are isotopic since $\Lambda$ has no closed leaves. This proves the biconditional statement.

For the furthermore statement, in light of the above, it suffices to prove that a single shift of divergent neighbors induces a poset isomorphism. We do this by cases, after setting some notation. As before, $\rho_v$ denotes the maximal $\Lambda$-route from a cusp $v$. Suppose that prior to the shift, $a$ and $b$ are as shown in \Cref{fig:incomparableshift}. If $c$ is a cusp such that  $c\succeq a$ (resp. $c\succeq b$), we say that $\rho_c$ ``joins $a$ (resp. $b$) through" $x$, $y$, or $z$ if $x$, $y$, or $z$ is the last of $\{x,y,z\}$ traversed by $\rho_c$ before it fellow travels $\rho_a$ (resp. $\rho_b)$.

\begin{figure}
    \centering
    \fontsize{6pt}{6pt}\selectfont
    \resizebox{!}{3cm}{
\begingroup%
  \makeatletter%
  \providecommand\color[2][]{%
    \errmessage{(Inkscape) Color is used for the text in Inkscape, but the package 'color.sty' is not loaded}%
    \renewcommand\color[2][]{}%
  }%
  \providecommand\transparent[1]{%
    \errmessage{(Inkscape) Transparency is used (non-zero) for the text in Inkscape, but the package 'transparent.sty' is not loaded}%
    \renewcommand\transparent[1]{}%
  }%
  \providecommand\rotatebox[2]{#2}%
  \newcommand*\fsize{\dimexpr\f@size pt\relax}%
  \newcommand*\lineheight[1]{\fontsize{\fsize}{#1\fsize}\selectfont}%
  \ifx\svgwidth\undefined%
    \setlength{\unitlength}{62.11279069bp}%
    \ifx\svgscale\undefined%
      \relax%
    \else%
      \setlength{\unitlength}{\unitlength * \real{\svgscale}}%
    \fi%
  \else%
    \setlength{\unitlength}{\svgwidth}%
  \fi%
  \global\let\svgwidth\undefined%
  \global\let\svgscale\undefined%
  \makeatother%
  \begin{picture}(1,0.60441571)%
    \lineheight{1}%
    \setlength\tabcolsep{0pt}%
    \put(0,0){\includegraphics[width=\unitlength,page=1]{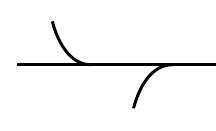}}%
    \put(0.58407163,0.20838815){\color[rgb]{0,0,0}\makebox(0,0)[lt]{\lineheight{1.25}\smash{\begin{tabular}[t]{l}$b$\end{tabular}}}}%
    \put(0.19384148,0.34225624){\color[rgb]{0,0,0}\makebox(0,0)[lt]{\lineheight{1.25}\smash{\begin{tabular}[t]{l}$a$\end{tabular}}}}%
    \put(0.19086974,0.53008018){\color[rgb]{0,0,0}\makebox(0,0)[lt]{\lineheight{1.25}\smash{\begin{tabular}[t]{l}$x$\end{tabular}}}}%
    \put(-0.00731092,0.27698407){\color[rgb]{0,0,0}\makebox(0,0)[lt]{\lineheight{1.25}\smash{\begin{tabular}[t]{l}$y$\end{tabular}}}}%
    \put(0.56953324,0.01702736){\color[rgb]{0,0,0}\makebox(0,0)[lt]{\lineheight{1.25}\smash{\begin{tabular}[t]{l}$z$\end{tabular}}}}%
    \put(0.48436937,0.34102054){\color[rgb]{0,0,0}\makebox(0,0)[lt]{\lineheight{1.25}\smash{\begin{tabular}[t]{l}$m$\end{tabular}}}}%
  \end{picture}%
\endgroup%
}
    \caption{Notation from the definition of divergent neighbors and the proof of \Cref{cor:anyorder}.}
    \label{fig:incomparableshift}
\end{figure}

Suppose $c$ and $d$ are comparable, say $c\succeq d$. Then either:
    \begin{enumerate}[label=(\alph*)]
        \item $\{c,d\}\cap \{a,b\}=\varnothing$, or 
        \item $c=a$, or
        \item $c=b$, or
        \item $d=a$, or
        \item $d=b$.
    \end{enumerate}
In each of (1a)-(1d), it is clear that shifting $a$ and $b$ does not affect $c\succeq d$. If $d=b$, then $c\succeq d$ holds after shifting $a$ and $b$ unless $\rho_c$ joins $a$ and $b$ through $x$, which cannot happen because $a$ and $b$ are divergent neighbors.
    
Since the inverse of a shift of divergent neighbors is also a shift of divergent neighbors, this shows that $c\succeq d$ before such a shift if and only if $c\succeq d$ after such a shift. Hence the shift induces a poset isomorphism.
\end{proof}

\section{Endperiodic maps} \label{sec:endperiodic}

In this section, we give a condensed treatment of the parts of Handel-Miller theory we need. Other than a few lemmas which we prove, all of the material presented here can be found in more detail in \cite{CCF19}.

\subsection{Surfaces and ends}

Let $L$ be an orientable surface.
Consider a sequence $A_1\supset A_2\supset A_3\supset\cdots $ such that there exists a compact exhaustion $K_1\subset K_2\subset K_3\subset\cdots$ of $L$ and $A_i$ is a connected component of $L-K_i$. We consider two such sequences $\{A_i\}$ and $\{B_i\}$ to be equivalent if each term of one sequence contains all but finitely many terms of the other. An \textbf{end} of $L$ is an equivalence classes of such sequences. If $e$ is an end of $L$ and $\{A_i\}$ is a sequence in the equivalence class $e$, we will call $\{A_i\}$ a \textbf{regular neighborhood basis} for $e$. We denote the set of ends of $L$ by $\ms E(L)$. 

It will be useful for our treatment to single out two specific types of ends of $L$. We say that an end $e$ is an \textbf{infinite strip end} if it has a regular neighborhood basis in which each set is homeomorphic to $[0,1]\times (0,\infty)$. 
We say that $e$ is an \textbf{infinite cyclinder end} if it has a regular neighborhood basis in which each set is homeomorphic to an open annulus.

There is a natural topology on $L\sqcup \ms E(L)$ which compactifies $L$: a base for this topology is given by the open sets of $L$ together with all sets of the form $\{e\}\cup V$ where $V$ is open in $L$ and contains a term of a regular neighborhood basis for $e$. Together with this topology, $L\sqcup \ms E(L)$ is called the \textbf{end compactification} of $L$; the subspace topology on $\ms E(L)$ makes it into a totally disconnected set. 
A \textbf{neighborhood} of the end $e$ is an open set $A\subset L$ such that $A\cup e$ is an honest neighborhood of $e$ in the end compactification of $L$. The end compactification motivates our use of the term ``regular neighborhood basis" for elements of $e$: if $A_1,A_2,A_3,\dots$ is a regular neighborhood basis for $e$ in our sense then $A_1\cup\{e\}, A_2\cup\{e\},A_3\cup\{e\},\dots$ is a regular neighborhood basis for $e$ in the end compactification, in the traditional point set topological sense.

\subsection{Endperiodic maps}
Let $f\colon L\to L$ be a homeomorphism. Then $f$ induces a homeomorphism of the end compactification which restricts to a homeomorphism of $\ms E(L)$. We say an end $e\in \ms E(L)$ is \textbf{periodic} if there exists an integer $p$ such that $f^p(e)=e$. If $p$ is the smallest such integer, we call it the \textbf{period} of $e$.

Let $e$ be a periodic end of $L$ with period $p$. Then $e$ is \textbf{positive} if there exists a neighborhood $U$ of $e$ such that $U, f^p(U), f^{2p}(U),\dots$ is a regular neighborhood basis for $e$. Symmetrically, $e$ is \textbf{negative} if $e$ is a positive end of $f^{-1}$. We think of the positive ends of $L$ as attracting and the negative ends as repelling. 

The set $\{e, f(e),\dots, f^{p-1}(e)\}$ is called a \textbf{positive end-cycle} or \textbf{negative end-cycle} depending on whether $e$ is positive or negative, respectively.

\begin{definition}[Endperiodic map, Reeb endperiodic map]\label{def:endperiodic}
Let $L$ be an oriented surface with finitely many ends, none of which are infinite cylinder ends. Let $f\colon L\to L$ be an orientation-preserving homeomorphism. We say that $f$ is \textbf{endperiodic} if
\begin{enumerate}[label=(\alph*)]
    \item all ends of $L$ are positive or negative, and
\end{enumerate}
if $L$ has a noncompact boundary component $\ell$ with period $p$, then
\begin{enumerate}[label=(\alph*),resume]
    \item $\ell$ runs between two ends of $L$ with the opposite sign and $f^p|_\ell$ has no fixed points.
\end{enumerate}
We say that $f$ is \textbf{Reeb endperiodic} if $f$ satisfies (a) and if $\ell$ is a noncompact component of $\del L$ with period $p$ then either $\ell$ satisfies (b) or
\begin{enumerate}[label=(\alph*),resume]
    \item $\ell$ runs between two ends of $L$ with the same sign and $f^p|_\ell$ has a single fixed point $x_0$, which is a source or sink for $f^p$ in $L$ depending on whether the ends are both positive or negative, respectively.\qedhere
\end{enumerate}
\end{definition}

We remark that if $f$ satisfies (a) then we can homotope $f$ so that each noncompact boundary component running between two ends of the opposite sign satisfies (b) and each noncompact boundary component running between two ends of the same sign satisfies (c).

Reeb endperiodic maps are a convenient generalization of endperiodic maps. We will see in \Cref{subsec:reebsutured} that they are associated to generalized sutured manifolds called Reeb sutured manifolds. These will play a part in our future work \cite{LT23}.

\subsection{Junctures}\label{sec:junctures}

The construction of Handel-Miller laminations starts by producing a collection of ``junctures" associated to an endperiodic map $f\colon L\to L$, which are cooriented 1-manifolds in $L$ ``dual to a cohomology class at infinity" in a certain sense.

Let $e$ be an end of $L$, and let $\{U_i\}_{i\in \Z_+}$ be any regular neighborhood basis for $e$. We say a sequence of sets $\{A_i\}_{i\in \Z_+}$  \textbf{escapes to $e$} if each $U_j$ contains all but finitely many $A_i$.
If $e$ is a positive end of $L$ with period $p$, let 
\[
\U_e=\left\{q\in L\mid \{f^{np}(q)\}_{n\in \Z^+} \text{ escapes to }e\right\}.
\]
Letting $e_0=e$, if $Z=\{e_0,e_1,\dots, e_{p-1}\}$ is the $f$-cycle of $e_0$, let
\[
\U_Z=\bigcup_{i=0}^{p-1} \U_{e_i}.
\]
Then $\U_Z$ is a surface with connected components $\U_{e_0},\dots,\U_{e_{p-1}}$, and is in fact a regular covering space of a compact, connected surface $F_Z=\U_Z/\langle f\rangle$ with cyclic deck group generated by $f$. 

The \textbf{positive escaping set} $\U_+$ is the union of $\U_Z$ where $Z$ ranges over all positive end-cycles:
\[
\U_+=\bigcup_{\text{pos. end-cycles $Z$}}\U_Z.
\]
The \textbf{negative escaping set} $\U_-$ is the positive escaping set of $f^{-1}$.

\begin{construction}[Juncture components and tilings]\label{construction:junctures}
Choose a base point $b\in F_Z$, and let $\gamma$ be an oriented loop based at $b$. If $\wt \gamma$ is any lift of $\gamma$ traveling from $\wt b_1$ to $\wt b_2$ in $\U_Z$, there is a unique $n\in \Z$ such that $f^n(\wt b_1)=\wt b_2$. This defines a map $\pi_1(F_Z,b)\to \Z$, which gives a cohomology class $u\in H^1(F_Z;\Z)$. It is clear from the definition that $u$ must take values only in $p\Z$. Moreover the smallest positive value taken by $u$ is $p$. This can be seen as follows: let $x_0$ be a lift of $b$ to $\U_{e_0}$, and define $x_i=f^{i}(x_0)$. We can find a path in $\U_{e_0}$ from $x_0$ to $x_p$. This projects to a loop in $F_Z$ evaluating to $p$ under $u$.

We can choose a weighted, cooriented 1-manifold $J_Z$ in $F_Z$ which is dual to $u$. The weights come from collapsing parallel components of a representative of the Lefschetz dual of $u$. Further, we can require that $J_Z$ be nonseparating in $F_Z$. In this situation we see that all the weights on components of $J_Z$ will be divisible by $p$. We require that $J_Z$ be disjoint from our basepoint $b\in F_Z$.

Let $F'_Z$ be $F_Z\cut J_Z$, and let $\J_Z$ be the preimage of $J_Z$ in $\U_Z$. The surface $\U_Z$ decomposes into compact, connected surfaces called \textbf{tiles} which are glued along components of $\J_Z$, each of which has a weight in $p\Z_+$ induced by $J_Z$. Because $J_Z$ is nonseparating, for each $n\in\Z$ there is a unique component $t_n$ of $\U_Z-\J_Z$ that contains $x_n$. Here are some relevant facts about the tiles $t_n$:
\begin{itemize}
\item $f$ carries $t_n$ to $t_{n+1}$ for all $n$
\item If $j$ is a component of $\J_Z$ with weight $w$, then there is a unique $n$ such that $j$ joins $t_n$ to $t_{n+w}$ and the coorientation on $j$ points out of $t_n$ and into $t_{n+w}$.
\end{itemize}

A decomposition of $\U_Z$ into tiles as above is called a \textbf{tiling}.
Given a tiling $\{t_n\mid n\in \Z\}$ of $\U_Z$, a \textbf{tiled neighborhood} of $Z$ is any set of the form $\bigcup_{i=n}^\infty t_i$. This is a union of neighborhoods of each of the ends in $Z$, which are themselves unions of tiles. We call each of these end-neighborhoods a tiled neighborhood of the corresponding end. The boundary of a tiled neighborhood of an end $e$ is called a \textbf{juncture} of $e$. 

Now let $\J_+=\bigcup_Z \J_Z$, where the union is over all $f$-cycles of positive ends of $L$. The components of $\J_+$ are called \textbf{positive juncture components}. A juncture component $j$ is called \textbf{escaping} if $\{f^{np}(j)\}_{n<0}$ escapes to a negative end and \textbf{nonescaping} otherwise.
\end{construction}

\begin{example}\label{ex:tiling}
This example is a special case of \cite[Example 2.29]{CCF19}. Let $L$ be the two-ended surface shown in \Cref{fig:tiling}, which is made by gluing together countably many X-shaped pieces like the one shaded on the top of the figure. Let $f\colon L\to L$ be translation to the right by one X-shaped piece. 
Let $e$ be the positive end of $L$. Then $\U_e=L$, $F$ is as shown on the right of \Cref{fig:tiling}, and we can take $J$ to be the weighted cooriented 1-manifold shown there. With this $J$, the tiles corresponding to the singleton end-cycle $e$ are exactly the X-shaped pieces. On the bottom of \Cref{fig:tiling} we see a tiled neighborhood of $e$. Note that because the weights on components of $J$ are greater than the period of $e$ (in this case the period is 1) each juncture component is a part of multiple junctures.
\end{example}

\begin{remark} \label{rmk:connectedjuncture}
If all boundary components of the surface $F_Z$ have zero pairing with the cohomology class $u$ in the above construction of junctures, then $J$ can be chosen to have one component disjoint from $\del F_Z$, with weight equal to the period of $Z$. See the discussion in \cite[\S 2]{FKLL23}, in which $\del F=\varnothing$. In this case each juncture component will belong to just one juncture, unlike the situation in \Cref{ex:tiling}. 
\end{remark}

\begin{figure}
\centering
\fontsize{12pt}{12pt}\selectfont
\resizebox{!}{2in}{
\begingroup%
  \makeatletter%
  \providecommand\color[2][]{%
    \errmessage{(Inkscape) Color is used for the text in Inkscape, but the package 'color.sty' is not loaded}%
    \renewcommand\color[2][]{}%
  }%
  \providecommand\transparent[1]{%
    \errmessage{(Inkscape) Transparency is used (non-zero) for the text in Inkscape, but the package 'transparent.sty' is not loaded}%
    \renewcommand\transparent[1]{}%
  }%
  \providecommand\rotatebox[2]{#2}%
  \newcommand*\fsize{\dimexpr\f@size pt\relax}%
  \newcommand*\lineheight[1]{\fontsize{\fsize}{#1\fsize}\selectfont}%
  \ifx\svgwidth\undefined%
    \setlength{\unitlength}{375.94076334bp}%
    \ifx\svgscale\undefined%
      \relax%
    \else%
      \setlength{\unitlength}{\unitlength * \real{\svgscale}}%
    \fi%
  \else%
    \setlength{\unitlength}{\svgwidth}%
  \fi%
  \global\let\svgwidth\undefined%
  \global\let\svgscale\undefined%
  \makeatother%
  \begin{picture}(1,0.37016987)%
    \lineheight{1}%
    \setlength\tabcolsep{0pt}%
    \put(0,0){\includegraphics[width=\unitlength,page=1]{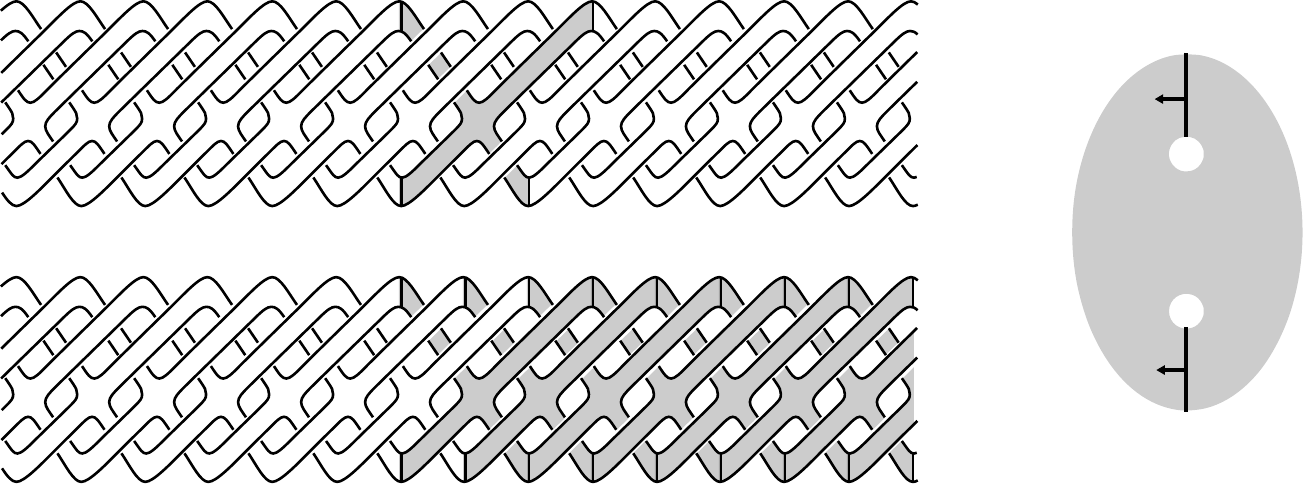}}%
    \put(0.89621642,0.17989529){\color[rgb]{0,0,0}\makebox(0,0)[lt]{\lineheight{1.25}\smash{\begin{tabular}[t]{l}$F$\end{tabular}}}}%
    \put(0.91761324,0.28191731){\color[rgb]{0,0,0}\makebox(0,0)[lt]{\lineheight{1.25}\smash{\begin{tabular}[t]{l}3\end{tabular}}}}%
    \put(0.91770897,0.07710159){\color[rgb]{0,0,0}\makebox(0,0)[lt]{\lineheight{1.25}\smash{\begin{tabular}[t]{l}2\end{tabular}}}}%
    \put(0,0){\includegraphics[width=\unitlength,page=2]{fig_tiling.pdf}}%
  \end{picture}%
\endgroup%
}
\caption{This figure accompanies \Cref{ex:tiling}.}
\label{fig:tiling}
\end{figure}

\subsection{The Handel-Miller laminations}\label{sec:HMlams}

A \textbf{geodesic half plane}, or \textbf{half plane}, is a closed subset of $\hyp^2$ bounded by a single geodesic. Following \cite{CCF19}, we say a complete hyperbolic metric on a surface $L$ is \textbf{standard} if there is no isometric embedding of a half plane in $L$ and if all components of $\del L$ are geodesics.

Recall that $\ms J_+$ is the set of positive juncture components. There is a symmetric definition of the set $\ms J_-$ of \textbf{negative juncture components}.

The following theorem is proved in \cite{CCF19} and is the foundation of Handel-Miller theory.

\begin{theorem}[Handel-Miller, Cantwell-Conlon-Fenley]\label{thm:HMrep}
Let $L$ be endowed with a standard hyperbolic metric and suppose that $f:L \to L$ is an endperiodic map. The geodesic representatives of non-escaping components of $\ms J_+$ limit on a geodesic lamination $\Lambda_-$. Similarly the geodesic representatives of non-escaping components of $\J_-$ limit on a geodesic lamination $\Lambda_+$ which is transverse to $\Lambda_-$.

The geodesic tightenings of the negative (positive) juncture components are mutually disjoint and disjoint from $\Lambda_+$ ($\Lambda_-$).

Moreover, up to isotopy we can assume that $f$ preserves the geodesic laminations $\Lambda_+$, $\Lambda_-$ as well as permutes the geodesic representatives of positive and negative juncture components.
\end{theorem}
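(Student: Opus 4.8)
The statement collects results of Handel and Miller as exposited in \cite{CCF19}, and the plan is to follow their approach; I sketch its skeleton. Throughout one works with the fixed standard hyperbolic metric on $L$, so that every essential simple closed curve or properly embedded essential arc has a unique geodesic tightening, disjoint $1$-manifolds tighten to disjoint geodesics, and two transverse $1$-manifolds in efficient position tighten to transverse geodesics. The hypothesis that no half plane embeds isometrically in $L$ will be used to guarantee that Hausdorff limits of geodesics are genuine geodesic laminations.

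First I would construct $\Lambda_-$. Inside a positive end-cycle $Z$, the non-escaping components of $\ms J_+$ fall into finitely many bi-infinite sequences of the form $\{f^{np}(j)\}_{n\in\Z}$, and all of the curves and arcs so obtained can be taken pairwise disjoint; their geodesic tightenings then form a pairwise disjoint family of simple complete geodesics. As $n\to+\infty$ the curve $f^{np}(j)$ escapes to a positive end, whereas for $n\to-\infty$, since $j$ is non-escaping, it does not escape to a negative end, and using the tiled-neighborhood structure of \Cref{construction:junctures} one checks that its geodesic tightening eventually stays inside a fixed compact subsurface of $L$. Hence the accumulation set of those tightenings as $n\to-\infty$ is a nonempty compact set; being the accumulation set of a pairwise disjoint family of simple complete geodesics, it is closed and is itself a disjoint union of simple complete geodesics, so the no-half-plane hypothesis makes it a geodesic lamination, which I call $\Lambda_-$. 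Applying the same construction to $f^{-1}$ and $\ms J_-$ produces $\Lambda_+$.

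Next I would dispatch the disjointness and transversality claims. Any fixed geodesic tightening of a negative juncture component is disjoint from every geodesic in the family defining $\Lambda_+$, hence disjoint from $\Lambda_+$; symmetrically the tightenings of positive junctures are disjoint from $\Lambda_-$. For transversality I would use that the positive and negative junctures are Lefschetz dual to complementary classes in ``cohomology at infinity,'' so that after an isotopy they meet transversely and efficiently with an intersection number that cannot be reduced; this positivity of intersection persists under the two limiting processes and gives that $\Lambda_+$ is transverse to $\Lambda_-$.

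The two substantial points, and where I expect the real work to lie, are the following. For \emph{independence of choices}, one must show that $\Lambda_-$ does not depend on which non-escaping positive juncture component is iterated, nor on the choice of convergent subsequence; this is the endperiodic analogue of the well-definedness of the stable lamination of a pseudo-Anosov map, and it requires making precise, on the infinite-type surface $L$, that $f$ contracts along $\Lambda_-$ while expanding transversally to it, so that backward iteration of any suitable curve is attracted to $\Lambda_-$. For \emph{on-the-nose invariance}, since $f$ merely permutes the non-escaping positive juncture components, a priori $f(\Lambda_-)$ and $f(\Lambda_+)$ are only isotopic to $\Lambda_-$ and $\Lambda_+$; to upgrade this to genuine invariance one constructs the Handel-Miller representative $\hat f$ in the isotopy class of $f$ that preserves both geodesic laminations and permutes the geodesic tightenings of the juncture components exactly, the endperiodic counterpart of the Thurston representative of a surface mapping class. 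Granting these two ingredients, every conclusion of the theorem follows, and for both I would adapt the arguments of \cite{CCF19} rather than look for a shortcut.
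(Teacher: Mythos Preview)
The paper does not give its own proof of this theorem: it is stated with attribution to Handel--Miller and Cantwell--Conlon--Fenley, preceded by the sentence ``The following theorem is proved in \cite{CCF19} and is the foundation of Handel--Miller theory,'' and no argument is supplied. So there is nothing to compare your proposal against beyond the citation itself; your sketch is essentially an outline of what one would find in \cite{CCF19}, which is exactly where the paper sends the reader.

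That said, one point in your sketch deserves a caution. You write that the geodesic tightening of $f^{np}(j)$ for $n\to-\infty$ ``eventually stays inside a fixed compact subsurface of $L$,'' and use this to extract a compact accumulation set. In the endperiodic setting the laminations $\Lambda_\pm$ are typically \emph{not} compact---leaves of $\Lambda_+$ can have rays escaping to positive ends (cf.\ \Cref{lem:infstripend} and \Cref{lem:bdylams})---so the limiting argument cannot be a straightforward Hausdorff-compactness argument on a fixed compact piece. The actual construction in \cite{CCF19} handles this by working more carefully with the tiled structure and the standardness of the metric, and you would need to do likewise rather than reduce to a compact subsurface. Your identification of the two substantive issues (independence of the limit from choices, and upgrading isotopy-invariance to genuine invariance via the Handel--Miller representative) is accurate, and deferring to \cite{CCF19} for those is appropriate and is precisely what the paper does.
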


\begin{remark}
This statement is slightly more general than \cite{CCF19} in that we allow infinite strip ends, but the methods of proof in \cite{CCF19} work just as well in this case.
\end{remark}

The laminations $\Lambda_\pm$ are called the \textbf{positive/negative Handel-Miller laminations} for $f$. 
Note that they are independent of the representative of the isotopy class of $f$, since they depend only on the geodesic tightenings of the $f$-images of curves. Additionally, they are independent of the choice of junctures \cite[Cor. 4.72]{CCF19}. Finally, by \cite[Cor. 10.16]{CCF19}, the union $\Lambda_+\cup \Lambda_-$ is independent of the choice of standard hyperbolic metric on $L$ up to ambient isotopy. As such we will sometimes refer to the Handel-Miller laminations without specifying a choice of metric. 

A representative of $f$ which preserves the Handel-Miller laminations as well as the geodesic representatives of the juncture components is called a \textbf{Handel-Miller representative} of the homotopy class of $f$, or simply a \textbf{Handel-Miller map}. When the metric on $L$ is not specified, a Handel-Miller map means a Handel-Miller map for some choice of metric.

Suppose we are given a Handel-Miller map $f$; by definition $f$ comes with some associated (geodesic) juncture components. If we perform the construction of junctures on $f$ from \Cref{construction:junctures} to produce some \emph{other}  collection of juncture components, then $f$ will permute these new juncture components 
as well as the leaves of $\Lambda_+$ and $\Lambda_-$. These new juncture components will not be geodesics in general.

\begin{example}[Translation]\label{eg:translate}
Suppose that $f\colon L\to L$ is endperiodic and that each point in $L$ escapes compact sets of $L$ under postive and negative iteration of $f$, i.e.  $\mathscr U_+=\mathscr U_-=L$. Such an endperiodic map is called a \textbf{translation}. In this case , $\Lambda_+=\Lambda_-=\varnothing$. See \cite[\S 4.8]{CCF19}. One can show that if $L$ is connected then it has exactly two ends. In general $f$ generates the deck group of an infinite cyclic covering $L\to L/\langle f\rangle$, and the mapping torus $(L\times I)/(x,1)\sim (f(x), 0)$ is homeomorphic to $(L/\langle f\rangle)\times (0,1)$.
\end{example}

\begin{example}\label{example:stackofchairs}
The following is one of the simplest examples in which the Handel-Miller laminations are nonempty. Let $L=\{(x,y)\in \mathbb R^2\mid xy\le1\}$, and let $f\colon L\to L$ be given by the matrix $\left(\begin{smallmatrix}2&0\\0&\frac{1}{2}\end{smallmatrix}\right)$. There are two positive and two negative ends of $L$, all of which are infinite strip ends. The lamination $\Lambda_+$ consists of a single line running between the two positive ends, while $\Lambda_-$ consists of a single line running between the negative ends. See \Cref{fig:chairmonodromy}.
\end{example}

\begin{figure}
    \centering
    \fontsize{8pt}{8pt}\selectfont
    \resizebox{!}{2in}{
\begingroup%
  \makeatletter%
  \providecommand\color[2][]{%
    \errmessage{(Inkscape) Color is used for the text in Inkscape, but the package 'color.sty' is not loaded}%
    \renewcommand\color[2][]{}%
  }%
  \providecommand\transparent[1]{%
    \errmessage{(Inkscape) Transparency is used (non-zero) for the text in Inkscape, but the package 'transparent.sty' is not loaded}%
    \renewcommand\transparent[1]{}%
  }%
  \providecommand\rotatebox[2]{#2}%
  \newcommand*\fsize{\dimexpr\f@size pt\relax}%
  \newcommand*\lineheight[1]{\fontsize{\fsize}{#1\fsize}\selectfont}%
  \ifx\svgwidth\undefined%
    \setlength{\unitlength}{141.61492177bp}%
    \ifx\svgscale\undefined%
      \relax%
    \else%
      \setlength{\unitlength}{\unitlength * \real{\svgscale}}%
    \fi%
  \else%
    \setlength{\unitlength}{\svgwidth}%
  \fi%
  \global\let\svgwidth\undefined%
  \global\let\svgscale\undefined%
  \makeatother%
  \begin{picture}(1,0.96079559)%
    \lineheight{1}%
    \setlength\tabcolsep{0pt}%
    \put(0,0){\includegraphics[width=\unitlength,page=1]{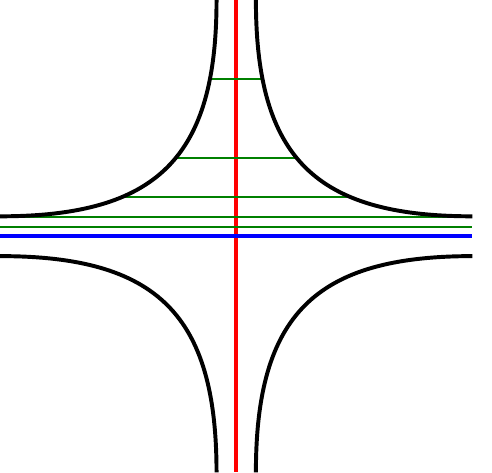}}%
    \put(0.55719051,0.79481899){\color[rgb]{0,0.50196078,0}\makebox(0,0)[lt]{\lineheight{1.25}\smash{\begin{tabular}[t]{l}$j$\end{tabular}}}}%
    \put(0.62946929,0.63894987){\color[rgb]{0,0.50196078,0}\makebox(0,0)[lt]{\lineheight{1.25}\smash{\begin{tabular}[t]{l}$f(j)$\end{tabular}}}}%
    \put(0.75913996,0.56045485){\color[rgb]{0,0.50196078,0}\makebox(0,0)[lt]{\lineheight{1.25}\smash{\begin{tabular}[t]{l}$f^2(j)$\end{tabular}}}}%
  \end{picture}%
\endgroup%
}
    \caption{A picture of $\Lambda_\pm$ for the map $f=\left(\begin{smallmatrix}2&0\\0&\frac{1}{2}\end{smallmatrix}\right)$ from \Cref{example:stackofchairs}. The blue line denotes $\Lambda_+$ and the red denotes $\Lambda_-$. The green lines depict part of the $f$-orbit of a negative juncture $j$.}
    \label{fig:chairmonodromy}
\end{figure}

\subsection{Principal regions}

We will now assume that $f\colon L\to L$ is Handel-Miller. The positive and negative escaping sets $\U_+$ and $\U_-$ (defined in \Cref{sec:junctures}) of $f$ are related quite simply to the Handel-Miller laminations $\Lambda_+$, $\Lambda_-$ by the following lemma, which is \cite[Lemma 4.71]{CCF19}.

\begin{lemma}[Cantwell-Conlon-Fenley]
For a Handel-Miller map $f$, we have $\Lambda_+=\del \U_-$ and $\Lambda_-=\del \U_+$.
\end{lemma}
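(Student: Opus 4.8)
The plan is to prove the two equalities $\Lambda_+=\del\U_-$ and $\Lambda_-=\del\U_+$; by the symmetry exchanging $f$ with $f^{-1}$ (which swaps positive and negative escaping sets and swaps $\Lambda_+$ with $\Lambda_-$) it suffices to establish $\Lambda_-=\del\U_+$. First I would recall the definitions: $\U_+$ is the union of the $\U_Z$ over positive end-cycles $Z$, where $\U_{e}=\{q\in L\mid \{f^{np}(q)\}_{n>0}\text{ escapes to }e\}$, and $\Lambda_-$ is the geodesic lamination obtained as the limit of the geodesic representatives of the non-escaping positive juncture components (\Cref{thm:HMrep}). Since $f$ is assumed Handel-Miller, it preserves $\Lambda_\pm$ and permutes the geodesic juncture components on the nose, so there are no isotopy ambiguities to worry about.

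The key step is to identify $\U_+$ concretely. A tiled neighborhood of a positive end $e$ (\Cref{construction:junctures}) is a nested union of tiles $\bigcup_{i\ge n}t_i$ bounded by a positive juncture; because $e$ is positive, iterating $f^p$ carries such a neighborhood into smaller and smaller ones, so every point of such a neighborhood escapes to $e$, i.e. lies in $\U_e$. Conversely, if $q\in\U_e$ then the forward $f^{p}$-orbit of $q$ eventually enters every tiled neighborhood of $e$, and since $f$ permutes the tiles, $q$ itself must lie in a tiled neighborhood of $e$. Thus $\U_+$ is exactly the union of the open tiled neighborhoods of all positive ends, and its frontier $\del\U_+$ is a sublamination of the limit set of the positive juncture components. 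The point is that the boundary of $\U_+$ is built from the ``innermost'' accumulation of the geodesic juncture curves: a juncture component is non-escaping precisely when it does not fall into a negative end under backward iteration, which is exactly the condition that its geodesic representative contributes to the accumulation set $\Lambda_-$ rather than disappearing off to a negative end. So $\del\U_+$ and $\Lambda_-$ are assembled from the same data. One should then check the two inclusions: a leaf of $\Lambda_-$, being a limit of geodesic junctures bounding tiled neighborhoods, is approached from one side by points of $\U_+$ and is itself not in $\U_+$ (it does not escape to any positive end, since $f$ preserves it), hence lies in $\del\U_+$; conversely any boundary point of $\U_+$ is a limit of a sequence of juncture curves none of which escapes to a negative end, so it lies in $\Lambda_-$.

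The main obstacle I anticipate is the careful bookkeeping around \emph{escaping versus non-escaping} juncture components and the role of infinite strip ends: one must verify that the escaping juncture components contribute only to the escaping set for \emph{negative} ends and hence do not appear in $\del\U_+$, while making sure that near a noncompact boundary component (condition (b) of \Cref{def:endperiodic}) the tiled neighborhoods still behave as claimed. Since this is precisely \cite[Lemma 4.71]{CCF19}, the cleanest route is to cite that reference for the detailed verification and to present the argument above as the conceptual skeleton; the reader loses nothing, as the statement is used only as a bridge between the combinatorial escaping sets and the geometric laminations.
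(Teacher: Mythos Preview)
The paper's own ``proof'' is simply a citation to \cite[Lemma 4.71]{CCF19}; no argument is given. Your proposal ultimately does the same thing, so in that sense you match the paper exactly.

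Your added conceptual sketch is a reasonable outline of why the result should hold, but a couple of steps would need tightening if you intended it to stand on its own. First, the sentence ``since $f$ permutes the tiles, $q$ itself must lie in a tiled neighborhood of $e$'' is not quite the right justification: the point is rather that by \Cref{construction:junctures} the tiles already decompose all of $\U_Z$, so $\U_+$ is tautologically the nested union of tiled neighborhoods. Second, the argument that a leaf of $\Lambda_-$ is not in $\U_+$ ``since $f$ preserves it'' is imprecise: $f$ preserves the lamination $\Lambda_-$ but may permute its leaves, so you would still need to argue that no point of $\Lambda_-$ escapes to a positive end. This is true, but it is exactly the kind of bookkeeping you flag as an obstacle and defer to \cite{CCF19}. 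Since the paper itself defers the entire statement to that reference, your proposal is if anything more informative than the paper's treatment.
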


\begin{definition}[Principal regions]\label{def:principalregion}
Let $\ms P_+=L-(\Lambda_+\cup \U_-)$. Each connected component of $\ms P_+$ is called a \textbf{positive principal region}. Symmetrically, a \textbf{negative principal region} is a positive principal region of $f^{-1}$.
\end{definition}

We now describe some general structure of principal regions without justification; for more details see \cite[\S\S 5.3, 6.1-6.4]{CCF19}.

Let $P_+$ be a positive principal region. Then $P_+$ is homeomorphic to the interior of a compact surface with boundary, say $\Sigma$. The metric completion $\ol P_+$ of $P_+$ is homeomorphic to $\Sigma$ minus a finite nonempty set of points on each component of $\del\Sigma$.
Hence $\del \ol P_+$ consists of finitely many lines $\lambda_1,\dots, \lambda_n$. Furthermore each $\lambda_i$ is a leaf of $\Lambda_+$. Assume for now that $\Sigma$ has only one boundary component. Then each $\lambda_i$ has the same period under $f$, say $p$. For each $\lambda_i$, there is a maximal $f$-invariant closed interval $I_i\subset \ell_i$ such that if $x\in \lambda_i-I_i$, then $\{f^{kp}(x)\mid k\ge 0\}$ escapes an end of $\lambda_i$.

Orient each $\lambda_i$ from left to right looking out from $P_+$, and let $a_i$ and $b_i$ be the left and right endpoints of $I_i$, respectively (see \Cref{fig:principal}). Then there is a leaf $\lambda_i'$ of $\Lambda_-$ passing through $b_i$ and $a_{i+1}$ (indices taken mod $n$). The leaves $\lambda_1',\dots, \lambda_n'$ are also periodic of period $p$. The leaf $\lambda_i'$ has a maximal invariant interval $I_i'$ with endpoints on $b_i$ and $a_{i+1}$, and if $x\in \lambda_i'-I_i'$ then $\{f^{kp}(x)\mid k\ge 0\}$ escapes an end of $\lambda_i'$.
The set $P_+-(I_1'\cup\cdots\cup I_n')$ consists of $n$ unbounded, simply connected components called the \textbf{arms} of $P_+$ and one bounded component called the \textbf{nucleus} of $P_+$. The nucleus is homeomorphic to the interior of $\Sigma$.

In fact, the leaves $\lambda_1',\dots, \lambda_n'$ bound a negative principal region $P_-$; the pair $P_+$, $P_-$ are said to be \textbf{dual principal regions}. See \Cref{fig:principal}. Symmetrically to $P_+$, $P_-$ has $n$ arms and one nucleus, which is equal to the nucleus of $P_+$.

When the compact surface $\Sigma$ has more than one boundary component, the description above holds with minor modifications. Namely the boundary lines of $P_+$ break into finitely many sets corresponding to the boundary components of $\Sigma$, each of these sets gives rise to finitely many arms, the complement in $P_+$ of the closure of the arms is the nucleus of $P_+$, and there exists a dual principal region $P_-$ sharing a nucleus with $P_-$.

\begin{figure}
    \centering
    \fontsize{8pt}{8pt}\selectfont
    \resizebox{!}{2.5in}{\import{basecase-fig}{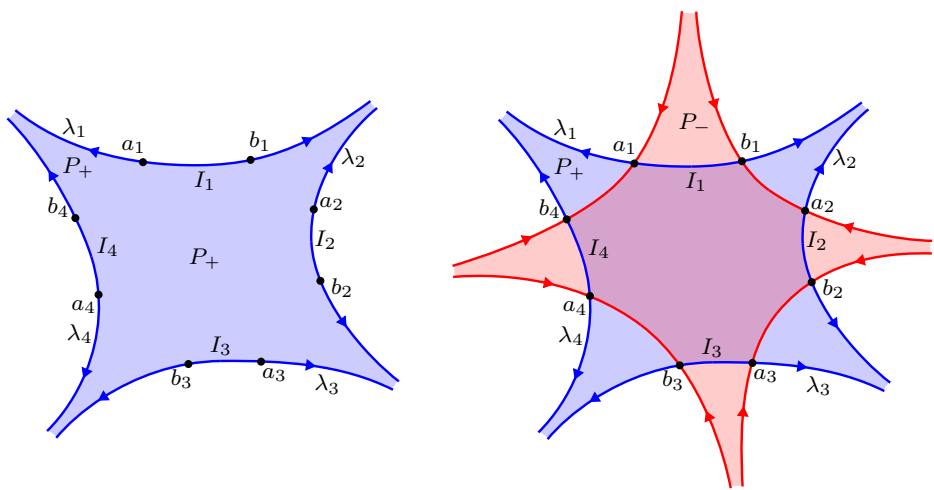}}
    \caption{An example of a positive principal region $P_+$ (left) together with some notation from our description of the structure of principal regions. On the right we see the dual principal region $P_-$. Each of $P_+$ and $P_-$ has 4 arms, and the nucleus is the 8-sided region in the center. The arrows on leaves indicate the direction in which points move under application of $f^p$, where $p$ is the period of the leaves.\\
    Note that unlike in this example,  principal regions need not be simply connected in general.}
    \label{fig:principal}
\end{figure}

\subsection{Useful lemmas about the Handel-Miller representative}\label{sec:HMlems}

For this section, let $f\colon L\to L$ be a Handel-Miller map.

Since $f$ preserves the laminations $\Lambda_\pm$, it induces quotient laminations $\Lambda_\pm^\infty$ in the quotients $\U_\pm/\langle f \rangle$. 

\begin{lemma}\label{lem:bdylams}
\begin{enumerate}[label=(\alph*)]
\item The laminations $\Lambda_\pm^\infty$ are spiraling and consistent. Each compact leaf of $\Lambda_\pm^\infty$ is the image under the quotient map of a leaf of $\Lambda_\pm \cap \U_\pm$ which has a subray escaping to an end of $L$. Conversely, each such leaf of $\Lambda_\pm \cap \U_\pm$ gives a compact leaf of $\Lambda_\pm^\infty$.
\end{enumerate}

For the next two items, see the definitions in \Cref{sec:endperiodictosutured}.

\begin{enumerate}[resume,label=(\alph*)]
\item If $\lambda$ is a compact leaf of $\Lambda_+^\infty$, then $\lambda$ is the boundary component of a periodic leaf $\ell$ of the suspension $\LL^u$ of $\Lambda_+$, and $\lambda$ is homotopic in $\ell$ to a positive multiple of the closed orbit of the suspension semiflow at the core of $\ell$.

\item Further, $\Lambda_+^\infty$ contains parallel closed leaves if and only if $f$ has a positive principal region with an arm bounded by rays in $\Lambda_+$ which escape compact sets in $L$. This arm gives rise to two parallel closed leaves of $\Lambda_+^\infty$ bounding an annulus disjoint from the lamination. Symmetric statements hold for $\Lambda_-^\infty$.
\end{enumerate}
\end{lemma}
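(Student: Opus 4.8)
The plan is to analyze the structure of $\Lambda_\pm \cap \U_\pm$ near the ends of $L$ using the tile decomposition of \Cref{construction:junctures} and the geometry of principal regions described above, and then push everything down to the quotient $\U_\pm/\langle f\rangle$. We treat only $\Lambda_+^\infty$; the statement for $\Lambda_-^\infty$ follows by replacing $f$ with $f^{-1}$.

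First I would establish item (a). Since $f$ is Handel-Miller, $\U_+$ is an $f$-invariant surface which is a regular cyclic cover of the compact surface $F_Z$ (summing over positive end-cycles $Z$), so $\Lambda_+^\infty = (\Lambda_+\cap\U_+)/\langle f\rangle$ is a lamination on a compact surface. To see it is spiraling, I would take a noncompact leaf $\ell$ of $\Lambda_+^\infty$ and a lift $\wt\ell$ to $\U_+$; using the tiled neighborhood structure, each ray of $\wt\ell$ either escapes to a positive end of $L$ or stays in a fixed compact set of tiles. A ray escaping to a positive end $e$ is, after enough forward iterations of $f^p$, pushed into ever-deeper tiled neighborhoods; combined with the fact that $f$ preserves $\Lambda_+$, this forces the image ray in $\Lambda_+^\infty$ to spiral onto a closed leaf — concretely, the projection of the boundary of a tiled neighborhood intersected with $\Lambda_+$ must contain a closed curve, and the escaping ray accumulates on it. This simultaneously proves the "each compact leaf is the image of a leaf of $\Lambda_+\cap\U_+$ with a subray escaping to an end" claim: a compact leaf $\lambda$ of $\Lambda_+^\infty$ lifts to an $f^p$-invariant line (for appropriate $p$) in $\U_+$, which being $f^p$-invariant and contained in a tiled neighborhood must have a subray escaping the corresponding end; conversely a leaf of $\Lambda_+\cap\U_+$ with an escaping subray is, up to the $\langle f\rangle$-action, eventually periodic among the tiles, hence projects to a closed curve. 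Consistency — no two oriented closed leaves cobounding an annulus with compatible orientations — would follow because two such closed leaves lift to $f^p$-invariant lines co-bounding an $f^p$-invariant annulus in $\U_+$ on which $f^p$ acts as a translation in the same direction on both boundary lines; I would argue this contradicts the structure of $\Lambda_+$ as $\del\U_-$, or directly contradicts the way arms of principal regions attach. The main subtlety here is that "spiraling" also requires the finiteness of closed leaves, which is immediate from compactness of $\U_+/\langle f\rangle$ and the fact that closed leaves are pairwise disjoint geodesics (or, post-isotopy, disjoint simple closed curves).

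Next, for item (b), I would use the suspension picture of \Cref{sec:endperiodictosutured}: a compact leaf $\lambda$ of $\Lambda_+^\infty$ lifts to an $f^p$-periodic leaf $\wt\ell$ of $\Lambda_+\cap\U_+$, and suspending $\wt\ell$ under the mapping torus of $f$ produces a leaf $\ell$ of $\LL^u$. The periodicity of $\wt\ell$ under $f^p$ means $\ell$ is a periodic leaf of the suspension semiflow, and the closed orbit at its core is exactly the mapping torus of the invariant interval $I$ inside $\wt\ell$ (in the principal-region language: the core of an arm). The boundary component $\lambda$ sits at the frontier of the tiled neighborhood, and tracking how it meets successive tiles $t_n, t_{n+1},\dots$ under $f$ shows it is freely homotopic in $\ell$ to $k$ times the closed core orbit, where $k\ge 1$ is forced by the coorientation/escaping direction — I would pin down the sign by noting that the forward semiflow direction agrees with the direction in which points escape the positive end, which is the direction in which the escaping subray of $\wt\ell$ limits onto $I$. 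The hard part here is bookkeeping the homotopy class precisely and confirming the multiple is positive rather than merely nonzero; this amounts to carefully orienting $\lambda$ and the core orbit consistently with the maw/flow conventions.

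Finally, for item (c), I would invoke the principal-region structure directly. If $f$ has a positive principal region $P_+$ with an arm bounded by two rays of $\Lambda_+$ that escape compact sets of $L$, then those two rays lie in leaves $\lambda_i,\lambda_{i+1}$ of $\Lambda_+$ that, after passing to $\U_+/\langle f\rangle$, become two distinct closed leaves (by item (a)); the arm itself is a region of $P_+$ disjoint from $\Lambda_+$ whose quotient is an annulus cobounded by these two closed leaves and meeting no leaves of $\Lambda_+^\infty$ in its interior, so the two closed leaves are parallel. Conversely, two parallel closed leaves of $\Lambda_+^\infty$ bound an annulus in $\U_+/\langle f\rangle$ disjoint from $\Lambda_+^\infty$; lifting, this gives an $f^p$-invariant strip in $\U_+$ disjoint from $\Lambda_+$ and escaping a positive end, whose two boundary lines are leaves of $\Lambda_+$, and by the description of principal regions (the complement $L - (\Lambda_+\cup\U_-)$) this strip is contained in an arm of some positive principal region whose two boundary rays escape compact sets. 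I expect the converse direction to be the genuine obstacle: one must rule out the possibility that the escaping strip, while disjoint from $\Lambda_+$, fails to lie in a single principal region or is "pinched" by $\U_-$; here I would appeal to the fact that $\Lambda_+ = \del\U_+$ together with the standard-metric/no-half-plane hypothesis to show the strip is actually a neighborhood of an arm, and that the two bounding leaves are genuinely the $\lambda_i, \lambda_{i+1}$ of the principal-region description rather than a single leaf wrapped twice (ruled out by consistency from item (a)).
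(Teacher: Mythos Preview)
The paper's own proof is a single sentence: all three parts are declared to be restatements of material in \cite[Section~6.7]{CCF19}, with no argument given. So there is no ``approach'' in the paper to compare against; you are attempting something strictly more ambitious, namely a self-contained proof from the structural facts already assembled in \Cref{sec:endperiodic}.

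Your outline is broadly reasonable, but a few points would need tightening before it becomes a proof. First, your reading of \emph{consistent} is slightly off: consistency (\Cref{defn:finitespirallam}) does not forbid parallel closed leaves --- indeed part~(c) says they can occur --- it forbids parallel closed leaves with \emph{incompatible} orientations (equivalently, it forbids the annulus between them from being Reeb-like). Your sketch for consistency should therefore argue that two parallel closed leaves of $\Lambda_+^\infty$ inherit the same orientation from the escaping dynamics, which is essentially what part~(b) gives you once established. Second, in part~(c) you write $\Lambda_+=\del\U_+$, but the correct identity is $\Lambda_+=\del\U_-$; more substantively, the lifted $f^p$-invariant strip disjoint from $\Lambda_+$ could a priori lie in a component of $\U_-$ rather than in a principal region, and you need to rule this out (for instance by noting that an $f^p$-invariant strip escaping a \emph{positive} end cannot simultaneously be a component of the \emph{negative} escaping set in the required sense). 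Third, your spiraling argument in~(a) (``forces the image ray to spiral onto a closed leaf'') is the crux and is currently asserted rather than shown; this is where \cite{CCF19} does real work, and a self-contained version would need to exploit the periodicity of semi-isolated leaves (their \cite[Theorem~6.5]{CCF19}) together with \Cref{lem:accumonends}.
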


\begin{proof}
The statements in this lemma are all restatements of the material in \cite[Section 6.7]{CCF19}.
\end{proof}

\begin{lemma}\label{lem:accumonends}
Let $\lambda$ be a leaf of $\Lambda_+$, and let $p\in \lambda$. Then no component of $\lambda-\{p\}$ is contained in a compact subset of $L$.
\end{lemma}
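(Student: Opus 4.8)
The goal is to show that for a leaf $\lambda$ of the positive Handel--Miller lamination $\Lambda_+$ and any $p \in \lambda$, neither component of $\lambda - \{p\}$ lies in a compact subset of $L$. Equivalently, every proper ray in $\lambda$ exits every compact set, i.e. $\lambda$ has no ``returning'' rays that accumulate only in the interior of $L$.

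\medskip

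\noindent\textbf{Plan.} I would argue by contradiction using the construction of $\Lambda_+$ as a limit of geodesic tightenings of non-escaping negative juncture components, together with the standardness of the hyperbolic metric (no embedded half planes). Suppose some component $r$ of $\lambda - \{p\}$, which is a geodesic ray, stays inside a compact set $K \subset L$. First I would note that since $r$ is a complete geodesic ray trapped in the compact set $K$, its closure $\overline{r}$ contains a nonempty recurrent sublamination, and in particular $\lambda$ accumulates on a leaf entirely contained in $K$; so without loss of generality I may assume $\lambda$ itself is contained in a compact set, or at least that $\Lambda_+$ has a leaf $\mu$ contained in a compact set. Now I would bring in the escaping-set description: by \Cref{thm:HMrep} and \cite[Lemma 4.71]{CCF19}, $\Lambda_+ = \partial \U_-$, so $\mu \subset \partial \U_-$. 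I would then use the structure of principal regions (\Cref{def:principalregion} and the discussion following it): every leaf of $\Lambda_+$ is either a boundary leaf $\lambda_i$ of some positive principal region $\ol P_+$, with each such $\lambda_i$ having rays that escape ends of $L$ (the part of $\lambda_i$ outside the invariant interval $I_i$ escapes), or else it lies in the ``interior'' of the lamination. In the first case $\mu$ cannot be compactly contained because its defining rays escape ends of $L$.

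\medskip

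\noindent The remaining case is that $\mu$ is a non-boundary leaf of $\Lambda_+$, i.e. it is a limit of boundary leaves. Here I would use the fact that $\Lambda_+$ is the geodesic limit of tightened non-escaping negative junctures: a compactly contained leaf $\mu$ would be approximated arbitrarily well (in the Hausdorff / geometric topology on compact sets) by arcs of tightened junctures, which would then also spend a long time in a slightly larger compact set $K'$. But a juncture component $j \in \ms J_-$ that is non-escaping, when tightened, is a properly embedded line or circle; if it were a circle it would be compact and one can rule this out (a tightened juncture is non-separating-ish in the relevant surface and a compact leaf would have to be one of finitely many isotopy classes, contradicting that $f$ pushes junctures toward ends). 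If it is a properly embedded line, then it must exit every compact set in both directions, so long subarcs of it cannot stay in $K'$ unless they double back — and doubling back a geodesic in a standard metric (no embedded half plane, no bigons between geodesics) is impossible. This is the heart of the argument: a geodesic confined to a compact region must, by recurrence, limit on a minimal sublamination; but such a minimal sublamination of $\Lambda_+$, being compactly contained, would violate the half-plane-free condition on the metric, because a recurrent geodesic trapped in a compact subsurface forces an embedded half-plane or else forces the subsurface to contain a closed geodesic, and a closed leaf of $\Lambda_+$ is excluded unless it bounds a principal-region arm — and that arm again escapes an end by \Cref{lem:bdylams}(c).

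\medskip

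\noindent\textbf{Main obstacle.} The delicate point is the ``non-boundary leaf'' case: translating ``$\lambda$ is not compactly contained'' into a clean statement. I expect the cleanest route is: (1) reduce to a compactly contained \emph{minimal} sublamination $\mu_0 \subseteq \Lambda_+$ by the standard recurrence argument for a geodesic ray trapped in a compact set; (2) observe $\mu_0$ is either a closed geodesic or has all leaves dense in $\mu_0$; (3) in the closed-geodesic case, invoke \Cref{lem:bdylams}(b)--(c) (a closed leaf of $\Lambda_+^\infty$, equivalently a leaf of $\Lambda_+$ with a subray escaping an end) to get a contradiction with compact containment, since the corresponding leaf of $\Lambda_+$ genuinely has an escaping subray; (4) in the filling/minimal case, use that $\Lambda_+$ is disjoint from $\U_-$ and is $\partial \U_-$ — a compactly supported minimal lamination disjoint from the open dense-near-the-negative-ends set $\U_-$ would have to be swallowed by $\U_-$ or by a principal region, again contradicting the structure theory. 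I would organize the write-up around invoking \Cref{lem:bdylams} and the principal region description rather than re-deriving hyperbolic geometry, since those encapsulate exactly the escaping behavior needed.
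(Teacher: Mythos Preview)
The paper does not prove this lemma; it simply cites \cite[Corollary 4.50]{CCF19}. So any self-contained argument you give is necessarily a different route.

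Your sketch, however, has a genuine gap. In step (3) of your ``main obstacle'' paragraph you try to rule out a closed geodesic leaf $\mu_0$ of $\Lambda_+$ by invoking \Cref{lem:bdylams}(b)--(c). But that lemma is about closed leaves of the \emph{quotient} lamination $\Lambda_+^\infty$ in $\U_+/\langle f\rangle$, and such a closed leaf corresponds to a (noncompact) leaf of $\Lambda_+$ with a subray escaping to a positive end --- not to a closed geodesic leaf of $\Lambda_+$ in $L$. So \Cref{lem:bdylams} says nothing about the existence of a compact leaf of $\Lambda_+$, and your step (3) does not go through. In fact the paper later uses precisely \Cref{lem:accumonends} to conclude that $\Lambda_+$ has no compact leaves (see the proof of \Cref{thm:depthonetovbs}, item (5)), so you cannot assume this.

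There is also a circularity risk in your appeal to the structure theory of principal regions (the description of the $\lambda_i$, $I_i$, escaping arms, etc.). That material in \cite[\S\S 5.3, 6.1--6.4]{CCF19} is developed alongside Corollary~4.50, and it is not clear you can use the fine structure of arms escaping to ends without already knowing that leaves of $\Lambda_+$ have no compactly contained rays. If you want a self-contained argument, you would need to go back to the actual construction of $\Lambda_+$ as a Hausdorff limit of geodesic tightenings of negative junctures and argue directly from properties of geodesics in a standard hyperbolic metric, rather than from the downstream structure theory.
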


\begin{proof}
This is \cite[Corollary 4.50]{CCF19}.
\end{proof}

\begin{lemma}\label{lem:ilam}
Let $K$ be a compact subsurface of $L$ with $\del K\pitchfork \Lambda_+$. Then $\Lambda_+\cap K$ is an $I$-lamination.
\end{lemma}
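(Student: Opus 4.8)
The plan is to show that every leaf of $\Lambda_+ \cap K$ is a compact, properly embedded arc in $K$, which is exactly the definition of an $I$-lamination. Since $K$ is a compact subsurface with $\partial K \pitchfork \Lambda_+$, each leaf $\lambda$ of $\Lambda_+$ meets $K$ in a disjoint union of subarcs of $\lambda$, each of which is either a properly embedded arc with both endpoints on $\partial K$, or a closed curve contained in $\intr(K)$, or a ray or line of $\lambda$ trapped inside $K$. The transversality $\partial K \pitchfork \Lambda_+$ rules out leaves tangent to $\partial K$, so the only thing to rule out is components of $\lambda \cap K$ that are noncompact, i.e. that contain a ray of $\lambda$ staying inside the compact set $K$. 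But this is precisely forbidden by \Cref{lem:accumonends}: if a component of $\lambda \cap K$ were noncompact, it would contain a ray of $\lambda$, hence (cutting $\lambda$ at a point of that ray) a component of $\lambda - \{p\}$ contained in the compact set $K \subset L$, contradicting that lemma. Closed-curve components are handled the same way, since a closed curve in $\intr(K)$ also yields a component of $\lambda - \{p\}$ with compact closure in $L$.

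So the argument is: take any leaf component $\mu$ of $\Lambda_+ \cap K$. By transversality its endpoints (if any) lie on $\partial K$ and it meets $\partial K$ only there. If $\mu$ had no endpoints it would be a closed curve or a properly embedded line in $K$; either way it contains a ray of some leaf $\lambda$ of $\Lambda_+$ with closure inside $K$, contradicting \Cref{lem:accumonends}. If $\mu$ had exactly one endpoint, it would contain a ray escaping into $\intr(K)$, again contradicting \Cref{lem:accumonends}. Hence $\mu$ has two endpoints on $\partial K$ and, being a closed subset of the compact set $K$ with no limiting behavior inside $\intr(K)$ other than within $\mu$ itself, is a compact arc properly embedded in $K$. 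Therefore every leaf of $\Lambda_+ \cap K$ is a compact properly embedded arc, so $\Lambda_+ \cap K$ is an $I$-lamination.

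The only subtlety — and the step I would be most careful about — is making precise the claim that a noncompact leaf component inside a compact surface must contain a ray with closure in $K$ in a way that genuinely invokes \Cref{lem:accumonends}. The cleanest route is: pick any point $p$ on $\mu$; if $\mu$ is noncompact then at least one of the two components of $\lambda - \{p\}$ (where $\lambda$ is the leaf of $\Lambda_+$ containing $\mu$) has its image staying in $K$ for all time, i.e. is contained in the compact set $K$, directly contradicting \Cref{lem:accumonends}. This needs only that $K$ is compact and that $\mu$ is a connected noncompact piece of a single leaf — no further structure theory of $\Lambda_+$ is required. I do not anticipate any serious obstacle; the lemma is essentially an immediate corollary of \Cref{lem:accumonends} together with transversality.
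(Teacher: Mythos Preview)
Your proposal is correct and takes essentially the same approach as the paper: both arguments reduce immediately to \Cref{lem:accumonends}, observing that a noncompact (or circular) component of $\lambda\cap K$ would force a half-leaf of $\lambda$ to stay in the compact set $K$. The paper's proof is just the one-line version of what you wrote.
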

\begin{proof}
If $\lambda$ is a leaf of $\Lambda_+$, then by \Cref{lem:accumonends} neither of its ends accumulates in $K$. The lemma follows.
\end{proof}

\begin{lemma}\label{lem:nojunctures}
Let $\lambda$ be a leaf of $\Lambda_+$. If there is a side of $\lambda$ on which negative (geodesic) juncture components do not accumulate, then $\lambda$ borders a principal region on that side.
\end{lemma}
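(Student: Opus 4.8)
The plan is to argue by the contrapositive: suppose $\lambda$ does not border a principal region on a given side; we will produce negative geodesic juncture components accumulating on $\lambda$ from that side. Fix a standard hyperbolic metric so that $f$ is Handel-Miller and $\lambda$ is a geodesic leaf of $\Lambda_+$. Recall from \Cref{def:principalregion} that $\ms P_+ = L - (\Lambda_+ \cup \U_-)$, so $\lambda = \del\U_- = \del\ms P_+ $ (up to the structure in the Cantwell--Conlon--Fenley lemma cited above: $\Lambda_+ = \del\U_-$). The complementary region of $\Lambda_+$ on the chosen side of $\lambda$ therefore meets $\U_-$; since it is not a principal region, I expect its interior to lie \emph{entirely} in $\U_-$ except for other boundary leaves of $\Lambda_+$ — that is, on that side $\lambda$ is a boundary leaf of $\closure{\U_-}$ whose adjacent complementary component of $\Lambda_+$ is contained in $\U_-$. (One should rule out the intermediate case where the side-region is cut by more leaves of $\Lambda_+$ by passing to the leaf of $\Lambda_+$ actually bounding $\U_-$ on that side, which differs from $\lambda$ only if $\lambda$ is not itself on $\del\U_-$; but $\lambda\subset\Lambda_+=\del\U_-$, so $\lambda$ is on the frontier of $\U_-$ and this is automatic.)

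Next I would use the structure of $\U_-$ in terms of junctures from \Cref{construction:junctures}, applied to $f^{-1}$. By construction $\U_-$ is exhausted by tiled neighborhoods of the negative end-cycles, and the boundaries of these tiled neighborhoods are exactly the negative junctures — the geodesic tightenings of which are the negative geodesic juncture components. Concretely, pick a point $q$ in $\intr(\U_-)$ close to $\lambda$ on the chosen side; by \Cref{lem:accumonends}, or directly from the escaping dynamics defining $\U_-$, the forward $f^{-1}$-orbit of $q$ escapes to a negative end, so $q$ lies in some tiled neighborhood of that end, hence inside (or on) a negative juncture $j_0$. Taking $q$ closer and closer to $\lambda$ forces the corresponding junctures to come arbitrarily close to $\lambda$: a subsequence of these junctures, and hence of their geodesic tightenings, must accumulate on a sublamination of $\Lambda_-$ together with — crucially — pieces hugging $\lambda$. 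The point is that $\lambda$, being on the frontier of $\U_-$, is a limit of boundaries of tiled neighborhoods; since those boundaries tighten to negative geodesic juncture components and the tightening moves them a bounded Hausdorff distance (standard metric, no half-planes), the tightened juncture components also accumulate on $\lambda$ from that side. This is essentially the content of \Cref{thm:HMrep} read in reverse: $\Lambda_-$ is what the negative juncture tightenings limit on, and here $\lambda$ sits on the frontier of $\U_-$ so it is "visible" from the negative junctures.

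More carefully, I would set up the accumulation as follows. Choose an embedded transverse arc $\alpha$ crossing $\lambda$ once, with the endpoint $\alpha(1)$ on the chosen side lying in $\intr\U_-$. The set $\alpha^{-1}(\U_-)$ contains a half-open interval $(s,1]$ with $\alpha(s)\in\lambda$. For each parameter $t\in(s,1]$, $\alpha(t)$ lies in some tiled neighborhood of a negative end, bounded by a negative juncture $j_t$; the juncture $j_t$ separates $\alpha(t)$ from $\lambda$ along $\alpha$ (since $\lambda\not\subset\U_-$), so $j_t$ crosses $\alpha$ at a parameter in $(s,t)$. Letting $t\to s^+$, the crossing parameters tend to $s$, so the junctures $j_t$ meet $\alpha$ arbitrarily close to $\lambda$. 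Geodesically tightening, and using that tightening decreases length, the tightened juncture components $j_t^*$ stay within bounded distance of $j_t$; combined with thinness of the hyperbolic surface near the geodesic $\lambda$, a diagonal/limiting argument gives that some sequence of tightened negative juncture components meets every neighborhood of a point of $\lambda$ on the chosen side. That is exactly "negative geodesic juncture components accumulate on $\lambda$ on that side," the desired contradiction.

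The main obstacle, I expect, is the last geometric step: controlling the tightened juncture components near $\lambda$. Tightening a curve to its geodesic representative can move it a long way in general, and one must use the hypothesis that the metric is standard (no isometrically embedded half-plane) together with \Cref{thm:HMrep} to know that the tightenings of the negative junctures limit precisely on $\Lambda_-$ and stay disjoint from $\Lambda_+$ — in particular they do not "jump across" $\lambda$. The cleanest route is probably not to track individual tightenings but to invoke the already-established picture: the non-escaping negative juncture components limit on $\Lambda_-$, the complementary region of $\Lambda_+$ adjacent to $\lambda$ on the given side is a component of $L-(\Lambda_+\cup\Lambda_-\cup\text{junctures})$ that is \emph{not} a nucleus-plus-arms configuration (not principal), hence it is a region into which negative junctures necessarily spiral or accumulate, giving the accumulation on $\lambda$. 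I would check the fine points by reference to \cite[\S\S 5.3, 6.1--6.4]{CCF19} and \Cref{thm:HMrep}, which supply exactly the disjointness and limiting statements needed.
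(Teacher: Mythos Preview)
Your contrapositive approach has the same logical skeleton as the paper's proof, but the paper argues the direct implication and is much shorter: it observes that if negative geodesic junctures do not accumulate on a side then (since $\Lambda_+$ is by definition their limit set, \Cref{thm:HMrep}) neither do leaves of $\Lambda_+$, so $\lambda$ borders a complementary region $P$; the decomposition $L=\U_-\sqcup\Lambda_+\sqcup\ms P_+$ forces $P\subset\U_-$ or $P\subset\ms P_+$; and \cite[Proposition 5.11]{CCF19} rules out the former.

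Your argument has two gaps. First, you assume that ``$\lambda$ does not border a principal region on that side'' implies ``$\lambda$ borders a component of $\U_-$ on that side.'' There is a third possibility you dismiss too quickly: leaves of $\Lambda_+$ could accumulate on $\lambda$ from that side, so $\lambda$ borders no complementary region there at all. Your remark that $\lambda\subset\del\U_-$ does not rule this out. (In the direct direction the paper kills this case immediately using that $\Lambda_+$ is the limit of negative junctures.)

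Second, the tightening problem you flag is genuine for the argument as written, but entirely avoidable. You build junctures from tilings via \Cref{construction:junctures} and then try to control their geodesic representatives near $\lambda$. Instead, use the geodesic negative junctures that come with the Handel--Miller map from the start: by \Cref{thm:HMrep} they are already disjoint from $\Lambda_+$, and they themselves bound tiled neighborhoods of the negative ends. Running your transverse-arc argument with these, the geodesic juncture separating $\alpha(t)\in\U_-$ from $\lambda$ must cross $\alpha$ in $(s,t)$ and, being disjoint from $\lambda$, stays on the chosen side---so the geodesic junctures accumulate on $\lambda$ there with no tightening step at all. This is essentially what \cite[Proposition 5.11]{CCF19} asserts, and is what the paper cites rather than reproves.
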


\begin{proof}
By \cite[Lemma 5.14]{CCF19} (or \Cref{def:principalregion}), $L$ is the disjoint union of the negative escaping set $\ms U_-$, the lamination $\Lambda=\Lambda_+$, and the union of positive principal regions $\ms P_+$.

If $\lambda$ has a side on which negative juncture components do not accumulate, it follows from the definition of $\Lambda_+$ that leaves of $\Lambda_+$ do not accumulate on that side either.
Hence $\lambda$ borders a component $P$ of $L-\Lambda$ on that side, which must be either an escaping component or a principal region.

By \cite[Proposition 5.11]{CCF19} negative juncture components accumulate on $\lambda$ from any side bordering $\mathscr U_-$. We conclude that $P$ is a principal region.
\end{proof}

Let $\wt L$ denote the universal cover of $L$, where $L$ is endowed with a standard hyperbolic metric. Then $\wt L$ can be identified with a subset of $\mathbb H^2$ with (possibly empty) boundary a collection of geodesics. Thus it has a natural compactification to a closed disk obtained by taking the closure in $\mathbb H^2\cup\del_\infty \mathbb H^2$. We denote the intersection of this closed disk with  $\del_\infty \mathbb H^2$ by $\del_\infty(\wt L)$. Since $\Lambda_\pm$ are geodesic laminations, each leaf of their lifts $\wt \Lambda_\pm$ to $\wt L$ determine well-defined endpoints in $\del_\infty(\wt L)$.

The following lemma says that the arms of principal regions never ``fellow travel." Its proof makes use of the fact that all leaves of $\Lambda^+$ and all juncture components are geodesics.

\begin{lemma}\label{lem:principalcusps}
Let $\lambda_1$ and $\lambda_2$ be leaves of $\Lambda_+$, with distinct lifts $\wt \lambda_1$ and $\wt\lambda_2$ sharing a point $p$ in $\del_\infty(\wt L)$. Then $\wt\lambda_1$ and $\wt\lambda_2$ border the same lifted principal region.
\end{lemma}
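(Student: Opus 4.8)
The plan is to argue by contradiction, exploiting the fact that everything in sight is geodesic. Suppose $\wt\lambda_1$ and $\wt\lambda_2$ share an ideal endpoint $p \in \del_\infty(\wt L)$ but do not border a common lifted principal region. Since $\wt\lambda_1 \ne \wt\lambda_2$ and both are complete geodesics in $\hyp^2$ sharing exactly the ideal point $p$, they are asymptotic there and bound between them a region $W \subset \wt L$ which is a ``cusp'' neighborhood of $p$: a geodesic bigon with one ideal vertex at $p$. I would first observe that because the hyperbolic metric on $L$ is standard, $W$ contains no half plane, so in particular $W$ cannot contain a whole leaf of $\wt\Lambda_+$ disjoint from $\wt\lambda_1 \cup \wt\lambda_2$; any leaf of $\wt\Lambda_+$ meeting $\intr(W)$ would, being a geodesic, have to be asymptotic to $p$ too, but then it would have to coincide with $\wt\lambda_1$ or $\wt\lambda_2$ (distinct geodesics sharing an endpoint can only be asymptotic, not nested infinitely). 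So $\intr(W) \cap \wt\Lambda_+ = \varnothing$, and hence the image of $\intr(W)$ lies in a single component $P$ of $L - \Lambda_+$, i.e.\ either an escaping component of $\U_-$ or a positive principal region.

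Next I would rule out the escaping case. If the image of $\intr(W)$ lay in an escaping component of $\U_-$, then by \Cref{lem:nojunctures} (or directly by \cite[Proposition 5.11]{CCF19}) the negative geodesic juncture components would accumulate on $\lambda_1$ (and $\lambda_2$) from the side facing $W$. But the negative geodesic juncture components are themselves geodesics disjoint from $\Lambda_+$ and mutually disjoint (\Cref{thm:HMrep}), and accumulating on $\wt\lambda_1$ from the $W$-side forces infinitely many of their lifts into $W$, each a geodesic with both endpoints on $\del_\infty(\wt L) \setminus \{p\}$ on the two sides — this is where I'd use that a standard metric admits no embedded half plane: a sequence of disjoint geodesics trapped in the cusp region $W$ and converging to its asymptotic spine must eventually bound a half plane inside $W$, contradiction. (Alternatively, and perhaps more cleanly: two geodesics asymptotic at $p$ have the property that the juncture geodesics crossing the gap between them would have to be non-separating segments accumulating only at $p$, which again produces an isometric half plane in the limit.) Therefore $\intr(W)$ maps into a positive principal region $P_+$.

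Finally I would upgrade ``$\wt\lambda_1, \wt\lambda_2$ bound a region mapping into $P_+$'' to ``$\wt\lambda_1, \wt\lambda_2$ border the \emph{same} lifted copy $\wt P_+$.'' Since $\wt\lambda_1$ and $\wt\lambda_2$ are on the boundary of $\intr(W)$ and $\intr(W)$ is connected and maps into $P_+$, both $\lambda_1$ and $\lambda_2$ are boundary leaves of $\ol{P_+}$ facing the side of $\intr(W)$; lifting the path in $\intr(W)$ connecting a neighborhood of $\wt\lambda_1$ to a neighborhood of $\wt\lambda_2$ gives a path in a single component of the preimage of $P_+$, so $\wt\lambda_1$ and $\wt\lambda_2$ border that one lifted principal region. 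This contradicts our assumption and completes the proof.

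The main obstacle is the escaping-component step: making rigorous the claim that negative juncture geodesics cannot accumulate from the $W$-side of $\wt\lambda_i$ without forcing an isometric half plane into $\wt L$. The clean way to phrase it is to note that $W$, being a geodesic bigon with an ideal vertex and no other ideal vertices, has the property that any geodesic entering $\intr(W)$ with both endpoints $\ne p$ cuts off a ``smaller'' region still containing a full geodesic ray toward $p$; iterating along the accumulating sequence of disjoint juncture geodesics produces a nested sequence whose intersection contains an isometrically embedded half plane, contradicting standardness. I expect this to be the only place requiring genuine hyperbolic-geometry care; the rest is point-set topology of principal regions and lifts.
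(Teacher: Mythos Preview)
Your argument has two related gaps, both stemming from a misapplication of the standard metric hypothesis.

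First, your claim that $\intr(W)\cap\wt\Lambda_+=\varnothing$ is not justified. Standardness says no half plane embeds isometrically in $L$; it says nothing about $\wt L$, which as a geodesically convex subset of $\hyp^2$ typically contains many half planes. So the sentence ``$W$ contains no half plane'' is simply false as stated, and in any case there is no reason a leaf of $\wt\Lambda_+$ in $W$ must have $p$ as an endpoint: a geodesic with both ideal endpoints in the arc $[q_1,q_2]$ opposite $p$ is perfectly consistent with the hypotheses. Your dismissal (``distinct geodesics sharing an endpoint can only be asymptotic, not nested infinitely'') does not address this case. In fact the paper never claims $W$ is leaf-free, and the conclusion of the lemma does not require it.

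Second, your treatment of the escaping case has the same confusion. A nested sequence of juncture lifts in $W\subset\wt L$ does not produce a half plane in $L$ without some argument that the region embeds under the covering map, which you do not supply. The key point you are missing is much simpler and does not use standardness at all: if lifted negative junctures $\wt j_k$ accumulate on $\wt\lambda_1$ from inside $W$, then since each $\wt j_k$ is a geodesic disjoint from both $\wt\lambda_1$ and $\wt\lambda_2$, its ideal endpoints lie in $\{p\}\cup[q_1,q_2]$; but convergence $\wt j_k\to\wt\lambda_1$ forces one endpoint to converge to $p$, and disjointness then forces that endpoint to \emph{equal} $p$. Now $\wt\lambda_1$ and $\wt j_k$ are asymptotic at $p$, so downstairs a ray of $\lambda_1$ stays in a neighborhood of the compact juncture $j_k$, contradicting \Cref{lem:accumonends}. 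This is the paper's argument: no junctures accumulate from the $W$-side, so by \Cref{lem:nojunctures} each $\wt\lambda_i$ borders a lifted principal region $\wt P_i\subset W$, and if $\wt P_1\ne\wt P_2$ there would be a separating leaf $\wt\lambda_3\subset W$ with endpoint $p$, to which the same juncture argument applies for a contradiction.
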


\begin{proof}
Let $A$ be the component of $\wt L-(\wt\lambda_1\cup \wt \lambda_2)$ bordered by both $\wt\lambda_1$ and $\wt\lambda_2$. We claim that there are no lifted negative juncture components accumulating on $\wt\lambda_1$ or $\wt\lambda_2$ from inside $A$. This is because any such lifted juncture component sufficiently far into such a sequence would have to have $p$ as an endpoint, since the negative junctures are disjoint from $\Lambda_+$. If $\wt j$ were such a lift of a juncture component $j$, then both $\lambda_1$ and $\lambda_2$ would limit on $j$ in $L$. However, by \Cref{lem:accumonends} both ends of each leaf of $\Lambda_+$ pass arbitrarily near at least one end of $L$ so this is impossible.

By \Cref{lem:nojunctures}, $\wt \lambda_1$ and $\wt \lambda_2$ must each border a lift of a principal region lying in $A$; call these lifts $\wt P_1$ and $\wt P_2$ respectively. If $\wt P_1\ne \wt P_2$, then there must be at least one lifted leaf $\wt\lambda_3\in \wt \Lambda$ lying in $A$ and having $p$ as an endpoint. However this would imply the existence of a sequence of lifted negative junctures accumulating on $\wt\lambda_3$, each having $p$ as an endpoint, which we have already seen is impossible. It follows that no such $\wt \lambda_3$ exists so that $\wt P_1=\wt P_2$.
\end{proof}

\begin{lemma}\label{lem:juncaccum}
Let $\lambda$ be a periodic leaf of $\Lambda_+$. Let $\wt\lambda$ be a lift of $\lambda$ to $\wt L$. Let $\wt f^n$ be a lift of $f^n$ that preserves $\wt \lambda$ and its ends. For each side of $\wt \lambda$ such that $\wt \lambda$ does not border a lifted principal region on that side, there exists a lift $\wt j$ of a (geodesic) juncture component $j$ such that $(\wt j, \wt f^n(\wt j),\wt f^{2n}(\wt j),\dots)$ converges to $\wt\lambda$ from that side.
\end{lemma}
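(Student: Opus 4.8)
\textbf{Proof proposal for \Cref{lem:juncaccum}.}

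The plan is to extract, for each side on which $\wt\lambda$ does not border a lifted principal region, a sequence of geodesic negative juncture components that accumulates on $\wt\lambda$, and then replace it by its $\wt f^n$-orbit. First I would work downstairs: since $\lambda$ is a periodic leaf of $\Lambda_+$, and by hypothesis the relevant side of $\wt\lambda$ is not bordered by a lifted principal region, \Cref{lem:nojunctures} (applied on that side, using that the universal cover detects local structure) together with the characterization $\Lambda_+=\del\U_-$ forces negative (geodesic) juncture components to accumulate on $\lambda$ from that side. Fix a geodesic negative juncture component $j$ realizing such an accumulation, i.e. pick $j$ whose geodesic tightening comes within $\epsilon$ of $\lambda$ along a long segment, on the correct side. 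Lift to a geodesic $\wt j_0$ in $\wt L$ that is close to $\wt\lambda$ on the prescribed side; because all the objects are geodesics in $\mathbb H^2$, closeness of $\wt j_0$ to $\wt\lambda$ along a long segment is equivalent to the endpoints of $\wt j_0$ in $\del_\infty(\wt L)$ being close (in the appropriate cyclic order) to the endpoints of $\wt\lambda$.

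Next I would apply the dynamics. Let $\wt f^n$ be the lift of $f^n$ preserving $\wt\lambda$ and its two endpoints $\xi_\pm \in \del_\infty(\wt L)$; then $\wt f^n$ acts on $\del_\infty(\wt L)$ fixing $\xi_\pm$, and since $f$ expands $\Lambda_+$ in the relevant sense (the positive end-cycle data, cf. the discussion of escaping sets and \Cref{lem:bdylams}), $\xi_-$ is repelling and $\xi_+$ is attracting (or vice versa) for this action restricted to a neighborhood of $\{\xi_\pm\}$ in $\del_\infty(\wt L)$ — more precisely, the dynamics on the boundary near these fixed points is north–south on each of the two complementary arcs cut off by $\xi_\pm$. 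Because $\wt j_0$ has both endpoints in the small arc on the chosen side near (one of) $\xi_\pm$, the iterates $\wt f^{kn}(\wt j_0)=:\wt j_k$ are again geodesic negative juncture components (as $f$ permutes the geodesic juncture components, by \Cref{thm:HMrep}), they remain on the same side, and their endpoints converge to $\{\xi_+,\xi_-\}$ under iteration. Hence $\wt j_k \to \wt\lambda$ in the Hausdorff-on-compacta sense, which is exactly the asserted convergence. Setting $\wt j := \wt j_0$, the sequence $(\wt j,\wt f^n(\wt j),\wt f^{2n}(\wt j),\dots)$ converges to $\wt\lambda$ from the prescribed side.

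One technical point to nail down is the north–south picture for $\wt f^n$ acting on $\del_\infty(\wt L)$ near $\xi_\pm$: this should follow from the fact that $\lambda$ is a periodic leaf of the \emph{positive} (expanding) lamination, so on each side the nearby negative junctures are pushed by $f^{n}$ toward the ends of $L$, i.e. toward $\xi_\pm$ on the circle at infinity; alternatively one can invoke the local model for $f$ near a periodic leaf from \cite[\S 6]{CCF19}. The other point is to make sure the chosen juncture component $j$ can be taken with \emph{both} lifted endpoints strictly inside the arc bounded by $\xi_\pm$ on the given side — this is where one uses that negative junctures are disjoint from $\Lambda_+$ (so $\wt j_0$ cannot have $\xi_+$ or $\xi_-$ as an endpoint and cannot cross $\wt\lambda$), together with the accumulation from \Cref{lem:nojunctures}; the argument here is parallel to the one in the proof of \Cref{lem:principalcusps}.

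\textbf{Main obstacle.} The step I expect to be the real work is establishing the north–south / contraction behavior of $\wt f^n$ on $\del_\infty(\wt L)$ near the endpoints of $\wt\lambda$ — i.e. showing the iterated juncture lifts genuinely converge to $\wt\lambda$ rather than merely staying nearby — since this requires translating the ``$f$ expands $\Lambda_+$, contracts junctures toward the ends'' heuristic into a precise statement about the boundary action of the chosen lift $\wt f^n$; everything else is bookkeeping with geodesics and the already-cited structural results of \cite{CCF19}.
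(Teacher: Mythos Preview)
The paper does not give an independent argument here: its entire proof is the single sentence ``This is a restatement of \cite[Lemma 6.7]{CCF19}.'' So there is nothing to compare at the level of strategy; what you have written is an attempt to reprove that cited lemma from scratch.

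Your outline is reasonable and you correctly isolate the only nontrivial step. But the version of the dynamics you invoke---that $\wt f^n$ acts with north--south dynamics on the arc of $\del_\infty(\wt L)$ cut off by $\xi_\pm$ on the given side---is too strong as stated. There is no reason that $\xi_\pm$ are the \emph{only} fixed points of $\wt f^n$ on that arc: other $f^n$-periodic leaves of $\Lambda_\pm$ contribute further fixed points, and a generic lift $\wt j_0$ whose endpoints straddle such an extra fixed point will not have both endpoints pushed toward $\{\xi_+,\xi_-\}$ under iteration. So the step ``endpoints converge to $\{\xi_+,\xi_-\}$'' does not follow from the boundary picture alone, and this is exactly the obstacle you flag. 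The argument in \cite{CCF19} does not go via global boundary dynamics; it instead uses the monotonicity built into the juncture construction (junctures bound nested tiled neighborhoods, and $f$ permutes them in a prescribed direction), which forces the iterates $\wt f^{kn}(\wt j)$ to be \emph{nested} between $\wt j$ and $\wt\lambda$ and hence to converge. If you want to make your sketch into an honest proof, that nesting is the missing ingredient; once you have it, the boundary-dynamics heuristic becomes unnecessary. Alternatively, simply citing \cite[Lemma 6.7]{CCF19} as the paper does is entirely adequate.
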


\begin{proof}
This is a restatement of \cite[Lemma 6.7]{CCF19}.
\end{proof}

When infinite strip ends are present, they interact predictably with the laminations as the following lemma shows. 

\begin{lemma}\label{lem:infstripend}
Let $e$ be an infinite strip end of $L$. If $e$ is positive then there are either 1 or 2 leaves of $\Lambda_+$ escaping to $e$. These are the only leaves that see $e$, in the sense that there is a neighborhood of $e$ disjoint from all other leaves of $\Lambda_\pm$.
\end{lemma}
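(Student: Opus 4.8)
The plan is to analyze a small neighborhood $V$ of the infinite strip end $e$ directly using the geodesic picture and the structure of tilings near $e$. Since $e$ is an infinite strip end, by definition it has a regular neighborhood basis $\{V_i\}$ with each $V_i \cong [0,1]\times(0,\infty)$; fix such a $V = V_1$. First I would recall that $e$ is a positive end, so by \Cref{construction:junctures} (applied to whatever collection of junctures we like, e.g.\ the geodesic ones coming with the Handel-Miller map $f$) the junctures of $e$ are obtained from a tiled neighborhood $\bigcup_{i\ge n} t_i$, and the boundary $\del V$ of a tiled neighborhood of $e$ is a juncture, a $1$-manifold properly embedded in $L$. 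Because $V\cong[0,1]\times(0,\infty)$, the juncture $\del V$ is either a single properly embedded arc (with both endpoints on $\del L$) or a single properly embedded line; and as we march toward $e$, consecutive junctures $\del(\bigcup_{i\ge n}t_i)$ are pairwise disjoint and ``parallel'' in $V$. Crucially, $\del V$ meets $V$ in an arc or line that separates $V$ into a smaller strip neighborhood of $e$ and a compact (or half-open strip) remainder.

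Next I would bound how many leaves of $\Lambda_+$ can enter an arbitrarily small strip neighborhood of $e$. By \Cref{lem:accumonends}, no half-leaf of $\Lambda_+$ is contained in a compact set, so any leaf entering the strip $[0,1]\times(T,\infty)$ for all large $T$ must escape to $e$; and a leaf escaping to $e$ does so along the strip, i.e.\ it is a properly embedded line (or ray, if it hits $\del L$) eventually contained in $[0,1]\times(T,\infty)$ and limiting on $e$. In the universal cover, using that leaves of $\Lambda_+$ are geodesics and $\wt L \subset \hyp^2$ carries a standard hyperbolic metric, the lifts of the strip end $e$ correspond to a collection of points (or intervals) in $\del_\infty(\wt L)$; a geodesic escaping to $e$ has an endpoint at such a point. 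Because the strip is thin, there can be at most two such geodesic leaves that are disjoint and both escape to the same lift of $e$ — one on each ``side'' of the strip — and no more, since a third disjoint geodesic heading to the same ideal point would have to be nested between them, forcing an accumulation and hence (by the same argument as in \Cref{lem:principalcusps}, using that junctures are geodesics disjoint from $\Lambda_+$) a contradiction with \Cref{lem:accumonends}. This is where I'd invoke \Cref{lem:principalcusps}: two leaves sharing an ideal point at the strip end would have to co-bound a lifted principal region, but a principal region has a nucleus that is not contained in any strip, so it cannot fit inside a neighborhood of $e$; hence distinct leaves escaping to $e$ do not share ideal endpoints, and at most two fit in the strip.

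For the ``only leaves that see $e$'' clause, I would argue that after passing to a sufficiently small strip neighborhood $V' = [0,1]\times(T,\infty)$ of $e$, the $1$ or $2$ escaping leaves of $\Lambda_+$ chop $V'$ into at most three sub-strips, and I claim the complementary sub-strip(s) not between two escaping leaves meet no other leaves of $\Lambda_\pm$. For $\Lambda_+$: any leaf meeting $V'$ for arbitrarily small $V'$ escapes to $e$ (by \Cref{lem:accumonends} as above), so it is one of the at most two we've already named. For $\Lambda_-$: a leaf of $\Lambda_-$ entering every $V'$ would likewise escape to $e$ by \Cref{lem:accumonends}; but by \Cref{thm:HMrep} $\Lambda_-$ is transverse to $\Lambda_+$ and a geodesic lamination escaping to the same strip end transversely to another would have to cross it infinitely often near $e$, which is impossible for two geodesics both asymptotic to the strip; alternatively, a leaf of $\Lambda_-$ escaping to the positive end $e$ would contradict the fact that negative juncture components (which limit on $\Lambda_+$, not $\Lambda_-$, near positive ends) accumulate on $\Lambda_-$ — more cleanly, $\Lambda_- \subset \del\U_+$ and points of $\U_+$ near $e$ escape to $e$ under forward iteration, so $\Lambda_-$ near $e$ would be forced to escape to $e$ too, again impossible for a geodesic lamination with a leaf asymptotic to a thin strip while staying disjoint from $\Lambda_+$. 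Thus we get the neighborhood of $e$ disjoint from all other leaves.

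\emph{Main obstacle.} The delicate point is the hyperbolic-geometry counting argument showing ``at most two'' and controlling the interaction with $\del L$ and with $\Lambda_-$: one must be careful that an infinite strip end in a \emph{standard} metric really does look, at infinity, like a single ideal point (or a pair of ideal points bounding the boundary geodesics of $L$ when $\del L \ne \varnothing$), so that geodesics escaping there have constrained endpoints. I expect to spend most of the real work pinning down the ideal-boundary picture of a strip end and then quoting \Cref{lem:principalcusps} and \Cref{lem:accumonends} to rule out a third leaf and to rule out $\Lambda_-$; the combinatorial/tiling input is essentially bookkeeping once that is in place.
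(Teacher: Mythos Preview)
Your proposal has two genuine gaps.

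\textbf{Existence is never established.} The lemma asserts there are \emph{at least} one (and at most two) leaves of $\Lambda_+$ escaping to $e$; you only argue for an upper bound. The paper's proof obtains existence from the boundary components $\ell_1,\ell_2$ of $L$ that escape to $e$: by the endperiodic axioms each $\ell_i$ joins a negative end to the positive end $e$, so there are infinitely many negative juncture components with endpoints on $\ell_i$, and under $f^p$ these endpoints march out to $e$. Since $\Lambda_+$ is, by definition, the limit of negative geodesic juncture components, these sequences accumulate on leaves $\lambda_1,\lambda_2$ of $\Lambda_+$ which are forced to escape to $e$. This constructive step is the heart of the argument and is absent from your outline.

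\textbf{The invocation of \Cref{lem:principalcusps} is inverted.} You write that two leaves sharing an ideal point at $e$ would cobound a principal region, and then treat this as a contradiction (``the nucleus can't fit in the strip''), concluding that distinct escaping leaves do \emph{not} share ideal endpoints. This is backwards. In the universal cover a strip end corresponds to a single ideal point, so any two leaves escaping to $e$ \emph{do} share that ideal point. \Cref{lem:principalcusps} then says they cobound a lifted principal region; that is not a contradiction but rather exactly the mechanism for ``at most two'': the region between $\lambda_1$ and $\lambda_2$ near $e$ is (an arm of) a principal region, and principal regions contain no other leaves of $\Lambda_+$. The nucleus sitting outside the strip is irrelevant---it is the arm that lives in the strip. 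Your raw hyperbolic-geometry claim that ``at most two disjoint geodesics can head to the same ideal point, one on each side'' is false without further input; arbitrarily many disjoint geodesics can be asymptotic to a common ideal point, so you really do need the principal-region conclusion, used correctly, together with the constructive identification of $\lambda_1,\lambda_2$ as the outermost such leaves.
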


\begin{proof}
Let $\ell_1$ and $\ell_2$ be the two boundary components of $L$ with ends escaping to $e$. We choose a collection of junctures $\ms J$ for $L$. Note that there are infinitely many negative junctures meeting $\ell_i$ since $\ell_i$ connects a negative end to a positive end.
If $p$ is the period of $e$, then $f^p$ maps $\ell_i$ to $\ell_i$ with no fixed points, translating points from the negative end to the positive end. Hence there is a sequence of negative junctures for each $i=1,2$, such that each juncture has an endpoint on $\ell_i$ and these endpoints escape to $e$. Hence each sequence of junctures accumulates on a leaf $\lambda_i$ of $\Lambda_+$ that escapes $e$. If $\lambda_1\ne \lambda_2$, then $\lambda_1$ and $\lambda_2$ border a single principal region by \Cref{lem:principalcusps} which cannot contain any other leaves of $\Lambda_+$.
\end{proof}

\begin{construction}[Endperiodization]\label{construction:samesign}
Suppose that $f\colon L\to L$ is a Reeb endperiodic map (see \Cref{def:endperiodic}). Assume first that there is a unique component $\ell$ of $\del L$ connecting two ends of the same sign. Without loss of generality assume that $\ell$ connects two negative ends, so that  $f|_\ell$ has a unique fixed point $x_0$ and all points in $\ell-\{x_0\}$ are attracted to $x_0$ under iteration of $f$. 
By definition $x_0$ is a local sink in $L$, so there is an attracting neighborhood $A\ni x_0$ in $L$ with $\lim_{n\to \infty}f^n(x)=x_0$ for all $x\in A$. 
Note that $f|_{L-\{x_0\}}$ is now an actual endperiodic map; we have introduced a new infinite strip end corresponding to the point $x_0$ and no boundary components of $L-\{x_0\}$ connect ends of the same sign. We can therefore speak of the Handel-Miller laminations associated to $f|_{L-\{x_0\}}$. When $L$ has multiple such boundary components of various periods, this construction can be modified in the obvious way to produce an endperiodic map. We call this new map the \textbf{endperiodization} of $f$.
\end{construction}

\section{Periodic sequences of endperiodic train tracks} \label{sec:splitseq}

\subsection{Conventions and definitions}
\label{sec:tilingconventions}

In this section we assume that $f\colon L\to L$ is a Handel-Miller map. Recall that by our definition of Handel-Miller map, there is a choice of standard hyperbolic metric on $L$ and geodesic juncture components such that $\Lambda_\pm$ are geodesic laminations and such that $f$ permutes the leaves of the laminations as well as the juncture components.

Fix a tiling of each positive and negative end-cycle as in \Cref{construction:junctures}; we emphasize that the juncture components defining these tilings need not be the geodesic juncture components used to construct the Handel-Miller laminations; this freedom will be useful to us later.

Let $E_\pm$ be a union of mutually disjoint tiled neighborhoods of all end-cycles of $L$, such that $\Lambda_+$ ($\Lambda_-$) is disjoint from the tiled neighborhoods of the negative (positive) end-cycles. Let $K_0=L\cut E_\pm$. We call $K_0$ a \textbf{core} of $L$.

Suppose that the positive end-cycles are $Z_1,\dots, Z_n$, and let $t^i_j$ be the $j$th tile in the tiled neighborhood for $Z_i$, where the indexing on $j$ starts at 1.
Starting with $\ell=0$, we inductively define the \textbf{$(\ell+1)$st core} to be
\[
K_{\ell+1}=K_\ell\cup \left(t^1_{\ell+1}\cup\cdots\cup t^n_{\ell+1}\right).
\]
That is, $K_{\ell+1}$ is obtained from $K_\ell$ by absorbing a $t^i_j$ for each $i$, where $j$ is the smallest integer such that $t^i_j$ does not already lie in $K_\ell$.
Let $E_i$ be the union of components of $L\cut K_i$ which are neighborhoods of positive ends.

\subsection{Train tracks carrying $\Lambda_+$: some geometric arguments}

We continue to assume that $f\colon L\to L$ is a Handel-Miller map. 

In this subsection our arguments make use of the standard hyperbolic metric on $L$ and of the geodesic juncture components associated to $f$ by the definition of a Handel-Miller map. 
However, we emphasize that the results do not depend on this geometric structure since the Handel-Miller laminations are independent of the standard metric up to ambient isotopy.

\begin{lemma}\label{lem:fellowtravel}
Let $\tau$ be a train track fully carrying $\Lambda_+$. Let $\rho,\rho'$ be two rays in leaves of $\Lambda_+$ which eventually fellow travel in $\tau$, and let $\wt\rho,\wt\rho'$ be two lifts of these rays eventually fellow traveling in $\wt \tau$. Then the ideal points in $\del_\infty(\wt L)$ determined by $\wt \rho$ and $\wt\rho'$ are equal.
\end{lemma}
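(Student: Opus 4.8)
The plan is to exploit the fact that everything in sight is geodesic: leaves of $\Lambda_+$, and the lifts $\wt\rho, \wt\rho'$ are rays in such leaves, hence are geodesic rays in $\hyp^2$. First I would fix a standard neighborhood $N$ of $\tau$ with its ties, and lift it to a standard neighborhood $\wt N$ of $\wt\tau$ in $\wt L$. The hypothesis that $\rho$ and $\rho'$ eventually fellow travel in $\tau$, once lifted appropriately, says that after some initial segments the train routes determined by $\wt\rho$ and $\wt\rho'$ coincide; equivalently, the terminal subrays $\wt\rho_{\geq t}$ and $\wt\rho'_{\geq t}$ lie in a common component of $\wt N$ running transverse to the ties, i.e. they are contained in a nested sequence of ties-crossed-intervals, so they stay within bounded Hausdorff distance of each other inside $\wt N$.

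The key step is then purely hyperbolic-geometric: two complete geodesic rays in $\hyp^2$ (or in the convex subset $\wt L \subset \hyp^2$) that remain a bounded Hausdorff distance apart must converge to the same ideal point of $\del_\infty \hyp^2$. This is a standard fact — geodesic rays diverging to distinct ideal endpoints have distance tending to infinity — and since $\wt\rho, \wt\rho'$ are subrays of leaves of $\wt\Lambda_+$, the ideal point each determines in $\del_\infty(\wt L)$ is exactly its forward endpoint in $\del_\infty\hyp^2$. Combining, the two ideal points agree. I would phrase the bounded-distance claim carefully: a standard neighborhood $N$ of the compact-core part of $\tau$ has ties of uniformly bounded length, but $\tau$ may be noncompact (it fully carries $\Lambda_+$ on the noncompact surface $L$); so I would either restrict attention to the fact that fellow traveling happens within finitely many branches per unit length and invoke that the relevant region is covered by finitely many branch-neighborhoods up to the $f$-action, or more simply observe that fellow traveling in $\tau$ means literally traversing the \emph{same} sequence of branches, so $\wt\rho_{\geq t}$ and $\wt\rho'_{\geq t}$ pass through the same sequence of lifted branches $\wt b_1, \wt b_2, \dots$, each of which has a compact closure in $\wt L$; hence the two rays are within Hausdorff distance $\leq \sup_i \operatorname{diam}(\wt b_i)$ of each other, and this supremum is finite because there are only finitely many branches of $\tau$ up to the deck action together with the $f$-action (or one picks $\tau$ with branches of bounded diameter). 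I expect the only real subtlety here — the main obstacle — to be making the bounded-Hausdorff-distance estimate clean in the noncompact setting, ensuring the diameters of the lifted branches traversed do not blow up; this is where I would lean on the endperiodicity/periodicity of the setup (finitely many branches modulo $f$) or simply on compactness of the closure of each branch in $\wt L$ together with local finiteness.

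Once the bounded-distance statement is in hand, the conclusion is immediate from the geometry of $\del_\infty\hyp^2$, and I would close by noting that this is precisely why the lemma's proof "makes use of the fact that all leaves of $\Lambda^+$ and all juncture components are geodesics" as flagged before \Cref{lem:principalcusps}.
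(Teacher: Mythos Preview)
Your overall strategy — leaves of $\Lambda_+$ are geodesic, so if the lifted rays stay a bounded Hausdorff distance apart they must share an ideal endpoint — is exactly the ``obvious'' argument the paper flags just before this lemma. But the bounded-distance step is the whole difficulty, and your proposed fixes do not close it. Bounding $\sup_i \operatorname{diam}(\wt b_i)$ amounts to bounding tie lengths over the infinitely many branches the rays traverse; each branch has compact closure, but the supremum over infinitely many need not be finite. Invoking ``finitely many branches modulo the $f$-action'' does not help either: the lemma does not assume $\tau$ is $f$-endperiodic, and even when it is, $f$ is only a homeomorphism (the Handel--Miller representative), not an isometry, so $f$-equivalence of branches gives no metric control. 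The paper explicitly says it gives ``a different argument here that does not require a uniform bound on the length of ties'' precisely because this bound is unavailable in the infinite-type setting.

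The paper's actual argument sidesteps the distance estimate entirely. After truncating so both rays start on a common tie, it considers the region $\wt A$ swept out between the two rays by tie segments. If $\wt A$ fails to embed in $L$, the rays must be following a closed train route and hence stay in a compact set, contradicting \Cref{lem:accumonends}. If $\wt A$ does embed, then since it is bounded by two geodesics and contains no lifts of $\partial L$, distinct ideal endpoints $p\neq p'$ would force $\wt A$ to contain a geodesic half plane projecting isometrically to $L$ --- contradicting that the hyperbolic metric on $L$ is \emph{standard}. So the key input you are missing is not a diameter bound but the standardness hypothesis (no isometrically embedded half planes), together with \Cref{lem:accumonends}.
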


The above lemma is obvious for laminations carried by train tracks on hyperbolic surfaces of finite type, but we  should convince ourselves that it also holds in our setting. In the finite type setting it is immediate that the ties of a standard neighborhood of a lamination have bounded length, which immediately implies the claim. We give a different argument here that does not require a uniform bound on the length of ties.

\begin{proof}
By truncating, we may assume that the two rays begin on the same tie of $\tau$. Choose parameterizations of $\wt \rho$ and $\wt \rho'$ that are compatible with the ties of $\wt \tau$. That is, for all $t\in[0,\infty]$ there exists a segment $a_t$ connecting $\rho(t)$ to $\rho'(t)$ such that $a_t$ is contained in a tie of $\wt \tau$. Thus the parameterization of $\rho'$ is uniquely determined by the parameterization of $\rho$. Let $\wt A_t$ be the region bounded by the tie segments $a_0, a_t$ and the leaf segments $[\wt\rho(0),\wt\rho(t)], [\wt\rho'(0),\wt\rho'(t)]$. Let $\wt A=\bigcup_{t\in[0,\infty)} \wt A_t$. 

Suppose that the mapping of $\wt A$ to $L$ under the covering projection is not an embedding. In this case, one can check that $\rho$ and $\rho'$ must follow the same train route as a closed curve $\gamma$. Thus the two rays stay in a compact set of $L$, contradicting \Cref{lem:accumonends}.

Otherwise $\wt A$ projects homeomorphically to a set $A\subset L$ which is foliated by segments of ties of $\tau$. Note that $\wt A$ does not contain any lifts of boundary components of $L$, since $\wt A$ is foliated by tie segments disjoint from all such lifts. As a consequence, if $p\ne p'$ then $\wt A$ contains a half plane $\wt H$, which projects isometrically to $L$. See \Cref{fig:halfplane}. This contradicts that the hyperbolic metric on $L$ is standard, so $p=p'$ as desired.
\end{proof}

\begin{figure}
    \centering
    \fontsize{10pt}{10pt}\selectfont
    \resizebox{!}{2in}{
\begingroup%
  \makeatletter%
  \providecommand\color[2][]{%
    \errmessage{(Inkscape) Color is used for the text in Inkscape, but the package 'color.sty' is not loaded}%
    \renewcommand\color[2][]{}%
  }%
  \providecommand\transparent[1]{%
    \errmessage{(Inkscape) Transparency is used (non-zero) for the text in Inkscape, but the package 'transparent.sty' is not loaded}%
    \renewcommand\transparent[1]{}%
  }%
  \providecommand\rotatebox[2]{#2}%
  \newcommand*\fsize{\dimexpr\f@size pt\relax}%
  \newcommand*\lineheight[1]{\fontsize{\fsize}{#1\fsize}\selectfont}%
  \ifx\svgwidth\undefined%
    \setlength{\unitlength}{140.59240633bp}%
    \ifx\svgscale\undefined%
      \relax%
    \else%
      \setlength{\unitlength}{\unitlength * \real{\svgscale}}%
    \fi%
  \else%
    \setlength{\unitlength}{\svgwidth}%
  \fi%
  \global\let\svgwidth\undefined%
  \global\let\svgscale\undefined%
  \makeatother%
  \begin{picture}(1,0.96364372)%
    \lineheight{1}%
    \setlength\tabcolsep{0pt}%
    \put(0,0){\includegraphics[width=\unitlength,page=1]{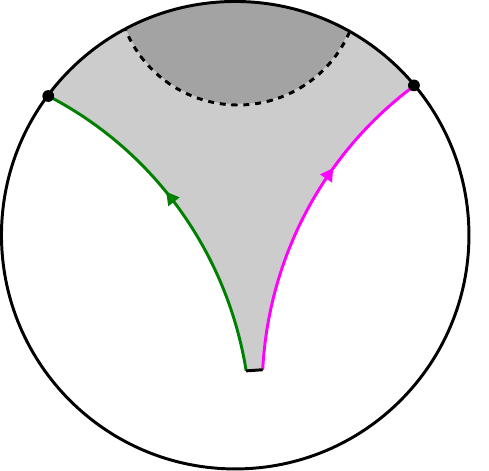}}%
    \put(0.44128158,0.84281689){\color[rgb]{0,0,0}\makebox(0,0)[lt]{\lineheight{1.25}\smash{\begin{tabular}[t]{l}$\widetilde{H}$\end{tabular}}}}%
    \put(0.43857088,0.5724248){\color[rgb]{0,0,0}\makebox(0,0)[lt]{\lineheight{1.25}\smash{\begin{tabular}[t]{l}$\widetilde{A}$\end{tabular}}}}%
    \put(0.01759263,0.77006903){\color[rgb]{0,0,0}\makebox(0,0)[lt]{\lineheight{1.25}\smash{\begin{tabular}[t]{l}$p$\end{tabular}}}}%
    \put(0.88075014,0.78690357){\color[rgb]{0,0,0}\makebox(0,0)[lt]{\lineheight{1.25}\smash{\begin{tabular}[t]{l}$p'$\end{tabular}}}}%
    \put(0.65316555,0.50130095){\color[rgb]{0,0,0}\makebox(0,0)[lt]{\lineheight{1.25}\smash{\begin{tabular}[t]{l}$\tilde{\rho}'$\end{tabular}}}}%
    \put(0.22149947,0.56466017){\color[rgb]{0,0,0}\makebox(0,0)[lt]{\lineheight{1.25}\smash{\begin{tabular}[t]{l}$\tilde{\rho}$\end{tabular}}}}%
  \end{picture}%
\endgroup%
}
    \caption{If $p\ne p'$, the metric on $L$ would be nonstandard.}
    \label{fig:halfplane}
\end{figure}

If $\tau$ is a train track fully carrying $\Lambda_+$, we say a cusp of $\tau$ is \textbf{principal} if it corresponds to a principal region of $\Lambda_+$. All other cusps are \textbf{nonprincipal}. Any cusp $c$ of $\tau$ determines two rays $\rho_1,\rho_2$ in leaves of $\Lambda_+$ bordering the corresponding complementary component of $\Lambda_+$. There are two possibilities: either $\rho_1$ and $\rho_2$ fellow travel in $\tau$, or there is some first cusp $c'$ of $\tau$ at which the two rays split away from each other. In the second case we say that the cusps $c$ and $c'$ \textbf{collide} with one another.

\begin{lemma}\label{lem:collision}
Let $\tau$ be a train track fully carrying $\Lambda_+$, and let $c$ be a nonprincipal cusp of $\tau$. Then there exists a cusp $c'$ of $\tau$ such that $c$ and $c'$ collide.
\end{lemma}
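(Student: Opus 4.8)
\textbf{Proof proposal for \Cref{lem:collision}.}

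The plan is to argue by contradiction: suppose $c$ is a nonprincipal cusp of $\tau$ which does not collide with any cusp. By the dichotomy established just before the statement, this means the two rays $\rho_1,\rho_2$ in leaves of $\Lambda_+$ bordering the complementary component of $\Lambda_+$ at $c$ fellow travel in $\tau$ forever; in particular they never split apart at any cusp. First I would lift to the universal cover: choose a lift $\wt c$ of $c$ and corresponding lifts $\wt\rho_1,\wt\rho_2$ of the two rays, which by hypothesis eventually fellow travel (indeed fellow travel from the start) in $\wt\tau$. Applying \Cref{lem:fellowtravel}, the ideal endpoints of $\wt\rho_1$ and $\wt\rho_2$ in $\del_\infty(\wt L)$ coincide; call this common point $p$.

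Next I would invoke \Cref{lem:principalcusps}. The rays $\wt\rho_1,\wt\rho_2$ extend to full leaves $\wt\lambda_1,\wt\lambda_2$ of $\wt\Lambda_+$ (using \Cref{lem:accumonends}, which guarantees each ray actually limits on an ideal point rather than staying compact, so these are genuine distinct leaves sharing the endpoint $p$ — unless $\rho_1,\rho_2$ lie in the same leaf, which I'll have to address as a degenerate case). If $\wt\lambda_1\neq\wt\lambda_2$, then \Cref{lem:principalcusps} forces $\wt\lambda_1$ and $\wt\lambda_2$ to border the same lifted principal region of $\Lambda_+$. Pushing this back down to $L$, the leaves $\lambda_1,\lambda_2$ border a principal region of $\Lambda_+$ on the side of the complementary component at $c$, which is exactly what it means for $c$ to be a principal cusp — contradicting the hypothesis that $c$ is nonprincipal. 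The remaining case where $\wt\lambda_1$ and $\wt\lambda_2$ are the same leaf would mean the complementary component at $c$ is a one-sided cusp bordered by a single leaf limiting on itself; I would rule this out directly, as a leaf of $\Lambda_+$ with a lift having both ends at the same ideal point would be a closed-up geodesic bigon, impossible for a geodesic lamination in a standard hyperbolic metric (or one can appeal again to \Cref{lem:accumonends} to see both ends escape to ends of $L$ and cannot double back).

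The main obstacle I anticipate is bookkeeping around \Cref{lem:fellowtravel}'s hypothesis: I need the two boundary rays of the complementary region at $c$ to \emph{eventually} fellow travel in $\tau$, and to make sure that ``not colliding with any cusp'' really does deliver eventual fellow traveling in the sense required (the rays could a priori diverge without passing through a switch — but since $\tau$ is a graph, any divergence of two train routes must occur at a switch, i.e.\ a cusp, so this is fine once stated carefully). A secondary point is matching the topological notion of ``bordering a principal region'' in $L$ with the lifted statement of \Cref{lem:principalcusps}; this is routine but should be spelled out. I do not expect to need the core/tiling machinery of \Cref{sec:tilingconventions} here — this lemma is purely about a single train track carrying $\Lambda_+$ and the geodesic geometry.
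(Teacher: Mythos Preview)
Your proposal is correct and follows essentially the same approach as the paper: lift to the universal cover, apply \Cref{lem:fellowtravel} to get a shared ideal endpoint, then invoke \Cref{lem:principalcusps} to conclude the cusp is principal. The paper phrases it as a contrapositive rather than a contradiction and does not separately treat your degenerate case (the two border leaves at a cusp are automatically distinct lifts), but the argument is the same.
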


\begin{proof}
We prove the contrapositive: if there exists no such cusp $c'$ then $c$ is principal.

Let $\wt c$ be a lift of $c$ to the universal cover $\wt L$. If $c$ does not collide with any cusp then $\wt c$ determines 2 rays  of leaves in $\wt \Lambda_+$ that follow the same route in $\wt \tau$; these are two border leaves $\wt\lambda$, $\wt\lambda'$ of the complementary region determined by $\wt c$. 
Let $\lambda$ and $\lambda'$ be their respective projections to $L$. By \Cref{lem:fellowtravel} there exists a point $p\in \del_\infty \wt L$ which is an endpoint of both $\wt\lambda$ and $\wt\lambda'$, so by \Cref{lem:principalcusps} there is a lifted principal region bounded by $\wt\lambda$ and $\wt\lambda'$. Therefore $c$ is principal.
\end{proof}

\subsection{Train tracks carrying $\Lambda_+$: topological arguments}

In this subsection we prove some more facts about train tracks carrying $\Lambda_+$, but using topological arguments that do not rely on the standard hyperbolic metric on $L$. We also introduce the ``core split," an operation on train tracks that will be important going forward.

We make the convention that train tracks carrying $\Lambda_+$ will always be transverse to any junctures under consideration. We can always arrange for this to hold by a small perturbation. 

\begin{lemma}\label{lem:lambdacompatible}
Let $\tau_1$ and $\tau_2$ be efficient train tracks fully carrying $\Lambda_+$ which 
are equal in $E_i$, i.e. $\tau_1|_{E_i}=\tau_2|_{E_i}$. 
Then the train tracks $\tau_1|_{K_{j}}$ and $\tau_2|_{K_{j}}$ are $\Lambda|_{K_j}$-compatible as train tracks in $K_{j}$ for all $j\ge i$. 
\end{lemma}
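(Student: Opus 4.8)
The statement to prove is \Cref{lem:lambdacompatible}: given two efficient train tracks $\tau_1,\tau_2$ fully carrying $\Lambda_+$ that agree on the tiled end-neighborhood $E_i$, their restrictions to $K_j$ are $\Lambda|_{K_j}$-compatible for every $j\ge i$. Recall (\Cref{def:lambdacomp}) that $\Lambda|_{K_j}$-compatibility means two things: the two tracks have the same set of stops in $K_j$, and for every complementary region $C$ of $\Lambda\cap K_j$ and every component of $\partial_v C$ (the part of $\partial C$ lying on $\partial K_j$), that component is associated to a stop of $\tau_1|_{K_j}$ if and only if it is associated to the same stop of $\tau_2|_{K_j}$. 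So the whole proof reduces to understanding the boundary data of the two tracks along $\partial K_j$.

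\textbf{Key steps.} First I would note that $\partial K_j$ decomposes into two kinds of pieces: the part lying on $\partial L$, and the part lying along junctures (the tile boundaries $\J$). Since $K_0 = L\cut E_\pm$ and $K_{\ell+1}$ is obtained from $K_\ell$ by absorbing one tile per positive end-cycle, for $j\ge i$ the boundary $\partial K_j$ is obtained from $\partial K_i$ by pushing the juncture components outward through finitely many tiles; crucially, inside $E_i$ the two tracks are literally equal, so the train track data along the ``new'' parts of $\partial K_j$ coming out of $E_i$ is identical for $\tau_1$ and $\tau_2$. Thus the set of stops of $\tau_1|_{K_j}$ and $\tau_2|_{K_j}$ coincides: the stops are exactly the intersection points (or carrying points) of the tracks with $\partial K_j$, and along $\partial K_j \cap E_i$ they agree by hypothesis, while along $\partial K_j\cap \partial L$ they agree because $\Lambda_+\cap\partial L$ is intrinsic to the lamination (the stops there are forced by where leaves of $\Lambda_+$ meet $\partial L$, by efficiency). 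Second, for the ``associated to a stop'' condition: a component of $\partial_v C$ is associated to a stop precisely when both its endpoints carry to the same stop, equivalently when the two leaves of $\Lambda_+$ (or the leaf and boundary) bounding that piece of $C$ fellow-travel all the way out to a single point of $\partial K_j$. This is a statement about the combinatorics of how leaves of $\Lambda_+\cap K_j$ run into $\partial K_j$, and along $\partial K_j\cap E_i$ it is determined by $\tau_1|_{E_i}=\tau_2|_{E_i}$, while along $\partial K_j \cap \partial L$ I would use \Cref{lem:infstripend} and \Cref{lem:bdylams} to pin down exactly which leaves of $\Lambda_+$ escape to which ends, so the association data is again intrinsic to $\Lambda_+$ and independent of the carrying track. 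Combining the two pieces gives the compatibility.

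\textbf{Main obstacle.} The delicate point is the interface between $E_i$ and $\partial L$ — i.e. handling the complementary regions $C$ of $\Lambda\cap K_j$ whose vertical boundary $\partial_v C$ straddles both a juncture portion of $\partial K_j$ and a genuine-boundary portion of $\partial K_j$, or whose behavior depends on whether a principal arm of $\Lambda_+$ escapes to an end (cf. \Cref{lem:bdylams}(c) and \Cref{lem:infstripend}). One must rule out that $\tau_1$ and $\tau_2$ could, while agreeing on $E_i$, still differ in how they absorb a leaf that runs from deep inside $K_0$ out through $E_i$ to an end: the content of efficiency (no cusped bigons, no carried annuli) together with \Cref{lem:accumonends} — every end of every leaf of $\Lambda_+$ runs out an end of $L$ and hence into $E_i$ — forces the association data in $K_j$ to be detected already in $E_i$. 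I expect the bulk of the writeup to be carefully spelling out this last reduction, i.e. that ``associated to a stop of $\tau_k|_{K_j}$'' can be read off from $\tau_k|_{E_i}$ for $k=1,2$, so that equality on $E_i$ propagates to $\Lambda|_{K_j}$-compatibility. Everything else is bookkeeping with the tiling conventions of \Cref{sec:tilingconventions}.
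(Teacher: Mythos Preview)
Your core idea is right and is essentially the paper's argument: the association data at a stop $s$ on $\partial K_j$ is local at $s$, and since for $j\ge i$ every stop lies on the positive juncture boundary $\partial K_j\cap\overline{E_j}\subset\overline{E_i}$, where $\tau_1$ and $\tau_2$ (and their standard neighborhoods) coincide, the data agrees. The paper states this slightly more crisply via a ``look from the other side'' trick: a vertical boundary component $c\subset\partial K_j$ of a complementary region $A$ of $\Lambda_+|_{K_j}$ is simultaneously a vertical boundary component of the adjacent complementary region $A'$ of $\Lambda_+|_{E_j}$, and $c$ is associated to the stop $s$ from the $K_j$ side if and only if it is from the $E_j$ side. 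Since $\tau_1|_{E_j}=\tau_2|_{E_j}$, the $E_j$-side data is identical, and the lemma follows in two lines.

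Where your proposal goes astray is in the ``main obstacle'' you flag. The case analysis involving $\partial K_j\cap\partial L$ and the appeal to \Cref{lem:bdylams} and \Cref{lem:infstripend} are unnecessary. By the conventions in \Cref{sec:tilingconventions}, $\Lambda_+$ is disjoint from the negative tiled neighborhoods, so any track fully carrying $\Lambda_+$ has no stops on the negative part of $\partial K_j$; and since $\Lambda_+$ is a geodesic lamination with every half-leaf unbounded (\Cref{lem:accumonends}), its leaves do not terminate on $\partial L$, so there are no stops there either. Every stop of $\tau_k|_{K_j}$ thus already sits on the positive juncture boundary, where your first argument applies directly. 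The ``delicate interface'' you anticipate does not exist, and the lemmas you plan to invoke play no role here.
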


Here, $\tau_1$ and $\tau_2$ being efficient implies that $\tau_1|_{K_j}$ and $\tau_2|_{K_j}$ are efficient, so the notion of $\Lambda$-compatibility makes sense.

\begin{proof}[Proof of \Cref{lem:lambdacompatible}]
Implicit in our discussion of $\tau_1$ and $\tau_2$ is the existence of standard neighborhoods witnessing the fact that $\Lambda$ is carried by both of these train tracks, and which agree on $E_j$ for $j\ge i$. Let $N(\tau_1)$ and $N(\tau_2)$ be these standard neighborhoods, respectively.
Let $A$ be a complementary region of $\Lambda_+|_{K_j}$, and let $c$ be a component of the vertical boundary $\del_v A\subset\del K_{j}$ associated to the stop $s$ of $\tau_1|_{K_j}$. We must show that $c$ is mapped to $s$ under the carrying map $N(\tau_2)\to \tau_2$. 
Let $A'$ be the complementary component of $\Lambda_+|_{E_j}$ meeting $A$ along $c$ (see \Cref{fig:compatible}). 
\begin{figure}
    \centering
    \fontsize{10pt}{10pt}\selectfont
    \resizebox{!}{2in}{
\begingroup%
  \makeatletter%
  \providecommand\color[2][]{%
    \errmessage{(Inkscape) Color is used for the text in Inkscape, but the package 'color.sty' is not loaded}%
    \renewcommand\color[2][]{}%
  }%
  \providecommand\transparent[1]{%
    \errmessage{(Inkscape) Transparency is used (non-zero) for the text in Inkscape, but the package 'transparent.sty' is not loaded}%
    \renewcommand\transparent[1]{}%
  }%
  \providecommand\rotatebox[2]{#2}%
  \newcommand*\fsize{\dimexpr\f@size pt\relax}%
  \newcommand*\lineheight[1]{\fontsize{\fsize}{#1\fsize}\selectfont}%
  \ifx\svgwidth\undefined%
    \setlength{\unitlength}{149.90488394bp}%
    \ifx\svgscale\undefined%
      \relax%
    \else%
      \setlength{\unitlength}{\unitlength * \real{\svgscale}}%
    \fi%
  \else%
    \setlength{\unitlength}{\svgwidth}%
  \fi%
  \global\let\svgwidth\undefined%
  \global\let\svgscale\undefined%
  \makeatother%
  \begin{picture}(1,0.90339672)%
    \lineheight{1}%
    \setlength\tabcolsep{0pt}%
    \put(0,0){\includegraphics[width=\unitlength,page=1]{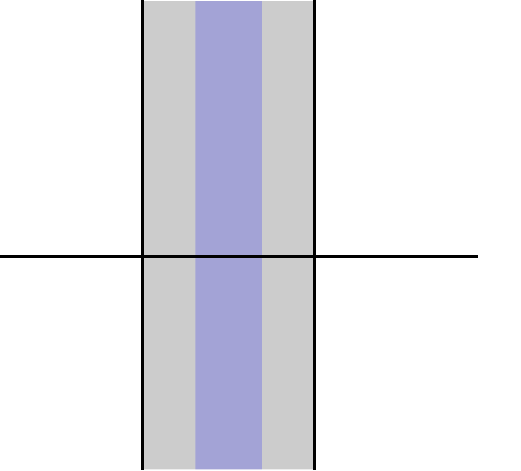}}%
    \put(0.39244146,0.20966637){\color[rgb]{0,0,0}\makebox(0,0)[lt]{\lineheight{1.25}\smash{\begin{tabular}[t]{l}$A'$\end{tabular}}}}%
    \put(0.00631748,0.54476809){\color[rgb]{0,0,0}\makebox(0,0)[lt]{\lineheight{1.25}\smash{\begin{tabular}[t]{l}$K_1$\end{tabular}}}}%
    \put(0.00873406,0.26652337){\color[rgb]{0,0,0}\makebox(0,0)[lt]{\lineheight{1.25}\smash{\begin{tabular}[t]{l}$E'_+$\end{tabular}}}}%
    \put(0.80852441,0.32833297){\color[rgb]{0,0,0}\makebox(0,0)[lt]{\lineheight{1.25}\smash{\begin{tabular}[t]{l}$\partial K_1$\end{tabular}}}}%
    \put(0.4014287,0.69364925){\color[rgb]{0,0,0}\makebox(0,0)[lt]{\lineheight{1.25}\smash{\begin{tabular}[t]{l}$A$\end{tabular}}}}%
    \put(0.41455552,0.44042538){\color[rgb]{0,0,0}\makebox(0,0)[lt]{\lineheight{1.25}\smash{\begin{tabular}[t]{l}$c$\end{tabular}}}}%
    \put(0,0){\includegraphics[width=\unitlength,page=2]{fig_compatible.pdf}}%
  \end{picture}%
\endgroup%
}
    \caption{Notation for showing $\Lambda_+|_{K_j}$-compatibility of $\tau_1|_{K_j}$ and $\tau_2|_{K_j}$ in the proof of \Cref{lem:lambdacompatible}.}
    \label{fig:compatible}
\end{figure}
Then $c$, when viewed as a component of $\del_v A'$, is also associated to $s$ (viewing $s$ as a stop of $\tau_1|_{E_j}$). 
We have that $N(\tau_1)|_{E_j}=N(\tau_2)|_{E_j}$, so $c$ is also associated to $s$ viewed as a stop of $\tau_2|_{E_j}$. Since the local picture is as in \Cref{fig:compatible}, it follows that $c$ is associated to $s$ viewed as a stop of $\tau_2|_{K_{j}}$. Therefore $\tau_1|_{K_j}$ and $\tau_2|_{K_j}$ are $\Lambda|_{K_j}$-compatible.
\end{proof}

\begin{definition}[Core split]
Let $\tau$ be a train track fully carrying $\Lambda_+$. 
If $\tau$ has no large branches, define $\kappa(\tau)=\tau$.
Otherwise, let $i$ be the least $i\in \Z_{\ge0}$ such that $\tau|_{K_i}$ contains a large branch. Let $\mu$ be the maximal splitting of $\tau|_{K_i}$, and let $\kappa(\tau)=\mu\cup(\tau\cut K_i)$(recall the maximal splitting is unique by \Cref{lemma:differbycommutations}).  We call the operation of replacing $\tau$ by $\kappa(\tau)$ a \textbf{core split}. Sometimes we will use a double-headed arrow to denote core splitting, i.e. $\tau \twoheadrightarrow \tau'$ means that $\tau'=\kappa(\tau)$.
\end{definition}

Note that the definition of a core split depends on the tiling fixed in \Cref{sec:tilingconventions}.
For an example see \Cref{fig:fenleyexample}, where if we fix the tiling shown, then the composition of the first two splitting moves is a core split.

\begin{lemma}\label{lem:coresplitcommute}
Let $\tau$ be a efficient train track fully carrying $\Lambda_+$. Then core splitting commutes with $f$, that is $\kappa(f(\tau))=f(\kappa(\tau))$.
\end{lemma}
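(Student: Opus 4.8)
The plan is to reduce the lemma to two ingredients: that $f$ acts on the fixed tiling simply by shifting indices, so that $f$ carries cores to cores, and the naturality of maximal splitting under homeomorphisms.

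\emph{Cores and $f$.} First I would record the basic observation that $f$ shifts the fixed tiling of each positive end-cycle $Z_i$ by one tile: $f(t^i_j)=t^i_{j+1}$. Hence $f(E_\ell)=E_{\ell+1}$, and, after passing to a standard neighborhood $N$ of $\Lambda_+$ that contains $\tau$, $f(\tau)$, and all of their core splits, and that is disjoint from the tiled neighborhoods of the negative end-cycles and their $f^{\pm1}$-translates---these lie in the open set $\U_-$, whose frontier is $\Lambda_+$---the map $f$ restricts to a homeomorphism carrying $K_\ell\cap N$ onto $K_{\ell+1}\cap N$, while $f^{-1}$ carries $K_\ell\cap N$ onto $K_{\ell-1}\cap N$ for $\ell\ge1$. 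Consequently, for any train track $\sigma$ fully carrying $\Lambda_+$ one has, as train tracks, $f(\sigma|_{K_\ell})=f(\sigma)|_{K_{\ell+1}}$ and $f(\sigma\cut K_\ell)=f(\sigma)\cut K_{\ell+1}$.

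\emph{The commutation.} A homeomorphism preserves the maw vector field and hence the type of each branch, so $\tau$ has a large branch if and only if $f(\tau)$ does; if neither does, both sides of the claimed identity equal $f(\tau)$. Otherwise let $i$ be least with $\tau|_{K_i}$ containing a large branch, and write $\kappa(\tau)=\mu\cup(\tau\cut K_i)$ with $\mu$ the maximal splitting of $\tau|_{K_i}$, unique by \Cref{lemma:differbycommutations}. Applying $f$ and using the previous paragraph, $f(\kappa(\tau))=f(\mu)\cup(f(\tau)\cut K_{i+1})$; and because $f$ fixes $\Lambda_+$, it carries a maximal sequence of $\Lambda_+$-splits realizing $\mu$ to a maximal sequence of $\Lambda_+$-splits for $f(\tau|_{K_i})=f(\tau)|_{K_{i+1}}$, so by uniqueness $f(\mu)$ is the maximal splitting of $f(\tau)|_{K_{i+1}}$. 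Thus $f(\kappa(\tau))=\bigl(\text{maximal splitting of }f(\tau)|_{K_{i+1}}\bigr)\cup\bigl(f(\tau)\cut K_{i+1}\bigr)$, which is $\kappa(f(\tau))$ provided $i+1$ is the least index with $f(\tau)|_{K_{i+1}}$ containing a large branch.

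\emph{The obstacle.} What remains, and carries the real content, is to verify that $i+1$ is indeed the least such index for $f(\tau)$. That $f(\tau)|_{K_{i+1}}=f(\tau|_{K_i})$ contains a large branch is immediate, and $f(\tau)|_{K_j}=f(\tau|_{K_{j-1}})$ contains none for $1\le j\le i$; the delicate point is to rule out a large branch of $f(\tau)$ already inside $K_0$. Since $f$ pushes the tiles of each positive end-cycle toward the end, such a branch would come from a large branch of $\tau$ in the proper subsurface $f^{-1}(K_0)\cap N\subseteq K_0$, and I would exclude this using the minimality of $i$ together with a local analysis---near the positive ends, where the structure of $\Lambda_+$ is simple---of which branches can be large and of how $\Lambda_+$-splittings propagate outward under $f$. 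I expect this reconciliation of the minimal core index on the two sides to be the main technical hurdle; it is the only step where the precise structure of the cores near the positive ends genuinely enters, the rest being formal consequences of the tile shift $f(t^i_j)=t^i_{j+1}$ and of uniqueness of maximal splittings.
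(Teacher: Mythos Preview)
Your approach---matching the minimal core indices of $\tau$ and $f(\tau)$ so that both sides become the \emph{same} maximal splitting by naturality---is different from the paper's, and the obstacle you isolate is genuine. In fact it is sharper than you suggest: if $i=0$ and some large branch of $\tau|_{K_0}$ happens to lie entirely inside $f^{-1}(K_0)$, its $f$-image is a large branch of $f(\tau)|_{K_0}$, so the minimal index for $f(\tau)$ is again $0$ rather than $1$. No ``local analysis near the positive ends'' will touch this, because the obstruction sits in the core, not near the ends. (For $i\ge 1$ the obstacle does dissolve: since a branch ending at a stop can never be large, restriction to a smaller region never creates large branches, and combining this with your tile-shift bookkeeping gives minimal index exactly $i{+}1$. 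So your scheme can be completed in that range, which is the range that actually arises in the applications.)

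The paper takes a different route that avoids index-matching altogether. It uses only the easy inequality---the minimal index for $f(\tau)$ is at most $i{+}1$---to see that $f(\kappa(\tau))$ and $\kappa(f(\tau))$ agree on $E_{i+1}$. Then \Cref{lem:lambdacompatible} makes their restrictions to $K_{i+1}$ $\Lambda|_{K_{i+1}}$-compatible, and since both $f$ and $\Lambda$-splits preserve the partial order on persistent cusps (\Cref{lemma:splitpreserveorder}), \Cref{cor:anyorder} forces these spiraling restrictions to be isotopic rel stops. The persistent-cusp order and $\Lambda$-compatibility machinery of \Cref{subsec:spiralfacts} were built precisely so that arguments like this one can bypass the index bookkeeping your approach requires.
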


\begin{proof}
If $\tau$ has no large branches then $f(\tau)$ also has no large branches, and the statement reduces to $f(\tau)=f(\tau)$; hence we may assume $\tau$ has large branches. Suppose that $i$ is the least $i$ such that $\tau|_{K_i}$ has large branches.

It is clear that $f(\kappa(\tau))|_{E_{i+1}}=\kappa(f(\tau))|_{E_{i+1}}$.
Moreover, $f(\kappa(\tau))|_{K_{i+1}}$ and $\kappa(f(\tau))|_{K_{i+1}}$ are $\Lambda|_{K_{i+1}}$-compatible by \Cref{lem:lambdacompatible}, and they are spiraling.
Since core splitting and the application of the map $f$ both preserve orders on persistent cusps, we conclude that $f(\kappa(\tau))|_{K_{i+1}}$ and $\kappa(f(\tau))|_{K_{i+1}}$ are isotopic in $K_{i+1}$ rel stops by \Cref{cor:anyorder}f. This gives a compactly supported isotopy from $\kappa(f(\tau))$ to $f(\kappa(\tau))$.
\end{proof}

\subsection{Endperiodic train tracks} 

We say a train track $\tau$ in $L$ fully carrying $\Lambda_+$ is \textbf{$f$-endperiodic} if there exists an $i$ such that
\begin{equation}\label{eq:periodiccondition} 
f(\tau)|_ {E_{i+1}}=\tau|_{E_{i+1}}. 
\end{equation}
(The sets $E_i$ were defined in \Cref{sec:tilingconventions}).
We say that $\tau$ is \textbf{$f$-endperiodic in $E_i$} if \Cref{eq:periodiccondition} holds.

\begin{proposition}\label{lem:trackconstruction}
There exists $i\ge0$ such that the lamination $\Lambda_+$ is fully carried by an efficient endperiodic train track $T_+$ with the following properties:
\begin{enumerate}[label=(\roman*)]
\item $T_+$ is $f$-endperiodic in $E_i$,
\item $T_+\cap K_i$ has no large branches, and 
\item $T_+\cap E_i$ has no large branches.
\end{enumerate}
\end{proposition}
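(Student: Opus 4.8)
The idea is to build $T_+$ by hand in a core and then extend it periodically into the ends, using the structural results about $\Lambda_+$ established in \Cref{sec:endperiodic}. First I would fix a core $K_0$ and the nested sequence of cores $K_\ell$ as in \Cref{sec:tilingconventions}. By \Cref{lem:ilam}, the restriction $\Lambda_+ \cap K_\ell$ is an $I$-lamination for every $\ell$, while near the positive ends the lamination $\Lambda_+ \cap E_\pm$ is a spiraling lamination — indeed by \Cref{lem:bdylams} the quotient laminations $\Lambda_\pm^\infty$ are spiraling and consistent, so $\Lambda_+$ restricted to a tiled neighborhood of a positive end-cycle is a consistent spiraling lamination invariant (up to the shift by $f$) under the tiling periodicity. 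By \Cref{lemma:spiralttexists} (applied in the quotient $\U_+/\langle f\rangle$ and then lifted), there is an efficient spiraling train track carrying $\Lambda_+$ in a tiled neighborhood of each positive end which is genuinely $f$-periodic in the sense of \Cref{eq:periodiccondition}; this is the train track we will use in $E_i$, and it has no large branches by the definition of spiraling (using \Cref{lemma:nolargebranchmeansspiraling}).

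Next I would extend this periodic piece across a core. Choose $i$ large enough that the periodic piece in the ends lives in $E_i$. Over $K_i$, $\Lambda_+ \cap K_i$ is an $I$-lamination, so by \Cref{lemma:spiralttexists} it is fully carried by some efficient spiraling train track $\tau_{K_i}$ whose stops are prescribed to match the endpoints of the boundary branches of the periodic piece along $\partial K_i \cap \partial E_i$; then $T_+ := \tau_{K_i} \cup (\text{periodic piece on } E_i)$ fully carries $\Lambda_+$ and is efficient — efficiency of the two pieces plus the fact that $\Lambda_+$ has no bigon complementary regions straddling $\partial K_i$ (there are no closed leaves crossing into $K_i$, and a cusped bigon would contradict \Cref{lem:accumonends}) gives efficiency of $T_+$. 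Since $\tau_{K_i}$ was chosen spiraling it has no large branches, so property (ii) holds; property (iii) holds by construction of the periodic end piece; and property (i) — $f$-endperiodicity in $E_i$ — holds because the end piece was built to be $f$-periodic with respect to the tiling. The one subtlety is matching: $f$ sends the tile structure forward by one, so $f(T_+)|_{E_{i+1}}$ and $T_+|_{E_{i+1}}$ should literally agree; this requires that the chosen spiraling track in $\U_+/\langle f\rangle$ pulls back to an honestly $f$-equivariant track, which is exactly what \Cref{lemma:spiralttexists} provides when one works downstairs in the compact quotient surface $F_Z$ and pulls back.

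The main obstacle I anticipate is reconciling the two constructions across the seam $\partial K_i \cap \partial E_i$ so that the glued object is simultaneously a genuine train track (switches trivalent, tangent directions matching) and efficient, \emph{and} so that the $f$-periodicity of the end piece survives the gluing — i.e. that one can choose the tiling and the index $i$ so that $f$ carries the whole picture on $E_{i+1}$ to itself on the nose. This is where one needs to be careful that the core $K_i$ is chosen after the periodic end pattern is fixed, and large enough that all the "transient" behavior of $\Lambda_+$ (the parts of leaves that have not yet settled into spiraling toward junctures, governed by \Cref{lem:juncaccum} and the finiteness of reduced $\Lambda$-routes from \Cref{lem:finitebiroutes}) is absorbed into $K_i$. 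Once the seam is handled, properties (i)–(iii) are immediate from the construction, so essentially all the work is in the gluing and in verifying efficiency of the result; I would expect the write-up to spend most of its length there, invoking \Cref{lem:bdylams}, \Cref{lem:ilam}, and \Cref{lemma:spiralttexists} as the main inputs.
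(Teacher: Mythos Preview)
Your overall strategy---construct an efficient spiraling track $T_+^\infty$ carrying $\Lambda_+^\infty$ on the compact quotient $\U_+/\langle f\rangle$ via \Cref{lemma:spiralttexists}, lift it to obtain a periodic piece on $E_i$, and extend over a core using the $I$-lamination structure from \Cref{lem:ilam}---is exactly the paper's approach. The paper also obtains property (iii) from the spiraling of $T_+^\infty$ and property (ii) by maximally splitting on the core, just as you propose.

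The gap is in your efficiency argument for the glued track. You assert that ``efficiency of the two pieces plus the fact that $\Lambda_+$ has no bigon complementary regions straddling $\partial K_i$ \dots\ gives efficiency of $T_+$,'' justifying the latter by \Cref{lem:accumonends}. This does not work: \Cref{lem:accumonends} constrains the \emph{lamination}, not the train track, and cusped bigon or monogon complementary regions of the glued track can genuinely arise at the seam even when both pieces are individually efficient (a $1$-cusped triangle on each side with their non-cusp sides identified along $\partial K_i$ yields a cusped bigon). The paper confronts this directly: after gluing, there are at most finitely many such bigons and monogons, and for each one at least one cusp must collide with another---otherwise, by \Cref{lem:fellowtravel} and \Cref{lem:principalcusps}, the region would correspond to a principal region of $\Lambda_+$ of nonnegative index, contradicting that principal regions of a geodesic lamination have negative index. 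Hence a finite sequence of further $\Lambda$-splits eliminates these regions. This extra splitting is precisely what forces the passage from $K_0$ to some larger $K_i$, and it is the reason the statement only asserts the \emph{existence} of such an $i$; your proposal, by contrast, fixes $i$ before addressing efficiency and so cannot absorb these additional splits.
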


We remark that conditions (ii) and (iii) together are equivalent to the statement that all large branches of $T_+$ pass through $\del K_i$. For an example, see the train track on the far left of \Cref{fig:fenleyexample}, where the core is the second piece from the top.

\begin{proof}
Recall the quotient surface $\mathscr{U}_+/\langle f \rangle$, which contains the quotient lamination $\Lambda_+^\infty$. By \Cref{lem:bdylams} (a) and (c), $\Lambda_+^\infty$ is spiraling, and has no closed leaves which are homotopic to each other with the opposite orientation. By \Cref{lemma:spiralttexists}, $\Lambda_+^\infty$ is carried by an efficient spiraling train track $T_+^\infty$.

Let $N(\Lambda^\infty_+)$ be a standard neighborhood of $T_+^\infty$ which is also a standard neighborhood of $\Lambda_+^\infty$.
By taking the preimage of $N(\Lambda^\infty_+)$ under the map $E_0 \hookrightarrow \mathscr{U}_+ \to \mathscr{U}_+/\langle f \rangle$, we obtain a standard neighborhood of $\Lambda_+|_{E_0}$. By taking a standard neighborhood of $\Lambda_+|_{K_0}$ which matches up along $\del K_0$, we can extend this to a standard neighborhood $N(\Lambda_+)$ of $\Lambda_+$.

Similarly, by taking the preimage of $T^\infty_+$, we obtain a train track $T_{E_0}$ on $E_0$. Let $T_{K_0}$ be a train track for $N(\Lambda_+)|_{K_0}$ that matches up with $T_E$ along $\del K_0$. Since $\Lambda_+|_{K_0}$ is an $I$-lamination by \Cref{lem:ilam}, we may replace $T_K$ by its maximal $\Lambda$-splitting by \Cref{lemma:maximalsplitting}. Let
\[
T_0=T_{E_0}\cup T_{K_0}.
\]

Note that $T_0$ has no annulus complementary regions since this would force the existence of a circular leaf of $\Lambda_+$. Also, $T_0$ has at most finitely many cusped bigon or cusped monogon complementary regions since $T_{E_0}$ has none such, so any cusped bigon complementary region must pass through $K_0$. Suppose $R$ is a cusped bigon or monogon complementary region of $T_0$. At least one of the cusps of $R$ must collide with another cusp, for otherwise \Cref{lem:fellowtravel} and \Cref{lem:principalcusps} would give a principal region of $\Lambda^+$ with nonnegative index, and all of the principal regions of $\Lambda_+$ have negative index ($\Lambda^+$ is geodesic). Hence there is a finite sequence of splits reducing the number of cusped bigon and monogon complementary regions by one.

Therefore after finitely many splits we obtain a track $T_1$ which agrees with $T_0$ outside some smallest core $K_i$. We perform a core split on $T_1$ and call the result $T_+$. By construction $T_+$ is efficient, and endperiodic in $E_i$.  Further, (ii) is satisfied because $T_+$ is the core split of $T_1$, and (iii) is satisfied because $T_+^\infty$ is spiraling.
\end{proof}

\begin{remark}\label{rmk:trackconstruction}
In the last paragraph of the proof of \Cref{lem:trackconstruction}, note that the uniqueness of maximal splittings implies that $T_+$ can also be obtained by performing $i$ core splits on $T_0$.
\end{remark}

\begin{theorem}\label{thm:sequenceexist}
There exists an efficient $f$-endperiodic train track $\tau$ which fully carries $\Lambda_+$, satisfies properties (i)-(iii) from \Cref{lem:trackconstruction}, and is the first term of a $\Lambda_+$-splitting sequence 
\[
\tau \to \cdots \to f(\tau).
\]
\end{theorem}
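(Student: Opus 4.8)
The plan is to take the endperiodic train track $T_+$ constructed in \Cref{lem:trackconstruction} and simply iterate core splitting, then show that after one full "period" of core splits the resulting train track is $f(T_+)$ up to isotopy. First I would set $\tau = T_+$ and consider the sequence $\tau_0 = \tau$, $\tau_{\ell+1} = \kappa(\tau_\ell)$. By construction all large branches of $\tau_0$ pass through $\partial K_i$, so the first core split $\tau_0 \twoheadrightarrow \tau_1$ acts on the core $K_i$ (by conditions (ii), (iii) of \Cref{lem:trackconstruction} the least index whose restriction has a large branch is exactly $i$). The key observation is that since $\tau$ is $f$-endperiodic in $E_i$, we have $f(\tau)|_{E_{i+1}} = \tau|_{E_{i+1}}$, and the tiles $t^1_{i+1}, \dots, t^n_{i+1}$ that were absorbed into $K_{i+1}$ from $E_i$ now carry exactly an $f$-shifted copy of the piece of $\tau$ that previously sat over the corresponding tiles in the core. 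So as we perform core splits and the "active core" $K_i, K_{i+1}, K_{i+2}, \dots$ grows, the splitting region marches up toward the positive ends, always doing to the newly-absorbed tiles what it did one step earlier to the tiles one position closer to the core — precisely the behavior of $f$.

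The technical heart is to package this into a finite statement. I would argue that after performing $p$ core splits — where $p$ is a common period for the positive end-cycles, or more precisely the number of steps it takes for the absorbed tiles $t^j_{i+1}, \dots$ to have cycled once — the train track $\tau_p$ agrees with $f(\tau_0)$ in the appropriate region $E_{i+p}$, because applying $f$ to $\tau_0$ has the same effect as sliding the whole core-splitting process forward by one tile-period. To make this rigorous I would use \Cref{lem:coresplitcommute}: core splitting commutes with $f$, so $\kappa^p(f(\tau)) = f(\kappa^p(\tau))$, and I would combine this with the fact that $f(\tau)$ and $\kappa^m(\tau)$ for suitable $m$ are $\Lambda_+$-compatible in large cores (by \Cref{lem:lambdacompatible}, since they agree in an end-neighborhood) and have order-isomorphic persistent cusps (orders are preserved by both core splits and $f$ via \Cref{lemma:splitpreserveorder}), hence are isotopic rel stops in each $K_j$ by \Cref{cor:anyorder}. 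Running this comparison over the exhaustion $K_j \nearrow L$ and using that the isotopies are compactly supported and stabilize gives a global isotopy $\kappa^p(\tau) \simeq f(\tau)$.

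Finally I would assemble the splitting sequence: the chain of core splits $\tau = \tau_0 \twoheadrightarrow \tau_1 \twoheadrightarrow \cdots \twoheadrightarrow \tau_p$ is itself a concatenation of $\Lambda_+$-splits (a core split is by definition a maximal $\Lambda|_{K}$-splitting sequence followed by the identity outside $K$, and each $\Lambda|_K$-split is a $\Lambda_+$-split of the ambient track since $\Lambda_+$ is fully carried), so writing them out gives a $\Lambda_+$-splitting sequence $\tau \to \cdots \to \tau_p$, and $\tau_p \simeq f(\tau)$ by the previous paragraph. I expect the main obstacle to be the bookkeeping in the second paragraph: making precise the claim that "$f$ acts the same way as shifting the core-splitting process by one tile-period," in particular checking that the splitting moves performed in the growing core region genuinely match up with their $f$-translates rather than merely being $\Lambda_+$-compatible with them — this is where one must carefully invoke that $\tau$ already has no large branches off $\partial K_i$ together with \Cref{rmk:trackconstruction}, so that the splitting genuinely "propagates outward" tile by tile without interference from branches deeper in the core.
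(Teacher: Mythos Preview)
Your overall strategy matches the paper's: start from $T_+$, iterate core splits, use \Cref{lem:coresplitcommute} and \Cref{lem:lambdacompatible}, and appeal to \Cref{cor:anyorder} to conclude that some $\kappa^N(\tau_0)$ equals $f(\kappa^{N-1}(\tau_0))$. However, there is a genuine gap at the step where you invoke \Cref{cor:anyorder}.

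You assert that $f(\tau)$ and $\kappa^m(\tau)$ ``have order-isomorphic persistent cusps (orders are preserved by both core splits and $f$ via \Cref{lemma:splitpreserveorder}).'' This does not follow. \Cref{lemma:splitpreserveorder} says that a $\Lambda$-split induces an order isomorphism between $\pers(\tau)$ and $\pers(\tau')$ \emph{for the same underlying cusps}. It says nothing about the $\Lambda$-identification between two \emph{different} $\Lambda$-compatible tracks, which is defined via the complementary regions of $\Lambda$ and need not be an order isomorphism. Concretely, $\tau_1|_{K_1}$ and $f(\tau_0)|_{K_1}$ are both spiraling and $\Lambda$-compatible, so by \Cref{lem:diffbyshifts} they differ by shifts --- but general shifts do \emph{not} preserve the partial order (this is exactly why \Cref{cor:anyorder} has a separate ``furthermore'' clause for shifts of divergent neighbors). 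Your argument gives no reason why the shifts relating these two tracks should be of this special type.

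The paper supplies the missing ingredient via the Handel--Miller structure theory that you never invoke: by \Cref{lem:fellowtravel} and \Cref{lem:principalcusps} the $\Lambda_+$-routes from two principal cusps never eventually fellow travel, and by \Cref{lem:collision} every nonprincipal cusp eventually collides. Hence one can choose $N$ large enough that in $K_N$ every nonprincipal cusp is nonpersistent and any two principal cusps whose routes exit through the same stop of $K_1$ have diverged; this forces all shifts relating $\tau_1|_{K_N}$ and $f(\tau_0)|_{K_N}$ to be shifts of divergent neighbors, whereupon the ``furthermore'' clause of \Cref{cor:anyorder} applies. Relatedly, your parameter $p$ (``a common period for the positive end-cycles'') is the wrong quantity: the correct $N$ is governed by how long $\Lambda_+$-routes take to diverge or collide, not by end-cycle periods, and the track $\tau$ in the statement ends up being $\kappa^{N-1}(T_+)$ rather than $T_+$ itself.
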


\begin{proof}
Let $T_+$ be the train track furnished by \Cref{lem:trackconstruction}. Reindex the $E_i$ so that $T_+$ and $E_0$ satisfy properties (i)-(iii) from that proposition. Set $\Lambda_i:=\Lambda_+|_{K_i}$.

Let $\tau_0=T_+$, and let $\tau_{i+1}$ be obtained by performing a core split on $\tau_i$ for $i\ge 0$. By \Cref{lem:lambdacompatible}, $\tau_0|_{K_j}$ and $f(\tau_0)|_{K_j}$ are $\Lambda_j$-compatible for $j\ge 1$. Since $\Lambda_j$-splitting preserves $\Lambda_j$-compatibility, this implies that $\tau_1|_{K_j}$ and $f(\tau_0)|_{K_j}$ are $\Lambda_j$-compatible for $j\ge 1$. In particular, there is a natural $\Lambda_j$-identification of their cusps. Note also that $\tau_1|_{K_1}$ and $f(\tau_0)|_{K_1}$ are both spiraling train tracks, so \Cref{lem:diffbyshifts} implies that they differ by at most a collection of shifts.

By \Cref{lem:principalcusps} and \Cref{lem:fellowtravel}, none of the $\Lambda_+$-routes from principal cusps of $\tau_1$ or $f(\tau_0)$ eventually fellow travel in their respective train tracks.
By \Cref{lem:collision}, the $\Lambda_+$-routes from all nonprincipal cusps all experience collisions. We can therefore choose a natural number $N$ such that the following hold:
\begin{itemize}
    \item if $a,b$ are principal cusps of $\tau_1|_{K_1}$ or $f(\tau_0)|_{K_1}$ such that their maximal $\Lambda$-routes in $K_1$ end at the same point of $\del K_1$, then their maximal $\Lambda$-routes in $K_N$ diverge at some point, and 
    \item each nonprincipal cusp of $\tau_1|_{K_1}$ and $f(\tau_0)|_{K_1}$ is nonpersistent in $K_N$.
\end{itemize}

As noted above, the train tracks $\tau_1|_{K_N}$ and $f(\tau_0)|_{K_N}$ are $\Lambda_N$-compatible and differ by a collection of shifts. By the choice of $N$, each of these is a shift of divergent neighbors. By \Cref{cor:anyorder}, the two train tracks have identical core splittings in $K_N$.

We have therefore shown that $\tau_N=\kappa^{N-1}(f(\tau_0))=f(\tau_{N-1})$ (we have used \Cref{lem:coresplitcommute} in the second equality).
Renaming $\tau=\tau_{N-1}$, we have that the core split of $\tau$ is $f(\tau)$, so there is a splitting sequence from $\tau$ to $f(\tau)$ as claimed.
\end{proof}

\Cref{thm:sequenceexist} is phrased so as to be maximally useful to us later in the paper. However, we note that the same proof actually gives the following statement, which mirrors Agol's construction of layered veering triangulations and may be of independent interest to researchers in Handel-Miller theory.

\begin{theorem}\label{thm:preperiodic}
Let $f\colon L\to L$ be endperiodic, and let $\tau_0$ be an efficient $f$-endperiodic train track carrying the positive Handel-Miller lamination. Consider the sequence of train tracks $\tau_0,\tau_1, \tau_2,\dots$ where $\tau_i$ is a core split of $\tau_{i-1}$. For sufficiently large $n$, we have $\tau_{n}=f(\tau_{n-1})$.
\end{theorem}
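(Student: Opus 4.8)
The plan is to show that \Cref{thm:preperiodic} follows from the same argument used to prove \Cref{thm:sequenceexist}, reconciling the two slightly different hypotheses. The subtle point is that \Cref{thm:sequenceexist} starts from a \emph{specially constructed} endperiodic train track $T_+$ satisfying properties (i)--(iii) of \Cref{lem:trackconstruction} and a fixed tiling, whereas \Cref{thm:preperiodic} allows \emph{any} efficient $f$-endperiodic $\tau_0$ carrying $\Lambda_+$, with a core-splitting sequence defined relative to whatever tiling we have fixed in \Cref{sec:tilingconventions}. So first I would observe that since $\tau_0$ is $f$-endperiodic, it is $f$-endperiodic in $E_{i_0}$ for some $i_0$, i.e.\ $f(\tau_0)|_{E_{i_0+1}}=\tau_0|_{E_{i_0+1}}$. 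Reindexing the $E_i$'s, assume $\tau_0$ is $f$-endperiodic in $E_0$. After finitely many core splits the large branches of $\tau_j$ will all have been pushed across $\partial K_{i}$ for some core $K_i$ — this uses that in $E_0$, the track is essentially the preimage of a spiraling track on $\mathscr U_+/\langle f\rangle$, which has no large branches (the relevant input being \Cref{lem:bdylams}(a),(c) and \Cref{lemma:maximalsplitting}) — so after replacing $\tau_0$ by $\kappa^{m}(\tau_0)$ for suitable $m$ we may assume $\tau_0$ itself satisfies the analogues of (i)--(iii). Thus it suffices to prove the statement for such a $\tau_0$.

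Next I would run verbatim the core of the proof of \Cref{thm:sequenceexist}. Set $\tau_{j+1}=\kappa(\tau_j)$, and $\Lambda_j:=\Lambda_+|_{K_j}$. By \Cref{lem:lambdacompatible}, $\tau_0|_{K_j}$ and $f(\tau_0)|_{K_j}$ are $\Lambda_j$-compatible for $j\ge 1$, and since $\Lambda_j$-splitting preserves $\Lambda_j$-compatibility (and core splitting is built out of $\Lambda_j$-splits), the tracks $\tau_1|_{K_j}$ and $f(\tau_0)|_{K_j}$ remain $\Lambda_j$-compatible; they are moreover both spiraling, hence differ by a collection of shifts by \Cref{lem:diffbyshifts}(a). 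The geometric lemmas \Cref{lem:fellowtravel}, \Cref{lem:principalcusps}, \Cref{lem:collision} then give a threshold $N$ such that, in $K_N$: (1) the $\Lambda_+$-routes from any two principal cusps of $\tau_1|_{K_1}$ or $f(\tau_0)|_{K_1}$ with the same terminal point on $\partial K_1$ have already diverged, and (2) every nonprincipal cusp of $\tau_1|_{K_1}$ or $f(\tau_0)|_{K_1}$ has become nonpersistent. Consequently the shifts relating $\tau_1|_{K_N}$ and $f(\tau_0)|_{K_N}$ are all shifts of divergent neighbors, so by \Cref{cor:anyorder} the two tracks have the same core splitting in $K_N$. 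Combined with \Cref{lem:coresplitcommute} this yields $\tau_N=\kappa^{N-1}(f(\tau_0))=f(\tau_{N-1})$, and since core splitting commutes with $f$, the relation $\tau_{n}=f(\tau_{n-1})$ then persists for all $n\ge N-1$.

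The main obstacle — and the only part genuinely requiring care beyond citing \Cref{thm:sequenceexist}'s proof — is the first paragraph: verifying that an arbitrary efficient $f$-endperiodic $\tau_0$ carrying $\Lambda_+$ becomes, after finitely many core splits, a track all of whose large branches cross a fixed core boundary. One must check that the "cleanup" steps in the proof of \Cref{lem:trackconstruction} (eliminating cusped bigon/monogon complementary regions via the index argument using that principal regions of $\Lambda_+$ have negative index, then performing a core split) are either unnecessary here or can be absorbed into the core-splitting sequence — in fact efficiency of $\tau_0$ already rules out cusped bigons, so the argument is shorter. Once that reduction is in place, everything else is a direct quotation of the machinery already developed. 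I would therefore keep this proof brief, stating it as a corollary of (the proof of) \Cref{thm:sequenceexist} together with the reduction just described.
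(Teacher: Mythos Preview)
Your overall approach matches the paper's, which offers no separate proof but simply asserts that ``the same proof [as \Cref{thm:sequenceexist}] actually gives'' \Cref{thm:preperiodic}; your second paragraph is a faithful rerun of that argument and is correct.

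There is, however, one concrete misstep in your reduction paragraph. You assert that in $E_0$ the track $\tau_0$ is ``essentially the preimage of a spiraling track on $\mathscr U_+/\langle f\rangle$, which has no large branches,'' citing \Cref{lem:bdylams}. But \Cref{lem:bdylams} says the quotient \emph{lamination} $\Lambda_+^\infty$ is spiraling, not the quotient \emph{train track} induced by $\tau_0$. The paper explicitly flags this distinction immediately after stating \Cref{thm:preperiodic}: ``we do not assume above that the quotient train track in the positive ends of $L$ is spiraling.'' Consequently property~(iii) of \Cref{lem:trackconstruction} is \emph{not} available and cannot be arranged by core splits when the quotient track has large branches.

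This is not fatal. A closer reading of the proof of \Cref{thm:sequenceexist} shows that only~(i) and~(ii) are used: the claim that $\tau_1|_{K_1}$ and $f(\tau_0)|_{K_1}$ are spiraling follows because~(ii) forces the first core split to occur in some $K_i$ with $i\ge 1$ (giving $\tau_1|_{K_1}$ spiraling), while any large branch of $f(\tau_0)$ contained in $K_1$ would pull back under $f^{-1}$ to a large branch of $\tau_0$ contained in $K_0$, contradicting~(ii). Since~(ii) holds automatically after a single core split, your second paragraph goes through unchanged once you drop the unjustified appeal to~(iii).
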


In particular, note that we do not assume above that the quotient train track in the positive ends of $L$ is spiraling.

\subsection{Uniqueness of the splitting sequence} \label{subsec:splitsequnique}

There were several choices involved in the construction of the splitting sequence in \Cref{thm:sequenceexist}. However, we claim that viewed through an appropriate lens, the only choice that mattered was that of the train track $T_+^\infty$. We investigate this now.

Let $\tau_0\to \tau_1 \to \tau_2\to \cdots$ be an infinite splitting sequence of train tracks in $L$ (i.e. for each $i$, $\tau_{i+1}$ is obtained from $\tau_i$ by performing a single split. Further, suppose there exists a positive integer $p$ such that $f(\tau_i)=\tau_{i+p}$ for all $i$ and that $p$ is the least such positive integer. Then we say that $(\tau_n)$ is an \textbf{$f$-periodic splitting sequence} with period $p$.

Suppose that $(\tau_n)$ is an $f$-periodic splitting sequence with period $p\ge 2$ and that for some $i\ge0$, there exists a track $\tau_i'$ such that $\tau_{i-1} \to \tau_{i} \to \tau_{i+1}$ and $\tau_{i-1}\to \tau_{i}' \to \tau_{i+1}$  differ by a commutation. Then the operation of replacing $\tau_{i+np}$ by $f^n(\tau_i')$ for all $n$ such that $i+np>0$ is called an \textbf{$f$-periodic commutation}. The result of performing an $f$-periodic commutation is another $f$-periodic splitting sequence. 
(For $p=0$ or $1$, there are no $f$-periodic commutations.)

If $\tau_0 \to \tau_1 \to \tau_2 \to \cdots$ is a splitting sequence such that some truncation $(\tau_n)_{n\ge N}$ is $f$-periodic, we say $(\tau_n)$ is \textbf{eventually $f$-periodic}. We say that two eventually $f$-periodic splitting sequences are \textbf{equivalent} if they have truncations which are related by finitely many $f$-periodic commutations.
We will see in \Cref{subsec:hmvbsunique} that an eventually $f$-periodic splitting sequence determines a certain branched surface in the compactified mapping torus $\ol M_f$ of $f$, and that equivalent sequences determine the same branched surface up to isotopy.

Recall that in the construction of the splitting sequence in \Cref{thm:sequenceexist}, we made the following choices:
\begin{enumerate}[label=(\alph*)]
    \item an efficient spiraling train track $T_+^\infty$ carrying $\Lambda_+^\infty$,
    \item a tiling of the end-cycles of $L$,
    \item an initial core $K=K_0$, and
    \item a train track $\tau_K=\tau_{K_0}$ fully carrying $\Lambda|_{K_0}$, which together with $T^\infty_+$ gives rise to a train track $\tau_0$ fully carrying $\Lambda_+$ which is endperiodic in $E_0$, and whose image in $E_0/f$ is equal to $T_+^\infty$.
\end{enumerate}
In the proofs of \Cref{lem:trackconstruction} and \Cref{thm:sequenceexist}, we showed that repeatedly performing core splits yields a sequence $\tau_0\twoheadrightarrow\tau_1\twoheadrightarrow\tau_2\twoheadrightarrow\cdots$ such that for large $i$, $\tau_i$ is efficient and $f(\tau_i)=\tau_{i+1}$. 
We can then factor the core splits $\tau_i\twoheadrightarrow \tau_{i+1}$ into sequences of individual splits such that the result is an eventually $f$-periodic splitting sequence. By \Cref{lemma:differbycommutations}, the equivalence class of this resulting $f$-periodic splitting sequence is well-defined. We denote the equivalence class by $\mathscr S(T_+^\infty,\mathcal T,  K, \tau_K)$, where $\mathcal T$ denotes our choice of tiling.

We claim that $\mathscr S(T_+^\infty,\mathcal T,  K, \tau_K)$ is determined up to equivalence by the train track $T_+^\infty$. That is, up to equivalence $\mathscr S(\cdot, \cdot, \cdot, \cdot)$ is independent of the last three arguments.

\begin{lemma}\label{lem:anytrack}
Let $\tau_K'$ be another choice of input for the function $\mathscr S(T_+^\infty,\mathcal T,  K,\cdot)$. Then
\[
\mathscr S(T_+^\infty,\mathcal T,  K, \tau_K)=\mathscr S(T_+^\infty,\mathcal T,  K, \tau_K').
\]
\end{lemma}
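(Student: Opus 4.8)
The goal is to show that changing the train track $\tau_K$ carrying $\Lambda_+|_{K_0}$ (while keeping $T_+^\infty$, the tiling $\mathcal T$, and the core $K$ fixed) does not change the equivalence class of the resulting eventually $f$-periodic splitting sequence. The plan is to compare the two splitting sequences $(\tau_n)$ and $(\tau_n')$ generated from $\tau_0$ and $\tau_0'$ respectively by repeated core splitting, and to show that for sufficiently large $n$ the train tracks $\tau_n$ and $\tau_n'$ are isotopic rel stops. Since core splitting commutes with $f$ by \Cref{lem:coresplitcommute}, once $\tau_n = \tau_n'$ (up to isotopy) for one large $n$, the two periodic sequences coincide from that point on, and by \Cref{lemma:differbycommutations} the factorizations of the core splits into individual splits differ by commutations; this gives equivalence of the sequences.

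\textbf{Key steps.} First, observe that $\tau_0$ and $\tau_0'$ agree in $E_0$: both have image $T_+^\infty$ in $E_0/\langle f\rangle$ by choice (d), so $\tau_0|_{E_0} = \tau_0'|_{E_0}$ up to isotopy. Hence by \Cref{lem:lambdacompatible}, $\tau_0|_{K_j}$ and $\tau_0'|_{K_j}$ are $\Lambda|_{K_j}$-compatible for all $j \ge 0$. Now I would run the same argument as in the proof of \Cref{thm:sequenceexist}: both $\tau_0|_{K_0}$ and $\tau_0'|_{K_0}$ carry the $I$-lamination $\Lambda_+|_{K_0}$ (using \Cref{lem:ilam}), so after performing maximal splittings inside the core — i.e., after a bounded number of core splits — we may assume both have no large branches in $K_0$; call the resulting tracks $\sigma_0$ and $\sigma_0'$, still $\Lambda$-compatible in each $K_j$. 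Next, by \Cref{lem:principalcusps} and \Cref{lem:fellowtravel}, the $\Lambda_+$-routes from principal cusps never eventually fellow travel, while by \Cref{lem:collision} every nonprincipal cusp's route collides. Choose $N$ large enough that (i) the maximal $\Lambda$-routes in $K_N$ of any two principal cusps of $\sigma_0|_{K_1}$ or $\sigma_0'|_{K_1}$ landing at the same point of $\del K_1$ have diverged, and (ii) every nonprincipal cusp of $\sigma_0|_{K_1}$ and $\sigma_0'|_{K_1}$ is nonpersistent in $K_N$. Then the core splittings of $\sigma_0|_{K_N}$ and $\sigma_0'|_{K_N}$ differ by shifts (by \Cref{lem:diffbyshifts}, since they are $\Lambda_N$-compatible spiraling train tracks), each of which is a shift of divergent neighbors by the choice of $N$; hence by \Cref{cor:anyorder} the two core splittings in $K_N$ are isotopic rel stops. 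Outside $K_N$ the two tracks already agree since both are built from $T_+^\infty$ and core splitting is supported in the core. Therefore after sufficiently many core splits the two sequences coincide up to compactly supported isotopy.

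\textbf{Main obstacle.} The delicate point is bookkeeping the indexing: a core split only modifies the smallest core containing a large branch, and the two sequences $(\tau_n)$, $(\tau_n')$ may absorb tiles into the core at different rates in the first few steps, so "the core splitting in $K_N$" must be interpreted carefully, matching up the two sequences by their effect on $K_N$ after enough splits have occurred that all subsequent large branches of interest lie in $K_N$. I would handle this exactly as in the proof of \Cref{thm:sequenceexist}: fix $N$ first, note that after finitely many core splits each track restricted to $K_N$ has been maximally split and further core splits either act inside $K_N$ or on larger cores, and then invoke \Cref{cor:anyorder} to conclude the restrictions to $K_N$ agree. Once $\tau_n$ and $\tau_n'$ are isotopic for some large $n$ in the periodic range, \Cref{lem:coresplitcommute} propagates the equality forward, and \Cref{lemma:differbycommutations} upgrades "same core splits" to "equivalent splitting sequences," completing the proof.
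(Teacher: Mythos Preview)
Your proposal is correct and follows essentially the same route as the paper's proof: both reduce to the argument of \Cref{thm:sequenceexist} by establishing $\Lambda$-compatibility via \Cref{lem:lambdacompatible}, choosing $N$ large enough that the collision and principal-cusp lemmas force the $\Lambda$-identification of persistent cusps to be a poset isomorphism, and then invoking \Cref{cor:anyorder} to conclude the core splittings agree. The paper is terser, simply writing ``as in the proof of \Cref{thm:sequenceexist}'' where you spell out the steps.

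One small point the paper makes explicit that you glossed over: the inputs $\tau_K$ and $\tau_K'$ are arbitrary train tracks fully carrying $\Lambda|_{K_0}$, so the resulting $\tau_0$ and $\tau_0'$ need not be efficient at the outset (they may have cusped bigon or monogon complementary regions). The paper handles this by citing \Cref{rmk:trackconstruction} to say that after some number of core splits both become efficient, then truncates and relabels. Your argument implicitly needs this too, since \Cref{lem:lambdacompatible} and the downstream lemmas require efficient tracks; it is a one-line fix but worth stating.
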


\begin{proof}
For notational simplicity, in this proof we write $\tau$ for a train track on $L$ constructed from $\tau_K$ and $\tau'$ for a train track constructed from $\tau'_K$.

By the proof of \Cref{lem:trackconstruction} (see \Cref{rmk:trackconstruction}), there is a number $i$ such that the train tracks obtained by performing $i$ core splits on $\tau$ and $\tau'$ are efficient. Hence by truncating and relabeling, we may assume that $\tau$ and $\tau'$ are efficient. By \Cref{lem:lambdacompatible}, $\tau|_{K_i}$ and $\tau'|_{K_i}$ are $\Lambda|_{K_i}$-compatible.

Now, as in the proof of \Cref{thm:sequenceexist}, there exists a number $N$ such that the $\Lambda_+|_{K_N}$-identification of the persistent cusps of $\tau|_{K_N}$ and $\tau'|_{K_N}$ is a poset isomorphism. Therefore $\tau$ and $\tau'$ have common core splittings, so $\mathscr S(T_+^\infty,\mathcal T,  K, \tau_K)=\mathscr S(T_+^\infty,\mathcal T,  K, \tau_K')$.
\end{proof}

In light of \Cref{lem:anytrack} above, we will drop the fourth argument of $\mathscr S(\cdot,\cdot,\cdot,\cdot)$ and write simply $\mathscr S(\cdot,\cdot,\cdot)$ going forward.

\begin{lemma}\label{lem:anyindex}
For all $i\ge0$, $\mathscr S(T_+^\infty,\mathcal T, K)=\mathscr S(T_+^\infty,\mathcal T, K_i)$ (recall our convention $K=K_0$).
\end{lemma}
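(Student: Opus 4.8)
The plan is to compute both sides using the \emph{same} initial train track, and to observe that the two computations differ only in which nested sequence of cores is used to define core splits, and that this difference washes out in the $f$-periodic tail. By \Cref{lem:anytrack} we may choose the (suppressed) fourth input freely. First I would note that if $\tau_0$ fully carries $\Lambda_+$, is endperiodic in $E_0$, and has image $T_+^\infty$ in $E_0/f$, then the same holds with $E_0$ replaced by $E_i$: since $E_{i+1}\subseteq E_1$, the equality $f(\tau_0)|_{E_1}=\tau_0|_{E_1}$ forces $f(\tau_0)|_{E_{i+1}}=\tau_0|_{E_{i+1}}$, and $E_i$ still surjects onto the quotient surface carrying $T_+^\infty$. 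Hence, choosing any legal input for $\mathscr S(T_+^\infty,\mathcal T, K,\cdot)$ to produce a track $\tau_0$ and then taking $\tau_0|_{K_i}$ as the input for $\mathscr S(T_+^\infty,\mathcal T, K_i,\cdot)$, both equivalence classes are computed by iterating core splits on the single track $\tau_0$: the first computation uses the cores $K_0\subseteq K_1\subseteq\cdots$ and the second uses $K_i\subseteq K_{i+1}\subseteq\cdots$. Write $\kappa$ and $\kappa'$ for the two resulting core-split operations.

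Next I would analyze how $\kappa$ and $\kappa'$ are related. If $\tau$ fully carries $\Lambda_+$ and $\tau|_{K_i}$ has a large branch, then the least index $j$ with $\tau|_{K_j}$ containing a large branch is $\le i$, so $\kappa$ acts by a maximal splitting of $\tau|_{K_j}$ taking place inside $K_j\subseteq K_i$ — a sequence of $\Lambda|_{K_i}$-splits which leaves $\tau$ unchanged outside $K_i$. Iterating $\kappa$ and applying \Cref{lemma:maximalsplitting} to $\tau|_{K_i}$, after finitely many steps we reach a track whose restriction to $K_i$ has no large branches and which still agrees with $\tau$ outside $K_i$; by the uniqueness of maximal splittings (\Cref{lemma:differbycommutations}) this restriction is the maximal splitting of $\tau|_{K_i}$, so this iterate of $\kappa$ equals $\kappa'(\tau)$. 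In particular the $\kappa'$-sequence starting at $\tau_0$ is, term by term, a subsequence of the $\kappa$-sequence starting at $\tau_0$.

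Finally I would pass to the $f$-periodic tail of the $\kappa$-sequence, which exists by the proof of \Cref{thm:sequenceexist} (cf.\ the proof of \Cref{lem:anytrack}). In that regime each core split is a maximal splitting inside $K_{j(n)}$ for the least index $j(n)$ with a large branch, and since a maximal splitting removes every large branch from $K_{j(n)}$, hence from $K_j$ for all $j\le j(n)$, the indices $j(n)$ are strictly increasing and tend to infinity. Thus for $n$ large, $\tau_n|_{K_i}$ has no large branch, so $\kappa(\tau_n)=\kappa'(\tau_n)$; consequently from some point on the $\kappa'$-sequence coincides with the $\kappa$-sequence. Factoring core splits into individual splits compatibly with the $f$-periodicity, the two computations therefore yield eventually $f$-periodic splitting sequences with a common truncation (up to $f$-periodic commutation, using \Cref{lemma:differbycommutations} and that $\kappa$, hence $\kappa'$, commutes with $f$ by \Cref{lem:coresplitcommute}), and hence $\mathscr S(T_+^\infty,\mathcal T, K)=\mathscr S(T_+^\infty,\mathcal T, K_i)$.

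I expect the step requiring the most care to be the middle one: verifying that each split performed by $\kappa$ inside $K_j$ with $j\le i$ really is a $\Lambda|_{K_i}$-split that cannot propagate outside $K_i$, so that \Cref{lemma:maximalsplitting} and \Cref{lemma:differbycommutations} may legitimately be invoked on the surface $K_i$; the bookkeeping that $\tau_0|_{K_i}$ is a valid input for the $K_i$-version of the construction (endperiodicity and the quotient image passing to the smaller collar $E_i$) is routine but worth stating cleanly. The degenerate subcase in which the train tracks eventually have no large branches at all is handled trivially, since then both $\kappa$ and $\kappa'$ are eventually the identity.
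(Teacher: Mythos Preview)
Your proof is correct and takes essentially the same approach as the paper's: both show the two core-split sequences are related by truncation. The paper's version is terser because it chooses $\kappa^i(\tau)$ (rather than $\tau_0$) as the starting track for the $K_i$-computation via \Cref{lem:anytrack}, so that the two sequences coincide immediately thereafter; your version reaches the same conclusion from the common start $\tau_0$ by the correct subsequence argument in your second paragraph.
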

\begin{proof}
Up to equivalence, to obtain a sequence representing $\mathscr S(T_+^\infty,\mathcal T, K_i)$ we may start with the train track $\kappa^i(\tau)$ and take iterative core splits. On the other hand, a sequence representing $\mathscr S(T_+^\infty,\mathcal T, K)$ is obtained by taking iterative core splits of $\tau$. The two sequences are related by truncation, so $\mathscr S(T_+^\infty,\mathcal T, K)=\mathscr S(T_+^\infty,\mathcal T, K_i)$.
\end{proof}

\begin{lemma}\label{lem:anycore}
Given our fixed tiling $\mathcal T$ of the positive and negative end-cycles of $L$, let $K'=K_0'$ be another choice of core. Then $\mathscr S(T_+^\infty,\mathcal T, K)=\mathscr S(T_+^\infty,\mathcal T, K')$.
\end{lemma}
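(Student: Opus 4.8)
The statement to prove is \Cref{lem:anycore}: given the fixed tiling $\mathcal T$, the equivalence class of $f$-periodic splitting sequence $\mathscr S(T_+^\infty, \mathcal T, K)$ does not depend on the choice of core $K$. The plan is to reduce this to the previously established invariances, namely \Cref{lem:anytrack} (independence of the auxiliary track $\tau_K$ in the core) and \Cref{lem:anyindex} (independence of which enlarged core $K_i$ in the nested family we start from). The key observation is that any two cores $K$ and $K'$ coming from the same tiling $\mathcal T$ both sit inside the same doubly-infinite family of tiles: each is obtained by choosing how many tiles of each end-cycle to absorb. So there is some index $i$ with $K \subset K_i'$ and some index $j$ with $K' \subset K_j$, simply by taking $i$ and $j$ large enough, since both nested families $\{K_\ell\}$ and $\{K_\ell'\}$ exhaust $L$ tile by tile.

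First I would make precise the compatibility of the two nested families. Both $K$ and $K'$ are obtained from the union $E_\pm$ of tiled neighborhoods (with respect to $\mathcal T$) by removing, for each positive end-cycle, finitely many tiles; the families $K_\ell$ and $K_\ell'$ then absorb one further tile per end-cycle at each step. Hence for $\ell$ sufficiently large, $K_\ell$ is a core that simultaneously can be viewed as $K_{m}'$ for an appropriate $m$ — more carefully, there exists $\ell$ such that $K_\ell = K_m'$ \emph{as a subsurface of $L$} once we have arranged (by taking $\ell, m$ large) that the absorbed tiles for the two families coincide, since past a certain stage both families are just ``absorb the next tile in each end-cycle''. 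Actually it is cleaner not to insist on literal equality: it suffices that for large $\ell$, $K_\ell$ is \emph{also} a legitimate core in the family generated by $K'$, i.e. $K_\ell = K_{m(\ell)}'$ for some function $m$. This follows because a core is determined by $\mathcal T$ together with the (finite) set of tiles it contains, and once $\ell$ is large enough $K_\ell$ contains exactly the tiles $K_{m}'$ does for the matching $m$.

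Then the argument is: by \Cref{lem:anyindex} applied to the family $\{K_\ell\}$, $\mathscr S(T_+^\infty, \mathcal T, K) = \mathscr S(T_+^\infty, \mathcal T, K_\ell)$ for all $\ell \geq 0$. Choose $\ell$ large enough that $K_\ell = K_m'$ for some $m$ as above. Then $\mathscr S(T_+^\infty, \mathcal T, K_\ell) = \mathscr S(T_+^\infty, \mathcal T, K_m')$ — here I should check that the equivalence class does not secretly depend on \emph{which} nested family the core is regarded as sitting in, but this is automatic: $\mathscr S(T_+^\infty, \mathcal T, K_\ell)$ is, by definition (after \Cref{lem:anytrack}), the equivalence class of the sequence obtained by iterated core splits starting from \emph{any} train track built from $T_+^\infty$ and $\Lambda|_{K_\ell}$, and a core split is defined purely in terms of the nested family $K_\ell \subset K_{\ell+1} \subset \cdots$, which for large index is identical whether indexed via $K$ or via $K'$. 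So the core-split operation itself agrees eventually, and the two splitting sequences are equal after truncation, hence equivalent. Finally apply \Cref{lem:anyindex} to the family $\{K_\ell'\}$ to get $\mathscr S(T_+^\infty, \mathcal T, K_m') = \mathscr S(T_+^\infty, \mathcal T, K')$, and chaining the equalities gives the result.

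The main obstacle I anticipate is purely bookkeeping rather than conceptual: pinning down the precise sense in which ``every core from $\mathcal T$ is eventually a member of every other core's nested family'' so that \Cref{lem:anyindex} can be invoked on both ends. One must be a little careful because the initial cores $K$ and $K'$ may absorb different finite sets of tiles, so $K_\ell$ and $K_\ell'$ need not agree for all $\ell$ — only for large $\ell$, and even then one needs the indexing conventions (``absorb the $j$th tile of $Z_i$ with $j$ minimal not already present'') to line up. The cleanest fix is to observe that beyond the stage where both families have absorbed all the tiles that either $K$ or $K'$ originally omitted, the two families literally coincide as sequences of subsurfaces, so that $K_{\ell} = K_{\ell - c}'$ for an appropriate constant shift $c$ and all large $\ell$; then the two splitting sequences are honest truncations of one another, and equivalence is immediate. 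I would state this shift-by-a-constant observation explicitly and keep the rest of the write-up short.
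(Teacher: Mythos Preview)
Your approach has a genuine gap. The claim that the two nested families $\{K_\ell\}$ and $\{K'_\ell\}$ eventually coincide, or even that some $K_\ell$ equals some $K'_m$, is false in general. The indexing convention adds \emph{one tile per positive end-cycle} at each step, so the relative offset between the highest-index tiles absorbed from two different end-cycles is a fixed constant along each family. If $K$ and $K'$ start with different such offsets (say $K$ has two extra tiles from $Z_1$ while $K'$ has two extra tiles from $Z_2$), then no $K_\ell$ will ever be equal to any $K'_m$ as subsurfaces of $L$. Since \Cref{lem:anyindex} only lets you move within a single family, your chain of equalities cannot be closed.

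The paper does not try to link the families directly. Instead it reduces (after observing negative tiles are irrelevant) to the elementary case $K' = K \cup t_1$, where $t_1$ is a single tile from a single end-cycle; the general case then follows by iterating this one-tile change. For this special case, the families $\{K_i\}$ and $\{K'_i\}$ interlace: $K_i \subset K'_i \subset K_{i+1}$. One then defines $\tau_i$ and $\tau'_i$ as the maximal splittings of the initial track in $K_i$ and $K'_i$ respectively, and factors the sequence $\tau_0, \tau'_0, \tau_1, \tau'_1, \ldots$ into an eventually $f$-periodic splitting sequence representing $\mathscr S(T_+^\infty, \mathcal T, K)$; truncating at $\tau'_0$ gives a representative of $\mathscr S(T_+^\infty, \mathcal T, K')$. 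This interleaving is exactly what your argument is missing, and it is what bridges two families that never literally merge.
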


\begin{proof}
By our definition of a core, $\Lambda_+$ is disjoint from the components of $L\cut K$ and $L\cut K'$ which are neighborhoods of negative ends. Hence it suffices to assume that $K$ and $K'$ contain exactly the same negative tiles, and differ only by the positive tiles they contain.

Let $Z$ be a positive end-cycle of $L$, and let $t_1$ be the lowest-index tile in the tiling of $\mathscr U_{Z}$ which is not already contained in $K$. We will first prove the statement of the lemma for $K'=K\cup t_1$.

For all $i\ge0$, define $K_i'$ so that $K_{i+1}'$ is to $K_i'$ as $K_{i+1}$ is to $K_i$; that is, $K_{i+1}'$ is obtained by adding to $K_i'$, for each positive end-cycle, the tile of lowest index not already lying in $K_i'$.

Let $\tau_0$ be the track chosen in (d) above. For $i\ge0$, let $\tau_i$ be the maximal splitting of $\tau_0$ in $K_i$, and $\tau_i'$ be the maximal splitting of $\tau_0$ in $K_i'$. Note that there exists a natural number $N$ such that for all $i\ge N$, we have $f(\tau_i)=\tau_{i+1}$ and $f(\tau_i')=\tau_{i+1}'$.

By factoring maximal splits into individual splits, the sequence $\tau_0,\tau_0',\tau_1,\tau_1',\tau_2,\tau_2',\dots$ may be factored into splits to give an eventually $f$-periodic splitting sequence 
\[
\tau_0\to\cdots\to
\tau_0'\to\cdots\to
\tau_1\to\cdots\to
\tau_1'\to\cdots\to
\tau_2\to\cdots\to
\tau_2'\to\cdots
\]
representing $\mathscr S(T_+^\infty, \mathcal T, K)$. By truncating the sequence above to begin with $\tau_0'$, we see that $\mathscr S(T_+^\infty, \mathcal T, K)=\mathscr S(T_+^\infty, \mathcal T, K')$.

The special case we have just proven can be applied iteratively to prove the lemma for general $K'$.
\end{proof}

In light of \Cref{lem:anycore}, we will drop the third argument of $\mathscr S(\cdot,\cdot,\cdot)$ and write simply $\mathscr S(\cdot,\cdot)$ going forward.

We recall from our construction of junctures and tilings in \Cref{sec:junctures} that an end-cycle $Z$ gives rise to a compact surface $F_Z$ and a homology class $u\in H^1(F_Z;\Z)$. Any nonseparating $\Z$-weighted 1-manifold $J$ which represents $u$ and intersects $\del F_Z$ with consistent coorientation then defines a tiling $\mathcal T_Z$ of $\U_Z$.

Let $Z$ be an end-cycle of $L$. We say that two tilings $\mathcal T_Z$, $\mathcal T_Z'$ of $Z$ respectively determined by weighted cooriented 1-manifolds $J, J'\subset F_Z$ are \textbf{interleaved} if $J$ and $-J'$ cobound an embedded subsurface in $F_Z$.

\begin{lemma}\label{lem:interleaved}
Let $Z$ be a positive end-cycle of $L$. Suppose that $\mathcal T'$ is obtained from $\mathcal T$ by replacing the tiling $\mathcal T_Z$ of $Z$ by an tiling $\mathcal T_Z'$ of $Z$ which is interleaved with $\mathcal T_Z$. Then $\mathscr S(T_+^\infty, \mathcal T)=\mathscr S(T_+^\infty, \mathcal T')$.
\end{lemma}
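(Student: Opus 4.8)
The plan is to reduce the statement about interleaved tilings to the already-established invariance results, \Cref{lem:anyindex} and \Cref{lem:anycore}, by exhibiting a single core built from one tiling whose iterated core splits can be read off using \emph{either} tiling. Let $J, J' \subset F_Z$ be the weighted cooriented $1$-manifolds defining $\mathcal T_Z$ and $\mathcal T_Z'$, and suppose $J$ and $-J'$ cobound an embedded subsurface $W \subset F_Z$. Lifting to $\U_Z$, the preimage $\wt W$ interpolates between the tiles of $\mathcal T_Z$ and those of $\mathcal T_Z'$: every tile of $\mathcal T_Z$ is eventually contained in a tiled neighborhood of $\mathcal T_Z'$ and vice versa, because the coorientations agree on $\del F_Z$ and the classes $[J]=[J']=u$ agree. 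Concretely, for each $\ell$ there are indices $m(\ell), m'(\ell)$ such that the $\ell$th core $K_\ell$ built from $\mathcal T$ is sandwiched between the $m(\ell)$th and $m'(\ell)$th cores built from $\mathcal T'$, and conversely; moreover $m(\ell), m'(\ell) \to \infty$. This is the only place the interleaving hypothesis is used, and making this sandwiching precise is the main obstacle: one must check that replacing the positive tiles coming from $J$ by those coming from $J'$ changes the core only by adding or removing finitely many compact pieces at each stage, so that the two exhaustions $\{K_\ell\}$ and $\{K'_\ell\}$ of $L$ are cofinal in each other, and that $\Lambda_+$ is still disjoint from the negative-end neighborhoods (which is automatic since the negative tilings are unchanged).

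Given this cofinality, I would argue exactly as in the proof of \Cref{lem:anycore}. Fix the train track $\tau_0$ of choice (d) and let $\tau_\ell$ (resp.\ $\tau'_\ell$) be the maximal splitting of $\tau_0$ in $K_\ell$ (resp.\ in $K'_\ell$), which is unique by \Cref{lemma:differbycommutations}. Because the two families of cores are cofinal, for each $\ell$ there is an $m$ with $K_\ell \subset K'_m$ and an $m'$ with $K'_\ell \subset K_{m'}$, so by uniqueness of maximal splittings $\tau'_m$ is obtained from $\tau_\ell$ by further core splits, and symmetrically. Interleaving the two sequences
\[
\tau_0 \to \cdots \to \tau'_{m_0} \to \cdots \to \tau_{\ell_1} \to \cdots \to \tau'_{m_1} \to \cdots
\]
and factoring each maximal split into individual splits yields a single eventually $f$-periodic splitting sequence (eventual $f$-periodicity holds for both $(\tau_\ell)$ and $(\tau'_\ell)$ once $\ell$ is large, by the argument in \Cref{thm:sequenceexist} together with \Cref{lem:coresplitcommute}). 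Truncating this combined sequence to begin at a $\tau_0$-term shows it represents $\mathscr S(T_+^\infty, \mathcal T)$, and truncating it to begin at a $\tau'_0$-term shows it represents $\mathscr S(T_+^\infty, \mathcal T')$; since both are equivalence classes of the same sequence up to truncation and $f$-periodic commutation, they coincide.

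One subtlety to address along the way: $\tau_0$ was built to be $f$-endperiodic with respect to $\mathcal T$, not $\mathcal T'$, so $\tau'_0$ (the maximal splitting of $\tau_0$ in $K'_0$) may not literally be one of the inputs contemplated in the definition of $\mathscr S(T_+^\infty, \mathcal T')$. This is harmless: by \Cref{lem:anytrack} and \Cref{lem:anyindex} the equivalence class $\mathscr S(T_+^\infty, \mathcal T')$ does not depend on the initial track or on how far we have pre-split, so it suffices that \emph{some} tail of the $\mathcal T'$-core-splitting sequence starting from $\tau_0$ agrees with a tail of the $\mathcal T$-core-splitting sequence — which is exactly what the cofinality of $\{K_\ell\}$ and $\{K'_\ell\}$ gives. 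Thus the interleaving hypothesis feeds precisely into the cofinality step, and everything downstream is a repackaging of \Cref{lem:anycore}'s proof.
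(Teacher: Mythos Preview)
Your overall strategy matches the paper's: build a single splitting sequence that contains both the $\mathcal T$-core splits and the $\mathcal T'$-core splits, then observe that truncating at different points exhibits it as a representative of each class. The gap is in how you use the interleaving hypothesis. You extract from it only the cofinality of $\{K_\ell\}$ and $\{K'_\ell\}$, but cofinality holds for \emph{any} two tilings of $Z$ (both families are exhaustions of $L$), so if your argument went through it would prove \Cref{lem:anytiling} directly and render the interleaving hypothesis and \Cref{lem:subsurfaceseq} superfluous. The place this actually bites is your claim that the interleaved sequence $\tau_0 \to \cdots \to \tau'_{m_0} \to \cdots \to \tau_{\ell_1} \to \cdots$ is eventually $f$-periodic: your parenthetical justifies $f$-periodicity of each \emph{subsequence}, but for the combined sequence you need the interleaving pattern $(m_k, \ell_k)$ itself to be eventually periodic, and mere cofinality does not provide that.

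The paper uses the interleaving hypothesis more sharply. Because $J$ and $-J'$ cobound an embedded surface, $F_Z \setminus (J \cup J')$ has exactly two components $W_{0.5}$ and $W_1$; lifting to $\U_Z$, each $\mathcal T$-tile $t_i$ splits into a piece $t_{i.5}$ lying over $W_{0.5}$ and a piece over $W_1$. Setting $K_{i.5} = K_i \cup t_{i.5}$ produces a strict chain $K_0 \subset K_{0.5} \subset K_1 \subset K_{1.5} \subset \cdots$ in which the half-integer terms are genuine cores for $\mathcal T'$. The combined sequence $\tau_0, \tau_{0.5}, \tau_1, \tau_{1.5}, \ldots$ of maximal splittings is then eventually $f$-periodic for the transparent reason that the alternation is built into the tiling of $F_Z$ and hence $f$-invariant, and truncating at an integer versus a half-integer index yields representatives of the two classes. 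Your argument becomes correct once you replace the cofinality step with this explicit alternating nesting --- which is precisely what ``interleaved'' is designed to give you.
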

\begin{proof}
Suppose that $\mathcal T$ and $\mathcal T'$ are induced by $J$ and $J'\subset F_Z$. Then because $\mathcal T$ and $\mathcal T'$ are interleaved, $F_Z-(J\cup J')$ consists of two surfaces, into one of which $J$ points and into one of which $J'$ points. Let these surfaces be called $W_{0.5}$ and $W_1$.

Let $\{t_i\mid i\in\mathbb Z\}$ be the tiles of $\mathcal T_Z$, each of which is naturally identified with $F_Z\cut J$. Let $t_{i.5}$ be the subsurface of $t_i$ corresponding to $W_{0.5}$.

By relabeling we can assume that $t_0$ is the lowest index tile of $\mathcal T$ not contained in $K_0$.

Let $K_{i.5}=K_{i}\cup t_{i.5}$. Then we have that $K_0\subset K_{0.5}\subset K_{1}\subset K_{1.5}\subset\cdots$. Let $\tau_{i.5}$ be the maximal splitting of $\tau$ in $K_{i.5}$. Now similarly to the proof of \Cref{lem:anycore}, the sequence $\tau_0,\tau_{0.5},\tau_1, \tau_{1.5}, \tau_2,\tau_{2.5},\dots$ can be factored to give an eventually periodic splitting sequence
\[
\tau_0\to\cdots\to
\tau_{0.5}\to\cdots\to
\tau_1\to\cdots\to
\tau_{1.5}\to\cdots\to
\tau_{2}\to\cdots\to
\tau_{2.5}\to\cdots
\]
representing $\mathscr S(T_+^\infty, \mathcal T)$. Truncating gives a sequence representing $\mathscr S(T_+^\infty, \mathcal T')$. Hence 
$\mathscr S(T_+^\infty, \mathcal T)=\mathscr S(T_+^\infty, \mathcal T')$.
\end{proof}

The following is a fairly well-known lemma.

\begin{lemma}\label{lem:subsurfaceseq}
Let $F$ be a compact oriented surface, and let $C$ and $C'$ be two cooriented multicurves such that for each boundary component $\alpha$ of $F$, all components of $C$ and $C'$ which meet $\alpha$ do so with consistent coorientation.

If $[C]=[C']$ in $H_1(F,\del F)$, then there exists a sequence of cooriented multicurves
\[
C=C_0, C_1,\dots, C_n=C'
\]
such that:
\begin{itemize}
    \item for each boundary component $\alpha$ of $F$, all components of $C_0,...,C_n$ which meet $\alpha$ do so with consistent coorientation, and
    \item for $0\le i\le n-1$, $C_i$ and $-C_{i+1}$ are the boundary of an embedded subsurface $W_i$.
\end{itemize}
\end{lemma}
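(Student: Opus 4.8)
The plan is to prove this by hand, pushing $C$ across an embedded subsurface finitely many times and measuring progress with the ``spread'' of a locally constant level function. First, some harmless reductions. Since $H_1(F,\del F;\Z)$ is free, the hypothesis $[C]=[C']$ over $\R$ already gives it over $\Z$, and after replacing each weighted multicurve by parallel pushed-off copies I may assume $C$ and $C'$ are honest cooriented $1$-manifolds properly embedded in $F$, put in general position (transverse, no overlapping arcs), since a small isotopy is realized by moves of the allowed type. One elementary point worth recording: the equality $[C]=[C']$ forces, on each boundary component $\alpha$ of $F$, that $C$ and $C'$ meet $\alpha$ with the \emph{same} coorientation sign, since otherwise the (equal) algebraic intersection numbers with $\alpha$ would be a positive and a negative integer. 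I may also assume $F$ is connected.

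Next I set up the level function. The graph $C\cup C'$ cuts $F$ into complementary regions $R_1,\dots,R_N$, and $C-C'$ is a relative $1$-cycle for the resulting CW structure on $(F,\del F)$; since $[C-C']=0$ in $H_1(F,\del F)$ there are integers $a_i$ with $\del\bigl(\sum_i a_iR_i\bigr)=C-C'$ rel $\del F$. Let $h\colon F\cut(C\cup C')\to\Z$ be the locally constant function equal to $a_i$ on $R_i$; by construction, crossing $C$ in its coorientation direction decreases $h$ by $1$ and crossing $C'$ in its coorientation direction increases $h$ by $1$. Set $\Phi(C):=\max h-\min h\in\Z_{\ge0}$. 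If $\Phi(C)=0$ then $h$ is constant, so $C-C'=\del(c\,[F])\equiv 0$ rel $\del F$, i.e.\ $C=C'$, and the empty sequence suffices.

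The key step is the move, applied when $h$ is nonconstant. Let $m:=\max h$ and $W:=\ol{R^{\max}}$ where $R^{\max}:=\bigcup\{R_i:a_i=m\}$; this is a compact embedded subsurface, possibly with corners along $\del F$ and at points of $C\cap C'$. Since crossing $C$ or $C'$ out of $R^{\max}$ would otherwise produce the value $m+1$, the frontier of $R^{\max}$ in $\intr F$ consists of arcs of $C$ cooriented out of $W$ together with arcs of $C'$ cooriented into $W$. I form $C_1$ from $C$ by deleting the $C$-arcs in the frontier of $W$, splicing in the $C'$-arcs in the frontier of $W$, and smoothing the finitely many corners created (alternatively one keeps $C_1$ piecewise-smooth for now, smoothing at the end; the spliced arcs may coincide with $C'$, which is harmless). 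Then: $C_1$ is again a cooriented multicurve meeting each $\alpha\subset\del F$ with a consistent coorientation (its boundary arcs are drawn from those of $C$ and $C'$, which agree there); $C_1$ differs from $C$ by $\del W$ rel $\del F$, with $C$ pointing out of $W$ and $C_1$ into $W$, so $C$ and $-C_1$ are the boundary of the embedded subsurface $W$ and in particular $[C_1]=[C]$; and the level function for $(C_1,C')$ equals $h$ off $R^{\max}$ and $h-1$ on $R^{\max}$, so its maximum drops to $m-1$ while its minimum is unchanged, giving $\Phi(C_1)=\Phi(C)-1$.

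Iterating, $\Phi$ is a nonnegative integer strictly decreasing at each move, so after $\Phi(C)$ steps one arrives at $C_n$ with $\Phi(C_n)=0$, that is $C_n=C'$; the sequence $C=C_0,\dots,C_n=C'$ then has all the required properties. The step I expect to be the main obstacle is exactly the local bookkeeping at points of $C\cap C'$: confirming that after smoothing $C_1$ is a genuine cooriented multicurve, that the surface-with-corners $W$ is an admissible cobounding region, that the smoothing creates no new region of value $\ge m$, and that all coorientations and level jumps are as claimed. A structurally different route would be to reduce to $C\cap C'=\varnothing$ and slice a generic homotopy $G\colon F\times[0,1]\to S^1$ between $C=g^{-1}(x_0)$ and $C'=g'^{-1}(x_0)$ at the regular levels of the height function on $G^{-1}(x_0)$, but arranging the genericity and the disjointness carries comparable overhead. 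Everything else — existence of the level function, termination via $\Phi$, and preservation of the boundary-coorientation condition — is routine.
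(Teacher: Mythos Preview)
Your level-function strategy is essentially Hatcher's argument for the closed case (which the paper cites as \cite{Hat08}) carried out directly on a surface with boundary; the paper instead doubles $F$ to a closed surface, applies \cite{Hat08} as a black box to $DC,DC'$, and then restricts the resulting sequence and subsurfaces back to $F$ with some cut-and-paste along $\partial F$ to restore the consistent-coorientation condition. Your route is more self-contained, and the core mechanism---a locally constant height $h$ on $F\setminus(C\cup C')$ whose spread $\Phi$ strictly decreases under each move---is sound.

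There is, however, a genuine gap in the claim that ``$C$ and $-C_1$ are the boundary of the embedded subsurface $W$.'' Your $W=\overline{R^{\max}}$ has frontier consisting only of those $C$-arcs and $C'$-arcs that separate $\{h=m\}$ from $\{h=m-1\}$. All the arcs of $C_0=C$ lying between two regions of height $<m$ are \emph{shared} with $C_1$ and sit in the complement of $W$, not on $\partial W$; so $\partial W\ne C_0\cup C_1$, and $C_0,C_1$ are not even disjoint. The lemma---and its application to ``interleaved'' tilings, where one uses that $F_Z\setminus(J\cup J')$ falls into exactly two pieces---requires the literal conclusion that each $W_i$ is an honest embedded subsurface with $\partial W_i$ equal to the disjoint union $C_i\sqcup C_{i+1}$. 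This discrepancy is global, not the local corner-smoothing issue you flag.

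A fix exists but takes work you have not supplied: after each swap, push $C_{i+1}$ slightly \emph{against} its coorientation. At each point of $C_i\cap C'$ the four adjacent regions carry heights $a-1,a,a+1,a$, and one checks that this push-off carries $C_{i+1}$ into the interior of a single height-$a$ region, hence disjoint from both $C_i$ and $C'$; one must then verify that the region between $C_i$ and the pushed-off $C_{i+1}$ really is an embedded subsurface with the correct oriented boundary. Without this step the argument does not establish the lemma as stated.
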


\begin{proof}
This is asserted in \cite{Gab87a}, and we provide a proof here for completeness. We make the convention that the boundary of an oriented manifold is cooriented into the manifold.

In the case where $F$ is closed, the lemma follows from \cite{Hat08}. We will use this to treat the case when $F$ has nonempty boundary.
We can double $C$ and $C'$ across $\partial F$ to get collections of closed curves $DC$ and $DC'$ on the doubled surface $F$, such that $[DC]=[DC']$ in $H_1(DF)$. Then applying \cite{Hat08}, we have a sequence of cooriented multicurves
\[
DC=\widehat{C_0}, \widehat{C_1},\dots, \widehat{C_n}=DC'
\]
such that for $0\le i\le n-1$, $\widehat{C_i}$ and $-\widehat{C_{i+1}}$ are the boundary of an embedded subsurface $\widehat{W_i}$. What we will do is to restrict $\widehat{C_i}$ and $\widehat{W_i}$ to $F$, and then perform some operations to obtain the desired sequence $C=C_0, C_1,\dots, C_n=C'$ on $F$.

The details are as follows. For each $\widehat{C_i}$, consider the restriction $\widehat{C_i} \cap F$. Up to a small perturbation, we can assume that this is a multicurve on $F$. However, its components might not meet components of $\partial F$ with consistent coorientations. To fix this, we inductively perform cut-and-paste along innermost pairs of intersection points $\widehat{C_i} \cap \partial F$ that are cooriented towards each other.
See \Cref{fig:interleave1} top. Call the resulting multicurve $C_i$. Notice that $DC$ and $DC'$ meet each boundary component of $F$ with consistent coorientations, so in this case the cut-and-paste operation is not necessary and we have $C_0=C$ and $C_n=C'$.

\begin{figure}
    \centering
    \fontsize{12pt}{12pt}\selectfont
    \resizebox{!}{6cm}{
\begingroup%
  \makeatletter%
  \providecommand\color[2][]{%
    \errmessage{(Inkscape) Color is used for the text in Inkscape, but the package 'color.sty' is not loaded}%
    \renewcommand\color[2][]{}%
  }%
  \providecommand\transparent[1]{%
    \errmessage{(Inkscape) Transparency is used (non-zero) for the text in Inkscape, but the package 'transparent.sty' is not loaded}%
    \renewcommand\transparent[1]{}%
  }%
  \providecommand\rotatebox[2]{#2}%
  \newcommand*\fsize{\dimexpr\f@size pt\relax}%
  \newcommand*\lineheight[1]{\fontsize{\fsize}{#1\fsize}\selectfont}%
  \ifx\svgwidth\undefined%
    \setlength{\unitlength}{286.2992126bp}%
    \ifx\svgscale\undefined%
      \relax%
    \else%
      \setlength{\unitlength}{\unitlength * \real{\svgscale}}%
    \fi%
  \else%
    \setlength{\unitlength}{\svgwidth}%
  \fi%
  \global\let\svgwidth\undefined%
  \global\let\svgscale\undefined%
  \makeatother%
  \begin{picture}(1,0.59406166)%
    \lineheight{1}%
    \setlength\tabcolsep{0pt}%
    \put(0,0){\includegraphics[width=\unitlength,page=1]{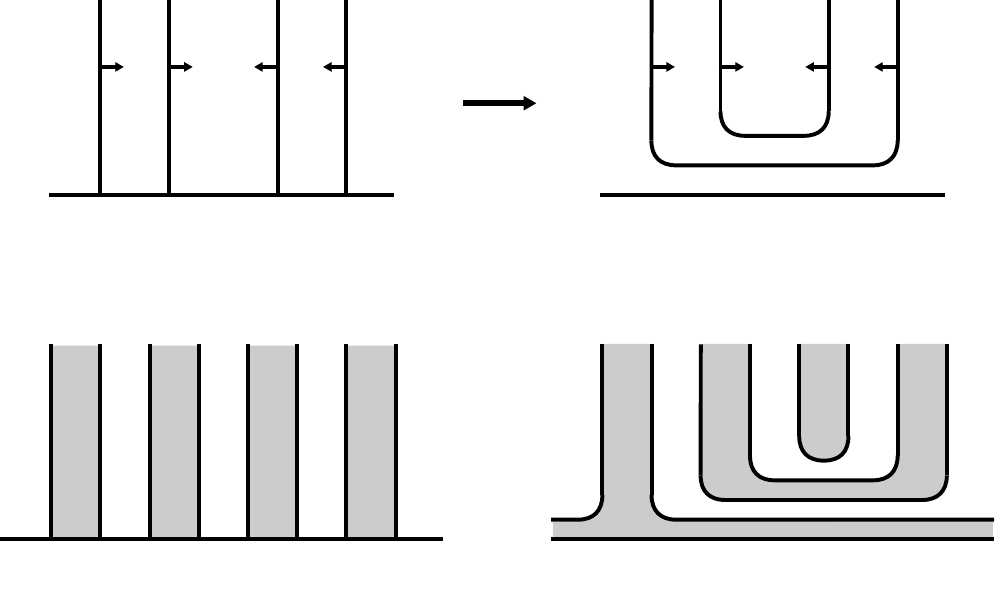}}%
    \put(0.04580062,0.00843196){\color[rgb]{0,0,0}\makebox(0,0)[lt]{\lineheight{1.25}\smash{\begin{tabular}[t]{l}(3)\end{tabular}}}}%
    \put(0.14481052,0.00843196){\color[rgb]{0,0,0}\makebox(0,0)[lt]{\lineheight{1.25}\smash{\begin{tabular}[t]{l}(2)\end{tabular}}}}%
    \put(0.24382042,0.00843196){\color[rgb]{0,0,0}\makebox(0,0)[lt]{\lineheight{1.25}\smash{\begin{tabular}[t]{l}(1)\end{tabular}}}}%
    \put(0.34283032,0.00843196){\color[rgb]{0,0,0}\makebox(0,0)[lt]{\lineheight{1.25}\smash{\begin{tabular}[t]{l}(2)\end{tabular}}}}%
    \put(0,0){\includegraphics[width=\unitlength,page=2]{interleave1.pdf}}%
  \end{picture}%
\endgroup%
}
    \caption{A sequence on $DF$ restricts to a sequence on $F$, up to performing cut-and-paste along $\partial F$ if necessary.}
    \label{fig:interleave1}
\end{figure}

We claim that for each $0\le i\le n-1$, $C_i$ and $-C_{i+1}$ bound an embedded subsurface $W_i$. To define $W_i$, consider the restriction $\widehat{W_i} \cap F$. For each boundary component $\alpha$ of $F$, if $\widehat{W_i} \cap \alpha$ is a union of intervals, then each such interval $I$ is one of 3 types:
\begin{enumerate}
    \item The endpoints of $I$ lie on $\widehat{C_i}$, hence are both cooriented inwards.
    \item One endpoint of $I$ lies on $\widehat{C_i}$ and the other lies on $\widehat{C_{i+1}}$, hence they are cooriented in the same direction. 
    \item The endpoints of $I$ lie on $\widehat{C_{i+1}}$, hence are both cooriented outwards.
\end{enumerate}

We first move $\widehat{W_i} \cap \alpha$ away from $\alpha$ near intervals of type (1). Then, inductively, for innermost pairs of intervals of type (2) whose endpoints are cooriented towards each other, we join $\widehat{W_i} \cap \alpha$ along the pair. Finally, if there are any intervals of type (3), we append a small collar neighborhood of $\alpha$ to $\widehat{W_i} \cap \alpha$. See \Cref{fig:interleave1} bottom. Call the resulting surface $W_i$.

It is straightforward to check that the boundary of $W_i$ is the union of $C_i$ and $-C_{i+1}$ as desired.
\end{proof}

\begin{lemma}\label{lem:anytiling}
Let $\mathcal T'$ be another tiling of $L$. Then $\mathscr S(T_+^\infty, \mathcal T)$ is equivalent to $\mathscr S(T_+^\infty, \mathcal T')$.
\end{lemma}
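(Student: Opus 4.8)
The plan is to reduce, via the homological interpolation of \Cref{lem:subsurfaceseq}, to the one-end-cycle situation already handled by \Cref{lem:interleaved}. First I would dispose of the negative end-cycles: since $\Lambda_+$ is disjoint from the tiled neighbourhoods of all negative end-cycles, the initial train track $\tau_0$ fully carrying $\Lambda_+$ may be taken to be empty near every negative end, so no core split $\tau_i\twoheadrightarrow\tau_{i+1}$ ever involves a negative tiled neighbourhood and the tilings of the negative end-cycles do not affect $\mathscr S(T_+^\infty,\mathcal T)$. Hence we may assume $\mathcal T$ and $\mathcal T'$ agree on every negative end-cycle. Writing the positive end-cycles as $Z_1,\dots,Z_k$ and replacing the $\mathcal T$-tiling of $Z_j$ by the $\mathcal T'$-tiling one index at a time, it suffices to treat the case in which $\mathcal T$ and $\mathcal T'$ differ only in the tiling of a single positive end-cycle $Z$.

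Next I would record the homological picture. By \Cref{construction:junctures} the tiling $\mathcal T_Z$ is induced by a weighted cooriented $1$-manifold $J\subset F_Z$ Lefschetz dual to the canonical class $u\in H^1(F_Z;\Z)$ and meeting $\partial F_Z$ with consistent coorientation; let $J$ and $J'$ be the $1$-manifolds inducing $\mathcal T_Z$ and $\mathcal T_Z'$. Replacing each weight-$w$ component by $w$ parallel copies, $J$ and $J'$ become cooriented multicurves (allowing properly embedded arc components), and since both are dual to the same class $u$ we have $[J]=[J']$ in $H_1(F_Z,\partial F_Z)$ with coorientations agreeing along each component of $\partial F_Z$. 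By \Cref{lem:subsurfaceseq} there is then a sequence $J=C_0,C_1,\dots,C_n=J'$ of cooriented multicurves, consistently cooriented along $\partial F_Z$, such that $C_i$ and $-C_{i+1}$ cobound an embedded subsurface of $F_Z$ for every $i$; since $[C_i]=[J]$, every $C_i$ is again Lefschetz dual to $u$.

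Each $C_i$ then induces a tiling $\mathcal T_Z^{(i)}$ of $Z$, with $\mathcal T_Z^{(0)}=\mathcal T_Z$ and $\mathcal T_Z^{(n)}=\mathcal T_Z'$, and by construction $\mathcal T_Z^{(i)}$ and $\mathcal T_Z^{(i+1)}$ are interleaved; so \Cref{lem:interleaved} gives $\mathscr S(T_+^\infty,\mathcal T^{(i)})=\mathscr S(T_+^\infty,\mathcal T^{(i+1)})$ for every $i$, and chaining these equalities yields $\mathscr S(T_+^\infty,\mathcal T)=\mathscr S(T_+^\infty,\mathcal T')$, completing the proof in the reduced case and hence in general.

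The step I expect to require the most care is that an intermediate multicurve $C_i$ produced by \Cref{lem:subsurfaceseq} (indeed even $C_0$, after the unweighting step) need not be non-separating in $F_Z$, so it is not literally a tiling in the sense of \Cref{construction:junctures}. I would resolve this by observing that the proof of \Cref{lem:interleaved} never uses non-separation of the second tiling: it uses only the $1$-manifold $J'$, its homology class, and the embedded subsurface cobounded by $J$ and $-J'$ in order to build the interpolating cores $K_{i.5}=K_i\cup t_{i.5}$. Thus that argument applies verbatim with a genuine tiling replaced by the weaker data $(C_i,W_i)$, so the chain of equalities above is legitimate. (Alternatively one could verify directly that the sequence in \Cref{lem:subsurfaceseq} may be taken through non-separating multicurves, which is plausible because $u$ arises from the connected cyclic cover $\mathscr U_Z\to F_Z$; but relaxing \Cref{lem:interleaved} is the cleaner route.) Everything else is routine bookkeeping together with the cited results.
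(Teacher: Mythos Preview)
Your proposal is correct and follows essentially the same approach as the paper: reduce to one end-cycle at a time, apply \Cref{lem:subsurfaceseq} to obtain a chain of interleaved intermediate $1$-manifolds, and then invoke \Cref{lem:interleaved} at each step. The paper's proof is a two-sentence version of exactly this, and the technical concern you raise about intermediate $C_i$ possibly being separating (together with your proposed resolution via relaxing the hypotheses of \Cref{lem:interleaved}) is a fair point that the paper's terse argument leaves implicit.
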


\begin{proof}
By applying \Cref{lem:subsurfaceseq} to one end-cycle at a time, $\mathcal T$ and $\mathcal T'$ are related by a sequence of tilings such that each one is interleaved with the next. The lemma then follows from \Cref{lem:interleaved}.
\end{proof}

Combining the sequence of lemmas in this subsection gives the following theorem.

\begin{theorem}\label{thm:splitsequnique}
Up to equivalence, the splitting sequence $\mathscr S(T_+^\infty, \mathcal T, K, \tau_K)$, which is defined by factoring repeated core splits of an efficient $f$-endperiodic train track $\tau_0$ carrying the positive Handel-Miller lamination, depends only on the train track $T_+^\infty$ induced by $\tau_0$ on $\U_+/\langle f\rangle$.
\end{theorem}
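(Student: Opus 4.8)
The plan is to derive the theorem by chaining together the sequence of lemmas proved in this subsection, each of which removes the dependence of $\mathscr S$ on one of its inputs while the others are held fixed. The point is that, after the construction of \Cref{thm:sequenceexist}, the only genuinely invariant piece of data is the quotient train track $T_+^\infty$ on $\U_+/\langle f\rangle$; everything downstream of it is pinned down up to the equivalence relation generated by $f$-periodic commutations and truncation (as set up in \Cref{subsec:splitsequnique}).

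First I would fix $T_+^\infty$, a tiling $\mathcal T$, and an initial core $K$, and invoke \Cref{lem:anytrack} to see that the equivalence class $\mathscr S(T_+^\infty,\mathcal T,K,\tau_K)$ is independent of the choice of the auxiliary train track $\tau_K$ carrying $\Lambda_+|_{K}$; this justifies suppressing the fourth argument and writing $\mathscr S(T_+^\infty,\mathcal T,K)$. Next, keeping $T_+^\infty$ and $\mathcal T$ fixed, I would apply \Cref{lem:anycore}: using \Cref{lem:anyindex} to handle cores of the form $K_i$ and then reducing a general change of core to adding one positive tile at a time, one gets $\mathscr S(T_+^\infty,\mathcal T,K)=\mathscr S(T_+^\infty,\mathcal T,K')$ for any two cores, so the third argument may also be dropped. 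Finally, with $T_+^\infty$ fixed, I would apply \Cref{lem:anytiling}: \Cref{lem:subsurfaceseq} connects any two tilings of a given end-cycle by a finite chain of interleaved tilings, and \Cref{lem:interleaved} shows that passing between interleaved tilings leaves the equivalence class of the splitting sequence unchanged; iterating over all end-cycles removes the dependence on $\mathcal T$. What remains is $\mathscr S(T_+^\infty)$, which by construction depends only on $T_+^\infty$, giving the theorem.

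The only point needing care — and the main (modest) obstacle — is the bookkeeping that lets these three reductions be composed: each cited lemma is stated with the remaining arguments frozen, so one must check that varying $\tau_K$, then $K$, then $\mathcal T$ in succession while keeping $T_+^\infty$ fixed does exhaust all admissible $4$-tuples of inputs, and that the relation ``equivalent splitting sequence'' is transitive (it is, being generated by $f$-periodic commutations together with truncation, so that the equalities supplied by \Cref{lem:anytrack} and \Cref{lem:anycore} and the equivalence supplied by \Cref{lem:anytiling} compose to a single equivalence). Granting this, no further argument is required beyond assembling the lemmas.
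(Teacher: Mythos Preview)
Your proposal is correct and matches the paper's own proof, which simply states that the theorem follows by combining the sequence of lemmas in the subsection (\Cref{lem:anytrack}, \Cref{lem:anyindex}, \Cref{lem:anycore}, \Cref{lem:interleaved}, \Cref{lem:anytiling}). Your added remarks about composing the reductions and the transitivity of the equivalence relation are reasonable bookkeeping but not something the paper spells out.
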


Hence we are justified in dropping the last three arguments of $\mathscr S(\cdot, \cdot, \cdot, \cdot)$ and simply writing $\mathscr S(\cdot)$ to denote the equivalence class of any sequence obtained from the core splitting construction of \Cref{thm:sequenceexist}.

\section{Endperiodic maps and sutured manifolds} \label{sec:endperiodictosutured}

Up to this point in the paper, we have been dealing with surfaces and automorphisms on surfaces. An equivalent way of studying this data is to consider the mapping tori of automorphisms and their associated suspension flows. This will be our perspective from this point forward. To this end, in this section we will describe how one passes from the 2-dimensional to the 3-dimensional picture, and prove some lemmas for later use.

\subsection{Sutured manifolds and compactified mapping tori} \label{subsec:compactifiedmappingtorus}

A \textbf{sutured manifold} $(Q,\gamma)$ is an oriented, compact 3-manifold $Q$ with decorated boundary. We have $\gamma\subset \del Q$ and $\gamma=A(\gamma)\cup T(\gamma)$, where $A(\gamma)$ is a union of annuli and $T(\gamma)$ is a union of tori. Each component of $A(\gamma)$ contains an oriented curve at its core called a \textbf{suture}. Let $R(\gamma)=\del Q\cut \gamma$; sometimes $R(\gamma)$ is called the \textbf{tangential boundary} of $Q$ and $\gamma$ the \textbf{transverse boundary}. We require that each component of $R(\gamma)$ is oriented, and that each component of $\partial R(\gamma)$, when given the boundary orientation, has the homology class of a suture in $H_1(\gamma)$. Since $Q$ is oriented, each component of $\del R(\gamma)$ has a well-defined coorientation either pointing out of or into $Q$. The sets of components whose coorientations point out of and into $Q$ are denoted $R_+(\gamma)$  and $R_-(\gamma)$, respectively. A consequence of this definition is that for every component $A$ of $A(\gamma)$, one component of $A$ lies on $R_+$ and the other lies on $R_-$. Often when there is no chance of confusion we omit reference to $\gamma$, for example writing $Q$ instead of $(Q,\gamma)$ or $R_\pm$ instead of $R_\pm(\gamma)$. 

A sutured manifold is \textbf{atoroidal} if any essential torus is boundary parallel.

A \textbf{foliation} $\mathcal{F}$ of a sutured manifold $(Q,\gamma)$ is a 2-dimensional cooriented foliation of $Q$ which is transverse to $\gamma$ and tangent to $R(\gamma)$ in such a way that the coorientation of $\mathcal{F}$ restricts to the coorientation of $R(\gamma)$. We say $\mathcal{F}$ is \textbf{taut} if each leaf of $\mathcal{F}$ intersects either a closed curve transverse to $\mathcal{F}$ or an interval transverse to $\mathcal{F}$ with one endpoint on $R_-$ and the other on $R_+$. We say that $\mathcal{F}$ is \textbf{depth one} if $Q-(R_+\cup R_-)$ fibers over $S^1$ with fibers the noncompact leaves of $\mathcal{F}$. 

A \textbf{depth one sutured manifold} is a sutured manifold admitting a depth one foliation and having no torus components in $R_\pm$.

For us, a \textbf{semiflow} on a sutured manifold $(Q,\gamma)$ is a 1-dimensional oriented foliation $\phi$, whose leaves we call \textbf{orbits}, that points inward along $R_-$, outward along $R_+$, and which is tangent to $\gamma$ in such a way that each orbit contained in $A(\gamma)$ is a properly embedded oriented interval with initial endpoint on $R_-(\gamma)$ and terminal endpoint on $R_+(\gamma)$. For us it will not be important to explicitly parameterize a semiflow. However, if one chooses a parameterization, orbits are not generally defined for all forward or backward time.

The following is well-known and appears as \cite[Lemma 12.5]{CCF19}.
\begin{lemma}
Let $f\colon L\to L$ be an endperiodic map. Then there exists a  sutured manifold $\ol M_f$, a taut depth one foliation $\mathcal{F}$ of $\ol M_f$, and a semiflow $\phi_f$ of $\ol M_f$ such that $L$ is homeomorphic to each noncompact leaf of $\mathcal{F}$ and the first return map induced by $\phi_f$ is equal to $f$.
\end{lemma}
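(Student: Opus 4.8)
The plan is to construct $\ol M_f$ directly from the mapping torus of $f$ and then compactify using the end structure of $L$. First I would form the ordinary mapping torus $M_f = (L \times [0,1]) / (x,1) \sim (f(x),0)$, equipped with the suspension semiflow $\phi_f$ coming from the vector field $\partial/\partial t$. Since $f$ is a homeomorphism of $L$, this semiflow is well-defined and its first return map to the fiber $L \times \{0\}$ is exactly $f$. The issue is that $M_f$ is noncompact, both because $L$ has ends and (possibly) because $L$ has noncompact boundary components. The compactification $\ol M_f$ is obtained by adjoining the appropriate pieces ``at infinity.''

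For the ends: recall that the ends of $L$ are all positive or negative, and each is periodic. Fixing a tiling of each end-cycle $Z$ as in \Cref{construction:junctures}, a tiled neighborhood of a positive end-cycle is swept out by $f$-translates of a fundamental domain, so in the mapping torus the corresponding end of $M_f$ is a half-open product $(\text{juncture torus}) \times [0,\infty)$; I would cap each such end off with its limiting torus or annulus, which becomes a component of $T(\gamma)$ or of $R_\pm(\gamma)$ depending on whether the relevant boundary of the tile is closed or runs into $\partial L$. The key computation here is that the monodromy $f$ restricted to a tiled neighborhood $\U_Z/\langle f\rangle$ is a product up to isotopy (this is essentially \Cref{eg:translate} applied to the escaping set), so the end of $M_f$ is genuinely a product and the compactification is unambiguous. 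For noncompact boundary components $\ell$ of $L$: by \Cref{def:endperiodic}(b), $\ell$ runs between ends of opposite sign and $f^p|_\ell$ is fixed-point free, so the suspension of $\ell$ gives an annulus in $\partial M_f$ which, after capping, becomes a component of $A(\gamma)$ carrying a suture (the core curve inherits an orientation from $f$). One then checks the coorientations: positive ends cap to $R_+$, negative ends to $R_-$, matching the convention that $\phi_f$ exits through $R_+$ and enters through $R_-$.

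Having built $\ol M_f$, I would verify the three required properties. That the noncompact leaves of $\mathcal{F}$ are homeomorphic to $L$: the leaves of $\mathcal{F}$ through the interior of $M_f$ are precisely the images of $L \times \{t\}$, which are copies of $L$; after compactification these leaves ``spin'' onto the capping surfaces $R_\pm$, which is exactly the well-known spinning construction producing a depth one foliation (its only compact leaves are $R_\pm$, and the noncompact leaves accumulate only on $R_\pm$ because tiled neighborhoods exhaust the ends). Tautness follows because every leaf meets either a fiber of the semiflow through the interior or, for $R_\pm$, a suture transversal; more carefully, each noncompact leaf meets a closed orbit of $\phi_f$ (the suspension of a periodic point, or just any interior orbit which is a closed loop in $M_f$), and $R_\pm$ meet the arcs in $A(\gamma)$. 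That the first return map of $\phi_f$ is $f$ is immediate from the construction. I expect the capping/coorientation bookkeeping at the ends to be the main obstacle: one must check that the various tori and annuli fit together into a genuine sutured manifold structure (in particular that $\partial R(\gamma)$ has the homology class of sutures, and that the atoroidality/taut conditions are as claimed), and that the construction is independent of the chosen tiling — but all of this is essentially contained in the cited \cite[Lemma 12.5]{CCF19}, so the proof can ultimately just appeal to that reference while indicating the construction above.
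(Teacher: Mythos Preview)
The paper does not actually prove this lemma; it simply cites it as \cite[Lemma 12.5]{CCF19} and then gives an informal description of the construction in the paragraph that follows. Your proposal is in the same spirit: form the mapping torus, compactify at the ends, and appeal to the cited reference for the bookkeeping. So at the level of strategy you are aligned with the paper.

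That said, your description of the compactification is not quite right and does not match the paper's account. You speak of capping each end of $M_f$ with a ``limiting torus or annulus'' which then becomes a component of $T(\gamma)$ or $R_\pm(\gamma)$. In fact the pieces glued on to form $R_\pm$ are the surfaces $\U_\pm/\langle f\rangle$, which are compact but need not be tori or annuli; the paper says this explicitly just after the lemma. The torus components $T(\gamma)$ arise not from ends of $L$ at all, but from compact boundary components of $L$ whose $f$-orbit is finite. Likewise, your tautness sketch is shaky: not every interior orbit of the suspension semiflow is closed, so ``any interior orbit which is a closed loop in $M_f$'' is not a valid choice of transversal. None of this is fatal since you ultimately defer to \cite{CCF19}, but if you want the sketch to stand on its own you should correct the description of what is being glued on.
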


We call the semiflow $\phi_f$ the \textbf{suspension semiflow} of $f$.

The manifold $\ol M_f$ is called the \textbf{compactified mapping torus} of $f$ because it is constructed by compactifying the mapping torus $M_f=L\times [0,1]/((x,1)\sim(f(x),0))$. The compactification works by attaching copies of the surfaces $\U_+/\langle f \rangle$ and $\U_-/\langle f \rangle$ to $M_f$. (Recall that $\U_+$ and $\U_-$ are the positive and negative escaping sets of $f$, respectively. See \Cref{sec:junctures}). Thus this compactification is obtained by gluing on one ideal point for each escaping end of an $f$-orbit. For the details of this construction see \cite[\S 3]{FKLL23}; they work only with ``irreducible" endperiodic maps on boundaryless surfaces but their construction goes through in our setting also.

The sutured structure on $\ol M_f$ is as follows. The tangential boundary $R_\pm$ arises from the copy of $\U_\pm/\langle f \rangle$ added to $M_f$ during the compactification. A component of $T(\gamma)$ arises from a compact boundary component of $L$ whose $f$-orbit has finitely many components. A component of $A(\gamma)$ can arise in two ways: from a noncompact boundary component of $L$, or from a compact boundary component of $L$ whose $f$-orbit has infinitely many components.

If $f$ is a Handel-Miller map, then $f$ preserves the Handel-Miller laminations and induces the spiraling laminations $\Lambda_\pm^\infty$ on $\U_\pm/\langle f \rangle$ (see \Cref{lem:bdylams}). We can canonically identify $R_\pm$ with $\U_\pm/\langle f \rangle$ (see \cite[Lemma 12.36]{CCF19}, so we can think of $\Lambda_\pm$ as a subset of $R_\pm$. Moreover, if we consider the union of all $\phi_f$-orbits passing through points in $\Lambda_\pm\subset L\subset \ol M_f$, we obtain a pair of 2-dimensional laminations $\LL^u$, $\LL^s$ such that $\LL^u\cap R_+=\Lambda_+^\infty$ and $\LL^s\cap R_-=\Lambda_-^\infty$. We call these the \textbf{unstable} and \textbf{stable Handel-Miller laminations} respectively.

\begin{example}\label{ex:stackofchairsflow}
Let $L=\{(x,y)\mid xy\le1\}$ and let $f\colon L\to L$ be the map $\left(\begin{smallmatrix}2&0\\0&\frac{1}{2}\end{smallmatrix}\right)$ from \Cref{example:stackofchairs}. Then $\ol M_f$ is a solid torus with four longitudinal sutures, each one homotopic to the core of the solid torus. Each of $R_+$ and $R_-$ has 2 components, both of which are annuli. The depth 1 foliation of $\ol M_f$ is known as a ``stack of chairs." Each of $\LL^u$ and $\LL^s$ consists of a single annulus connecting the two components of $R_+$ and the two components of $R_-$, respectively. The laminations $\Lambda_+^\infty$ and $\Lambda_-^\infty$ each have two components, each of which is a circle. See \Cref{fig:stackofchairsflow}.
\end{example}

\begin{figure}
    \centering
    \resizebox{!}{2in}{\import{basecase-fig}{fig_stackofchairsflow.pdf_tex}}
    \caption{Left: the compactified mapping torus $\ol M_f$ for the map $f=\left(\begin{smallmatrix}2&0\\0&\frac{1}{2}\end{smallmatrix}\right)$, where the top should be identified with the bottom to give a solid torus. The green and purple annuli are $R_+$ and $R_-$ respectively. Center: the surface $L$ sits inside $\ol M_f$ as shown, which two ends spiraling onto $R_+$ and two ends spiraling onto $R_-$. Right: the laminations $\LL_+$ and $\LL_-$.}
    \label{fig:stackofchairsflow}
\end{figure}

\subsection{Useful lemmas about the unstable lamination} \label{subsec:unstablelamfacts}

In this subsection, we will state and prove some lemmas about the unstable Handel-Miller lamination $\mathcal{L}^u$ in the setting above. These facts will play a role in \Cref{sec:folcone} and \Cref{sec:hmvbsunique}. Symmetric statements hold for the stable Handel-Miller lamination, even though those will not play a role in this paper.

\begin{lemma} \label{lemma:hmlamnoproductregions}
$\mathcal{L}^u$ has no $I$-fibered complementary regions whose boundary components lie along $\mathcal{L}^u$. Here a complementary region $C$ is $I$-fibered if $C$ fibers over some surface $F$ with $I$ fibers.
\end{lemma}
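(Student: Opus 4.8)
The plan is to reduce the three-dimensional assertion to a statement about the complementary regions of the geodesic lamination $\Lambda_+$ on $L$, and then to rule out all the resulting cases using that $\Lambda_+$ is geodesic.

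Suppose toward a contradiction that $C$ is a complementary region of $\mathcal L^u$ in $\ol M_f$ that is an interval bundle over a compact surface $F$, with the associated $\partial I$-subbundle $\partial_h C$ contained in $\mathcal L^u$. Recall from \Cref{subsec:compactifiedmappingtorus} that $\mathcal L^u$ is the union of the $\phi_f$-orbits through $\Lambda_+\subset L\subset \ol M_f$; in particular $\mathcal L^u$ is $\phi_f$-invariant, it is transverse to the depth-one foliation $\mathcal F$, and it meets each noncompact leaf $L'$ of $\mathcal F$ in a copy of $\Lambda_+$. Fixing such a leaf $L'$, the set $P := C\cap L'$ is a complementary region of $\Lambda_+$ in $L'$. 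Using the $\phi_f$-invariance of $\partial_h C\subset\mathcal L^u$ and the transversality of $\mathcal F$ to it, I would show that after an isotopy of its interval-bundle structure $C$ has all of its $I$-fibers lying in leaves of $\mathcal F$; since the $I$-fibers are connected, the fibration $\mathcal F|_{C\cap M_f}\to S^1$ then factors through the bundle projection $C\to F$, and hence $P=C\cap L'$ is the product of an interval with a $1$-manifold in $F$. Thus $P$ must be a product region, i.e.\ an annulus, a cusped bigon, a $1$-cusped triangle, or a rectangle. Carrying out this reduction carefully --- reconciling the interval-bundle structure of $C$, the semiflow $\phi_f$, and the foliation $\mathcal F$ near $\partial\ol M_f$, where the ends of $P$ escape to ends of $L'$ that are filled in by the copies of $\U_\pm/\langle f\rangle$ forming $R_\pm$ --- is the step I expect to be the main obstacle; the tools would be the explicit description of the compactification in \Cref{subsec:compactifiedmappingtorus} and the flow-invariance of $\mathcal L^u$.

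Granting the reduction, I would rule out each case using that $\Lambda_+$ is a geodesic lamination for a standard (hence half-plane-free) hyperbolic metric on $L$. If $P$ were an annulus, its two boundary leaves would be closed leaves of $\Lambda_+$; but by \Cref{lem:accumonends} every leaf of $\Lambda_+$ has both ends leaving every compact subset of $L$, so $\Lambda_+$ has no closed leaves. If $P$ were a rectangle, its two sides not on $\partial L$ would be compact sub-arcs of leaves of $\Lambda_+$ with endpoints on $\partial L$; but since $\Lambda_+$ and $\partial L$ are both geodesic, no leaf of $\Lambda_+$ meets $\partial L$. Finally, if $P$ were a cusped bigon or a $1$-cusped triangle, it would have at least one cusp, so two of its boundary leaves would fellow travel; passing to $\wt L$, their geodesic lifts would then share an ideal point of $\partial_\infty(\wt L)$, and by \Cref{lem:principalcusps} these lifts would bound a common lifted positive principal region. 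Hence $P$ would be contained in a positive principal region $P_+$, and, being itself a complementary region of $\Lambda_+$ (see \Cref{def:principalregion}), would have to coincide with $P_+$. But $P_+$ would then be an index-$0$ surface, contradicting the fact, recalled in the proof of \Cref{lem:trackconstruction}, that every positive principal region of the geodesic lamination $\Lambda_+$ has strictly negative index. This exhausts the possibilities and yields the contradiction. The symmetric statement for the stable Handel-Miller lamination follows by applying the argument to $f^{-1}$.
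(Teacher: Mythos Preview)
Your reduction step—isotoping the $I$-bundle structure on $C$ so that each fiber lies in a leaf of $\mathcal F$—is precisely where the argument has a real gap, and you correctly flag it as the main obstacle without resolving it. There is no evident reason this can be done: the foliation $\mathcal F|_C$ and the a~priori unrelated product structure on $C$ need not be compatible in this way, and neither the compactification description nor the $\phi_f$-invariance of $\mathcal L^u$ gives you leverage on the fibers. The paper sidesteps this entirely with a much cleaner idea: since $L$ is incompressible in $\overline{M_f}$, each component $P$ of $L\cap C$ is incompressible in the $I$-bundle $C$, while $\partial P\subset\partial_h C\subset\mathcal L^u$ consists of leaves of $\Lambda_+$, hence of copies of $\mathbb R$. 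From this one deduces directly that $P\cong\mathbb R\times[0,1]$, and then observes that a strip bounded by two distinct geodesic leaves is impossible in a standard hyperbolic surface. No alignment of bundle and foliation structures is needed.

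Your case analysis also shows some confusion: a product of an interval with a $1$-manifold is an annulus, a rectangle, or a strip $\mathbb R\times I$, never a $1$-cusped triangle, so you are conflating ``product region'' with ``index-$0$ region.'' Moreover, since the hypothesis $\partial C\subset\mathcal L^u$ keeps $C$ (and hence $P$) away from $\partial\overline{M_f}$ and $\partial L$, the rectangle and $1$-cusped-triangle cases are vacuous anyway. Your handling of the strip (your ``cusped bigon'') via \Cref{lem:principalcusps} and the negative index of principal regions does eventually work, but it is more roundabout than necessary: in $\widetilde L$ the two bounding geodesics of the lifted strip would share both ideal points, hence coincide.
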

\begin{proof}
Suppose otherwise. Let us identify the surface $L$ with a fixed leaf of $\mathcal{F}$. $L$ intersects $\partial C$ in leaves of $\Lambda_+$, hence each component of this intersection is a copy of $\mathbb{R}$. Meanwhile, $L$ is incompressible in $\overline{M_f}$ hence in $C$. Since $L$ is incompressible in $\overline{M_f}$, $L \cap C$ is incompressible in $C$, from which it can deduced that each component of $L \cap C$ is homeomorphic to $\mathbb{R} \times [0,1]$, meaning that there is a complementary region of $\Lambda_+$ in $L$ homeomorphic to $\mathbb{R} \times [0,1]$. But this is impossible since the leaves of $\Lambda_+$ are geodesics for some standard hyperbolic metric on $L$.
\end{proof}

\begin{lemma} \label{lemma:hmlamholonomy}
Let $\gamma$ be a closed orbit of $\phi_h$ on a leaf $A$ of $\mathcal{L}^u$. Suppose $A$ is non-isolated from some side. Then the holonomy of $\mathcal{L}^u$ along $\gamma$ is topologically contracting on that side, i.e. there is an immersion of a rectangle $\alpha: [0,1]_t \times [0,1]_s \to \overline{M_f}$ such that $\alpha([0,1] \times \{0\})$ traverses $\gamma$ for increasing $t$, $\alpha([0,1] \times \{1\})$ lies on a leaf of $\mathcal{L}^u$, and $\alpha(\{1\} \times [0,1]) \subsetneq \alpha(\{0\} \times [0,1])$.
\end{lemma}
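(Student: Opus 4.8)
The plan is to reduce the statement to the two-dimensional picture, where contraction of holonomy becomes contraction of leaves of $\Lambda_+$ under iteration of the Handel-Miller map, and then invoke the structure theory already set up in \Cref{sec:endperiodic}. Concretely, let $A$ be the leaf of $\mc L^u$ containing the closed orbit $\gamma$, and recall that $A$ is the suspension of some periodic leaf $\lambda$ of $\Lambda_+$ under $\phi_f$; after replacing $f$ by a power we may assume $\lambda$ is $f$-invariant with $f(\lambda)=\lambda$, and $\gamma$ is the image in $\ol M_f$ of the closed orbit at the core of $A$, which corresponds (via \Cref{lem:bdylams}(b)) to the maximal $f$-invariant interval $I\subset\lambda$, or rather to a point of the quotient $\lambda^\infty$ after collapsing. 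The hypothesis that $A$ is non-isolated from some side means that on the corresponding side of $\lambda$ in $L$ there are other leaves of $\Lambda_+$ accumulating onto $\lambda$.

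First I would set up the correspondence between sides of $A$ at $\gamma$ and sides of $\lambda$ at its core, so that "$A$ non-isolated from a side" translates to "leaves of $\Lambda_+$ accumulate on $\lambda$ from that side in $L$." By \Cref{lem:nojunctures} (contrapositive) and \Cref{lem:juncaccum}, since $\lambda$ does not bound a principal region on that side (leaves of $\Lambda_+$ do accumulate there, so the complementary region is not a principal region — or more carefully, if it did bound a principal region on that side, non-isolation would have to come through the principal region, and I would handle that by noting the arm structure still forces the holonomy picture), there is a lift $\wt\lambda$ and a lift $\wt f^n$ of a power of $f$ preserving $\wt\lambda$ and its ends, together with a sequence of lifted geodesic juncture components $(\wt j,\wt f^n(\wt j),\wt f^{2n}(\wt j),\dots)$ converging to $\wt\lambda$ from that side. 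This is precisely the raw material for building the immersed rectangle: the "width" of $A$ on that side, measured by a transverse arc, is strictly decreased by $\phi_f$-flow for one period because $f^n$ pushes the juncture $j$ strictly closer to $\lambda$ (monotonically, since $f$ is a translation toward the end on the escaping part of $\lambda$, and the juncture components are nested as in \Cref{construction:junctures}).

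The key step is then to assemble this into the map $\alpha\colon[0,1]_t\times[0,1]_s\to\ol M_f$. I would take $\alpha([0,1]\times\{0\})$ to traverse $\gamma$ once; take $\alpha(\{0\}\times[0,1])$ to be a short transverse arc in $L$ from $\gamma\cap L$ running into the non-isolated side, ending on the leaf of $\Lambda_+$ through the first juncture $j$; let $\alpha$ flow this arc forward under $\phi_f$ for one full period, so that $\alpha(\{1\}\times[0,1])$ is $f^n$ applied to the initial transverse arc; and take $\alpha([0,1]\times\{1\})$ to lie on a leaf of $\mc L^u$ (the suspension of the leaf of $\Lambda_+$ through $j$). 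Because $f^n(\wt j)$ lies strictly between $\wt\lambda$ and $\wt j$, we get $\alpha(\{1\}\times[0,1])\subsetneq\alpha(\{0\}\times[0,1])$ after identifying via the flow, which is exactly topological contraction. The map is an immersion because $\phi_f$ is a genuine (semi)flow and $L$ is embedded transverse to it; the only place injectivity could fail is where the arc wraps, but we only flow for one period so this is fine — and if it is convenient I would just state the conclusion with $\alpha$ an immersion as in the statement rather than an embedding.

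The main obstacle I anticipate is the bookkeeping around principal regions: a priori $\lambda$ could bound a principal region on the non-isolated side, in which case \Cref{lem:nojunctures}/\Cref{lem:juncaccum} do not directly give accumulating junctures, and I would need to argue instead using the arm structure from \Cref{subsec:principalregions} that the holonomy along the core orbit of an arm's boundary leaf is still contracting (points in $\lambda-I$ escape to an end of $\lambda$, which is the contraction we want, now in the "long" direction rather than transverse). A second, more technical point is making precise the identification "width of $A$ near $\gamma$" $\leftrightarrow$ "transverse arc to $\lambda$ near its core in $L$," i.e. that a tie transverse to $A$ at a point of $\gamma$ really does cut $L$ in a transverse arc to $\lambda$; this follows from the fact that $L$ is a leaf of $\mc F$ transverse to $\phi_f$ and $A$ is a $\phi_f$-saturated lamination, but it should be spelled out. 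Once these two points are dispatched, the construction of $\alpha$ is routine.
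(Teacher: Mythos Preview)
Your reduction to the two-dimensional picture is correct, and your principal-region worry is unfounded: if $\lambda$ bordered a principal region on that side it would be \emph{isolated} there (principal regions are open with finitely many boundary leaves), contradicting the hypothesis. The real gap is in the construction of $\alpha$. You write ``the leaf of $\Lambda_+$ through the first juncture $j$'', but juncture components are not leaves of $\Lambda_+$; the negative geodesic juncture components are in fact disjoint from $\Lambda_+$. So \Cref{lem:juncaccum} hands you a monotone sequence of \emph{junctures} converging to $\wt\lambda$, not leaves, and you cannot place $\alpha([0,1]\times\{1\})$ on $\mc L^u$ as written. This is fixable with an interleaving step: by non-isolation choose a leaf $\lambda'\ne\lambda$ of $\Lambda_+$ on that side close enough to lie between $\lambda$ and $j$ on a transversal; since the junctures $f^{kn}(j)$ converge to $\lambda$, some $f^{kn}(j)$ lies strictly between $\lambda$ and $\lambda'$, and then order-preservation of $f^n$ forces $f^n(\lambda')$ to lie strictly between $\lambda$ and $\lambda'$. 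That is the contraction you need for $\alpha$.

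The paper takes a different and shorter route: instead of junctures it uses the negative lamination $\Lambda_-$. By \cite[Corollary~6.11]{CCF19} there is a periodic leaf $l_-$ of $\Lambda_-$ meeting $l_+=\lambda$ in a periodic point $x$, and then applying that result again to $l_-$ together with \cite[Corollary~6.15]{CCF19} shows directly that $f$ contracts along $l_-$ toward $x$ on the non-isolated side. This is cleaner because $l_-$ serves as a ready-made transversal whose intersections with $\Lambda_+$ already supply the leaf on which $\alpha([0,1]\times\{1\})$ lies; your approach trades that for the more elementary internal \Cref{lem:juncaccum} at the cost of the extra interleaving step above.
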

\begin{proof}

The leaf $A$ is the suspension of some periodic leaf $l_+$ of $\Lambda_+$. Applying \cite[Corollary 6.11]{CCF19} to $l_+$, there is a leaf $l_-$ of $\Lambda_-$ and a periodic point $x \in  l_+ \cap l_-$.

Notice that $l_+$ is non-isolated from the side that suspends to the non-isolated side of $A$. 
Now applying \cite[Corollary 6.11]{CCF19} again but to $l_-$ this time (and also using \cite[Corollary 6.15]{CCF19}), we see that the holonomy is contracting on our fixed side of $l_+$, which implies the lemma.
\end{proof}

\subsection{Reeb sutured manifolds} \label{subsec:reebsutured}

We now define Reeb sutured manifolds, which generalize sutured manifolds in the same way that Reeb endperiodic maps generalize endperiodic maps. We will also explain a construction in \Cref{construction:reebmaptorus} which converts a Reeb sutured manifold into a sutured manifold, analogous to endperiodization (\Cref{construction:samesign}).

A \textbf{Reeb sutured manifold} $(P,\gamma)$ is defined similarly to a sutured manifold. The only difference is that $A(\gamma)$ is allowed to contain components that have both boundary components on $R_+$ or both boundary components on $R_-$. Such an annulus is called a \textbf{Reeb annulus} and does not have a suture at its core. 

A \textbf{foliation} of a Reeb sutured manifold is defined just as for foliations of sutured manifolds, with the additional requirement that the restriction of the foliation to each Reeb annulus is a Reeb foliation.

A \textbf{semiflow} on a Reeb sutured manifold is defined like a semiflow on a sutured manifold except on the Reeb annuli. If $A$ is a Reeb annulus touching only $R_+$ ($R_-$), orbits of the semiflow do not end on $R_+$ ($R_-$) in the backward (forward) direction.

If $(P,\gamma)$ is a Reeb sutured manifold, there is a naturally associated sutured manifold obtained by performing the following procedure for each Reeb annulus of $A(\gamma)$. Suppose without loss of generality that $A$ connects $R_+$ to $R_+$. Subdivide $A$ into 3 annuli $A_1, A_2, A_3$, labeled so that $A_1$ and $A_3$ touch $\del A$ and $A_2\subset \intr A$. Then modify $A(\gamma)$ and $R(\gamma)$ by adding $A_2$ to $R_-(\gamma)$, and replacing $A$ by $A_1$ and $A_3$ in $A(\gamma)$. Place sutures in $A_1$ and $A_3$, oriented so as to be compatible with the orientations of $R_+$ and $R_-$. The result is an honest sutured manifold $(P', \gamma')$ called the \textbf{de-Reebification} of $(P,\gamma)$.

\begin{construction}\label{construction:reebmaptorus}[Mapping tori of Reeb endperiodic maps] Let $f\colon L\to L$ be a Reeb endperiodic map. There is a compactified mapping torus $\ol M_f$ of $f$ defined just as for endperiodic maps, but $\ol M_f$ is in general only a Reeb sutured manifold. However, there is still a depth one foliation $\mathcal{F}$ of $\ol M_f$ whose depth one leaves are homeomorphic to $L$, and a semiflow on $\ol M_f$ whose first return map is conjugate to $f$ under this identification.

Let $f'\colon L'\to L'$ be the endperiodization of $f$ (\Cref{construction:samesign}), where $L'$ is $L$ minus finitely many boundary points. Let $\mathcal{F}'$ be the associated depth one foliation of $\ol M_{f'}$. Then $\ol M_{f'}$ is naturally identified with the de-Reebification of $\ol M_f$, and $\mathcal{F}'$ is obtained from ``spinning" $\mathcal{F}$ around the annuli added to the tangential boundary of $\ol M_f$ in the de-Reebification.
\end{construction}

\section{Veering branched surfaces} \label{sec:vbs}

In this section we start working with branched surfaces and laminations in 3-manifolds. The prototype of the laminations we consider is the unstable Handel-Miller lamination in a compactified mapping torus. In this context, the natural type of branched surfaces to consider are (unstable) dynamical branched surfaces. In the first two subsections, we will recall the definition of these and some related ideas. Essentially all the definitions presented in these subsections are due to Mosher. 

Then we define veering branched surfaces in sutured manifolds, the main objects of study in the rest of the paper. The rest of the section develops some of the theory of these veering branched surfaces, most of it being adapted from the theory in non-sutured manifolds.

\subsection{Dynamic branched surfaces} \label{subsec:dynbranchedsurfaces}

A \textbf{branched surface} $B$ is a 2-complex embedded in a 3-manifold such that every point in $B$ has a neighborhood smoothly modeled on a point in the space shown on the top left of \Cref{fig:bsdef}. In particular, every point in $B$ has a well-defined tangent space. 

If $(Q,\gamma)$ is a sutured manifold, a \textbf{vertical branched surface} in $Q$ is a 2-complex $B\subset Q$ such that $B\cap \intr Q$ is a branched surface, and each point in $B\cap \del Q$ has a neighborhood modeled on a point in the space on the top right of \Cref{fig:bsdef}. 

\begin{figure}
    \centering
    \fontsize{12pt}{12pt}\selectfont
    \resizebox{!}{2.5in}{
\begingroup%
  \makeatletter%
  \providecommand\color[2][]{%
    \errmessage{(Inkscape) Color is used for the text in Inkscape, but the package 'color.sty' is not loaded}%
    \renewcommand\color[2][]{}%
  }%
  \providecommand\transparent[1]{%
    \errmessage{(Inkscape) Transparency is used (non-zero) for the text in Inkscape, but the package 'transparent.sty' is not loaded}%
    \renewcommand\transparent[1]{}%
  }%
  \providecommand\rotatebox[2]{#2}%
  \newcommand*\fsize{\dimexpr\f@size pt\relax}%
  \newcommand*\lineheight[1]{\fontsize{\fsize}{#1\fsize}\selectfont}%
  \ifx\svgwidth\undefined%
    \setlength{\unitlength}{269.18595084bp}%
    \ifx\svgscale\undefined%
      \relax%
    \else%
      \setlength{\unitlength}{\unitlength * \real{\svgscale}}%
    \fi%
  \else%
    \setlength{\unitlength}{\svgwidth}%
  \fi%
  \global\let\svgwidth\undefined%
  \global\let\svgscale\undefined%
  \makeatother%
  \begin{picture}(1,0.65449484)%
    \lineheight{1}%
    \setlength\tabcolsep{0pt}%
    \put(0,0){\includegraphics[width=\unitlength,page=1]{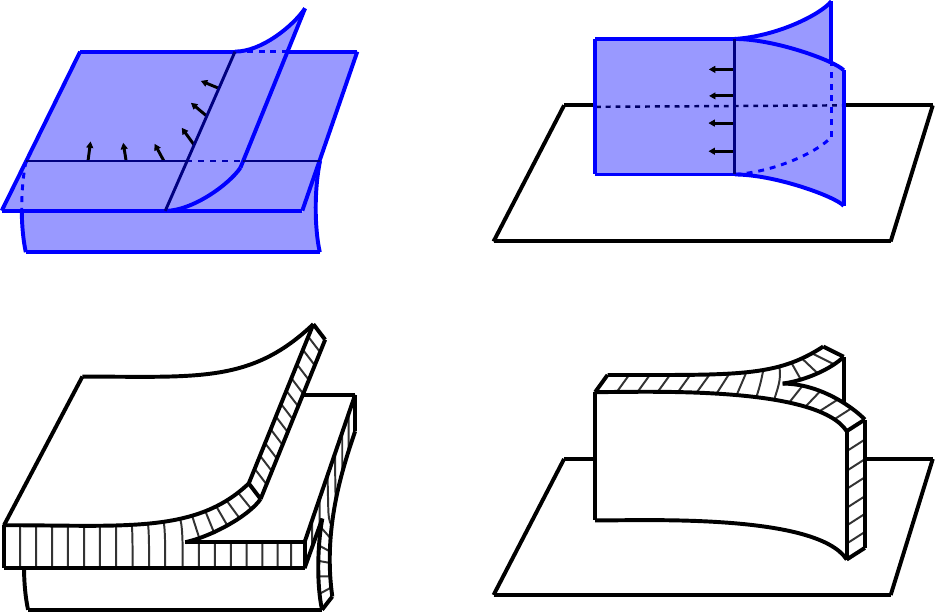}}%
    \put(0.08966515,0.54157244){\color[rgb]{0,0,0}\makebox(0,0)[lt]{\lineheight{1.25}\smash{\begin{tabular}[t]{l}$B$\end{tabular}}}}%
    \put(0.67886105,0.57074971){\color[rgb]{0,0,0}\makebox(0,0)[lt]{\lineheight{1.25}\smash{\begin{tabular}[t]{l}$B$\end{tabular}}}}%
    \put(0.57412084,0.420168){\color[rgb]{0,0,0}\makebox(0,0)[lt]{\lineheight{1.25}\smash{\begin{tabular}[t]{l}$\partial Q$\end{tabular}}}}%
  \end{picture}%
\endgroup%
}
    \caption{Top: The local models for vertical branched surfaces at points in $\intr(Q)$ and in $\del Q$. The maw vector field is indicated using arrows along the branch locus. Bottom: The local models for a standard neighborhood of a vertical branched surface.}
    \label{fig:bsdef}
\end{figure}

The union of the nonmanifold points of a vertical branched surface $B$ is called the \textbf{branch locus} of $B$ and denoted $\brloc(B)$. This set decomposes as a union of smooth, properly immersed curves and arcs called \textbf{branch curves} and \textbf{branch arcs}, respectively. A \textbf{component} of $\brloc(B)$ refers to a branch curve or branch arc (note that in general this is not a connected component of $\brloc(B)$). A \textbf{branch segment} is the image of a smooth immersion $I\to \brloc(B)$. The nonmanifold points of $\brloc(B)$ are called the \textbf{triple points} of $B$. A mnemonic for this definition is that near such a point, the branched surface is the quotient of a stack of three disks such that each disk is $C^1$ embedded.

There is a continuous vector field on $\brloc(B)$ called the \textbf{maw vector field} defined up to homotopy by the property that away from triple points it always points from the 2-sheeted side to the 1-sheeted side of $\brloc(B)$. The maw vector field induces a coorientation on each branch arc and branch loop that we call the \textbf{maw coorientation}.

A \textbf{sector} is a component of $B \cut \brloc(B)$.
Note that each sector is naturally a surface with corners, with the sides lying along $\brloc(B)$ and $\partial Q$.

$B \cap R_+$ and $B \cap R_-$ are train tracks on $R_+$ and $R_-$ respectively. We refer to these as the \textbf{boundary train tracks} of $B$, and denote their union by $\partial B$.

A \textbf{lamination} $\Lambda$ in a sutured manifold $Q$ is a partition of a closed subset of $Q$ into connected 2-manifolds, such that each point $x\in Q$ has a neighborhood $\mathbb{R}^2 \times \mathbb{R}$ with elements of the partition intersecting the neighborhood of the form $\mathbb{R}^2 \times C$ (if $x$ is in the interior of $Q$) or a neighborhood $[0,\infty) \times \mathbb{R} \times \mathbb{R}$ with elements of the partition intersecting the neighborhood in sets of the form $[0,\infty) \times \mathbb{R} \times C$ for some closed set $C$ (if $x$ is on the boundary of $Q$). The elements of the partition are called the \textbf{leaves} of $\Lambda$. As with one-dimensional laminations, we will often conflate a lamination with the union of its leaves.

If $\Lambda$ is a lamination in $Q$, then $\Lambda \cap R_+$ and $\Lambda \cap R_-$ are laminations on $R_+$ and $R_-$ respectively. We refer to these as the \textbf{boundary laminations} of $\Lambda$. 

Let $B$ be a branched surface in $Q$ and let $\Lambda$ be a lamination on $Q$. A \textbf{standard neighborhood} of $B$ is a closed regular neighborhood $N$ of $\tau$ which is foliated by line segments called \textbf{ties}
such that each line segment meets the sectors of $B$ transversely. See \Cref{fig:bsdef} bottom.

The \textbf{vertical boundary} of $N$, denoted by $\partial_v N$, is the complement of the union of endpoints of the ties in $\partial N \backslash \partial Q$. The \textbf{horizontal boundary} of $N$, denoted by $\partial_h N$, is the complementary region of $\partial_v N$ in $\partial N \backslash \partial Q$.
Notice that in this definition, $\partial_v N$ is a union of 1-manifolds, which may differ from some conventions in the literature. We have chose to define $N$ in this way for better analogy with the definitions for train tracks in \Cref{sec:ttlam}.

We remark that $\partial N \cap \partial Q$ is neither in the horizontal boundary nor in the vertical boundary of $N$. Also notice that $\partial N \cap R_\pm$ is a standard neighborhood of the boundary train tracks $B \cap R_\pm$ respectively.

By collapsing the ties, we get a projection map $N \to B$. We say that $B$ \textbf{carries} $\Lambda$ if $B$ has a standard neighborhood $N$ such that $\Lambda$ is embedded in $N$ in a way so that its leaves are transverse to the ties. In this case we say that the map $\Lambda \hookrightarrow N \to B$ is the \textbf{carrying map} and we say that $N$ is a \textbf{standard neighborhood} of $\Lambda$. Further, we say that $\tau$ \textbf{fully carries} $\Lambda$ if $\Lambda$ intersects every tie of $N$.

We now bring dynamics into the picture.
An \textbf{unstable dynamic branched surface} in $Q$ is an ordered pair $(B,V)$ where $B$ is a vertical branched surface and $V$ is a nonvanishing $C^0$ vector field on $M$ such that 
\begin{itemize}
\item $V$ is tangent to $\gamma$, inward pointing along $R_-$ and outward pointing along $R_+$,
\item $V$ is tangent to $B$, and
\item $V|_{\brloc(B)}$ is a maw vector field for $B$.
\end{itemize}
$V$ in this definition is said to be \textbf{smooth} if it is smooth on $B \backslash \brloc(B)$ and has a unique forward trajectory starting at each point of $B$. 

Symmetrically, a \textbf{stable dynamic branched surface} in $Q$ is an ordered pair $(B,V)$ where $B$ is a vertical branched surface and $V$ is a nonvanishing $C^0$ vector field on $M$ such that 
\begin{itemize}
\item $V$ is tangent to $\gamma$, inward pointing along $R_-$ and outward pointing along $R_+$,
\item $V$ is tangent to $B$, and
\item $-V|_{\brloc(B)}$ is a maw vector field for $B$.
\end{itemize}
$V$ in this definition is said to be \textbf{smooth} if it is smooth on $B \backslash \brloc(B)$ and has a unique backward trajectory starting at each point of $B$.

In this paper, except for \Cref{sec:dynamicpairs}, $V$ will always be smooth. Hence for the sake of brevity, we will implicitly include $V$ being smooth as part of the definition of an unstable or stable branched surface. In \Cref{sec:dynamicpairs} we will need to relax the definitions slightly in order to have a single vector field $V$ such that $(B^u, V)$ and $(B^s,V)$ are unstable and stable dynamic branched surfaces respectively.

\begin{remark} \label{rmk:vf=combinatorialdata}
As Mosher points out in \cite[\S 1.5]{Mos96}, the existence of a dynamic vector field $V$ for a given branched surface $B\subset M$ is a purely combinatorial property of $B$. Indeed, such a vector field can always be constructed along $\brloc (B)$. Whether it can be extended to all of $B$ and then to $M$ depends only on the combinatorics of the sectors of $B$ and the combinatorics of the components of $M\cut B$, respectively. As such we generally think of $V$ as a placeholder for some combinatorial data, unless we are explicitly using it for one of our arguments.
\end{remark}

Finally, we recall the definition of dynamically splitting an unstable dynamic branched surface $B$.

Let $N(B)$ be a standard neighborhood of $B$. Let $F$ be a surface embedded in $N(B)$ such that $\partial F = \partial_v F \cup \partial_i F$, where $\partial_v F \subset \partial_v N(B)$ and $\partial_i F \subset \intr N(B)$, and such that $F$ is transverse to the ties of $N(B)$. Then we can pull back the vector field $V$ on $B$ to $F$ via the composition $F \to N(B) \to B$. If the image of $\partial_i F$ under $N(B) \to B$ is transverse to $\brloc(B)$, and if the pulled back vector field points outwards along $\partial_i F$, then we call $F$ a \textbf{dynamic splitting surface}. By \textbf{dynamically splitting along $F$}, we refer to the operation of cutting $N(B)$ along $F$, then collapsing the remaining intervals to get a branched surface $B_F$. There is a natural choice of vector field making $B_F$ an unstable branched surface. See \Cref{fig:dynsplit}.

\begin{figure}
    \centering
    \fontsize{12pt}{12pt}\selectfont
    \resizebox{!}{4cm}{
\begingroup%
  \makeatletter%
  \providecommand\color[2][]{%
    \errmessage{(Inkscape) Color is used for the text in Inkscape, but the package 'color.sty' is not loaded}%
    \renewcommand\color[2][]{}%
  }%
  \providecommand\transparent[1]{%
    \errmessage{(Inkscape) Transparency is used (non-zero) for the text in Inkscape, but the package 'transparent.sty' is not loaded}%
    \renewcommand\transparent[1]{}%
  }%
  \providecommand\rotatebox[2]{#2}%
  \newcommand*\fsize{\dimexpr\f@size pt\relax}%
  \newcommand*\lineheight[1]{\fontsize{\fsize}{#1\fsize}\selectfont}%
  \ifx\svgwidth\undefined%
    \setlength{\unitlength}{328.42073482bp}%
    \ifx\svgscale\undefined%
      \relax%
    \else%
      \setlength{\unitlength}{\unitlength * \real{\svgscale}}%
    \fi%
  \else%
    \setlength{\unitlength}{\svgwidth}%
  \fi%
  \global\let\svgwidth\undefined%
  \global\let\svgscale\undefined%
  \makeatother%
  \begin{picture}(1,0.35325903)%
    \lineheight{1}%
    \setlength\tabcolsep{0pt}%
    \put(0,0){\includegraphics[width=\unitlength,page=1]{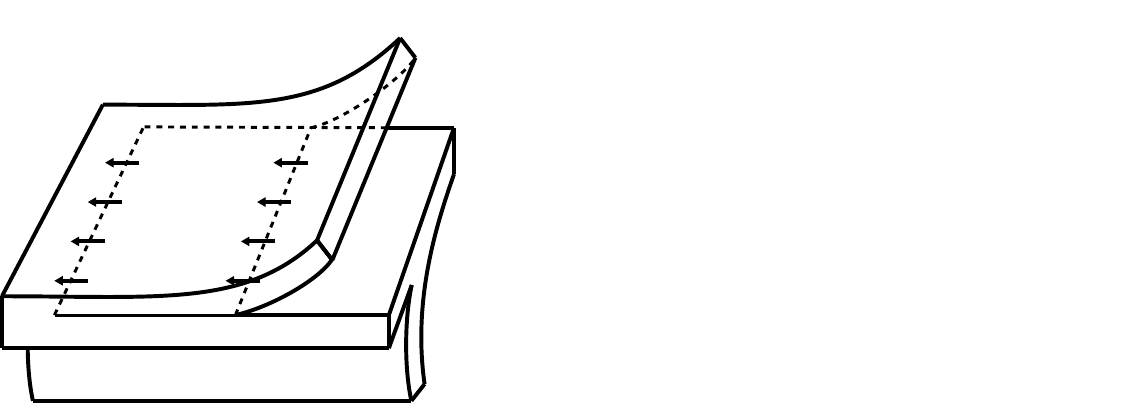}}%
    \put(0.14084499,0.15098802){\color[rgb]{0,0,0}\makebox(0,0)[lt]{\lineheight{1.25}\smash{\begin{tabular}[t]{l}$F$\end{tabular}}}}%
    \put(0,0){\includegraphics[width=\unitlength,page=2]{dynsplit.pdf}}%
  \end{picture}%
\endgroup%
}
    \caption{Dynamically splitting a dynamic branched surface along a dynamic splitting surface $F$.}
    \label{fig:dynsplit}
\end{figure}

Let $F \looparrowright B$ be an immersed surface with $\partial F = \partial_v F \cup \partial_i F$ where $\partial_v F$ lies along $\brloc(B)$. Suppose $F$ can be lifted to a dynamic splitting surface $F'\subset N(B)$. Then, as long as $F'$ is clear from context, we will refer to dynamically splitting $B$ along $F'$ as \textbf{dynamically splitting $B$ along $F$}. 

\subsection{Dynamic manifolds}

Following \cite{Mos96}, we define a \textbf{3-manifold with corners} to be a 3-manifold $M$ with boundary such that every point $p$ has a neighborhood modeled on one of the following 6 closed subsets of $\R^3$, where $p$ is identified with the origin:
\begin{itemize}
    \item Interior point: all of $\R^3$
    \item Boundary point: the upper half space $\{(x,y,z)\mid z\ge 0\}$
    \item Apex: the closed orthant $\{(x,y,z)\mid x,y,z\ge 0\}$
    \item (Convex) corner edge: $\{(x,y,z)\mid x,y\ge0\}$
    \item Gable: $\{x,y,z\mid x\ge0, z\le f(y)\}$ where $f\colon \R\to (-\infty, 0]$ is a cusp function e.g. $f(y)=-\sqrt{|y|}$
    \item Cusp edge: $\{x,y,z\mid  z\le f(y)\}$ where $f$ is as above
\end{itemize}

\begin{figure}
    \centering
    \fontsize{6pt}{6pt}\selectfont
    \resizebox{!}{2.8cm}{
\begingroup%
  \makeatletter%
  \providecommand\color[2][]{%
    \errmessage{(Inkscape) Color is used for the text in Inkscape, but the package 'color.sty' is not loaded}%
    \renewcommand\color[2][]{}%
  }%
  \providecommand\transparent[1]{%
    \errmessage{(Inkscape) Transparency is used (non-zero) for the text in Inkscape, but the package 'transparent.sty' is not loaded}%
    \renewcommand\transparent[1]{}%
  }%
  \providecommand\rotatebox[2]{#2}%
  \newcommand*\fsize{\dimexpr\f@size pt\relax}%
  \newcommand*\lineheight[1]{\fontsize{\fsize}{#1\fsize}\selectfont}%
  \ifx\svgwidth\undefined%
    \setlength{\unitlength}{375.67578016bp}%
    \ifx\svgscale\undefined%
      \relax%
    \else%
      \setlength{\unitlength}{\unitlength * \real{\svgscale}}%
    \fi%
  \else%
    \setlength{\unitlength}{\svgwidth}%
  \fi%
  \global\let\svgwidth\undefined%
  \global\let\svgscale\undefined%
  \makeatother%
  \begin{picture}(1,0.20715395)%
    \lineheight{1}%
    \setlength\tabcolsep{0pt}%
    \put(0,0){\includegraphics[width=\unitlength,page=1]{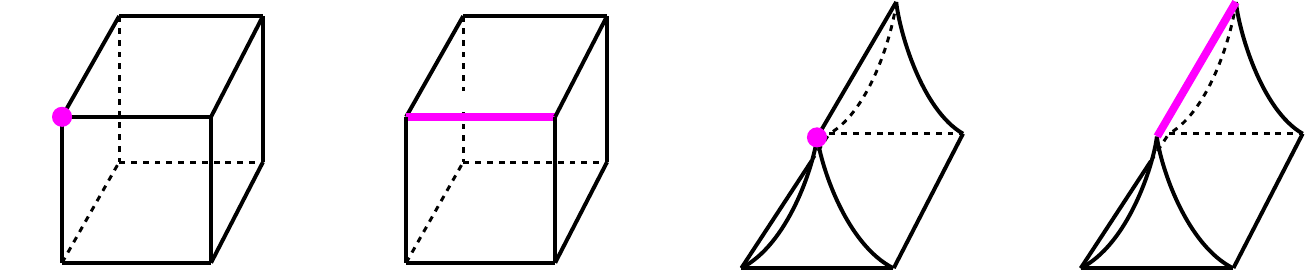}}%
    \put(-0.00081104,0.10248891){\color[rgb]{0,0,0}\makebox(0,0)[lt]{\lineheight{1.25}\smash{\begin{tabular}[t]{l}apex\end{tabular}}}}%
    \put(0.3247989,0.12534012){\color[rgb]{0,0,0}\makebox(0,0)[lt]{\lineheight{1.25}\smash{\begin{tabular}[t]{l}corner edge\end{tabular}}}}%
    \put(0.58316571,0.11509862){\color[rgb]{0,0,0}\makebox(0,0)[lt]{\lineheight{1.25}\smash{\begin{tabular}[t]{l}gable\end{tabular}}}}%
    \put(0.82601504,0.15011472){\color[rgb]{0,0,0}\makebox(0,0)[lt]{\lineheight{1.25}\smash{\begin{tabular}[t]{l}cusp edge\end{tabular}}}}%
  \end{picture}%
\endgroup%
}
    \caption{Illustrations of an apex, corner edge, gable, and cusp edge.}
    \label{fig:corners}
\end{figure}

See \Cref{fig:corners}. A connected component of the set of boundary points is called a \textbf{face} of the manifold with corners. A connected component of the set of corner edge points or cusp edge points is called a \textbf{corner edge} or \textbf{cusp edge}, respectively.

\begin{example}
A sutured manifold $(Q,\gamma)$ can be considered as a 3-manifold with corners by taking the set of (convex) corner edges to be the curves of intersection between $R_+(\gamma)$ and $A(\gamma)$ and between $R_-(\gamma)$ and $A(\gamma)$. There are no apexes, gables, or cusp edges in this example. 
\end{example}

Going forward, we will always view sutured manifolds as 3-manifolds with corners in this way.

We next define a dynamic manifold.
Consider a triple $(D, V, \lambda)$ where $D$ is a 3-manifold with corners, $V$ is a continuous nonvanishing vector field on $D$, and $\lambda$ is a labeling 
\[
\lambda\colon \{\text{faces of $D$}\}\to \{\p,\m,\b,\s,\u\}
\]
where the labels stand for $\p$lus, $\m$inus, $\b$are, $\s$table, and $\u$nstable respectively. This assigns each cusp edge and corner edge a pair of labels and we can classify edges by this pair of labels. For example a $\u\u$-edge is one which has a $\u$-face on both of its sides.

A \textbf{dynamic manifold} is such a triple $(D,V,\lambda)$ which satisfies the following:
\begin{enumerate}[label=(\alph*)]
    \item $V$ points out of $D$ along all $\p$-faces and into $D$ along all $\m$-faces.
    \item $V$ is tangent to all $\b$-, $\s$-, and $\u$-faces
    \item all $\s\s$-, $\u\u$-, and $\p\m$-edges are cusp edges
    \item $V$ points into $D$ along all $\s\s$-edges and points out of $D$ along all $\u\u$-edges
    \item there are no $\p\p$-, $\m\m$-, $\b\b$-, $\b\s$-, or $\b\u$-edges
\end{enumerate}

The motivation behind the above axioms is that a dynamic manifold should be thought of as a complementary component of the union of a stable dynamic branched surface $B^s$ and an unstable dynamic branched surface $B^u$ intersecting transversely (the $\b$are labels correspond to sutures). For example, condition (a) corresponds to the fact that a dynamic vector field points outward along $R_+$ and inward along $R_-$. Condition (d) corresponds to the fact that a dynamic vector field restricts to the maw vector field on $\brloc(B^u)$, and to the negative of the maw vector field on $\brloc(B^s)$.

\begin{remark}\label{rmk:annulusfaces}
It follows from the axioms that all $\u$-faces which are not incident to $\s$-faces (and vice versa) are annuli or tori. Indeed, the vector field $V$ must point outward (inward) along the entire boundary of such a $\u$-face ($\s$-face), so this follows from Poincar\'e-Hopf and the fact that $V$ is nonsingular.
\end{remark}

\begin{figure}
    \centering
    \resizebox{!}{2in}{\import{basecase-fig}{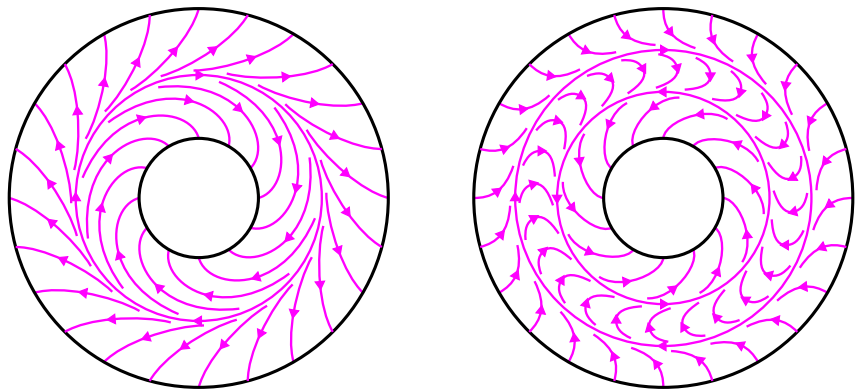}}
    \caption{Here we have drawn the trajectories of two different vector fields on an annulus. The vector field on the left is circular, while the vector field on the right is not.}
    \label{fig:circular}
\end{figure}

\begin{definition}[circular, dynamic orientation, (in-)coherent cusp circle]\label{def:circular}
Let $V$ be a nonvanishing vector field generating a forward semiflow on a manifold $M$, possibly with boundary. We say $V$ is \textbf{circular} if there exists a map $g\colon M\to S^1$ such that if $\gamma(t)$ is a monotonic parameterization of a trajectory of $V$, then $g(\gamma(t))$ is a monotonic path in $S^1$. See \Cref{fig:circular} for an example and nonexample.

If $\gamma$ is a $\u\u$-cusp circle of a dynamic manifold then by \Cref{rmk:annulusfaces} the $\u$-faces $A_1$ and $A_2$ to either side of $\gamma$ are annuli.  If $V$ is circular on both of these faces, then there are induced orientations on $H_1(A_1)$ and $H_1(A_2)$. This orientation of $H_1(A_i)$ is called the \textbf{dynamic orientation}. If the dynamic orientations of $H_1(A_1)$, $H_1(A_2)$ match up along $\gamma$, we say $\gamma$ is a \textbf{coherent} cusp circle. Otherwise we say $\gamma$ is \textbf{incoherent}.
\end{definition}

The following lemma proves that a vector field of the type shown on the left of \Cref{fig:circular} is circular.

\begin{lemma}\label{lem:reebcircular}
Let $\mc R$ be a cooriented Reeb foliation of an annulus $A$, and let $V_A$ be a vector field on $A$ which is positively transverse to $\mc R$. Then $V$ is circular.
\end{lemma}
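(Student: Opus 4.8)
The plan is to normalize the pair $(A,\mathcal R)$ up to homeomorphism, write down an explicit candidate for the map $g$ coming from the ``transverse coordinate'' of the Reeb foliation, and then deal with the only genuine difficulty, which occurs at $\partial A$. For the normalization I would use that a cooriented Reeb foliation of an annulus is unique up to a homeomorphism of the annulus, so I may take $A=S^1\times[0,1]$ with the two boundary circles as leaves and every interior leaf spiralling onto both of them. Concretely, in the universal cover $\wt A=\R\times[0,1]$ with coordinates $(u,v)$ and deck transformation $\sigma(u,v)=(u+1,v)$, fix an increasing homeomorphism $\phi\colon\R\to(0,1)$ and take the interior leaves of $\wt{\mathcal R}$ to be the graphs $\{v=\phi(u-c)\}$, $c\in\R$, together with $\R\times\{0\}$ and $\R\times\{1\}$; since $\sigma$ sends $\{v=\phi(u-c)\}$ to $\{v=\phi(u-(c+1))\}$ this descends to $A$ and models $\mathcal R$. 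After reversing the orientation of the target $S^1$ if necessary, I may assume the coorientation of $\mathcal R$ is given on $\intr(A)$ by the $1$-form $dF$, where $F(u,v)=u-\phi^{-1}(v)$.

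The candidate for $g$ is then easy to describe. On $\intr(\wt A)$ one has $F\circ\sigma=F+1$, so $F\bmod 1$ descends to a continuous map $g_0\colon\intr(A)\to S^1$. Positive transversality says exactly that every trajectory of $V$ inside $\intr(A)$ crosses the leaves of $\mathcal R$ — which are precisely the level sets of $F$ — always in the coorientation direction; hence $F$ strictly increases along any such trajectory, so $g_0$ traces a strictly monotone path in $S^1$ along it. This handles every trajectory contained in $\intr(A)$, and is essentially formal. What remains is to extend $g_0$ continuously over $\partial A$ in a way compatible with the trajectories that meet $\partial A$.

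The hard part is this extension, and it is where the Reeb structure (rather than a mere product structure) gets used. A trajectory can meet $\partial A$ only by running along a boundary circle (if $V$ is tangent there) or by limiting onto or crossing one; in the latter case $v\to 0$ or $v\to 1$ near such a point, so $F\to\pm\infty$ and $g_0$ has already wound around $S^1$ infinitely often in a single direction, which forces the required boundary behaviour. In the former case, because the interior leaves spiral onto a boundary circle $\ell$, positive transversality forces nearby interior trajectories to wind around $\ell$ infinitely, and the strict monotonicity of $F$ forbids any reversal of that winding; moreover the winding is coherent between the two components of $\partial A$ because $[\ell_0]=[\ell_1]$ in $H_1(A)$. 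One then extends $g_0$ over $\partial A$ by a degree $\pm 1$ map on each boundary circle recording this winding and checks that monotonicity is preserved through a collar of $\partial A$. The resulting $g\colon A\to S^1$ is monotone along every trajectory of $V$, so $V$ is circular. I expect the bookkeeping near $\partial A$, where both $F$ and the ties of a standard neighbourhood of $\mathcal R$ degenerate, to be the only real obstacle; the interior is routine.
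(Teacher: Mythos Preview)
Your construction has a genuine gap at the boundary that you do not close. The map $g_0 = F \bmod 1$ with $F(u,v) = u - \phi^{-1}(v)$ is only defined on $\intr(A)$, and it cannot be extended continuously to $\partial A$: as $v \to 0$ or $v \to 1$ along any radial arc, $F \to \pm\infty$, so $g_0$ winds infinitely around $S^1$ and has no limiting value. Your proposed fix --- ``extend $g_0$ over $\partial A$ by a degree $\pm 1$ map on each boundary circle'' --- does not produce a continuous map on $A$, precisely because of this infinite winding. The definition of circular requires a continuous $g\colon A \to S^1$, and the transverse coordinate of a Reeb foliation is the wrong object for this: its leaf space is an interval (or a non-Hausdorff $1$-manifold), not a circle, and the boundary leaves are the problem points.

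The paper's argument sidesteps this entirely by \emph{perturbing the foliation} rather than trying to use it directly. Put a Riemannian metric on $A$; by compactness, $V_A$ makes an angle at least some $\epsilon > 0$ with the tangent distribution of $\mathcal R$. Now perturb that distribution slightly to obtain a nearby foliation $\mathcal R'$ whose leaves are properly embedded arcs (a product foliation of the annulus), with the perturbation small enough that $V_A$ remains positively transverse to $\mathcal R'$. The leaf space of $\mathcal R'$ is genuinely $S^1$, and the quotient map $A \to S^1$ is the desired $g$. This is both shorter and avoids any boundary analysis.
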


\begin{proof}
If we place a Riemannian metric on $A$, then by compactness there is some $\epsilon$ such that the vector field $V_A$ makes an angle of at least $\epsilon$ with any vector tangent to $\mc R$. Hence we can perturb the tangent distribution of $\mc R$ slightly to obtain a foliation $\mc R'$ with tangent distribution close enough to that of $\mc R$ so that $V_A$ is positively transverse to $\mc R'$, but whose leaves are properly embedded line segments. The map to the leaf space of $\mc R'$ now certifies the circularity of $V_A$.
\end{proof}

As examples, we now define two types of dynamic manifolds following Mosher. Later, these types of dynamic manifolds will feature in the definition of a ``very full" dynamic branched surface.

\begin{definition}[$\u$-cusped torus] \label{defn:cuspedtorus}
Let $\Delta$ be a closed disk whose boundary is smooth with the exception of $n\ge2$ cusps, and let $f\colon \Delta\to \Delta$ be a diffeomorphism. The mapping torus $M$ of $f$ is a 3-manifold with corners homeomorphic to a solid torus. There are $\frac{n}{p}$ circular cusp edges of $M$, where $p$ is the period of a cusp of $\Delta$ under $f$. Likewise there are $\frac{n}{p}$ annular faces of $M$. We label each of these faces $\u$. Take a circular vector field $V$ on $M$ that gives it the structure of a dynamic manifold. Equipped with such a vector field, $M$ is called a \textbf{$\u$-cusped solid torus}. See the lefthand side of \Cref{fig:ucuspedproduct}. 
Define the \textbf{index} of $M$ to be $1-\frac{n}{2}$, which is the index of a meridional disk intersecting the $\u\u$-cusp curves minimally. 

A \textbf{$\u$-cusped torus shell} is defined similarly, but replacing $\Delta$ by a closed annulus whose boundary is smooth with the exception of $n\ge1$ cusps on a single boundary component. The annulus faces are labeled $\u$, the torus face is labeled $\b$, and circularity of the vector field is defined as for the $\u$-cusped solid torus.
Notice that there is no canonical meridian in a $\u$-cusped torus shell, so there is no canonical way to assign an index as in the non-punctured case. 

A \textbf{$\u$-cusped torus} refers to a $\u$-cusped solid torus or a $\u$-cusped torus shell.
\textbf{$\s$-cusped solid tori}, \textbf{$\s$-cusped torus shells}, and \textbf{$\s$-cusped tori} are defined symmetrically.
\end{definition}

\begin{definition}[$\u$-cusped product] \label{defn:cuspedproduct}
Suppose that $D$ is homeomorphic to $S\times[0,1]$, where $S$ be a compact surface with $\ind(S) \leq 0$. We say that $(D,V,\lambda)$ is a \textbf{$\u$-cusped product} if the following hold:
\begin{enumerate}[label=(\alph*)]
    \item $S\times\{0\}$ is a $\m$-face
    \item each component of $\del S\times[0,1]$ is a $\b$-face
    \item $S\times \{1\}$ is a union of $\p$-faces and $\u$-faces
    \item each orbit of the $V$-semiflow that does not accumulate on a $\u$-face either terminates on a $\p$-face or a $\u\u$-cusp
    \item each $\p$-face has nonpositive index
    \item $V$ is circular on each $\u$-face
    \item each $\u\u$-cusp circle is incoherent
\end{enumerate}

The definition of \textbf{s-cusped product} is symmetric.
See \Cref{fig:ucuspedproduct} for an example of a $\u$-cusped product. Note that because $\b\u$-edges are prohibited, the $\u$-faces in $S\times \{1\}$ must lie entirely in $\intr(S\times \{1\})$.
\end{definition}

\begin{remark}
In this paper the $\u$-cusped products we encounter will not have $\u\u$-cusp circles, but we include condition (g) to be consistent with \cite{Mos96} and with our future work.
\end{remark}

The following lemma is inspired by \cite[Remark preceding Prop 4.6.1]{Mos96}, and allows for the recognition of $\u$-cusped products. We record it here for later use.

\begin{lemma}[\cite{Mos96}]\label{lem:ucusprecog}
Let $(Q,V,\lambda)$ be a connected dynamic manifold such that
\begin{enumerate}
    \item $Q$ has no $\s$-faces,
    \item each backward trajectory not lying in a $\u$-face terminates on an $\m$-face,
    \item each $\b$-face is an annulus with boundary consisting of one $\p\b$-circle and one $\m\b$-circle, and
    \item there are no $\p\m$-edges.
\end{enumerate}
If each $\m$-face has nonpositive index, then $Q$ is homeomorphic to $S \times [0,1]$ for some compact surface $S$ with $\ind S \leq 0$ and satisfies items (a)-(d) in the definition of a $\u$-cusped product.
\end{lemma}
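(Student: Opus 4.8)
The plan is to reconstruct $Q$ as a mapping cylinder over the union of its $\m$-faces by flowing backward along $V$, and then to upgrade this to an honest product $S\times[0,1]$ using the annular structure of the $\u$-faces. Throughout write $Y$ for the union of the $\u$-faces and $M$ for the union of the $\m$-faces.

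First I would extract the combinatorics forced by the hypotheses. Since $Q$ has no $\s$-faces, \Cref{rmk:annulusfaces} shows every $\u$-face is an annulus or a torus tangent to $V$. Because $V$ points out of $\p$-faces, into $\m$-faces, and is tangent to $\b$- and $\u$-faces, hypothesis (3) gives that each $\b$-face is an annulus carrying a product $V$-flow from its $\m\b$-circle to its $\p\b$-circle; combined with hypothesis (4) and axiom (e) of dynamic manifolds, the only edges of $Q$ are $\p\b$-, $\m\b$-, $\u\p$-, $\u\m$-, and $\u\u$-edges. A short argument inspecting a backward trajectory that enters $Q$ along a putative $\u\m$-edge shows no $\u\m$-edges occur, so each annular $\u$-face meets only $\p$-faces (along $\u\p$-edges) and other $\u$-faces (along $\u\u$-cusps); this is what will let $S\times\{1\}$ be a union of $\p$- and $\u$-faces.

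Next I would use hypothesis (2) to build a flow-box decomposition. For $x\in Q\setminus Y$ the backward trajectory is well-defined and, by (2), reaches $M$ in finite time, so one gets a continuous escape time $\tau\colon Q\setminus Y\to[0,\infty)$ and a retraction $r\colon Q\setminus Y\to M$; moreover $Q\setminus Y$ is the disjoint union of the forward trajectory segments issuing from points of $M$, and the backward flow deformation retracts $Q\setminus Y$ onto $M$. Since $Q$, hence $Q\setminus Y$, is connected, $M$ is connected, so it is a single $\m$-face, which I take to be $S$; then $\ind S\le 0$ by hypothesis. Each forward trajectory from a point of $M$ either exits through a $\p$-face or runs into a $\u\u$-cusp in finite time, or else accumulates on $Y$ — here one rules out forward recurrence away from $Y$ using the absence of $\s$-faces together with the fact that every point of $Q\setminus Y$ escapes backward to $M$. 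This already yields the assertions underlying conditions (b)--(d) of a $\u$-cusped product (e.g.\ $\del S\times[0,1]$ maps onto the $\b$-faces via the product flows there, and $S\times\{1\}$ should be the union of $\p$- and $\u$-faces), and shows that union is homeomorphic to $M=S$.

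Finally, to produce the homeomorphism $Q\cong S\times[0,1]$, I would build a height function $h\colon Q\to[0,1]$ with $h^{-1}(0)=M$, with $h$ a submersion on $h^{-1}([0,1))$, with $h^{-1}(1)$ the union of the $\p$- and $\u$-faces, and with $h$ restricting on each $\b$-face to its product parameter. Away from a neighborhood of $Y$ one takes $h$ to be a suitable rescaling of $\tau$; near each $\u$-face annulus $A$ one uses an explicit half-collar $A\times[0,1]$ on which $V$ spirals toward $A=A\times\{1\}$ and sets $h$ equal to the collar parameter; the two definitions are patched on the overlap. Integrating a vector field transverse to the level sets of $h$ (and tangent to the $\b$-faces, using the product flows there) then identifies $Q$ with $M\times[0,1]=S\times[0,1]$, after which conditions (a)--(d) of \Cref{defn:cuspedproduct} are read off from the structure established above. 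I expect the main obstacle to be precisely this patching: arranging the half-collars of the $\u$-faces, with their cusped boundary behavior along $\u\u$- and $\u\p$-edges, to match the flow-box structure on the rest of $Q$ so that the global level sets are honest copies of $S$. The subsidiary points — continuity of $\tau$ given only a $C^0$ vector field, the exclusion of $\u\m$-edges, and the absence of forward recurrence — are more routine but still need to be handled carefully.
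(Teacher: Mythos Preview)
Your plan is sound but follows a different route from the paper. You propose to construct the product structure directly by building a height function $h\colon Q\to[0,1]$ from the backward escape time, patched with collar coordinates near the $\u$-faces, and you correctly identify this patching as the main obstacle. The paper avoids it entirely: after observing (as you do) that there is a single $\m$-face $S$, it shows that the inclusion $S\hookrightarrow Q$ induces a $\pi_1$-isomorphism---surjectivity by flowing loops in $\intr Q$ backward onto $S$ via hypothesis~(2), injectivity by first homotoping a nullhomotopy off the $\u$-faces and then flowing it backward onto $S$---and then invokes \cite[Theorem~10.2]{Hem76} to conclude $Q\cong S\times[0,1]$. Conditions (a)--(d) are then read off; for~(d), a forward orbit accumulating in $\intr Q$ would yield a point whose backward trajectory fails to reach $S$, contradicting~(2). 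The paper's argument is short and sidesteps your height-function construction at the cost of a black-box 3-manifold reference; your route is more self-contained but would require real work to make $h$ behave near the cusped boundary of the $\u$-annuli, and your preliminary exclusion of $\u\m$-edges is not as immediate as you suggest (the paper does not argue this separately---it falls out of the product structure once established).
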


\begin{proof}
We first claim that $S$ has exactly one $\m$-face. Suppose points $p, q$ lie in $\m$-faces. Take a path $\alpha$ in the interior of $Q$ from $p$ to $q$. We can flow $\alpha$ backwards so that it lies on a $\m$-face by (2), so $p$ and $q$ lie in the same $\m$-face. 

Let us denote the unique $\m$-face of $Q$ by $S$. We now claim that $Q$ is homeomorphic to the product $S \times [0,1]$ with $S$ corresponding to $S \times \{0\}$. 
Pick a basepoint $x \in S$.
Consider the map $\pi_1(S,x) \to \pi_1(Q,x)$ induced by inclusion. This map is surjective because any closed curve in the interior of $Q$ can be flowed backward to a curve on $S$ by (2). Similarly, the map is injective because any nullhomotopy of a curve on $S$ in $Q$ can be homotoped off of the $\u$-faces of $N$, and then flowed backward to lie on $S$. Thus the claim follows from, for example, \cite[Theorem 10.2]{Hem76}.

This shows (a) in \Cref{defn:cuspedproduct}. (b) follows from (4), while (c) follows from (1) and (3).

Finally for (d), suppose some orbit neither accumulates on a $\u$-face nor terminates on a $\p$-face nor a $\u\u$-cusp. Then it must have some accumulation point $x$ in the interior of $Q$. But then the backward trajectory of $x$ cannot terminate on $S$, contradicting (2).
\end{proof}

\begin{figure}
    \centering
    \fontsize{12pt}{12pt}\selectfont
    \resizebox{!}{4cm}{
\begingroup%
  \makeatletter%
  \providecommand\color[2][]{%
    \errmessage{(Inkscape) Color is used for the text in Inkscape, but the package 'color.sty' is not loaded}%
    \renewcommand\color[2][]{}%
  }%
  \providecommand\transparent[1]{%
    \errmessage{(Inkscape) Transparency is used (non-zero) for the text in Inkscape, but the package 'transparent.sty' is not loaded}%
    \renewcommand\transparent[1]{}%
  }%
  \providecommand\rotatebox[2]{#2}%
  \newcommand*\fsize{\dimexpr\f@size pt\relax}%
  \newcommand*\lineheight[1]{\fontsize{\fsize}{#1\fsize}\selectfont}%
  \ifx\svgwidth\undefined%
    \setlength{\unitlength}{89.40741159bp}%
    \ifx\svgscale\undefined%
      \relax%
    \else%
      \setlength{\unitlength}{\unitlength * \real{\svgscale}}%
    \fi%
  \else%
    \setlength{\unitlength}{\svgwidth}%
  \fi%
  \global\let\svgwidth\undefined%
  \global\let\svgscale\undefined%
  \makeatother%
  \begin{picture}(1,1.21960269)%
    \lineheight{1}%
    \setlength\tabcolsep{0pt}%
    \put(0,0){\includegraphics[width=\unitlength,page=1]{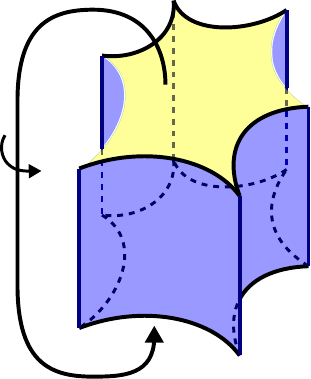}}%
    \put(0.47855637,0.41100565){\color[rgb]{0,0,0}\makebox(0,0)[lt]{\lineheight{1.25}\smash{\begin{tabular}[t]{l}$\u$\end{tabular}}}}%
    \put(0,0){\includegraphics[width=\unitlength,page=2]{fig_ucuspedtorus.pdf}}%
  \end{picture}%
\endgroup%
}
    \hspace{1.2cm}
    \resizebox{!}{4cm}{
\begingroup%
  \makeatletter%
  \providecommand\color[2][]{%
    \errmessage{(Inkscape) Color is used for the text in Inkscape, but the package 'color.sty' is not loaded}%
    \renewcommand\color[2][]{}%
  }%
  \providecommand\transparent[1]{%
    \errmessage{(Inkscape) Transparency is used (non-zero) for the text in Inkscape, but the package 'transparent.sty' is not loaded}%
    \renewcommand\transparent[1]{}%
  }%
  \providecommand\rotatebox[2]{#2}%
  \newcommand*\fsize{\dimexpr\f@size pt\relax}%
  \newcommand*\lineheight[1]{\fontsize{\fsize}{#1\fsize}\selectfont}%
  \ifx\svgwidth\undefined%
    \setlength{\unitlength}{163.95319948bp}%
    \ifx\svgscale\undefined%
      \relax%
    \else%
      \setlength{\unitlength}{\unitlength * \real{\svgscale}}%
    \fi%
  \else%
    \setlength{\unitlength}{\svgwidth}%
  \fi%
  \global\let\svgwidth\undefined%
  \global\let\svgscale\undefined%
  \makeatother%
  \begin{picture}(1,0.69614486)%
    \lineheight{1}%
    \setlength\tabcolsep{0pt}%
    \put(0,0){\includegraphics[width=\unitlength,page=1]{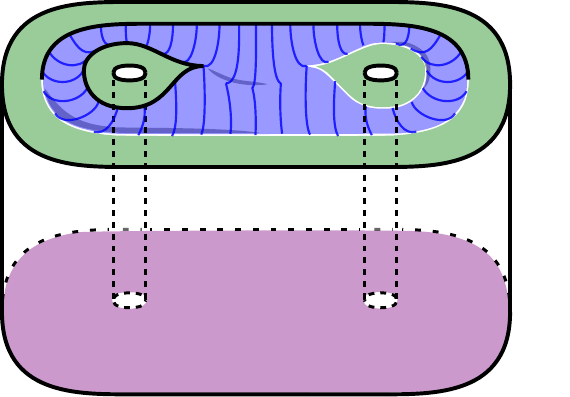}}%
    \put(0.44339292,0.14278422){\color[rgb]{0,0,0}\makebox(0,0)[lt]{\lineheight{1.25}\smash{\begin{tabular}[t]{l}$\m$\end{tabular}}}}%
    \put(0.02630673,0.32259403){\color[rgb]{0,0,0}\makebox(0,0)[lt]{\lineheight{1.25}\smash{\begin{tabular}[t]{l}$\b$\end{tabular}}}}%
    \put(0.44997575,0.57787352){\color[rgb]{0,0,0}\makebox(0,0)[lt]{\lineheight{1.25}\smash{\begin{tabular}[t]{l}$\u$\end{tabular}}}}%
    \put(0.83303163,0.53464293){\color[rgb]{0,0,0}\makebox(0,0)[lt]{\lineheight{1.25}\smash{\begin{tabular}[t]{l}$\p$\end{tabular}}}}%
    \put(0,0){\includegraphics[width=\unitlength,page=2]{fig_ucuspedproduct.pdf}}%
  \end{picture}%
\endgroup%
}
    \caption{On the left is a $\u$-cusped solid torus, and on the right a $\u$-cusped product.}
    \label{fig:ucuspedproduct}
\end{figure}

\subsection{Veering branched surfaces} \label{subsec:vbs}

In this subsection, we define veering branched surfaces, which will be the main class of objects we study in the rest of this paper.

\begin{definition}\label{def:veryfull}
An unstable dynamic branched surface $(B,V)$ in a sutured manifold $Q$ is said to be \textbf{very full} if the complementary regions of $B$ in $Q$ are all $\u$-cusped tori and $\u$-cusped products.
A very full {stable} dynamic branched surface is defined symmetrically.
\end{definition}

In particular, notice that a very full unstable dynamic branched surface $B$ does not meet $R_- \cup \gamma$ and a very full stable dynamic branched surface $B$ does not meet $R_+ \cup \gamma$.

Let $B$ be a very full unstable dynamic branched surface. Recall that $\brloc(B)$ is a union of branch loops and branch arcs. 
Suppose that we have chosen a source orientation (recall \Cref{defn:sourceorientation}) on each branch loop and each branch arc.
We say that this data specifies a \textbf{source orientation} of $\brloc(B)$.

\begin{definition} \label{defn:vbs}
A very full unstable dynamic branched surface $(B,V)$, along with the choice of a source orientation, is called an \textbf{unstable veering branched surface} if:
\begin{enumerate}
    \item For each triple point $p$ meeting branch segments $\gamma$ and $\delta$, the orientation of $\gamma$ points into the same side of $T_p\delta$ in $T_p B$ as the maw coorientation of $\delta$ does. See \Cref{fig:veeringcondition}.
    \item The \textbf{boundary train track} $\beta=B \cap R_+$ is efficient and has no large branches.
    \item For each branch loop $l$, let $c$ be the unique $\u\u$-cusp circle of $Q \cut B$ which is identified to $l$. We require that the orientation on $l$ agree with the dynamic orientations of both $\u$-faces adjacent to $c$.
    \item There are no annulus or Möbius band sectors with both boundary components having inward pointing maw coorientations.
    \item $B$ does not carry any tori or Klein bottles.
\end{enumerate}

By \Cref{lemma:nolargebranchmeansspiraling}, there exists a choice of source orientations on the boundary train track making it into a spiraling train track. In fact, a canonical choice exists here: The only freedom is how the circular branches are oriented. Each of these lie on a $\u$-face of $Q \cut B$, so we orient it according to the dynamic orientation of the $\u$-face. In the following we will implicitly assume that the boundary train track of a veering branched surface is endowed with this source orientation, thus making it a spiraling train track.
\end{definition}

\begin{figure}
    \centering
    \fontsize{8pt}{8pt}\selectfont
    \resizebox{!}{1.25in}{
\begingroup%
  \makeatletter%
  \providecommand\color[2][]{%
    \errmessage{(Inkscape) Color is used for the text in Inkscape, but the package 'color.sty' is not loaded}%
    \renewcommand\color[2][]{}%
  }%
  \providecommand\transparent[1]{%
    \errmessage{(Inkscape) Transparency is used (non-zero) for the text in Inkscape, but the package 'transparent.sty' is not loaded}%
    \renewcommand\transparent[1]{}%
  }%
  \providecommand\rotatebox[2]{#2}%
  \newcommand*\fsize{\dimexpr\f@size pt\relax}%
  \newcommand*\lineheight[1]{\fontsize{\fsize}{#1\fsize}\selectfont}%
  \ifx\svgwidth\undefined%
    \setlength{\unitlength}{81.232268bp}%
    \ifx\svgscale\undefined%
      \relax%
    \else%
      \setlength{\unitlength}{\unitlength * \real{\svgscale}}%
    \fi%
  \else%
    \setlength{\unitlength}{\svgwidth}%
  \fi%
  \global\let\svgwidth\undefined%
  \global\let\svgscale\undefined%
  \makeatother%
  \begin{picture}(1,0.74195012)%
    \lineheight{1}%
    \setlength\tabcolsep{0pt}%
    \put(0,0){\includegraphics[width=\unitlength,page=1]{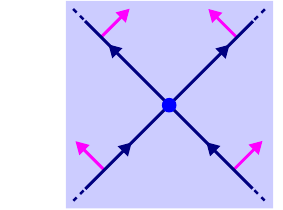}}%
    \put(0.57154853,0.27198214){\color[rgb]{0,0,0}\makebox(0,0)[lt]{\lineheight{1.25}\smash{\begin{tabular}[t]{l}$p$\end{tabular}}}}%
    \put(0.35189913,0.02625346){\color[rgb]{0,0,0}\makebox(0,0)[lt]{\lineheight{1.25}\smash{\begin{tabular}[t]{l}$T_p \gamma$\end{tabular}}}}%
    \put(0.68598916,0.02600054){\color[rgb]{0,0,0}\makebox(0,0)[lt]{\lineheight{1.25}\smash{\begin{tabular}[t]{l}$T_p \delta$\end{tabular}}}}%
    \put(-0.00559016,0.59257554){\color[rgb]{0,0,0}\makebox(0,0)[lt]{\lineheight{1.25}\smash{\begin{tabular}[t]{l}$T_p B$\end{tabular}}}}%
    \put(0,0){\includegraphics[width=\unitlength,page=2]{fig_veeringcondition.pdf}}%
  \end{picture}%
\endgroup%
}
    \caption{In a veering branched surface, the orientations (dark blue) and maw coorientations (pink) of branch segments are compatible at triple points.}
    \label{fig:veeringcondition}
\end{figure}

\begin{remark}
There is a symmetric definition for a stable veering branched surface. In this paper all the veering branched surfaces we encounter will be unstable, so we will sometimes omit the word `unstable' and refer to these as veering branched surfaces. Everything done in this paper can be performed for stable veering branched surfaces as well.
\end{remark}

Recall that sectors of a vertical branched surface are surfaces with corners. In particular, sectors of a veering branched surface are surfaces with corners, with sides along the branch locus and $R_+$. Moreover, the sides along the branch locus are naturally cooriented inwards or outwards by the vector field $V$ (equivalently, by the maw coorientation), and the sides along $R_+$ are naturally cooriented outwards by the vector field $V$. The sides of the sectors also inherit a source orientation from that of $\brloc(B)$ and $\beta$. Henceforth we will implicitly coorient and orient edges in this manner.

\begin{propdefn} \label{prop:vbssectors}
A sector $s$ of a veering branched surface on a sutured manifold must be one of the following:
\begin{itemize}
    \item A \textbf{diamond} that possibly has
    \begin{itemize}
        \item \textbf{Scalloped top}, i.e. the 2 top sides and the top vertex of the diamond can be replaced by $n \geq 3$ adjacent top sides and $n-1$ top corners, or
        \item \textbf{Rounded bottom}, i.e. the 2 bottom sides and the bottom vertex of the diamond can be replaced by 1 bottom side
    \end{itemize}
    The outermost top sides are oriented towards the top vertices, while the other top sides have sources. All the top sides are cooriented outwards. If the bottom is not rounded, the bottom sides are oriented away from the bottom vertex, otherwise the bottom side has a source. All the bottom sides are cooriented inwards. See first two columns of \Cref{fig:vbssectors}.
    \item An \textbf{annulus/Möbius band} that possibly has
    \begin{itemize}
        \item \textbf{Scalloped boundary components}, i.e. each of the boundary components can be replaced by $n \geq 1$ sides and $n$ corners.
    \end{itemize}
    
    In this case, $s$ lies on a $\u$-face $F$ of $Q \cut B$, and we have an injection $H_1(F) \hookrightarrow H_1(s)$. We call the orientation on $H_1(s)$ induced by the dynamic orientation of $F$ the \textbf{dynamic orientation} on $s$. Here $F$ may not be unique but the dynamic orientation on $s$ is well-defined as the orientation of a closed orbit on $s$ if $s$ is a source, and by \Cref{defn:vbs}(3) if $s$ is transient.

    The orientation of a non-scalloped boundary component agrees with the dynamic orientation of $s$. Non-scalloped boundary components can be cooriented inwards or outwards. Each side on a scalloped boundary component has a source. Scalloped boundary components must be cooriented outwards.
    
    We say $s$ is a \textbf{source} sector if all of $\partial s$ is cooriented outwards. See \Cref{fig:vbssectors} third column. Otherwise by \Cref{defn:vbs}(4), $s$ must be an annulus with one inward and one outward boundary component, and we say $s$ is a \textbf{transient} annulus. See \Cref{fig:vbssectors} fourth column.
\end{itemize}
\end{propdefn}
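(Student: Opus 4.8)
The plan is to analyze an individual sector $s$ locally, using the local structure of a vertical branched surface together with the dynamic and veering hypotheses, and then to run an index argument to determine its global topology; the orientations and coorientations are read off at the end from the source orientation and the dynamic orientations on the $\u$-faces. First I would record the local picture. Since $V$ is nowhere zero on $B$, a closed sector is impossible: a closed sector is a closed subsurface of $B$, hence carried by $B$, and \Cref{defn:vbs}(5) forbids tori and Klein bottles while Poincar\'e--Hopf forbids spheres and projective planes, so $\partial s\neq\varnothing$. Each side of $s$ lies either along $\brloc(B)$, where $V$ restricts to the maw vector field --- so $V$ is tangent to the side and the side carries a maw coorientation pointing into or out of $s$ --- or along $R_+$, where $V$ points out of $Q$ and hence transversally out of $s$. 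At each corner of $s$ there are only finitely many local models, dictated by the local model of a vertical branched surface at a triple point (for a corner in $\intr Q$, where three sectors meet) or at a switch of the boundary train track $\beta$ (for a corner on $R_+$). The veering condition \Cref{defn:vbs}(1) is what keeps this list short: at a triple point it forces the three incident sectors to be arranged in one way up to symmetry, so that every corner of $s$ in $\intr Q$ is a convex cusp of exactly one of three kinds --- two $\brloc$-sides meeting with both maw coorientations pointing out of $s$ (a ``top'' cusp), both pointing into $s$ (a ``bottom'' cusp), or one of each (a ``side'' corner). The local models on $R_+$ are likewise short, and here \Cref{defn:vbs}(2) --- efficiency of $\beta$ and the absence of large branches in $\beta$ --- rules out the degenerate configurations, leaving only corners at which two sides leave $R_+$ transversally with $V$ exiting.

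Second I would globalize. The field $V|_s$ is a nowhere-zero vector field on the compact surface-with-corners $s$, tangent along every $\brloc$-side and transversally outward along every $R_+$-side, with local behavior at each corner on the short list from the first step. An index (Poincar\'e--Hopf) computation for $V|_s$ on $s$, in which the boundary tangencies are concentrated at the corners, then expresses $\chi(s)$ as a sum of nonpositive corner contributions, forcing $\chi(s)\ge 0$, with $\chi(s)=1$ only for a disk and $\chi(s)=0$ only for an annulus or M\"obius band. In the disk case the corner data is pinned down to that of a diamond --- a top cusp, a bottom cusp, and ``side'' corners in between --- with the scalloped top arising when splits accumulate so that the top cusp is subdivided into adjacent top sides, and the rounded bottom arising when the two bottom sides degenerate into one bottom $\brloc$-side carrying a source. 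In the $\chi(s)=0$ case, \Cref{defn:vbs}(4) excludes the annulus or M\"obius band with all boundary components cooriented inward, while \Cref{rmk:annulusfaces} together with the $\u$-cusped product/torus structure of the complementary regions of $B$ shows that $s$ lies along a $\u$-face $F$ of $Q\cut B$ and supplies the injection $H_1(F)\hookrightarrow H_1(s)$, hence the dynamic orientation on $s$; a scalloped boundary component of $s$ is a boundary circle of $F$ that has been subdivided into cusp sides.

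Finally, with the topological type fixed, the orientations and coorientations follow mechanically. The maw coorientations of the sides were already determined in the first step. Feeding the source orientation of $\brloc(B)$ and $\beta$ --- which, by \Cref{lemma:nolargebranchmeansspiraling} and the convention at the end of \Cref{defn:vbs}, makes $\beta$ spiraling --- back through the veering condition at the corners of a diamond forces the outermost top sides to point toward the top vertex, the remaining top sides to carry sources, and the bottom sides to point away from the bottom vertex (or the rounded bottom side to carry a source). For an annulus or M\"obius band sector, a non-scalloped boundary component is a closed orbit of $V$ in the source case and is governed by \Cref{defn:vbs}(3) in the transient case, so in either case its orientation agrees with the dynamic orientation of $s$; each side of a scalloped boundary component is a ``top'' cusp side, so such a component is cooriented outward, and \Cref{defn:vbs}(4) forces any non-source annulus to have exactly one inward and one outward boundary circle.

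The step I expect to be the main obstacle is the bookkeeping in the globalization: carefully enumerating every corner model allowed by the veering condition --- in particular those on $R_+$, where ``$V$ exits transversally'' must be reconciled with efficiency and the ban on large branches in $\beta$ --- and then packaging the corner data into a clean identity for $\chi(s)$, so that ``$\chi(s)\ge 0$ with the listed equality cases'' genuinely leaves only diamonds and annuli/M\"obius bands carrying precisely the decorations in the statement.
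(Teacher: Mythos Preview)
Your approach is essentially the paper's: apply Poincar\'e--Hopf to the nonvanishing field $V|_s$ to force $\chi(s)\ge 0$, and then use the veering condition \Cref{defn:vbs}(1) at corners to pin down the in/out coorientation pattern around $\partial s$. The paper's execution is considerably more streamlined---it goes directly to $\chi(s)=\deg(V|_{\partial s})$, observes that for a disk this forces one run of consecutive outward sides followed by one run of consecutive inward sides, and then reads off the diamond structure from the corner constraint (sides without sources sit at the in/out transition, sides with sources sit between two outward sides)---but your plan would arrive at the same place.

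There is, however, a genuine error in your local picture that you should correct before carrying out the computation. You write that along a side of $s$ on $\brloc(B)$, ``$V$ restricts to the maw vector field --- so $V$ is tangent to the side.'' This is backwards: the maw vector field points from the two-sheeted side to the one-sheeted side of $\brloc(B)$, i.e.\ \emph{transversely} to $\brloc(B)$ within $B$. Thus along every $\brloc$-side of $s$, $V$ points transversally into or out of $s$ according to the maw coorientation, and along every $R_+$-side $V$ points transversally out. Your later phrase ``boundary tangencies are concentrated at the corners'' is in fact consistent with this corrected picture (there are no tangencies along sides), so your index argument survives---but if you actually believed $V$ were tangent along $\brloc$-sides, the degree computation would collapse. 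Once this is fixed, the convexity of all corners makes each corner contribute nonnegatively to $\deg(V|_{\partial s})$, and the rest of your outline goes through.
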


\begin{figure}
    \centering
    \fontsize{12pt}{12pt}\selectfont
    \resizebox{!}{8cm}{
\begingroup%
  \makeatletter%
  \providecommand\color[2][]{%
    \errmessage{(Inkscape) Color is used for the text in Inkscape, but the package 'color.sty' is not loaded}%
    \renewcommand\color[2][]{}%
  }%
  \providecommand\transparent[1]{%
    \errmessage{(Inkscape) Transparency is used (non-zero) for the text in Inkscape, but the package 'transparent.sty' is not loaded}%
    \renewcommand\transparent[1]{}%
  }%
  \providecommand\rotatebox[2]{#2}%
  \newcommand*\fsize{\dimexpr\f@size pt\relax}%
  \newcommand*\lineheight[1]{\fontsize{\fsize}{#1\fsize}\selectfont}%
  \ifx\svgwidth\undefined%
    \setlength{\unitlength}{354.43452083bp}%
    \ifx\svgscale\undefined%
      \relax%
    \else%
      \setlength{\unitlength}{\unitlength * \real{\svgscale}}%
    \fi%
  \else%
    \setlength{\unitlength}{\svgwidth}%
  \fi%
  \global\let\svgwidth\undefined%
  \global\let\svgscale\undefined%
  \makeatother%
  \begin{picture}(1,0.55702136)%
    \lineheight{1}%
    \setlength\tabcolsep{0pt}%
    \put(0,0){\includegraphics[width=\unitlength,page=1]{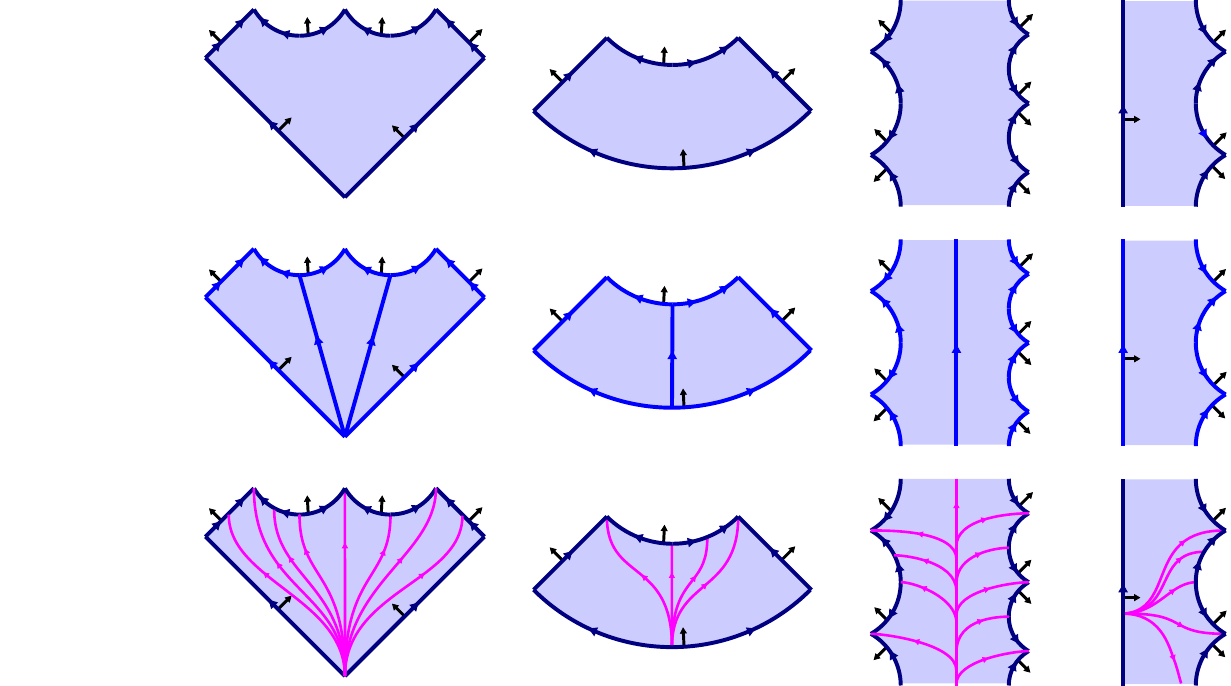}}%
    \put(-0.00221524,0.47669544){\color[rgb]{0,0,0}\makebox(0,0)[lt]{\lineheight{1.25}\smash{\begin{tabular}[t]{l}Sectors\end{tabular}}}}%
    \put(-0.00156894,0.30611322){\color[rgb]{0,0,1}\makebox(0,0)[lt]{\lineheight{1.25}\smash{\begin{tabular}[t]{l}Dual\\graph $\Gamma$\end{tabular}}}}%
    \put(-0.00156894,0.11341665){\color[rgb]{1,0,1}\makebox(0,0)[lt]{\lineheight{1.25}\smash{\begin{tabular}[t]{l}Semiflow \\graph $\Phi_+$\end{tabular}}}}%
    \put(0,0){\includegraphics[width=\unitlength,page=2]{vbssectors.pdf}}%
  \end{picture}%
\endgroup%
}
    \caption{The possible sectors of a veering branched surface and the portion of the dual graph $\Gamma$ and semiflow graph $\Phi_+$ on each of them. Column 1: A diamond with scalloped top. Column 2: A diamond with scalloped top and rounded bottom. Column 3: A source sector. Column 4: A transient annulus.}
    \label{fig:vbssectors}
\end{figure}

\begin{proof}
Since $V$ is nonsingular, by the Poincare-Hopf theorem, $\chi(s)=\mathrm{degree}(V|_{\partial s})$.
In particular, since $V$ is transverse to $\brloc(B)$ and all corners are convex, $s$ must be a disk, an annulus, or a Möbius band.

If $s$ is a disk, $\ind(V|_{\partial s})=1$, so $s$ must have some number of consecutive sides cooriented outwards followed by some number of consecutive sides cooriented inwards. But given (1) in \Cref{defn:vbs}, a side with no source must be adjacent to both an inwardly and outwardly cooriented edge; and a side with a source must be adjacent to two outwardly cooriented edges.

If $s$ is an annulus or a Möbius band, $\ind(V|_{c})=0$ for each boundary component $c$ of $s$, so each boundary component must consist of a number of consecutive edges all cooriented outwards or all cooriented inwards. But, as above, the two edges to the sides of an edge with no source cannot be cooriented in the same direction, and the two edges to the sides of an edge with a source must be cooriented outwards. 
\end{proof}

\begin{remark}
In \Cref{defn:vbs}, the requirements that $\beta$ has no large branches and (4) are not actually very restrictive, in the sense that if $B$ is a very full dynamic branched surface satisfying all the axioms except these, then they can be arranged to hold. For suppose $B$ is such a branched surface. Then a similar proof as in \Cref{prop:vbssectors} shows that a sector $s$ of $B$ must be one of the following:
\begin{itemize}
    \item A diamond that possibly has
    \begin{itemize}
        \item Scalloped top, or
        \item Rounded top, i.e. the 2 top sides and the top vertex of the diamond can be replaced by 1 top side lying on $R_+$, or
        \item Rounded bottom
    \end{itemize}
    If the top is rounded, the top side has a source. Otherwise the same statements as in \Cref{prop:vbssectors} still hold.
    \item An annulus/Möbius band that possibly has
    \begin{itemize}
        \item Scalloped boundary components
    \end{itemize}
    There are now no restrictions on the coorientations of the boundary components. The same statements about their orientations as in \Cref{prop:vbssectors} still hold.
\end{itemize}

To arrange for $\beta$ to have no large branches, we can dynamically split $B$ to get rid of all diamonds with rounded top (see \Cref{fig:splittingforgoodness} in \Cref{sec:dynamicpairs}). This might change the corner structure of some cusped product pieces, but they remain cusped product pieces, hence $B$ is still very full. The result of the splitting may contain new undesirable sectors, so we split again to get rid of those inductively. Since the splittings reduce the number of sectors, the process stops eventually, and $\beta$ now has no large branches.

Similarly, we can do dynamic splittings to arrange for (4) to hold. Suppose there is an annulus/Möbius band sector $s$ with both boundary components cooriented inwards. For a fixed boundary component $c$ of $s$, locate an immersed annulus $A$ with one boundary component on $c$ and another boundary component $c'$ on $\brloc(B) \cup \beta$ that is cooriented out of $A$, by, for example, following along the boundary of a complementary region of $B$ meeting $s$, using the fact that $B$ is very full. We can then dynamically split along $A$ minus a small neighborhood of $c'$ to reduce the number of annulus/Möbius band sectors that violate (4).
\end{remark}

\subsection{The dual graph and flow graph} \label{subsec:dualgraphflowgraph}

In this section, we define the dual graph and flow graphs of a veering branched surface. These definitions are motivated by the corresponding objects in the non-sutured setting (see e.g. \cite{LMT23a}). Within the context of this paper, the dual graph will play a part in \Cref{sec:folcone} and \Cref{sec:hmvbsunique}, while the main use of flow graphs is to prove the main result of \Cref{sec:dynamicpairs} that veering branched surfaces give rise to dynamic pairs.

\begin{definition}[Dual graph] \label{defn:dualgraph}
Let $B$ be a veering branched surface. The \textbf{dual graph} $\Gamma$ of $B$ is a directed graph embedded in $B$ determined by requiring its intersection with each sector $s$ of $B$ be as follows:
\begin{itemize}
    \item If $s$ is a diamond, then $\Gamma \cap s$ is the union of $\partial s$ with an edge from the bottom vertex (if the bottom is not rounded) or the bottom source (if the bottom is rounded) of $s$ to each source on a top side.
    \item If $s$ is a source, then $\Gamma \cap s$ is the union of $\partial s$ with an edge connecting a vertex in the interior of $s$ to itself forming a core of $s$, oriented by the dynamic orientation of $s$.
    \item If $s$ is a transient annulus, then $\Gamma \cap s = \partial s$ (with a vertex placed on each cycle) \qedhere
\end{itemize}
\end{definition}
See \Cref{fig:vbssectors} for an illustration of the dual graph.

\begin{definition} \label{defn:postransbrloc}
Let $c$ be an oriented path immersed in $B$. We say that $c$ is \textbf{positively transverse} to $\brloc(B)$ if:
\begin{itemize}
    \item $c$ intersects $\brloc(B)$ nontrivially, and at every intersection point, the orientation of $c$ agrees with the maw coorientation on $\brloc(B)$, or 
    \item $c$ is contained in an annulus or Möbius band sector, and is homotopic within the sector to a positive multiple of the core oriented by the dynamic orientation.\qedhere
\end{itemize}
\end{definition}

\begin{proposition} \label{prop:dualgraphcarries}
Any closed curve $c$ on a veering branched surface $B$ that is positively transverse to $\brloc(B)$ is homotopic to a directed cycle of the dual graph $\Gamma$.
\end{proposition}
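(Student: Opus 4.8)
The plan is to analyze how $c$ interacts with the sectors of $B$ and push $c$ onto $\Gamma$ one sector at a time, then close up. First I would dispose of the easy case: if $c$ is contained in a single annulus or Möbius band sector $s$, then by \Cref{defn:postransbrloc} it is homotopic within $s$ to a positive multiple of the core oriented by the dynamic orientation, which by \Cref{defn:dualgraph} is exactly a directed cycle of $\Gamma$ (the core edge of $s$ if $s$ is a source, or one of the boundary cycles of $\partial s$ if $s$ is a transient annulus, traversed in the dynamic direction). So assume from now on that $c$ meets $\brloc(B)$ nontrivially; by \Cref{defn:postransbrloc} the orientation of $c$ then agrees with the maw coorientation at every crossing.

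The main step is a local normal-form argument on sectors. Put $c$ in general position with respect to $\brloc(B)$, so $c \cap s$ is a union of properly embedded oriented arcs for each sector $s$, each entering and exiting $s$ through the branch locus (it cannot exit through $R_+$, since the maw coorientation along the sides on $R_+$ points out of $Q$, and $c$ lies in $B$). Consider such an arc $\alpha$ in a sector $s$. Using \Cref{prop:vbssectors}, $s$ is a diamond (possibly scalloped/rounded), a source, or a transient annulus, and the coorientation pattern of $\partial s$ is tightly controlled: on a diamond the top sides are cooriented outwards and the bottom side(s) inwards, so $\alpha$ must enter through a bottom side and exit through a top side; on a transient annulus it enters through the inward boundary component and exits through the outward one. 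In each case I claim $\alpha$ is homotopic rel endpoints, within $\overline{s}$ and keeping its endpoints on $\brloc(B)$ (sliding them if necessary along the branch locus — this is the mechanism that lets the homotopy propagate to neighboring sectors), to a path running along $\Gamma \cap s$: for a diamond, the edge from the bottom vertex/source up to a source on a top side; for a transient annulus, a boundary cycle; for a source sector, the core loop plus arcs of $\partial s$. This is the crux: one must check that every way $\alpha$ can enter and leave $s$ consistently with the maw coorientations is captured by a directed path in $\Gamma \cap s$, and that the endpoint-slides demanded on one side of a branch segment are compatible with those demanded on the other side (so the homotopies glue across $\brloc(B)$). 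The veering condition \Cref{defn:vbs}(1), which relates the source orientation of branch segments to the maw coorientation, is what guarantees this compatibility at triple points, and \Cref{defn:vbs}(3) handles the matching of dynamic orientations across branch loops.

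Finally, performing these local homotopies simultaneously over all sectors (the count of crossings $|c \cap \brloc(B)|$ does not increase, and one can order the moves or do them in parallel since the endpoint-slides are consistent), we homotope $c$ to a closed curve carried by $\Gamma$ which is still positively transverse to $\brloc(B)$, hence traverses each edge of $\Gamma$ in its given direction — i.e., $c$ is homotopic to a closed edge-path in $\Gamma$ respecting orientations, which is precisely a directed cycle (or concatenation of directed cycles, but being a single closed curve it is a single directed closed walk, which is what ``directed cycle'' means here in the sense of a cycle in the edge group). The main obstacle I anticipate is the bookkeeping in the gluing step: making precise the notion of homotoping $\alpha$ ``rel branch locus, allowing endpoint slides'' and verifying that the slides prescribed from the two sides of each branch segment agree, which is where the full strength of the veering axioms \Cref{defn:vbs}(1) and (3) and the sector classification \Cref{prop:vbssectors} must be used together; the rest is routine general-position and Poincaré–Hopf-type reasoning already set up in the excerpt.
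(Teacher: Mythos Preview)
Your approach has the right overall shape but misses a key structural fact, and as a result your local homotopy claim is actually false in one of the cases you list. For a transient annulus $s$, \Cref{defn:dualgraph} says $\Gamma \cap s = \partial s$: the dual graph contributes only the two boundary cycles and \emph{no} edges crossing $s$. So an arc of $c$ that enters through the inward boundary component and exits through the outward one cannot be homotoped, within $s$, onto any directed path in $\Gamma \cap s$. Your ``endpoint slides along the branch locus'' cannot fix this, because the slide would have to carry the entry point all the way around to the other boundary component, which leaves $s$. Similarly, a source sector has all boundary cooriented outwards, so $c$ cannot enter it at all; your discussion of ``the core loop plus arcs of $\partial s$'' is addressing a case that never arises.

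The paper's proof resolves this by first proving that once $c$ meets $\brloc(B)$ nontrivially, it can only pass through diamond sectors. The argument is: $c$ cannot enter a source (all boundary outward), and if $c$ entered a transient annulus through its inward, non-scalloped boundary circle, then the sector immediately preceding that crossing would also have to be a transient annulus (since that boundary circle has no triple points), and iterating produces a cycle of transient annuli gluing up to a torus carried by $B$, contradicting \Cref{defn:vbs}(5). With only diamonds in play, the paper then slides each intersection point of $c$ with $\brloc(B)$ backward along the branch-locus orientation until it hits a triple point or source; after this $c$ enters each diamond at the bottom vertex/source and exits at a top triple point or source, and the homotopy onto $\Gamma$ is immediate. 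The veering axioms (1) and (3) are not needed for the gluing compatibility you were worried about---the sliding is done globally along $\brloc(B)$ rather than sector-by-sector, so there is nothing to match.
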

\begin{proof}
If $c$ does not intersect $\brloc(B)$ then the proposition is clear from definition, so we can assume that $c$ intersects $\brloc(B)$.
We claim that $c$ does not intersect any annulus or Möbius band sectors in this case. Indeed, $c$ cannot enter a source sector. If $c$ intersects a transient annulus $s$, then it must enter $s$ at some point $x$ on a non-scalloped boundary component of $s$. But $c$ must lie on a transient annulus immediately before $x$ as well, so repeating this argument we eventually get a union of annuli that glue up to form a torus or Klein bottle carried by $B$ (in fact a torus because the dynamic orientations agree), contradicting \Cref{defn:vbs}(5).

Now by a small perturbation, we can assume $c$ does not meet $\brloc(B)$ in a triple point or source. We analyze the form of $c$ within each sector $s$ that it meets. By the paragraph above, $s$ must be a diamond. $c$ must enter though a bottom side and exit through a top side. For each intersection point between $c$ and $\brloc(B)$, we homotope $c$ in a neighborhood to push the intersection point against the orientation on $\brloc(B)$ until it hits a triple point or a source. After this homotopy, $c$ then enters each sector through the triple point or source in the interior of the union of bottom sides and exits through a triple point or source on the top sides. We can then clearly homotope $c$ sector-by-sector to lie on $\Gamma$ such that its orientation is compatible with that of $\Gamma$.
\end{proof}

\begin{definition}[The (semi-)flow graph] \label{defn:flowgraph}
Let $B$ be a veering branched surface. Construct an oriented train track embedded in $B$ in the following way.
First, for each component of $\brloc(B)$ with no triple points, pick a point lying on it. Similarly, for each component of $B \cap R_+$ with no switches, pick a point lying on it.
\begin{itemize}
    \item For each diamond $s$, take a union of disjoint branches going from the bottom vertex (if the bottom is not rounded) or the bottom source (if the bottom is rounded) to each corner, triple point, and source contained in the interior of the union of top sides.
    \item For each source sector $s$, take the core of $s$ oriented by the dynamic orientation. Then for each boundary component $t$ of $s$:
    \begin{itemize}
        \item If $t$ is a smooth circle with some triple points, attach to the core a branch from the core to each triple point on $t$.
        \item If $t$ is a smooth circle with no triple points, attach to the core a branch from the core to the chosen vertex on $t$.
        \item If $t$ has corners, attach to the core a branch from the core to each corner, triple point and source on $t$.
    \end{itemize}
    \item For each transient annulus $s$ with one boundary component $t_1$ cooriented inwards and the other boundary component $t_2$ cooriented outwards:
    \begin{itemize}
        \item If $t_2$ is a smooth circle with some triple points, take a union of disjoint branches going from the chosen vertex on $t_1$ to each triple point on $t_2$.
        \item If $t_2$ is a smooth circle with no triple points, take a branch going from the chosen vertex on $t_1$ to the chosen vertex on $t_2$.
        \item If $t_2$ has corners, take a union of disjoint branches going from the chosen vertex on $t_1$ to each corner, triple point and source on $t_2$.
    \end{itemize}
\end{itemize}

Notice that such an oriented train track is not uniquely defined. There is freedom in choosing the points on components of $\brloc(B)$ with no triple points and components of $B \cap R_+$ with no switches, and in the case when $s$ is a source, there is freedom in choosing how the branches connect the vertices on $\partial s$ to the core of $s$.

However, these operations do not meaningfully change the information contained by the oriented train track, so by a slight abuse of language we will refer to an oriented track defined by the above description as \emph{the} \textbf{semiflow graph} $\Phi_+$ of $B$. See \Cref{fig:vbssectors}.

Now consider the set 
$$\{x \in \Phi_+:\text{every directed train route starting at } x \text{ ends on } R_+ \} $$ 
Let $\Phi$ be the result of removing this set from $\Phi_+$, which is still an oriented train track embedded in $B$. We call $\Phi$ the \textbf{flow graph}. As with the semiflow graph, the flow graph is not uniquely defined, but the ambiguity is inconsequential.
\end{definition}

In \Cref{sec:dynamicpairs}, we will show that the flow graph is a ``dynamic train track," which will imply that one can construct a dynamic pair starting with only a veering branched surface. See \Cref{sec:dynamicpairs} for definitions of the terms. Even without this motivation, a flow graph is a natural companion object for the dual graph, as we will see in the next subsection.

\subsection{Dynamic planes} \label{subsec:dynamicplanes}

In this section we will generalize some discussion from \cite{LMT23a}. The authors of that paper introduced ``dynamic planes," which are combinatorial objects associated to a veering triangulation that correspond to different leaves of the stable/unstable foliations of pseudo-Anosov flows. Here we will define dynamic planes for veering branched surfaces that correspond to leaves of the suspension of the  Handel-Miller laminations.

Our first order of business will be to show that a veering branched surface $B$ fully carries a lamination $\mathcal{L}$ for which every leaf is $\pi_1$-injective. This is so that we can use the leaves of $\mathcal{L}$ to understand our dynamic planes. 

We remark that in the setting of \cite{LMT23a}, one starts with a pseudo-Anosov flow and gets a veering branched surface, which one can check is \emph{laminar} and hence carries an essential lamination by \cite{Li02}.
The leaves of such a lamination must be $\pi_1$-injective, so this preliminary step was automatically true in that paper. 

\begin{proposition} \label{prop:vfulldbslaminar}
A veering branched surface $B$ on a sutured manifold $Q$ carries a lamination $\mathcal{L}$ with no spherical leaves and such that every leaf is $\pi_1$-injective.
\end{proposition}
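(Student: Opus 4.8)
The plan is to construct the lamination $\mathcal{L}$ by hand from the combinatorics of $B$, exploiting the sector description of \Cref{prop:vbssectors}. First I would build a ``carried lamination'' inside a standard neighborhood $N(B)$. The starting point is that a veering branched surface, being very full, has complementary regions that are $\u$-cusped tori and $\u$-cusped products; in particular the only sectors are diamonds (with possible scalloped tops and rounded bottoms), source annuli/M\"obius bands, and transient annuli. The key structural fact is that at every triple point the maw coorientation behaves as in \Cref{defn:vbs}(1), so there are no ``mixed'' splittings that would create monogon or bigon complementary regions in cross-section. I would first verify the combinatorial splitting-consistency needed to invoke the machinery that a branched surface with these local models carries a lamination with no spherical leaves: concretely, one checks $B$ has no disk of contact and no half-disk of contact using that each sector is a diamond, source annulus, or transient annulus, none of which can bound such a disk because of the source-orientation/maw-coorientation compatibility. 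This is essentially Li's criterion for an essential lamination \cite{Li02}, or can be done directly by producing the lamination as an inverse limit of splittings; either way the output is a lamination $\mathcal{L}$ fully carried by $B$ with no spherical leaves and no torus leaves (using \Cref{defn:vbs}(5) to rule out the latter).

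Next I would upgrade ``no spherical leaves'' to ``every leaf is $\pi_1$-injective.'' The cleanest route is to show $\mathcal{L}$ is an essential lamination in the sense of Gabai--Oertel and then invoke that leaves of essential laminations are $\pi_1$-injective and incompressible. Essentiality requires: (i) no sphere leaves and no disk-of-contact leaves (handled above), (ii) the complementary regions are irreducible, and (iii) $\del$-incompressibility of the complementary regions. For (ii) and (iii) I would use the explicit description of complementary regions: a $\u$-cusped solid torus is a solid torus, a $\u$-cusped torus shell and the $\u$-cusped product pieces $S \times [0,1]$ all have incompressible horizontal boundary (since $\ind(S)\le 0$ forces $S$ to not be a disk or sphere once we exclude the low-complexity exceptional pieces, and the cusp structure prevents $\del$-compressions). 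Here I would lean on \Cref{lem:ucusprecog} and \Cref{defn:cuspedproduct}(e) controlling indices of the faces. A mild subtlety: the complementary pieces that are $\u$-cusped solid tori are not irreducible as manifolds-with-boundary in the naive sense, but the relevant condition for essentiality is that the gut regions not contain a monogon $\times S^1$ and not be reducible in the laminar sense, which the cusp structure guarantees.

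The step I expect to be the main obstacle is establishing $\pi_1$-injectivity of leaves cleanly in the \emph{sutured} setting, since the ambient manifold $Q$ has boundary $R_\pm \cup \gamma$ and the leaves of $\mathcal{L}$ run out to $R_+$. The standard essential-lamination statements are usually phrased for closed or cusped manifolds, so I would either (a) double $Q$ along $R_+$ (or pass to $\ol M_f$ and argue there) to reduce to a boundaryless situation and then transfer the conclusion back, or (b) redo the leafwise $\pi_1$-injectivity argument directly: take a leaf $\ell$, a loop $\alpha \subset \ell$ bounding a disk $D$ in $Q$, put $D$ transverse to $\mathcal{L}$, and run the standard innermost-disk argument using that no leaf is a sphere and no complementary region contains a disk of contact, to push $D$ into $\ell$. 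Option (b) is more self-contained and is what I would write up, with the sector analysis of \Cref{prop:vbssectors} supplying the ``no disk of contact'' input. Once $\pi_1$-injectivity of leaves is in hand, incompressibility and the absence of spherical leaves complete the statement.
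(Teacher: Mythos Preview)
Your overall strategy---invoke Li's laminar branched surface criterion and then deal separately with the boundary---matches the paper's, but the paper's execution is more direct than either of your options as written. The paper doubles $Q$ over \emph{both} $R_+$ and $R_-$ (not just $R_+$), obtaining a manifold $DQ$ whose boundary consists only of the doubled suture annuli and tori. The doubled branched surface $DB$ is then genuinely disjoint from $\partial DQ$, so Li's theorem \cite{Li02} applies on the nose: one checks $DB$ is laminar by running through the five conditions, using the explicit description of the doubled complementary regions (doubles of $\u$-cusped tori and $\u$-cusped products over their $\p$- and $\m$-faces), the efficiency of $B\cap R_+$, and \Cref{defn:vbs}(5). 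The sink-disk condition follows directly from \Cref{prop:vbssectors}. This produces an essential lamination $\mathcal{L}'$ on $DQ$, and one restricts to $\mathcal{L}$ on $Q$.

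The final step---showing $\pi_1(\ell)\to\pi_1(Q)$ is injective for a leaf $\ell$ of $\mathcal{L}$---is not quite automatic from essentiality of $\mathcal{L}'$, since $\ell$ is only a piece of a leaf $\ell'$ of $\mathcal{L}'$. The paper handles this with a short additional argument: one needs $\pi_1(\ell)\to\pi_1(\ell')$ injective, and a compressing disk $D\subset\ell'$ for $\ell$ would intersect $R_+$ in circles; an innermost such circle bounds a disk $D'$ carried by $B$, and the pulled-back vector field on $D'$ points outward along $\partial D'$, contradicting Poincar\'e--Hopf. Your option (b)---running an innermost-disk argument directly in $Q$---would require you to first establish essentiality of $\mathcal{L}$ in the sutured setting, which as you anticipated is exactly the subtlety the doubling trick sidesteps. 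Your option (a) is closer to the paper but would not quite work if you double only over $R_+$: the $\u$-cusped product complementary regions still have $\m$-faces on $R_-$, so the doubled complementary regions would not be closed up and the laminar conditions would be awkward to verify.
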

\begin{proof}
If $B$ carries a surface $F$, then we can construct a nonsingular vector field on $F$ by pulling back the vector field $V$ on $B$. In particular if $F$ is closed then it must have zero Euler characteristic. This shows that if $B$ carries a lamination $\mathcal{L}$ then no leaf of $\mathcal{L}$ can be a sphere.

Now, to construct $\mathcal{L}$ we use the tool of \textbf{laminar branched surfaces}, introduced in \cite{Li02}. Recall that a branched surface $B$ in a compact 3-manifold $M$ where $B \cap \partial M = \varnothing$ is laminar if:
\begin{enumerate}
    \item $\partial_h N(B)$ is incompressible in $M \cut N(B)$, no component of $\partial_h N(B)$ is a sphere, and $Q \cut N(B)$ is irreducible.
    \item There are no \textit{monogons} in $M \cut N(B)$.
    \item $B$ does not carry a torus that bounds a solid torus.
    \item $B$ has no \textit{trivial bubbles}.
    \item $B$ has no \textit{sink disks}.
\end{enumerate}
We refer to \cite{Li02} for the definitions of the italicized terms. By \cite[Theorem 1]{Li02}, a laminar branched surface fully carries an \textbf{essential lamination} $\mathcal{L}$. We will not go into the full definition of an essential lamination here. We only need the property that every leaf of an essential lamination is $\pi_1$-injective.

Returning to our setting, let $DQ$ be the double of $Q$ over $R_+$ and $R_-$, and let $DB \subset DQ$ be the double of $B$. We claim that $DB$ is laminar.

The complementary regions of $N(DB)$ in $DQ$ are the doubles of those of $N(B)$ in $Q$ over their $\p$ and $\m$ faces. In particular, it is straightforward to check that no component of $\partial_h N(DB)$ is a sphere, $Q \cut N(DB)$ is irreducible, and $B$ has no trivial bubbles.

To show that $\partial_h N(DB)$ is incompressible in $DQ \cut N(DB)$, assume that there is a compressing disk. Then by an innermost disk argument
using the incompressibility of $R_\pm$ and the irreducibility of $Q$, there is a compressing disk or boundary compressing disk for $\partial_h N(B)$ in $Q \cut N(B)$. But it is straightforward to check that these do not exist. Similarly, one can show that there are no monogons in $DQ \cut N(DB)$. 

If $DB$ carries a torus that bounds a solid torus, then by the same standard arguments, now using the efficiency of the boundary train track $B \cap R_+$ as well, either $B$ carries a torus bounding a solid torus $T$ or $B$ carries a properly embedded annulus that bounds a solid torus $T$ with another annulus on $R_+$. The former case is ruled out by (5) in \Cref{defn:vbs}. In the latter case, every complementary region of $B$ in $Q$ inside $T$ must be a cusped torus piece, but then this implies that $T$ has no $\p$-face, giving a contradiction.

Finally, that $DB$ has no sink disks follows from \Cref{prop:vbssectors}.

Now apply \cite[Theorem 1]{Li02} to get an essential lamination $\mathcal{L}'$ fully carried by $DB$, then restrict it to a lamination $\mathcal{L}$ fully carried by $B$. To show that every leaf $\ell$ of $\mathcal{L}$ is $\pi_1$-injective, it suffices to show that if $\ell'$ is the leaf of $\mathcal{L}'$ that contains $\ell$, then $\pi_1 (\ell) \to \pi_1 (\ell')$ is injective. Assume otherwise, then there is a disk $D$ in $\ell'$ intersecting $R_+$ in a union of circles. We take an innermost such circle which bounds a disk $D'$ in $D$. Then $D'$ is carried by $B$. However, the vector field on $B$ induces one on $D'$ which points outwards along $\partial D'$, contradicting the Poincare-Hopf theorem.
\end{proof}

\begin{remark}
The above gives a proof of a version of Li's result from \cite{Li02} in the setting of sutured manifolds. Note that Li himself generalized the result to include torally bounded manifolds in \cite{Li03}.
\end{remark}

Let $B$ be a veering branched surface on a sutured manifold $Q$ and let $\widetilde{B}$ be the lift of $B$ to the universal cover $\widetilde{Q}$. Let $\widetilde{\Gamma}$ be the lift of the dual graph $\Gamma$ to $\widetilde{B}$.

\begin{definition}
Let $\sigma$ be a sector of $\brloc(\widetilde{B})$. The \textbf{future set of $\sigma$}, which we denote as $\nabla(\sigma)$, is the union of sectors $s$ of $\widetilde{B}$ for which there is a path from $\sigma$ to $s$ that points in the same direction as the maw vector field whenever it intersects $\brloc(\widetilde{B})$. 

Let $x$ be a point of $\brloc(\widetilde{B})$. Let $\sigma(x)$ be the sector of $B$ that meets $x$ and at which the coorientation is pointing inwards. The \textbf{future set of $x$} is defined to be $\nabla(\sigma(x))$.

Let $\gamma$ be a bi-infinite directed edge path of $\widetilde{\Gamma}$ that is not a subset of $\partial \widetilde{B}$. We denote by $D(\gamma)$ the union of sectors $s$ of $\widetilde{B}$ for which there is a path from $\gamma$ to $s$ that is positively transverse to $\brloc(\widetilde{B})$.
If $\gamma$ does not eventually lie along a single component of $\brloc(\widetilde{B})$ in the backward direction, then we call $D(\gamma)$ the \textbf{dynamic plane} associated to $\gamma$.
Otherwise we call $D(\gamma)$ the \textbf{dynamic half-plane} associated to $\gamma$.
See \Cref{fig:dynplane} for a depiction of a portion of a dynamic place tiled by sectors of $\wt B$.
\end{definition}

\begin{proposition}\label{prop:dynamicplanes}
\begin{enumerate}
    \item Each future set $\nabla(\sigma)$ is a surface with corners, with interior homeomorphic to a plane and boundary along $\partial \widetilde{B}$ and $\brloc(\widetilde{B})$. Furthermore,
    \begin{itemize}
        \item If $\sigma$ is a diamond, then $\partial \nabla(\sigma) \cap \brloc(\widetilde{B})$ is precisely the rays along the components of $\brloc(\widetilde{B})$ starting from the bottom vertex (if the bottom is not rounded) or the bottom source (if the bottom is rounded) of $\sigma$.
        \item If $\sigma$ is the lift of an annulus/Möbius band, then $\partial \nabla(\sigma) \cap \brloc(\widetilde{B})$ is precisely the components of $\brloc(\widetilde{B})$ containing the sides of $\sigma$ that are cooriented inwards (if any).
    \end{itemize}
    \item Each dynamic plane $D(\gamma)$ is a surface with boundary, with interior homeomorphic to a plane and boundary along $\partial \widetilde{B}$.
    \item Each dynamic half-plane $D(\gamma)$ is a surface with boundary, with interior homeomorphic to a plane and boundary along $\partial \widetilde{B}$ and one component of $\brloc(\widetilde{B})$.
\end{enumerate}
\end{proposition}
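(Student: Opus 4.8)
The plan is to prove (1) first, by an explicit sector-by-sector construction of $\nabla(\sigma)$ as an increasing union of simply connected pieces, modeled on the ladder description of dynamic planes for veering triangulations in \cite{LMT23a}; parts (2) and (3) then follow by exhausting the bi-infinite edge path $\gamma$ by finite subpaths and passing to the limit.

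For (1), I would set up finite approximations. Given a sector $\sigma$ of $\widetilde{B}$, let $\nabla_n(\sigma)$ be the union of sectors $s$ of $\widetilde{B}$ reachable from $\sigma$ by a path crossing $\brloc(\widetilde{B})$ in the direction of the maw vector field at most $n$ times (a subpath running inside a lift of an annulus or Möbius band sector counting once). For $n=0$ this is $\sigma$ itself; since $\sigma$ is a cover of a sector of $B$ — a disk, annulus, or Möbius band by \Cref{prop:vbssectors} — sitting inside the simply connected $\widetilde{Q}$, it is a disk, a half-strip $[0,\infty)\times[0,1]$, or a strip $\R\times[0,1]$, so it is simply connected with interior a plane, and the part of $\partial\nabla_0(\sigma)\cap\brloc(\widetilde{B})$ whose maw coorientation points out of $\nabla_0(\sigma)$ — call it the \emph{frontier} $\partial^+\nabla_0(\sigma)$ — is a disjoint union of properly embedded arcs and rays. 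Inductively, $\nabla_{n+1}(\sigma)$ is obtained from $\nabla_n(\sigma)$ by gluing onto each branch segment $e$ of $\partial^+\nabla_n(\sigma)$ the unique sector of $\widetilde{B}$ lying on the $1$-sheeted side at $e$, i.e. the sector into which the maw vector field points. The proposition for finite $n$ amounts to the claim that each $\nabla_n(\sigma)$ is a simply connected surface with corners whose frontier $\partial^+\nabla_n(\sigma)$ is again a disjoint union of arcs and rays, and this is where the veering hypotheses are used.

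That claim is the crux, and I expect it to be the main obstacle. Two things must be checked at the inductive step. First, that the gluing is local-model compatible, so $\nabla_{n+1}(\sigma)$ is again a surface with corners: this is a direct local analysis using that every sector is a diamond (entered through its inward-cooriented bottom, exited through its outward-cooriented top) or an annulus/Möbius band, together with the triple-point compatibility \Cref{defn:vbs}(1). Second, and harder, that no identifications occur — no sector already in $\nabla_n(\sigma)$ is reglued, no sector is glued to two disjoint pieces of the frontier, and the frontier never closes up into a circle. The mechanism is that the maw flow is ``strictly forward'': using \Cref{defn:vbs}(1) and the diamond structure one shows that all maw-forward paths from $\sigma$ into a fixed sector $s$ enter $s$ through a single connected portion of the union of its bottom sides, so $s$ is attached exactly once along a connected arc; the possibility of the frontier closing up is excluded by the absence of carried tori and Klein bottles \Cref{defn:vbs}(5), together with \Cref{prop:vbssectors} which rules out a ``sink'' pattern of sectors closing back on itself. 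Granting this, $\nabla(\sigma)=\bigcup_n\nabla_n(\sigma)$ is a simply connected surface with corners with nonempty boundary, so its interior is a plane, with boundary along $\partial\widetilde{B}$ and $\brloc(\widetilde{B})$; the explicit description of $\partial\nabla(\sigma)\cap\brloc(\widetilde{B})$ then just records which branch segments of $\sigma$ are never consumed by the forward construction — namely the inward-cooriented ones, which are exactly the rays from the bottom vertex or bottom source when $\sigma$ is a diamond, and exactly the inward-cooriented boundary components when $\sigma$ is a lift of an annulus or Möbius band.

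Finally, for (2) and (3), write a bi-infinite directed dual edge path $\gamma$ not contained in $\partial\widetilde{B}$ as an increasing union of finite subpaths $\gamma_k$, and observe $D(\gamma)=\bigcup_k \big(\bigcup_{x\in\gamma_k}\nabla(\sigma(x))\big)$. For each $k$ the inner union is a finite union of planes glued consecutively along the arc $\gamma_k$, hence by (1) it is simply connected with planar interior, its frontier along $\brloc(\widetilde{B})$ being a disjoint union of arcs and rays, with the rays on the ``backward'' side emanating from the backward endpoint of $\gamma_k$. Thus $D(\gamma)$ is an increasing union of such, and its interior is again a plane. The only remaining point is the boundary along $\brloc(\widetilde{B})$: extending $\gamma_k$ forward does nothing on the backward side, while extending it backward pushes the rays emanating from the backward endpoint off to infinity — so such rays survive in the limit if and only if $\gamma$ eventually runs along a single branch loop $l$ in the backward direction, in which case exactly the one component $\widetilde{l}$ of $\brloc(\widetilde{B})$ covered by $l$ remains, giving the dynamic half-plane case (3), and otherwise no component of $\brloc(\widetilde{B})$ survives, giving the dynamic plane case (2).
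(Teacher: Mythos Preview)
Your approach is genuinely different from the paper's, and the difference is instructive. The paper does \emph{not} attempt a direct combinatorial verification that the inductive gluing of sectors never creates identifications. Instead it invokes \Cref{prop:vfulldbslaminar}: $B$ fully carries a lamination $\mathcal{L}$ with $\pi_1$-injective leaves, so in $\widetilde{Q}$ each leaf $\ell$ of $\widetilde{\mathcal{L}}$ has interior homeomorphic to a plane. For a given sector $\sigma$, one picks a leaf $\ell$ passing through $\sigma$; then $\nabla(\sigma)$ is naturally a subset of $\ell$, and planarity of the interior is immediate. The identification of $\partial\nabla(\sigma)\cap\brloc(\widetilde{B})$ is then a matter of identifying which branch-locus rays bound the region inside $\ell$, referring to \cite[Lemma~3.1]{LMT23a}. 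Parts (2) and (3) are handled as you do, by writing $D(\gamma)=\bigcup\nabla(x_i)$ for $x_i$ converging to the negative end and tracking whether the boundary component along $\brloc(\widetilde{B})$ persists.

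Your direct approach is appealing because it would avoid the dependence on Li's laminar branched surface machinery, but the ``no identifications'' step is exactly where it is incomplete. Your assertion that ``all maw-forward paths from $\sigma$ into a fixed sector $s$ enter $s$ through a single connected portion of the union of its bottom sides'' is not justified by the veering axioms you cite; \Cref{defn:vbs}(1) constrains local behavior at triple points but does not by itself rule out two distant pieces of the frontier of $\nabla_n(\sigma)$ being identified in $\widetilde{B}$. Your appeal to \Cref{defn:vbs}(5) is also misplaced: in the universal cover there are no carried tori or Klein bottles regardless, so that axiom cannot be what prevents the frontier from closing up. The paper's use of the carried lamination is precisely the device that makes this global embeddedness automatic --- $\nabla(\sigma)$ lives inside a single planar leaf, so there is nothing to check. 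If you want to make your route work, you would need an independent argument that the collapsing map from a leaf of $\widetilde{\mathcal{L}}$ to $\widetilde{B}$ is injective on the future set, which is essentially what the paper's approach gives you for free.
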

\begin{proof}
This follows from the discussion in \cite[\S 3]{LMT23a}, which can almost be applied word for word here. We outline the idea, emphasizing the slight modifications that one has to perform in our setting.

By \Cref{prop:vfulldbslaminar}, $B$ fully carries a lamination $\mathcal{L}$ with no spherical leaves and for which every leaf is $\pi_1$-injective. Lifting this to $\widetilde{Q}$, $\widetilde{B}$ fully carries $\widetilde{\mathcal{L}}$, for which each leaf is a surface with boundary with interior homeomorphic to a plane. For each leaf $\ell$ of $\widetilde{\mathcal{L}}$, we will abuse notation and use the same name for the homeomorphic image under the collapsing map $\widetilde{\mathcal{L}} \subset N(\widetilde{B}) \to \widetilde{B}$. As such, $\ell$ inherits a decomposition into surfaces with corners that are among the sectors of $\widetilde{B}$. 

Now for a sector $\sigma$, let $\ell$ be a leaf of $\widetilde{\mathcal{L}}$ containing $\sigma$. Then $\nabla(\sigma)$ is naturally a subset of $\ell$. The boundary of $\nabla(\sigma)$ in $\ell$ will be $\partial \nabla(\sigma) \cap \brloc(\widetilde{B})$. One shows that $\nabla(\sigma)$ coincides with the region bounded by the claimed form of $\partial \nabla(\sigma) \cap \brloc(\widetilde{B})$ using the argument in \cite[Lemma 3.1]{LMT23a}.

To prove (2) and (3), notice that every directed path $\alpha$ in $\widetilde{\Gamma}$ starting at some point $x$ can be homotoped to be positively transverse to $\brloc(\widetilde{B})$ rel $x$ by pushing it along the flow on $\widetilde{B}$ slightly. Hence given a bi-infinite directed edge path $\gamma$, $D(\gamma)=\cup \nabla(x_i)$ for a sequence of points $x_i$ on $\gamma$ converging to the negative end.

We have $\nabla(x_i) \subset \intr \nabla(x_{i+1})$ unless $\gamma$ stays within a constant component $c$ of $\brloc(\widetilde{B})$ between $x_i$ and $x_{i+1}$. In the latter case, we at least have $\nabla(x_i) \backslash c \subset \intr \nabla(x_{i+1}) \backslash c$. Hence taking the union, we see that $\bigcup \nabla(x_i)$ is always a plane in its interior, and has boundary entirely along $\partial \widetilde{B}$, unless the component $c$ above eventually stays constant, in which case the boundary of $\bigcup \nabla(x_i)$ will have one side along that $c$. 

Note that the latter case occurs exactly when $\gamma$ eventually lies along a constant component of $\brloc(\widetilde{B})$ in the backwards direction, which is precisely the case when $D(\gamma)$ is a dynamic half-plane.
\end{proof}

Let $\widetilde{\Phi_+}$ be the lift of the semiflow graph $\Phi_+$ to $\widetilde{Q}$. Each dynamic plane $D(\gamma)$ inherits the restriction of $\widetilde{\Phi_+}$. This will be an oriented train track on $D(\gamma)$ with only diverging switches. In particular, every point has a infinite, unique backward trajectory.

\begin{definition} [Triangles, rectangles, tongues, following]\label{defn:dynamicplanepieces}
Let $\Phi_+$ be the semiflow graph. Notice that by construction, for each sector $s$ of $B$, the components of $s \cut \Phi_+$ are of the following forms:
\begin{itemize}
    \item \textbf{Triangles}, i.e. triangles with two sides on $\brloc(B)$ or $R_+$, one cooriented outwards and one cooriented inwards, and the remaining side on $\Phi_+$.
    \item \textbf{Rectangles}, i.e. rectangles with two opposite sides on $\brloc(B)$ or $R_+$, one cooriented outwards and one cooriented inwards, and the remaining two opposite sides on $\Phi_+$.
    \item \textbf{Tongues}, i.e. one-cusped triangles with the side opposite to the cusp on $\brloc(B)$ or $R_+$ cooriented outwards, and the remaining two sides on $\Phi_+$.
\end{itemize}
If $s$ is a diamond, then the components of $s \cut \Phi_+$ are triangles and tongues; if $s$ is an annulus/Möbius band, then the components of $s \cut \Phi_+$ are rectangles and tongues. See \Cref{fig:vbssectors}.

Let $t_1$ and $t_2$ be two such components. We say that $t_1$ is \textbf{followed by} $t_2$ if $t_1$ is adjacent to $t_2$ along an edge of $\brloc(B)$ which is cooriented from $t_1$ to $t_2$.
\end{definition}

\begin{lemma} \label{lemma:dynamicplanepieces}
We have the following two properties.
\begin{itemize}
    \item A triangle cannot be followed by a rectangle.
    \item There cannot be an infinite sequence of rectangles following one another.
\end{itemize}
\end{lemma}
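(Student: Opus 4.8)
The plan is to analyze the combinatorics of the semiflow graph $\Phi_+$ sector by sector, using the classification of sectors from \Cref{prop:vbssectors} together with the explicit description of $\Phi_+ \cap s$ and the pieces $s \cut \Phi_+$ from \Cref{defn:flowgraph} and \Cref{defn:dynamicplanepieces}. The first bullet — a triangle cannot be followed by a rectangle — should be essentially immediate from the local picture: rectangles only occur inside annulus/Möbius band sectors (as noted in \Cref{defn:dynamicplanepieces}), and the edge of $\brloc(B)$ along which a rectangle is entered is a side of such a sector, cooriented inwards. The piece on the other side of that edge is then cooriented outwards with respect to its own sector. I would examine each way a piece can sit immediately before such an inward edge of an annulus sector and check, using \Cref{prop:vbssectors}, that it is a tongue or another rectangle but never a triangle — the point being that a triangle has exactly one inward side and one outward side meeting at a vertex, and when such a vertex abuts the branch locus along a full inward edge of the next sector, the matching forces the incoming piece to be bounded by a branch locus edge on which $\Phi_+$ terminates at a triple point/source rather than passing through, which is the tongue/rectangle configuration.

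For the second bullet — no infinite sequence of rectangles following one another — the key observation is that each rectangle lives in an annulus or Möbius band sector, and following from one rectangle to the next along an inwardly-cooriented branch locus edge either stays within the same annulus sector or crosses into an adjacent one. Within a single annulus sector, the rectangles are cyclically arranged around the core, and the ``followed by'' relation among them circulates consistently with the dynamic orientation; so an infinite sequence of rectangles confined to one annulus sector would wrap around the core infinitely, which is fine combinatorially but means the corresponding path in $B$ repeatedly traverses a closed orbit. The real content is to rule out an infinite chain that threads through infinitely many distinct annulus sectors, and here I would argue that following a rectangle always moves to a sector whose core is ``downstream'' under the maw coorientation; since there are only finitely many sectors, an infinite chain passing through infinitely many of them is impossible, and an infinite chain eventually trapped in finitely many annulus sectors would force a cycle of annulus sectors glued along inwardly-cooriented edges, i.e. a carried torus or Klein bottle with coherent dynamic orientations, contradicting \Cref{defn:vbs}(5) (together with \Cref{defn:vbs}(4), which forbids the relevant annulus sectors with both boundaries inward).

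The main obstacle I anticipate is the bookkeeping in the second bullet: making precise the sense in which ``following a rectangle moves downstream'' and extracting the contradiction with \Cref{defn:vbs}(5). One clean way to do this is to note that an infinite sequence of rectangles $R_0, R_1, R_2, \dots$ following one another determines a properly immersed half-infinite annulus $\bigcup_i R_i$ in $B$ whose boundary runs along branch locus edges all cooriented consistently; by finiteness of sectors some annulus sector is repeated, and splicing at the repetition produces an embedded (or immersed, then resolved) closed annulus carried by $B$, whose double along the core — or whose two boundary circles, which are freely homotopic in $B$ — gives a torus carried by $B$, contradicting (5). I would also double-check the degenerate Möbius band case, where one passes to the orientation double cover of the relevant sector, and confirm the argument survives; I expect it does, since a carried Klein bottle is likewise excluded by (5). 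The finiteness-plus-pigeonhole step is where care is needed to make sure the resulting carried surface is genuinely a torus/Klein bottle and not merely an annulus with boundary on $\brloc(B)$, but the consistent coorientation of the boundary edges is exactly what guarantees the two ends glue up.
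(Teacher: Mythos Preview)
Your proposal has the right destination for both bullets, but the route you sketch for the first is missing the decisive observation, and for the second you are working much harder than necessary because of a misconception about the ``followed by'' relation.

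For the first bullet, the clean argument is this: a rectangle lies in an annulus or M\"obius band sector $s_2$, and its inward $\brloc(B)$-side lies on a boundary component of $s_2$ that is cooriented inwards. By \Cref{prop:vbssectors}, inward-cooriented boundary components of annulus/M\"obius sectors are never scalloped --- they are smooth circles in $\brloc(B)$ containing no triple points. Hence the entire circle is adjacent on the other side to a single sector $s_1$, and this circle is a full boundary component of $s_1$. But a diamond is a disk, so it has no circular boundary components; equivalently, every side of a diamond is an arc whose endpoints are triple points or switches. This is the contradiction. Your description (``the matching forces the incoming piece to be bounded by a branch locus edge on which $\Phi_+$ terminates at a triple point/source rather than passing through'') does not capture this point and would not, as written, yield a proof.

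For the second bullet, you have a misconception: pieces within a single sector are separated by edges of $\Phi_+$, not of $\brloc(B)$, so the ``followed by'' relation (which by definition requires crossing a $\brloc(B)$-edge) never relates two pieces of the same sector. In fact a transient annulus contains exactly one rectangle, since all $\Phi_+$-branches in it emanate from the single chosen vertex on its inward boundary circle. So your discussion of rectangles ``circulating around the core'' of a single sector is vacuous. Once you drop that, the argument is immediate and is essentially what the paper does: each rectangle in the sequence (from the second one onward) lies in a transient annulus, since it has an inward $\brloc(B)$-side, and following to the next rectangle crosses a full circular boundary component into the next transient annulus. Finiteness of sectors forces a repetition, and the resulting cycle of transient annuli glues to a torus carried by $B$, contradicting \Cref{defn:vbs}(5). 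There is no need to invoke \Cref{defn:vbs}(4), and no splicing or immersed-half-annulus argument is required.
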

\begin{proof}
Suppose that we have a triangle followed by a rectangle. Then we would have a diamond sector $s_1$ and an annulus/Möbius band sector $s_2$ that are adjacent along an edge $e$ that is cooriented from $s_1$ to $s_2$. This implies that the component of $\partial s_2$ on which $e$ lies is cooriented inwards, hence meets no triple points of $B$. This contradicts the fact that $s_1$ is a diamond.

Now suppose that there is an infinite sequence of rectangles following one another. Then we would have an infinite sequence of transient annulus sectors each adjacent to the previous one along a circular edge. This produces a torus that is carried by $B$, contradicting \Cref{defn:vbs}(5).
\end{proof}

\begin{definition} \label{defn:abstrip}
An \textbf{AB strip} on a dynamic plane $D(\gamma)$ is a region with boundary along $\widetilde{\Phi_+}$ which is homeomorphic to $[0,1] \times \mathbb{R}$ and made out of a bi-infinite sequence of triangles following one another. An \textbf{AB half-strip} on a dynamic plane $D(\gamma)$ is a region with boundary along $\widetilde{\Phi_+}$ which is homeomorphic to $[0,1] \times (-\infty,0]$ with the infinite end being in the direction of backwards flow along $\widetilde{\Phi_+}$, and made out of an infinite sequence of triangles following one another. See \Cref{fig:dynplane} for an example.
\end{definition}

\begin{lemma} \label{lemma:dynamicplaneflowgraph}
Two infinite backwards $\widetilde{\Phi_+}$ rays lying on a dynamic plane $D(\gamma)$ either eventually coincide or both eventually lie on the boundaries of AB half-strips.
\end{lemma}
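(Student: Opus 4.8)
The statement is about two infinite backwards $\widetilde{\Phi_+}$-rays $r_1, r_2$ on a dynamic plane $D(\gamma)$. Since $\widetilde{\Phi_+}$ has only diverging switches (every point has a unique backward trajectory), the two rays, once they meet at a point, coincide from then on backwards; so we may assume $r_1$ and $r_2$ are disjoint. Each ray cuts $D(\gamma)$, so between them lies a (possibly one-sided, but here two-sided) region $R$ of $D(\gamma)$, and this region is tiled by the pieces of \Cref{defn:dynamicplanepieces}: triangles, rectangles, and tongues. The key structural observation is that a tongue in $D(\gamma)$ has \emph{both} of its non-horizontal sides on $\widetilde{\Phi_+}$ emanating from a single cusp point, and these two sides are part of a single backwards flow trajectory merging at that cusp; so a tongue cannot appear in the strip between two \emph{disjoint} backwards rays that do not eventually merge --- following a tongue backward forces the two bounding $\widetilde{\Phi_+}$-arcs together. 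Thus, assuming $r_1$ and $r_2$ never coincide, the region $R$ between them is tiled only by triangles and rectangles.

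Next I would analyze the sequence of tiles met by $R$ as one flows backward. Because each tile is met along an edge of $\brloc(\widetilde{B})$ or $\partial\widetilde{B}$ cooriented into the tile, the tiling of $R$ is organized into a bi-infinite sequence of "layers," each layer being a single tile that is followed (in the sense of \Cref{defn:dynamicplanepieces}) by the next. By \Cref{lemma:dynamicplanepieces}, a triangle cannot be followed by a rectangle, and there is no infinite backward sequence of rectangles following one another. Combining these: if infinitely many rectangles appeared among the backward layers of $R$, then between consecutive rectangles (going backward) we could have at most... no, we need the sharper consequence. Since a triangle cannot be \emph{followed} by a rectangle, reading in the forward direction a rectangle can only be preceded by a rectangle; equivalently, reading backward, once we pass a rectangle every later (more backward) tile in that nested sequence that is "below" it must also trace back through rectangles --- and an infinite backward run of rectangles is forbidden. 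The upshot is that the backward tiling of $R$ is \emph{eventually all triangles}: there is a point past which every layer is a triangle following the previous one. That eventual bi-infinite sequence of triangles following one another is, by \Cref{defn:abstrip}, exactly an AB half-strip (it is a half-strip rather than a full strip because it terminates in the forward direction where the rectangles and the rest of $D(\gamma)$ begin), and it has $r_1$ and $r_2$ as portions of its two $\widetilde{\Phi_+}$-boundary components. Hence each of $r_1$, $r_2$ eventually lies on the boundary of an AB half-strip, as claimed.

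I would organize the write-up as: (Step 1) reduce to the case $r_1 \cap r_2 = \varnothing$ using the unique-backward-trajectory property; (Step 2) show the region $R$ between them contains no tongues, using that both non-horizontal sides of a tongue are backward-flow arcs emanating from one cusp, so a tongue forces the bounding rays to merge; (Step 3) apply \Cref{lemma:dynamicplanepieces} to the backward sequence of triangle/rectangle layers in $R$ to conclude it is eventually all triangles; (Step 4) identify that eventual region with an AB half-strip via \Cref{defn:abstrip} and read off that $r_1, r_2$ lie on its boundary.

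The main obstacle I anticipate is Step 2–3: making precise the claim that the tiling of $R$ genuinely decomposes into a linearly ordered bi-infinite sequence of tiles each "following" the next, so that \Cref{lemma:dynamicplanepieces} applies directly. One has to rule out the possibility that $R$ branches or that several tiles abut $r_1$ simultaneously at one "level" in a way not captured by the "followed by" relation; this should follow from the structure of $\widetilde{\Phi_+}$ as a train track with only diverging switches on the plane $D(\gamma)$ (so that the complementary region $R$ between two backward rays is itself a strip-like region whose tiling is forced to be a stack), but it requires care. The secondary subtlety is the tongue argument: one should check, using \Cref{prop:dynamicplanes} and the local picture of sectors in \Cref{fig:vbssectors}, that the cusp of a tongue inside a dynamic plane really is a point where two $\widetilde{\Phi_+}$-arcs of a common backward trajectory come together, so its presence between $r_1$ and $r_2$ is incompatible with their being disjoint and non-merging.
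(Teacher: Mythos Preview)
Your Steps 2--3 contain a genuine gap, and it is exactly the one you flag at the end: the region $R$ between the two disjoint backward rays $r_1,r_2$ is \emph{not} in general a single linearly ordered stack of tiles each following the next. A dynamic plane can have width greater than one, so $R$ may be several tiles wide, with internal $\widetilde{\Phi_+}$-arcs running through it. Consequently your tongue argument in Step~2 does not work as stated: the two $\widetilde{\Phi_+}$-sides of a tongue in $R$ need not lie on $r_1$ and $r_2$; they may be internal arcs of $R$, so the presence of a tongue does not force $r_1$ and $r_2$ themselves to merge. Tongues can and do occur in $R$.

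What a tongue actually does is reduce the \emph{width} of $R$ by one. The paper's argument (following \cite[Lemma~3.7]{LMT23a}) makes this precise via the exhaustion $D(\gamma)=\bigcup_i \nabla(x_i)$: one measures the combinatorial distance between $r_1\cap\partial\nabla(x_i)$ and $r_2\cap\partial\nabla(x_i)$ along $\partial\nabla(x_i)$. By inspecting how triangles, rectangles, and tongues meet $\partial\nabla(x_i)$, this integer is non-increasing as $i$ grows, and strictly decreases exactly when a tongue is crossed. Hence if $r_1,r_2$ never coincide, the width eventually stabilizes and from that point on $R$ is tiled only by triangles and rectangles. Now \Cref{lemma:dynamicplanepieces} applies (column by column, if you like) to show these are eventually all triangles, giving a union of AB half-strips with $r_1$ and $r_2$ on the outermost boundaries. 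The key idea you are missing is this width measure; once you have it, your Steps~3--4 go through, but applied to a region that may be several half-strips wide rather than one.
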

\begin{proof}
This follows nearly word for word from the proof of \cite[Lemma 3.7]{LMT23a}. As in the proof of \Cref{prop:dynamicplanes}, we outline the idea here and point out small modifications for our setting.

Recall that $D(\gamma)=\bigcup \nabla(x_i)$ for a sequence $x_i$ converging to the negative end of $\gamma$. Let $\alpha_1, \alpha_2$ be two infinite backwards $\widetilde{\Phi_+}$ rays lying on $D(\gamma)$. By inspection of the complementary regions of $\widetilde{\Phi_+}$ in sectors, the distance between $\alpha_k \cap \partial \nabla(x_i)$ on $\partial \nabla(x_i)$ is larger or equal to that between $\alpha_k \cap \partial \nabla(x_j)$ on $\partial \nabla(x_j)$ for $i<j$, and equality holds if and only if the region bounded by the portion of $\alpha_k$ in between $\partial \nabla(x_i)$ and $\partial \nabla(x_j)$ is made out of triangles and rectangles. 

But by \Cref{lemma:dynamicplanepieces}, a triangle cannot be followed by a rectangle and there cannot have an infinite sequence of rectangles following one another. Hence we conclude that if $\alpha_k$ do not eventually coincide, they will eventually bound some AB half-strips.
\end{proof}

\begin{figure}
    \centering
    \fontsize{12pt}{12pt}\selectfont
    \resizebox{!}{6cm}{
\begingroup%
  \makeatletter%
  \providecommand\color[2][]{%
    \errmessage{(Inkscape) Color is used for the text in Inkscape, but the package 'color.sty' is not loaded}%
    \renewcommand\color[2][]{}%
  }%
  \providecommand\transparent[1]{%
    \errmessage{(Inkscape) Transparency is used (non-zero) for the text in Inkscape, but the package 'transparent.sty' is not loaded}%
    \renewcommand\transparent[1]{}%
  }%
  \providecommand\rotatebox[2]{#2}%
  \newcommand*\fsize{\dimexpr\f@size pt\relax}%
  \newcommand*\lineheight[1]{\fontsize{\fsize}{#1\fsize}\selectfont}%
  \ifx\svgwidth\undefined%
    \setlength{\unitlength}{260.85656905bp}%
    \ifx\svgscale\undefined%
      \relax%
    \else%
      \setlength{\unitlength}{\unitlength * \real{\svgscale}}%
    \fi%
  \else%
    \setlength{\unitlength}{\svgwidth}%
  \fi%
  \global\let\svgwidth\undefined%
  \global\let\svgscale\undefined%
  \makeatother%
  \begin{picture}(1,0.60576067)%
    \lineheight{1}%
    \setlength\tabcolsep{0pt}%
    \put(0,0){\includegraphics[width=\unitlength,page=1]{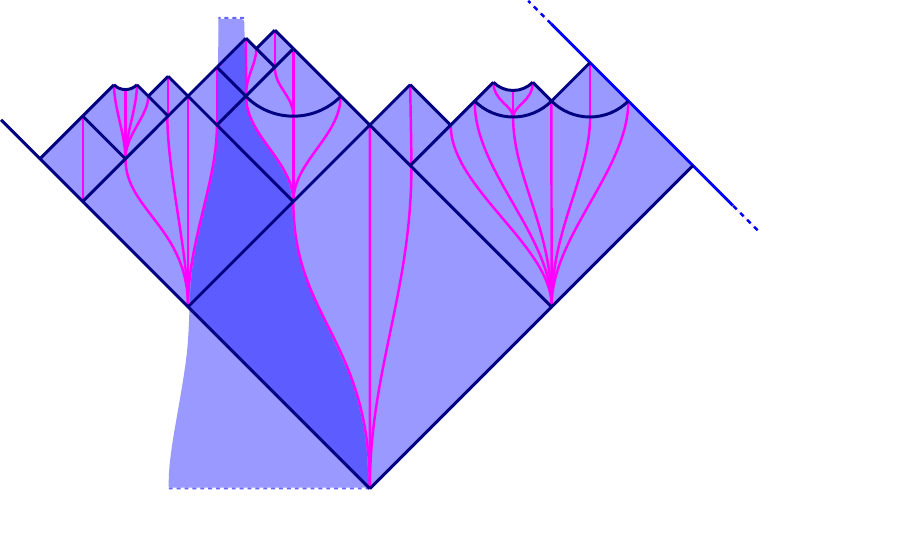}}%
    \put(0.8457306,0.35775743){\color[rgb]{0,0,1}\makebox(0,0)[lt]{\lineheight{1.25}\smash{\begin{tabular}[t]{l}$R_+$\end{tabular}}}}%
    \put(0.21226314,0.00925437){\color[rgb]{0,0,0}\makebox(0,0)[lt]{\lineheight{1.25}\smash{\begin{tabular}[t]{l}AB strip\end{tabular}}}}%
  \end{picture}%
\endgroup%
}
    \caption{A portion of a dynamic plane containing an AB strip. We remark that the entire dynamic plane could intersect $R_+$ in infinitely many components.}
    \label{fig:dynplane}
\end{figure}

In particular, just as in \cite[Proposition 3.10]{LMT23a}, this lemma easily implies that all AB half-strips in a dynamic plane are adjacent.

\begin{proposition} \label{prop:flowgraphdualgraph}
Let $B$ be a veering branched surface, let $\Gamma$ be its dual graph, and let $\Phi$ be its flow graph. If $c$ is a cycle of $\Gamma$, then $c$ or $c^2$ is homotopic to a cycle of $\Phi$ in $Q$.
\end{proposition}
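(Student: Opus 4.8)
The goal is to relate cycles of $\Gamma$ and $\Phi$, both of which are directed graphs embedded in $B$. The key structural fact is that the dual graph $\Gamma$ consists of $\brloc(B)$ (together with cores of source/transient annuli) plus extra edges inside the diamond sectors, while the flow graph $\Phi_+$ is a train track whose complementary regions inside each sector are triangles, rectangles, and tongues, and whose restriction to any dynamic plane has only diverging switches.

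The plan is to take a directed cycle $c$ of $\Gamma$ and first homotope it (using \Cref{prop:dualgraphcarries}, or rather its proof) to be positively transverse to $\brloc(B)$; by that proposition $c$ is then homotopic to a directed cycle of $\Gamma$ that crosses branch loci transversely, avoiding triple points and sources. Now I want to push $c$ onto $\Phi$. The idea is that $c$ lifts to a bi-infinite periodic path $\wt c$ in $\wt\Gamma$ which is not contained in $\partial\wt B$ (since $R_+$ has no directed cycles transverse to the flow — every $\Phi_+$-ray into $R_+$ terminates there). Attach to $\wt c$ its dynamic plane $D(\wt c)$ (or dynamic half-plane). On this plane we have the oriented train track $\wt{\Phi_+}$ with only diverging switches, so every point has a unique infinite backward trajectory. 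Pick a point $x$ on $\wt c$; its backward $\wt\Phi_+$-ray $\alpha$ and the translate of that ray under the deck transformation $g$ generating the $\Z$-action corresponding to $[c]$ (so $g\cdot \wt c = \wt c$, shifted by one period) are two infinite backward $\wt\Phi_+$-rays on $D(\wt c)$. By \Cref{lemma:dynamicplaneflowgraph}, these two rays either eventually coincide or both eventually lie along boundaries of AB half-strips.

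In the first case, $\alpha$ and $g\cdot\alpha$ eventually coincide, meaning some backward subray $\alpha'$ of $\alpha$ satisfies $g\cdot\alpha' = \alpha'$ (as unparameterized paths in $\wt B$, eventually); projecting $\alpha'$ back to $B$ gives a directed cycle of $\Phi_+$ in the homotopy class of $c$. Since this cycle is actually in $\Phi_+$, I must check it does not run into $R_+$, i.e. it lies in $\Phi$: but a $g$-invariant backward ray cannot terminate on $R_+$ (it is bi-infinite under iteration of $g$, hence escapes no compact set and certainly never hits the boundary in finite time), so it is a cycle of $\Phi$. In the second case, $\alpha$ eventually lies on the boundary of an AB half-strip; since all AB half-strips in a dynamic plane are adjacent (the remark following \Cref{lemma:dynamicplaneflowgraph}), and $g$ preserves $D(\wt c)$, $g$ permutes the (finitely many consecutive) AB half-strips, and $g^2$ fixes the cyclic structure well enough that the boundary ray $\alpha$ satisfies $g^2\cdot\alpha' = \alpha'$ for a subray $\alpha'$. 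This produces a directed cycle of $\Phi_+$ homotopic to $c^2$, and as before it avoids $R_+$ so lies in $\Phi$. That is exactly the dichotomy ``$c$ or $c^2$ is homotopic to a cycle of $\Phi$.''

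The main obstacle I expect is the bookkeeping in the second (AB half-strip) case: making precise that the deck transformation $g$ acts on the finite cyclically-ordered collection of adjacent AB half-strips of $D(\wt c)$, so that $g$ either fixes the relevant boundary ray up to a subray or does so after squaring, and then checking that the resulting periodic $\wt\Phi_+$-ray descends to an honest directed cycle in $B$ of the correct homotopy class. One must also handle the possibility that $D(\wt c)$ is a dynamic half-plane rather than a plane — but in that case $\wt c$ eventually lies along a single component of $\brloc(\wt B)$ in the backward direction, which forces $c$ itself (or a power) to be homotopic into $\brloc(B)$, i.e. into a circular branch locus component, and these components lie on source or transient annulus sectors whose cores already belong to $\Phi$ (one checks the core of a source sector is not removed when passing from $\Phi_+$ to $\Phi$, since a closed orbit never reaches $R_+$). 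Collecting these cases gives the statement. A secondary technical point is verifying the ``unique infinite backward trajectory'' claim for $\wt{\Phi_+}$ restricted to $D(\wt c)$, which follows from \Cref{prop:dynamicplanes} together with the fact that $\Phi_+$ has only diverging switches, since every triangle/rectangle/tongue has its single ``incoming'' $\Phi_+$-side opposite an outward-cooriented branch-locus side.
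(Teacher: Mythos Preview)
Your approach is essentially the same as the paper's: lift $c$ to a periodic $\wt\Gamma$-path, form the associated dynamic plane $D(\wt c)$, and use \Cref{lemma:dynamicplaneflowgraph} on backward $\wt\Phi_+$-rays to find a $g$- or $g^2$-periodic ray, handling the half-plane case separately. One small correction: in the half-plane case, a branch loop need not bound a source sector---it may bound a transient annulus, whose core is \emph{not} part of $\Phi_+$; the paper instead observes that $c$ lies on an annulus $\u$-face of a complementary region of $B$, and the restriction of $\Phi_+$ to that face (a nonempty oriented track with only diverging switches and branches exiting through the boundary) must contain a cycle of class $g$ or $g^2$.
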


\begin{proof}
Let $\widetilde{c}$ be the lift of $c$ to an infinite directed path in $\widetilde{\Gamma}$. 

Suppose first that $D(\widetilde{c})$ is a dynamic plane. Similarly to above the proposition follows essentially from the proof of \cite[Proposition 3.15]{LMT23a}, and we outline the idea here.

Let $g=[c] \in \pi_1(Q)$. Notice that the dynamic plane $D(\widetilde{c})$ is $g$-invariant. If $D(\widetilde{c})$ does not contain an AB strip, then pick some infinite backward trajectory $\alpha$ of $\widetilde{\Phi_+}$ on $D(\widetilde{c})$. From \Cref{lemma:dynamicplaneflowgraph}, we know that $\alpha$ and $g \cdot \alpha$ eventually coincide, hence $\alpha$ is eventually $g$-periodic, and the periodic part projects to a cycle of $\Phi_+$ with homotopy class $g$.

On the other hand, if $D(\widetilde{c})$ contains AB strips, then the projection of the boundary components of an AB strip is a cycle of $\Phi_+$ with homotopy class $g$ or $g^2$ (depending on whether $g$ reverses the orientation of the AB strip).

Now if $D(\widetilde{c})$ is a dynamic half-plane, then $c$ must be a component of $\brloc(B)$. Hence $c$ must lie on an annulus face of some complementary region of $B$. The restriction of $\Phi_+$ to this annulus face is a (nonempty) oriented train track with only diverging switches and with branches leaving through the boundary. Hence it must consist of some cycles and some branches from the cycles to the boundary. The cycles will have homotopy class $g$ or $g^2$.

Finally, notice that by definition a cycle of $\Phi_+$ must lie in $\Phi$.
\end{proof}

\begin{corollary}
The cone spanned by cycles of $\Gamma$ in $H_1(Q)$ agrees with that of $\Phi$.
\end{corollary}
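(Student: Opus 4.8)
The plan is to deduce the equality of cones from \Cref{prop:flowgraphdualgraph} together with the elementary observation that every cycle of $\Phi$ is, up to finite cover and reversal of a cusp, already a cycle of $\Gamma$. Concretely, I would argue both inclusions $\C_\Phi \subseteq \C_\Gamma$ and $\C_\Gamma \subseteq \C_\Phi$ separately.

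For the inclusion $\C_\Gamma \subseteq \C_\Phi$: let $c$ be a cycle of $\Gamma$. By \Cref{prop:flowgraphdualgraph}, either $c$ or $c^2$ is homotopic in $Q$ to a cycle $c'$ of $\Phi$. Since homotopic loops have the same class in $H_1(Q)$, we get $[c']=[c]$ or $[c']=2[c]$; in either case $[c]$ lies on the ray through $[c']$, hence in $\C_\Phi$. As the generators of $\C_\Gamma$ all lie in $\C_\Phi$ and $\C_\Phi$ is a convex cone, $\C_\Gamma \subseteq \C_\Phi$.

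For the reverse inclusion $\C_\Phi \subseteq \C_\Gamma$: let $c$ be a cycle of $\Phi$. I would trace $c$ sector by sector through $B$, exactly as in the proof of \Cref{prop:dualgraphcarries}. The cycle $c$ is by construction a closed curve on $B$ which is positively transverse to $\brloc(B)$ in the sense of \Cref{defn:postransbrloc} (it crosses branch loci in the maw-coorientation direction, and within any source or transient annulus sector it runs along the core in the dynamic orientation). Hence by \Cref{prop:dualgraphcarries}, $c$ is homotopic in $B$ — and so in $Q$ — to a directed cycle of $\Gamma$. Therefore $[c]\in\C_\Gamma$, and since $\C_\Gamma$ is convex this gives $\C_\Phi \subseteq \C_\Gamma$.

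Combining the two inclusions yields $\C_\Gamma = \C_\Phi$ as desired. The one point that requires care — and which I expect to be the main (mild) obstacle — is checking that an arbitrary cycle of $\Phi_+$, when viewed as a curve on $B$, genuinely satisfies the positive-transversality hypothesis of \Cref{prop:dualgraphcarries}: a cycle of $\Phi_+$ could a priori run entirely along a single component of $\brloc(B)$ without crossing it, but then it would have to lie on an annulus face of a complementary region and one reads off directly from the sector pictures in \Cref{fig:vbssectors} that it is homotopic (within that annulus) to a positive multiple of the dynamically-oriented core, so the second bullet of \Cref{defn:postransbrloc} applies; and a cycle of $\Phi$ cannot lie entirely on $R_+$ by the very definition of $\Phi$ (such points were removed). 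Once that case-check is in hand, both inclusions are immediate from the two propositions already proved.
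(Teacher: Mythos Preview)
Your proposal is correct and follows essentially the same approach as the paper: one inclusion comes from \Cref{prop:flowgraphdualgraph}, and the other from observing that every $\Phi$-cycle is positively transverse to $\brloc(B)$ and then invoking \Cref{prop:dualgraphcarries}. Your closing case-check (cycles lying in a source sector versus cycles crossing the branch locus) is a helpful spelling-out of why the positive-transversality hypothesis is satisfied, which the paper leaves implicit.
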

\begin{proof}
\Cref{prop:flowgraphdualgraph} implies that the former cone is a subset of the latter cone. For the converse, observe that each cycle of $\Phi$ is positively transverse to $\brloc(B)$ in the sense of \Cref{defn:postransbrloc}, hence can be homotoped to a cycle of $\Gamma$ by \Cref{prop:dualgraphcarries}.
\end{proof}

To conclude this section, we illustrate the utility of dynamic planes by discussing the correspondence between periodic dynamic planes and periodic leaves. This will be applied in \Cref{sec:folcone}.

Suppose $B$ is a veering branched surface on $Q$ fully carrying a lamination $\mathcal{L}$ as in \Cref{prop:vfulldbslaminar}. Lift $B$ and $\mathcal{L}$ to the universal cover $\widetilde{Q}$. A dynamic plane of $\widetilde{B}$ is said to be \textbf{periodic} if it is invariant under some element of $\pi_1(Q)$ acting on $\widetilde{Q}$. Similarly, a leaf of $\widetilde{\mathcal{L}}$ is said to be \textbf{periodic} if it is invariant under some element of $\pi_1(Q)$; equivalently, a leaf of $\widetilde{\mathcal{L}}$ is said to be periodic if its image in $\mathcal{L}$ is a leaf that has interior homeomorphic to an open annulus or Möbius band.

Given a periodic leaf $\ell$ of $\widetilde{\mathcal{L}}$, consider the union of sectors that the leaf passes through. This is a surface with boundary along $\partial \widetilde{B}$ and with interior homeomorphic to a plane, which we abuse notation and call $\ell$ as well. If $\ell$ does not contains the lift of an annulus or Möbius band sector, then since each vertex of $\Gamma$ has at least one incoming edge on $\ell$, one can construct a bi-infinite $\widetilde{\Gamma}$-path $\gamma$ on $\ell$ that does not lie entirely on $\partial \widetilde{B}$.
If $\gamma$ can be chosen such that $D(\gamma)$ is a dynamic plane, then $\ell=D(\gamma)$. If not, then it must be the case that $\ell$ contains the lift of an annulus or Möbius band sector. In this case $\ell$ contains the lift of a source sector, for otherwise $B$ will carry a closed surface. We pick $\gamma$ to be the lift of the core of such a sector, then $\ell=D(\gamma)$ in this case as well. 

Conversely, given a periodic dynamic plane $D$, we can write it as a nested union $\bigcup_{i=0}^\infty \nabla(\sigma_i)$. The sets of leaves of $\widetilde{\mathcal{L}}$ passing through $\sigma_i$ gives a nested sequence of compact sets in the space of leaves of $\wt \LL$.
Taking the intersection we get a nonempty collection of leaves of $\widetilde{\mathcal{L}}$ passing through exactly the sectors that tile the dynamic plane. For general $\mathcal{L}$, this determines a packet of leaves, but when $\mathcal{L}$ is an unstable Handel-Miller lamination, then such a leaf must be unique by \Cref{lemma:hmlamnoproductregions}, hence periodic.

This discussion implies the following proposition. 

\begin{proposition} \label{prop:dynamicplanesandhmleaves}
Suppose $\mathcal{L}^u$ is an unstable Handel-Miller lamination carried by a veering branched surface $B$. Then there is a one-to-one correspondence between the periodic leaves of $\widetilde{\mathcal{L}^u}$ and the periodic dynamic planes of $B$. Moreover, this correspondence is such that a leaf of $\widetilde{\mathcal{L}^u}$ is invariant under $g \in \pi_1(\overline{M_f})$ if and only if its corresponding dynamic plane is invariant under $g$.
\end{proposition}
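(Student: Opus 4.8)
The plan is to establish the correspondence by packaging the two constructions sketched in the paragraphs preceding the statement into maps that are mutually inverse and $\pi_1$-equivariant. The key input is \Cref{prop:dynamicplanes}, which identifies future sets, dynamic planes, and dynamic half-planes as planar surfaces with boundary along $\partial\wt B$ (and possibly a component of $\brloc(\wt B)$), together with \Cref{prop:vfulldbslaminar}, which supplies a lamination $\mc L^u$ fully carried by $B$ whose lifted leaves are planar surfaces carried by $\wt B$, and \Cref{lemma:hmlamnoproductregions}, which guarantees that the collection of leaves through a fixed sector-union is a \emph{single} leaf rather than a nontrivial packet (this is where the Handel-Miller hypothesis enters and where the argument genuinely uses more than the formalism of \cite{LMT23a}).

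First I would set up the forward map $\ell \mapsto D$. Given a periodic leaf $\wt\ell$ of $\wt{\mc L^u}$, invariant under $g\in\pi_1(\ol M_f)$, I view $\wt\ell$ as a union of sectors of $\wt B$ via the collapsing map $N(\wt B)\to\wt B$. As in the discussion before the proposition, one of two cases occurs: either $\wt\ell$ contains no lift of an annulus/M\"obius sector, in which case (using that every $\wt\Gamma$-vertex on $\wt\ell$ has an incoming edge) I build a bi-infinite directed $\wt\Gamma$-path $\gamma$ in $\wt\ell$; or $\wt\ell$ contains such a lift, hence (else $B$ carries a closed surface, contradicting \Cref{defn:vbs}(5), or using \Cref{prop:vbssectors}) it contains the lift of a source sector whose oriented core I take as $\gamma$. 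In either case I must check that $\gamma$ can be arranged so that $\wt\ell=D(\gamma)$ — i.e. the sectors reachable from $\gamma$ by positively-transverse paths are exactly those of $\wt\ell$. One inclusion is immediate since $\wt\ell$ is forward-closed under the flow on $\wt B$; the reverse uses \Cref{prop:dynamicplanes}(1), comparing $\partial\nabla(\sigma)\cap\brloc(\wt B)$ with the boundary of $\wt\ell$. Since $\gamma$ can be chosen $g$-invariantly (iterate $g$ along a chosen edge), $D(\gamma)$ is a periodic dynamic plane invariant under $g$.

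Then I would construct the reverse map $D\mapsto\ell$ exactly as in the excerpt: write $D=\bigcup_i\nabla(\sigma_i)$ as a nested union, take for each $i$ the compact set of leaves of $\wt{\mc L^u}$ passing through $\sigma_i$ in the (compact, metrizable) leaf space of $\wt{\mc L^u}$, and intersect the nested sequence to obtain a nonempty collection of leaves passing through precisely the sectors tiling $D$. By \Cref{lemma:hmlamnoproductregions} this collection is a single leaf $\wt\ell$: two distinct such leaves would cobound an $I$-fibered complementary region of $\mc L^u$ with boundary along $\mc L^u$, which is excluded. If $D$ is invariant under $g$, then $g\cdot\wt\ell$ passes through the same sectors, so $g\cdot\wt\ell=\wt\ell$, i.e. $\wt\ell$ is periodic and $g$-invariant. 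Finally I would verify the two composites are the identity: starting from $\wt\ell$, building $\gamma$, and intersecting the $\nabla(\sigma_i)$ returns a leaf through exactly the sectors of $\wt\ell=D(\gamma)$, hence $\wt\ell$ itself by uniqueness; starting from $D$, producing $\wt\ell$, and re-running the forward construction yields a $\wt\Gamma$-path (or source core) in $\wt\ell$ whose dynamic plane is again $D$, since $D$ and $\wt\ell$ have the same sector decomposition. Equivariance of both directions, hence the ``moreover'' clause, is built into the constructions. The main obstacle is the uniqueness step — making precise that \Cref{lemma:hmlamnoproductregions} upgrades the \emph{packet} of leaves through a periodic dynamic plane to a single periodic leaf, and correspondingly that the $\wt\Gamma$-path/core chosen in the forward direction really recovers all of $\wt\ell$ and nothing more; everything else is bookkeeping with \Cref{prop:dynamicplanes}.
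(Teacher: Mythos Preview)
Your proposal is correct and follows essentially the same approach as the paper: the proposition is stated as an immediate consequence of the discussion preceding it, and that discussion is precisely the two constructions you package---leaf to dynamic plane via a bi-infinite $\wt\Gamma$-path (or source core), and dynamic plane to leaf via the nested intersection of leaf-sets through the $\sigma_i$, with \Cref{lemma:hmlamnoproductregions} supplying uniqueness. One small simplification: you do not need to arrange $\gamma$ itself to be $g$-invariant; since $\wt\ell=D(\gamma)$ and $\wt\ell$ is $g$-invariant, the dynamic plane $D(\gamma)$ is automatically $g$-invariant regardless of the particular $\gamma$ chosen.
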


\section{Veering branched surfaces for Handel-Miller laminations} \label{sec:hmvbs}

In this section we construct veering branched surfaces carrying unstable Handel-Miller laminations in compactified mapping tori. The construction itself, contained in \Cref{sec:bsconstruct}, essentially consists of suspending a splitting sequence of train tracks carrying the positive Handel-Miller lamination, the existence of which is guaranteed by \Cref{thm:sequenceexist}. Checking for the axioms of a veering branched surface requires some in-depth analysis of the splitting sequence, which we perform in \Cref{subsec:sectorannulusMobius} and \Cref{subsec:3dprincipalregion}, allowing us to prove our main existence result \Cref{thm:depthonetovbs} in \Cref{subsec:depthonetovbs}. Finally in \Cref{subsec:hmvbsunique} we promote the uniqueness result of \Cref{subsec:splitsequnique} regarding splitting sequences to a uniqueness result about veering branched surfaces carrying Handel-Miller laminations.

\subsection{Staircases}
It is now convenient to consider an enlargement of the category of sutured manifolds, which by our convention have convex corner points, to include \textbf{concave corner points}, which by definition have neighborhoods modeled on the following closed set in $\R^3$:

\begin{itemize}
    \item (Concave) corner edge: $\{(x,y,z)\mid \text{$x \le 0$ or $y \le 0$}\}$.
\end{itemize}

A \textbf{sutured manifold with concavity} is defined in the same way as a sutured manifold, but now we allow the existence of annular sutures with both boundary components meeting the positive (or negative) tangential boundary, and require that for any such annular suture $A$, one component of $\del A$ consists of concave corner points and the other consists of convex corner points.

A \textbf{positive staircase} is a sutured manifold with concavity $(P,\gamma)$ satisfying the following:
\begin{enumerate}[label=(\roman*)]
    \item $P$ is homeomorphic to $S\times[0,1]$ for some compact oriented surface $S$
    \item $R_+(P)=S\times\{1\}$
    \item $T(\gamma)=\varnothing$ and $A(\gamma)=(\del S\times[0,1]) \cup A_1\cup\cdots\cup A_n$ for a collection of annuli $A_1,\dots, A_n\subset \intr(S)\times\{0\}$
    \item $R_-(\gamma)=(S\times\{0\})-\bigcup_1^n A_i$.
\end{enumerate}
A \textbf{negative staircase} is defined symmetrically by switching the roles of $R_+$ and $R_-$. See \Cref{fig:staircase}.

\begin{figure}
    \centering
    \fontsize{8pt}{8pt}\selectfont
    \resizebox{!}{2in}{
\begingroup%
  \makeatletter%
  \providecommand\color[2][]{%
    \errmessage{(Inkscape) Color is used for the text in Inkscape, but the package 'color.sty' is not loaded}%
    \renewcommand\color[2][]{}%
  }%
  \providecommand\transparent[1]{%
    \errmessage{(Inkscape) Transparency is used (non-zero) for the text in Inkscape, but the package 'transparent.sty' is not loaded}%
    \renewcommand\transparent[1]{}%
  }%
  \providecommand\rotatebox[2]{#2}%
  \newcommand*\fsize{\dimexpr\f@size pt\relax}%
  \newcommand*\lineheight[1]{\fontsize{\fsize}{#1\fsize}\selectfont}%
  \ifx\svgwidth\undefined%
    \setlength{\unitlength}{286.27442211bp}%
    \ifx\svgscale\undefined%
      \relax%
    \else%
      \setlength{\unitlength}{\unitlength * \real{\svgscale}}%
    \fi%
  \else%
    \setlength{\unitlength}{\svgwidth}%
  \fi%
  \global\let\svgwidth\undefined%
  \global\let\svgscale\undefined%
  \makeatother%
  \begin{picture}(1,0.44381695)%
    \lineheight{1}%
    \setlength\tabcolsep{0pt}%
    \put(0,0){\includegraphics[width=\unitlength,page=1]{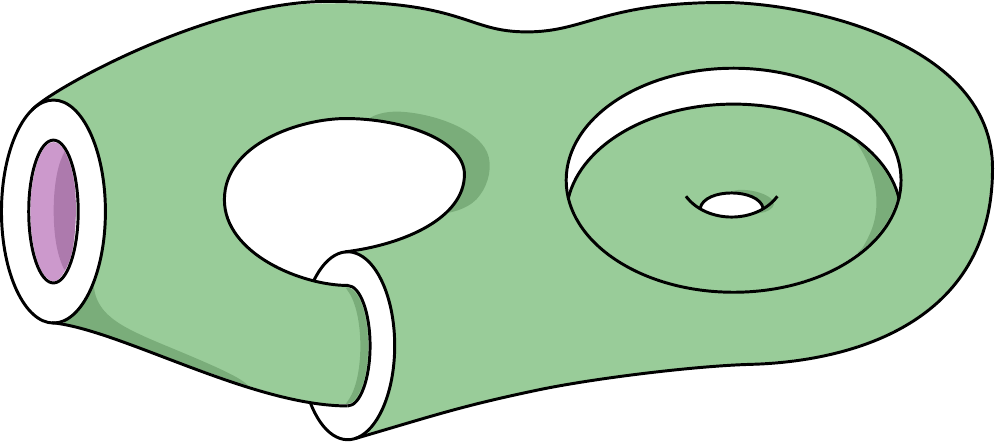}}%
    \put(0.04139211,0.31333837){\color[rgb]{0,0,0}\makebox(0,0)[lt]{\lineheight{1.25}\smash{\begin{tabular}[t]{l}$\gamma$\end{tabular}}}}%
    \put(0.2082717,0.38104774){\color[rgb]{0,0,0}\makebox(0,0)[lt]{\lineheight{1.25}\smash{\begin{tabular}[t]{l}$R_+$\end{tabular}}}}%
    \put(0.03504882,0.22953491){\color[rgb]{0,0,0}\makebox(0,0)[lt]{\lineheight{1.25}\smash{\begin{tabular}[t]{l}$R_-$\end{tabular}}}}%
    \put(0.33819968,0.16391949){\color[rgb]{0,0,0}\makebox(0,0)[lt]{\lineheight{1.25}\smash{\begin{tabular}[t]{l}$\gamma$\end{tabular}}}}%
    \put(0.65880349,0.33774314){\color[rgb]{0,0,0}\makebox(0,0)[lt]{\lineheight{1.25}\smash{\begin{tabular}[t]{l}$\gamma$\end{tabular}}}}%
  \end{picture}%
\endgroup%
}
    \caption{A negative staircase. By switching $R_+$ and $R_-$ we obtain a positive staircase.}
    \label{fig:staircase}
\end{figure}

\subsection{Construction of $(B^u,V)$} \label{sec:bsconstruct}

Let $f\colon L\to L$ be a Handel-Miller map, and let $(Q,\gamma)$ be the sutured manifold $\ol M_f$ endowed with the suspension semiflow $\phi_f$. We view $L$ as sitting inside $Q$. Let $\mc F$ be the depth one foliation of $(Q,\gamma)$ whose noncompact leaves are parallel to $L$.
Let $\mc L^u$ be the suspension of the Handel-Miller lamination $\Lambda_+$.

\begin{definition}[Staircase determined by a tiled neighborhood]
Let $E_+$ be a tiled neighborhood of a positive end-cycle of $L$ corresponding to a component $Y$ of $R_+(Q)$. The union of all forward $\phi_f$-orbits starting in $E_+$ naturally has the structure of a positive staircase $S$ with $R_+(S)=Y$. We call this the \textbf{staircase neighborhood} of $Y$ \textbf{determined by} the tiled neighborhood $E_+$.
\end{definition}

In \Cref{thm:sequenceexist} we showed that there exists a finite splitting sequence from $\tau$ to $f(\tau)$, for a certain train track $\tau$ fully carrying the lamination $\Lambda_+$. Reversing this sequence, we obtain a sequence
\[
f(\tau)=\tau_0\to \tau_1\to \cdots \to \tau_n=\tau,
\]
where $\tau_{i+1}$ is obtained from $\tau_i$ by a single fold.

We first assume that $n>0$.
As $Q$ is the compactified mapping torus of $f$, we have $Q \backslash \partial_\pm Q = (L\times[0,1])/((x,1)\sim (f(x),0))$. Let $L_i=L\times\{\frac{i}{n}\}\subset Q$ for $i=0,\dots, n-1$.

For each end-cycle $Z$, choose a tiling of $\ms U_Z$. 
Let $N(\ms E_+)$ be a tiled neighborhood of all positive end-cycles which is small enough so that $\tau_0$ is $f$-endperiodic in $N(\ms E_+)$ 
and the folding sequence from $\tau_0$ to $\tau_n$ is supported in $K=L\cut N(\ms E_+)$. Let $N(R_+)$ be the staircase neighborhood of $R_+$ determined by $N(\ms E_+)$. Note that $K$ is not compact---it is the union of a core of $L$ with neighborhoods of all negative ends.

Let $M_i=K\times [\frac{i}{n},\frac{i+1}{n}]\subset Q$ and $K_i=K\times\{\frac{i}{n}\}$. Thus $M_i$ is a region in $Q$ lying between $L_i$ and $L_{i+1}$, and $K_i\subset L_i$. Let
\[
A=R_-\cup \left(\bigcup_{i=0}^{n-1}M_i\right)=Q\cut N(R_+).
\]

Under our identification of $L$ with $L_0=L\times\{0\}$, there is a natural embedding of $\tau_0$ in $L_0$. 
Given an embedding of $\tau_i$ in $K_i$, we fix an embedding of $\tau_{i+1}$ in $K_{i+1}$ as follows: we flow $\tau_i$ forward into $K_{i+1}$ under $\phi_f$, and then perform the fold $\tau_i\to \tau_{i+1}$. 

Now for $i=0,\dots, n-2$ (if $n=1$ we skip this step) we define a dynamic branched surface $(M_i,B_i,V_i)$ with the following properties:

\begin{enumerate}[label=(\roman*)]
    \item $B_i$ intersects $K_i$ in $\tau_i$ and $K_{i+1}$ in $\tau_{i+1}$.
    \item Between the parts of $\tau_i$ and $\tau_{i+1}$ involved in the fold, there is a piece of $B_0$ that is modeled on the center or right of \Cref{fig:ttsplitting}, according to whether the fold is the reverse of a collision or not, respectively.
    \item Away from the pieces of $B_i$  described in (ii), $B_i$ is topologically a product.
    \item $\brloc(B_i)$ has a source orientation which is positively transverse to $\mc F$ at oriented points and tangent to $\mc F$ at sources.  If $\tau_i\to \tau_{i+1}$ is the reverse of a collision there is a single source of $\brloc(B_i)$(see \Cref{fig:ttsplitting}, center) and otherwise there are no sources.
    \item $V_i$ has unique forward trajectories, points forward along $\brloc(B_i)$, and is positively transverse to $\mc F|_{M_i}$. All points in $M_i-B_i$ have unique backward trajectories.
\end{enumerate}

\begin{figure}
    \centering
    \fontsize{6pt}{6pt}\selectfont
    \resizebox{!}{2.1in}{
\begingroup%
  \makeatletter%
  \providecommand\color[2][]{%
    \errmessage{(Inkscape) Color is used for the text in Inkscape, but the package 'color.sty' is not loaded}%
    \renewcommand\color[2][]{}%
  }%
  \providecommand\transparent[1]{%
    \errmessage{(Inkscape) Transparency is used (non-zero) for the text in Inkscape, but the package 'transparent.sty' is not loaded}%
    \renewcommand\transparent[1]{}%
  }%
  \providecommand\rotatebox[2]{#2}%
  \newcommand*\fsize{\dimexpr\f@size pt\relax}%
  \newcommand*\lineheight[1]{\fontsize{\fsize}{#1\fsize}\selectfont}%
  \ifx\svgwidth\undefined%
    \setlength{\unitlength}{392.438385bp}%
    \ifx\svgscale\undefined%
      \relax%
    \else%
      \setlength{\unitlength}{\unitlength * \real{\svgscale}}%
    \fi%
  \else%
    \setlength{\unitlength}{\svgwidth}%
  \fi%
  \global\let\svgwidth\undefined%
  \global\let\svgscale\undefined%
  \makeatother%
  \begin{picture}(1,0.36218587)%
    \lineheight{1}%
    \setlength\tabcolsep{0pt}%
    \put(0,0){\includegraphics[width=\unitlength,page=1]{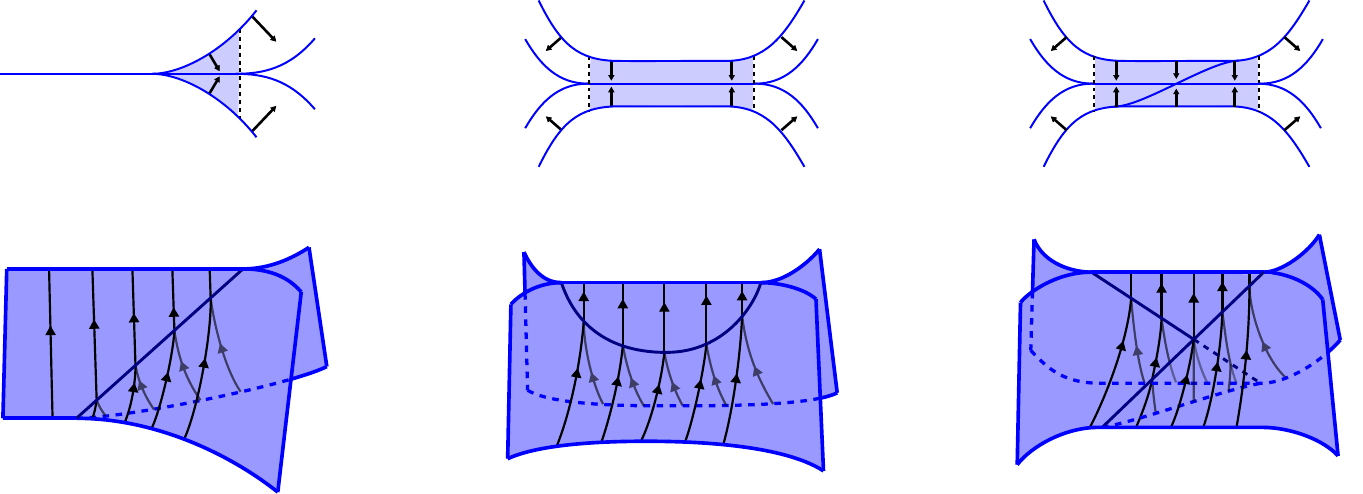}}%
    \put(0.18791837,0.25269982){\color[rgb]{0,0,0}\makebox(0,0)[lt]{\lineheight{1.25}\smash{\begin{tabular}[t]{l}$t=0$\end{tabular}}}}%
    \put(0.23071146,0.27348016){\color[rgb]{0,0,0}\makebox(0,0)[lt]{\lineheight{1.25}\smash{\begin{tabular}[t]{l}$t=1$\end{tabular}}}}%
    \put(0.58998542,0.23131583){\color[rgb]{0,0,0}\makebox(0,0)[lt]{\lineheight{1.25}\smash{\begin{tabular}[t]{l}$t=0$\end{tabular}}}}%
    \put(0.59975194,0.25965414){\color[rgb]{0,0,0}\makebox(0,0)[lt]{\lineheight{1.25}\smash{\begin{tabular}[t]{l}$t=1$\end{tabular}}}}%
    \put(0.95911792,0.23131646){\color[rgb]{0,0,0}\makebox(0,0)[lt]{\lineheight{1.25}\smash{\begin{tabular}[t]{l}$t=0$\end{tabular}}}}%
    \put(0.96888445,0.25965477){\color[rgb]{0,0,0}\makebox(0,0)[lt]{\lineheight{1.25}\smash{\begin{tabular}[t]{l}$t=1$\end{tabular}}}}%
  \end{picture}%
\endgroup%
}
    \caption{Building a dynamic branched surface from a train track folding sequence. The shaded regions in the top row indicate the sets in $K_i$ whose forward trajectories under $V_i$ merge with others (i.e. they enter $B_i$.)}
    \label{fig:ttsplitting}
\end{figure}

Between switches of $\tau_i$ and $\tau_{i+1}$ which are not involved in the fold $\tau_i\to \tau_{i+1}$, condition (iii) requires that $B_i$ be a product and (v) requires that $V$ point forward along $\brloc(B_i)$. This can be achieved by using the model shown on the left side of \Cref{fig:ttsplitting}.

We next describe $(M_{n-1}, B_{n-1}, V_{n-1})$. This is built the same way as for $i=1,\dots, n-2$, but with the additional requirement that the top of $(M_{n-1}, B_{n-1}, V_{n-1})$ line up smoothly with the bottom of $(M_0, B_0, V_0)$ on the overlap of $K_0$ and $K_n=f(K_0)$.

Let $B_A=\bigcup_{i=0}^{n-1} B_i$, and $V_A$ be the vector field on $A$ which restricts to $V_i$ on each $M_i$.

Finally, we define a vector field on $N(R_+)$. 
Let $B_N$ be the union of all forward $\phi_f$-orbits starting in $B_A\cap \del N(R_+)$. Away from $B_N$, let $V_N$ be equal to the vector field generating $\phi_f$; near $B_N$ we choose $V_N$ so that each branch line of $(B_N, V_N)$ has a neighborhood modeled on the left side of \Cref{fig:ttsplitting}.

Let $B^u=B_A\cup B_N$, and let $V$ be the vector field on $Q$ restricting to $V_A$ on $A$ and $V_N$ on $N$. Then evidently $(Q, B^u, V)$ is an unstable dynamic branched surface. Furthermore, because $B^u$ was constructed from an $f$-periodic train track folding sequence for a train track fully carrying $\Lambda_+$, we see that $B^u$ fully carries $\mc L^u$.

Now we treat the case when $n=0$. We construct a neighborhood of the positive end-cycles $N(\ms E_+)$ just as before, and similarly we construct the sets $N(R_+)$, $K=L\cut  N(R_+)$, $N(\ms E_+)$ and $A=Q\cut N(R_+)$. The construction works just as above, the only difference being that the branched surface we build in $A$ has no triple points or sources in its branch locus.

Hence we have shown the following, which is suggested by Mosher as an intermediate step in \cite{Mos96}:

\begin{proposition} \label{prop:buexist}
Let $h\colon L\to L$ be a Handel-Miller endperiodic map. There exists an unstable dynamic branched surface $(B^u,V)$ in $\ol M_f$ fully carrying $\LL^u$, the suspension of the Handel-Miller lamination $\Lambda_+$.
\end{proposition}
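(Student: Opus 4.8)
The plan is essentially to assemble the pieces already developed in the excerpt and check that the assembly has the claimed properties. First I would invoke \Cref{thm:sequenceexist} to obtain an efficient $f$-endperiodic train track $\tau$ fully carrying $\Lambda_+$ together with a finite $\Lambda_+$-splitting sequence $\tau\to\cdots\to f(\tau)$; reversing this gives the folding sequence $f(\tau)=\tau_0\to\tau_1\to\cdots\to\tau_n=\tau$ used in the construction. Then I would carry out the suspension construction of $(Q,B^u,V)$ exactly as written: fix tilings of the positive end-cycles, take a sufficiently small tiled neighborhood $N(\ms E_+)$ so that $\tau_0$ is $f$-endperiodic there and the folding sequence is supported in $K=L\cut N(\ms E_+)$, form the staircase neighborhood $N(R_+)$ and the complementary piece $A=Q\cut N(R_+)$, and build the local dynamic branched surfaces $(M_i,B_i,V_i)$ from the local models in \Cref{fig:ttsplitting}. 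Gluing these along $K_i=K\times\{i/n\}$ — with the top of $(M_{n-1},B_{n-1},V_{n-1})$ matched smoothly to the bottom of $(M_0,B_0,V_0)$ via $f(K_0)=K_n$ — produces $B_A$ with vector field $V_A$, and suspending $B_A\cap\partial N(R_+)$ forward along $\phi_f$ yields $B_N$ with vector field $V_N$; set $B^u=B_A\cup B_N$ and let $V$ agree with $V_A$ on $A$ and $V_N$ on $N(R_+)$.

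The substantive checks are that $(Q,B^u,V)$ is an unstable dynamic branched surface and that it fully carries $\LL^u$. For the first, I would verify against the definition in \Cref{subsec:dynbranchedsurfaces}: $V$ is nonvanishing by construction (it is a perturbation of the nonsingular suspension vector field $\phi_f$), it is inward along $R_-$ and outward along $R_+$ and tangent to $\gamma$ because it is built from $\phi_f$ away from the folds and the fold models preserve these properties, it is tangent to $B^u$ by design, and $V|_{\brloc(B^u)}$ is a maw vector field because condition (iv) of the construction makes the source orientation of $\brloc(B_i)$ positively transverse to $\mc F$ at the oriented points (so $V$ points from the two-sheeted to the one-sheeted side there) and the only sources occur at reversed collisions, where the local model (center of \Cref{fig:ttsplitting}) is exactly the maw picture. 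Smoothness of $V$ — unique forward trajectories on $B^u$, unique backward trajectories off $B^u$ — is property (v) of each local piece, and these match up across the gluings. For the carrying statement, I would note that $B^u$ was built by suspending a train track fully carrying $\Lambda_+$ through a sequence of folds, so a standard neighborhood $N(B^u)$ is obtained by suspending standard neighborhoods of the $\tau_i$; since each $\tau_i$ fully carries $\Lambda_+$ and $\mc L^u$ is the $\phi_f$-suspension of $\Lambda_+$, the lamination $\mc L^u$ sits in $N(B^u)$ transverse to the ties and meets every tie, i.e. $B^u$ fully carries $\mc L^u$.

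The main obstacle, I expect, is not any single deep point but the bookkeeping at the seams: first, checking that the local models of \Cref{fig:ttsplitting} can genuinely be glued along the $K_i$ into a globally $C^1$ branched surface with a globally continuous vector field — in particular that the ``product'' regions between switches not involved in a given fold (left of \Cref{fig:ttsplitting}) interpolate correctly between the embeddings of $\tau_i$ and $\tau_{i+1}$, which themselves are related by flowing forward under $\phi_f$ and then folding; and second, handling the periodic gluing at $i=n-1$, where one must arrange that $B_{n-1}$ meets $K_n=f(K_0)$ in $f(\tau_0)=\tau_0$ flowed forward, consistently with the bottom of $B_0$. The degenerate case $n=0$ (no folds needed, so $B_A$ is a product over $K$ with no triple points or sources in its branch locus) should be handled separately but is immediate. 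None of this requires the veering axioms — those are deferred to \Cref{subsec:sectorannulusMobius} and later — so the proof here is genuinely just ``run the construction and read off the definition of a dynamic branched surface,'' and I would present it at that level of detail.
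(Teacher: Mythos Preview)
Your proposal is correct and follows essentially the same approach as the paper: the proposition is stated immediately after the construction in \Cref{sec:bsconstruct}, and the paper's proof is exactly that construction, concluding with the one-line observation that $(Q,B^u,V)$ is ``evidently'' an unstable dynamic branched surface and that full carrying of $\LL^u$ follows because $B^u$ was built from an $f$-periodic folding sequence of a track fully carrying $\Lambda_+$. Your verification of the dynamic branched surface axioms and your identification of the bookkeeping issues at the seams and the $n=0$ case are appropriate elaborations of points the paper leaves implicit.
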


\subsection{Principal and nonprincipal regions of $(B^u,V)$} \label{subsec:3dprincipalregion}

We let $\phi_V$ denote the forward semiflow of $V$. Note that orbits of $\phi_V$ are uniquely determined in the forward direction but may not be in the backward direction.

Recall that there are two types of complementary region of $\Lambda_+$ in $L$: principal regions and components of the negative escaping set $\ms U_-$. Let $C_\tau$ be a complementary region of $\tau_0$. Then $C_\tau$ corresponds naturally to a complementary region $C_\Lambda$ of $\Lambda_+$. 
If $C_\Lambda$ is a principal region, we call $C_\tau$ a \textbf{principal region of $\tau_0$}. Since $\tau_0$ is efficient and fully carries $\Lambda_+$, there is only one principal region of $\tau_0$ corresponding to a given principal region of $\Lambda_+$.
In the case that $C_\tau$ is a principal region homeomorphic to an annulus with one boundary component equal to a component of $\del L$, we say that $C_\tau$ is a \textbf{peripheral principal region}.
If $C_\Lambda$ is not a principal region (and hence is a component of $\ms U_-$), then we say $C_\tau$ is a \textbf{nonprincipal region of $\tau$}.

If $U$ is a complementary region of $B^u$, then $U\cap L_0$ consists of either entirely principal regions or entirely nonprincipal regions of $\tau_0$. As such we will speak of \textbf{(peripheral) principal} and \textbf{nonprincipal} regions of $B^u$. 

The following two lemmas describe the vector fields induced on principal and nonprincipal regions.

\begin{lemma}[$V$ in principal regions]\label{lem:principalregioncircular}
Let $U$ be a principal region of $B^u$, and let $V_U$ be the vector field on $U$ induced by $V$. Then $V_U$ is circular. As a consequence, if $U$ is a solid torus or is peripheral, then $U$ is a $\u$-cusped torus or $\u$-cusped torus shell respectively.
\end{lemma}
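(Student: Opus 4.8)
The plan is to trace the structure of the principal region $C_\Lambda$ of $\Lambda_+$ corresponding to $U$ through the suspension construction and show that the vector field $V$ is positively transverse to a Reeb-type foliation, so that circularity follows from \Cref{lem:reebcircular}. Recall from \Cref{def:principalregion} and the surrounding discussion that a positive principal region $P_+$ has a nucleus and finitely many arms, with the boundary leaves $\lambda_i$ being periodic leaves of $\Lambda_+$ of some common period $p$. First I would observe that the complementary region $U$ of $B^u$ is the suspension of the principal region $C_\tau$ of $\tau_0$, where $C_\tau$ corresponds to $C_\Lambda = P_+$; since $f$ permutes the boundary leaves $\lambda_i$ cyclically and $B^u$ is built from an $f$-periodic folding sequence, $U$ is a mapping-torus-like piece: it is the union of forward $\phi_V$-orbits of the region $C_\tau \times [0,1/n]$-pieces, glued up by the monodromy $f^p$ on the relevant boundary leaves.

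The key step is to understand the foliation that $V$ is transverse to. The depth one foliation $\mc F$ restricts to a foliation of $U$, and by construction (property (v) of the $B_i$) the vector field $V$ is positively transverse to $\mc F|_{M_i}$ wherever $\mc F$ is actually 2-dimensional. Inside $U$, the leaves of $\mc F$ are copies of the principal region $P_+$ spinning toward the boundary $\u$-faces, and near the closed orbits at the cores of the arms (which are the suspensions of the invariant intervals $I_i$) the foliation $\mc F|_U$ looks like a Reeb foliation on an annular cross-section transverse to the closed orbit. More precisely, on each annulus $A_i \subset \partial U$ corresponding to an arm, $\mc F$ restricts to a cooriented Reeb foliation (this is exactly how depth one foliations spin onto $R_+$), and $V$ is positively transverse to it. So I would cut $U$ along a collection of meridional/transverse surfaces and apply \Cref{lem:reebcircular} on the annular pieces, then patch the resulting maps to $S^1$ together over the product part of $U$ — using that on the product part $V$ is simply positively transverse to the product foliation, hence trivially circular there — to produce a global map $g\colon U \to S^1$ monotonic along $V$-trajectories. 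The technical care needed is checking that these local maps to $S^1$ can be glued consistently; since $U$ is connected and the pieces overlap in product regions where circularity is unobstructed, this amounts to a standard Mayer–Vietoris / covering-space argument, but it must be done carefully because $U$ need not be simply connected.

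Once circularity of $V_U$ is established, the consequences are immediate from the definitions. If $U$ is a solid torus, then $U$ has no $\m$-faces (a principal region of $B^u$ meets $R_+$ only in $\u$-faces and meets no $R_-$ or $\gamma$, since $B^u$ is an unstable branched surface), all its faces are $\u$-faces, and they are annuli by \Cref{rmk:annulusfaces}; the cusp structure along $\brloc$ comes from the arms of the principal region, so by \Cref{defn:cuspedtorus} $U$ is a $\u$-cusped solid torus. If $U$ is peripheral, then one boundary component of the corresponding $C_\tau$ lies along $\partial L$, which suspends to a component of $A(\gamma)$; this is a $\b$-face (a torus), the remaining faces are $\u$-annuli with cusps along a single side, and circularity of $V$ is exactly what \Cref{defn:cuspedtorus} requires, so $U$ is a $\u$-cusped torus shell.

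The main obstacle I anticipate is the gluing argument for circularity: producing the local maps to $S^1$ on the Reeb-annulus pieces via \Cref{lem:reebcircular} is clean, but verifying that the monotonicity of $V$-trajectories is preserved when passing between a Reeb piece and the adjacent product piece of $U$ — and that the circle-valued functions can be chosen to agree on overlaps without introducing winding — requires a genuine argument rather than a one-line citation, especially since a general principal region $U$ can have positive genus and multiple arms of various periods. Everything downstream of circularity (identifying $U$ with a $\u$-cusped torus or torus shell) is then a direct unpacking of \Cref{defn:cuspedtorus} together with \Cref{rmk:annulusfaces}.
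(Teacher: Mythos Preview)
Your approach overcomplicates matters and rests on a misreading of the geometry of $U$. The crucial observation you are missing is that a principal region $C_\tau$ of $\tau_0$ is \emph{compact} and lies entirely in the open surface $L$: by \Cref{def:principalregion} it is disjoint from the negative escaping set $\mathscr U_-$, and the arms of the corresponding principal region of $\Lambda_+$ are collapsed to cusps of $\tau_0$, so $C_\tau$ does not reach any positive end either. Consequently the suspended region $U$ sits entirely in $\mathring Q = M_f$, touching neither $R_+$ nor $R_-$. On $\mathring Q$ the depth one foliation $\mc F$ is literally a fibration over $S^1$, and since $V$ was constructed to be positively transverse to $\mc F$, the restriction of the fibration map $\mathring Q \to S^1$ to $U$ is already the map $g$ required by \Cref{def:circular}. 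That is the paper's proof in full; no decomposition, no Reeb pieces, and no gluing are needed.

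Your description of $\mc F|_U$ is where things go wrong: you write that the leaves of $\mc F$ ``spin toward the boundary $\u$-faces'' and that $\mc F$ restricts to a Reeb foliation on annuli $A_i \subset \partial U$. But the $\u$-faces of $U$ lie in $B^u$, not in $R_+$; the depth one foliation is \emph{transverse} to $B^u$, not tangent to it, so there is no spinning onto $\u$-faces. The leaves of $\mc F|_U$ are simply the compact regions $C_{\tau_\theta}$ for $\theta \in S^1$, giving an honest fibration. The Reeb-annulus machinery of \Cref{lem:reebcircular} is the tool the paper uses for the genuinely harder \emph{nonprincipal} case (\Cref{lem:circular}), where the $\u$-faces do meet $R_+$ and $\mc F$ does spiral; importing it here is unnecessary and the gluing issue you correctly identify as an obstacle is an artifact of the wrong setup.
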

\begin{proof}
If $U$ is a principal region of $B^u$, then the foliation of $U$ induced by $\mc F$ defines a map to $S^1$. Since $V$ is positively transverse to $\mc F$, we see that the induced vector field on $U$ is circular. The last claim follows immediately.
\end{proof}

By Handel-Miller theory, backward orbits from points in nonprincipal regions of $\LL^u$ terminate on $R_-(\ol M_f)$. We will need the corresponding fact for $(B^u, V)$.

\begin{lemma}[$V$ in nonprincipal regions]\label{lem:nonprincipal regionescaping}
Let $U$ be a nonprincipal region of $B^u$, and let $V_U$ be the vector field on $U$ induced by $V$. 
The branched surface in \Cref{prop:buexist} can be constructed so that the backward trajectory from each point in $U$ not lying in a $\u$-face ends on $R_-(\ol M_f)$.
\end{lemma}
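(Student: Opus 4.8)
The plan is to translate the $2$-dimensional fact from Handel--Miller theory, that backward orbits of points in the negative escaping set $\ms U_-$ limit onto negative ends, into a statement about the semiflow $\phi_V$ on the complementary region $U$. Recall the key surface-level inputs: by \Cref{lem:nojunctures} and \Cref{def:principalregion}, $L = \ms U_- \sqcup \Lambda_+ \sqcup \ms P_+$, and if $C_\Lambda$ is a component of $\ms U_-$, then by definition of $\ms U_-$ every point $q \in C_\Lambda$ has the property that $\{f^{-np}(q)\}$ escapes to a negative end. Under suspension, this says: for a point $x$ in a nonprincipal region of $\LL^u$, the backward $\phi_f$-orbit of $x$ escapes compact sets of $Q \setminus \partial_\pm Q$ toward $R_-$, and in fact (since $R_- = \ms U_-/\langle f\rangle$) terminates on $R_-(\ol M_f)$ in the compactified mapping torus.

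The main steps: (1) Identify the nonprincipal region $U$ of $B^u$ with the suspension construction: $U \cap L_0$ is a union of nonprincipal regions $C_\tau$ of $\tau_0$, each corresponding to a component $C_\Lambda$ of $\ms U_-$. (2) Analyze the backward flow of $V$ inside $U$. Since $V$ is positively transverse to $\mc F$ (property (v) of the construction, and \Cref{lem:principalregioncircular}-style reasoning), a backward $\phi_V$-trajectory starting in $U$ is monotone through the levels $L_n \to L_{n-1} \to \cdots \to L_0$, and each time it crosses back through $L_0 \cong L_n$ it applies $f^{-1}$ (up to the folding). The subtlety is that $V$ does \emph{not} have unique backward trajectories in general --- at branch loci the backward trajectory may split. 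But we only need to produce \emph{some} backward trajectory that terminates on $R_-$, or rather that \emph{every} backward trajectory from a point not on a $\u$-face does so; so we need: whichever sheet the backward trajectory follows at a branch locus, it stays in the nonprincipal region $U$ (nonprincipal regions are flow-invariant under the suspension since $f$ preserves $\ms U_-$), and the ``escaping to a negative end'' behavior of $f^{-1}$ on $C_\Lambda$ forces the trajectory out through $R_-$ after finitely many laps. (3) The phrase ``can be constructed so that'' signals we have freedom in the construction of $B^u$ inside $N(R_-)$ and in the core $K$: we should arrange the branched surface $B_A$ and the vector field $V_A$ so that $V$ in the negative escaping region simply agrees with the vector field generating $\phi_f$ (as was done for $V_N$ away from $B_N$ in the construction), so that backward $V$-orbits in $U$ literally are backward $\phi_f$-orbits of points in $\ms U_-$, which terminate on $R_- = \ms U_-/\langle f\rangle \subset \ol M_f$ by Handel--Miller theory.

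Concretely, I would argue: choose the core $K_0$ large enough that the folding sequence $f(\tau)=\tau_0 \to \cdots \to \tau_n = \tau$, which is supported in $K = L \cut N(\ms E_+)$, is in fact supported in the \emph{union of principal regions} plus a neighborhood of $\Lambda_+$ --- equivalently, so that the branch locus of $B^u$ meets no nonprincipal region $U$. This is possible because each fold $\tau_i \to \tau_{i+1}$ is the reverse of a $\Lambda_+$-split, hence happens at a large branch of $\tau_{i+1}$; large branches correspond to cusps that collide (\Cref{lem:collision}), and a cusp colliding with another cusp lies in the closure of $\Lambda_+$-carrying region, not in the interior of a nonprincipal complementary region. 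Thus after shrinking $N(\ms E_+)$ appropriately we may take $B^u \cap U$ to be a product $\tau_0\!\restriction_{C_\tau} \times [0,1]$ reglued by $f$, with $V|_U$ transverse to the $[0,1]$-factor and agreeing with $\phi_f$. Then a backward $V$-trajectory from $x \in U$ not on a $\u$-face projects to a backward $\phi_f$-orbit of the corresponding point of $\ms U_-$ on $L$; by the definition of $\ms U_-$ this orbit escapes to a negative end, so in $\ol M_f$ the trajectory terminates on $R_-(\ol M_f)$.

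The hard part will be step (2)--(3): pinning down exactly how much flexibility the construction of \Cref{prop:buexist} affords in the nonprincipal regions, and verifying that one can simultaneously keep $B^u$ veering (i.e.\ not break the other axioms) while arranging $V$ to coincide with $\phi_f$ there. In particular one must check that pushing all the folding activity out of nonprincipal regions does not force folds to accumulate or create bad complementary regions --- this should follow from efficiency of $\tau_0$ and finiteness of the folding sequence (\Cref{thm:sequenceexist}), together with \Cref{lem:collision} and \Cref{lem:principalcusps} identifying precisely which cusps are principal. Once that is in hand, the escaping statement is a direct consequence of the definition of the negative escaping set, with no further analysis needed.
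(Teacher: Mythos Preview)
Your proposal correctly identifies the underlying Handel--Miller input: points in a nonprincipal complementary region of $\LL^u$ lie in the suspension of $\ms U_-$, and backward $\phi_f$-orbits of such points terminate on $R_-$. However, the argument for transferring this to the vector field $V$ has a genuine gap.

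The core problem is that $\phi_f$ is tangent to the leaves of $\LL^u$, not to $B^u$. The $\u$-faces of a nonprincipal region $U$ are pieces of $B^u$, and there is no reason for $\phi_f$ to be tangent to them. Hence one cannot simply set $V|_U$ equal to the generator of $\phi_f$ and still have a continuous vector field on $Q$ that is tangent to $B^u$ (as required for $(B^u,V)$ to be an unstable dynamic branched surface). Your concrete step, ``take $B^u \cap U$ to be a product $\tau_0|_{C_\tau}\times[0,1]$,'' is also confused: $B^u \cap U$ is empty since $U$ is a complementary region, and the folds that create triple points of $\brloc(B^u)$ occur precisely at the nonprincipal cusps on $\partial U$ (these are the cusps that collide, by \Cref{lem:collision}), so the folding activity cannot be pushed away from $\partial U$.

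The paper resolves this mismatch by building a special standard neighborhood $\mc N(B^u)$ with the property that $\phi_f$ points \emph{into} $\mc N(B^u)$ along every nonprincipal boundary component. The horizontal boundary pieces of this neighborhood come from suspending negative juncture components that monotonically accumulate on the semi-isolated border leaves of $U$ (this uses \Cref{lem:juncaccum}); the vertical boundary pieces are rectangles near the cusp curves. Once $\mc N(B^u)$ is built, one redoes the construction of $V$ so that $V$ equals the generator of $\phi_f$ outside $\mc N(B^u)$ and so that backward trajectories inside $\mc N(B^u)\setminus B^u$ exit the neighborhood. Then any backward trajectory from $p\in U$ (not on a $\u$-face) either starts outside $\mc N(B^u)$ or exits it; once outside, it is a genuine backward $\phi_f$-orbit of a point in the negative escaping set and hence terminates on $R_-$. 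The construction of this inward-pointing neighborhood is the missing ingredient in your proposal.
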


\begin{proof}
In this proof we refer to each complementary region of a standard neighborhod of $B^u$ as ``principal" or ``nonprincipal" according to whether it is contained in a principal or nonprincipal region of $B^u$, respectively.

The hard part of the proof is the construction of a standard neighborhood $\mc N(B^u)$ of $B^u$ with the property that the Handel-Miller suspension flow points inward along each component of $\del \mc N(B^u)$ meeting a nonprincipal region. 

In this construction we make use of the fact that $f$ preserves geodesic juncture components (see \Cref{thm:HMrep}).

Recall from our construction of $B^u$ in \Cref{prop:buexist} that $N$ is a staircase neighborhood of $R_+(\ol M_f)$ and $A=\ol M_f\cut N$.

Let $U$ be a nonprincipal region of $B^u$, and let $U_\LL$ be the associated nonprincipal region of $\LL^u$.
Let $F$ be a $\u$-face of $U$. Then $F$ corresponds to a leaf $\ell$ of $\LL^u$ which borders $U_\LL$ and is carried by $F$. Let $\lambda$ be one component of $\ell\cap L$, which is evidently a semi-isolated leaf of $\Lambda_+$ (i.e.  leaves of $\Lambda_+$ do not accumulate on $\lambda$ from the side corresponding to $U_\LL$). By \cite[Theorem 6.5]{CCF19}, $\lambda$ is periodic with some period $p>0$. Since $U$ is a nonprincipal region, \Cref{lem:juncaccum} gives that there exists a negative juncture component $j$ such that the sequence $j, f^p(j), f^{2p}(j),\dots$, accumulates on $\lambda$ monotonically from the side corresponding to $U$. Let $\lambda_K$ be the component of $\lambda\cap K$ containing a periodic point; we are evidently free to assume that there is a component $j_0$ of $j\cap K$ such that $j_0$ is as close as we like to $\lambda_K$.

Let $H(F)'$ be obtained by flowing $j_0$ around $\ol M_f$ $p$ times. Since $f^p(j_0)$ is closer to $\lambda$ than $j$ is, $H(F)'$ can be homotoped slightly so that it is an annulus $H(F)$ transverse to $\phi_f$, and $\phi_f$ points through $H(F)$ toward $\ell$. See \Cref{fig:horizontalbdy}.
The letter $H$ indicates $H(F)$ will correspond to a component of the horizontal boundary of the standard neighborhood we are building.

\begin{figure}
    \centering
    \fontsize{8pt}{8pt}\selectfont
    \resizebox{!}{1.8in}{\import{basecase-fig}{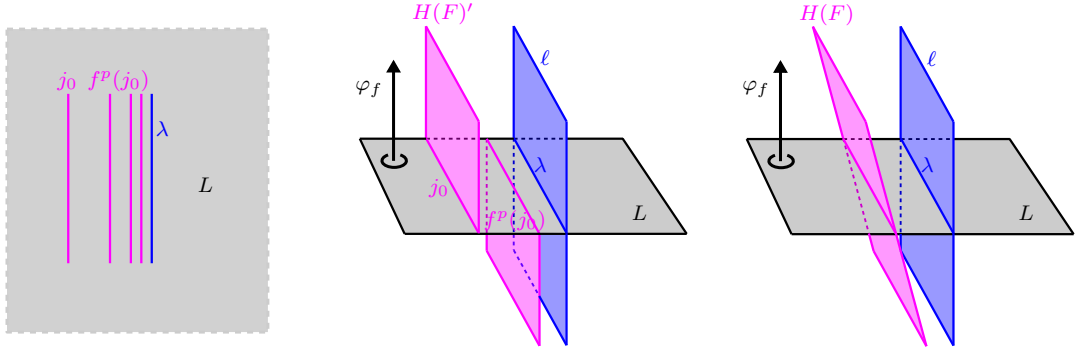}}
    \caption{An illustration of the construction of $H(F)$ in the proof of \Cref{lem:nonprincipal regionescaping}. In the center and right pictures, the flow $\phi_f$ is vertical.}
    \label{fig:horizontalbdy}
\end{figure}

Next, let $c$ be a $\u\u$-cusp curve of $U$. Since $U$ is nonprincipal, $c$ is an interval and not a circle (see \Cref{lem:collision}). There are two leaves of $\LL^u$, say $\ell_1$ and $\ell_2$, which correspond to the faces of $U$ adjacent to $c$. Let $V(c)$ be a rectangle embedded in $A$ which has one edge along $\ell_1$, an opposite edge along $\ell_2$, and the remaining two edges along $\del A$. Further, choose $V(c)$ to stay close to $c$. See \Cref{fig:verticalbdy}.

\begin{figure}
    \centering
    \resizebox{!}{2.5in}{\import{basecase-fig}{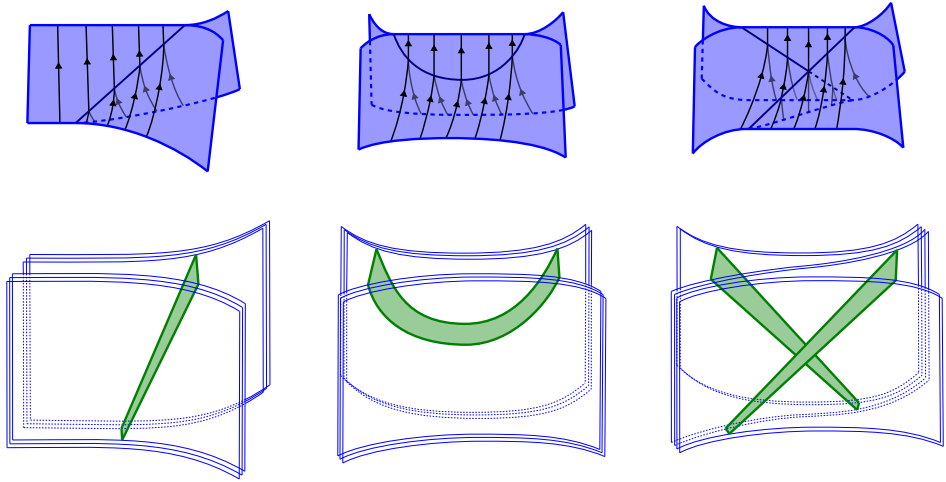}}
    \caption{Locally constructing $V(c)$ for different cusp curves $c$. In the bottom row, the flow $\phi_f$ is vertical.}
    \label{fig:verticalbdy}
\end{figure}

Let $\wh U_A$ be equal to the component of $(U\cap A)\cut \big((\bigcup V(c))\cup(\bigcup H(F)\big)$ containing $U\cap R_-(\ol M_f)$, where the unions are taken over all $\u\u$-cusp curves $c$ and $\u$-faces $F$ of $U$.

For each principal region $P$ of $B^u$, let $\wh P$ be equal to $P$ minus some (any) standard neighborhood of $B^u$.

Form $A\cut ((\bigcup \wh P)\cup (\bigcup \wh U_A))$, where the unions are taken over all principal regions and all nonprincipal regions. This is a neighborhood of $B^u|_A$ which we can make into a standard neighborhood by smoothing out corners and adding cusps near the nonprincipal cusp curves of $B^u$. We call the resulting standard neighborhood $\mc N_A(B^u)$. See \Cref{fig:intothemaw}.

\begin{figure}
    \centering
    \resizebox{!}{2.5in}{\import{basecase-fig}{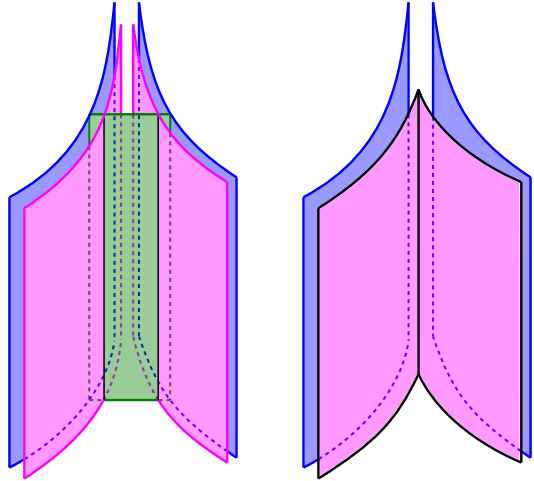}}
    \caption{The view from a nonprincipal region $U$ of $B^u$, looking at two leaves of $\LL^u$ (blue) coming together near a cusp curve of $B^u$ ($B^u$ not pictured). On the left, the darker pink sheets are parts of $H(F)$ for the corresponding faces of $B^u$, and the green rectangle is part of $V(c)$ for the cusp curve $c$. The boundary of $\wh U_A$ is formed by surgering and perturbing the pieces as shown.}
    \label{fig:intothemaw}
\end{figure}

Now we extend this neighborhood to $N$. Let $\mc N_N(B^u)$ be a standard neighborhood of $\LL^u|_N$ that lines up smoothly with $\mc N_A(B^u)$, with the additional property that $\phi_f$ points into $\mc N_N(B^u)$ at every point of $\del \mc N_N(B^u)$. Such a neighborhood exists because the flow is quite simple on $N$: each flow line is an interval. See \Cref{fig:nbhdinN}.

\begin{figure}
    \centering
    \fontsize{10pt}{10pt}\selectfont
    \resizebox{!}{2in}{
\begingroup%
  \makeatletter%
  \providecommand\color[2][]{%
    \errmessage{(Inkscape) Color is used for the text in Inkscape, but the package 'color.sty' is not loaded}%
    \renewcommand\color[2][]{}%
  }%
  \providecommand\transparent[1]{%
    \errmessage{(Inkscape) Transparency is used (non-zero) for the text in Inkscape, but the package 'transparent.sty' is not loaded}%
    \renewcommand\transparent[1]{}%
  }%
  \providecommand\rotatebox[2]{#2}%
  \newcommand*\fsize{\dimexpr\f@size pt\relax}%
  \newcommand*\lineheight[1]{\fontsize{\fsize}{#1\fsize}\selectfont}%
  \ifx\svgwidth\undefined%
    \setlength{\unitlength}{311.45796425bp}%
    \ifx\svgscale\undefined%
      \relax%
    \else%
      \setlength{\unitlength}{\unitlength * \real{\svgscale}}%
    \fi%
  \else%
    \setlength{\unitlength}{\svgwidth}%
  \fi%
  \global\let\svgwidth\undefined%
  \global\let\svgscale\undefined%
  \makeatother%
  \begin{picture}(1,0.40057234)%
    \lineheight{1}%
    \setlength\tabcolsep{0pt}%
    \put(0,0){\includegraphics[width=\unitlength,page=1]{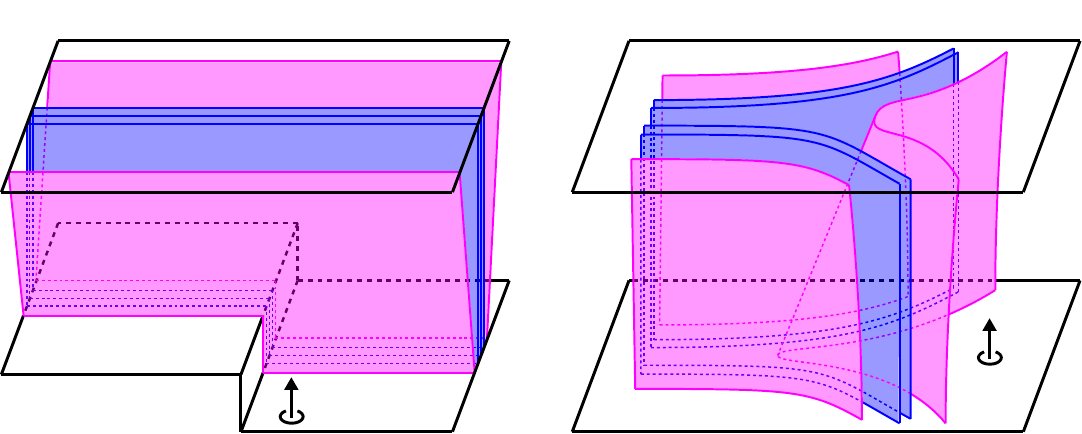}}%
    \put(0.08255573,0.07063518){\color[rgb]{0,0,0}\makebox(0,0)[lt]{\lineheight{1.25}\smash{\begin{tabular}[t]{l}$\partial A$\end{tabular}}}}%
    \put(0.56597115,0.0116447){\color[rgb]{0,0,0}\makebox(0,0)[lt]{\lineheight{1.25}\smash{\begin{tabular}[t]{l}$\partial A$\end{tabular}}}}%
    \put(0.05610894,0.37586495){\color[rgb]{0,0,0}\makebox(0,0)[lt]{\lineheight{1.25}\smash{\begin{tabular}[t]{l}$R_+(\overline{M_f})$\end{tabular}}}}%
    \put(0.58398409,0.37586495){\color[rgb]{0,0,0}\makebox(0,0)[lt]{\lineheight{1.25}\smash{\begin{tabular}[t]{l}$R_+(\overline{M_f})$\end{tabular}}}}%
    \put(0.28487742,0.01786005){\color[rgb]{0,0,0}\makebox(0,0)[lt]{\lineheight{1.25}\smash{\begin{tabular}[t]{l}$\phi_f$\end{tabular}}}}%
    \put(0.93050786,0.07243331){\color[rgb]{0,0,0}\makebox(0,0)[lt]{\lineheight{1.25}\smash{\begin{tabular}[t]{l}$\phi_f$\end{tabular}}}}%
  \end{picture}%
\endgroup%
}
    \caption{Construction of $\mc N_N$. The flow $\phi_f|_N$ is vertical in this picture.}
    \label{fig:nbhdinN}
\end{figure}

Let $\mc N(B^u)=\mc N_A(B^u)\cup \mc N_N(B^u)$. This is a standard neighborhood of $B^u$ with the promised property that $\phi_f$ points into $\mc N(B^u)$ along the boundary of each of its nonprincipal regions.

Now that we have the neighborhood $\mc N(B^u)$, we can redo the contruction of $(B^u, V)$ from \Cref{prop:buexist} so that $V$ is equal to the generating vector field for $\phi_f$ outside of $\mc N(B^u)$ and points into $\mc N(B^u)$ along the boundary of each nonprincipal region. Inside $\mc N(B^u)$ we are free to require that flow lines not lying in $B^u$ do not accumulate in $\mc N(B^u)$ in backward time.

Next we let $U$ be any nonprincipal region of $B^u$, and consider any point $p\in U$ not lying in a $\u$-face. If $p$ lies in $\mc N(B^u)$ then its backward orbit must exit $\mc N(B^u)$ by construction, so we can assume $p\notin \mc N(B^u)$. If we flow $p$ backward until it hits $L$ at a point $p'$, we see that $p'$ must lie in a nonprincipal region. Hence $p$ lies in the negative escaping set of $f$ by the definition of principal regions (\Cref{def:principalregion}), so the backward orbit must terminate on $R_-(\ol M_f)$.
\end{proof}

In light of \Cref{lem:nonprincipal regionescaping}, we will henceforth assume that $V$ has been chosen so that each backward trajectory in a nonprincipal regions not lying in a $\u$-face terminates on $R_-(\ol M_f)$.
We now begin an extended analysis of the nonprincipal regions of $(B^u,V)$, which will lead to the following proposition.

\begin{proposition}\label{lem:ucuspedproduct}
Each nonprincipal region of $(B^u, V)$ is a $\u$-cusped product with no $\u\u$-cusp circles. 
\end{proposition}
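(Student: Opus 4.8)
## Proof plan for \Cref{lem:ucuspedproduct}

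The strategy is to verify the hypotheses of the recognition criterion \Cref{lem:ucusprecog} for each nonprincipal region $U$ of $(B^u,V)$, and then separately check the remaining axioms (e) and (f)--(g) in \Cref{defn:cuspedproduct} that are not part of that lemma's conclusion. Throughout, recall that $(B^u,V)$ was built by suspending an $f$-periodic folding sequence, that $V$ is positively transverse to $\mc F$, and that by \Cref{lem:nonprincipal regionescaping} (in the strengthened form we have adopted) every backward trajectory in $U$ not lying in a $\u$-face terminates on $R_-(\ol M_f)$.

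First I would check the four numbered hypotheses of \Cref{lem:ucusprecog}. Hypothesis (1), that $U$ has no $\s$-faces, is immediate since $B^u$ is an unstable dynamic branched surface and so meets only $R_-$ and the unstable locus --- there is no stable branched surface present. Hypothesis (2), that each backward trajectory not lying in a $\u$-face terminates on an $\m$-face, is exactly the conclusion of \Cref{lem:nonprincipal regionescaping} together with the observation that $R_-(\ol M_f)$ provides the $\m$-faces of $U$ (the suspension semiflow, and hence $V$, points into $Q$ along $R_-$). Hypothesis (4), no $\p\m$-edges, follows because in our construction $B^u$ does not meet $\gamma$ or $R_-$ except where the nonprincipal regions touch $R_-$, and there the local model is the staircase/product model of \Cref{fig:ttsplitting} --- no face labeled $\p$ is adjacent to $R_-$. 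Hypothesis (3), that each $\b$-face of $U$ is an annulus bounded by one $\p\b$-circle and one $\m\b$-circle, is where I expect to spend real effort: the $\b$-faces of a nonprincipal region come from the annular sutures $A(\gamma)$ of $\ol M_f$ and from the subdivided vertical boundary $\del S \times [0,1]$ of the staircase neighborhood $N(R_+)$; one must trace through \Cref{construction:junctures} and the staircase structure (\Cref{fig:staircase}) to see that each such annulus has precisely one boundary circle on $R_+$ (the $\p$-side) and one on $R_-$ (the $\m$-side), using that $f^p|_\ell$ on a noncompact boundary component $\ell$ runs between ends of opposite sign with no fixed points (\Cref{def:endperiodic}(b)). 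Finally, the hypothesis of \Cref{lem:ucusprecog} that each $\m$-face have nonpositive index: the $\m$-faces are subsurfaces of $R_- = \U_-/\langle f\rangle$, and the relevant components are the complementary regions of $\Lambda_-^\infty$ cut by the boundary train track pattern; since $\Lambda_+$ is geodesic, all principal regions of $\Lambda_+$ have negative index, and the nonprincipal complementary regions (pieces of the negative escaping set) have nonpositive index as well --- this is where I would invoke \Cref{lem:bdylams} and the index additivity of \Cref{sec:ttlam}. Applying \Cref{lem:ucusprecog} then gives that $U \cong S\times[0,1]$ with $\ind S \le 0$ and that axioms (a)--(d) of a $\u$-cusped product hold.

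It remains to verify axioms (e), (f), (g) of \Cref{defn:cuspedproduct} and that there are no $\u\u$-cusp circles. The absence of $\u\u$-cusp circles is the key geometric input and follows from \Cref{lem:collision}: a $\u\u$-cusp curve of $U$ corresponds to a cusp of $\tau_0$, and in a \emph{nonprincipal} region that cusp is nonprincipal, hence collides with another cusp by \Cref{lem:collision}, so in the branched surface the corresponding branch arc is genuinely an arc (it terminates at a triple point coming from the collision) rather than a closed loop --- I would phrase this as: the folding sequence resolves every nonprincipal cusp in finite time, so no nonprincipal cusp persists forever under $\phi_f$, ruling out circular cusp curves. Given this, axioms (f) (circularity of $V$ on each $\u$-face) and (g) (incoherence of $\u\u$-cusp circles) are vacuous or follow trivially: the $\u$-faces adjacent to nonprincipal regions are not annuli in the relevant sense --- more precisely, by \Cref{rmk:annulusfaces} a $\u$-face not incident to an $\s$-face is an annulus or torus, but the $\u$-faces here are incident to the $\m$-faces of $U$ along corner edges coming from $R_-$, so there is nothing to check; and (g) is empty since there are no $\u\u$-cusp circles. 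Axiom (e), that each $\p$-face has nonpositive index, is handled just as the $\m$-face index computation above, using that the $\p$-faces are subsurfaces of $R_+ = \U_+/\langle f\rangle$ and that $\Lambda_+$ is geodesic with all principal regions of negative index.

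The main obstacle will be hypothesis (3) of \Cref{lem:ucusprecog}: correctly identifying which annuli in $\del U$ are $\b$-faces and checking that each is bounded by exactly one $\p\b$-circle and one $\m\b$-circle. This requires carefully unwinding how the annular sutures of $\ol M_f$ interact with the staircase neighborhood $N(R_+)$ and with the nonprincipal regions, and in particular using that the noncompact boundary components of $L$ run between ends of opposite sign (so their suspensions give honest sutures, not Reeb annuli) --- the structure from \Cref{construction:junctures} and \Cref{sec:endperiodictosutured} is exactly what makes this work, and I would draw the relevant picture (a refinement of \Cref{fig:intothemaw} near a $\b$-face) to make the argument precise. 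A secondary subtlety is making sure the index bounds on $\p$- and $\m$-faces really do follow from geodesy of $\Lambda_\pm$ in the quotient surfaces $\U_\pm/\langle f\rangle$ rather than just in $L$; here one uses that a standard hyperbolic metric on $L$ descends appropriately, or equivalently that the relevant complementary regions upstairs and downstairs have the same index by index additivity.
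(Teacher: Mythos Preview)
Your overall strategy matches the paper's: invoke \Cref{lem:ucusprecog} to get (a)--(d), then verify (e)--(g) separately. However, there is a genuine gap in your treatment of axiom (f), and a smaller conceptual error about the $\m$-faces.

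The serious problem is your dismissal of circularity. You claim that ``the $\u$-faces here are incident to the $\m$-faces of $U$ along corner edges coming from $R_-$, so there is nothing to check.'' This is false. In a $\u$-cusped product $S\times[0,1]$, the $\u$-faces lie in $S\times\{1\}$ while the unique $\m$-face is $S\times\{0\}$; they are separated by the $\b$-faces $\partial S\times[0,1]$ and never share an edge. Consequently \Cref{rmk:annulusfaces} applies in full force and every $\u$-face $F$ of $U$ \emph{is} an annulus, and the circularity of $V|_F$ is a genuine condition you must establish. In the paper this is not trivial at all: it occupies \Cref{lem:spanningleaf} (showing the foliation $\mc F|_F$ has a spanning leaf) and \Cref{lem:circular} (embedding $F$ in a larger annulus to see that $\mc F|_F$ extends to a Reeb foliation, then invoking \Cref{lem:reebcircular}). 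Without these arguments your proof is incomplete.

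A smaller point: your description of the $\m$-faces as ``complementary regions of $\Lambda_-^\infty$ cut by the boundary train track pattern'' is wrong, because $B^u$ does not meet $R_-$ at all (a very full unstable branched surface is disjoint from $R_-\cup\gamma$). The $\m$-faces of $U$ are simply whole components of $R_-(\ol M_f)$, and their nonpositive index follows directly from the hypothesis that $Q$ is a depth one sutured manifold (no sphere, disk, or torus components of $R_\pm$). Likewise the $\p$-faces have nonpositive index because they are complementary regions of the \emph{efficient} boundary train track $T_+^\infty$, not because of any geodesic argument. Your instinct to invoke geodesy of $\Lambda_\pm$ here is misplaced; efficiency of $T_+^\infty$ is what does the work for (e).
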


\begin{figure}
    \centering
    \resizebox{!}{1.5in}{\import{basecase-fig}{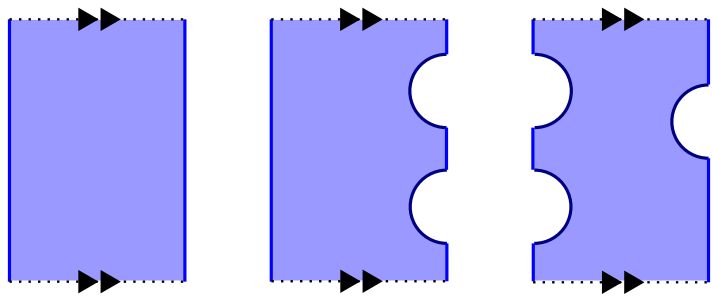}}
    \caption{Possible $\u$-faces of a component of $Q\cut B^u$ containing a component of $R_-$.The blue pieces of boundary indicate parts of the branch locus of $B^u$, while the black pieces lie on $R_+$.}
    \label{fig:ufaces}
\end{figure}

Fix a nonprincipal region $U$ of $B^u$, and let
$F$ be a $\u$-face of $U$. 
By \Cref{rmk:annulusfaces}, $F$ is an annulus. 
Hence $F$ is of one of the forms shown in \Cref{fig:ufaces};
that is, each boundary component either lies entirely on $R_+$ or alternates between lying on $R_+$ and lying in $\brloc(B^u)$. Each segment of $\del F$ lying in $\brloc(B^u)$ must have a source orientation with a single source in its interior. If a component of $\del F$ does not lie completely in $R_+$ we say that component is \textbf{scalloped}.

Recall that $\mc F$ is the depth one foliation of $Q$. Let $\{L_\theta\mid \theta\in S^1\}$ denote the collection of leaves in $\intr (Q)$, each isotopic to $L$. For each $\theta\in S^1$, let $\tau_\theta$ be the intersection $B^u\cap L_\theta$.

By intersecting with $\mc F$, we obtain a cooriented foliation $\mc F_F$ of $F$. The foliation $\mc F$ is compatible with $\del F$ in the sense that 
\begin{itemize}
    \item the pieces of $F$ along $R_+$ are leaves of $\mc F_F$,
    \item $\mc F_F$ is tangent to $\del F$ at the sources, and
    \item $\mc F_F$ is positively transverse to $\del F$ at all oriented points of $\brloc(B^u)\cap F$.
\end{itemize}
Furthermore, a given leaf of $\mc F_F$ is tangent to at most one source in $\del F$. This is because in our folding sequence, folds are performed one at a time.

Fix $\theta\in S^1$, and let $\gamma$ be a component of $L_\theta\cap F$. In $L_\theta$, $\gamma$ is a piece of the boundary of a nonprincipal region of $\tau_\theta$. Note that $\gamma$ cannot be a circle, or else $\Lambda_+$ would have a circular leaf.
Since $\gamma$ is a 1-manifold, it is homeomorphic to either $[0,1]$, $[0,\infty)$, or $(-\infty,\infty)$.

We can see that $\gamma$ does not accumulate in $\intr(F)$, for otherwise $L_\theta$ would accumulate in $\intr (Q)$, a contradiction.
As a consequence, if $\gamma$ is noncompact then each end of $\gamma$ (there may be one or two) must accumulate on a component of $\del F$. It is not possible for $\gamma$ to accumulate on $\brloc(B^u)\cap F$ since $\gamma$ must be transverse to $\brloc(B^u)$ except at sources. We conclude that each neighborhood $\nu\cong [a,\infty)$ of an end of $\gamma$ limits on a boundary component of $F$ lying entirely on $R_+$, and hence that $\nu$ limits on $R_+$. We remark that when viewed as a curve in $L_\theta$, the end-neighborhood $\nu$ must be an escaping ray.

Let $\del_1$ and $\del_2$ be the two boundary components of $F$. We say that a leaf $\lambda$ of $\F_F$ \textbf{spans} $F$ if for each $\del_i$ ($i=1,2$), $\lambda$ either spirals onto $\del_i$ (if $\del_i$ is nonscalloped) or terminates on $\del_i$.

\begin{lemma} \label{lem:spanningleaf}
There exists a leaf of $\F_F$ spanning $F$.
\end{lemma}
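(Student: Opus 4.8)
The plan is to analyze the foliation $\mc F_F$ on the annulus $F$ directly, using the structural constraints recorded just before the statement. First I would observe that $\mc F_F$ is a cooriented singular foliation of an annulus whose only singularities are the finitely many sources on the scalloped boundary components; away from these it is transverse to $\del F$ (positively, at the oriented points of $\brloc(B^u)\cap F$) and tangent to $\del F$ exactly along the pieces of $\del F$ lying on $R_+$. The key dichotomy for a leaf $\lambda$ of $\mc F_F$ is: either $\lambda$ is compact (a properly embedded arc or a closed leaf) or $\lambda$ is noncompact. I would first rule out closed leaves: a closed leaf of $\mc F_F$ would be a closed curve transverse to $\mc F$ in $Q$ lying inside $F$, hence its core would be carried by $B^u$ as a circle, which combined with circularity of $V$ on $F$ would force $\Lambda_+$ to have a circular leaf — contradicting that $\Lambda_+$ is a geodesic lamination with no closed leaves (or, more directly, contradicting that $F$ is an annulus face with nonprincipal $U$, via \Cref{lem:collision}). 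So every leaf of $\mc F_F$ is an arc, and each of its ends either limits on a source, spirals onto a nonscalloped boundary component, or exits through a transverse point of $\del F$.

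Next I would set up a "first-return" / limiting argument along the annulus. Parametrize $F$ so one boundary component is $\del_1$ and the other $\del_2$. Consider the holonomy of $\mc F_F$: starting from a transverse arc near $\del_1$ and flowing along leaves. Because $F$ is compact and the foliation has only source singularities on the boundary, any leaf that does not eventually return to $\del F$ transversally must accumulate, and — by the same argument as in the paragraph preceding the lemma (an accumulating leaf forces $L_\theta$ to accumulate in $\intr(Q)$, impossible) — the accumulation must be on a nonscalloped component of $\del F$, onto which the leaf then spirals. So I would argue: start a leaf $\lambda$ at a point of $\del_1$; follow it; in the "good" direction it either spirals onto $\del_1$ (if nonscalloped) or limits into the interior and hence spirals onto a boundary component, or it reaches $\del_2$. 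A standard exhaustion/limiting argument (take a sequence of leaves, extract a limit leaf) produces a leaf whose behavior at \emph{both} ends is of spanning type. Concretely: the set of leaves that "escape to $\del_2$" (terminate on or spiral onto $\del_2$) is open; if it is also closed, a connectedness argument gives a leaf spanning $F$; if it is not closed, the limit leaf of a boundary sequence spirals onto a nonscalloped component or limits to a source, and I would argue that in the source case one can push to an adjacent leaf still heading toward $\del_2$, iterating to eventually land on a spanning leaf. The finiteness of sources is what makes this terminate.

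The main obstacle, I expect, is handling the sources cleanly: a leaf can run into a source on one boundary component, which interrupts the naive holonomy, so the "span both ends" conclusion requires carefully tracking what happens to leaves on either side of a source and showing that only finitely many leaves are "trapped" by sources (one leaf per source, by the observation that a leaf is tangent to at most one source in $\del F$). I would organize this as: there are finitely many \textbf{separatrix leaves} emanating from sources; they cut $F$ into finitely many subsurfaces on each of which $\mc F_F$ is a nonsingular transversely-foliated band, and on each such band the holonomy is a genuine map; a leaf spanning $F$ is then found inside an appropriate band (the one whose two "ends" touch $\del_1$ and $\del_2$ respectively, which exists because $F$ is an annulus and the coorientations force the bands to chain around from $\del_1$ to $\del_2$). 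The circularity of $V$ on $F$ from \Cref{lem:principalregioncircular}-style reasoning (or rather its analogue: $\mc F$ gives a map $U\to S^1$, so $\mc F_F$ has no Reeb-type obstruction) is what guarantees the chain of bands actually connects $\del_1$ to $\del_2$ rather than closing up on one side. Once that band is identified, any leaf in its interior spans $F$, proving the lemma.
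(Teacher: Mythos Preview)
Your proposal has a genuine gap in the crucial final step. You write that ``the circularity of $V$ on $F$'' (or the absence of a Reeb-type obstruction for $\mc F_F$) is what guarantees the chain of bands connects $\del_1$ to $\del_2$. But circularity of $V$ on the $\u$-faces of nonprincipal regions is exactly what \Cref{lem:circular} establishes, and that lemma \emph{uses} \Cref{lem:spanningleaf} as its main input. So as written your argument is circular. Your parenthetical alternative---that $\mc F$ gives a map $U\to S^1$---works for \emph{principal} regions (this is \Cref{lem:principalregioncircular}), but $U$ here is nonprincipal and meets $R_-$, where the fibration $\intr(Q)\to S^1$ does not extend; you cannot simply transplant that reasoning.

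Beyond this, the holonomy/limiting sketch is too vague to carry weight: you never specify what the open and closed conditions are, nor why the limit leaf behaves as you need near sources. The paper's proof avoids all of this by a direct coorientation argument. In the nonscalloped case it takes a leaf $\lambda$ spiraling onto $\del_1$ and observes that the coorientation prevents $\lambda$ from spiraling back onto $\del_1$ at its other end, forcing it toward $\del_2$. In the doubly-scalloped case it looks at the $k$ separatrix leaves $\lambda_1,\dots,\lambda_k$ through the sources on $\del_1$, splits each into a clockwise half $\lambda_i^\circlearrowright$ and a counterclockwise half $\lambda_i^\circlearrowleft$, and observes that since $\mc F_F$ is positively transverse to the oriented parts of $\brloc(B^u)$, each $\lambda_i^\circlearrowright$ can only land on a \emph{counterclockwise} segment of $\del_1$ (if it returns to $\del_1$ at all). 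If every $\lambda_i^\circlearrowright$ returns to $\del_1$, cutting along all of them leaves an annular piece $F'$ containing $\del_2$ with no clockwise segments of $\del_1$ remaining; the counterclockwise half of the separatrix on $\del F'$ then has nowhere to land on $\del_1$ and must reach $\del_2$. This is the concrete combinatorial mechanism you are missing.
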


\begin{proof}
Suppose that $\del_1$ is nonscalloped. Since $\F_F$ has no circle leaves, there must be leaves spiraling onto $\del_1$; let $\lambda$ be such a leaf. Choose an orientation for $\lambda$ and suppose that $\lambda$ spirals on $\del_1$ in the backward direction. In the forward direction, the coorientation of $\lambda$ prevents $\lambda$ from spiraling on $\del_1$. Also, as noted above $\lambda$ cannot accumulate in $\intr(F)$. Thus we see $\lambda$ must either accumulate on $\del_2$ if $\del_2$ is nonscalloped or terminate on $\del_2$ if $\del_2$ is scalloped. 

It remains to show only that there exists a spanning leaf when $\del_1$ and $\del_2$ are both scalloped. Suppose that $\del_1$ is scalloped and contains $k$ sources of $\brloc(B^u)$. Let $\lambda_1,\dots, \lambda_k$ be the leaves of $\F_F$ which are tangent to the sources. Choose an orientation of $\del_1$, which we will refer to as \emph{clockwise}. Thus every oriented portion of $\brloc(B^u)\cap F$ is either \emph{clockwise} or \emph{counterclockwise}.
If $s_i$ is the source contained in $\lambda_i$, then $\lambda_i\cut s_i$ consists of 2 components. Orient each of these components \emph{away} from $s_i$, and let  $\lambda_i^\circlearrowright$ and $\lambda_i^\circlearrowleft$ denote the components whose respective orientations at $s_i$ are clockwise and counterclockwise, respectively. 

If $\lambda_i^\circlearrowright$ terminates on $\del_2$, then it must be the case that $\lambda_i^\circlearrowleft$ terminates on $\del_1$. Indeed, if $\lambda_i$ had both its endpoints on $\del_2$ then an index argument shows that there would be a component of $F\cut \lambda_i$ on which $\mc F_F$ is singular, a contradiction. We conclude that if $\lambda_i^\circlearrowright$ terminates on $\del_2$, then $\lambda_i$ spans $F$.

Now suppose that none of $\lambda_1^\circlearrowright,\dots,\lambda_k^\circlearrowright$ terminates on $\del_2$, whence they all terminate on $\del_1$. Then because the coorientation of $\F_F$ is compatible with the orientation of $\brloc(B^u)$, each $\lambda_i^\circlearrowright$ must terminate on a counterclockwise portion of $\brloc(B^u)$ (recall that each $\lambda_i$ contains at most one source). Consider $F'=F\cut (\bigcup_{i=1}^k \lambda_i^\circlearrowright)$. By assumption, there exists an annular component of $F'$ containing $\del_2$. Without loss of generality, assume that $\lambda_1^\circlearrowright\subset \del F'$. Then $\lambda_1^\circlearrowleft$ must either terminate on a clockwise portion of $\del_1$ or terminate on $\del_2$. 
Observe that $F'$ contains no clockwise portions of $\del_1$, since each clockwise component of $\del F$ is separated from $F'$ by some $\lambda_i^\circlearrowright$. 
We conclude that $\lambda_1^\circlearrowleft$ terminates on $\del_2$, and that $\lambda_1$ spans $F$.
\end{proof}

\begin{lemma}\label{lem:circular}
The vector field $V_F$ is circular.
\end{lemma}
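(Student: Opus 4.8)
\textbf{Proof plan for \Cref{lem:circular}.}

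The goal is to show that the vector field $V_F$ on the $\u$-face $F$ (an annulus, by \Cref{rmk:annulusfaces}) is circular in the sense of \Cref{def:circular}, i.e. there is a map $g\colon F\to S^1$ that is monotonic along every $V_F$-trajectory. The natural candidate for $g$ is built from the cooriented foliation $\mc F_F = \mc F \cap F$ together with the structure of $\del F$: the idea is to perturb $\mc F_F$ to a genuine fibration $F\to S^1$ to which $V_F$ is positively transverse, exactly as in the proof of \Cref{lem:reebcircular}, and then compose with that fibration. The reason one expects this to work is \Cref{lem:spanningleaf}: there exists a leaf $\lambda$ of $\mc F_F$ spanning $F$, which rules out the possibility that $\mc F_F$ contains a subannulus foliated as a genuine product factor whose leaves are compact circles, and controls how leaves terminate on or spiral into the two boundary components $\del_1,\del_2$.

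First I would record the compatibility of $\mc F_F$ with $\del F$ already noted in the text just before \Cref{lem:spanningleaf}: the pieces of $\del F$ along $R_+$ are leaves of $\mc F_F$; $\mc F_F$ is tangent to $\del F$ at the (finitely many) sources of $\brloc(B^u)\cap F$; and $\mc F_F$ is positively transverse to $\del F$ at all oriented points of $\brloc(B^u)\cap F$. Since $V$ is positively transverse to $\mc F$ throughout $Q$ by construction (property (v) of the $B_i$ and the fact that $V_N$ generates $\phi_f$, which is transverse to $\mc F$), $V_F$ is everywhere positively transverse to $\mc F_F$ — i.e. $V_F$ is never tangent to a leaf of $\mc F_F$. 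Next I would use \Cref{lem:spanningleaf} to extract a spanning leaf $\lambda_0$ and cut $F$ along (a closed-up version of) $\lambda_0$: because $\lambda_0$ either spirals onto or terminates on each $\del_i$, cutting along it produces a disk (a topological rectangle, possibly with cusped corners coming from sources). On this cut-open piece $\mc F_F$ restricts to a foliation of a disk transverse to $V_F$ whose leaf space is an interval, and one can put a product structure making $V_F$ monotonically transverse.

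The key step — and the main obstacle — is to re-glue this product structure around the annulus so that the resulting map to $S^1$ is still monotonic along $V_F$, which amounts to checking that there is no ``Reeb-type'' obstruction: no subannulus of $F$ on which $\mc F_F$ has a compact leaf bounding a half-open sub-annulus foliated by non-compact leaves spiraling the wrong way, which is precisely what \Cref{lem:reebcircular} is designed to handle. Concretely, I would argue: place a Riemannian metric on $F$; by compactness $V_F$ makes angle at least some $\epsilon>0$ with the tangent distribution of $\mc F_F$ everywhere (including at the tangency points with $\del F$, handled by working slightly inside $F$); perturb the tangent distribution of $\mc F_F$ within angle $<\epsilon$ to a foliation $\mc F_F'$ whose leaves are all properly embedded arcs or circles and which is still positively transverse to $V_F$ and still compatible with $\del F$ in the above sense — the spanning leaf guarantees $\mc F_F'$ has no compact circle leaves (a compact circle leaf of $\mc F_F'$ would be parallel to $\del_i$ and would have to intersect a spanning leaf of $\mc F_F$, hence of the nearby $\mc F_F'$, transversally, which is impossible for two leaves of the same foliation). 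Then the leaf space of $\mc F_F'$ is a circle, the quotient map $g\colon F\to S^1$ is well-defined, and monotonicity of $g$ along $V_F$-trajectories follows from positive transversality. This establishes that $V_F$ is circular. I expect the delicate point to be ensuring the perturbation near the finitely many source tangencies on $\del F$ keeps $V_F$ positively transverse; this is a purely local adjustment, pushing the perturbation to be supported in the interior of $F$ and constant near $\del F$, and does not affect the global combinatorics because each leaf of $\mc F_F$ is tangent to at most one source (as noted in the text, since folds are performed one at a time).
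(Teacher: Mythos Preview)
Your proposal has the right ingredients — positive transversality of $V_F$ to $\mc F_F$, the spanning leaf from \Cref{lem:spanningleaf}, and a perturbation in the spirit of \Cref{lem:reebcircular} — but the direct perturbation on $F$ runs into genuine difficulties that you have not resolved. First, your ``cut along the spanning leaf to get a disk'' step fails when a boundary component $\del_i$ is nonscalloped: the spanning leaf then \emph{spirals} onto $\del_i$, so it is not properly embedded and cutting along it does not produce a compact disk with interval leaf space. Second, the perturbation argument is not justified: near a nonscalloped boundary circle (itself a leaf of $\mc F_F$) a small perturbation of the tangent distribution supported in $\intr(F)$ does not make nearby spiraling leaves properly embedded — they still accumulate on that boundary circle. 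And your claim that $\mc F_F'$ has no circle leaves rests on the spanning leaf of $\mc F_F$ persisting as a spanning leaf of $\mc F_F'$, which you have not established.

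The paper's proof sidesteps all of this with one move you are missing: it first embeds $F$ into a larger \emph{smooth} annulus $A$, sending $\del F\cap R_+$ into $\del A$ and the scalloped arcs of $\del F$ into $\intr(A)$. The spanning leaf $\lambda$ is then extended, by alternately appending segments in $A\setminus F$ and segments of other $\mc F_F$-leaves, to an embedded line $\ol\lambda\subset A$ with both ends spiraling onto $\del A$. One extends $\mc F_F$ and $V_F$ to $A$ so that $\ol\lambda$ and the components of $\del A$ are leaves; the spiraling of $\ol\lambda$ forces the extended foliation to be a Reeb foliation, and \Cref{lem:reebcircular} applies directly to give circularity of $V_A$, which restricts to circularity of $V_F$. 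The embedding converts the awkward boundary tangencies at sources and the scalloped edges into the clean Reeb picture on a smooth annulus, which is exactly the setting in which your perturbation idea works.
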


\begin{proof}
Let $A$ be an annulus with smooth boundary, and choose an embedding of $F$ into $A$ so that $\del F\cap R_+$ is mapped into $\del A$ and the interior of each component of $\del F\cap \brloc(B^u)$ is mapped to $\intr(A)$. See \Cref{fig:AandF}, left. 
\begin{figure}
    \centering
    \resizebox{!}{1.4in}{\import{basecase-fig}{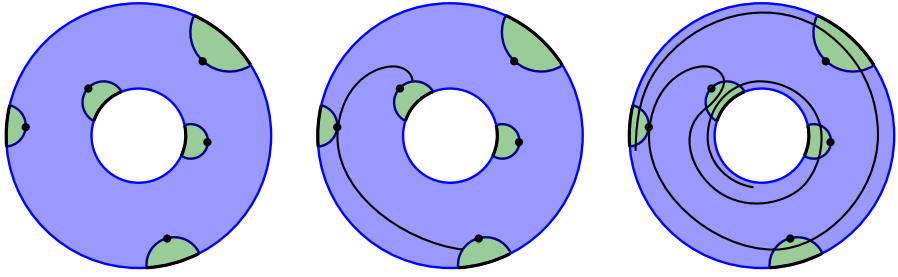}}
    \caption{\emph{Left}: embedding $F$ (purple) in a larger annulus $A$ without corners. \emph{Center}: a spanning leaf $\lambda$ as furnished by \Cref{lem:spanningleaf}. \emph{Right}: extending $\lambda$ to an embedded cooriented line $\ol \lambda$ by adding segments which are alternately paths in $A-F$ and leaves of $\F_F$ so that both ends of $\ol \lambda$ spiral onto $\del A$ compatibly with the coorientation of $\del A$. Coorientations are not drawn.}
    \label{fig:AandF}
\end{figure}
By \Cref{lem:spanningleaf}, there exists a leaf $\lambda$ of $\mc F|_F$ spanning $F$. We can extend $\lambda$ to an embedded copy $\ol \lambda$ of $\R$ with both ends spiraling onto $\del A$ by adding to $\lambda$ line segments which are alternately contained in $A-F$ and in leaves of $\F_F$. See \Cref{fig:AandF}, right. 

Next, we may extend $\F_F$ to a foliation $F_A$ of $A$ such that $\ol \lambda$ is a leaf, and such that each component of $\del A$ is also. Moreover, $V$ extends to a vector field $V_A$ on $A$ transverse to $\F_A$. The spiraling of $\ol \lambda$ forces $\F_A$ to be a Reeb foliation. By \Cref{lem:reebcircular} $V_A$ is circular, so $V_F$ must also be circular.
\end{proof}

Now we can prove \Cref{lem:ucuspedproduct} as promised, which states that each nonprincipal region of $(B,V)$ is a $\u$-cusped product.

\begin{proof}[Proof of \Cref{lem:ucuspedproduct}]
We will start by using Mosher's $\u$-cusped product recognition lemma, \Cref{lem:ucusprecog}.

Let $P$ be a nonprincipal region. We will check that $P$ satisfies the hypotheses of Mosher's $\u$-cusped product recognition lemma, \Cref{lem:ucusprecog}; namely the conditions labeled (1-4) and the property that each $\m$-face has nonpositive index. Evidently $P$ contains no $\s$-faces, so condition (1) is satisfied. Recall that $V$ was chosen so that each backward orbit of the $V$-semiflow not contained in a $\u$-face terminates on $R_-$, thus condition (2) holds. Every $\b$-face of $P$ comes from the orbit of a boundary component of $L$ that accumulates on $R_-$. This orbit can either consist of finitely many line boundary components, each with one end in a positive end of $L$ and one end in a negative end of $L$; or of infinitely many compact boundary components that escape into the positive and negative ends under positive and negative iteration of $f$. In either case, the corresponding $\b$-face is an annulus with boundary consisting of one $\p\b$-circle and one $\m\b$-circle; this shows (3) holds. Also, note that there are no $\p\m$-edges since components of $R_+(\ol M_f)$ and $R_-(\ol M_f)$ are never adjacent, so (4) holds.
Further, each $\m$-face of $P$ is a component of $R_-(\ol M_f)$, so has nonpositive index. Hence we can apply \Cref{lem:ucusprecog} to conclude that $P$ is homeomorphic to $S\times [0,1]$ for some component $S$ of $R_-(\ol M_f)$, and $P$ satisfies (a-d) in the definition of $\u$-cusped product (\Cref{defn:cuspedproduct}). 

It remains to check the following conditions from \Cref{defn:cuspedproduct}: that (e) each $\p$-face has nonpositive index, (f) $V$ is circular on each $\u$-face, and (g) each $\u\u$-cusp circle is incoherent.
For (e), note that each $\p$-face is a complementary component of $B^u\cap R_+=T_+^\infty$, which is an efficient train track; hence each $\p$-face has nonpositive index. Condition (f) is satisfied by \Cref{lem:circular}. Finally, for (g), note that a $\u\u$-cusp circle of a complementary region of $B^u$ corresponds to a cusp in our construction's train track splitting sequence which experiences no collisions. By \Cref{lem:collision}, such a cusp corresponds to a principal region, so we conclude that the nonprincipal region $P$ is a $\u$-cusped product with no $\u\u$-cusp circles as claimed.
\end{proof}

\subsection{Annulus and M\"obius band sectors of $B^u$} \label{subsec:sectorannulusMobius}

In \Cref{subsec:depthonetovbs} and \Cref{subsec:vbstodp} it will be important to understand how annulus and M\"obius band sectors of $B^u$ without corners arise in our construction.

\begin{lemma}\label{lem:sectorannulus}
Suppose that $A$ is an annulus sector of $B^u$ with no corners. Then $A$ either:
\begin{enumerate}[label=(\roman*)]
    \item is a face of a  principal region of $B^u$, or
    \item has one boundary component on $R_+$ and another on a cusp circle of a principal region of $B^u$, or
    \item has both its boundary components on $R_+$.
\end{enumerate}
Furthermore, $A$ is adjacent to at least one nonprincipal complementary region of $B^u$.
\end{lemma}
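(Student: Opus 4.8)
\textbf{Plan for the proof of \Cref{lem:sectorannulus}.}

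The plan is to trace an annulus sector $A$ with no corners back through the construction of $B^u$ in \Cref{sec:bsconstruct}. Since $A$ has no corners, it is disjoint from the branch arcs and only meets branch loops of $\brloc(B^u)$; its boundary components lie either on $R_+$ or on branch loops (equivalently, circular components of $\brloc(B^u)$). First I would recall that a branch loop of $B^u$ arises in the construction from a cusp of the splitting sequence $(\tau_\theta)$ that never collides, i.e. from a periodic cusp whose $\Lambda_+$-route fellow travels a closed route; by \Cref{lem:collision} (and \Cref{lem:principalcusps}, \Cref{lem:fellowtravel}) such a cusp is principal, so every branch loop of $B^u$ is a $\u\u$-cusp circle of a principal region. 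Likewise, intersecting $A$ with the depth one foliation $\mc F$, I would observe $A \cap L_\theta$ consists of components each of which is a (possibly empty) piece of the boundary of a complementary region of $\tau_\theta$; since $A$ has no corners it carries no sources, so $A$ is topologically a product away from a neighborhood of $\brloc(B^u)$, exactly as in conditions (iii)-(v) of the construction. Hence $A$ is a union of product pieces threaded along the suspension flow, and intersecting with the laminated leaf $L$ it reads off a boundary arc or loop $\gamma$ of a complementary region of the train track $\tau_0$ that closes up under $f^p$ for some $p$.

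Next I would case on what kind of complementary region $\gamma$ bounds in $L$. If $\gamma$ bounds a principal region on the side of $A$, then $A$ is a face of a principal region of $B^u$ (case (i)). If $\gamma$ bounds a nonprincipal region on that side, then the corresponding side of $A$ is adjacent to a $\u$-cusped product by \Cref{lem:ucuspedproduct}; by \Cref{lem:nonprincipal regionescaping} backward trajectories in that region not in a $\u$-face run to $R_-$, and the annulus sector $A$ itself, having no corners, must be a $\u$-face of such a region --- so one of its boundary components lies either on $R_+$ or on a $\u\u$-cusp circle, which by the first paragraph is a cusp circle of a principal region. Running this analysis on each of the two sides of $A$ and combining the possibilities yields precisely the trichotomy (i)-(iii): either both sides are principal faces (case (i)), or one side is a principal face and the other touches $R_+$, or one side runs to a cusp circle of a principal region and the other to $R_+$ (case (ii)), or both run to $R_+$ (case (iii)). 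Here I would also invoke \Cref{lem:bdylams}(c) and the consistency of $\Lambda_+^\infty$ to rule out the spurious configuration where both boundary components of $A$ are cusp circles bounding a disjoint annulus region of the lamination, since the closed leaves of $\Lambda_+^\infty$ never cobound an annulus disjoint from the lamination in a way that would separate $A$ off as an isolated annulus with no nonprincipal neighbor.

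For the furthermore statement I would argue that $A$ cannot be sandwiched between two principal regions of $B^u$ unless... actually the cleanest route is: every principal region is a $\u$-cusped torus or torus shell (when it is a solid torus or peripheral, \Cref{lem:principalregioncircular}) or more generally has circular induced vector field, and its boundary faces are accounted for by branch loops and pieces of $R_+$; by \Cref{defn:vbs}(5) (no carried tori or Klein bottles) and \Cref{rmk:annulusfaces}, an annulus sector cannot separate $Q$ into two pieces each of which is purely principal, because then gluing the two principal regions along $A$ would produce a carried torus or closed surface. So $A$ must be adjacent to at least one nonprincipal region. The main obstacle I anticipate is the bookkeeping in the case analysis of the two sides of $A$ --- in particular being careful that "no corners on $A$" genuinely forces $A$ to be a $\u$-face (rather than, say, a transient annulus with a scalloped side) on any side abutting a nonprincipal region, which requires matching the corner structure of $A$ against the sector classification in \Cref{prop:vbssectors} and \Cref{defn:cuspedproduct}(c) together with the fact that $\b\u$-edges are prohibited in a $\u$-cusped product.
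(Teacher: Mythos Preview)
Your opening observation is correct and matches the paper: any boundary component of $A$ lying in $\brloc(B^u)$ is a branch loop, hence corresponds to a cusp of the splitting sequence that never collides, and so is principal by \Cref{lem:collision}. But after that the argument goes astray in two ways.

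First, you conflate the two \emph{sides} of $A$ (the adjacent complementary regions of $B^u$) with the two \emph{boundary components} of $A$ (circles lying on $R_+$ or on $\brloc(B^u)$). The trichotomy (i)--(iii) is entirely about the latter. Knowing that one side of $A$ abuts a $\u$-cusped product tells you nothing directly about where $\partial A$ lies; a $\u$-face of a $\u$-cusped product can perfectly well have both boundary circles on branch loops. The paper instead works at the level of the train track: $A$ corresponds to a single branch $b$ of $\tau_0$, and the proof cases on whether $b$ is small, mixed, or has no incident switches. The delicate case is when $b$ is small with two principal cusps $c,c'$ at its ends. Here one must show that the two corresponding principal regions lie on the \emph{same} side of $b$ (forcing (i)); the paper does this by a universal cover argument using border leaves of the lifted principal regions together with \Cref{lem:principalcusps}, showing that if the principal regions were on opposite sides of $\wt b$ then some third cusp would eventually split through $b$. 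Your appeal to consistency of $\Lambda_+^\infty$ does not address this at all---consistency is a statement about closed leaves in $R_+$, not about how principal regions sit relative to a branch in $L$.

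Second, your argument for the ``furthermore'' is wrong. Gluing two $\u$-cusped tori along an annulus face does not produce a torus carried by $B^u$; \Cref{defn:vbs}(5) gives no traction here. The paper's argument is a direct byproduct of the small-branch analysis just described: once you know the two principal cusps of $b$ lie on the same side, the other side of $b$ is automatically nonprincipal, so $A$ abuts a nonprincipal region there. In the remaining cases (ii)--(iii), $A$ touches $R_+$ and hence abuts $\u$-cusped products on both sides for free.
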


\begin{proof}
Suppose that $\gamma$ is a boundary component of $A$ lying in $\brloc(B^u)$. Then in the periodic splitting sequence used to construct $B^u$, $\gamma$ corresponds to a cusp $c$ of $\tau$ which never collides with another cusp, so is a principal cusp by \Cref{lem:collision}. Let $b$ be the branch incident to $c$ and lying in $A$. One property of the splitting sequence is that every large branch is eventually split, so $b$ must not be a large branch. 

Suppose that $c$ points into $b$. If $b$ is not incident to another switch, then (ii) holds. If $b$ is incident to another switch $c'$ (necessarily principal), then the $\Lambda$-routes from $c$ and $c'$ must fellow travel, contradicting the fact that principal cusps never fellow travel (\Cref{lem:principalcusps}).

Otherwise $b$ is a small branch. Lift $b$ to the universal cover $\wt L$, call its lift $\wt b$ and suppose that the two cusps incident to $\wt b$ are $\wt c$ and $\wt c'$ corresponding to $c$ and $c'$ respectively. 
 
The reader should reference \Cref{fig:principalregions} while reading this argument.
Suppose for a contradiction that $\wt c$ and $\wt c'$ correspond to lifted principal regions of $\tau$ on each side of the small branch $\wt b$.
There are 2 lifts $P$ and $P'$ of principal regions of $\Lambda_+$ corresponding to these train track complementary regions. Note that there are multiple $\wt\Lambda_+$-leaves separating $P$ and $P'$ by \cite[Lemma 5.20]{CCF19}. Also, by  \Cref{lem:principalcusps} none of the border leaves for $P$ and $P'$ share ideal points in $S_\infty(\wt L)$. Let $\ell$ and $\ell'$ be the border leaves of $P$ and $P'$ carried by $\wt b$, respectively. Following $\ell$ and $\ell'$ from $b$ through $\wt c$ and toward $\del \wt L$, the corresponding train routes must diverge, forcing the existence of a cusp $\wt c ''$ that projects to a cusp in $L$ that eventually is involved in a split through $b$, a contradiction. 

Since $\wt c$ and $\wt c'$ are both principal cusps, the contradiction above forces them to lie on the same side of $\wt b$. We conclude that if $b$ is small, then (i) holds: $A$ is a face of a principal region of $B^u$.

\begin{figure}
    \centering
    \fontsize{6pt}{6pt}\selectfont
    \resizebox{!}{2in}{
\begingroup%
  \makeatletter%
  \providecommand\color[2][]{%
    \errmessage{(Inkscape) Color is used for the text in Inkscape, but the package 'color.sty' is not loaded}%
    \renewcommand\color[2][]{}%
  }%
  \providecommand\transparent[1]{%
    \errmessage{(Inkscape) Transparency is used (non-zero) for the text in Inkscape, but the package 'transparent.sty' is not loaded}%
    \renewcommand\transparent[1]{}%
  }%
  \providecommand\rotatebox[2]{#2}%
  \newcommand*\fsize{\dimexpr\f@size pt\relax}%
  \newcommand*\lineheight[1]{\fontsize{\fsize}{#1\fsize}\selectfont}%
  \ifx\svgwidth\undefined%
    \setlength{\unitlength}{156.03458066bp}%
    \ifx\svgscale\undefined%
      \relax%
    \else%
      \setlength{\unitlength}{\unitlength * \real{\svgscale}}%
    \fi%
  \else%
    \setlength{\unitlength}{\svgwidth}%
  \fi%
  \global\let\svgwidth\undefined%
  \global\let\svgscale\undefined%
  \makeatother%
  \begin{picture}(1,0.95840823)%
    \lineheight{1}%
    \setlength\tabcolsep{0pt}%
    \put(0,0){\includegraphics[width=\unitlength,page=1]{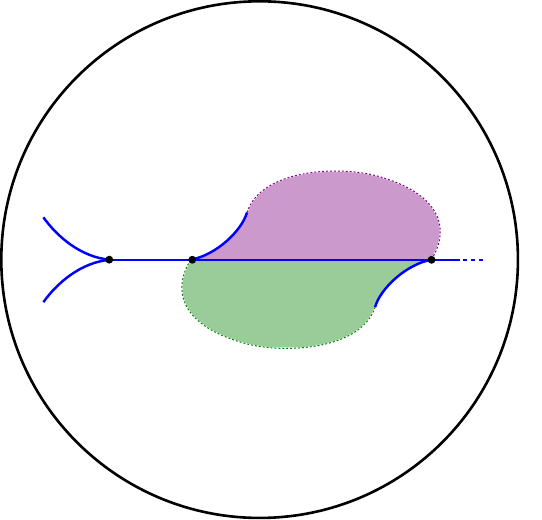}}%
    \put(0.09623599,0.46761675){\color[rgb]{0,0,0}\makebox(0,0)[lt]{\lineheight{1.25}\smash{\begin{tabular}[t]{l}$\widetilde{c}''$\end{tabular}}}}%
    \put(0.57697779,0.49083445){\color[rgb]{0,0,0}\makebox(0,0)[lt]{\lineheight{1.25}\smash{\begin{tabular}[t]{l}$\widetilde{b}$\end{tabular}}}}%
    \put(0.6175766,0.55778383){\color[rgb]{0,0,0}\makebox(0,0)[lt]{\lineheight{1.25}\smash{\begin{tabular}[t]{l}$\widetilde{P}$\end{tabular}}}}%
    \put(0.50977823,0.38854957){\color[rgb]{0,0,0}\makebox(0,0)[lt]{\lineheight{1.25}\smash{\begin{tabular}[t]{l}$\widetilde{P}'$\end{tabular}}}}%
    \put(0.42463497,0.48954601){\color[rgb]{0,0,0}\makebox(0,0)[lt]{\lineheight{1.25}\smash{\begin{tabular}[t]{l}$\widetilde{c}$\end{tabular}}}}%
    \put(0.68486556,0.43641797){\color[rgb]{0,0,0}\makebox(0,0)[lt]{\lineheight{1.25}\smash{\begin{tabular}[t]{l}$\widetilde{c}'$\end{tabular}}}}%
    \put(0,0){\includegraphics[width=\unitlength,page=2]{fig_principalregions.pdf}}%
  \end{picture}%
\endgroup%
}
    \caption{From the proof of \Cref{lem:sectorannulus} that $\wt b$ cannot abut principal regions on both its sides.}
    \label{fig:principalregions}
\end{figure}
 
The only other possibility is that $b$ is not incident to any cusps. In this case, both ends of $b$ escape, giving two closed curves in $R_+$, so possiblity (iii) holds.
 
It remains only to show that $A$ cannot border a principal region on both sides. Since any sector touching $R_+$ is incident to nonprincipal regions on both sides, we can assume that $A$ has both its boundary components in $\brloc(B^u)$ and thus corresponds to a compact branch $b$ of a train track in our splitting sequence. By the above arguments, $b$ must be incident to a nonprincipal region of the train track, forcing $A$ to border a nonprincipal region of $B^u$.
\end{proof}

\begin{lemma}\label{lem:sectormobius}
Let $M$ be a M\"obius strip sector of $B^u$ with no corners. Then $\del M\subset R_+$.
\end{lemma}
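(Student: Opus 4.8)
The plan is to follow the same strategy as in the proof of \Cref{lem:sectorannulus}, adapting the argument to a Möbius band sector. Suppose for contradiction that $M$ is a Möbius band sector of $B^u$ with no corners and that $\del M$ is not contained in $R_+$, so the single boundary circle $\gamma$ of $M$ lies (at least partly) in $\brloc(B^u)$. Since $M$ has no corners, $\gamma$ is a smooth circle embedded in $\brloc(B^u)$, hence $\gamma$ corresponds to a single branch loop in the periodic splitting sequence used to build $B^u$, i.e.\ to a cusp $c$ of $\tau$ whose maximal $\Lambda$-route periodically traverses a closed route. In particular $c$ never collides with another cusp, so by \Cref{lem:collision} $c$ is a principal cusp, and the branch $b$ incident to $c$ and lying in $M$ is part of a closed oriented route in some $\tau_\theta$.

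The key step is to rule this out using the structure of principal regions. First I would observe that since $M$ is a Möbius band, the branch $b$ must be a circular (closed) branch in $\tau_\theta$ traversed an odd number of times, or more precisely the union of sectors sweeping out $M$ closes up with an orientation reversal; following the argument of \Cref{lem:principalregioncircular}, the foliation $\mc F$ restricted to $M$ gives a circular structure, so $M$ corresponds to a circular leaf of $\tau_\theta$ bounding a side of a principal region of $\Lambda_+$. But by \Cref{lem:bdylams}(b), any closed leaf of $\Lambda_+^\infty$ is homotopic to a \emph{positive} multiple of a closed orbit of the suspension semiflow, and by the natural one-to-one correspondence between parallel classes of closed leaves of $\Lambda_+$ and circular components of the sink discussed after \Cref{lemma:spiralttexists}, the carrying map sends this circular leaf to an embedded oriented circle in $\tau_\theta$ — which is incompatible with the Möbius-band (orientation-reversing) identification. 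More directly: a circular sink component of a spiraling train track is a genuine embedded oriented circle, and the dynamic orientation of the two $\u$-faces abutting the corresponding $\u\u$-cusp circle must agree with the orientation of that circle by \Cref{defn:vbs}(3); an orientation-reversing gluing around $\gamma$ would force these dynamic orientations to disagree, contradicting \Cref{defn:vbs}(3) (equivalently, incoherence versus coherence of the cusp circle).

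Alternatively, and perhaps more cleanly, I would argue as in the last paragraph of \Cref{lem:sectorannulus}: lift $\gamma$ and the abutting region to $\wt L$. Since $c$ is principal, $\gamma$ borders a lifted principal region $\wt P$ on one side; because $M$ is a Möbius band, traversing $\gamma$ once returns to $\wt P$ with the two sides of $\wt b$ swapped, so $\wt P$ would have to abut $\wt b$ on \emph{both} of its sides. But then the two border leaves of $\wt P$ carried by $\wt b$ share a point in $\del_\infty(\wt L)$, and \Cref{lem:principalcusps} together with \Cref{lem:fellowtravel} would then force these two leaves to fellow travel and hence force a collision somewhere along the way (following the divergence argument used for the small-branch case in \Cref{lem:sectorannulus}), contradicting that $c$ is a principal cusp that never collides. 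The only remaining possibility is that $b$ is incident to no cusps at all, but then both ends of $b$ escape, yielding two disjoint closed curves in $R_+$ rather than a single Möbius boundary, so this case does not produce a Möbius band either. Therefore $\del M \subset R_+$.

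The main obstacle I anticipate is making precise the combinatorial statement that a Möbius band sector forces an orientation-reversing identification along $\gamma$, and then extracting from this the contradiction with the principal-cusp structure — i.e.\ carefully checking that the lift $\wt b$ really does end up abutting a single lifted principal region on both sides, rather than two distinct lifts. This requires tracking the flow $\mc F$ and the source orientation of $\brloc(B^u)$ around the loop $\gamma$, much as in \Cref{lem:sectorannulus}, but with the extra bookkeeping of the nonorientable identification; once that is set up, the contradiction follows from \Cref{lem:principalcusps} exactly as before.
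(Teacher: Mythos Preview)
Your proposal has the right shape but contains several genuine errors. In your first paragraph you treat $b$ as a ``closed oriented route'' and later as ``a circular leaf of $\tau_\theta$''; this is wrong. The branch $b$ is a $1$-cell of $\tau$ --- an arc, not a circle --- and $M \cap L_\theta = b$ is an arc. The Möbius structure of $M$ arises because some power $f^p$ maps $b$ to itself with orientation reversed, not because $b$ itself is circular; your appeal to closed leaves of $\Lambda_+^\infty$ and circular sink components is therefore misplaced. The paper's key observation, which you never state cleanly, is this: since $f^p$ reverses the orientation of $b$ while preserving the orientation of $L$, it swaps the two sides of $b$. Hence if $b$ has any cusp at an endpoint, it has cusps at \emph{both} endpoints (swapped by $f^p$), and these two cusps lie on \emph{opposite} sides of $b$. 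Both are principal, so in $\wt L$ there are lifted principal regions on both sides of $\wt b$ --- precisely the configuration ruled out in the small-branch contradiction of \Cref{lem:sectorannulus}. Your alternative paragraph gestures at side-swapping but conflates the $3$-dimensional loop $\gamma \subset \brloc(B^u)$ with the $2$-dimensional picture in $\wt L$, and your description of the contradiction is backwards: the border leaves of the two distinct lifted principal regions \emph{cannot} share ideal points (by \Cref{lem:principalcusps}), hence cannot fellow travel, hence must diverge at some cusp $\wt c''$ which eventually splits through $b$. You wrote the opposite (``force these two leaves to fellow travel and hence force a collision'').

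Your final case is also mishandled. When $b$ has no cusps and both ends escape, you claim this ``yields two disjoint closed curves in $R_+$ rather than a single Möbius boundary, so this case does not produce a Möbius band''. But $f^p$ swaps the two escaping ends of $b$, so their suspension is a \emph{single} circle in $R_+$. This case is exactly the desired conclusion $\del M \subset R_+$, not a contradiction; as written, your argument would purport to show that Möbius sectors never exist, which is false.
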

\begin{proof}
Let $b$ be the branch of $\tau$ which suspends to give $M$. As in the previous proof, the cusps incident to $b$ (if any) must be principal. Suppose there is at least one cusp incident to $b$. Since $M$ has only one boundary component, this forces $b$ to be incident to 2 cusps. Since $f$ sends $b$ to itself with the orientation reversed while preserving the orientation of $L$, these cusps must lie on opposite sides of $b$.
But this gives a similar contradiction to the previous proof by looking at the picture in $\wt L$.

We conclude that $b$ is not incident to any switches, so both of its ends must escape to positive ends and correspond to a boundary component of $M$ lying in $R_+$.
\end{proof}

\subsection{Our branched surface $(B^u, V)$ is very full and veering}
\label{subsec:depthonetovbs}

\begin{theorem}
\label{thm:depthonetovbs}
Let $Q$ be an atoroidal sutured manifold with a depth one foliation $\mathcal{F}$. Then $Q$ contains an unstable veering branched surface carrying the unstable Handel-Miller lamination associated to $\mathcal{F}$.
\end{theorem}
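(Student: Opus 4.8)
The plan is to assemble the branched surface $(B^u,V)$ from the preceding subsections and verify the five axioms of \Cref{defn:vbs}. First I would invoke \Cref{prop:buexist} to obtain an unstable dynamic branched surface $(B^u,V)$ in $\ol M_f$ fully carrying $\mc L^u$, the suspension of $\Lambda_+$, where $f$ is the Handel--Miller representative of the endperiodic monodromy of $\mc F$. (If $Q$ has Reeb boundary components we first pass to the endperiodization via \Cref{construction:samesign}; since $Q$ is atoroidal this introduces no new difficulties.) By \Cref{lem:principalregioncircular} every principal region which is a solid torus or peripheral is a $\u$-cusped (solid) torus, and by \Cref{lem:ucuspedproduct} every nonprincipal region is a $\u$-cusped product with no $\u\u$-cusp circles; combined with the fact that principal regions that are not solid tori still have the structure forced by circularity of $V$, one concludes that every complementary region of $B^u$ is a $\u$-cusped torus or a $\u$-cusped product, so $B^u$ is \emph{very full} in the sense of \Cref{def:veryfull}. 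The source orientation on $\brloc(B^u)$ is the one supplied by property (iv) of the construction in \Cref{sec:bsconstruct}.

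Next I would check axioms (1)--(5) of \Cref{defn:vbs} one at a time. Axiom (1), compatibility of orientations and maw coorientations at triple points, is a local statement about the folding-sequence model and follows directly from property (iv) of the construction together with the shape of the local pictures in \Cref{fig:ttsplitting}: a triple point of $B^u$ arises from a branch of $\tau_i$ through which a fold is being performed while the split data at an adjacent switch is still present, and the source orientation points in the direction the fold occurs, which is exactly the maw coorientation of the crossing branch segment. Axiom (2), that $\beta = B^u\cap R_+ = T_+^\infty$ is efficient and has no large branches: efficiency holds because $T_+^\infty$ is the efficient spiraling train track carrying $\Lambda_+^\infty$ from \Cref{lemma:spiralttexists} (used in \Cref{lem:trackconstruction}), and it has no large branches because it is spiraling (\Cref{lemma:nolargebranchmeansspiraling}). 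Axiom (3), agreement of the orientation on each branch loop with the dynamic orientations of the two adjacent $\u$-faces, follows from the fact that a branch \emph{loop} of $B^u$ suspends a principal cusp that never collides (by \Cref{lem:collision}), lying on a principal region; there $V$ is circular (\Cref{lem:principalregioncircular}), the foliation $\mc F$ furnishes a consistent circular structure, and property (iv) makes the source orientation positively transverse to $\mc F$, hence compatible with both dynamic orientations.

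For axiom (4), that there are no annulus or M\"obius band sectors with both boundary components having inward-pointing maw coorientations: I would appeal to \Cref{lem:sectorannulus} and \Cref{lem:sectormobius}. A corner-free annulus sector is either a face of a principal region, or has a boundary component on $R_+$, or has both boundary components on $R_+$; in the first case at least one boundary component of a $\u$-face carries an outward maw coorientation (it borders a nonprincipal region per the ``furthermore'' of \Cref{lem:sectorannulus}), and a boundary component on $R_+$ is automatically cooriented \emph{outward} by $V$, so both boundary components can never point inward. A corner-free M\"obius sector has its entire boundary on $R_+$ (\Cref{lem:sectormobius}), hence is cooriented outward. Annulus/M\"obius sectors \emph{with} corners are handled by \Cref{prop:vbssectors}-type reasoning, or more simply: such a sector has a scalloped boundary component, which by the analysis in \Cref{prop:vbssectors} must be cooriented outward. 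Axiom (5), that $B^u$ carries no tori or Klein bottles: if $B^u$ carried a closed surface $F$, pulling back $V$ gives a nonsingular vector field on $F$, forcing $\chi(F)=0$, so $F$ would be a torus or Klein bottle; but then $F$ would be a union of transient annulus sectors glued cyclically, and since the dynamic orientations are consistent $F$ would be a torus carried by $B^u$, giving an essential torus in $\ol M_f$ --- impossible since $Q$ is atoroidal (a carried torus is either compressible, contradicting that $F$ supports a nonsingular flow transverse to nothing, or incompressible, contradicting atoroidality, unless it is boundary-parallel, which is ruled out because $R_\pm$ are not tori for a depth one sutured manifold). Finally I would note that the construction automatically produces $B^u$ carrying $\mc L^u$ \emph{compatibly} in the sense needed later, since the boundary train track is $T_+^\infty$ and the vertical boundary associations are inherited from the standard neighborhood constructed in \Cref{lem:nonprincipal regionescaping}.

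The main obstacle I expect is axiom (5) and, more precisely, the interaction between the atoroidality hypothesis and carried tori: ruling out carried Klein bottles and tori requires knowing that the only closed surfaces $B^u$ could carry are built cyclically from transient annuli, and then that such a torus would be essential. This is where the hypothesis that $Q$ is atoroidal (rather than merely that $\mc F$ is taut) is genuinely used, and the argument must be run carefully through the incompressibility of $R_\pm$ and the structure of cusped-product and cusped-torus complementary regions --- essentially the same kind of bookkeeping that appears in the proof of \Cref{prop:vfulldbslaminar}. A secondary but more routine difficulty is bookkeeping the corner-free versus cornered sector cases in axiom (4) to make sure no configuration of inward-cooriented annulus sector slips through; the classification in \Cref{prop:vbssectors} combined with \Cref{lem:sectorannulus} and \Cref{lem:sectormobius} handles this, but it requires care.
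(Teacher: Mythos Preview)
Your overall strategy matches the paper's, but there are two genuine gaps.

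\textbf{Very fullness and atoroidality.} You assert that ``principal regions that are not solid tori still have the structure forced by circularity of $V$,'' and conclude every complementary region is a $\u$-cusped torus or $\u$-cusped product. This does not follow: circularity of $V$ on a principal region does \emph{not} force that region to be a $\u$-cusped torus. A principal region of $\Lambda_+$ can be nonperipheral and non-simply-connected (e.g.\ homotopy equivalent to a wedge of circles), and its suspension would then fail to be a $\u$-cusped torus. The paper uses the atoroidality hypothesis precisely here: if $P$ is a nonperipheral, non-simply-connected principal region, the boundary of its nucleus flows forward to an essential torus in $Q$, contradicting atoroidality. Only after this step may one conclude that all principal regions of $B^u$ are $\u$-cusped tori or torus shells. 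You never invoke atoroidality for this purpose.

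\textbf{Axiom (5).} Your argument is genuinely different from the paper's and, as you suspected, incomplete. You want to say a carried torus is essential and then appeal to atoroidality, but you do not rule out a carried torus bounding a solid torus in $Q$; your parenthetical about compressibility and boundary-parallelism does not address this. Moreover, your claim that a carried closed surface ``would be a union of transient annulus sectors glued cyclically'' presupposes the sector classification of \Cref{prop:vbssectors}, which is a \emph{consequence} of the veering axioms you are trying to establish. The paper avoids these issues entirely: it intersects a carried torus or Klein bottle $T$ with the leaf $L$, observes that $\mc F|_T$ is a foliation by circles, extracts a closed curve $c\subset L$ carried by every $\tau_i$ in the splitting sequence, and argues that after enough splits all switches on one side of $c$ point the same way, forcing a compact leaf of $\Lambda_+$ --- impossible by \Cref{lem:accumonends}. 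This argument does not need $T$ to be essential and does not use atoroidality.

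A minor point: in your treatment of axiom~(4), case~(i) of \Cref{lem:sectorannulus}, the relevant fact is that the corresponding branch $b$ is \emph{small}, so both boundary components of $A$ are cooriented outward; the ``furthermore'' about adjacency to a nonprincipal region is not what controls the maw coorientation on $\partial A$.
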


\begin{proof}
Let $\mc F$ be a depth one foliation of $Q$. Let $L$ be a particular noncompact leaf of $\mc F$, and let $f\colon L\to L$ be a Handel-Miller representative of the monodromy for $L$ determined by $\mc F$. Let $(B^u, V)$ be an unstable dynamic branched surface as constructed in \Cref{prop:buexist} whose nonprincipal regions are $\u$-cusped products (this exists by \Cref{lem:ucuspedproduct}).

Suppose that $f$ has a principal region $P$ which is nonperipheral and not simply connected. Then the border of the nucleus of $P$, when flowed forward under the suspension flow of $f$, sweeps out a $\pi_1$-injective nonperipheral torus in $Q$, violating atoroidality.

Otherwise all principal regions of $f$ are simply connected or peripheral, so all principal regions of $B^u$ are $\u$-cusped tori or $\u$-cusped torus shells.
By assumption all nonprincipal regions of $B^u$ are $\u$-cusped products so $B^u$ is very full in this case. It remains to show that $B^u$ is veering. Veering branched surfaces are defined in \Cref{defn:vbs} and in this proof we will refer to properties (1)-(5) from that definition.
\begin{itemize}
\item Condition (1) regarding triple points is satisfied because each triple point of $B^u$ comes from a train track folding move.

\item Condition (2) on the boundary train track is satisfied because we chose the boundary train track $T_+^\infty$ to be efficient and spiraling.

\item Condition (3) requires that the orientation on each branch loop agree with the dynamic orientation (\Cref{def:circular}) on each adjacent face of the corresponding complementary region. This is true because each branch loop is positively transverse to $\mc F$ by the construction, as is the core of each annulus face of a principal region of $B^u$. Meanwhile there are no branch loops in nonprincipal regions by \Cref{lem:ucuspedproduct}.

\item Condition (4) on annulus and Möbius band sectors holds by \Cref{lem:sectorannulus} and \Cref{lem:sectormobius}.

\item Condition (5) says that $B^u$ does not carry any tori or Klein bottles. Suppose that $T$ is a torus or Klein bottle carried by $B^u$. Since $T\subset \intr Y$, the intersection $T\cap L$ is compact. It follows that $\mc F|_T$ is a foliation of $T$ by circles. 
By reversing the folding sequence used to construct $(B^u, V)$ we get a splitting sequence 
\[
\tau_0\to \tau_1\to \cdots \tau_n\to \tau_{n+1}\to \cdots
\]
where each $\tau_i$ is a train track in $L$ carrying the Handel-Miller lamination $\Lambda_+$, and $\tau_{i+n}=f(\tau_i)$ for all $i$. Since $\mc F|_T$ is a foliation by circles, there exists a closed curve $c\subset L$ which is carried by $\tau_0$ and survives each splitting move in the above sequence. Fix a (local) side of $c$. After finitely many splits, all switches on this side of $c$ must point in the same direction around $c$. Since each $\tau_i$ fully carries $\Lambda_+$, this forces the existence of a compact leaf of $\Lambda_+$. This contradicts that all leaves of $\Lambda_+$ are noncompact; see e.g. \Cref{lem:accumonends}.
\end{itemize}

Having verified that $(B^u,V)$ satisfies all the conditions in the definition of veering branched surface, we are done. 
\end{proof}

\begin{theorem}
Let $Y$ be an atoroidal depth one Reeb sutured manifold, and let $Y'$ be the de-Reebification of $Y$. Then $Y'$ contains a very full unstable veering branched surface. For each annulus component $A$ of $R_+(Y')$ that was added in the de-Reebification process, the veering branched surface intersects $A$ in a circle.
\end{theorem}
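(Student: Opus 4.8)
The plan is to deduce this theorem from \Cref{thm:depthonetovbs} together with \Cref{construction:reebmaptorus}, which relates a Reeb endperiodic map to its endperiodization. First I would observe that $Y$ is (up to the forgetful correspondence) a compactified mapping torus $\ol M_f$ of a Reeb endperiodic map $f\colon L\to L$, by the sutured-manifold analogue of \cite[Lemma 12.5]{CCF19}. Passing to the endperiodization $f'\colon L'\to L'$ of \Cref{construction:samesign}, where $L'=L$ minus finitely many boundary points, \Cref{construction:reebmaptorus} identifies $\ol M_{f'}$ with the de-Reebification $Y'$, and identifies the depth one foliation $\mc F'$ of $Y'$ with the one obtained by spinning $\mc F$ around the annuli added to $R(\gamma)$.

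The next step is to check that $Y'$ is atoroidal so that \Cref{thm:depthonetovbs} applies. An essential torus in $Y'$ would be an essential torus in $Y$ (the de-Reebification only modifies the boundary, subdividing Reeb annuli and adding annular pieces to $R_-$, and a two-sided torus can be isotoped off of a neighborhood of these boundary modifications), hence boundary parallel in $Y$ by hypothesis; one then checks a torus boundary-parallel in $Y$ remains boundary-parallel in $Y'$, since the de-Reebification does not change the homeomorphism type of $Y$ as a 3-manifold, only its boundary decoration. With atoroidality in hand, \Cref{thm:depthonetovbs} produces a very full unstable veering branched surface $B$ in $Y'$ carrying the unstable Handel-Miller lamination $\mc L^u$ associated to $f'$.

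It remains to analyze what $B$ looks like near an annulus component $A$ of $R_+(Y')$ that was created in the de-Reebification. Such an $A$ comes from subdividing a Reeb annulus of $Y$ that touched $R_+$ twice; concretely $A=A_2$ is the middle annulus in the subdivision $A_1\cup A_2\cup A_3$, and it was added to $R_-$ — but the symmetric version of the construction with the roles of $R_\pm$ swapped, or a Reeb annulus touching $R_-$ twice, produces an annulus added to $R_+$. The key point is that near $A$ the endperiodization $f'$ has an isolated infinite-strip-type end: by \Cref{construction:samesign}, $x_0$ is a source for $f^p$ in $L$, and removing it creates a new end $e$ corresponding to $A$. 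By \Cref{lem:infstripend} applied to this end (or directly from the structure near a source of a Reeb component: the point $x_0$ is surrounded by curves escaping to the new positive end and there are no leaves of $\Lambda_\pm$ near $e$ other than at most one or two escaping to $e$), there is a neighborhood of $e$ disjoint from $\Lambda_+$, so the boundary train track $B\cap A=\beta\cap A$ is carried by a neighborhood of $e$ and, since it is efficient with no large branches and sits in a once-punctured disk neighborhood of $e$ that meets no leaves of $\Lambda_+$, it must be a single circle — the circle bounding a standard neighborhood of $e$, which is exactly $A$'s core-parallel juncture. Indeed the staircase neighborhood of $A$ in the construction of $B^u$ is built from a tiled neighborhood of this new singleton end-cycle, and since that end is an infinite cylinder end in $L'$ in disguise — wait, rather an infinite strip end but with the unique boundary line $\partial A$ running between the two ends of the same sign before removal — the train track $T_+^\infty$ restricted there is a single circle, hence so is $B\cap A$.

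The main obstacle I anticipate is the last step: pinning down precisely that $B\cap A$ is a \emph{circle} rather than some more complicated efficient train track with no large branches on the annulus $A$. The delicate point is that $A$ is an annulus, and efficient spiraling train tracks on an annulus carrying the empty lamination (or a lamination consisting only of closed leaves parallel to the core) could a priori be more elaborate; one must use that $\mc L^u$ near $A$ is empty or a single closed leaf, that $B$ very full forces every complementary region touching $A$ to be a $\u$-cusped torus shell whose annular $\u$-faces are controlled, and that the de-Reebification's spinning produces exactly a Reeb foliation near $A$ whose transverse veering branched surface intersects $A$ in the core circle. Making this rigorous likely requires revisiting the staircase-neighborhood construction of \Cref{sec:bsconstruct} in the neighborhood of the new end and explicitly checking the train track there is the core circle of $A$; alternatively, invoking \Cref{lem:infstripend} and \Cref{lem:sectorannulus} to rule out corners and extra branches. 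I would present the argument via \Cref{lem:infstripend}, as that isolates the essential input with the least new work.
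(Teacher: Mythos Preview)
Your overall strategy matches the paper's: pass to the endperiodization $f'$, identify $\ol M_{f'}$ with $Y'$ via \Cref{construction:reebmaptorus}, apply \Cref{thm:depthonetovbs}, and then use \Cref{lem:infstripend} to control $B\cap A$. The paper's proof is essentially just this outline; it does not even pause over atoroidality of $Y'$, so your paragraph on that is a reasonable addition.

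However, your argument for why $B\cap A$ is a circle contains a genuine error. You write that \Cref{lem:infstripend} gives ``a neighborhood of $e$ disjoint from $\Lambda_+$,'' and then conclude that $\beta\cap A$ sits in a region meeting no leaves of $\Lambda_+$. This misreads the lemma: it says that there are \emph{one or two} leaves of $\Lambda_+$ escaping to the positive infinite strip end $e$, and that a neighborhood of $e$ is disjoint from all \emph{other} leaves. If a neighborhood of $e$ were genuinely disjoint from $\Lambda_+$, then (since $B^u$ fully carries $\mc L^u$) the intersection $B\cap A$ would be empty, not a circle---contradicting the very statement you are trying to prove.

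The paper's actual reasoning is the opposite of yours: the annulus $A\subset R_+(Y')$ corresponds to a new positive infinite strip end $e$ of $L'$; by \Cref{lem:infstripend} exactly one or two leaves of $\Lambda_+$ escape to $e$; and in the construction of $B^u$ via \Cref{prop:buexist} these leaves produce a single circle in $B^u\cap A$. (When there are two such leaves they bound an arm of a principal region, and the efficient spiraling boundary train track chosen in \Cref{lemma:spiralttexists} carries them with a single core circle.) Your ``main obstacle'' paragraph correctly senses that this is the delicate point, but the resolution is not to argue that $\Lambda_+$ is absent near $e$---it is to use that $\Lambda_+$ is present in a very controlled way.
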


\begin{proof}
In light of \Cref{thm:depthonetovbs}, the only case that needs our attention here is when $\del Y$ has at least one Reeb annulus. Since $Y$ is depth one, there is a depth one foliation of $Y$ with Reeb endperiodic monodromy $f$ (see \Cref{def:endperiodic}). We can replace $f$ with its endperiodization (see \Cref{construction:samesign}), obtaining a depth 1 foliation of $Y'$. Each annulus of $R_+(Y')$ corresponds to an infinite strip end. By \Cref{lem:infstripend}, the Handel-Miller lamination for the endperiodization has one or two leaves that exit each such end. When we construct the very full unstable veering branched surface ($B^u, V)$ of \Cref{prop:buexist}, each such annulus will meet $B^u$ in a single circle.
\end{proof}

\subsection{Uniqueness of the veering branched surface construction} \label{subsec:hmvbsunique}

In this subsection, we will prove some uniqueness results about the veering branched surface constructed in \Cref{thm:depthonetovbs}. Taken together with the results in \Cref{subsec:splitsequnique}, they show that our construction of such a veering branched surface ends up being quite canonical.

The construction of a dynamic branched surface in \Cref{prop:buexist} depends on three inputs, namely:
\begin{enumerate}[label=(\alph*)]
    \item a $f$-periodic splitting sequence of $f$-endperiodic train tracks $S=\{\tau_0 \to \tau_1 \to \tau_2 \to \cdots\}$,
    \item a tiling $\mathcal{T}$ of the  end-cycles of $L$, and
    \item a core $K$ large enough such that the truncated sequence $\tau_0 \to \cdots \to f(\tau_0)$ is supported in $K$.
\end{enumerate}

Our construction of the branched surface could be summarized as follows. First we reversed the splitting sequence to obtain a folding sequence from $f(\tau_0)$ to $\tau_0$. Outside a  neighborhood of $R_+(\ol M_f)$, we let the branched surface be the suspension of this folding sequence; in the neighborhood of $R_+(\ol M_f)$, we constructed the branched surface to be $T_+^\infty\times I$ (recall that $T_+^\infty$ is chosen during the construction of the splitting sequence). The choices of tiling and core simply served the purpose of fixing a neighborhood of $R_+(\ol M_f)$, so it should come as no surprise that the construction is independent of those choices. In \Cref{lemma:vbsunique-tilednbd} and \Cref{lemma:vbsunique-tiling} we make this precise.

Later, in \Cref{lemma:vbsunique-splitseq} we show that the branched surface is unchanged if we replace $S$ by an equivalent $f$-periodic splitting sequence (using the equivalence relation from \Cref{subsec:splitsequnique}). By \Cref{thm:splitsequnique}, this implies that the branched surface depends only on the choice of train track $T_+^\infty$.

Let us write $B(S, \mathcal{T}, K)$ for the branched surface constructed with the data (a), (b), (c), considered up to isotopy.

\begin{lemma} \label{lemma:vbsunique-tilednbd}
For a fixed splitting sequence $S$ and tiling $\mathcal{T}$, let $K$ and $K'$ be two choices of cores such that $\tau_0 \to ... \to f(\tau_0)$ is supported inside $K$ and $K'$. Then $B(S, \mathcal{T}, K) = B(S, \mathcal{T}, K')$.
\end{lemma}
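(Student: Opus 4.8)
The plan is to show that a change of core can be realized by an ambient isotopy of $\ol M_f$ that is supported away from the part of the branched surface determined by the folding sequence, so that it does not affect the resulting branched surface up to isotopy. First I would reduce to the case where $K$ and $K'$ differ in an elementary way: since both cores must contain the support of $\tau_0 \to \cdots \to f(\tau_0)$, and since (by the definition of a core) $\Lambda_+$ is disjoint from the components of $L\cut K$ and $L\cut K'$ that are neighborhoods of negative ends, it suffices — exactly as in the proof of \Cref{lem:anycore} — to treat the case $K' = K\cup t$ where $t$ is a single positive tile, and then iterate. Replacing $K$ by $K\cup t$ changes the associated staircase neighborhood $N(R_+)$ of $R_+(\ol M_f)$ by absorbing one tile's worth of suspension region; the complementary region $A = \ol M_f \cut N(R_+)$ shrinks correspondingly.

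The key point is that in the region where $K$ and $K'$ disagree, the branched surface is, by construction, simply the product $T_+^\infty \times I$ (this is the part of $B^u$ living in the staircase neighborhood of $R_+$, described in \Cref{prop:buexist} and \Cref{subsec:hmvbsunique}), while the suspension-of-folding-sequence part of the branched surface — which carries all the interesting combinatorics — is supported in $K_0 \times I$ where $K_0$ is common to both choices. Concretely, I would build an isotopy of $\ol M_f$ that pushes the boundary leaf $L_0$ (and the whole product structure on the staircase) forward by one tile along $\phi_f$, carrying $\partial N(R_+)$ for the core $K$ onto $\partial N(R_+)$ for the core $K'$; since $B^u$ restricted to this swept region is the product $T_+^\infty\times I$ transverse to the $I$-direction, and the staircase structure is flow-respecting, this isotopy carries $B(S,\mathcal T,K)$ onto $B(S,\mathcal T,K')$. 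Equivalently, one can phrase this as in \Cref{lem:anycore}: factoring the two constructions into splitting/folding sequences, the sequences used to build $B(S,\mathcal T,K)$ and $B(S,\mathcal T,K')$ differ by starting one step later (performing the extra maximal splitting in the tile $t$ before closing up), hence by \Cref{lemma:differbycommutations} give the same branched surface up to isotopy.

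The main obstacle I anticipate is bookkeeping at the interface $\partial K_0 \times I$, i.e. making sure the isotopy moving the staircase region is compatible with the fixed suspension-of-folding region along their common boundary, and that no triple points or sources of $\brloc(B^u)$ are created, destroyed, or moved in the process. This is handled by the fact, already used repeatedly (see \Cref{lem:lambdacompatible} and \Cref{cor:anyorder}), that the train tracks $\tau_i|_{K_j}$ and $f(\tau_i)|_{K_j}$ are $\Lambda|_{K_j}$-compatible and related by shifts of divergent neighbors, so absorbing one more tile before suspending changes the boundary train track in the staircase only by shifts that are invisible after the full core splitting — equivalently, the combinatorial data of the sectors of $B^u$ (which by \Cref{rmk:vf=combinatorialdata} is all that matters) is unchanged. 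Once the elementary case is settled, the general statement follows by the iteration described above, completing the proof.
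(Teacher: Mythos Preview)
Your isotopy approach would work, but the paper's proof is considerably simpler and more direct. The paper does not reduce to adding one tile at a time; it reduces only to $K \subset K'$ (always possible since some core contains both), sets $F = K' \cut K$, and decomposes $Q$ as $A \cup (F \times [0,1]) \cup N'(R_+)$, where $A$ and $N'(R_+)$ are the complements of the staircases associated to $K$ and $K'$ respectively. Because the folding sequence $\tau_0 \to \cdots \to f(\tau_0)$ is supported in $K$, the restriction $B|_{F\times[0,1]}$ is literally the product $\tau_0|_F \times [0,1]$, so $B|_{A'} = B'|_{A'}$ on the nose for $A' = A \cup (F\times[0,1])$. Since then $B|_{\partial A'} = B'|_{\partial A'}$, the forward-flow completion into $N'(R_+)$ also agrees, giving $B = B'$ literally --- no isotopy is needed.

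Two cautions about your proposal. First, your ``equivalently'' phrasing invoking \Cref{lemma:differbycommutations} is confused: in this lemma the splitting sequence $S$ is \emph{fixed} as input, so the two constructions use the identical folding sequence and there is nothing to commute. You seem to be conflating this with \Cref{lem:anycore}, which concerns the equivalence class $\mathscr S(\cdot)$ of splitting sequences and genuinely involves extra core splits; that is a different statement with a different proof. Second, the bookkeeping you worry about at the interface, and the appeals to \Cref{lem:lambdacompatible} and \Cref{cor:anyorder}, are unnecessary here: since the folds are supported in $K$, the tracks $\tau_i$ are all equal to $\tau_0$ on $F$, so there are no shifts or compatibility issues to reconcile --- this is exactly what makes the paper's argument a one-paragraph observation.
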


\begin{proof}
For convenience, let $B = B(S, \mathcal{T}, K)$ and $B'= B(S, \mathcal{T}, K')$. Since there is always a core containing both $K$ and $K'$, it suffices to assume that $K \subset K'$. Let $F = K' \cut K$. Using the notation from the construction in \Cref{sec:bsconstruct}, we can write $Q$ as the union $A \cup (F \times [0,1]) \cup N'(R_+)$, with $N(R_+)=(F \times [0,1])  \cup N'(R_+)$ and $A'=A \cup (F \times [0,1])$. See \Cref{fig:vbsunique-tilednbd} for a schematic picture. 

\begin{figure}
    \centering
    \fontsize{6pt}{6pt}\selectfont
    \resizebox{!}{3cm}{
\begingroup%
  \makeatletter%
  \providecommand\color[2][]{%
    \errmessage{(Inkscape) Color is used for the text in Inkscape, but the package 'color.sty' is not loaded}%
    \renewcommand\color[2][]{}%
  }%
  \providecommand\transparent[1]{%
    \errmessage{(Inkscape) Transparency is used (non-zero) for the text in Inkscape, but the package 'transparent.sty' is not loaded}%
    \renewcommand\transparent[1]{}%
  }%
  \providecommand\rotatebox[2]{#2}%
  \newcommand*\fsize{\dimexpr\f@size pt\relax}%
  \newcommand*\lineheight[1]{\fontsize{\fsize}{#1\fsize}\selectfont}%
  \ifx\svgwidth\undefined%
    \setlength{\unitlength}{180.86760366bp}%
    \ifx\svgscale\undefined%
      \relax%
    \else%
      \setlength{\unitlength}{\unitlength * \real{\svgscale}}%
    \fi%
  \else%
    \setlength{\unitlength}{\svgwidth}%
  \fi%
  \global\let\svgwidth\undefined%
  \global\let\svgscale\undefined%
  \makeatother%
  \begin{picture}(1,0.34465923)%
    \lineheight{1}%
    \setlength\tabcolsep{0pt}%
    \put(0,0){\includegraphics[width=\unitlength,page=1]{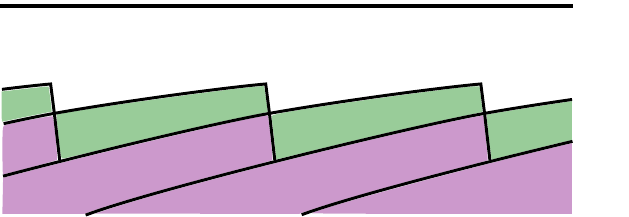}}%
    \put(0.79582685,0.12656235){\color[rgb]{0.50196078,0,0.50196078}\makebox(0,0)[lt]{\lineheight{1.25}\smash{\begin{tabular}[t]{l}$K \times [0,1]$\end{tabular}}}}%
    \put(0.78537474,0.20236035){\color[rgb]{0,0.50196078,0}\makebox(0,0)[lt]{\lineheight{1.25}\smash{\begin{tabular}[t]{l}$F \times [0,1]$\end{tabular}}}}%
    \put(0.92583491,0.32764057){\color[rgb]{0,0,0}\makebox(0,0)[lt]{\lineheight{1.25}\smash{\begin{tabular}[t]{l}$R_+$\end{tabular}}}}%
  \end{picture}%
\endgroup%
}
    \caption{A schematic picture of the proof of \Cref{lemma:vbsunique-tilednbd}.} 
    \label{fig:vbsunique-tilednbd}
\end{figure}

Notice that $B|_{F \times [0,1]}$ is the product branched surface $\tau_0|_F \times [0,1]$ and that $B|_{A'} = B|_A \cup B|_{F \times [0,1]} = B'|_{A'}$, since $\tau_0 \to ... \to f(\tau_0)$ is supported inside $K_i$. In particular $B|_{\partial A'} = B'|_{\partial A'}$, and so upon taking the forward flow completion, we have $B|_{N'(R_+)} = B'|_{N'(R_+)}$. Hence $B=B'$.
\end{proof}

In light of \Cref{lemma:vbsunique-tilednbd}, we will drop the third argument of $B(\cdot, \cdot, \cdot)$ and write simply $B(\cdot, \cdot)$.

\begin{lemma} \label{lemma:vbsunique-tiling}
For a fixed splitting sequence $S$, let $\mathcal{T}$ and $\mathcal{T}'$ be two choices of tiling of the end-cycles of $L$. Then $B(S,\mathcal{T}) = B(S,\mathcal{T}')$.
\end{lemma}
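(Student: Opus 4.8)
The plan is to reduce the statement to \Cref{lem:anytiling}, exactly as \Cref{lemma:vbsunique-tilednbd} reduced to the freedom-of-core lemmas in the splitting-sequence setting. The key observation is that the branched surface $B(S,\mathcal T)$ is built in two pieces glued along $\partial N(R_+)$: on $A=Q\cut N(R_+)$ it is the suspension of the folding sequence obtained by reversing $S$, which does not reference $\mathcal T$ at all, and on $N(R_+)$ it is the forward $\phi_f$-flow-completion of $B|_{\partial A}$, i.e. essentially $T_+^\infty\times I$. Since the isotopy class of $T_+^\infty$ is determined by $S$ (it is the image of $\tau_0$ in $\U_+/\langle f\rangle$), the only way $\mathcal T$ enters is through the choice of the staircase neighborhood $N(R_+)$, which is an auxiliary choice that by \Cref{lemma:vbsunique-tilednbd} does not affect the answer once we know two such neighborhoods are nested inside a common one.

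Concretely, I would argue as follows. First, by \Cref{lemma:vbsunique-tilednbd} we may enlarge the core freely, so it suffices to prove the statement for tilings $\mathcal T$, $\mathcal T'$ that are \emph{interleaved} in the sense of \Cref{lem:interleaved}: by \Cref{lem:subsurfaceseq} applied one end-cycle at a time (as in the proof of \Cref{lem:anytiling}), any two tilings of $L$ are connected by a finite chain of interleaved tilings, so the general case follows by composing the isotopies. Second, suppose $\mathcal T$ and $\mathcal T'$ are interleaved, induced by weighted cooriented $1$-manifolds $J$, $J'$ in the surfaces $F_Z$, with $J$ and $-J'$ cobounding an embedded subsurface. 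Then the staircase neighborhoods $N(R_+)$ and $N'(R_+)$ determined by $\mathcal T$ and $\mathcal T'$ satisfy $N(R_+)\subset N'(R_+)$ up to isotopy (or vice versa, after interleaving we may arrange one to contain the other, since one of the cobounding subsurfaces sits between $J$ and $J'$). Thus $A'=Q\cut N'(R_+)$ is obtained from $A=Q\cut N(R_+)$ by removing a collar region $F\times[0,1]$ foliated by $\phi_f$-orbits, exactly as in the proof of \Cref{lemma:vbsunique-tilednbd}.

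Now the endgame mirrors \Cref{lemma:vbsunique-tilednbd} verbatim. In the collar region $F\times[0,1]$, the branched surface $B(S,\mathcal T)$ is the product $\tau_0|_F\times[0,1]$ because the folding sequence from $f(\tau_0)$ to $\tau_0$ is supported in the smaller core; hence $B(S,\mathcal T)|_{A'}=B(S,\mathcal T)|_A\cup(\tau_0|_F\times[0,1])=B(S,\mathcal T')|_{A'}$. Since the two branched surfaces agree on $A'$, in particular on $\partial A'=\partial N'(R_+)\setminus R_+$, and since the portion inside $N'(R_+)$ is in both cases obtained by taking the forward $\phi_f$-flow completion of the intersection with $\partial N'(R_+)$, we conclude $B(S,\mathcal T)|_{N'(R_+)}=B(S,\mathcal T')|_{N'(R_+)}$, and therefore $B(S,\mathcal T)=B(S,\mathcal T')$ up to isotopy.

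The main obstacle I anticipate is the second step: verifying that interleaved tilings really do give nested (up to isotopy) staircase neighborhoods, so that the collar-removal picture of \Cref{lemma:vbsunique-tilednbd} applies. This requires unwinding the definition of the staircase neighborhood determined by a tiled neighborhood and checking that enlarging the tiled neighborhood of an end-cycle in the interleaved direction corresponds precisely to absorbing the flow-saturation of the cobounding subsurface $W$ between $J$ and $J'$; the coorientation hypotheses in \Cref{lem:interleaved} and \Cref{lem:subsurfaceseq} are exactly what guarantee the flow points consistently through $W$, so the flow-saturation is a genuine collar and not something with holonomy. Once that geometric picture is pinned down, everything else is a formal repetition of \Cref{lemma:vbsunique-tilednbd}. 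One should also remark, as in the splitting-sequence analogue, that this completes the chain of reductions showing $B(S,\mathcal T)$ depends only on $S$; combined with the forthcoming \Cref{lemma:vbsunique-splitseq} and \Cref{thm:splitsequnique}, the branched surface depends only on $T_+^\infty$.
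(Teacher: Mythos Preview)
Your proposal is correct and takes essentially the same approach as the paper: reduce to interleaved tilings via \Cref{lem:subsurfaceseq}, observe that interleaved tilings give nested cores $K_i \subset K'_i \subset K_{i+1}$ (equivalently, nested staircase neighborhoods), and then rerun the collar argument of \Cref{lemma:vbsunique-tilednbd} verbatim. The nesting you flag as the main obstacle is exactly what the paper asserts directly from the definition of interleaved tilings, so there is no hidden difficulty there.
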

\begin{proof}
For convenience, let $B = B({S},\mathcal{T})$ and $B' = B({S},\mathcal{T}')$. Using \Cref{lem:subsurfaceseq} as in the proof of \Cref{lem:anytiling}, it suffices to prove the lemma in the case that $\mathcal{T}$ and $\mathcal{T}'$ are interleaved. Recall that this in particular means that $K_i \subset K'_i \subset K_{i+1}$ up to reindexing, where $K_i$ are the cores determined by $\mathcal{T}$ and $K'_i$ are the cores determined by $\mathcal{T}'$. Suppose $\tau_0 \to ... \to f(\tau_0)$ is supported inside $K_i$. Let $F = K'_i \cut K_i$. We will show that $B(S, \mathcal{T}, K_i) = B(S, \mathcal{T}, K'_i)$.

This proof of this is exactly as in \Cref{lemma:vbsunique-tilednbd}: We write $Q$ as the union $A \cup (F \times [0,1]) \cup N'(R_+)$, observe that $B|_{A'} = B|_A \cup B|_{F \times [0,1]} = B'|_{A'}$ then $B|_{N'(R_+)} = B'|_{N'(R_+)}$, hence $B=B'$.
\end{proof}

In light of \Cref{lemma:vbsunique-tiling}, we will drop the second argument of $B(\cdot, \cdot)$ and write simply $B(\cdot)$.

\begin{lemma} \label{lemma:vbsunique-splitseq}
Let $S$ and $S'$  be two equivalent $f$-periodic splitting sequences. Then $B(S)$ is ambient isotopic to $B(S')$.
\end{lemma}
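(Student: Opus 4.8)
The plan is to reduce the claim to the single-commutation case and then to the case of a single periodic commutation, and for that case to produce an explicit ambient isotopy supported in a neighborhood of the region where the two splitting sequences differ. Recall from \Cref{subsec:splitsequnique} that two equivalent $f$-periodic splitting sequences have truncations related by finitely many $f$-periodic commutations, and by \Cref{lemma:vbsunique-tilednbd} and \Cref{lemma:vbsunique-tiling} the branched surface $B(\cdot)$ is insensitive to truncation (a truncation just enlarges the core, which we have shown does not matter) and to the choice of tiling. So first I would observe that it suffices to show $B(S)$ is ambient isotopic to $B(S')$ when $S'$ is obtained from $S$ by a single $f$-periodic commutation, since ambient isotopy is transitive and the truncations producing $B(S)=B(\text{trunc}(S))$ are already handled.

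Next, fix the single $f$-periodic commutation. By definition it swaps the order of two splits on disjoint large branches $b_1, b_2$ of some $\tau_i$, and then propagates this swap $f$-periodically to all $\tau_{i+np}$ via $f^n$. Reversing to folding sequences as in \Cref{sec:bsconstruct}, the branched surface $B(S)$ is built by suspending the folding sequence $f(\tau_0)=\tau_0 \to \cdots \to \tau_n = \tau$ inside the region $A=Q\cut N(R_+)$, and completing forward along the flow inside $N(R_+)$; the $f$-periodicity means the piece of $B(S)$ between $L_i$ and $L_{i+1}$ is glued $f$-periodically. The commutation changes only the order in which two folds on disjoint branches are performed. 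Because $b_1$ and $b_2$ are disjoint, the corresponding two elementary pieces of the suspended branched surface (each modeled on the center or right of \Cref{fig:ttsplitting}) occur in disjoint subsurfaces of $K_i \times [\tfrac{i}{n}, \tfrac{i+1}{n}]$, away from which $B_i$ is a product. I would then write down the local model: in a product chart $D_1 \times [0,1] \sqcup D_2 \times [0,1]$ (where $D_1, D_2$ are disjoint disk neighborhoods of the two folds), performing the folds in the order $(b_1, b_2)$ versus $(b_2, b_1)$ produces branched surfaces that are literally identical as subsets of $Q$, since the two pieces live in disjoint charts and the ``time'' parameter within the single merged level $[\tfrac{i}{n},\tfrac{i+1}{n}]$ is just a bookkeeping device. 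Here I should be careful: a commutation in a \emph{splitting sequence} inserts an extra intermediate track $\tau_i'$, so the folding sequences have the same length; the branched surface before and after differs only by an isotopy that slides the pieces corresponding to $b_1$ and $b_2$ past each other in the product direction, which is supported in $(D_1 \cup D_2) \times [\tfrac{i}{n}, \tfrac{i+1}{n}]$ and fixes the boundary of this region pointwise.

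Then I would extend this local isotopy $f$-periodically: apply the same slide in each $(D_1^{(n)} \cup D_2^{(n)}) \times [\tfrac{i}{n}+n, \cdots]$ chart, where $D_j^{(n)} = f^n(D_j)$, using the fact that the branched surface is constructed $f$-equivariantly so these charts are disjoint and the isotopies patch together to a globally defined ambient isotopy of $Q$; since it is the identity outside a flow-invariant neighborhood of the branch locus, it also does not disturb the forward-flow completion defining $B|_{N(R_+)}$. The upshot is $B(S)$ is ambient isotopic to $B(S')$. The main obstacle I anticipate is not the isotopy itself but the bookkeeping: making precise that an $f$-periodic commutation in the splitting sequence corresponds exactly to swapping the order of two disjoint elementary fold-pieces in the suspension (as opposed to some coarser re-routing), and checking that disjointness of the branches $b_1, b_2$ genuinely implies the two suspended pieces can be taken to have disjoint supports inside a single level $K_i \times [\tfrac{i}{n}, \tfrac{i+1}{n}]$ — this uses that away from the fold loci $B_i$ is a product (property (iii) in \Cref{sec:bsconstruct}) and that $V$ restricts to a product there, so the pieces really do commute rather than interact through the dynamics.
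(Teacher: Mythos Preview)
Your approach is essentially the same as the paper's: reduce to a single $f$-periodic commutation, then observe that the two branched surfaces differ only in the suspension of two folds on disjoint branches, which are visibly isotopic. The paper's proof is shorter---it just notes that $B(S)$ and $B(S')$ agree outside $M_i\cup M_{i+1}$ and are isotopic inside since they suspend the same two disjoint splits in different orders---but your more explicit local-chart argument is the same idea fleshed out.

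One point of confusion worth flagging: your step about ``extending the local isotopy $f$-periodically'' to charts $(D_1^{(n)}\cup D_2^{(n)})\times[\cdots]$ for all $n$ is unnecessary. The branched surface $B(S)$ is built by suspending exactly \emph{one} period of the folding sequence inside $A=Q\cut N(R_+)$ (the slabs $M_0,\dots,M_{n-1}$ wrap once around the mapping torus); the $f$-periodicity of the commutation just means that when you restrict the infinite sequence to one period you see a single ordinary commutation. So there is only one slab-pair $M_i\cup M_{i+1}$ to fix, not infinitely many. This doesn't break your argument---the extra step is vacuous rather than wrong---but it suggests you may be picturing the suspension as living in something like $L\times\mathbb{R}$ rather than in the compactified mapping torus.
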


\begin{proof}
Let $B = B(S)$ and $B' = B(S')$. It suffices to prove the lemma in the case when $S$ and $S'$ differ by one $f$-periodic commutation.

In this case, using the notation from the construction of \Cref{sec:bsconstruct}, there exist $M_i$ and $M_{i+1}$ such that $B(S)$ and $B(S')$ agree outside of $M_i\cup M_{i+1}$. In $M_i\cup M_{i+1}$, the restrictions of $B(S)$ and $B(S')$ are the suspensions (in the downward direction) of splits along the same two disjoint branches, but in different orders. This description makes it clear that the restrictions, and hence the entire branched surfaces, are isotopic.
\end{proof}

This gives the following:

\begin{corollary}\label{cor:uniqueuptotraintrack}
Up to isotopy, our construction of a veering branched surface in \Cref{thm:depthonetovbs} depends only on the choice of the boundary train track.
\end{corollary}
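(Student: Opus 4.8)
The plan is to chain together the three uniqueness lemmas just proved with the uniqueness result for splitting sequences from the previous section. Recall that the branched surface produced by \Cref{prop:buexist} (and shown to be veering in \Cref{thm:depthonetovbs}) was built from three pieces of data: an $f$-periodic splitting sequence $S$ of $f$-endperiodic train tracks carrying $\Lambda_+$, a tiling $\mathcal T$ of the end-cycles of $L$, and a core $K$ large enough to support the truncated sequence $\tau_0\to\cdots\to f(\tau_0)$. We have written this branched surface (up to isotopy) as $B(S,\mathcal T, K)$. The goal is to reduce the dependence all the way down to the train track $T_+^\infty$ induced on $\U_+/\langle f\rangle$ by $\tau_0$.

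First I would invoke \Cref{lemma:vbsunique-tilednbd} to eliminate the third argument: for fixed $S$ and $\mathcal T$, any two admissible cores give isotopic branched surfaces, so we may write $B(S,\mathcal T)$. Next I would apply \Cref{lemma:vbsunique-tiling} to eliminate the second argument: for fixed $S$, any two tilings of the end-cycles give isotopic branched surfaces, so we may write $B(S)$. Then \Cref{lemma:vbsunique-splitseq} shows that equivalent $f$-periodic splitting sequences produce ambiently isotopic branched surfaces, so $B(\cdot)$ descends to a function of the equivalence class $\mathscr S$ of the splitting sequence. Finally, \Cref{thm:splitsequnique} tells us that the equivalence class $\mathscr S(T_+^\infty,\mathcal T, K,\tau_K)$ obtained by factoring repeated core splits of $\tau_0$ depends only on the train track $T_+^\infty$ that $\tau_0$ induces on $\U_+/\langle f\rangle$; combined with the fact (from \Cref{thm:sequenceexist}) that the splitting sequence $S$ used in the construction is precisely one of this form, we conclude that $B$ depends, up to isotopy, only on $T_+^\infty$. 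Since $T_+^\infty$ is exactly the boundary train track $B^u\cap R_+$ (the construction of \Cref{sec:bsconstruct} builds $B^u$ to equal $T_+^\infty\times I$ in a neighborhood of $R_+$), this is the claimed statement.

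There is essentially no hard step here — this corollary is a bookkeeping consequence of the lemmas already established. The only point requiring mild care is making sure that the passage between ``the construction depends only on the equivalence class of the splitting sequence'' and ``the construction depends only on $T_+^\infty$'' correctly matches the data: one must check that the splitting sequence fed into \Cref{prop:buexist} in the proof of \Cref{thm:depthonetovbs} is indeed obtained by factoring core splits of an efficient $f$-endperiodic train track $\tau_0$ carrying $\Lambda_+$ (this is the content of \Cref{thm:sequenceexist}), so that \Cref{thm:splitsequnique} applies verbatim, and that the $T_+^\infty$ referenced there is the same as the intersection $B^u\cap R_+$. Both of these are immediate from the constructions in \Cref{sec:tilingconventions}, \Cref{sec:splitseq}, and \Cref{sec:bsconstruct}. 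I would write the proof as a short three-sentence chain of citations: apply \Cref{lemma:vbsunique-tilednbd}, then \Cref{lemma:vbsunique-tiling}, then \Cref{lemma:vbsunique-splitseq}, then \Cref{thm:splitsequnique}, and observe that $T_+^\infty = B^u\cap R_+$.
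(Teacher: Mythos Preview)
Your proposal is correct and matches the paper's approach exactly. The paper presents this corollary with no explicit proof, simply writing ``This gives the following:'' after \Cref{lemma:vbsunique-splitseq}; the intended argument is precisely the chain you describe—eliminate $K$ via \Cref{lemma:vbsunique-tilednbd}, then $\mathcal T$ via \Cref{lemma:vbsunique-tiling}, then pass to equivalence classes via \Cref{lemma:vbsunique-splitseq}, and finally invoke \Cref{thm:splitsequnique} to reduce to $T_+^\infty = B^u\cap R_+$.
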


It turns out that an even stronger statement is true, namely that any veering branched surface in \Cref{thm:depthonetovbs} (satisfying some obviously necessary conditions) comes from our construction, see \Cref{thm:hmvbsunique}. Hence our construction produces the unique veering branched surface satisfying these conditions given a boundary train track. 
We prove this stronger statement in \Cref{sec:hmvbsunique}, where we also show that when the boundary train track is allowed to vary, the resulting branched surface transforms via moves that can be explicitly described. This will round off our investigation of uniqueness.

\section{Foliation cones} \label{sec:folcone}

In this section, we will port some results of \cite{LMT24} regarding veering triangulations and Thurston fibered faces over to the sutured setting. The main goal will be \Cref{thm:folcone}, which we need in order to complete the uniqueness discussion in \Cref{sec:hmvbsunique}. We remark that this discussion opens up some interesting questions, in particular regarding how to generalize Thurston norm faces to sutured manifolds, which we will discuss in \Cref{sec:questions}.

\subsection{Depth one foliations and foliation cones} \label{subsec:reviewfolcone}

We first review some of the theory of foliation cones for atoroidal sutured manifolds. For more detail, see the articles \cite{CC99, CC17, CCF19}. 

Let $Q$ be an atoroidal sutured manifold. 
Every depth one foliation $\mathcal{F}$ on $Q$ determines a class $[\mathcal{F}] \in H^1(Q)$ whose value on any oriented loop in $\intr(Q)$ is given by taking its algebraic intersection with a non-compact leaf of $\mathcal{F}$. An alternative way of describing $[\mathcal{F}]$ is to cut away staircase neighborhoods of $R_\pm$ and look at the restriction of a noncompact leaf, which is now a properly embedded surface; the class of this surface in $H_2(Q,\partial Q) \cong H^1(Q)$ is $[\mathcal{F}]$. It is shown by Cantwell and Conlon in \cite[Theorem 1.1]{CC94} that the isotopy class of $\mathcal{F}$ is determined by $[\mathcal{F}]$.

Cantwell and Conlon also showed that for fixed $Q$, there exist finitely many cones in $H^1(Q)$, called \textbf{foliation cones}, such that the class of any depth one foliation on $Q$ lies in the interior of one of these cones and conversely any integral point interior to a cone corresponds to a depth one foliation \cite[Theorem 4.3]{CC99}. We remark that Cantwell and Conlon refer to these as ``Handel-Miller foliation cones."

Suppose we fix a foliation $\mathcal{F}_1$ associated to $C$, with a Handel-Miller representative $f_1$ of its monodromy. By suspending the 1-dimensional positive Handel-Miller lamination of $f_1$, we obtain the 2-dimensional Handel-Miller lamination $\LL$. Now suppose $\mathcal F_2$ is another depth one foliation associated to $C$. By \cite[Thm. 4.9 and Prop. 6.20]{CC17}, $\mathcal F_2$ can be isotoped to be transverse to the suspension semiflow of $f_1$.
The proof of the Transfer Theorem of Cantwell-Conlon-Fenley \cite[Theorem 12.7]{CCF19} gives that the intersection $\Lambda_2$ of $\LL$ with a noncompact leaf of $\mathcal{F}_2$ is ambient isotopic to the 1-dimensional positive Handel-Miller lamination $\Lambda$ for the first return map to that leaf. (In fact, using this ambient isotopy to pull back a hyperbolic metric for which $\Lambda$ is geodesic shows that $\Lambda_2$ is the positive Handel-Miller lamination associated to this pullback metric). Hence as we vary the depth one foliation within the foliation cone $C$, the 2-dimensional Handel-Miller lamination is invariant up to isotopy.
Hence there exists a semiflow $\phi_C$ on $Q$ such that every depth one foliation $\mathcal{F}$ associated to $C$ is transverse to $\phi_C$ up to isotopy, and such that the first return map of $\phi_C$ preserves the Handel-Miller lamination for a given noncompact leaf of $\mathcal F$.

\begin{remark}
The papers \cite{CC17} and \cite{CCF19} assume that $R_\pm$ have no annulus or torus components. However, we have explained in \Cref{sec:endperiodic} that Handel-Miller theory works for surfaces with infinite strip ends. Reading these papers with that in mind, one sees the results also hold when annular components of $R_\pm$ are allowed. 
\end{remark}

Finally, it is explained in \cite[Section 6.2]{CC17} how one can compute foliation cones in practice using Markov partitions. We will not need to use the full knowledge of this, but rather we just need the fact that each foliation cone $C$ is dual to the cone in $H_1(Q)$ positively generated by the periodic orbits of the Handel-Miller semiflow $\phi_C$ described above.

We remark that the atoroidal case described here is simpler than the general case, where one must choose the Handel-Miller representative with an additional property called tightness to define the correct foliation cone. This complication arises in the presence of principal regions which are not disks.

Note the significant parallels between the theory of foliation cones and the flow-theoretic perspective on cones over fibered faces of the Thurston norm ball developed by Fried \cite{Fri79} in the setting of compact hyperbolic manifolds. However, the analogy is not perfect because unlike fibered cones, foliation cones cannot be interpreted as cones over faces of the unit ball of a norm on $H^1(Q)$: they are in general not invariant under multiplication by $-1$, as demonstrated by examples in \cite{CC99}.

\subsection{Cone of dual cycles} \label{subsec:folcone}

In the setting of pseudo-Anosov mapping tori, the following is true.

\begin{theorem} \label{thm:closedcones}
Let $f:S \to S$ be a pseudo-Anosov homeomorphism on a finite type surface, let $M=S\times[0,1]/(x,1)\sim (f(x),0)$ be the mapping torus of $f$ with its foliation $\mathcal{F}$ by the leaves $S\times\{t\}$, and let $\mathcal{L}$ be the unstable 2-dimensional lamination in $M$ associated to $f$. Let $\mathcal{C}_{\mathcal{F}} \subset H_2(M, \partial M)$ be the Thurston fibered cone associated to $\mathcal{F}$. 

Meanwhile, let $B$ be a veering branched surface fully carrying $\mathcal{L}$. Let $\Gamma$ be the dual graph of $B$ and let $\Phi$ be the flow graph of $B$. Let $\mathcal{C}_\Gamma \subset H_1(M)$ be the cone positively generated by the cycles of $\Gamma$, and let $\mathcal{C}_\Phi$ be the cone positively generated by the cycles of $\Phi$.

Then $\mathcal{C}_{\mathcal{F}}^\vee = \mathcal{C}_\Gamma = \mathcal{C}_\Phi$ in $H_1(M)$.
\end{theorem}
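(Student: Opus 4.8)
The plan is to prove the two equalities $\mathcal{C}_\Gamma=\mathcal{C}_\Phi$ and $\mathcal{C}_{\mathcal{F}}^\vee=\mathcal{C}_\Phi$ separately; the first is essentially a specialization of results already established in the excerpt, while the second is where Thurston--Fried theory and the flow-graph machinery enter.

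For $\mathcal{C}_\Gamma=\mathcal{C}_\Phi$: regard $M$ as a sutured manifold with $R_\pm(\gamma)=\varnothing$ and $\gamma$ a (possibly empty) union of tori. By the known uniqueness of the veering branched surface carrying the unstable lamination in the finite-type case (\cite{Ago10,LMT24}; cf. \Cref{thm:closeduniqueness}), $B$ is isotopic to Agol's layered veering branched surface; in particular $B$ is very full, its complementary regions being $\u$-cusped solid tori (neighborhoods of the singular orbits) and, in the punctured case, $\u$-cusped torus shells. The corollary following \Cref{prop:flowgraphdualgraph} (generalizing \cite{LMT23a}) was proved in the general sutured setting and applies verbatim here, yielding $\mathcal{C}_\Gamma=\mathcal{C}_\Phi$ in $H_1(M)$.

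For $\mathcal{C}_{\mathcal{F}}^\vee=\mathcal{C}_\Phi$: let $\phi$ be the suspension pseudo-Anosov flow of $f$, so that the fiber $\mathcal{F}$ is a positive cross-section of $\phi$ and the leaves of $\mathcal{L}$ are unions of $\phi$-orbits. By Fried's theorem \cite{Fri79}, the fibered cone $\mathcal{C}_{\mathcal{F}}\subset H^1(M)\cong H_2(M,\partial M)$ is exactly the set of classes pairing positively with every closed orbit of $\phi$; equivalently the closed cones $\mathcal{C}_{\mathcal{F}}$ and $\mathcal{C}_\phi\subset H_1(M)$ (the cone positively generated by the closed orbits of $\phi$) are mutually dual, so $\mathcal{C}_{\mathcal{F}}^\vee=\mathcal{C}_\phi$. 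It remains to identify $\mathcal{C}_\phi$ with $\mathcal{C}_\Phi$, equivalently (by the previous paragraph) with $\mathcal{C}_\Gamma$. One inclusion is geometric: each closed orbit $c$ of $\phi$ lies on its unstable leaf, which is a leaf of $\mathcal{L}$ and hence carried by $B$; on $B$ the orbit $c$ is a flow line and therefore positively transverse to $\brloc(B)$ in the sense of \Cref{defn:postransbrloc} (when $c$ is a singular orbit it is realized instead as the core of a source sector, the second alternative of that definition), so by \Cref{prop:dualgraphcarries} it is homotopic to a directed cycle of $\Gamma$ and $[c]\in\mathcal{C}_\Gamma$. For the reverse inclusion $\mathcal{C}_\Phi\subseteq\mathcal{C}_\phi$ one uses that $\Phi$ is a dynamic train track carrying $\phi$ (established in \Cref{sec:dynamicpairs}, and also the content of \cite{LMT24}): every directed cycle of $\Phi$ is carried by the flow, so its homology class is a nonnegative combination of classes of closed orbits. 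Alternatively, one may quote the main theorem of \cite{Lan22,LMT24} identifying $\mathcal{C}_\Phi$ for a layered veering triangulation with the dual of the cone over the associated Thurston norm face, which in the present situation is $\mathcal{C}_{\mathcal{F}}^\vee$.

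I expect the main obstacle to be the inclusion $\mathcal{C}_\Phi\subseteq\mathcal{C}_\phi$: a priori a cycle of the flow graph only tracks the flow ``up to the branching'' of $B$, and ruling out spurious homology classes not realized by honest closed orbits is precisely what the flow-graph analysis of \cite{LMT24} accomplishes (in the sutured generalization of \Cref{sec:folcone} the same point is handled via dynamic planes together with \Cref{prop:flowgraphdualgraph}). The identification of $\mathcal{C}_\Phi$ with a Thurston norm face, due to \cite{Lan22,LMT24}, is the other substantive input; the remaining steps are routine bookkeeping with Fried's duality and Poincar\'e--Lefschetz duality.
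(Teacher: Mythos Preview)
Your overall strategy matches the paper's: use Fried to identify $\mathcal{C}_{\mathcal{F}}^\vee$ with the cone generated by closed orbits of the suspension flow, establish $\mathcal{C}_\Gamma=\mathcal{C}_\Phi$ separately, and then match closed orbits with $\Gamma$-cycles. The difference is in how the last step is executed. The paper treats both inclusions \emph{symmetrically} via dynamic planes: a closed orbit of homotopy class $g$ lies on an annulus or M\"obius leaf of $\mathcal{L}$, whose lift is a $g$-invariant periodic leaf, hence (by the leaf--dynamic-plane correspondence) a $g$-invariant dynamic plane, which contains a $g$-invariant bi-infinite $\widetilde{\Gamma}$-line projecting to a $\Gamma$-cycle in class $g$; and conversely a $\Gamma$-cycle in class $g$ determines a $g$-invariant dynamic plane, hence a $g$-invariant leaf of $\widetilde{\mathcal{L}}$, whose image downstairs contains a closed orbit in class $g$. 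This is short and self-contained once the dynamic-plane machinery is in hand, and it is exactly the argument you gesture at when you say ``the same point is handled via dynamic planes.''

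Your route for the forward inclusion (closed orbit $\Rightarrow$ $\Gamma$-cycle) via \Cref{prop:dualgraphcarries} is a legitimate variant, but note it tacitly assumes the projected orbit crosses $\brloc(B)$ in the maw direction; this holds because after invoking uniqueness $B$ is layered and $V$ agrees with the suspension direction, but it deserves a sentence. For the reverse inclusion, your primary argument (``$\Phi$ carries $\phi$, so cycles of $\Phi$ are nonnegative combinations of closed orbits'') is not a proof as stated: $\Phi$ being a dynamic train track on $B$ does not by itself produce a closed orbit homotopic to a given $\Phi$-cycle. You correctly flag this as the crux and point to dynamic planes or to \cite{Lan22,LMT24}; the paper's dynamic-plane argument is the direct way to close this gap without a black-box citation, and it costs no more than the forward direction.
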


The branched surface $B$ in the statement of \Cref{thm:closedcones} is actually unique up to isotopy; see \Cref{thm:closeduniqueness}. The statement above is phrased to motivate and maximize symmetry with \Cref{thm:folcone}, which we later use to prove the corresponding uniqueness statement in our setting.

In the cases when $S$ is closed and when $S$ is fully punctured (i.e. when the singularities of $f$ all occur at punctures), \Cref{thm:closedcones} is a consequence of results in \cite{LMT24} and \cite{Lan22}. The general case does not pose any added difficulty, so we sketch a proof using the ideas of \cite{LMT23a}. Our goal for the rest of this section is to transport this proof to the setting of sutured manifolds.

\begin{proof}[Proof of \Cref{thm:closedcones}]
Let $M^\circ$ be the fully punctured mapping torus of $f$ (obtained by deleting the singular orbits of the suspension flow of $f$), and let $\Delta$ be the veering triangulation on $M^\circ$ dual to $B$. Then $\Gamma$ is the dual graph of $\Delta$ and $\Phi$ is the dual graph of $\Delta$.

By \cite[Theorem 6 and Theorem 7]{Fri79}, $\mathcal{C}_{\mathcal{F}}^\vee$ is spanned by the closed orbits of the pseudo-Anosov suspension flow. (Strictly speaking \cite{Fri79} treats only the case where $S$ is closed; a proof of the general case appears in \cite[Appendix A]{Lan23}).  Now every closed orbit of the flow, say of homotopy class $g$, lies on some annulus or Möbius band leaf of $\mathcal{L}$. The lift of such a leaf to $\widetilde{M}$ determines a $g$-invariant dynamic plane, which has to contain a $g$-invariant bi-infinite $\widetilde{\Gamma}$-line. The quotient of the line is a $\Gamma$-cycle of homotopy class $g$. Conversely, a $\Gamma$-cycle, say of homotopy class $g$, determines a $g$-invariant dynamic plane, thus a $g$-invariant leaf of $\widetilde{\mathcal{L}}$, the image of which contains a closed orbit of homotopy class $g$. See \cite{LMT23a} for more details about this argument. This shows that $\mathcal{C}_{\mathcal{F}}^\vee = \mathcal{C}_\Gamma$.
Meanwhile it is shown in \cite[Theorem 5.1]{LMT24} that $\mathcal{C}_\Gamma = \mathcal{C}_\Phi$.
\end{proof}

To generalize the above result to the sutured setting, fix, for the rest of this section, a Handel-Miller map $f: L \to L$. Let $\overline{M_f}$ be the compactified mapping torus of $f$ with depth one foliation $\mathcal{F}$ and associated unstable Handel-Miller lamination $\mathcal{L}$. Let $B$ be an arbitrary veering branched surface fully carrying $\mathcal{L}$. 
We emphasize that while such a branched surface was constructed in \Cref{prop:buexist}, we are \emph{not} assuming that $B$ comes from our construction. However, in \Cref{sec:hmvbsunique} we show that $B$ indeed arises from our construction.

We have defined the dual graph and flow graph of $B$ in \Cref{subsec:dynamicplanes}. If we define $\mathcal{C}_\Gamma$ and $\mathcal{C}_\Phi$ exactly as in \Cref{thm:closedcones}, \Cref{prop:flowgraphdualgraph} tells us that $\mathcal{C}_\Gamma=\mathcal{C}_\Phi$. Also, in \Cref{subsec:reviewfolcone}, we discussed how foliation cones serve as a natural generalization to Thurston fibered cones.

However, some thought reveals that the equality between $\mathcal{C}_\mathcal{F}^\vee$ and $\mathcal{C}_\Gamma = \mathcal{C}_\Phi$ and its proof will not carry over immediately. The branched surface may contain annulus sectors, which do not carry a canonical dynamic orientation. See \Cref{eg:bookofIbundles} for an instance where this happens. To address this, we need to impose a requirement on dynamic orientation separately. Before we do that we set up some notation.

Recall the correspondence between periodic leaves of $\widetilde{\mathcal{L}}$ and periodic dynamic planes of $\widetilde{B}$ in \Cref{prop:dynamicplanesandhmleaves}. Going forward, we will be implicitly applying this correspondence.

The quotient of a periodic leaf $A$ in $\overline{M_f}$ is a leaf of $\mathcal{L}$ whose interior is homeomorphic to an open annulus or Möbius band. Such a leaf must contain a periodic orbit of the Handel-Miller suspension flow as a core. While such a periodic orbit may not be unique, the flow direction on all of them will be oriented in the same way. With this in mind, we will refer to the dynamic orientation determined by the flow direction on this core as \textbf{the dynamic orientation on $A$}. 

Meanwhile, a periodic dynamic plane $D$, which is, say, invariant under $g \in \pi_1(M)$, contains a $g$-invariant bi-infinite $\wt \Gamma$-line. Indeed, the proof of \Cref{prop:flowgraphdualgraph} shows that unless $D$ has an odd number of AB strips, there is a $g$-invariant bi-infinite $\widetilde{\Phi}$-line, and applying \Cref{prop:dualgraphcarries} to the quotient of $D$ under $g$, we can homotope this to a $g$-invariant bi-infinite $\widetilde{\Gamma}$-line. If $D$ has even width, the zig-zag in the middle AB region is a $g$-invariant bi-infinite $\widetilde{\Gamma}$-line. The quotient of such a line is an oriented core of the quotient of $D$, and again with a slight abuse of notation, we refer to the corresponding dynamic orientation on the image of $D$ as \textbf{the dynamic orientation on $D$}. 

One should of course check that this is well-defined, since there could be more than one $g$-invariant bi-infinite line on $D$. Suppose otherwise that we can find two such lines on $D$ which descend to two $\Gamma$-cycles of opposite homotopy class. Then by \Cref{prop:flowgraphdualgraph} we would be able to find two bi-infinite $\widetilde{\Phi}$-lines which descend to two (possibly non-primitive) $\Gamma$-cycles of opposite homotopy class. But by \Cref{lemma:dynamicplaneflowgraph} any two periodic $\widetilde{\Phi}$-lines must bound a union of AB strips, thus be oriented in the same way. Hence this shows that the dynamic orientation of a periodic dynamic plane is well-defined.

If a periodic leaf $A$ corresponds to a periodic dynamic plane $D$, we say that the dynamic orientation on $A$ is \textbf{compatible} with that on $D$ if the dynamic orientations on the images of $A$ and $D$ agree.
The condition that we need for the generalization of \Cref{thm:closeduniqueness} is that the dynamic orientation on each periodic leaf is compatible with that on its associated periodic dynamic plane. 

However, such a condition is difficult to verify in practice, since there are infinitely many periodic leaves to check. Fortunately, it turns out a simpler criterion suffices here, which we take as our definition in \Cref{defn:compatiblycarry} below. In \Cref{prop:compatiblycarrydefns} we will show the equivalence between \Cref{defn:compatiblycarry} and the condition stated in the last paragraph.

To state \Cref{defn:compatiblycarry}, we need some more setup. Recall that by \Cref{lem:bdylams}(a), $\Lambda_+^\infty=\mathcal{L} \cap R_+$ is spiraling. In fact, if we use the language of this section, \Cref{lem:bdylams}(b) says further that the orientation of each closed leaf of $\Lambda_+^\infty$ is compatible with the dynamic orientation of the closed leaf of $\mathcal{L}$ that contains it. Similarly, each circular sink component of the boundary train track $T_+^\infty=B \cap R_+$ carries an orientation coming from the source orientation of $T_+^\infty$. Each circular leaf of $\Lambda_+^\infty$ corresponds to a circular sink component of the boundary train track, so we can ask if the orientations of these loops agree.

Also recall that each leaf in the boundary of a complementary region of $\mathcal{L}$ contains a periodic orbit of the Handel-Miller suspension flow, thus carries a natural dynamic orientation. Similarly, each face of a complementary region of $B$ by definition carries a dynamic orientation. Each leaf in the boundary of a complementary region of $\mathcal{L}$ corresponds to a face of a complementary region of $B$, so we can compare their dynamic orientations.

\begin{definition} \label{defn:compatiblycarry}
A veering branched surface $B=(B,V)$ in $M$ is said to \textbf{compatibly carry} $\mathcal{L}$ if:
\begin{enumerate}
    \item $B$ fully carries $\mathcal{L}$,
    \item Every circular leaf of the boundary lamination $\mathcal{L} \cap R_+$ is oriented compatibly with the corresponding circular sink component of the boundary train track $\beta=B \cap R_+$, and
    \item Every leaf in the boundary of a complementary region of $\mathcal{L}$ is oriented compatibly with the corresponding face of the corresponding complementary region of $B$.\qedhere
\end{enumerate}
\end{definition}

\begin{proposition} \label{prop:compatiblycarrydefns}
Suppose a veering branched surface $B=(B,V)$ fully carries $\mathcal{L}$. Then $B$ compatibly carries $\mathcal{L}$ if and only if the dynamic orientation on each periodic leaf is compatible with that on its associated periodic dynamic plane.
\end{proposition}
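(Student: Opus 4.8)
The plan is to prove both directions by reducing statements about all periodic leaves and planes to the three boundary/complementary-region conditions of \Cref{defn:compatiblycarry}, using the correspondence of \Cref{prop:dynamicplanesandhmleaves}. The forward direction is the substantive one: assuming (1)--(3) of \Cref{defn:compatiblycarry}, I want to show that for \emph{every} periodic leaf $A$ of $\widetilde{\mathcal{L}}$, the dynamic orientation on the image of $A$ agrees with the dynamic orientation on its corresponding periodic dynamic plane $D$. The key idea is that the dynamic orientation of $A$ in $\overline{M_f}$ is determined by the flow direction of the Handel-Miller semiflow on a core periodic orbit, while the dynamic orientation of $D$ is determined by a $g$-invariant bi-infinite $\widetilde{\Gamma}$-line (equivalently $\widetilde{\Phi}$-line); both of these are ``local'' pieces of data that propagate along the leaf/plane, so it suffices to pin them down near a single convenient place. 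First I would handle the case where $A$ meets $R_+$: then $A \cap R_+$ contains a circular leaf $\ell$ of $\Lambda_+^\infty = \mathcal{L} \cap R_+$, whose orientation (by \Cref{lem:bdylams}(b)) records the dynamic orientation of $A$; meanwhile $D$ meets $\beta = B\cap R_+$ in the corresponding circular sink component, whose source orientation records the dynamic orientation of $D$ via \Cref{defn:vbs}(3) and the conventions following it. Condition (2) of \Cref{defn:compatiblycarry} says exactly that these two agree. For the case where $A$ is disjoint from $R_+$, I would instead use that $A$ borders a complementary region of $\mathcal{L}$: the leaf $A$ itself is (or is parallel to) a boundary leaf of such a region, so condition (3) relates its dynamic orientation to that of the corresponding face of the corresponding complementary region of $B$, and that face's dynamic orientation is in turn tied to the dynamic orientation of $D$ through the structure of $\u$-cusped tori/products and \Cref{defn:vbs}(3) again. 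Some care is needed because an isolated periodic leaf need not border a complementary region on \emph{both} sides, but it must border one on at least one side (or meet $R_+$), which is enough.

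The converse direction is comparatively easy: each of the three conditions in \Cref{defn:compatiblycarry} is a special case of the ``every periodic leaf is compatible with its dynamic plane'' statement. For (2), a circular leaf of $\mathcal{L}\cap R_+$ is the boundary of a periodic leaf $A$ of $\mathcal{L}$ (by \Cref{lem:bdylams}(b)); the periodic dynamic plane $D$ associated to $\widetilde{A}$ meets $R_+$ in (a lift of) the corresponding circular sink component of $\beta$, and compatibility of $A$ with $D$ plus \Cref{lem:bdylams}(b) and the source-orientation convention forces the two loop orientations on $R_+$ to agree. For (3), a boundary leaf of a complementary region of $\mathcal{L}$ is again a periodic leaf $A$; its associated dynamic plane $D$ has image equal to the union of sectors $A$ passes through, and the faces of the complementary region of $B$ adjacent to that leaf inherit their dynamic orientation from $D$ via \Cref{defn:vbs}(3); compatibility of $A$ with $D$ then gives (3).

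In carrying this out I would lean on \Cref{prop:dynamicplanesandhmleaves} for the bijection between periodic leaves of $\widetilde{\mathcal{L}^u}$ and periodic dynamic planes, on \Cref{prop:dynamicplanes}/\Cref{lemma:dynamicplaneflowgraph}/\Cref{prop:flowgraphdualgraph} for the fact that the dynamic orientation of a periodic dynamic plane is well-defined and detectable by a single periodic $\widetilde{\Gamma}$- or $\widetilde{\Phi}$-line, on \Cref{lem:bdylams}(a)--(b) for the boundary-lamination bookkeeping, and on \Cref{lemma:hmlamnoproductregions} implicitly (it guarantees the leaf attached to a periodic dynamic plane is unique, so the correspondence is honest). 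The main obstacle I anticipate is organizing the case analysis cleanly: a periodic leaf $A$ can interact with $\partial\overline{M_f}$ in several ways (escaping to $R_+$ on one or both ``sides'' of its core orbit, bordering a cusped-torus-shell complementary region, bordering a cusped-product complementary region, or being non-peripheral), and I need to be sure that in every case at least one of conditions (2), (3) ``sees'' the dynamic orientation of $A$ and matches it to that of $D$. Tracking the identification between a side of a periodic leaf, a face of the adjacent complementary region of $B$, and the corresponding piece of $\partial\widetilde{B}$ bounding $D$ — and verifying these identifications are orientation-compatible by construction — is where the real work lies, but it is bookkeeping rather than a genuinely new idea, and the arguments in \cite[\S 3]{LMT23a} together with the structural results of \Cref{subsec:3dprincipalregion} should make each case routine.
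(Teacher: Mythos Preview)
Your proposal has a genuine gap in the hard direction (assuming conditions (1)--(3), conclude that every periodic leaf is compatible with its dynamic plane). You reduce to two cases: either the periodic leaf $A$ meets $R_+$, or else ``$A$ borders a complementary region of $\mathcal{L}$.'' But this dichotomy is false: a periodic leaf of the unstable Handel-Miller lamination can be non-isolated on \emph{both} sides and have no circular boundary component on $R_+$. In that situation neither condition (2) nor condition (3) sees $A$ at all, so your bookkeeping approach cannot pin down its dynamic orientation.

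The paper closes this gap with a different case split, based on the dynamic plane $D$ rather than on $A$: either $D$ contains the lift of an annulus/M\"obius band sector (in which case that sector lies on a face of a complementary region of $B$, and condition (3) applies as you envision), or it does not. In the latter case the key input you are missing is \Cref{lemma:hmlamholonomy}: a periodic bi-infinite $\widetilde{\Phi}$-line on $D$ is positively transverse to $\brloc(\widetilde{B})$, so sectors merge into $D$ in the forward direction along it, which means the holonomy of $\mathcal{L}$ around its image is contracting on at least one side. Since $A$ is non-isolated on at least one side, \Cref{lemma:hmlamholonomy} says the Handel-Miller flow direction on the core orbit of $A$ is the contracting one, and this forces it to agree with the orientation of the $\Phi$-cycle. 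This holonomy argument is the substantive idea, not bookkeeping; without it your proof does not go through for generic periodic leaves. Your treatment of the easy direction is fine and matches the paper.
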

\begin{proof}
Suppose the dynamic orientation on each periodic leaf is compatible with that on its associated periodic dynamic plane. Then for every circular leaf $l$ of the boundary lamination, there is some annulus or Möbius band leaf of $\mathcal{L}$ containing $l$, whose dynamic orientation is given by the orientation of $l$. Such a leaf lifts to a periodic leaf of $\widetilde{\mathcal{L}}$, and the corresponding periodic dynamic plane of $\widetilde{B}$ must have a boundary component along $\widetilde{B} \cap \widetilde{R_+} \subset \widetilde{\Gamma}$ which projects down to the circular sink component of $\beta$ carrying $l$, and its orientation determines the dynamic orientation of the dynamic plane. Since the dynamic orientation of the leaf agrees with that of the dynamic plane, the orientation of $l$ agrees with that of the circular sink component.

For every leaf in the boundary of a complementary region of $\mathcal{L}$, consider a lift to $\widetilde{\mathcal{L}}$ and the corresponding dynamic plane. The quotient of this dynamic plane meets the corresponding face of the complementary region of $B$, and we claim that their dynamic orientations agree. This can be seen by taking a periodic orbit of $V$ on the face, which defines the dynamic orientation of the face, and applying \Cref{prop:dualgraphcarries} to homotope it to a $\Gamma$ cycle, which defines the dynamic orientation of the dynamic plane. Since the dynamic orientation of the leaf agrees with that of the dynamic plane, it in turn agrees with that of the face.

For the converse, it suffices to show that the dynamic orientation condition always holds for leaves of $\mathcal{L}$ that do not have a circular boundary component that lies on $R_+$, since for those leaves, the argument in the first paragraph goes through.

To that end, consider a periodic leaf $A$ of $\widetilde{\mathcal{L}}$ whose image in $\mathcal{L}$ does not have a circular boundary component that lies on $R_+$, and let $D$ be its corresponding periodic dynamic plane. We first consider the case when $D$ does not contain the lift of an annulus or Möbius band sector. 

Consider a bi-infinite periodic $\widetilde{\Phi}$-line on $D$. It is positively transverse to $\brloc(\wt B)$, i.e. sectors of $\widetilde{B}$ must merge with $D$ in the forward direction. Equivalently, this can be expressed by saying that the holonomy of $\mathcal{L}$ around the image of this line is contracting on at least one side. By the transversely contracting dynamics of the unstable Handel-Miller lamination (\Cref{lemma:hmlamholonomy}), the periodic orbit in the image of $A$ must be oriented in the same direction as the $\Phi$-cycle.

If $D$ contains the lift of an annulus or Möbius band sector, the sector must lie on the face of some complementary region of $B$. As reasoned in the second paragraph above, the dynamic orientation of $D$ agrees with that of the face, which by assumption agrees with that of the corresponding leaf of $\mathcal{L}$. This completes the proof of the converse.
\end{proof}

Equipped with the notion of compatibly carrying, we can proceed with the proof of our theorem.

\begin{theorem} \label{thm:folcone}
Let $f:L \to L$ be an endperiodic map, let $Q$ be the compactified mapping torus of $f$ with depth one foliation $\mathcal{F}$, and let $\mathcal{L}$ be the unstable Handel-Miller lamination in $Q$ associated to $f$. Let $\mathcal{C}_{\mathcal{F}} \subset H_2(Q, \partial Q)$ be the foliation cone associated to $\mathcal{F}$. Meanwhile, let $B$ be a veering branched surface compatibly carrying $\mathcal{L}$, with dual graph $\Gamma$ and flow graph $\Phi$. Let $\mathcal{C}_\Gamma$ and $\mc C_\Phi$ be the cones in $H_1(Q)$ positively generated by the cycles of $\Gamma$ and $\Phi$, respectively.
Then 
\[
\mathcal{C}_{\mathcal{F}}^\vee = \mathcal{C}_\Gamma = \mathcal{C}_\Phi.
\]
\end{theorem}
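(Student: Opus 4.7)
The equality $\mathcal{C}_\Gamma = \mathcal{C}_\Phi$ is already the corollary to \Cref{prop:flowgraphdualgraph}, so the plan is to prove $\mathcal{C}_\mathcal{F}^\vee = \mathcal{C}_\Gamma$. As recalled in \Cref{subsec:reviewfolcone}, $\mathcal{C}_\mathcal{F}^\vee$ is positively generated by the homology classes of periodic orbits of the Handel-Miller semiflow $\phi_{\mathcal{C}_\mathcal{F}}$ associated with the cone $\mathcal{C}_\mathcal{F}$. Hence it suffices to establish a two-way correspondence (up to rational positive combinations and homology) between periodic orbits of $\phi_{\mathcal{C}_\mathcal{F}}$ and cycles of $\Gamma$, in a way that preserves homology class.

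For the direction $\mathcal{C}_\mathcal{F}^\vee\subseteq \mathcal{C}_\Gamma$: every periodic orbit $\alpha$ of $\phi_{\mathcal{C}_\mathcal{F}}$ lies in a leaf of $\mathcal{L}$ whose interior is an open annulus or Möbius band. Lifting to $\widetilde{Q}$, we obtain a $g$-invariant leaf of $\widetilde{\mathcal{L}}$ (where $g=[\alpha]\in\pi_1(Q)$), which by \Cref{prop:dynamicplanesandhmleaves} corresponds to a $g$-invariant periodic dynamic plane $D$ of $\widetilde{B}$. Inside $D$ we can locate a $g$-invariant bi-infinite $\widetilde{\Gamma}$-line (as in the proof sketch of \Cref{thm:closedcones} and discussion above \Cref{defn:compatiblycarry}), whose quotient is a $\Gamma$-cycle $c$. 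The compatibly carrying hypothesis and \Cref{prop:compatiblycarrydefns} ensure that the dynamic orientation on the leaf matches the dynamic orientation of $D$, hence $c$ and $\alpha$ represent the same element $g\in\pi_1(Q)$ (rather than $g^{-1}$), so $[c]=[\alpha]\in H_1(Q)$.

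For the reverse inclusion $\mathcal{C}_\Gamma\subseteq \mathcal{C}_\mathcal{F}^\vee$: let $c$ be a cycle of $\Gamma$ with lift a $g$-invariant bi-infinite $\widetilde{\Gamma}$-line $\widetilde{c}\subset\widetilde{B}$. The associated $D(\widetilde{c})$ is a $g$-invariant periodic dynamic plane (or half-plane); \Cref{prop:dynamicplanesandhmleaves} then furnishes a corresponding $g$-invariant periodic leaf of $\widetilde{\mathcal{L}}$, whose image in $\mathcal{L}$ is an annulus or Möbius band leaf containing a periodic orbit $\alpha$ of $\phi_{\mathcal{C}_\mathcal{F}}$ of homotopy class $g$ (again using compatible carrying together with \Cref{prop:compatiblycarrydefns} to fix the orientation). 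Therefore $[c]=[\alpha]\in \mathcal{C}_\mathcal{F}^\vee$.

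The main obstacle, and the reason the hypothesis of \emph{compatibly} carrying is needed, is the matching of orientations. Without that hypothesis a periodic dynamic plane could a priori carry the ``opposite'' dynamic orientation from its associated leaf of $\mathcal{L}$ (as can happen in the presence of annulus sectors, cf.\ \Cref{eg:bookofIbundles}), in which case the $\Gamma$-cycle would represent $g^{-1}$ rather than $g$; this would still give $\mathcal{C}_\Gamma=\pm\mathcal{C}_\mathcal{F}^\vee$ componentwise but could destroy the equality of cones. The remaining subtlety is the case of dynamic half-planes, which occur only when $\widetilde{c}$ eventually lies along a single component of $\brloc(\widetilde{B})$; this component projects to a branch loop on a periodic $\u$-face of a principal region, and by \Cref{lem:bdylams}(b) and the construction around \Cref{lem:principalregioncircular} the circular flow on such a face again provides the required periodic orbit with matching homology class, so the argument goes through uniformly.
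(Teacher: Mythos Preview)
Your proof is correct and follows essentially the same route as the paper's: reduce to $\mathcal{C}_{\mathcal{F}}^\vee=\mathcal{C}_\Gamma$ via the corollary to \Cref{prop:flowgraphdualgraph}, then use the bijection of \Cref{prop:dynamicplanesandhmleaves} between periodic leaves of $\widetilde{\mathcal{L}}$ and periodic dynamic planes, together with \Cref{prop:compatiblycarrydefns}, to match each closed orbit of the Handel--Miller semiflow with a $\Gamma$-cycle in the same homotopy class and vice versa.

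One small comment: your final paragraph on dynamic half-planes is unnecessary and slightly off. The reference to \Cref{lem:principalregioncircular} is inapposite here, since that lemma concerns the specific branched surface built in \Cref{sec:hmvbs}, not an arbitrary veering $B$ compatibly carrying $\mathcal{L}$. The paper simply asserts that a $\Gamma$-cycle ``determines a periodic dynamic plane''; the point is that even when $D(\widetilde{c})$ is a half-plane (so $c$ lies along a branch loop), the sectors of $\widetilde{B}$ it contains are traversed by a periodic leaf of $\widetilde{\mathcal{L}}$, and that leaf in turn equals a genuine periodic dynamic plane via the discussion preceding \Cref{prop:dynamicplanesandhmleaves}. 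So the half-plane case folds back into the main argument without needing a separate treatment.
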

\begin{proof}
As pointed out before, \Cref{prop:flowgraphdualgraph} implies that $\mathcal{C}_\Gamma = \mathcal{C}_\Phi$, hence it suffices to show that $\mathcal{C}_{\mathcal{F}}^\vee = \mathcal{C}_\Gamma$. By the discussion in \Cref{subsec:reviewfolcone}, it suffices to show that every closed orbit of the Handel-Miller suspension flow is homotopic to a $\Gamma$-cycle, and that every $\Gamma$-cycle is homotopic to a closed orbit of the Handel-Miller suspension flow.

Every closed orbit of the flow lies on some leaf of $\mathcal{L}$, whose lift to $\widetilde{\mathcal{L}}$ is a periodic leaf, thus corresponds to some periodic dynamic plane. The periodic dynamic plane carries some periodic $\Gamma$-line, which quotients down to a $\Gamma$-cycle. By \Cref{prop:compatiblycarrydefns}, the $\Gamma$-cycle is homotopic to the closed orbit (as opposed to its opposite).

Conversely, every $\Gamma$-cycle determines a periodic dynamic plane, which corresponds to a periodic leaf of $\widetilde{\mathcal{L}}$ whose image contains a closed orbit. By \Cref{prop:compatiblycarrydefns}, the $\Gamma$-cycle is homotopic to the closed orbit.
\end{proof}

\section{Uniqueness of veering branched surfaces which carry Handel-Miller laminations} \label{sec:hmvbsunique}

The following theorem about veering triangulations is essentially known to experts, although perhaps with different terminology. We include a proof for completeness.

\begin{theorem} \label{thm:closeduniqueness}
Let $f: S \to S$ be a pseudo-Anosov map on a finite type surface. Let $\mathcal{L}$ be the unstable lamination of the suspension flow on the mapping torus. If $(B,V)$ and $(B', V')$ are two unstable veering branched surfaces fully carrying $\mc L$, then $B$ is isotopic to $B'$. 
\end{theorem}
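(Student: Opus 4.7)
The plan is to reduce to the fully-punctured mapping torus, where $B$ and $B'$ become the dual branched surfaces of layered veering triangulations, and then invoke Agol's uniqueness.

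Let $\Sigma\subset S$ be the set of singularities of the invariant foliations, let $M=S\times[0,1]/(x,1)\sim(f(x),0)$ be the closed mapping torus, and let $M^\circ=M\smallsetminus (\Sigma\times S^1)$ be the fully punctured mapping torus obtained by drilling the singular orbits. Each singular orbit is a closed orbit of the suspension semiflow; by \Cref{prop:vbssectors}, every closed orbit of $V$ carried in $B$ sits in the core of an annulus or M\"obius source sector, and any other closed orbit lies in the interior of a complementary region. Hence $B$ (and symmetrically $B'$) is disjoint from a small tubular neighborhood of each singular orbit, so both restrict to veering branched surfaces $B^\circ,\,(B')^\circ$ in $M^\circ$ fully carrying the restricted lamination $\mathcal L|_{M^\circ}$.

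First, I would apply \cite[Proposition 3.2]{Tsa23} to produce veering triangulations $\Delta,\Delta'$ of $M^\circ$ dual to $B^\circ,(B')^\circ$. Because $f$ is pseudo-Anosov, $\mathcal L|_{M^\circ}$ is the unstable measured foliation and carries a unique-up-to-scale projectively invariant transverse measure. Intersecting $B^\circ$ and $(B')^\circ$ with the fiber $S^\circ=S\smallsetminus\Sigma$ yields efficient train tracks $\tau,\tau'$ on $S^\circ$ fully carrying this unique measured lamination; applying Agol's construction from \cite{Ago10}, the layered veering triangulations $\Delta,\Delta'$ arise as suspensions of the periodic maximal splitting sequences of $\tau,\tau'$. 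Since the measured lamination is unique and a maximal splitting is unique up to isotopy (\Cref{lemma:differbycommutations}), while the initial train tracks $\tau,\tau'$ are related by shifts (a consequence of \Cref{lem:diffbyshifts} applied to the measured setting, where the measure rules out the closed-leaf ambiguity), the cores of any two maximal splitting sequences eventually agree. Hence $\Delta$ and $\Delta'$ are isotopic veering triangulations of $M^\circ$, so $B^\circ$ and $(B')^\circ$ are isotopic in $M^\circ$.

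Finally, I would promote the isotopy on $M^\circ$ to an isotopy on $M$. For each singular orbit $\gamma$, the behavior of a veering branched surface fully carrying $\mathcal L$ near $\gamma$ is combinatorially rigid: by \Cref{prop:vbssectors}, $\gamma$ is the core of an annulus or M\"obius source sector whose boundary is a scalloped circle with one cusp per separatrix of the singularity, and the dynamic orientation on this sector is determined by the flow. This forces $B$ and $B'$ to have isotopic sectors in a neighborhood of $\gamma$, so the isotopy on $M^\circ$ extends across each drilled orbit. This extension step is the main obstacle, and it is precisely where the combinatorial rigidity provided by the dual graph/flow graph framework of \cite{LMT24}, together with the veering axioms, is used to rule out any ambiguity in how the branched surfaces close up around the singular orbits.
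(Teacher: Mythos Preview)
Your outline has the right endgame---reduce to a layered veering triangulation on the fully punctured mapping torus and invoke Agol---but the crucial step of showing that the dual triangulation is \emph{layered} is missing, and several of the tools you cite do not apply as stated.

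The main gap is circularity in your middle paragraph. You write that ``the layered veering triangulations $\Delta,\Delta'$ arise as suspensions of the periodic maximal splitting sequences of $\tau,\tau'$,'' but this is precisely what must be proved: \emph{a priori} you only know that $B^\circ$ is veering, not that it is the suspension of any splitting sequence. Agol's construction goes from a splitting sequence to a triangulation; you need the reverse. The paper handles this by invoking \cite[Proposition 5.11]{LMT24} to find a fiber surface positively transverse to the dual graph $\Gamma$, then building a height function on $M\cut S$ whose level sets exhibit $B$ as a suspended splitting sequence. Your proposal to simply ``intersect with the fiber $S^\circ$'' does not work without first arranging such transversality.

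There are also two specific misapplications. First, \Cref{lem:diffbyshifts} concerns \emph{spiraling} train tracks carrying \emph{spiraling} laminations; the unstable lamination of a pseudo-Anosov on $S^\circ$ is minimal with no closed leaves, so it is not spiraling and the lemma does not apply. Second, your final paragraph asserts that each singular orbit $\gamma$ ``is the core of an annulus or M\"obius source sector,'' but in fact the paper shows (using that distinct singular orbits are never freely homotopic up to powers) that \emph{every} sector of $B$ is a disk, so there are no source sectors at all: the singular orbits are cores of $\u$-cusped torus complementary regions. This actually makes the extension step trivial once layeredness is established, but it also means your first paragraph's appeal to \Cref{prop:vbssectors} to locate the singular orbits is not doing the work you want.
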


That is, the unstable veering branched surface carrying $\mathcal{L}$ has a unique underlying branched surface. Note the vector field cannot be expected to be unique since there are many vector fields that make a branched surface into an unstable branched surface (This was discussed in \Cref{rmk:vf=combinatorialdata}). 

\begin{proof}[Proof of \Cref{thm:closeduniqueness}]
Let $\Gamma$ be the dual graph for $B$. By \cite[Proposition 5.11]{LMT24} there exists an oriented surface $S$ such that $M\cut S\cong S\times[0,1]$ (so $S$ is a fiber of a fibration $M\to S^1$), $S$ is positively transverse to $\Gamma$, and $S$ pairs positively with each cycle in $\Gamma$.
It follows that $\Gamma\cut S$ is an acyclic digraph. As such we can choose a function $h\colon \Gamma\cut S\to[0,1]$ which is monotonically increasing on edges of $\Gamma\cut S$, and takes the values 1 and 0 at all points of $\Gamma\cut S$ lying on $R_+(M\cut S)$ and $R_-(M\cut S)$, respectively. 

Each sector of $B\cut S$ is a disk, over which we can continuously extend $h$ so that each level set in a sector is a single properly embedded line segment transverse to $\Gamma$, and $h$ takes the values 1 and 0 exactly on points of $R_+(M\cut S)$ and $R_-(M\cut S)$, respectively. 

Finally, each component of $(M\cut S)\cut (B\cut S)$ is diffeomorphic to $A\times [0,1]$ where $A$ is either a disk with $\ge 3$ cusps on its boundary or an annulus with one smooth boundary component and $\ge 1$ cusps on the other. We can extend $h$ to each $A\times[0,1]$ region so that each level set of $h|_{A\times[0,1]}$ is diffeomorphic to $A$. Together, the level sets determine an isotopy from $S\times 1$ to $S\times 0$. Looking at the intersection with $B$ at each time slice of this isotopy, we obtain a splitting sequence of train tracks which is periodic under $f$. The branched surface $B$ is the suspension of this splitting sequence.

Note that every sector of $B$ is a disk: otherwise there would be two parallel $\u$-cusped tori. These would correspond to complementary regions of the unstable lamination with core curves $c_1$, $c_2$ such that $c_1^{k_1}$ is homotopic to $c_2^{k_2}$ for some positve integers $k_1,k_2$, a contradiction. 
Hence $B$ has a dual ideal triangulation, which we call $\Delta$. By \cite[Proposition 3.22]{Tsa23}, $\Delta$ is a veering triangulation; by the arguments above, $\Delta$ is layered. This determines $\Delta$ up to isotopy by \cite{Ago10}. As its dual, $B$ is unique up to isotopy also.
\end{proof}

We note that a shorter proof of \Cref{thm:closeduniqueness} is possible  using \cite[Theorem E]{LMT24}. 
In fact, such a proof would show a stronger version of the theorem where we just need to assume that the dual graphs of $B$ and $B'$ generate the same cone in homology. See \Cref{sec:conequestions} for more discussion. 

The goal of this section is to transport this particular proof of \Cref{thm:closeduniqueness} to our setting of endperiodic maps. Some care is needed because as mentioned earlier, when we construct a veering branched surface carrying $\mc L$ in \Cref{prop:buexist}, the resulting branched surface depends on the initial choice of boundary train track. However we will show that this is the only obstacle to uniqueness, in the sense that that a veering branched surface fully carrying $\mathcal{L}$ with a specified efficient boundary train track is unique up to isotopy. Our strategy is to mimic the proof of \Cref{thm:closeduniqueness} above: given a veering branched surface $B$ carrying $\mathcal{L}$, we will construct a surface whose intersection with $B$ gives us a splitting sequence of train tracks as we sweep it around the sutured manifold.
This will show that $B$ can be obtained from our construction in \Cref{sec:hmvbs} using this splitting sequence. We will then apply the results in \Cref{subsec:hmvbsunique} to conclude that $B$ is the only  veering branched surface fully carrying $\mc L$ with this boundary train track.

A natural question, then, is \emph{how} these veering branched surfaces differ from one another as we vary the boundary train track. We address this question in \Cref{subsec:shiftmoves} by showing that they are related by a family of moves which we describe explicitly. 

\subsection{Building surfaces from veering branched surfaces} \label{subsec:dualcelldecomp}

In this section we let $(B,V)$ be an unstable veering branched surface in a sutured manifold $Q$. 

We let $N$ be a standard neighborhood of $B$ with the property that along each component of $\del N$ lying in a $\u$-cusped product, $V$ points into $N$. This can be arranged by, in a $\u$-cusped product $P\cong \Sigma\times[0,1]$, isotoping $\Sigma=\Sigma_0$ along the orbits of $V|_P$ to obtain a family of surfaces $\{\Sigma_t\mid t\in[0,1]\}$ such that the $\u$-faces of $P$ are within some small $\epsilon>0$ from $\Sigma_1$ and the points in $\Sigma_1$ further than $\epsilon$ from the $\u$-faces lie in $R_+$. We then set the components of $\Sigma_1\cut R_+$ to be components of $\del N$. See \Cref{fig:specialneighborhood} for a schematic.

\begin{figure}
    \centering    
    \fontsize{12pt}{12pt}\selectfont
    \resizebox{!}{1.5in}{
\begingroup%
  \makeatletter%
  \providecommand\color[2][]{%
    \errmessage{(Inkscape) Color is used for the text in Inkscape, but the package 'color.sty' is not loaded}%
    \renewcommand\color[2][]{}%
  }%
  \providecommand\transparent[1]{%
    \errmessage{(Inkscape) Transparency is used (non-zero) for the text in Inkscape, but the package 'transparent.sty' is not loaded}%
    \renewcommand\transparent[1]{}%
  }%
  \providecommand\rotatebox[2]{#2}%
  \newcommand*\fsize{\dimexpr\f@size pt\relax}%
  \newcommand*\lineheight[1]{\fontsize{\fsize}{#1\fsize}\selectfont}%
  \ifx\svgwidth\undefined%
    \setlength{\unitlength}{243.75494307bp}%
    \ifx\svgscale\undefined%
      \relax%
    \else%
      \setlength{\unitlength}{\unitlength * \real{\svgscale}}%
    \fi%
  \else%
    \setlength{\unitlength}{\svgwidth}%
  \fi%
  \global\let\svgwidth\undefined%
  \global\let\svgscale\undefined%
  \makeatother%
  \begin{picture}(1,0.41971613)%
    \lineheight{1}%
    \setlength\tabcolsep{0pt}%
    \put(0,0){\includegraphics[width=\unitlength,page=1]{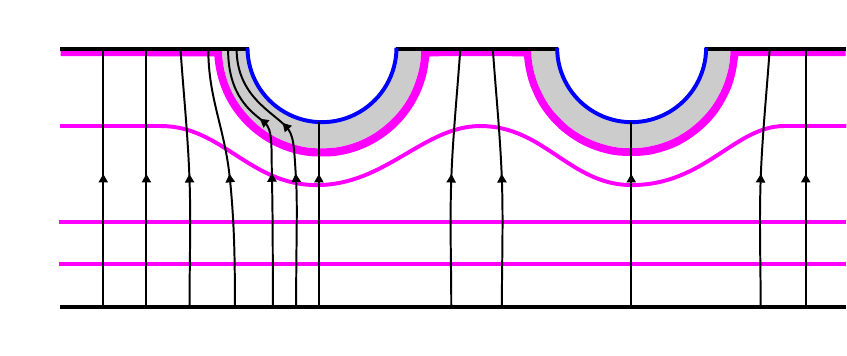}}%
    \put(0.24527489,0.01016806){\color[rgb]{0,0,0}\makebox(0,0)[lt]{\lineheight{1.25}\smash{\begin{tabular}[t]{l}$\Sigma \subset R_-$\end{tabular}}}}%
    \put(-0.00372589,0.17100948){\color[rgb]{0,0,0}\makebox(0,0)[lt]{\lineheight{1.25}\smash{\begin{tabular}[t]{l}$P$\end{tabular}}}}%
    \put(0.17929047,0.38183229){\color[rgb]{0,0,0}\makebox(0,0)[lt]{\lineheight{1.25}\smash{\begin{tabular}[t]{l}$R_+$\end{tabular}}}}%
    \put(0,0){\includegraphics[width=\unitlength,page=2]{fig_specialneighborhood.pdf}}%
  \end{picture}%
\endgroup%
}
    \caption{By using the product structure of $\u$-cusped products, we can build a standard neighborhood $N$ of $B$ with the property that along each component of $\del N$ lying in a $\u$-cusped product, $V$ points into $N$. Here the gray areas indicate parts of $N$ in the $\u$-cusped product $P$.}
    \label{fig:specialneighborhood}
\end{figure}

Let $a$ be a transient annulus sector of $B$. Since $V|_a$ points inward on one component of $\del a$ and outward on the other, we can always replace $V$ by a vector field $V'$ such that each orbit of $V'|_a$ is a properly embedded arc, with the additional properties that $V$ and $V'$ agree outside of $a$ and $(B,V')$ is a veering branched surface. 
This simplifies some upcoming definitions and constructions, so we will make the following assumption going forward:

\begin{convention}\label{conv:goodV}
For any transient annulus $a$ of $(B,V)$, all orbits of $V|_a$ are properly embedded arcs.
\end{convention}

Recall the definition of the dual graph $\Gamma$ of $B$ from \Cref{defn:dualgraph}. The complementary regions of $\Gamma$ in $B$ are disks and annuli, and it is now to our advantage to subdivide the latter into disks. There are multiple ways to do this that would work for our purposes, but we have chosen one that feels relatively natural. 

\begin{definition}
An \textbf{extended dual graph} for $B$ is a directed graph embedded in $B$, containing the dual graph $\Gamma$ as a subgraph, intersecting each sector $s$ of $B$ as follows:
\begin{itemize}
    \item if $s$ is a source sector, then for each vertex $v$ on $\del s$ we augment $\Gamma$ by adding a directed edge from the core $\Gamma$-cycle of $s$ to $v$. 
    \item if $s$ is a transient annulus, then for each vertex $v$ in the outwardly cooriented component of $\del s$, we augment $\Gamma$ by adding the $V$-trajectory connecting $v$ to the other boundary component of $s$, oriented compatibly with $V$. (This uses \Cref{conv:goodV}).
    \item For any other sector $s$, we let $\Gamma^+|_s=\Gamma|_s$.
\end{itemize}
See \Cref{fig:dualgraphextend}.
\end{definition}

\begin{figure}
    \centering
    \fontsize{12pt}{12pt}\selectfont
    \resizebox{!}{6cm}{
\begingroup%
  \makeatletter%
  \providecommand\color[2][]{%
    \errmessage{(Inkscape) Color is used for the text in Inkscape, but the package 'color.sty' is not loaded}%
    \renewcommand\color[2][]{}%
  }%
  \providecommand\transparent[1]{%
    \errmessage{(Inkscape) Transparency is used (non-zero) for the text in Inkscape, but the package 'transparent.sty' is not loaded}%
    \renewcommand\transparent[1]{}%
  }%
  \providecommand\rotatebox[2]{#2}%
  \newcommand*\fsize{\dimexpr\f@size pt\relax}%
  \newcommand*\lineheight[1]{\fontsize{\fsize}{#1\fsize}\selectfont}%
  \ifx\svgwidth\undefined%
    \setlength{\unitlength}{382.83442846bp}%
    \ifx\svgscale\undefined%
      \relax%
    \else%
      \setlength{\unitlength}{\unitlength * \real{\svgscale}}%
    \fi%
  \else%
    \setlength{\unitlength}{\svgwidth}%
  \fi%
  \global\let\svgwidth\undefined%
  \global\let\svgscale\undefined%
  \makeatother%
  \begin{picture}(1,0.42473015)%
    \lineheight{1}%
    \setlength\tabcolsep{0pt}%
    \put(0,0){\includegraphics[width=\unitlength,page=1]{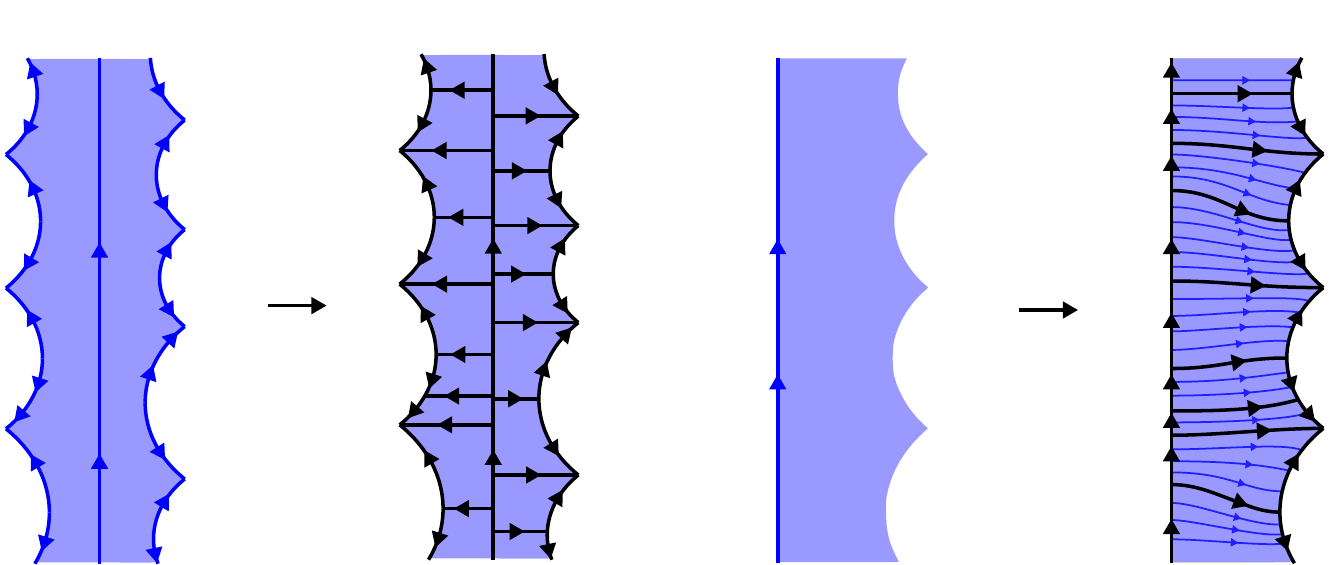}}%
    \put(0.06051085,0.40060756){\color[rgb]{0,0,0}\makebox(0,0)[lt]{\lineheight{1.25}\smash{\begin{tabular}[t]{l}$\Gamma$\end{tabular}}}}%
    \put(0.35775418,0.40060908){\color[rgb]{0,0,0}\makebox(0,0)[lt]{\lineheight{1.25}\smash{\begin{tabular}[t]{l}$\Gamma^+$\end{tabular}}}}%
    \put(0.6156764,0.40060727){\color[rgb]{0,0,0}\makebox(0,0)[lt]{\lineheight{1.25}\smash{\begin{tabular}[t]{l}$\Gamma$\end{tabular}}}}%
    \put(0.91291979,0.4006088){\color[rgb]{0,0,0}\makebox(0,0)[lt]{\lineheight{1.25}\smash{\begin{tabular}[t]{l}$\Gamma^+$\end{tabular}}}}%
    \put(0,0){\includegraphics[width=\unitlength,page=2]{fig_dualgraphextend_squish.pdf}}%
  \end{picture}%
\endgroup%
}
    \caption{Obtaining the extended dual graph $\Gamma^+$ from the dual graph $\Gamma$. For transient annulus sectors we use the data of $V$, while for source sectors we do not.}
    \label{fig:dualgraphextend}
\end{figure}

\begin{lemma}\label{lem:sourcearc}
Let $s$ be a source sector of $B$. Let $\gamma$ be a properly embedded arc in $s$ that is positively transverse to $\Gamma^+$. Then $\gamma$ is homotopic in $s$, rel $\del\gamma$, to an arc positively transverse to $V$.
\end{lemma}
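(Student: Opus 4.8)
The claim is a local statement about a single source sector $s$ of $B$, so I would start by recalling exactly what $s$ and its extended dual graph $\Gamma^+|_s$ look like. By \Cref{prop:vbssectors}, a source sector is an annulus or M\"obius band, possibly with scalloped boundary, all of whose boundary edges are cooriented outwards; its core circle carries the dynamic orientation, and the pieces $s\cut \Phi_+$ are tongues (and, for annuli, rectangles). By construction, $\Gamma|_s$ consists of $\del s$ together with the core $\Gamma$-cycle, and $\Gamma^+|_s$ is obtained by adding, for each vertex $v$ on $\del s$, a directed edge from the core cycle out to $v$. Cutting $s$ along $\Gamma^+|_s$ produces finitely many disks, each a ``triangle/tongue''-type region with one side on the core, one side on an added spoke edge, and the remaining side(s) on $\del s$, and the $V$-flow enters each such disk through (a subarc of) the core side and exits through the outer side. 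The point is that $\Gamma^+|_s$ has been designed precisely so that $V$, after a homotopy, is transverse to it: the core cycle is a closed orbit (or parallel to one) and the spokes point outward along $V$.

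The main step is then: take the properly embedded arc $\gamma\subset s$ positively transverse to $\Gamma^+|_s$. Since $\gamma$ is transverse to $\Gamma^+$ and has both endpoints on $\del s$ (cooriented outward), $\gamma$ cannot cross the core cycle (that would force $\gamma$ to re-enter $s$, contradicting that it is properly embedded with endpoints on the outer boundary and positively transverse), so $\gamma$ lies in the complement of the core. Hence $\gamma$ is contained in a union of consecutive complementary disks of $\Gamma^+|_s$, meeting each spoke it crosses positively. Within each such disk, which is a disk foliated by $V$-orbits going from the core side to the outer side, I would push $\gamma$ by an ambient isotopy of the disk, fixing $\del\gamma$, so that the image is positively transverse to $V$; this is possible because in each disk $\gamma$ is an arc whose orientation is consistent with the direction from core to boundary (that is the content of being positively transverse to the spokes), and any such arc is isotopic rel endpoints to a union of subarcs of $V$-orbits — more precisely to an arc everywhere positively transverse to $V$, since $V$ has no singularities in $s$ and is a product flow on each disk. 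Concatenating these disk-by-disk isotopies (matching along the spokes, which $\gamma$ crosses transversely and where the isotopies can be taken to fix a neighborhood of the crossing point) yields a homotopy, rel $\del\gamma$, of $\gamma$ to an arc positively transverse to $V$ in all of $s$.

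The part that needs the most care — and which I expect to be the main obstacle — is the bookkeeping about \emph{which} way $\gamma$ runs relative to the core and the spokes, so that "positively transverse to $\Gamma^+$" really does translate into "isotopic to an arc transverse to $V$" rather than its reverse. The key observations to nail down are: (i) $\gamma$ must avoid the core cycle (an index/orientation argument using that both endpoints of $\gamma$ are on outwardly cooriented boundary and $\gamma$ is positively transverse to $\Gamma^+$); (ii) consequently $\gamma$ sits in an annular or disk region between the core and $\del s$, foliated by $V$-orbits once we apply \Cref{conv:goodV}-type normalization inside $s$ (here using that $V$ is nonsingular and transverse to $\brloc(B)$, so its restriction to $s$ is a nonvanishing field pointing out along all of $\del s$); and (iii) the positivity of the crossings with the spokes pins down the direction. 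I would also remark that the M\"obius band case is handled identically after passing to the orientation double cover of $s$, or simply by the same local disk-by-disk argument since the spokes and complementary disks make sense regardless of orientability. Once these are in place, the isotopy is routine.
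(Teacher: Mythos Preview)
Your proposal contains a genuine error at step (i): the claim that $\gamma$ cannot cross the core cycle is false. If $s$ is an annulus with two boundary components, a properly embedded arc $\gamma$ from one boundary component to the other necessarily crosses the core essentially, and there is no obstruction to $\gamma$ being positively transverse to $\Gamma^+$ at that crossing (the coorientation of $\gamma$ simply has to agree with the dynamic orientation of the core there). Your justification---that crossing the core would force $\gamma$ to ``re-enter $s$''---does not make sense; the core is in the interior of $s$, and crossing it just takes $\gamma$ from one half of the annulus to the other.

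The paper's proof handles exactly this as a separate case. It first describes the structure of $V|_s$ (closed orbits all carrying the dynamic orientation, with other orbits spiraling onto them), and then splits into two cases according to whether $\gamma$ has essential or inessential intersection with the core. The inessential case is close in spirit to what you outline: one lifts to the universal cover, observes that positivity with respect to the spokes forces the endpoints of $\wt\gamma$ to lie on boundary segments that ``point toward each other,'' and homotopes $\wt\gamma$ near $\del\wt s$ where transversality to $V$ is evident. The essential case---which your argument misses entirely---uses that positivity with respect to the core forces the endpoints of $\gamma$ to lie on boundary segments oriented compatibly with the dynamic orientation, and then the spiraling description of $V|_s$ makes the required homotopy possible. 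Your disk-by-disk approach could likely be salvaged for the inessential case, but you need a genuinely different argument when $\gamma$ crosses the core.
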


\begin{figure}
    \centering
    \resizebox{!}{2.5in}{\import{basecase-fig}{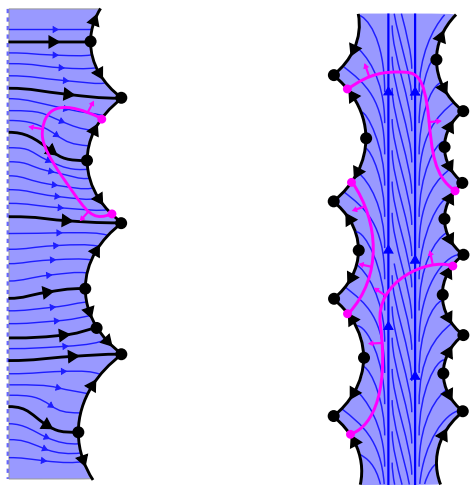}}
    \caption{Left: An impossible configuration of $\gamma$ in $s$. Right: A possible configuration.}
    \label{fig:arcsource}
\end{figure}

\begin{proof}
In general, the structure of $V|_s$ is as follows. Since $V|_s$ points outward along $\del s$, there is at least one closed orbit of $V|_s$, and possibly infinitely many. All closed orbits have orientation agreeing with the dynamic orientation of $s$. Every orbit ending on $\del s$ spirals onto a closed orbit in the backward direction, and the direction of the spiraling is compatible with the closed orbit's orientation. Every non-closed orbit disjoint from $\del s$ spirals onto closed orbits in both directions such that all orientations are compatible. See the right side of \Cref{fig:arcsource}.

Let $\wt s$ be the universal cover of $s$, let $\wt \gamma$ be the lift of $\gamma$ to $\wt s$, and let $V_{\wt s}$ be the lift of $V|_s$ to $\wt s$. 

Suppose that $\gamma$ has inessential intersection with the core of $s$. In this case, since $\gamma$ is positively transverse to $\Gamma^+$, the lift $\wt \gamma$ must connect two portions of $\del \wt s$ which point toward each other. That is, the picture cannot be as shown on the left of \Cref{fig:arcsource}. Hence $\wt\gamma$ can be homotoped into a neighborhood of $\del \wt s$ where it can evidently be made positively transverse to $V_{\wt s}$. See the right side of \Cref{fig:arcsource}. This homotopy projects to one in $s$. If the resulting arc $\gamma'$ is not embedded, then there is an innermost bigon complementary region of $\gamma'$ in $s$ foliated by flow segments of $V$ joining the two sides of the bigon. Such a bigon can be eliminated while maintaining transversality by flowing part of $\gamma'$ along $V$. After finitely many steps we have produced the desired homotopy of $\gamma$.

Alternatively, suppose that $\gamma$ intersects the core of $s$ essentially. Since $\gamma$ is positively transverse to the core $\Gamma^+$-cycle in $s$, the endpoints of $\gamma$ must lie on segments of $\wt s$ that are oriented compatibly with the dynamic orientation of $s$. From the description of $V|_s$ given at the beginning of the proof, we see that we can homotope $\gamma$ so that it is positively transverse to $V$. See the right side of \Cref{fig:arcsource}.
\end{proof}

Let $e$ be an edge of $\Gamma$ which lies at the bottom of a disk sector $s$, meaning that the sector $s$ of $B$ into which the maw vector field points along $e$ is not an annulus or Möbius band. Let $p$ be the terminal vertex of $e$.

Let $A$ be the collection of points in the top of $s$ which are sinks of $s$ (these will all be corners). 
Let $A'$ be the set of points in the bottom of $s$ obtained by flowing backward along $V$ from points in $A$. Let $U(e)$ be the component of $e\setminus A'$ containing $p$ (see \Cref{fig:diskdivision}). If $e$ is an edge of $\Gamma$ that does not lie at the bottom of a disk sector let $U(e)=e$. Let
\[
U=\bigcup_e U(e)
\]
where the union is taken over all $\Gamma$-edges.

\begin{figure}
    \centering
    \fontsize{12pt}{12pt}\selectfont
    \resizebox{!}{1.5in}{
\begingroup%
  \makeatletter%
  \providecommand\color[2][]{%
    \errmessage{(Inkscape) Color is used for the text in Inkscape, but the package 'color.sty' is not loaded}%
    \renewcommand\color[2][]{}%
  }%
  \providecommand\transparent[1]{%
    \errmessage{(Inkscape) Transparency is used (non-zero) for the text in Inkscape, but the package 'transparent.sty' is not loaded}%
    \renewcommand\transparent[1]{}%
  }%
  \providecommand\rotatebox[2]{#2}%
  \newcommand*\fsize{\dimexpr\f@size pt\relax}%
  \newcommand*\lineheight[1]{\fontsize{\fsize}{#1\fsize}\selectfont}%
  \ifx\svgwidth\undefined%
    \setlength{\unitlength}{370.09674882bp}%
    \ifx\svgscale\undefined%
      \relax%
    \else%
      \setlength{\unitlength}{\unitlength * \real{\svgscale}}%
    \fi%
  \else%
    \setlength{\unitlength}{\svgwidth}%
  \fi%
  \global\let\svgwidth\undefined%
  \global\let\svgscale\undefined%
  \makeatother%
  \begin{picture}(1,0.31576248)%
    \lineheight{1}%
    \setlength\tabcolsep{0pt}%
    \put(0,0){\includegraphics[width=\unitlength,page=1]{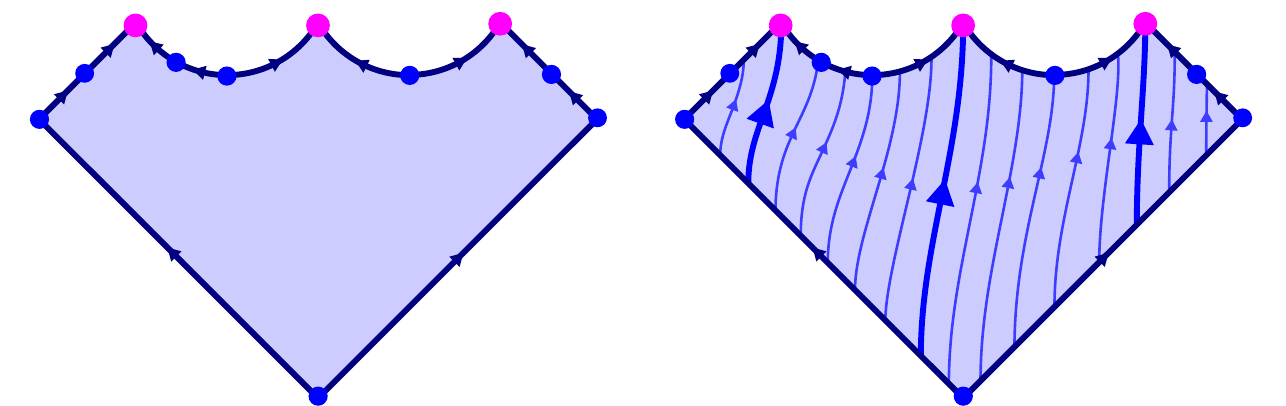}}%
    \put(-0.00245396,0.215072){\color[rgb]{0,0,0}\makebox(0,0)[lt]{\lineheight{1.25}\smash{\begin{tabular}[t]{l}$p$\end{tabular}}}}%
    \put(0.10335523,0.0958277){\color[rgb]{0,0,0}\makebox(0,0)[lt]{\lineheight{1.25}\smash{\begin{tabular}[t]{l}$e$\end{tabular}}}}%
    \put(0,0){\includegraphics[width=\unitlength,page=2]{fig_diskdivision.pdf}}%
    \put(0.47737849,0.1455127){\color[rgb]{0,0,0}\makebox(0,0)[lt]{\lineheight{1.25}\smash{\begin{tabular}[t]{l}$U(e)$\end{tabular}}}}%
    \put(0.93882512,0.29081123){\color[rgb]{1,0,1}\makebox(0,0)[lt]{\lineheight{1.25}\smash{\begin{tabular}[t]{l}$A$\end{tabular}}}}%
    \put(0.92597571,0.09264819){\color[rgb]{0,0.50196078,0}\makebox(0,0)[lt]{\lineheight{1.25}\smash{\begin{tabular}[t]{l}$A'$\end{tabular}}}}%
  \end{picture}%
\endgroup%
}
    \caption{Defining the set $U(e)$ for a $\Gamma$-edge $e$ in the bottom of a disk sector.}
    \label{fig:diskdivision}
\end{figure}

\begin{lemma}\label{lem:diskarc}
Let $\gamma$ be a cooriented arc which is properly embedded in a disk sector $s$ of $b$ and positively transverse to $\Gamma$. Further suppose that $\del s\subset U$. Then $\gamma$ can be made positively transverse to $V|_s$ by a homotopy fixing the endpoints of $\gamma$.  
\end{lemma}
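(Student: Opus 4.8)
\textbf{Proof proposal for \Cref{lem:diskarc}.}

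The plan is to analyze the structure of the vector field $V$ restricted to a disk sector $s$ and use the hypothesis $\del s \subset U$ to rule out the configurations that would obstruct positive transversality. First I would recall from \Cref{prop:vbssectors} that a disk sector $s$ is a diamond (possibly with scalloped top and/or rounded bottom); its boundary decomposes into a union of bottom sides, cooriented inwards, and a union of top sides, cooriented outwards, with the vector field $V|_s$ pointing into $s$ along the bottom and out of $s$ along the top. Since $V$ is nonsingular on $s$ and $s$ is a disk, every orbit of $V|_s$ enters through a bottom side and exits through a top side (here I use \Cref{conv:goodV}-style control and the fact that $V$ has unique forward trajectories), so $V|_s$ is a product-like foliation of $s$ by properly embedded arcs running from bottom to top. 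The set $A'$ used to define $U(e)$ is precisely the set of bottom points whose forward $V$-orbit terminates at a sink (corner) on the top; cutting along the $V$-orbits through $A'$ subdivides $s$ into subrectangles, each foliated by $V$-arcs going from a sub-arc of the bottom to a sub-arc of the top.

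Next I would use the hypothesis $\del s \subset U$ to control the endpoints of $\gamma$. Because $\del\gamma$ lies on $\del s$ and $\del s \subset U = \bigcup_e U(e)$, each endpoint of $\gamma$ lies in the interior of one of the subrectangles cut out by the backward $V$-flow of $A$ — in particular an endpoint of $\gamma$ on the bottom of $s$ lies in some $U(e)$, hence in the portion of a bottom side from which the $V$-orbit flows to a \emph{source} (a non-sink point) on the top, not to a corner. Since $\gamma$ is positively transverse to $\Gamma$, and $\Gamma \cap s$ consists of $\del s$ together with the edges from the bottom vertex/source to each top source, I would argue as in the proof of \Cref{lem:sourcearc} (passing to the universal cover of $s$ if convenient, though $s$ is already a disk): the positive-transversality-to-$\Gamma$ condition forces $\gamma$ to run ``upward'' across the product foliation, i.e. $\gamma$ cannot enter through the top and exit through the top, nor can it be confined to a region foliated by $V$-arcs joining the two sides of $\gamma$ in a way inconsistent with its coorientation. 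Concretely, $\gamma$ crosses the $\Gamma$-edges it meets in the direction of the maw coorientation, which is the same as the $V$-direction, so $\gamma$ is isotopic rel endpoints to an arc monotone with respect to the $V$-foliation.

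I would then produce the homotopy. After the preliminary isotopy making $\gamma$ monotone across the product foliation (pushing $\gamma$ toward $\del s$ and across the subrectangle structure just as in \Cref{lem:sourcearc}), the arc $\gamma$ may still fail to be embedded or fail to be transverse to $V|_s$; but any failure is witnessed by an innermost bigon between $\gamma$ and a $V$-orbit (or between two strands of $\gamma$), and such a bigon is foliated by subsegments of $V$-orbits joining its two sides. Flowing the relevant portion of $\gamma$ along $V$ across this bigon removes the bigon while preserving the coorientation and positive transversality to $\Gamma$, and decreases the number of bigons; after finitely many such moves $\gamma$ is positively transverse to $V|_s$. The endpoints are fixed throughout because all these moves are supported away from $\del s$.

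The main obstacle I expect is the bookkeeping in the second paragraph: precisely translating ``$\del s \subset U$'' into the statement that no endpoint of $\gamma$ sits on a $V$-orbit running to a sink corner, and then verifying that positive transversality to $\Gamma$ (rather than to the finer $\Gamma^+$) genuinely suffices to prevent a ``backtracking'' configuration of $\gamma$ analogous to the impossible configuration on the left of \Cref{fig:arcsource}. Once that combinatorial point is pinned down, the homotopy-via-innermost-bigons argument is routine and parallels \Cref{lem:sourcearc} closely.
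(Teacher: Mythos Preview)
Your proposal has the right ingredients but contains a genuine error and overcomplicates what is in the paper a very short argument.

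The error is your claim that positive transversality to $\Gamma$ forces $\gamma$ to ``run upward,'' so that ``$\gamma$ cannot enter through the top and exit through the top.'' This is false: an arc with both endpoints on the top of $s$ is entirely possible. What positive transversality to $\Gamma$ actually gives you at an endpoint is only that the coorientation of $\gamma$ there agrees with the orientation of the $\Gamma$-edge on which it sits. For an arc with both endpoints on the top, this forces the two $\Gamma$-edges at the endpoints to point \emph{toward each other}; for an arc with both endpoints on the bottom, it forces them to point \emph{away from each other}. Neither configuration is excluded. Your argument that ``$\gamma$ crosses the $\Gamma$-edges it meets in the direction of the maw coorientation, which is the same as the $V$-direction, so $\gamma$ is isotopic rel endpoints to an arc monotone with respect to the $V$-foliation'' does not survive these cases, since such a $\gamma$ need not cross any interior $\Gamma$-edge at all.

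The paper's proof is simply the three-case split on where the endpoints lie: (a) both on the top, (b) both on the bottom, (c) one on each. In (a) and (b) the orientation constraint just described, together with the picture of $V|_s$, already makes the endpoint-fixing homotopy to positive transversality evident; the hypothesis $\partial\gamma\subset U$ is not used at all. It is only in case (c) that the $U$ hypothesis is invoked, and then only for the single endpoint on the bottom of $s$: lying in $U(e)$ means this endpoint is close enough to the terminal vertex of $e$ that it is not on the wrong side of a $V$-orbit hitting a sink corner. With that in hand, the picture again gives the homotopy directly.

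Your subrectangle decomposition via the $V$-orbits through $A'$ and the innermost-bigon cleanup imported from \Cref{lem:sourcearc} are unnecessary here: $s$ is already a disk, $V|_s$ is a genuine product, and once the endpoint data are pinned down there is nothing to untangle. The three-case check is the entire proof.
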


\begin{figure}
    \centering
    \resizebox{!}{1.5in}{\import{basecase-fig}{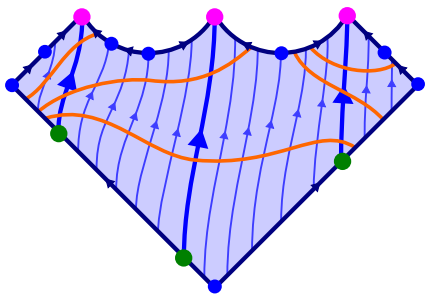}}
    \caption{Any properly embedded arc in a disk sector with endpoints in $U$, positively transverse to $\Gamma$, can be made transverse to $V$ by an endpoint-fixing homotopy.}
    \label{fig:diskarcs}
\end{figure}

\begin{proof}
It is either the case that (a) both endpoints of $\gamma$ lie in the top of $s$, (b) both lie in the bottom of $s$, or (c) one lies in the top and the other in the bottom. 

In case (a) the fact that $\gamma$ is positively transverse to $\Gamma$ implies that the $\Gamma$-edges on which the endpoints of $\gamma$ lie point toward each other. Similarly in case (b) the $\Gamma$-edges on which the endpoints of $\gamma$ lie point away from each other. A picture then makes clear that $\gamma$ is positively transverse to $V$ after an endpoint-fixing homotopy (see \Cref{fig:diskarcs}).

In case (c), we use the fact that in particular the endpoint of $\gamma$ lying on the bottom of $s$ lies in the set $U$. \Cref{fig:diskarcs} shows that this is precisely the condition needed to guarantee that $\gamma$ is positively transverse to $V$ after an endpoint-fixing homotopy.
\end{proof}

\begin{figure}
    \centering
    \resizebox{!}{1.4in}{\import{basecase-fig}{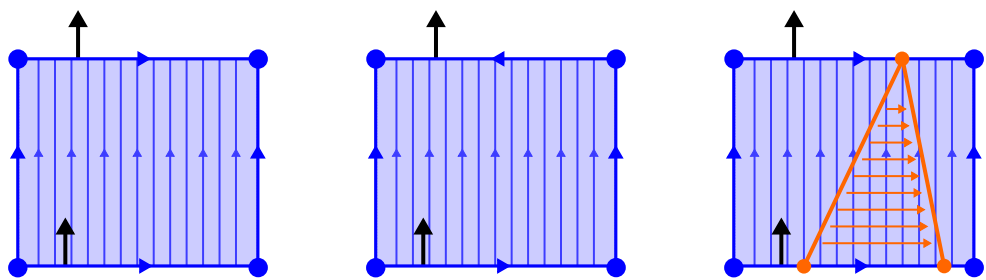}}
    \caption{Left and center: for a transient annulus sector $s$, each component of $s\cut \Gamma^+$ is of one of these forms. Right: any properly embedded cooriented arc positively transverse to $\del s$ may be homotoped rel $\del s$ to be positively transverse to $v$; shown is the only case where it is necessary to move a boundary point of the arc.}
    \label{fig:transienthomotopy}
\end{figure}

\begin{lemma}\label{lem:transientarc}
Let $s$ be a transient annulus sector of $B$, and let $\gamma$ be a cooriented arc properly embedded in $s$ which is positively transverse to $\Gamma^+$. Let $p$ be the endpoint of $\gamma$ lying on the inward-pointing component of $\del s$. Then $\gamma$ can be made positively transverse to $V|_s$ by a homotopy which either fixes both endpoints of $\gamma$ or moves $p$ closer to the terminal vertex of the $\Gamma$-edge on which it lies.
\end{lemma}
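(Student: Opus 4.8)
\textbf{Proof plan for \Cref{lem:transientarc}.}

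The plan is to work in the universal cover $\wt s$ of the transient annulus $s$, exactly in the spirit of the proofs of \Cref{lem:sourcearc} and \Cref{lem:diskarc}. First I would record the structure of $V|_s$: by \Cref{conv:goodV}, every orbit of $V|_s$ is a properly embedded arc running from the inward-cooriented boundary component $\del_{\mathrm{in}} s$ to the outward-cooriented boundary component $\del_{\mathrm{out}} s$, and these arcs foliate $s$. By the construction of the extended dual graph $\Gamma^+$ (see \Cref{fig:dualgraphextend}), $\Gamma^+ \cap s$ consists of $\del s$ together with, for each vertex $v$ on $\del_{\mathrm{out}} s$, the $V$-trajectory from $v$ back to $\del_{\mathrm{in}} s$; so the components of $s \cut \Gamma^+$ are rectangles of the form shown on the left and center of \Cref{fig:transienthomotopy}, each foliated by subarcs of $V$-orbits. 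Passing to $\wt s$ (an infinite strip) and lifting $\gamma$, $\Gamma^+$, $V|_s$ to $\wt\gamma$, $\wt{\Gamma^+}$, $V_{\wt s}$, the orbits of $V_{\wt s}$ give a product foliation of the strip by arcs joining the two boundary lines.

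Next I would use that $\wt\gamma$ is positively transverse to $\wt{\Gamma^+}$ to pin down its combinatorial position. The endpoint $\wt p$ of $\wt\gamma$ on the lift of $\del_{\mathrm{in}} s$ lies in the interior of some lift of a $\Gamma$-edge $e$ (oriented along $\del_{\mathrm{in}} s$), and the other endpoint $\wt q$ lies on the lift of $\del_{\mathrm{out}} s$. Since $\wt\gamma$ crosses each lifted $V$-trajectory of $\wt{\Gamma^+}$ in the positive direction, $\wt q$ must lie within the ``fan'' of $V$-trajectories emanating past $\wt p$ toward $\del_{\mathrm{out}} s$; equivalently, there are no lifted $\Gamma^+$-trajectories separating $\wt p$ from $\wt q$ that $\wt\gamma$ would have to cross against its coorientation. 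The one case requiring care — illustrated on the right of \Cref{fig:transienthomotopy} — is when $\wt q$ sits just on the far side of the lifted $V$-trajectory through the terminal vertex of $e$: then a small homotopy sliding $\wt p$ toward that terminal vertex (i.e.\ along $e$ in the direction of its orientation) brings $\wt\gamma$ into a region where it can be straightened to be positively transverse to $V_{\wt s}$; in every other case $\wt\gamma$ already lies in such a region and a homotopy fixing both endpoints suffices. Having made $\wt\gamma$ transverse to $V_{\wt s}$, I project back down to $s$; if the resulting arc fails to be embedded, it cobounds innermost bigons with itself that are foliated by $V$-segments, and these are removed one at a time by flowing part of the arc along $V$ while preserving transversality, exactly as at the end of the proof of \Cref{lem:sourcearc}. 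This yields the desired homotopy, whose only nontrivial effect on endpoints is possibly moving $p$ toward the terminal vertex of its $\Gamma$-edge.

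I expect the main obstacle to be a clean bookkeeping argument showing that the positive-transversality hypothesis really does confine $\wt q$ to the fan past $\wt p$, so that the only endpoint adjustment ever forced is the one described; this is entirely analogous to the case analysis in \Cref{lem:diskarc} but the annular (rather than disk) sector and the asymmetry between $\del_{\mathrm{in}} s$ and $\del_{\mathrm{out}} s$ mean one has to be attentive to which boundary component carries $\Gamma$-edges and which carries the added $\Gamma^+$-trajectories. Once that is set up carefully, the straightening and bigon-removal steps are routine and can be imported verbatim from the earlier lemmas.
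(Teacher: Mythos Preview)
Your approach is workable but takes a more elaborate route than the paper, and one step is shakier than you may realize.

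The paper's proof is entirely local. Having observed (as you do) that each component of $s\cut\Gamma^+$ is a rectangle foliated by $V$-segments, the paper simply cuts $\gamma$ into sub-arcs $\gamma_c$, one per rectangle $c$, and straightens each $\gamma_c$ inside its own rectangle. Since $\gamma_c$ is positively transverse to $\partial c$, the straightening is immediate; the only case requiring an endpoint slide is the sub-arc that runs from the inward-cooriented portion of $\partial c$ (the piece of $\del_{\mathrm{in}}s$) to the outward-cooriented portion, where one may need to push the endpoint on $\del_{\mathrm{in}}s$ along the $\Gamma^+$-orientation. Gluing the straightened sub-arcs back together gives the result directly --- no universal cover, no bigon-removal step.

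Your universal-cover argument, by contrast, tries to control the position of $\wt q$ relative to $\wt p$ via a global ``fan'' constraint. The claim that positive transversality forces $\wt q$ into a specific fan is not obviously correct as stated: the coorientation of $\wt\gamma$ is a continuously varying normal, so it can rotate along the arc, and hence crossing each vertical $\wt{\Gamma^+}$-edge ``in the positive direction'' does not by itself pin down which side of the $V$-trajectory through $\wt p$ the point $\wt q$ lies on. You could likely repair this with a more careful index/winding argument, but that is exactly the bookkeeping you flag as the main obstacle --- and the paper's piece-by-piece argument sidesteps it entirely. Similarly, the bigon-removal step you import from \Cref{lem:sourcearc} is unnecessary here: the sub-arcs live in disjoint rectangles, so the piecewise straightening is automatically embedded. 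In short, you have all the right ingredients (the rectangle decomposition, the role of the inward endpoint), but the local argument is both simpler and avoids the gap.
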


\begin{proof}
Each component of $s\cut \Gamma^+$ is of the form shown in the left and center of \Cref{fig:transienthomotopy}. The arc $\gamma$ intersects each such component $c$ in an sub-arc $\gamma_c$ which is positively transverse to $\del c$. We may clearly homotope $\gamma_c$ to be postively transverse to $V$, and we can even choose the homotopy to fix the endpoints of $\gamma_c$ unless
$\gamma_c$ joins the outward and inward-pointing portions of $\del c$. In this case it may be necessary to move the endpoint on the inward pointing boundary along the orientation coming from $\Gamma^+$ (see \Cref{fig:transienthomotopy}). 
Applying this to all sub-arcs of $\gamma$ proves the lemma.
\end{proof}

Note that as remarked above, an extended dual graph is not uniquely determined. However, by the next proposition this freedom does not affect the cycles of $\Gamma^+$ (recall our convention is that cycles are directed).

\begin{proposition} \label{prop:extdualgraphnonewcycles}
Each cycle in $\Gamma^+$ lies in $\Gamma$.
\end{proposition}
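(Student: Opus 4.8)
The plan is to analyze the two places where an extended dual graph $\Gamma^+$ can differ from the dual graph $\Gamma$, namely inside a source sector and inside a transient annulus, and to argue in each case that the new edges cannot be completed to a cycle using only edges that leave the sector through a point that could feed back into a source or transient annulus.

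First consider the edges added inside a source sector $s$. By construction these are edges from the core $\Gamma$-cycle of $s$ to vertices $v$ lying on $\partial s$; each such edge is oriented \emph{away} from the core and toward $\partial s$. Since $\partial s$ is entirely cooriented outwards (as $s$ is a source sector), every $\Gamma^+$-edge incident to a vertex on $\partial s$ either lies in $\Gamma$ or is one of these new ``spoke'' edges pointing out of the core; in particular no $\Gamma^+$-edge points back \emph{into} the core of $s$. Hence a directed cycle of $\Gamma^+$ can never re-enter the core of $s$ after leaving it, so it can use at most one of the new spoke edges, and in fact cannot use any: a cycle using a spoke edge would have to return to the core, which is impossible. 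Therefore any cycle of $\Gamma^+$ that meets $s$ must stay on $\partial s \subset \Gamma$ or on the core $\Gamma$-cycle, and in either case it lies in $\Gamma$.

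Next consider the edges added inside a transient annulus sector $s$, whose boundary has one component $t_1$ cooriented inwards and one component $t_2$ cooriented outwards. The new edges are the $V$-trajectories running from vertices $v$ on $t_2$ across to $t_1$, oriented compatibly with $V$, i.e. from $t_2$ to $t_1$. So a new edge in $s$ always \emph{ends} on $t_1$. But $t_1$ is cooriented inwards, meaning $V$ points from outside $s$ into $s$ along $t_1$; consequently, in the adjacent sector on the other side of $t_1$ the edge $t_1$ sits at the \emph{top}, and for a directed path to continue from a point of $t_1$ it would have to leave that adjacent sector — but by \Cref{lemma:dynamicplanepieces}, more precisely by the structure recorded in \Cref{prop:vbssectors}, a transient annulus cannot be followed (across $t_1$) by another transient annulus, and in fact $t_1$ abuts a diamond sector, on which $t_1$ is a top side with source structure forcing the outgoing edges of $\Gamma$ there to be the $\Gamma$-edges of that diamond. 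So after traversing a new edge of $s$ and arriving on $t_1$, the only way to continue along $\Gamma^+$ is along $\Gamma$; moreover, one never returns to $t_2$ from $t_1$ within a single transient annulus, because orbits of $V|_s$ are properly embedded arcs by \Cref{conv:goodV}. Using the finiteness/acyclicity input from \Cref{lemma:dynamicplanepieces} (no infinite sequence of transient annuli following one another, hence no cycle can pass through infinitely many of them), a putative $\Gamma^+$-cycle using a new transient-annulus edge is forced eventually into $\Gamma$ and then cannot re-enter the ``$t_2$-side'' of any transient annulus, contradicting closedness. Hence no cycle of $\Gamma^+$ uses a new transient-annulus edge either.

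I expect the main obstacle to be the transient-annulus case: one has to be careful that a cycle could in principle weave through several transient annuli and source sectors, so the argument needs the global finiteness statement (\Cref{lemma:dynamicplanepieces}, together with \Cref{defn:vbs}(5) ruling out carried tori) rather than a purely local ``this edge points the wrong way'' observation. The source-sector case is essentially local and should be routine once the coorientation of $\partial s$ is invoked. I would organize the write-up as: (1) fix a directed cycle $c$ in $\Gamma^+$ and suppose it uses some edge not in $\Gamma$; (2) dispose of the source-sector edges by the ``no edge re-enters the core'' observation; (3) for transient-annulus edges, follow $c$ forward from such an edge, show it arrives on $t_1$, must continue in $\Gamma$, and can never again cross any transient annulus in the $t_2 \to t_1$ direction, so $c$ never closes up — contradiction; (4) conclude $c \subset \Gamma$.
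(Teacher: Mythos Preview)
Your source-sector paragraph is fine and matches the paper's one-line dismissal of that case. The transient-annulus case, however, contains a genuine error: you have the direction of the new edges reversed. On a transient annulus $s$ the vector field $V$ points \emph{into} $s$ along the inward-cooriented boundary $t_1$ and \emph{out of} $s$ along $t_2$, so the added $V$-trajectory edges run from $t_1$ to a vertex $v\in t_2$, not the other way. Consequently your claim ``after traversing a new edge of $s$ and arriving on $t_1$'' is false (you arrive on $t_2$), and your assertion that ``$t_1$ abuts a diamond sector'' is also false: since $t_1$ is the inward-cooriented boundary of a transient annulus it is a smooth branch circle with no triple points, so the sectors meeting $s$ across $t_1$ have $t_1$ as a full circular boundary component and hence are annulus or M\"obius band sectors, never diamonds.

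With the correct orientation, the argument must go \emph{backward} along $c$ from the start of the new edge on $t_1$. One sees that the preceding edge of $c$ (once $c$ leaves $t_1$) must be a new $\Gamma^+$-edge in an adjacent annulus/M\"obius band sector $s'$ for which $t_1$ is the outward boundary; your own source-sector reasoning rules out $s'$ being a source, so $s'$ is again a transient annulus. Iterating yields a cycle of transient annuli glued along smooth branch circles, whose union is a torus carried by $B$, contradicting \Cref{defn:vbs}(5). This is exactly the paper's argument. Your invocation of \Cref{lemma:dynamicplanepieces} is morally the same finiteness input (an infinite chain of rectangles corresponds to a cycle of transient annuli), but that lemma concerns pieces of $B\cut\Phi_+$, not sectors, and in any case you applied it with the wrong adjacency direction.
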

\begin{proof}
Suppose there is a cycle $c$ in $\Gamma^+$ containing an edge $e$ of $\Gamma^+ \backslash \Gamma$. By the definition of $\Gamma^+$, the edge $e$ must lie interior to an annulus or Möbius band sector $s$. We see that $s$ is in fact a transient annulus, for the only cycles passing through the interior of sources are the cycles contained in their cores, which are also $\Gamma$-cycles. 

Following $c$ backward, $c$ must contain an edge entering $s$ from another annulus or Möbius band sector $s'$. Repeating the argument on $s'$, and so on, we get a cycle of transient annuli whose union is a torus, contradicting \Cref{defn:vbs}.
\end{proof}

In particular \Cref{prop:extdualgraphnonewcycles} implies that the cone in $H_1(Q)$ generated by directed cycles of $\Gamma^+$ is independent of the choice of extension.

\begin{lemma}\label{lem:cocycle}
Let $z\in H^1(\Gamma^+)$ be an integral class such that for every directed cycle $\gamma$ of $\Gamma^+$, we have $z([\gamma])\ge 0$. Then $z$ is represented by a nonnegative rational 1-cocycle on $\Gamma^+$. 
\end{lemma}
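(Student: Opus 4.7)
The plan is to deduce this from the classical potential-function theorem for weighted directed graphs: if $(G,w)$ is a finite directed graph with rational edge weights and no directed cycle of negative total weight, then there is a vertex function $f$ with $w(e) + f(t(e)) - f(h(e)) \ge 0$ for every edge $e$ (where $t(e), h(e)$ denote the tail and head of $e$). Once this is applied to $\Gamma^+$, the resulting $w' := w + \delta(-f)$ is a nonnegative rational cocycle representing $z$.

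First I would pick any rational $1$-cocycle $w$ on $\Gamma^+$ representing $z$; since $z$ is integral an integer representative in fact exists. By hypothesis, for every directed cycle $\gamma$ of $\Gamma^+$ the total $w$-weight equals $z([\gamma]) \ge 0$, so $(\Gamma^+, w)$ has no directed cycle of negative total weight.

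Next I would augment $\Gamma^+$ by adjoining a new source vertex $\ast$ together with a weight-zero directed edge from $\ast$ to every vertex of $\Gamma^+$. Since $\ast$ has no incoming edges, this creates no new directed cycles, so the no-negative-cycle hypothesis persists. Because $\Gamma^+$ is finite and has no negative cycles, for each vertex $v$ the quantity
\[
f(v) \;:=\; \min \Bigl\{ \sum_{e \in p} w(e) \,:\, p \text{ is a directed path } \ast \to v \Bigr\}
\]
is attained and is a rational number. The shortest-path triangle inequality gives $f(h(e)) \le f(t(e)) + w(e)$ for every edge $e$ of $\Gamma^+$, equivalently
\[
w'(e) \;:=\; w(e) + f(t(e)) - f(h(e)) \;\ge\; 0.
\]
Then $w'$ is a nonnegative rational $1$-cochain on $\Gamma^+$, and $w' - w = -\delta f$ is a coboundary, so $[w'] = [w] = z$ in $H^1(\Gamma^+)$.

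The argument is essentially a well-known application of LP-style duality (or Bellman--Ford) to cohomology; there is no substantive obstacle. The only thing to be careful about is to augment with a source vertex having only outgoing edges, so that shortest paths from it to every vertex are defined without inadvertently introducing new directed cycles that could violate the hypothesis.
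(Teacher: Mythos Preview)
Your proof is correct and takes a genuinely different route from the paper's. The paper first invokes \cite[Lemma 5.10]{LMT24} as a black box to obtain a nonnegative \emph{real} cocycle representing $z$, and then argues rationality separately: the affine subspace of cocycles representing $z$ intersected with the face of $\R_{\ge 0}^E$ containing that real cocycle is cut out by integer equations and inequalities, hence contains a rational point. Your argument is instead the classical Bellman--Ford/potential construction: start with any integral cocycle, use the no-negative-cycle hypothesis to build a rational potential via shortest paths from an auxiliary source, and subtract its coboundary. This is more elementary and fully self-contained (no external citation needed), and it is constructive in that it produces the nonnegative representative directly rather than via a nonconstructive rational-point-in-a-polytope step. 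The paper's approach, on the other hand, cleanly separates the two issues (nonnegativity and rationality) and fits the style of the surrounding literature on veering triangulations, where the cited lemma is already in play.
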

\begin{proof}
The proof directly follows ideas from \cite{McM15} and \cite {LMT24}. Let $W=\R^E$ be the  space of real weights on the edges of $\Gamma_+$ and let $A\colon W\to H^1(\Gamma^+)$ be the map to cohomology. We can choose rational bases for $W$ and $H^1(\Gamma^+)$ so that $A$ is given by an integral matrix.

By \cite[Lemma 5.10]{LMT24}, there exists a nonnegative cocycle $c$ on $\Gamma^+$ representing $z$.
Let $S$ be the set of solutions to $Ax=z$, and let $F$ be the face of $\R_{\ge0}^E$ containing $c$ in its relative interior. Since $S\cap F$ is nonempty (it contains $c$) and cut out by integer equations and inequalities, it contains a rational point; this gives the required cocycle. 
\end{proof}

We next describe a construction that takes as input a certain type of cohomology class and produces a surface properly embedded in $N$, transverse to $V$. While reading the construction, the reader should consult \Cref{fig:Nsurface}. 

Before beginning, we briefly describe the structure of complementary regions of $B\cut \Gamma^+$. By construction, each component $c$ is diffeomorphic to a square. The edges on $\del c$ are oriented so that exactly one vertex is a source and one is a sink. The source is called the \textbf{bottom vertex} of $c$ and the sink is the \textbf{top vertex} of $c$. The components of $c\cut\{\text{top and bottom vertices}\}$ are called the \textbf{sides} of $c$.

\begin{figure}
    \centering
    \resizebox{!}{3in}{\import{basecase-fig}{fig_Nsurface.pdf_tex}}
    \caption{Some steps in \Cref{construction:Nsurface}.}
    \label{fig:Nsurface}
\end{figure}

\begin{construction}[$N$-surfaces]\label{construction:Nsurface}
Let $z\in H^1(Q)$ be an integral class such that for every directed cycle $\gamma$ of $\Gamma^+$, we have $z([\gamma])\ge0$; we will also think of $z$ as a class in $H^1(Q)$. By \Cref{lem:cocycle}, there exists a nonnegative $\mathbb{Q}$-valued 1-cocycle $c$ on $\Gamma^+$ representing the pullback of $z$ to $H^1(\Gamma^+)$ (\Cref{fig:Nsurface}(a)). Let $n$ be the least integer such that $nc$ is integer-valued. For each $\Gamma^+$ edge $e$, place $nc(e)$ points on $e$, close enough to the terminal vertex so that they all lie in $U$ (recall the definition of $U$ preceding \Cref{lem:diskarc}).

Let $\sigma$ be a component of $B\cut \Gamma^+$, and recall that $\sigma$ is a topological disk with a top and bottom $t$ and $b$ respectively, and edges in each side of $\sigma$ oriented from $b$ to $t$. Since $\del \sigma$ is contractible in $Q$ we must have $nz([\del \sigma])=0$, so the number of points we have placed on each side of $\sigma$ is equal (\Cref{fig:Nsurface}(b)). Hence we can join the points on each side of $\sigma$ by cooriented arcs, each arc joining the two sides of $\sigma$ and with coorientation compatible with the orientation of $\Gamma^+$ (\Cref{fig:Nsurface}(c)).

Doing this for each component of $B\cut \Gamma^+$, we obtain a cooriented train track $\eta$ in $B$; at each point of intersection between the train track and $\Gamma^+$, the coorientation of $\eta$ is compatible with the orientation of $\Gamma^+$ (\Cref{fig:Nsurface}(d)). 

We now homotope $\eta$ to be positively transverse to $V|_B$. First we arrange for all branches of $\eta$ contained in transient annulus sectors to be positively transverse to $V|_B$.  This is possible by \Cref{lem:transientarc}, and because there are no cycles of adjacent transient annulus sectors (as noted in the proof of \Cref{prop:extdualgraphnonewcycles}, the union of such sectors would produce a torus carried by $B$). This homotopy does not affect the property that all intersections of $\eta$ with $\Gamma$ lie in $U$. Hence we are free to apply \Cref{lem:sourcearc} and \Cref{lem:diskarc} in order to homotope the branches of $\eta$ to be positively transverse to $V|_B$.

We now extend $\eta$ to a cooriented surface $T$ properly embedded in $N$ and positively transverse to $V$. (\Cref{fig:Nsurface}(e)). By \cite[Lemma 1]{Thu86}, $T$ is a union of $n$ cooriented surfaces that each represent the Lefschetz dual of $z$. Pick one of these components and call it $S$. We call $S$ an \textbf{$N$-surface for $z$}. 
\end{construction}

\begin{lemma}\label{lem:correctclass}
Suppose $S$ is an $N$-surface for $z$.  Then $S$ is Lefschetz dual to $i^*(z)\in H^1(N)$, where $i\colon N\to Q$ is inclusion.
\end{lemma}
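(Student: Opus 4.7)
The plan is to verify the identity of cohomology classes by pairing with a generating set of $H_1(N)$ coming from cycles in $\Gamma^+$, then invoking Thurston's decomposition to pass from $T$ to $S$.

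First, unpack the construction: by design, $T$ meets each edge $e$ of $\Gamma^+$ transversely in exactly $nc(e)$ points, and the coorientation of $T$ extending that of $\eta$ is everywhere compatible with the orientation of $\Gamma^+$. Thus for any oriented cycle $\gamma$ in $\Gamma^+$,
\[
[T] \cdot [\gamma] \;=\; \sum_{e \subset \gamma} nc(e) \;=\; nc([\gamma]) \;=\; n \cdot i^*(z)([\gamma]),
\]
using that $c$ represents the pullback of $z$ to $H^1(\Gamma^+)$ and factoring through $i\colon N \to Q$.

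Second, promote this pairing identity to equality in $H^1(N)$. The key point is that every sector of $B$ is cut into topological disks by $\Gamma^+$: for diamonds this is already accomplished by the edges of $\Gamma$ from the bottom to each top corner, and for transient annuli and source sectors the additional arcs of $\Gamma^+ \setminus \Gamma$ complete the subdivision into disks. Therefore $B \cut \Gamma^+$ is a disjoint union of open disks, whence the long exact sequence of $(B,\Gamma^+)$ forces $H_1(\Gamma^+) \onto H_1(B)$ to be surjective. Since $N$ deformation retracts to $B$ by collapsing ties, the composition $H_1(\Gamma^+) \to H_1(N)$ is surjective as well. Consequently the classes in $H^1(N)$ are determined by their restriction to $\Gamma^+$, so from step one we deduce $[T]^\vee = n \cdot i^*(z)$ in $H^1(N)$, or equivalently $[T] = n \cdot (i^*(z))^\vee$ in $H_2(N, \partial N)$.

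Third, pass from $T$ to $S$. Since the weight function $nc$ on $\Gamma^+$ can be interpreted as an integer weight system on $B$ satisfying the switch conditions, Thurston's lemma \cite[Lemma 1]{Thu86} gives that $T$ decomposes as a disjoint union of cooriented surfaces each carried by $B$ with a weight function that sums to $nc$. Because $nc = n \cdot c$ with $c$ integral on $nc$'s support, the natural decomposition produces $n$ parallel components, each carried with weight $c$ and hence each homologous to the same class $(i^*(z))^\vee$. The component $S$ of $T$ therefore satisfies $[S] = (i^*(z))^\vee$ in $H_2(N,\partial N)$, as desired.

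The main obstacle is the third step: making sure Thurston's decomposition yields $n$ copies each representing $(i^*(z))^\vee$ rather than a more irregular decomposition into components of differing classes whose sum happens to equal $n \cdot (i^*(z))^\vee$. This is where one must use that the weight $nc$ is literally $n$ times the integer-unreachable cocycle $c$ together with the disk-decomposition of $B \cut \Gamma^+$ to see that $T$ is in fact a bundle of $n$ parallel transverse surfaces in $N$, not just some arbitrary weighted sum.
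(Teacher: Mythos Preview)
Your first two steps match the paper's proof exactly: the paper also argues that since $N$ deformation retracts to $B$ and $\Gamma^+$ cuts $B$ into disks, it suffices to check the pairing of $S$ with (not necessarily directed) cycles of $\Gamma^+$, and that this pairing is governed by the cocycle $c$ representing the pullback of $z$.

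The difference is in how step~3 is handled. The paper does not prove the passage from $T$ to $S$ inside the lemma at all---that step is already absorbed into \Cref{construction:Nsurface}, where \cite[Lemma~1]{Thu86} is invoked to assert that $T$ is a union of $n$ cooriented surfaces each representing the Lefschetz dual of $z$, and $S$ is \emph{defined} to be one of them. So by the time the lemma is stated, the fact that $[S]\cdot\gamma = c(\gamma)=i^*(z)([\gamma])$ is taken as built into the construction, and the proof is a one-liner.

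Your own attempted justification of step~3, however, contains an error. You write that ``$nc = n\cdot c$ with $c$ integral on $nc$'s support'' and conclude that $T$ consists of $n$ parallel copies of a surface with weight $c$. But $c$ is only rational---indeed $n$ is chosen precisely as the least integer clearing denominators---so there may be edges $e$ with $c(e)\notin\mathbb{Z}$, and a carried surface cannot have non-integral weight on a sector. Thus $T$ is \emph{not} in general a stack of $n$ parallel copies, and the ``bundle of $n$ parallel transverse surfaces'' picture is incorrect. What is true is only that $c$ is integral on \emph{cycles} (since it represents the integral class $i^*(z)$), which is exactly what Thurston's lemma exploits at the level of homology rather than weights. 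So your concern about step~3 is well-placed, but the resolution you sketch does not work; the paper simply defers this to the cited lemma.
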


\begin{proof}
Since $N$ deformation retracts to $B$ and $B$ is cut into disks by $\Gamma^+$, it suffices to show that for any cycle $\gamma$ in $\Gamma^+$ (not necessarily directed), the algebraic intersection of $S$ with $\gamma$ is equal to $i^*(z)([\gamma])$. This property is immediate from the fact that $S$ was constructed using a cocycle representing the pullback of $z$ to $H^1(\Gamma)$.
\end{proof}

We will now explain how to extend an $N$-surface for a cohomology class $z$ over $Q\cut N$ to obtain a surface properly embedded in $Q$. One can do this for any $N$-surface, but we restrict ourselves to the case when $z$ pairs positively with each $\Gamma^+$-cycle since this is the only case we need and it simplifies the presentation.

For convenience, we will denote $Q\cut N$ by $N^c$. The components of $N^c$ are in bijection with the components of $Q\cut B$, and accordingly we will refer to them as $\u$-cusped tori and $\u$-cusped products.

For the statement of this lemma, recall that $\gamma$ denotes  $\del Q\cut(R_+\cup R_-)$ (see \Cref{subsec:compactifiedmappingtorus}).

\begin{lemma}\label{lem:bdyNsurface}
 Let $z\in H^1(Q)$ be an integral class that pairs positively with the class of every directed cycle of $\Gamma^+$. Let $S$ be an $N$-surface for $z$, and let $C$ be a component of $N^c$. Then:
\begin{enumerate}[label=(\alph*)]
\item If $C$ is a $\u$-cusped torus shell, then $S\cap C$ is a collection of closed parallel curves with compatible coorientation, each essential in $\del C$ with slope distinct from that of the $\u\u$-cusp curves of $C$.
\item If $C$ is a $\u$-cusped solid torus, then $S\cap C$ is a collection of meridians of $C$.
\item If $C$ is a $\u$-cusped product, then $S\cap C$ is a collection of closed curves and arcs. If we coorient the arcs using the coorientation of $S$, they may be completed by cooriented arcs in $C\cap R_+$ to obtain a family of arcs and curves whose boundary points lie only in $R_+\cap \gamma$. 
\end{enumerate}
\end{lemma}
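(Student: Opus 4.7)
Proof proposal:

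The overall strategy is to exploit two structural features of the $N$-surface. First, by \Cref{construction:Nsurface}, the train track $\eta=S\cap B$ is positively transverse to $V|_B$, and hence after the slight thickening into $N$, $S$ itself is positively transverse to $V$. Second, the standard neighborhood $N$ was chosen so that $V$ is tangent to every $\u$-face of a complementary region of $B$ (and to every $\b$-face), and points strictly into $N$ along components of $\partial N$ lying in $\u$-cusped products. Hence $\partial N \cap C$ has a specific, predictable relationship to $V$ in each type of region $C$, which lets us read off the form of $S \cap C = S \cap \partial N \cap C$. Moreover, since $B$ (and hence $N$) is disjoint from $A(\gamma)\cup T(\gamma)\cup R_-$, parts of $\partial C$ lying on $\gamma$ or $R_-$ do not meet $S$, so the analysis is confined to $\partial N\cap C$ and $R_+\cap C$.

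For parts (a) and (b), $C$ is a $\u$-cusped torus (shell or solid), on which $V$ is circular and tangent to every face; in particular, $V$ is tangent to $\partial C$, with the $\u\u$-cusp circles being closed $V$-orbits. The surface $\partial N\cap C$ consists of annuli parallel to the $\u$-faces, and $S\cap\partial N\cap C$ is a collection of closed curves (the $\b$-face torus in case (a) is disjoint from $N$, so contributes nothing). By positive transversality, these curves are transverse to $V$, hence cannot be null-homotopic in the union of $\u$-face annuli, nor can they run parallel to the $\u\u$-cusp slope. Disjointness of curves in a torus then forces them to be parallel with matching coorientations, establishing (a). For (b), we additionally observe that the core of the $\u$-cusped solid torus $C$ represents the homology class of a source cycle of $\Gamma$, hence lies in $\Gamma^+$; by hypothesis $z$ pairs positively with this cycle, so $S$ meets the core with positive algebraic intersection, and the intersection points are all geometrically positive by transversality. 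Each transverse arc of $S$ on a $\u$-annulus face is a monotone cross-arc of the circular $V$-fibration on that annulus, crossing each longitudinal orbit exactly once; gluing across the $\u\u$-cusp circles yields a closed curve meeting every longitude on $\partial C$ exactly once, i.e.\ a cross-section of the $S^1$-fibration given by $V$. These cross-sections are meridians of $C$, with the number of meridional curves equal to $z([\mathrm{core}])>0$.

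For part (c), $C\cong \Sigma\times[0,1]$ is a $\u$-cusped product, $R_-=\Sigma\times\{0\}$, the $\b$-faces are $\partial\Sigma\times[0,1]$, and $\Sigma\times\{1\}$ decomposes into $\p$-faces (on $R_+$) and $\u$-faces. By our choice of $N$, each component of $\partial N\cap C$ is a surface close to a $\u$-face of $C$, along which $V$ points outward from $C$. The intersection $S\cap\partial N\cap C$ is then a disjoint union of closed curves and properly embedded arcs, with arcs terminating on $\partial(\partial N\cap C)\subset R_+$, specifically at points adjacent to the switches and stops of the boundary train track $\beta=B\cap R_+$ around the corresponding $\p$-faces. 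Completing each such endpoint through the adjoining $\p$-face by a small cooriented arc in $R_+$ (whose existence follows from the complementary structure of $\beta$ as given by \Cref{prop:vbssectors}) produces a family of cooriented arcs and closed curves in $C$ whose boundary points lie only on $\partial R_+=R_+\cap\gamma$, as required. The key observation making this completion go through is that each endpoint of an arc of $S\cap C$ on $\partial(\partial N\cap C)$ is naturally paired with a unique cooriented direction inside the $\p$-face, by following along the boundary of the nearby $\u$-face sector.

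The main obstacle is the meridionality argument in (b): one must correctly identify the transverse arcs on a $\u$-annulus face as cross-sections of a circular fibration and verify that, when glued around the $\u\u$-cusps, they produce precisely the meridional slope rather than an arbitrary $(p,q)$-slope. This requires combining the local circularity of $V$ on each annulus face with the global fact that the $\u\u$-cusp circles of $\partial C$ are longitudes of the solid torus $C$, and is where the positivity assumption on $z$ against the $\Gamma^+$-cycles enters decisively to fix the count of meridional components.
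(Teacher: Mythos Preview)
Your argument for (b) has a real gap. Positive transversality of $S\cap\partial C$ to the circular vector field $V$ on the boundary of the $\u$-cusped solid torus only tells you that each component has positive algebraic intersection with the longitudinal (cusp-circle) direction; it does \emph{not} force the slope to be meridional. A curve of slope $(p,q)$ with any $p>0$ is positively transverse to the longitudinal foliation, and your assertion that each transverse arc on a $\u$-face ``crosses each longitudinal orbit exactly once'' is simply not implied by circularity of $V$---a transverse arc can spiral. (Also, the core of $C$ is not a source cycle of $\Gamma$: it lies in a complementary region of $B$. What is true is that the $\u\u$-cusp circles are $\Gamma$-cycles, but that only recovers the positive-intersection statement you already have.) The paper supplies the missing ingredient homologically: using the long exact sequence of the triple $(Q, N^c\cup\partial Q,\partial Q)$ together with \Cref{lem:correctclass}, one sees that $[S\cap C]=0$ in $H_1(C, C\cap\partial Q)$. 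For a $\u$-cusped solid torus this is $H_1(C)\cong\mathbb{Z}$ generated by the longitude, so the curves must be meridians or cancel in pairs; the latter is ruled out by your transversality observation.

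The same homological input is what drives (c). Your ``local pairing'' of endpoints through an adjoining $\p$-face is too vague to be a proof: there is no a priori reason that endpoints landing in a given $\p$-face can be matched in cooriented pairs. The paper shows, via $H_1(C, C\cap\partial Q)\cong H_0(C\cap R_+, C\cap R_+\cap\gamma)$ (from the long exact sequence of $(C, C\cap\partial Q, C\cap R_-)$), that the signed endpoints of $S\cap C$ form a nullhomologous $0$-chain in $H_0(C\cap R_+, C\cap R_+\cap\gamma)$, which is exactly the condition allowing them to be joined by cooriented arcs in $C\cap R_+$ rel $R_+\cap\gamma$. Your Poincar\'e--Hopf style reasoning for ruling out inessential curves is fine and matches the paper's first step, but the substantive content of (b) and (c) comes from this relative homology computation, which your proposal does not supply.
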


\begin{proof}
Let $C$ be a component of $N^c$, and let $C'$ be the corresponding complementary region of $B$.

Let $c$ be a component of $S\cap C$, and suppose that $c$ bounds a disk $D$ in $\del C$. Since $S$ was constructed to be positively transverse to $\Gamma^+$, $c$ is disjoint from any $\u\u$-cusp curves of $C$. Hence corresponding to $D$ is a disk $D'$ immersed in $B$ with boundary lying in $S\cap B$. Since $S\cap B$ is positively transverse to $V$, this disk violates the Poincaré-Hopf theorem.
We conclude that no component of $S\cap C$ bounds a disk in the $\u$-faces of $C$. 

If $C$ is a $\u$-cusped torus, $S\cap C$ must intersect the $\u\u$-cusp curves of $C$ since $z$ pairs positively with each such curve. Also, $\del S$ is positively transverse to the $\u\u$-cusp curves by construction, so the orientations of the components of $S\cap C$ must agree. This proves part (a).

Consider the long exact sequence of the triple $(Q, N^c\cup \del Q,\del Q)$, which contains the subsequence
\begin{align}\label{eqn:LES1}
&H_2 (N^c\cup\del Q, \partial Q) \to H_2(Q, \partial Q) \to H_2(Q,  N^c\cup\del Q) \\
\to & H_1(N^c\cup\del Q, \partial Q) \to \cdots\nonumber.
\end{align}
Here $H_2(Q,N^c\cup\del Q) \cong H_2(N, \partial N)$ by excision and
\[
H_i( N^c\cup\del Q, \partial Q) \cong H_i(N^c, N^c\cap \del Q)\cong \bigoplus_C H_i(C, C \cap \partial Q)
\]
where the first isomorphism is induced by the inclusion $(N^c, N^c\cap \del Q)\into ( N^c\cup\del Q, \partial Q)$, 
and the direct sum is over all complementary regions $C$ of $N$ in $Q$. By \Cref{lem:correctclass}, the image of $[S]$ in $H_1(N^c\cup Q,\del Q)$ is the image of the image of $z$, viewing $z$ as a class in $H_2(Q,\del Q)$.
Therefore $ S\cap C$ is nulhomologous in $H_1(C, C\cap \del Q)$ for each complementary region $C$.

If $C$ is a $\u$-cusped solid torus, this means that $ S\cap C$ must be either a union of meridians or an even-sized collection of parallel curves essential in $\del C$ whose coorientations cancel each other out in homology. The latter case is impossible because $z$ pairs positively with the $\u\u$-cusp curves as noted above, proving (b).

If $C$ is a $\u$-cusped product, we consider the following subsequence of the long exact sequence associated to $(C,C\cap \del Q, C\cap R_-)$:
\[
H_i(C, C \cap R_-) \to H_i(C, C \cap \partial Q) \to H_{i-1}(C \cap \partial Q, C \cap R_-) \to H_{i-1}(C, C \cap R_-).
\]
Note $H_i(C, C \cap R_-) \cong 0$ since $C$ is homeomorphic to $(C \cap R_-) \times [0,1]$. Hence 
\begin{equation}\label{eqn:LES2}
H_i(C, C \cap \partial Q) \cong H_{i-1}(C \cap \partial Q, C \cap R_-) \cong H_{i-1}(C \cap R_+, C\cap R_+\cap \gamma).
\end{equation}

Using the fact that $S\cap C$ is nulhomologous in $H_1(C,C\cap \del Q)$ as well as the isomorphism $H_1(C,C\cap \del Q)\cong H_0(C\cap R_+, C\cap R_+\cap \gamma)$, we see that the (signed) boundary points of $S\cap C$ give a nulhomologous 0-chain in $H_0(C\cap R_+, C\cap  R_+\cap \gamma)$. Hence they can be connected by paths in $C\cap R_+$ to form a collection of cooriented arcs and loops, all of whose boundary points lie in $C\cap  R_+\cap \gamma$. 
This completes the proof of (c).
\end{proof}

\begin{proposition}\label{prop:decompsurface}
Let $z\in H^1(Q)$ be an integral class which pairs positively with the homology class of every directed cycle of $\Gamma^+$. Let $S$ be an $N$-surface for $z$. Then there exists a surface  which extends $S$, is properly embedded in $Q$, is transverse to the  vector field $V$, and represents $z$ (viewed as a class in $H_2(Q,\del Q)$).
\end{proposition}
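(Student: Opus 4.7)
My plan is to extend $S$ across each complementary region of $N$ one at a time, using the structural description of $S \cap \partial C$ given in \Cref{lem:bdyNsurface} together with the dynamical structure of $V$ on each region type. I would cap off the boundary curves of $S$ by disks, annuli, or more complicated surfaces (depending on the type of $C$), in each case taking advantage of special features of $V|_C$ to ensure transversality.

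First I would dispose of the torus pieces. If $C$ is a $\u$-cusped solid torus, then by \Cref{lem:principalregioncircular}-style reasoning $V|_C$ is circular, so fixing a circle-valued function $g\colon C \to S^1$ whose level sets are meridian disks transverse to $V$, I can isotope the meridional curves of $S \cap \partial C$ furnished by \Cref{lem:bdyNsurface}(b) to coincide with the intersections of $\partial C$ with finitely many such level sets, and cap off with the corresponding meridian disks. If $C$ is a $\u$-cusped torus shell, then again $V|_C$ is circular (by the analogue of \Cref{lem:principalregioncircular}), and the essential curves of $S \cap \partial C$, which have slope distinct from the $\u\u$-cusp curves by \Cref{lem:bdyNsurface}(a), can be capped off by annuli running to the torus face $T(\gamma) \cap C$, again chosen to be level sets of a circle-valued function; these annuli are automatically transverse to $V$.

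The main work is in the $\u$-cusped product case, which I expect to be the main obstacle. Here $C \cong \Sigma \times [0,1]$ with $\Sigma \times \{0\} \subset R_-$, and by \Cref{lem:bdyNsurface}(c) the set $S \cap \partial C$ consists of closed curves together with arcs that can be completed via arcs in $R_+ \cap C$ into a system $\delta$ of curves and arcs with endpoints on $C \cap R_+ \cap \gamma$. The idea is to use the $V$-semiflow to push $\delta$ down into $C$, but one must be careful: orbits may not all cross transversely to a given copy of $\Sigma$, because $V$ is only circular on $\u$-faces and can spiral onto $\u\u$-cusp curves. However, condition (d) in the definition of $\u$-cusped product (\Cref{defn:cuspedproduct}) ensures that every backward trajectory from a point off a $\u$-face reaches $R_-$, so after an initial isotopy rel $\partial C$ pushing each component of $\delta$ slightly off the $\u$-faces (using the local model of $V$ there), I can flow $\delta$ backward by $V$ until it lies near $R_-$, sweeping out a properly embedded surface in $C$ which is transverse to $V$ by construction and has $\delta$ as part of its boundary. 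The remaining boundary lies on $R_-$ and along $\gamma$, completing the extension.

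Finally, the gluing of these pieces to $S$ along $\partial N$ produces a cooriented properly embedded surface $\widehat S$ in $Q$ transverse to $V$. To identify its homology class, I would use the long exact sequence (\ref{eqn:LES1}) together with the isomorphism (\ref{eqn:LES2}) exactly as in the proof of \Cref{lem:bdyNsurface}: by \Cref{lem:correctclass} the class of $S$ in $H_2(N, \partial N)$ is the image of $z$, and the extensions I have constructed in each complementary region realize precisely the chains needed so that $[\widehat S] = z$ in $H_2(Q, \partial Q)$.
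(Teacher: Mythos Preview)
Your capping-off strategy in the torus pieces and $\u$-cusped products mirrors the paper's proof closely, and the use of backward $V$-flow in the products is exactly what the paper does (though note: a surface swept out by flowing a curve along $V$ is \emph{tangent} to $V$, not transverse; the paper fixes this with a small perturbation afterward, and you should too).

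The genuine gap is in your last paragraph. You assert that the extensions ``realize precisely the chains needed so that $[\widehat S]=z$,'' but this is not automatic. The completing arcs in $C\cap R_+$ furnished by \Cref{lem:bdyNsurface}(c) are only determined up to the choice of a $1$-chain bounding a given $0$-chain in $H_0(C\cap R_+, C\cap R_+\cap\gamma)$, and different choices produce extensions differing by a class in $H_2(C,C\cap\partial Q)\cong H_1(C\cap R_+, C\cap\del R_+)$, which is generally nonzero for $\u$-cusped products. So after gluing you only know that $z$ and $[\widehat S]$ have the same image in $H_2(N,\partial N)$; their difference $\alpha$ lies in the image of $\bigoplus_C H_2(C,C\cap\partial Q)$ summed over the $\u$-cusped product components. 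The paper handles this by explicitly constructing a surface $S_\alpha$ representing $\alpha$ (again by flowing curves in $C\cap R_+$ backward along $V$ and perturbing) and then performing an oriented cut-and-paste of $\widehat S$ with $S_\alpha$, arranged so that the result remains positively transverse to $V$. You need either this correction step, or an argument singling out the correct choice of completing arcs from the start.
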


\begin{proof}
By parts (a) and (b) of \Cref{lem:bdyNsurface}, we can cap off $S$ in the $\u$-cusped solid tori and the $\u$-cusped torus shells of $N^c$ by gluing on disks and annuli. Since $V$ is circular in these $\u$-cusped tori, we can choose these disks and annuli to be transverse to $V$.

For each $\u$-cusped product $C\cong \Sigma\times[0,1]$ in $N^c$, we can extend $S\cap C$ by the family of arcs furnished by \Cref{lem:bdyNsurface}(c) to obtain a family $A$ of arcs and curves in $\Sigma\times\{1\}$. If we flow $A$ backward along $V$ until it hits $R_-$, it sweeps out a properly embedded surface tangent to $V$ that we can use to cap off $S$ in $C$. We then homotope the result slightly so that it is transverse to $V$. After doing this in each $\u$-cusped product we have produced a cooriented surface $\ol S$ which is properly embedded in $Q$ and positively transverse to $V$. Moreover if we consider the maps of \Cref{eqn:LES1} from the proof of \Cref{lem:bdyNsurface}, the images of $z$ and $[\ol S]$ in $H_2(Q,N^c\cup \del Q)\cong H_2(N,\del N)$ are equal by \Cref{lem:correctclass}. As such, they differ by an element $\alpha\in H_2(Q,\del Q)$ in the image of $H_2(N^c\cup \del Q,\del Q)\cong H_2(N^c, N^c\cap \del Q)$. Since $H_2(C, C\cap \del Q)=0$ for each $\u$-cusped torus component of $N^c$, we have $H_2(N^c, N^c\cap \del Q)\cong\bigoplus_C H_2(C,C\cap \del Q)$ where the sum is over all the $\u$-cusped product components.

We will now construct a cooriented surface $S_\alpha$ which is positively transverse to $V$ and represents $\alpha$. By \Cref{eqn:LES2} from the proof of \Cref{lem:bdyNsurface}, for each $\u$-cusped product $C$ we have $H_2(C, C \cap \partial Q) \cong H_{1}(C \cap R_+, C\cap \del R_+)$. An element of $H_{1}(C \cap R_+, C\cap \del R_+)$ is represented by a collection of cooriented closed loops and arcs in $C\cap R_+$, which can be flowed backward to $R_-$ along $V$ to sweep out a cooriented surface representing the corresponding class in $H_2(C,C\cap \del Q)$. By a small homotopy we can make the swept out surface positively transverse to $V$. Doing this for all $\u$-cusped products we obtain our surface $S_\alpha$.

To finish the proof, we perform an oriented cut-and-paste on $\ol S$ and $S_\alpha$ so that the result is positively transverse to $V$.
\end{proof}

\subsection{Uniqueness argument} \label{subsec:uniquenessarg}

In this subsection, we establish our uniqueness result. We fix some notation.
\begin{itemize}
\item $f\colon L\to L$ is a Handel-Miller endperiodic map,
\item $\mr Q:=M_f$ is the mapping torus of $f$,
\item $Q:=\ol M_f$ is the compactified mapping torus of $f$ with associated depth one foliation $\mc F$,
\item $\phi$ is the suspension semiflow of $f$ in $Q$, and
\item $\mc L$ is the unstable Handel-Miller lamination in $Q$ (recall this is the suspension by $f$ of the positive Handel-Miller lamination $\Lambda_+\subset L$).
\end{itemize}

Suppose we have an unstable veering branched surface $(B,V)$ in $Q$ compatibly carrying $\mathcal{L}$.
Let $\mr B=B\cap \mr Q$.

By \Cref{thm:folcone}, the cone spanned by cycles in the dual graph $\Gamma$ of $B$ in $H_1(Q)$ is dual to the foliation cone associated to $\mathcal{F}$ in $H_2(Q, \partial Q)$. Fix an extended dual graph $\Gamma^+$ for $B$. Let $z=[\mathcal{F}] \in H_2(M, \partial M) \cong H^1(M)$.
By \Cref{prop:extdualgraphnonewcycles}, the dual of $z$ pairs positively with every cycle of $\Gamma^+$. By \Cref{prop:decompsurface}, there exists a surface $S_z$ representing $z$ properly embedded in $Q$, which is positively transverse to $V$ and $\Gamma^+$.

\begin{lemma}\label{lemma:itsaproduct}
Let $p\in Q$. Then the forward $V$-trajectory from $p$ meets $S_z$ or $R_+(Q)$.
\end{lemma}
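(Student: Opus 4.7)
The plan is to argue by contradiction: suppose the forward $V$-trajectory $\gamma$ from $p$ avoids $S_z \cup R_+$ for all $t \geq 0$. Since a forward $V$-trajectory in $Q$ can only terminate by reaching $R_+$ (as $V$ points out along $R_+$, in along $R_-$, and is tangent to the remainder of $\partial Q$), $\gamma$ is defined for all positive time, and its $\omega$-limit set $\Omega$ is a nonempty compact $V$-forward-invariant subset of $Q \setminus S_z$.

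The key step is to show that $\Omega$ contains a closed orbit $c$ of $V$. I would do this by case analysis on where $\Omega$ sits. If $\Omega$ meets a $\u$-cusped torus component of $Q\cut B$, then $V$ is circular there by \Cref{lem:principalregioncircular}, so any minimal subset of $\Omega$ in the region contains a closed orbit. If $\Omega$ meets the interior of a $\u$-cusped product, then \Cref{defn:cuspedproduct}(d) forces each forward trajectory either to reach $R_+$ (impossible by assumption), reach a $\u\u$-cusp of $\brloc(B)$ (whereupon it enters $B$), or accumulate on a $\u$-face, on which $V$ is again circular and so yields closed orbits. Trajectories on $B$ itself are controlled by the sector description in \Cref{prop:vbssectors}: they cross diamond sectors from bottom to top in finite time and continue onto adjacent sectors via the maw vector field, so any forward-recurrent orbit on $B$ must accumulate on a closed orbit lying in a source sector, on a branch loop of $\brloc(B)$, or in a transient annulus.

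Once a closed orbit $c \subset \Omega$ is obtained, I would check that $c$ is homotopic in $Q$ to a cycle of $\Gamma$. Closed orbits on $B$ are positively transverse to $\brloc(B)$ in the sense of \Cref{defn:postransbrloc} by the definition of $V|_B$, so \Cref{prop:dualgraphcarries} applies. Closed orbits in the interior of a $\u$-cusped solid torus are isotopic in $Q$ to closed orbits on an adjacent $\u$-face, which reduces to the previous case. Therefore $[c] \in \mathcal{C}_\Gamma$, and since $z$ lies in the interior of $\mathcal{C}_\mathcal{F}$, \Cref{thm:folcone} gives $z([c]) > 0$. Because $V$ is positively transverse to $S_z$, this algebraic positivity forces $c$ to meet $S_z$ geometrically. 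Since $\gamma$ accumulates on $c$, a neighborhood of a transverse intersection point of $c$ with $S_z$ contains points of $\gamma$ whose forward $V$-trajectories cross $S_z$, contradicting our assumption.

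The main obstacle is ensuring that $\Omega$ contains a closed orbit in every case. While the rigidity axioms of a veering branched surface, particularly \Cref{defn:vbs}(5) ruling out tori carried by $B$, and the cusped product structure on nonprincipal regions, provide strong constraints, handling interfaces between $B$ and its complementary regions, where a trajectory can enter $\brloc(B)$ through a $\u\u$-cusp and subsequently leave via a triple point onto a new sector, will require careful bookkeeping to ensure that no exotic recurrent behavior without closed orbits arises.
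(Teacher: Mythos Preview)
Your proposal has a genuine gap exactly where you flag it: producing a closed $V$-orbit inside $\Omega$. For a $C^0$ vector field on a 3-manifold or on a branched surface, invariant sets need not contain periodic orbits, and none of your case arguments actually establishes one. Circularity of $V$ on a $\u$-cusped torus only gives a map to $S^1$ that is monotone along trajectories; it does not by itself force periodic orbits in $\Omega$. On $B$, the claim that a recurrent trajectory must accumulate on a closed orbit in a source sector, branch loop, or transient annulus is asserted but not proved, and the interface behavior at $\brloc(B)$ that you mention in your final paragraph is precisely where such an argument could break down.

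The paper avoids this difficulty by never seeking a closed $V$-orbit. For $p \in B$, it observes that $B \cut S_z$ has finitely many sectors, so by compactness some segment $\rho$ of the forward trajectory begins and ends in the same sector; closing $\rho$ by an arc inside that sector yields a closed curve $\rho'$ positively transverse to $\brloc(B)$. Then \Cref{prop:dualgraphcarries} homotopes $\rho'$ to a $\Gamma$-cycle, and since $z$ pairs positively with every $\Gamma$-cycle, $\rho'$ (hence $\rho$) must meet $S_z$. For $p \notin B$, the forward trajectory either enters $B$, reducing to the previous case, or stays inside a $\u$-cusped torus $T$; in the latter case circularity of $V|_T$ forces the trajectory to cross the meridional disk or annulus used to cap off $S_z$ inside $T$. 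The argument uses only pigeonhole-level recurrence and needs no classification of limit sets.
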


\begin{proof}
First let $p\in B$ and suppose that the forward trajectory from $p$ does not intersect $R_+$. Then by compactness there is a segment $\rho$ of the forward $V$-trajectory from $p$ that starts and ends in the same sector of $B\cut S_z$. By adding a segment in this sector, we can close up $\rho$ to a closed curve $\rho'$ that is positively transverse to $\brloc(B)$. By \Cref{prop:dualgraphcarries}, $\rho'$ is homotopic to a directed $\Gamma$-cycle. Since $z$ pairs positively with this directed cycle, $\rho'$ must also intersect $S_z$ positively, so $\rho$ itself must intersect $S_z$ positively.

Next suppose $p\notin B$. If $p$ lies in a $u$-cusped product then the forward trajectory from $p$ either enters $B$ which then intersects $S_z$ by above, or it terminates on $R_+$. Similarly, if $p$ lies in a $\u$-cusped torus $T$ then its forward trajectory either enters $B$ and intersects $S_z$ by above, or remains in $T$. In this case, by circularity of $V|_T$, it intersects one of the pieces we used to cap off $S_z$ inside of $T$.
\end{proof}

It follows from \Cref{lemma:itsaproduct} that the sutured manifold obtained by decomposing $Q$ along $S_z$ is a product (cf. the proof of \Cref{lem:ucusprecog} on $\u$-cusped product recognition). In particular $S_z$ can be spun around $R_\pm(Q)$ to produce a surface $L_z$ which is the fiber of a fibration 
\[
L_z\into \mr Q \onto S^1. 
\]
(see \cite[Lemma C]{Alt12}). This gives rise to a depth one foliation of $Q$, which we call $\mc F_z$. We wish to show that $L_z\cap \mc L$ is the Handel-Miller lamination associated to the fibration above, which turns out to be a surprisingly subtle point.

The cohomology class of $\mc F_z$ is equal to that of $\mc F$, so by \cite[Theorem 1.1]{CC93} there exists an ambient isotopy of $Q$ which is smooth in $\mr Q$, fixes each point in $R_\pm (Q)$, and carries $\mc F$ to $\mc F_z$ and $L$ to $L_z$. Further, it follows from the arguments in \cite{CC93} that we can require the existence of a neighborhood $N_\epsilon$ of $R_\pm(Q)$ such that the ambient isotopy moves points along flow lines of $\phi$ in $N_\epsilon$.

We now replace $\mc F$, $\phi$, and $\mc L$ by their images under this ambient isotopy. However, we leave $B$ in its original position. The reason for this is that $L_z$ is in a particularly nice position with respect to $(B,V)$ that we would like to preserve. The cost of leaving $B$ in place is that we can no longer assume $\mc L$ is in a carried position with respect to $B$.

Choose a tiling $\mc T_{\mc F}$ of all $f$-cycles of ends of $L$. 
We can choose tiled neighborhoods of all the end-cycles which are small enough so that that the associated staircase neighborhoods of components of $R_\pm(Q)$ are contained in $N_\epsilon$. Let $N_+$ and $N_-$ be the union of the positive and negative staircases respectively, and let $N_\pm=N_+\cup N_-$.

We now set some more notation, describing natural objects that live in the complement of $N_\pm$.
\begin{itemize}
\item Manifolds:
\begin{itemize}
\item Let $Q^\dag=Q\cut N_\pm$ (the dagger notation indicates that we are cutting away $N_\pm$).
\item Let $L^\dagger=(L\cap Q^\dagger)\cut(L\cap \del Q^\dag)$ be the core of $L$ complementary to the tiled neighborhoods defining $N_\pm$.
\item Let $L^\dag_{-1}=L\cup R_+(N_-)$ and $L^\dag_1=L\cup R_-(N_+)$. In words, $L_1^\dagger$ ($L_{-1}^\dag$) is obtained by adding to $L^\dagger$ the first tile in each positive (negative) end-cycle that doesn't already lie in $L^\dagger$.
\item Let $\wt {Q^\dagger}$ be the $\Z$-cover of $Q^\dagger$ associated to $L^\dagger$. This can be constructed by gluing together $\Z$-many copies of $Q^\dagger\cut L^\dagger$.
\end{itemize}
\item Maps and flows:
\begin{itemize}
     \item Let $\phi^\dag=\phi|_{Q^\dag}$.
     \item Let $f^\dagger\colon L^\dag_{-1}\to L^\dag_1$ be the homeomorphism induced by $\phi^\dag$. Note that $Q^\dag$ is the ``mapping torus" of $f^\dag$, i.e. 
\[
Q^\dag=L^\dag_{-1}\times[0,1]/\left((x,1)\sim(f^\dag(x),0) \text{ if } f^\dag(x)\in L^\dagger\right).
\]
To visualize this it may be helpful to consider \Cref{fig:Qdagger}.
\end{itemize}
\item Laminations and branched surfaces:
\begin{itemize}
    \item Let $\mc L^\dagger=\mc L|_{Q^\dagger}$.
    \item Let $B\cap L=\tau$ and let $\tau^\dagger=\tau|_{L^\dagger}$.
    \item Since $f$ is Handel-Miller, $\mc L\cap L$ is the positive Handel-Miller lamination. For convenience we will suppress the $+$ subscript and denote the positive Handel-Miller lamination by $\Lambda$. Let $\Lambda^\dag=\Lambda|_{L^\dag}$.
    \item Note that there is an isotopy of $\mc L^\dagger$, fixing its boundary, so that the result is fully carried by $B^\dagger$ (this is induced by the ambient isotopy carrying $\mc F$ to $\mc F_z$).
    Let $\mc L^\dagger_0$ denote this isotoped lamination, and let $\Lambda^\dagger_0= \mc L^\dagger_0\cap L^\dagger$.
\end{itemize}
\end{itemize}

\begin{figure}
    \centering
    \fontsize{10pt}{10pt}\selectfont
    \resizebox{!}{4cm}{
\begingroup%
  \makeatletter%
  \providecommand\color[2][]{%
    \errmessage{(Inkscape) Color is used for the text in Inkscape, but the package 'color.sty' is not loaded}%
    \renewcommand\color[2][]{}%
  }%
  \providecommand\transparent[1]{%
    \errmessage{(Inkscape) Transparency is used (non-zero) for the text in Inkscape, but the package 'transparent.sty' is not loaded}%
    \renewcommand\transparent[1]{}%
  }%
  \providecommand\rotatebox[2]{#2}%
  \newcommand*\fsize{\dimexpr\f@size pt\relax}%
  \newcommand*\lineheight[1]{\fontsize{\fsize}{#1\fsize}\selectfont}%
  \ifx\svgwidth\undefined%
    \setlength{\unitlength}{236.1173758bp}%
    \ifx\svgscale\undefined%
      \relax%
    \else%
      \setlength{\unitlength}{\unitlength * \real{\svgscale}}%
    \fi%
  \else%
    \setlength{\unitlength}{\svgwidth}%
  \fi%
  \global\let\svgwidth\undefined%
  \global\let\svgscale\undefined%
  \makeatother%
  \begin{picture}(1,0.44022005)%
    \lineheight{1}%
    \setlength\tabcolsep{0pt}%
    \put(0,0){\includegraphics[width=\unitlength,page=1]{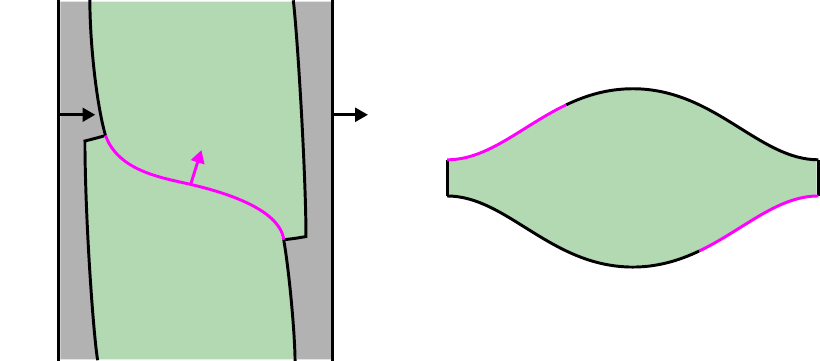}}%
    \put(0.17100632,0.36979139){\color[rgb]{0,0,0}\makebox(0,0)[lt]{\lineheight{1.25}\smash{\begin{tabular}[t]{l}$Q^\dagger$\end{tabular}}}}%
    \put(-0.00384641,0.35010885){\color[rgb]{0,0,0}\makebox(0,0)[lt]{\lineheight{1.25}\smash{\begin{tabular}[t]{l}$R_-$\end{tabular}}}}%
    \put(0.41758489,0.3501061){\color[rgb]{0,0,0}\makebox(0,0)[lt]{\lineheight{1.25}\smash{\begin{tabular}[t]{l}$R_+$\end{tabular}}}}%
    \put(0.69722741,0.22021747){\color[rgb]{0,0,0}\makebox(0,0)[lt]{\lineheight{1.25}\smash{\begin{tabular}[t]{l}$Q^\dagger \cut L^\dagger$\end{tabular}}}}%
    \put(0.73529453,0.34990688){\color[rgb]{0,0,0}\makebox(0,0)[lt]{\lineheight{1.25}\smash{\begin{tabular}[t]{l}$L^\dagger_1$\end{tabular}}}}%
    \put(0.73529536,0.06098693){\color[rgb]{0,0,0}\makebox(0,0)[lt]{\lineheight{1.25}\smash{\begin{tabular}[t]{l}$L^\dagger_{-1}$\end{tabular}}}}%
    \put(0.18707718,0.16179469){\color[rgb]{1,0,1}\makebox(0,0)[lt]{\lineheight{1.25}\smash{\begin{tabular}[t]{l}$L^\dagger$\end{tabular}}}}%
  \end{picture}%
\endgroup%
}
    \caption{A cartoon of $Q^\dagger$ and the product manifold obtained by cutting $Q^\dagger$ along $L^\dagger$.}
    \label{fig:Qdagger}
\end{figure}

\begin{lemma}\label{lem:noclosedcurves}
The train track $\tau$ carries no closed curve which is nullhomotopic in $L$.
\end{lemma}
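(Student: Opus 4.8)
The plan is to derive a contradiction from the existence of a nullhomotopic closed curve $c$ carried by $\tau$, using the veering branched surface $(B,V)$ together with the fact that $\tau = B \cap L$ sits inside $B$ as the intersection with the depth one leaf $L$. The key point is that a closed curve carried by $\tau$ can be pushed forward under $V$ to sweep out an annulus carried by $B$, and the dynamics of $V$ (combined with the sector structure classified in \Cref{prop:vbssectors} and condition \Cref{defn:vbs}(5)) will rule out this possibility when $c$ is essential on $B$, while $\pi_1$-injectivity of leaves of the carried lamination $\mathcal L$ will rule it out when $c$ is inessential on $B$ but nullhomotopic in $L$.

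First I would observe that $c$, being carried by $\tau \subset B$, is freely homotopic in $B$ to a closed curve transverse to $\brloc(B)$, hence after a homotopy we may assume $c$ is positively transverse to $\brloc(B)$ if its maw-coorientation crossings are consistent — but in general we should argue more carefully. The cleaner route: push $c$ forward slightly along the semiflow $\phi$ (which is positively transverse to $\mc F$ and hence crosses $L$) to get that $c$ is homotopic in $Q$ into the interior $\mr Q$, and consider the lift $\wt c$ to the universal cover $\wt Q$. Since $c$ is nullhomotopic in $L$, and $L$ is $\pi_1$-injective in $Q = \overline{M_f}$ (the noncompact leaf of a taut foliation is incompressible), $c$ is already nullhomotopic in $Q$. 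So $\wt c$ is a closed curve, i.e. $c$ lifts to $\wt Q$. Now $B$ fully carries the lamination $\mc L$ which by \Cref{prop:vfulldbslaminar}-type reasoning (or directly, since $\mc L^u$ has $\pi_1$-injective leaves, \Cref{lemma:hmlamnoproductregions} and the surrounding discussion) has every leaf $\pi_1$-injective; and in fact $B$ itself, being a veering branched surface, cannot carry a closed curve that is nullhomotopic in $Q$ unless that curve bounds a disk \emph{on} $B$ — and a disk sector configuration is obstructed by Poincaré–Hopf applied to $V$ restricted to the disk, exactly as in the proof of \Cref{prop:vfulldbslaminar} (the vector field would point outward along the boundary of an innermost disk). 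Concretely: if $c$ is nullhomotopic in $B$, take an innermost subdisk $D \subset B$ bounded by (a subarc of) $c$; the vector field $V|_D$ points outward along $\partial D$ wherever $\partial D$ crosses $\brloc(B)$ positively, contradicting $\chi(D) = 1 > 0 = \deg(V|_{\partial D})$.

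The remaining case is that $c$ is essential on $B$ but nullhomotopic in $Q$. Here I would use that $\tau$ is \emph{efficient} (part of the construction; $B \cap L = \tau$ is the boundary-type train track arising in the layered picture, and by \Cref{defn:efficienttt} / the hypotheses of \Cref{thm:sequenceexist} it is efficient and carries $\Lambda_+$). An essential closed curve carried by an efficient train track carrying $\Lambda_+$ is essential in $L$ — this is precisely the motivating property of efficient train tracks stated after \Cref{defn:efficienttt}, together with \Cref{lem:accumonends}: a closed curve $c$ carried by $\tau$ that were essential on $\tau$ but inessential in $L$ would, after homotopy, bound a disk or once-punctured-type region in $L$ whose frontier in $\tau$ forces a complementary region of $\tau$ that $\Lambda_+$ does not allow (a cusped bigon or an annulus, both excluded by efficiency; alternatively it would produce a compact leaf of $\Lambda_+$, contradicting \Cref{lem:accumonends}). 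Since $L$ is incompressible in $Q$, an essential curve in $L$ is essential in $Q$, so it cannot be nullhomotopic — contradiction.

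The main obstacle I anticipate is the bookkeeping in the essential-on-$B$-but-not-on-$\tau$ middle ground: a curve carried by $\tau$ could conceivably be inessential on $\tau$ (hence bounding a disk in $L$ hitting $\tau$ only in a controlled way) yet still need the efficiency argument to see it does not survive as a nontrivial curve. I would handle this by first straightening $c$ to a curve that is either carried by $\tau$ with no "backtracking" (so essential on $\tau$) or genuinely nullhomotopic in $L$ through the complement of $\tau$ in a way that makes it nullhomotopic on $B$ as well; the dichotomy then reduces exactly to the two cases above. The Poincaré–Hopf obstruction for disks on $B$ and the efficiency-plus-incompressibility obstruction for essential curves together close the argument, and neither requires any new machinery beyond what is already in the excerpt.
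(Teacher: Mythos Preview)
Your proposal has a genuine gap. In the context of this lemma, $\tau = B \cap L$ where $B$ is an \emph{arbitrary} veering branched surface compatibly carrying $\mathcal{L}$ and $L$ is the spun surface $L_z$; at this point in \Cref{subsec:uniquenessarg} we have \emph{not} established that $\tau$ is efficient or that it carries $\Lambda_+$. Indeed, \Cref{lem:ttcarriesHMlam} (that $\tau$ fully carries $\Lambda$) is a \emph{corollary} of \Cref{lem:noclosedcurves} via \Cref{lem:itsanilam}, so your second-case argument invoking efficiency and the carrying of $\Lambda_+$ is circular. Your first-case argument is also shaky: when $c \subset \tau \subset L$ bounds a disk $D$ immersed in $B$, the vector field $V$ is tangent to $B$ but \emph{transverse} to $L$, so there is no reason for $V|_{\partial D}$ to point outward along $c$; the Poincar\'e--Hopf obstruction you cite from \Cref{prop:vfulldbslaminar} applies to disks with boundary on $R_+$, not to disks with boundary on a depth-one leaf.

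The paper's proof is far more direct and uses none of this machinery. It exploits the explicit construction of $L = L_z$: if $c$ bounds a disk $D \subset L$, then $D$ is tiled by compact complementary regions of $\tau$, and by index additivity at least one such region must have positive index. But the only compact complementary regions of $\tau$ in $L$ are the capping-off pieces added inside the $\u$-cusped torus pieces in \Cref{prop:decompsurface} (meridional disks of $\u$-cusped solid tori and annuli in $\u$-cusped torus shells), all of which have negative index. That is the whole argument.
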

\begin{proof}
If $\tau$ carried a curve bounding a disk $D\subset L$, the disk $D$ could be decomposed into compact complementary regions of $\tau$, at least one of which would have to have positive index. However, all the compact complementary regions of $\tau$ are pieces we used to cap off an $N$-surface in $\u$-cusped torus pieces, which all have negative index.
\end{proof}

\begin{lemma}\label{lem:itsanilam}
The laminations $\Lambda_0^\dagger$ and $\Lambda^\dagger$ are isotopic in $L^\dagger$ fixing their boundary points.
\end{lemma}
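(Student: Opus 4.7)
The plan is to leverage the incompressibility of $L$ in $Q$ to transport the three-dimensional isotopy from $\mc L^\dagger$ to $\mc L^\dagger_0$ into a two-dimensional ambient isotopy of $L^\dagger$ rel $\partial L^\dagger$. To begin, both $\Lambda^\dagger$ and $\Lambda^\dagger_0$ are $I$-laminations in $L^\dagger$: for $\Lambda^\dagger$ this is \Cref{lem:ilam}, and for $\Lambda^\dagger_0$ the same conclusion follows since $\mc L^\dagger_0$ is obtained from $\mc L^\dagger$ by an ambient isotopy of $Q^\dagger$ fixing $\partial \mc L^\dagger \subset \partial Q^\dagger$, so after arranging transversality with $L^\dagger$ through the isotopy the intersection remains an $I$-lamination with the same endpoints on $\partial L^\dagger$. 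Moreover, the three-dimensional isotopy induces a natural bijection between the leaves of $\Lambda^\dagger$ and $\Lambda^\dagger_0$ which preserves endpoints on $\partial L^\dagger$.

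Next, I will show that corresponding leaves are ambient isotopic in $L^\dagger$ rel endpoints. For each pair of corresponding leaves $\ell \subset \Lambda^\dagger$ and $\ell' \subset \Lambda^\dagger_0$, the three-dimensional isotopy provides a homotopy in $Q^\dagger$ rel endpoints between $\ell$ and $\ell'$. Since $L$ is incompressible in $Q$, the surface $L^\dagger$ is incompressible in $Q^\dagger$, and by standard boundary-compression arguments this extends to arcs: any properly embedded arc in $L^\dagger$ which is homotopic rel endpoints in $Q^\dagger$ to another such arc is already homotopic to it rel endpoints within $L^\dagger$. Two properly embedded arcs in an orientable surface that agree on their endpoints and are homotopic rel endpoints are ambient isotopic rel endpoints, so $\ell$ and $\ell'$ are ambient isotopic rel endpoints in $L^\dagger$.

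The main obstacle is packaging these individual leaf isotopies into a single ambient isotopy of $L^\dagger$ rel $\partial L^\dagger$ carrying $\Lambda^\dagger$ to $\Lambda^\dagger_0$. I plan to do this by showing that both laminations are fully carried by the train track $\tau^\dagger$, and that they induce identical weightings on it. The lamination $\Lambda^\dagger_0$ is carried by $\tau^\dagger$ by construction. Using the leaf-by-leaf isotopies from the previous paragraph, together with \Cref{lem:noclosedcurves} to rule out inessential loops in $\tau^\dagger$, $\Lambda^\dagger$ may be isotoped rel $\partial L^\dagger$ so that it too is fully carried by $\tau^\dagger$. Because both laminations have the same endpoints on $\partial L^\dagger$, the switch conditions force them to assign the same integer weight to each branch of $\tau^\dagger$, and an $I$-lamination fully carried by an efficient train track with prescribed branch weights is determined up to ambient isotopy rel boundary. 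The delicate point will be verifying that the preliminary leaf-by-leaf isotopies can be performed in a coherent manner so that the resulting lamination lies in a standard neighborhood of $\tau^\dagger$ without changing the boundary endpoints; I expect this to be the only real technical hurdle.
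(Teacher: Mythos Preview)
Your overall strategy—establish a leaf-by-leaf correspondence, use incompressibility of $L^\dagger$ to promote the 3-dimensional homotopy to a 2-dimensional one, then package the pieces—is close in spirit to the paper's argument. But the real gap is not where you locate it. The step you flag as the ``only real technical hurdle'' (packaging leaf isotopies into a lamination isotopy) is secondary; the argument breaks down much earlier.

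The problem is in your first paragraph. You assert that $\Lambda^\dagger_0$ is an $I$-lamination, reasoning that an isotopy of $\mc L^\dagger$ rel its boundary keeps the intersection with $L^\dagger$ an $I$-lamination ``after arranging transversality through the isotopy.'' This is not a valid inference. The isotopy moves the 2-dimensional lamination while $L^\dagger$ stays fixed; nothing prevents a single leaf $\ell$ of $\mc L^\dagger$—which intersects $L^\dagger$ in one arc—from being isotoped to a leaf $\ell_0$ of $\mc L^\dagger_0$ that intersects $L^\dagger$ in several arcs, or in circles. In particular, your claimed ``natural bijection between the leaves of $\Lambda^\dagger$ and $\Lambda^\dagger_0$ which preserves endpoints'' has no source: the 3-dimensional isotopy only gives a bijection of \emph{2-dimensional} leaves, not of their intersections with $L^\dagger$. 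Your second paragraph then inherits this problem, since the asserted homotopy in $Q^\dagger$ rel endpoints between corresponding 1-dimensional leaves does not come for free from the isotopy of 2-dimensional leaves.

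The paper resolves this by lifting to the $\Z$-cover $\wt{Q^\dagger}$ associated to $L^\dagger$. There each leaf of $\wt{\mc L^\dagger}$ and $\wt{\mc L^\dagger_0}$ is a properly embedded strip $[0,1]\times\R$, so its intersection with a lift $\wt{L^\dagger}$ is a compact 1-manifold; \Cref{lem:noclosedcurves} together with $\pi_1$-injectivity rules out circle components, and a boundary count forces each strip to meet $\wt{L^\dagger}$ in exactly one arc. This is what produces both the $I$-lamination structure on $\Lambda^\dagger_0$ and the bijection of leaves—neither of which you have justified downstairs.
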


\begin{proof}
Let $\wt {\LL^\dag}$ and $\wt {\LL_0^\dag}$ be the lifts of $\LL^\dag$ and $\LL_0^\dag$ to $\wt {Q^\dag}$, respectively. 
Let $\wt {L^\dagger}$ be a lift of $L^\dagger$ to $\wt {Q^\dagger}$, and let $\wt {\Lambda^\dagger}$ and $\wt {\Lambda^\dagger_0}$ be the preimages of $\Lambda^\dagger$ and $\Lambda^\dagger_0$ in $\wt {L^\dagger}$ under the covering projection, respectively.

We can lift the isotopy between $\LL^\dag$ and $\LL^\dag_0$ in $Q^\dag$ to a proper isotopy $\iota$ in $\wt {Q^\dag}$ between $\wt{\LL^\dag}$ and $\wt{\LL^\dag_0}$ fixing their common boundary. Each leaf of $\wt {\LL^\dag}$ is properly embedded in $\wt{ Q^\dag}$ and homeomorphic to $[0,1]\times \R$, so the same is true of each leaf of $\wt{\LL^\dagger_0}$. 

Immediately we see that each leaf of $\wt{\Lambda^\dagger_0}$ is a compact 1-manifold properly embedded in $\wt {L^\dagger}$. If such a leaf were a closed curve, it would be nulhomotopic in the corresponding leaf of $\wt {\LL^\dagger_0}$, hence nulhomotopic in $\wt {L^\dagger}$ by $\pi_1$-injectivity of $\wt {L^\dagger}$; projecting to $Q^\dagger$ would then give a contradiction to \Cref{lem:noclosedcurves}. Thus we see that $\wt{ \Lambda^\dagger_0}$ is an $I$-lamination, and each leaf of $\wt{\LL^\dag}$ intersects $\wt{ L^\dag}$ in a unique leaf of the $I$-lamination.

Let $\lambda$ be a leaf of $\wt{ \Lambda^\dagger}$, which is equal to $\ell\cap \wt {L^\dagger}$ for some leaf $\ell$ of $\wt{\LL^\dagger}$. Note that $\lambda$ connects  the two components of $\del\ell$. If $\ell_0$ is the image of $\ell$ under $\iota$, $\ell_0\cap \wt {L^\dagger}$ is a union of $\wt{\LL^\dagger_0}$-leaves with boundary equal to $\del \lambda$. It cannot contain any closed curves by above, so consists of a single curve $\lambda_0$ such that $\del\lambda=\del\lambda_0$. Since $\ell$ and $\ell_0$ are isotopic rel boundary in $\wt {Q^\dagger}$ and $\lambda$ and $\lambda_0$ have the same boundary points, they are isotopic rel boundary in $\wt {Q^\dag}$. Since $\wt {L^\dag}$ is $\pi_1$-injective in $\wt {Q^\dag}$, they are in fact isotopic in $\wt{ L^\dag}$. Projecting to $L^\dagger$, we see that every leaf of the $I$-lamination $\Lambda^\dag$ is isotopic, fixing endpoints, to a leaf of the $I$-lamination $\Lambda^\dagger_0$.
\end{proof}

\begin{corollary}\label{lem:ttcarriesHMlam}
The train track $\tau^\dag$ fully carries $\Lambda^\dag$ up to an isotopy of $\Lambda^\dag$ rel boundary. Hence $\tau$ fully carries $\Lambda$. 
\end{corollary}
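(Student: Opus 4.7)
The plan is to deduce the first claim directly from \Cref{lem:itsanilam}, and then to extend the carrying statement across $\partial L^\dag$ using the endperiodic structure of $B$ and $\Lambda$ in the end-neighborhoods.

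For the first claim: by the construction of $\mc L^\dag_0$, this lamination is fully carried by $B\cap Q^\dag$. Since $L^\dag$ is a cross-section of $Q^\dag$ transverse to the branched surface (inherited from the transversality of the defining surface $S_z$ to $V$ and $\Gamma^+$), its intersection $\Lambda^\dag_0 = \mc L^\dag_0 \cap L^\dag$ is fully carried by $\tau^\dag = B\cap L^\dag$. Composing with the boundary-fixing isotopy from $\Lambda^\dag$ to $\Lambda^\dag_0$ furnished by \Cref{lem:itsanilam} then places $\Lambda^\dag$ in a fully carried position with respect to $\tau^\dag$, which is exactly the first statement.

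For the \emph{hence} part, we must promote this to a carrying statement on all of $L$. The complement $L\setminus L^\dag$ is a disjoint union of tiled neighborhoods of the end-cycles of $L$. Negative end neighborhoods lie in the negative escaping set $\mathscr U_-$, which is disjoint from $\Lambda = \Lambda_+$, so nothing needs to be checked there. In a tiled neighborhood $E$ of a positive end-cycle, the veering branched surface $B$ is $f$-periodic: $\tau\cap E$ is a stack of $f$-translates of the quotient spiraling train track $T_+^\infty = B\cap R_+$. Since $B$ fully carries $\mc L$, restriction to $R_+$ shows that $T_+^\infty$ fully carries $\Lambda_+^\infty = \mc L\cap R_+$ in $\mathscr U_+/\langle f\rangle$, and pulling back via $f$-iterates under the covering $\mathscr U_+ \to \mathscr U_+/\langle f\rangle$ shows $\tau$ fully carries $\Lambda$ on $E$. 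The isotopy from the first paragraph fixes $\partial L^\dag$ pointwise, so it extends by the identity across $\partial L^\dag$ to a global isotopy of $\Lambda$ after which $\tau$ fully carries $\Lambda$.

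The main obstacle to check is compatibility along $\partial L^\dag$: the boundary trace of $\Lambda^\dag_0$ along $\partial L^\dag$ must agree with the restriction of $\Lambda$ coming from the end-neighborhoods, so that extending by the identity across $\partial L^\dag$ produces a bona fide isotopy of $L$. This follows from the arrangement made before \Cref{lem:noclosedcurves} that the ambient isotopy carrying $\mc F$ to $\mc F_z$ moves points along $\phi$-orbits in the neighborhood $N_\epsilon$ of $R_\pm(Q)$ and fixes $R_\pm(Q)$ pointwise. In particular, this ambient isotopy restricts to a flow-compatible isotopy in a neighborhood of $\partial L^\dag$, so the trace of $\mc L^\dag_0$ on $\partial L^\dag$ coincides with the trace of $\mc L$, and the piecewise-defined isotopy glues continuously, completing the proof.
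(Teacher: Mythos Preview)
Your first paragraph is correct and matches the paper's argument: $B^\dag$ fully carries $\mc L^\dag_0$, hence $\tau^\dag$ fully carries $\Lambda^\dag_0$, and the boundary-fixing isotopy of \Cref{lem:itsanilam} transfers this to $\Lambda^\dag$.

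The gap is in your treatment of the positive end neighborhoods. You assert that ``the veering branched surface $B$ is $f$-periodic: $\tau\cap E$ is a stack of $f$-translates of the quotient spiraling train track $T_+^\infty = B\cap R_+$'' and then pull back the carrying from $R_+$ via $f$-iterates. But in this section $B$ is an \emph{arbitrary} veering branched surface compatibly carrying $\mc L$, not the specific one built in \Cref{sec:hmvbs}; there is no reason for $B$ to have any $f$-periodic structure in the ends, and indeed the whole point of \Cref{sec:hmvbsunique} is to eventually \emph{prove} that $B$ arises from such a construction. So this step assumes what is to be shown. You also invoke ``$B$ fully carries $\mc L$'' globally, but the paper explicitly warns (just before the notation block) that after replacing $\mc L$ by its image under the ambient isotopy while leaving $B$ fixed, one can no longer assume $\mc L$ is in carried position with respect to $B$.

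The fix is already in your final paragraph, but you use it only for boundary matching rather than as the main argument. Since the ambient isotopy moves points along $\phi$-orbits throughout $N_\epsilon \supset N_\pm$, and the (pre-isotopy) lamination $\mc L$ is a union of $\phi$-orbits, the isotopy preserves $\mc L$ as a set inside $N_\pm$. Hence in $N_\pm$ the post-isotopy $\mc L$ coincides with the original one, which \emph{was} carried by $B$; intersecting with $L$ gives that $\tau$ carries $\Lambda$ on $L\setminus L^\dag$. This is what the paper means by ``$\tau$ already carries $\Lambda^+$ outside $L^\dag$,'' and it requires no periodicity of $B$.
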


\begin{proof}
Since $B^\dag$ fully carries $\LL_0^\dag$, the train track $\tau^\dag$ fully carries $\Lambda_0^\dag$. The isotopy from \Cref{lem:itsanilam} between $\Lambda^\dag$ and $\Lambda^\dag_0$ rel boundary certifies that $\Lambda^\dag$ is fully carried by $B^\dag$. Since $\tau$ already carries $\Lambda^+$ outside $L^\dag$, this proves the claim.
\end{proof}

Now that we have shown that $\tau$ fully carries $\Lambda$, we wish to show that $B$ is obtained from this paper's main construction, which can be summarized in two steps:
\begin{enumerate}
    \item Given a Handel-Miller map $f\colon L\to L$ with positive Handel-Miller lamination $\Lambda$ and 2D unstable Handel-Miller lamination $\LL$, fix an efficient spiraling train track carrying $\LL\cap R_+(\ol M_f)$. Using core splits, produce an eventually $f$-periodic splitting sequence of train tracks fully carrying $\Lambda$. In \Cref{thm:splitsequnique} we prove the resulting sequence is unique up to equivalence.
    \item Suspend the periodic part of this splitting sequence in $\ol M_f$. In \Cref{lemma:vbsunique-splitseq} we prove the resulting veering branched surface is unique up to isotopy.
\end{enumerate}

Hence we must show that $B$ is obtained as the suspension of a splitting sequence obtained from performing core splits. 

\begin{definition}[Downward flip]
Let $s$ be a diamond sector of $B$, and suppose that a component of $L^\dag\cap s$ has both its boundary points along the bottom of $s$, so that the corresponding branch $b$ of $\tau$ is large. Further suppose $b$ is lowermost in $s$, meaning that $b$ is the only piece of $L^\dag\cap s$ contained in the component of $s\cut b$ containing the bottom point of $s$. Then there exists an isotopy of $L^\dag$ supported in a neighborhood of $b$, as shown in \Cref{fig:pushdownwards}, which has the effect of splitting $\tau$ along $b$. This isotopy is called a \textbf{downward flip}.
It is straightforward to see that a downward flip can always be performed in such a way that it preserves the property of $L^\dag$ being positively transverse to $V$ and $\Gamma$.
\end{definition}

\begin{figure}
    \centering
    \resizebox{!}{7cm}{\import{basecase-fig}{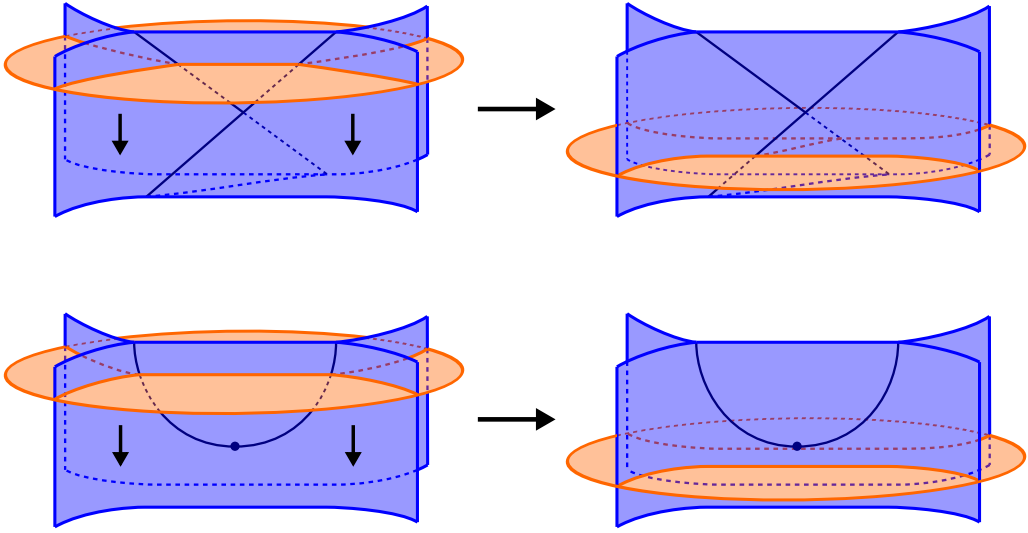}}
    \caption{Downward flips.}
    \label{fig:pushdownwards}
\end{figure}

\begin{lemma}\label{lem:makeitspiral}
After finitely many downward flips, we may assume that $\tau^\dagger$ is a spiraling train track.
\end{lemma}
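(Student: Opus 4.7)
The plan is to show that each downward flip realizes a $\Lambda^\dagger$-split of $\tau^\dagger$, and then apply \Cref{lemma:maximalsplitting} to conclude termination. By \Cref{lem:ttcarriesHMlam} we know that $\tau^\dagger$ fully carries $\Lambda^\dagger$, and the proof of \Cref{lem:itsanilam} shows that $\Lambda^\dagger$ is an $I$-lamination on $L^\dagger$, hence a consistent spiraling lamination.

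The first step is to establish that whenever $\tau^\dagger$ has a large branch, a lowermost large branch exists in some diamond sector. Note that large branches of $\tau^\dagger$ can only lie in diamond sectors: in a source sector all boundary is cooriented outward, so every arc of $L^\dagger$ is small; and in a transient annulus, positive transversality of $L^\dagger$ to $V$ forces every arc to run from the inward boundary to the outward boundary, so every arc is mixed. Fix a diamond sector $s$ containing a large branch of $\tau^\dagger$. At a switch on a bottom side of $s$ the maw vector field of $\tau^\dagger$ points into $s$, while at a switch on a top side it points out of $s$; hence the components of $L^\dagger \cap s$ with both endpoints on bottom sides of $s$ correspond exactly to the large branches of $\tau^\dagger$ lying in $s$. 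Among these arcs, let $b'$ be one innermost with respect to the bottom of $s$ (the bottom vertex, or the bottom source if $s$ has rounded bottom). The component $R$ of $s \setminus b'$ containing the bottom is bounded by $b'$ together with segments of bottom sides of $s$, so any arc of $L^\dagger \cap s$ lying in $R$ would have both its endpoints on bottom sides and lie below $b'$, contradicting the minimality of $b'$. Thus $b'$ is lowermost in $s$, and a downward flip along $b'$ can be performed.

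For the second step, the downward flip along $b'$ is a small ambient isotopy of $L^\dagger$ supported near $b'$ that preserves positive transversality with $V$, so the intersection of $\mathcal{L}$ with the new $L^\dagger$ remains carried by the new train track $B \cap L^\dagger$ and is isotopic to $\Lambda^\dagger$. Since there is a unique way to split a large branch while preserving the carrying of a given spiraling lamination, the induced split of $\tau^\dagger$ must be the $\Lambda^\dagger$-split. Together with the first step, this shows that whenever $\tau^\dagger$ has a large branch we can perform a downward flip realizing a $\Lambda^\dagger$-split. By \Cref{lemma:maximalsplitting}, any sequence of $\Lambda^\dagger$-splits of $\tau^\dagger$ terminates in finitely many steps in a train track with no large branches; by \Cref{lemma:nolargebranchmeansspiraling} the resulting train track is spiraling. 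The main obstacle is the first step, where the structure of diamond sectors and the transversality of $L^\dagger$ with $V$ are used to guarantee the existence of a lowermost large branch.
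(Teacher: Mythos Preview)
Your approach is essentially the same as the paper's: both arguments show that any large branch lies in a diamond sector, locate a lowermost large branch there on which to perform a downward flip, and then invoke the $I$-lamination carried by $\tau^\dagger$ to conclude that finitely many flips suffice. Your termination argument via \Cref{lemma:maximalsplitting} is a bit more explicit than the paper's one-line appeal to the $I$-lamination, but the content is the same.

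There is, however, a small gap in your first step. When you argue that the region $R$ below an innermost large branch $b'$ is bounded by $b'$ and segments of bottom sides, and hence that any arc of $L^\dagger\cap s$ in $R$ must have both endpoints on bottom sides, you are implicitly assuming that no endpoint of such an arc can lie in the \emph{interior} of $R$. But components of $L^\dagger\cap s$ can terminate at points of $\partial L^\dagger\cap s$ (these are stops of $\tau^\dagger$, lying where $L$ crosses into the staircase neighborhoods $N_\pm$), and such points are in the interior of $s$. If $\partial L^\dagger$ passed through $R$, there could be arcs in $R$ that are not large branches, and your innermost large branch $b'$ would fail to be lowermost. The paper handles exactly this point with the observation that ``the part of $s$ below $b$ is disjoint from $\partial Q^\dagger$, since the orientation of $\brloc(B^\dagger)$ points outward along $\partial Q^\dagger$.'' Once you add this observation, your argument goes through.
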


\begin{proof}
We first claim that whenever we see a large branch of $\tau^\dag$, it is possible to perform a downward flip on $L^\dag$.

Suppose that $b$ is a large branch of $\tau^\dag$, and let $s$ be the sector of $B^\dag$ containing $b$. Note that the part of $s$ below $b$ is disjoint from $\del Q^\dag$, since the orientation of $\brloc(B^\dag)$ points outward along $\del Q^\dag$. Note also that $s$ is not a Möbius band or annulus by the definition of veering branched surfaces (\Cref{defn:vbs}(4)) so it must be part of a diamond sector of $B$. We can now locate a bottommost arc of $L^\dag$ in $s$ (this may or may not be $b$) and perform a downward flip.

Since $\tau$ carries the $I$-lamination $\LL^\dag_0$, the train track $\tau^\dag$ will become spiraling after finitely many downward flips.
\end{proof}

\begin{lemma}
The branched surface $B$ is obtained as the suspension of a splitting sequence representing the equivalence class $\mathscr S(B\cap R_+(Q))$.
\end{lemma}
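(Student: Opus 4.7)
The plan is to build an eventually $f$-periodic splitting sequence directly from $(B,V)$ using the surface $L^\dagger$, and then identify its equivalence class with $\mathscr S(B\cap R_+(Q))$ via \Cref{thm:splitsequnique}. By \Cref{lem:makeitspiral} we may assume $\tau^\dagger$ is spiraling and that $L^\dagger$ is still properly embedded in $Q^\dagger$, positively transverse to $V$ and to $\Gamma$. Flow $L^\dagger$ forward in $Q^\dagger$ under $\phi^\dagger$: away from the finitely many times when a sector of $B$ is traversed from bottom to top, the surface moves through a product region of $B^\dagger$ with no change of combinatorial type, and at each exceptional time a downward flip occurs, splitting the track along a single large branch. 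After time $1$ we arrive back at $f^\dagger(L^\dagger)$ with the train track $f^\dagger(\tau^\dagger)$, so we obtain a finite splitting sequence $\tau^\dagger=\tau^\dagger_0\to\cdots\to\tau^\dagger_m=f^\dagger(\tau^\dagger)$; iterating by $f^\dagger$ yields an $f^\dagger$-periodic splitting sequence on $L^\dagger$, and by construction the suspension of its periodic part is exactly $B^\dagger$.

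Next I would extend this sequence to an $f$-periodic splitting sequence of train tracks on $L$ carrying $\Lambda$. Inside the staircase neighborhoods $N_\pm$, the branched surface $B$ is, by the construction of \Cref{prop:buexist} and the way its boundary train track $T_+^\infty=B\cap R_+(Q)$ lifts to $\mathscr U_+$, a pure product $T_+^\infty\times I$. Thus each $\tau^\dagger_k$ extends uniquely to a train track $\tau_k$ on $L$ by attaching the preimage of $T_+^\infty$ (and the symmetric portion coming from $R_-$), and the sequence $(\tau_k)$ is an $f$-periodic splitting sequence whose suspension equals $B$. The boundary train track of this sequence is precisely $T_+^\infty$ by construction.

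Finally I would show that the equivalence class of $(\tau_k)$ is $\mathscr S(T_+^\infty)$. By \Cref{thm:splitsequnique} it suffices to exhibit some choice of tiling $\mathcal T$, core $K$, and initial track $\tau_K$ for which $(\tau_k)$ is a valid representative of $\mathscr S(T_+^\infty,\mathcal T,K,\tau_K)$. Take $\mathcal T=\mathcal T_{\mathcal F}$ and $K=L^\dagger$; then $\tau_0|_K=\tau^\dagger$ serves as $\tau_K$. The splits along the sweep occur exactly when the evolving surface crosses $\brloc(B)$ inside $L^\dagger$, and since each large branch encountered during the flow must be split before the sweep can continue past it, the splits between two successive passages across $\partial K_i$ comprise a maximal splitting of the current track restricted to $K_i$. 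By \Cref{lemma:differbycommutations} any two such maximal splitting sequences differ by commutations, so up to $f$-periodic commutation our sequence agrees with the iterated core-splitting sequence used to define $\mathscr S(T_+^\infty,\mathcal T_{\mathcal F},L^\dagger,\tau^\dagger)$. Combining with \Cref{lem:anytrack}, \Cref{lem:anycore}, and \Cref{lem:anytiling}, we conclude $(\tau_k)\in\mathscr S(B\cap R_+(Q))$.

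The step I expect to be the most delicate is the identification of the sweep-produced splits with core splits. The downward flip can in principle occur on any lowermost large branch in its diamond sector at the moment $L^\dagger$ crosses the appropriate triple point, whereas a core split performs the full maximal splitting of $\tau|_{K_i}$ at once; showing the two produce equivalent sequences requires that within a single passage of the sweep past a fixed core level, only disjoint large branches are split, so that \Cref{lemma:differbycommutations} applies. This should follow from the fact that, by \Cref{lem:principalcusps} and \Cref{lem:collision}, distinct large branches in the core correspond to distinct sectors of $B$, so their crossings by the sweeping $L^\dagger$ can be reordered by commutations into the canonical core-splitting order.
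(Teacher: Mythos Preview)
Your proposal has two genuine gaps.

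First, you sweep $L^\dagger$ forward under $\phi^\dagger$, the restriction of the Handel--Miller suspension semiflow. But no transversality between $\phi$ and $\brloc(B)$ has been established: in this subsection $B$ is an \emph{arbitrary} veering branched surface compatibly carrying $\mathcal L$, and after the ambient isotopy the text explicitly notes that $\mathcal L$ need no longer be in carried position with respect to $B$. The only surface known to be positively transverse to $V$ and to $\Gamma^+$ is $L_z$ itself, by virtue of its construction as the spin of an $N$-surface. Flowing it along $\phi$ carries it to other leaves of $\mathcal F$, which have no reason to meet $\brloc(B)$ transversely; so there is no guarantee that $\phi^\dagger_t(L^\dagger)\cap B$ is a train track, let alone that the movie consists of splits at isolated times.

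Second, your extension over the staircases is circular. You assert that inside $N_\pm$ ``the branched surface $B$ is, by the construction of \Cref{prop:buexist}\dots, a pure product $T_+^\infty\times I$.'' But $B$ is not assumed to arise from that construction---showing that it does is precisely the point of the lemma. You cannot assume a product structure in the staircases in order to extend the splitting sequence from $L^\dagger$ to $L$.

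The paper avoids both problems by working on all of $\mathring Q\cut L_z$ at once and constructing a height function $h\colon \mathring Q\cut L_z\to[0,1]$ which is simultaneously monotone along directed paths of $\Gamma^+$ and along trajectories of $V$. The key input is that $\Gamma|_{\mathring Q\cut L_z}$ is acyclic (since $L_z$ pairs positively with every $\Gamma$-cycle via \Cref{thm:folcone}), so $h$ can be defined first on $\Gamma$, then extended over the sectors of $B$, and finally over the complementary regions using their descriptions as $\u$-cusped tori and $\u$-cusped products together with the special neighborhood $N$ from \Cref{subsec:dualcelldecomp}. The level sets $h^{-1}(t)$ then furnish the sweep; monotonicity along $\Gamma^+$ is exactly what forces the movie $h^{-1}(t)\cap B$ to be a splitting sequence whose suspension is $B$. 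No flow---neither $\phi$ nor $V$---is used to produce the sweep.
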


\begin{proof} 
Recall from \Cref{lemma:itsaproduct} (and the subsequent discussion) that $\mr Q \cut L_z$ is homeomorphic to a product $L_z \times [0,1]$. The strategy of the proof, similar to our proof of \Cref{thm:closeduniqueness}, is to construct a `height' function $h: \mr Q \cut L_z \to [0,1]$ such that:
\begin{enumerate}
    \item $h=0$ on the bottom face $L_z\times\{0\}$ and $h=1$ on the top face $L_z\times\{1\}$
    \item $h$ is monotonically increasing on the trajectories of $V|_{\mr Q \cut L_z}$ 
    \item $h$ is monotonically increasing on the directed paths of $\Gamma^+|_{\mr Q \cut L_z}$
\end{enumerate}
Given such a function $h$, the fibers $h^{-1}(t)$ define an isotopy from $L_z\times\{1\}$ to $L_z\times\{0\}$.
Item (3) guarantees that the intersections $h^{-1}(t)\cap B$ give a movie of train tracks which undergo splits at each $t$ for which $h^{-1}(t)$ passes through a triple point or source of $\brloc (B)$. Each train track $h^{-1}(t)\cap B$ carries $\mc L \cap h^{-1}(t)$ by \Cref{lem:ttcarriesHMlam}. 
Moreover the sequence of splits, taken together, gives a core split of $\tau|_{L^\dag_1}$. Therefore it determines a representative of $\mathscr S(B\cap R_+(Q))$.

To construct $h$, observe that by the construction of $L_z$, $\Gamma|_{\mr Q \cut L_z}$ has no directed cycles. Hence we can first define $h$ on $\Gamma|_{\mr Q \cut L_z}$ to satisfy (1) and (3). Then we can extend $h$ over the 2-cells of $\mr B \cut L_z$ to satisfy (1) and (2).

Finally we have to extend $h$ over each complementary region $C$ of $\mr B \cut L_z$ in $\mr Q \cut L_z$. If $C$ lies in a solid cusped torus component $C'$ of $Q \cut B$, then this is straightforward since $L_z$ intersects $C'$ in meridional disks. If $C$ lies in a cusped torus shell component then extending $h$ over $C$ is similarly straightforward.

If $C$ lies in a cusped product piece $C'$, we make use of the neighborhood $N$ of $B$ constructed in \Cref{subsec:dualcelldecomp}. Recall the key feature of $N$ is that along each component of $\partial N \cap C'$, $V$ points into $B$. For $N$ small enough, we can extend $h$ into $N \cap C$ so that (2) (and (1)) still hold. Now using the fact that the trajectory of every point on $C \cap (L_z \times \{0\})$ meets $N$ or $L_z \times \{1\}$ in finite time, we can extend $h$ into $C \cut N$ as well.
\end{proof}

In summary, the branched surface $B$ is obtained from this paper's main construction. Since that construction depends only on the choice of the boundary train track (\Cref{cor:uniqueuptotraintrack}), we have proven the following.

\begin{theorem} [Veering branched surface uniqueness]\label{thm:hmvbsunique}
Let $Q$ be an atoroidal sutured manifold with depth one foliation $\mathcal{F}$, and let $\mathcal{L}$ be the unstable Handel-Miller lamination associated to $\mathcal{F}$. Any veering branched surface $B$ compatibly carrying $\mc L$ is determined up to isotopy by $B\cap R_+(Q)$.
\end{theorem}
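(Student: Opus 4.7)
The plan is to show that any veering branched surface $B$ compatibly carrying $\mc L$ must arise from the core-splitting construction of \Cref{sec:hmvbs} with input data $B\cap R_+(Q)$; then \Cref{cor:uniqueuptotraintrack} immediately yields that $B$ is determined up to isotopy by $B\cap R_+(Q)$. The bulk of the work is therefore to produce, intrinsically from $(B,V)$, a fiber surface $L_z\subset \mr Q$ transverse to $V$ and to $\Gamma^+$, together with a height function on $\mr Q\cut L_z$ whose level sets realize $B$ as the suspension of a core-split splitting sequence fully carrying $\Lambda$.

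First I would use the foliation cone identification \Cref{thm:folcone} to conclude that $z:=[\F]\in H^1(Q)$ pairs positively with every directed cycle of $\Gamma$, and hence (by \Cref{prop:extdualgraphnonewcycles}) with every directed cycle of an extended dual graph $\Gamma^+$. Then \Cref{prop:decompsurface} supplies a properly embedded surface $S_z$ representing $z$, positively transverse to $V$ and to $\Gamma^+$. A Poincaré--Hopf argument on sectors of $B\cut S_z$ (using \Cref{prop:dualgraphcarries} to close up forward trajectories into $\Gamma$-cycles) shows that every forward $V$-trajectory reaches $S_z\cup R_+(Q)$, so $Q\cut S_z$ has a product structure in the sense of \cite{Alt12}; spinning $S_z$ around $R_\pm(Q)$ produces a fiber $L_z$ of a fibration $\mr Q\to S^1$ with an associated depth-one foliation $\F_z$. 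Since $[\F_z]=[\F]$, by \cite{CC93} an ambient isotopy smooth on $\mr Q$ carries $\F$ to $\F_z$ and $L$ to $L_z$, and can be chosen to move points along $\phi$-orbits near $R_\pm(Q)$. I transport $\mc L$ along this isotopy, but deliberately leave $B$ in place.

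The hard step is to verify that, after this isotopy, the train track $\tau:=B\cap L_z$ carries the positive Handel--Miller lamination $\Lambda$. The subtle issue is that $\mc L$ may no longer be in carried position with respect to $B$. To handle this I would cut out staircase neighborhoods $N_\pm$ of $R_\pm(Q)$ determined by a tiling of the end-cycles and work in $Q^\dagger=Q\cut N_\pm$, where $B^\dagger$ fully carries an isotoped copy $\mc L_0^\dagger$ of $\mc L^\dagger$. Lifting to the $\Z$-cover $\wt{Q^\dagger}$ associated to $L^\dagger$, the carried lamination $\wt{\mc L_0^\dagger}$ meets $\wt{L^\dagger}$ in an $I$-lamination; a $\pi_1$-injectivity argument together with the absence of nullhomotopic carried loops (\Cref{lem:noclosedcurves}, proved via index counting on complementary regions of $\tau$) shows the original $\wt{\mc L^\dagger}$ meets $\wt{L^\dagger}$ in an isotopic $I$-lamination, leaf-by-leaf rel boundary. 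Projecting down gives that $\tau^\dagger$ fully carries $\Lambda^\dagger$, and hence $\tau$ fully carries $\Lambda$.

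Finally I would construct a height function $h\colon \mr Q\cut L_z\to[0,1]$ monotonic on $V$-trajectories and on $\Gamma^+$-paths, after first performing a finite sequence of downward flips (supported in neighborhoods of lowermost large branches) to make $\tau^\dagger$ spiraling; a finite sequence of flips suffices because $\Lambda^\dagger$ is an $I$-lamination. Such an $h$ exists: $\Gamma|_{\mr Q\cut L_z}$ is acyclic so $h$ can be built on $\Gamma^+$ first, then extended across sectors, and finally across $\u$-cusped torus and $\u$-cusped product complementary pieces using the standard neighborhood constructed in \Cref{subsec:dualcelldecomp}, whose boundary the vector field $V$ crosses inward. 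The level sets $h^{-1}(t)$ sweep out a splitting sequence of train tracks from $\tau|_{L^\dag_1}$ back to $\tau$, and by construction this sequence represents the equivalence class $\mathscr S(B\cap R_+(Q))$. Combined with \Cref{cor:uniqueuptotraintrack} (or equivalently \Cref{lemma:vbsunique-splitseq} together with \Cref{thm:splitsequnique}), this gives that $B$ is determined up to isotopy by $B\cap R_+(Q)$.
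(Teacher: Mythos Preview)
Your proposal is correct and follows essentially the same route as the paper: it uses \Cref{thm:folcone} and \Cref{prop:decompsurface} to produce a fiber surface $L_z$ transverse to $V$ and $\Gamma^+$, passes through the $\Z$-cover argument (\Cref{lem:noclosedcurves}, \Cref{lem:itsanilam}, \Cref{lem:ttcarriesHMlam}) to show $\tau$ carries $\Lambda$, applies downward flips (\Cref{lem:makeitspiral}), and then builds the height function exhibiting $B$ as a suspension in the class $\mathscr S(B\cap R_+(Q))$ before invoking \Cref{cor:uniqueuptotraintrack}. The only minor inaccuracy is that the step showing forward $V$-trajectories meet $S_z\cup R_+$ is a compactness/closing-up argument rather than a Poincar\'e--Hopf argument, but you cite the correct ingredient (\Cref{prop:dualgraphcarries}).
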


\subsection{Shifting moves} \label{subsec:shiftmoves}

As promised in the introduction of this section, we will also explain how the veering branched surfaces with different boundary train tracks in \Cref{thm:hmvbsunique} are related to each other. To do so, we introduce some moves that one can use to modify veering branched surfaces in general, then claim that the veering branched surfaces in \Cref{thm:hmvbsunique} are exactly related by these moves.

Let $B$ be a veering branched surface with boundary train track $\beta$.

\begin{definition} \label{defn:sstriangle}
A \textbf{shift-source triangle} is a triangle $t$ carried by $B$ such that:
\begin{enumerate}
    \item One side of $t$ is a mixed branch $b$ of $\beta$.
    \item The other two sides of $t$ lie along components of $\brloc(B)$.
\end{enumerate}
Note that by (1), among the two sides of $t$ lying along $\brloc(B)$, one side is cooriented outwards while the other is cooriented inwards, let these two sides be $a$ and $c$ respectively. Then $a$ must be oriented from $c$ to $b$, while $c$ must contain a source since $B$ is veering. 
\end{definition}

We claim that a shift-source triangle, with the above notation, is the union of a number of adjacent diamonds (unscalloped) with a bottom side along $c$ and a  diamond (unscalloped) with rounded bottom whose bottom side is on $c$. See \Cref{fig:shiftingmoves} top left. This follows from \Cref{prop:vbssectors} concerning the structure of sectors of veering branched surfaces as follows. Consider the sector $b$ lies on. If it is a diamond with rounded bottom, then the shift-source triangle only consists of this sector and the sector is not scalloped. If the sector is a diamond without rounded bottom, then the sector is not scalloped, one of its top sides lies on $a$ and one of its bottom sides lies on $c$. We call the other bottom side $b'$ and repeat the argument on $b'$. Since there are finitely many triple points on $c$, this process terminates eventually.

When $b$ is embedded, we define the \textbf{shifting move along a shift-source triangle $t$} to be the operation of dynamically splitting $c$ across a small neighborhood of $t$. The effect on the boundary train track is a shift on the branch $b$. We retain the source orientations on the components of the branch locus after splitting, so that the resulting branched surface $B'$ satisfies the triple point condition of being veering (\Cref{defn:vbs}(1)). See \Cref{fig:shiftingmoves} top.

\begin{figure}
    \centering
    \fontsize{6pt}{6pt}\selectfont
    \resizebox{!}{8cm}{\import{basecase-fig}{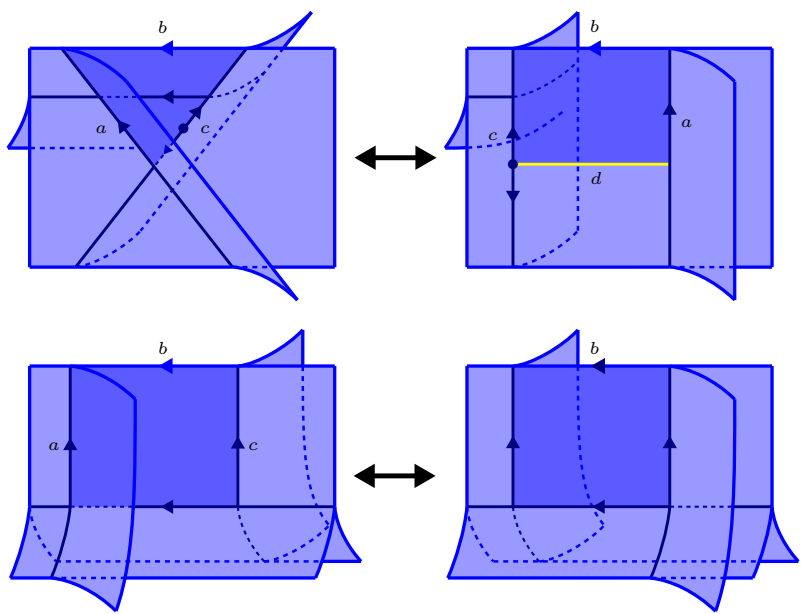}}
    \caption{Top: Shifting along a shift-source triangle or rectangle. Bottom: Shifting along a shift rectangle.}
    \label{fig:shiftingmoves}
\end{figure}

We note that a shift-source triangle $t$ might not be embedded; indeed, it could pass twice through a diamond sector $s$ (see \Cref{fig:shiftlift} left). While $c$ intersects itself in this situation, the component of $\del_vN(B)$ corresponding to $c$ is embedded. This gives a canonical relative positioning to the parts of a lift of $t$ to $N(B)$ that project to $s$ such that the lift embeds in $N(B)$. Hence there is no ambiguity in what we mean by dynamically splitting along $s$. See \Cref{fig:shiftlift} middle and right, and recall the discussion of dynamic splittings at the end of \Cref{subsec:dynbranchedsurfaces} if necessary.

\begin{figure}
    \centering
    \fontsize{6pt}{6pt}\selectfont
    \resizebox{!}{5.4cm}{
\begingroup%
  \makeatletter%
  \providecommand\color[2][]{%
    \errmessage{(Inkscape) Color is used for the text in Inkscape, but the package 'color.sty' is not loaded}%
    \renewcommand\color[2][]{}%
  }%
  \providecommand\transparent[1]{%
    \errmessage{(Inkscape) Transparency is used (non-zero) for the text in Inkscape, but the package 'transparent.sty' is not loaded}%
    \renewcommand\transparent[1]{}%
  }%
  \providecommand\rotatebox[2]{#2}%
  \newcommand*\fsize{\dimexpr\f@size pt\relax}%
  \newcommand*\lineheight[1]{\fontsize{\fsize}{#1\fsize}\selectfont}%
  \ifx\svgwidth\undefined%
    \setlength{\unitlength}{416.43266512bp}%
    \ifx\svgscale\undefined%
      \relax%
    \else%
      \setlength{\unitlength}{\unitlength * \real{\svgscale}}%
    \fi%
  \else%
    \setlength{\unitlength}{\svgwidth}%
  \fi%
  \global\let\svgwidth\undefined%
  \global\let\svgscale\undefined%
  \makeatother%
  \begin{picture}(1,0.35849061)%
    \lineheight{1}%
    \setlength\tabcolsep{0pt}%
    \put(0,0){\includegraphics[width=\unitlength,page=1]{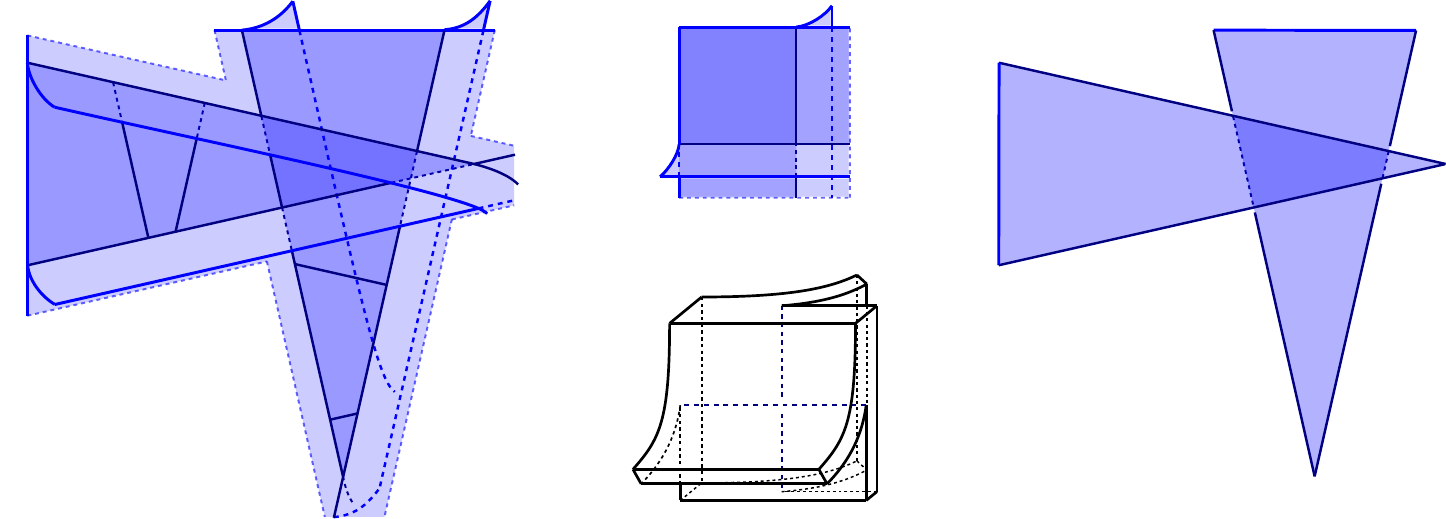}}%
    \put(0.23070855,0.34674119){\color[rgb]{0,0,0}\makebox(0,0)[lt]{\lineheight{1.25}\smash{\begin{tabular}[t]{l}$b$\end{tabular}}}}%
    \put(0.18175869,0.30357984){\color[rgb]{0,0,0}\makebox(0,0)[lt]{\lineheight{1.25}\smash{\begin{tabular}[t]{l}$a$\end{tabular}}}}%
    \put(0.28272961,0.30358041){\color[rgb]{0,0,0}\makebox(0,0)[lt]{\lineheight{1.25}\smash{\begin{tabular}[t]{l}$c$\end{tabular}}}}%
    \put(-0.00109046,0.24202081){\color[rgb]{0,0,0}\makebox(0,0)[lt]{\lineheight{1.25}\smash{\begin{tabular}[t]{l}$b$\end{tabular}}}}%
    \put(0.03830065,0.29187023){\color[rgb]{0,0,0}\makebox(0,0)[lt]{\lineheight{1.25}\smash{\begin{tabular}[t]{l}$a$\end{tabular}}}}%
    \put(0.03830304,0.18606084){\color[rgb]{0,0,0}\makebox(0,0)[lt]{\lineheight{1.25}\smash{\begin{tabular}[t]{l}$c$\end{tabular}}}}%
    \put(0.55486823,0.2951806){\color[rgb]{0,0,0}\makebox(0,0)[lt]{\lineheight{1.25}\smash{\begin{tabular}[t]{l}$c$\end{tabular}}}}%
    \put(0.50342343,0.2440436){\color[rgb]{0,0,0}\makebox(0,0)[lt]{\lineheight{1.25}\smash{\begin{tabular}[t]{l}$c$\end{tabular}}}}%
  \end{picture}%
\endgroup%
}
    \caption{There is a canonical way to lift a shift-source triangle to $N(B)$ even if it passes through a diamond sector twice.}
    \label{fig:shiftlift}
\end{figure}

We demonstrate that $B'$ is a veering branched surface by verifying the rest of the conditions in \Cref{defn:vbs}: The topology of the complementary regions is not changed, hence $B'$ is still very full. As noted above, the boundary train track of $B'$ is obtained from that of $B$ via a shift, hence is still efficient and has no large branches. The dynamic orientations of the $\u$-faces are not changed hence \Cref{defn:vbs}(3) is preserved. 

For \Cref{defn:vbs}(4), suppose $B'$ contained an annulus/Möbius band sector with both boundary components cooriented inwards. Then $B$ would carry an annulus/Möbius band with boundary components along branch loops of $\brloc(B)$ that are cooriented inwards. But then the dynamic half-plane $D(\gamma)$ determined by the one of these branch loops would not be homeomorphic to a half-plane, contradicting \Cref{prop:dynamicplanes}. Finally, $B'$ does not carry any tori or Klein bottles otherwise $B$ would carry such a closed surface as well. 

This shows that the shifting move along a shift-source triangle is an operation that transforms a veering branched surface into another veering branched surface.

\begin{definition} \label{defn:ssrectangle}
A \textbf{shift-source rectangle} is a rectangle $r$ carried by $B$ such that:
\begin{enumerate}
    \item One side of $r$ is a mixed branch $b$ of $\beta$.
    \item The two sides of $r$ adjacent to $b$ lie along components of $\brloc(B)$. As in \Cref{defn:sstriangle}, note that by (1), one of these sides is cooriented outwards while the other is cooriented inwards, let these two sides be $c$ and $a$ respectively.
    \item The side of $r$ opposite to $b$, which we call $d$, lies in the interior of a sector.
    \item The corner formed by $c$ and $d$ is a source on $\brloc(B)$.\qedhere
\end{enumerate}
\end{definition}

Similarly to the case of shift-source triangles, one can use \Cref{prop:vbssectors} to deduce that a shift-source rectangle, with the above notation, is a union of a number of adjacent (unscalloped) diamonds with a bottom side along $a$ and a rectangular neighborhood of a non-scalloped top side of a sector with a bottom side along $a$. See \Cref{fig:shiftingmoves} top right. 

Within this last sector, $d$ is an interval connecting an interior point of a bottom side to a source on a top side. Up to modifying the vector field $V$ locally, we can always assume that $V$ is transverse to $d$ pointing out of $r$.

When $b$ is embedded and with the vector field $V$ modified as described, we define the \textbf{shifting move along a shift-source rectangle $r$} to be the operation of splitting $c$ across a small neighborhood of $r$. The effect on the boundary train track is a shifting move on the branch $b$. See the top of \Cref{fig:shiftingmoves}. We retain the source orientations on the components of the branch locus after splitting. By reasoning similarly as above, we see that the resulting branched surface is veering.

As in the case of shift-source triangles, shift-source rectangles might not be embedded, but the shifting move is still well-defined.

Shifting moves along shift-source triangles are inverse to shifting moves along shift-source rectangles, in the following sense: After shifting along a shift-source triangle $t$, there is a natural shift-source rectangle $r$, shifting along which recovers the original veering branched surface. After shifting along a shift-source rectangle $r$, there is a natural shift-source triangle $t$, shifting along which recovers the original veering branched surface. See \Cref{fig:shiftingmoves} top row.

\begin{definition} \label{defn:srectangle}
A \textbf{shift rectangle} is a rectangle $r$ carried by $B$ such that:
\begin{enumerate}
    \item One side of $r$ is a mixed branch $b$ of $\beta$.
    \item The two sides of $r$ adjacent to $b$ lie along components of $\brloc(B)$. As in \Cref{defn:sstriangle}, note that by (1), one of these sides is cooriented outwards while the other is cooriented inwards, let these two sides be $a$ and $c$ respectively.
    \item The side of $r$ opposite to $b$, which we call $d$, lies along a component of $\brloc(B)$ that is cooriented into $r$.\qedhere
\end{enumerate}
\end{definition}

As above, one can use \Cref{prop:vbssectors} to deduce that a shift rectangle, with the above notation, is a union of a number of adjacent (unscalloped) diamonds with a bottom side along $c$. See \Cref{fig:shiftingmoves} bottom.

When $b$ is embedded, we define the \textbf{shifting move along $r$} to be the operation of splitting $B$ across a neighborhood of $r$. The effect on the boundary train track is a shifting move on the branch $b$. See \Cref{fig:shiftingmoves} bottom. As above, we retain the source orientations on the components of the branch locus after splitting. By similar reasoning, we see that the resulting branched surface is veering. Also as above, shift rectangles might not be embedded but the operation is still well-defined.

Shifting moves along shift rectangles are inverse to themselves, in the following sense: After shifting along a shift rectangle $r$, there is a natural shift rectangle $r'$, shifting along which recovers the original veering branched surface. See \Cref{fig:shiftingmoves} bottom row.

\begin{proposition} \label{prop:vbsshift}
For every mixed branch $b$ embedded in the boundary train track $\beta$, there is a shift-source triangle, a shift-source rectangle, or a shift rectangle with a side along $b$.
\end{proposition}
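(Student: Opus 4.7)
The plan is to construct the desired triangle or rectangle incrementally, starting from $b$ and extending into $B$ using the combinatorial structure of sectors described in \Cref{prop:vbssectors}. Let $v_l$ and $v_s$ denote the large and small endpoints of $b$ respectively.

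First I would observe that the sector of $B$ whose top boundary contains $b$ must be a unique diamond $s_0$: annulus and M\"obius band sectors meet $R_+$ either along circular branches or along branches carrying sources, neither of which is possible for the mixed branch $b$. Moreover $b$ must be an \emph{outermost} top side of $s_0$, since \Cref{prop:vbssectors} forces the non-outermost top sides of a diamond to carry sources. A direct calculation shows that the source orientation on a mixed branch runs from its large end to its small end; combined with the fact that outermost top sides are oriented toward top vertices, this identifies $v_s$ with the top vertex of $s_0$ and $v_l$ with the adjacent side vertex. Consequently, at $v_l$ a bottom side $c_0 \subset \brloc(B)$ of $s_0$ emanates, cooriented into $s_0$; at $v_s$ a component $a_0 \subset \brloc(B)$ emanates into $\intr(Q)$, lying on a sector adjacent to $s_0$ across the top-vertex cusp and (by the triple-point compatibility \Cref{defn:vbs}(1)) cooriented out of $s_0$. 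Thus $b$, $a_0$, $c_0$ already carry the correct coorientation pattern for the sides of any of the three target configurations.

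The construction then extends $a$ and $c$ by walking them along $\brloc(B)$ through a chain of diamond sectors, following the patterns dictated by \Cref{prop:vbssectors}. At each step, whichever of $a$ or $c$ currently terminates at the bottom vertex of a sector is extended into an adjacent diamond along a shared branch locus edge, maintaining consistency of coorientations. At every stage we check for a closure condition corresponding to the three possible outcomes: (i) the extension of $a$ lands on a source sitting on the extension of $c$, which happens precisely when we reach a diamond with rounded bottom whose bottom source is in the appropriate position---yielding a shift-source triangle; (ii) the extension of $a$ terminates at a source on a top side in the interior of a sector, and a short arc $d$ in that sector (positively transverse to $V$) connects this source back to the extension of $c$---yielding a shift-source rectangle; or (iii) $a$ and $c$ meet along a further component of $\brloc(B)$ cooriented into the developing region---yielding a shift rectangle.

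The main obstacle is termination: we must rule out the extension continuing forever without reaching any of these three closures. Since $B$ has only finitely many sectors (the ambient 3-manifold is compact), an infinite extension would have to revisit some sector, producing a closed chain. The only way such a chain can close consistently with the coorientation bookkeeping is by assembling into an immersed torus or Klein bottle carried by $B$, which is forbidden by \Cref{defn:vbs}(5). A refinement of this argument, modelled on \Cref{lemma:dynamicplanepieces}, additionally prevents infinite runs through transient annulus sectors and rules out pathological interleavings of diamond and annulus pieces. Together these constraints force the extension to terminate in one of the three prescribed configurations, completing the proof.
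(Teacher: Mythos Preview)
Your approach—iterating from $b$ downward through a chain of diamond sectors using the structure of \Cref{prop:vbssectors}—is essentially the same as the paper's. The paper likewise fixes the two components of $\brloc(B)$ at the endpoints of $b$ (calling them $a$ and $c$) and at each stage checks for the three termination conditions you list.

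The genuine gap is in your termination argument. The paper's argument is both simpler and correct: $a$ and $c$ are fixed \emph{components} of $\brloc(B)$, namely branch arcs with endpoints on $R_+$, each carrying a source orientation and hence containing a source. At each step of the iteration one moves one triple point further along $a$ and along $c$. Since a branch component contains only finitely many triple points, the process must terminate: either by reaching the source of $a$ within the current sector (giving a shift-source rectangle), or the source of $c$ (giving a shift-source triangle), or earlier by $a$ and $c$ following different sheets across the opposite bottom edge (giving a shift rectangle). No appeal to \Cref{defn:vbs}(5) or to \Cref{lemma:dynamicplanepieces} is needed.

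Your proposed termination (``revisit a sector $\Rightarrow$ closed chain $\Rightarrow$ carried torus or Klein bottle'') does not work as stated. The paper explicitly observes, in the discussion following \Cref{defn:sstriangle} and \Cref{fig:shiftlift}, that a shift-source triangle may pass through a single diamond sector twice; revisiting a sector is therefore permitted and does not by itself produce a carried closed surface. The invocation of \Cref{lemma:dynamicplanepieces} is also misplaced: that lemma concerns complementary pieces of the semiflow graph on dynamic planes, not the chain of sectors you are tracking here.

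Two minor points on the setup. Your $a_0$ is in fact the other top side of $s_0$ meeting $b$ at the top vertex—that is, the initial segment of the branch arc $a$—so it lies on $\partial s_0$ itself rather than ``on a sector adjacent to $s_0$.'' Also, in the iteration both $a$ and $c$ cross the opposite bottom side simultaneously into the next sector (one checks whether they land on the same sheet), rather than extending ``whichever of $a$ or $c$ currently terminates at the bottom vertex.''
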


\begin{proof}
The strategy of this proof is to move iteratively downward from $b$ into $B$ using  \Cref{prop:vbssectors}, much like the reasoning employed above.

Let $a$ and $c$ be the components of $\brloc(B)$ on which the outward and inward pointing endpoints of $b$ lie, respectively.
Consider the sector $s_1$ in whose boundary $b$ lies. If $a$ has a source on $s_1$, then one can pick a shift-source rectangle for $b$ within $s_1$. If $c$ has a source on $s_1$ and $a$ does not, then $s_1$ is a diamond with rounded bottom and thus a shift triangle for $b$.  If neither $a$ nor $c$ has a source on $s_1$, then $s_1$ is an unscalloped diamond. Let $b_1$ be the side of $s_1$ opposite to $b$, which must be cooriented into $s_1$. Locally there are two sheets of $B$ converging along $b_1$. 
If $a$ and $c$ do not follow the same sheet when crossing $b_1$, then $s_1$ is a shift rectangle.

If $a$ and $c$ follow the same sheet of $B$ after crossing $b_1$, consider the sector $s_2$ meeting $s_1$ along $b_1$ and containing $a$ and $c$ in its boundary. 
We can perform the above analysis on $s_2$: if $s_1\cup s_2$ is not a a shift-source triangle, shift-source rectangle, or shift rectangle, then there is a sector $s_3$ containing $a$ and $c$ in its boundary on which we can continue our analysis. This process terminates when we reach the source of either $a$ or $c$, which is guaranteed because each branch component has finitely many triple points.
\end{proof}

This proposition implies that given any embedded mixed branch $b$ of $\beta$, we can construct a new veering branched surface $B'$ with boundary train track $\beta'$ given by $\beta$ with a shifting move done along $b$. If the original veering branched surface $B$ carried a lamination $\Lambda$, then $B'$ carries $\Lambda$ as well, since the shifting moves are defined by dynamic splittings. In particular we have the following:

\begin{proposition}
Let $B$ be a veering branched surface on a sutured manifold $Q$ with boundary train track $\beta$. Suppose $\beta'$ is another train track on $R_+(Q)$ differing from $\beta$ by shifts. Then there exists a veering branched surface $B'$ on $M$ with boundary train track $\beta'$. Furthermore, if $B$ carries a lamination $\Lambda$ in $M$, then $B'$ also carries $\Lambda$.
\end{proposition}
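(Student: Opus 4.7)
The plan is to reduce the statement to repeated application of the shifting moves from \Cref{subsec:shiftmoves}. First I would decompose the sequence of shifts taking $\beta$ to $\beta'$ into elementary shifts
\[
\beta = \beta_0 \to \beta_1 \to \cdots \to \beta_n = \beta',
\]
where each step is a single shift along one mixed branch $b_i$ of $\beta_i$. By induction on $n$, it then suffices to show that given any veering branched surface $B_i$ with boundary train track $\beta_i$, there is a veering branched surface $B_{i+1}$ whose boundary train track is $\beta_{i+1}$.

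To carry out a single step, I would first handle the case that $b_i$ is embedded in $\beta_i$. In that case \Cref{prop:vbsshift} produces either a shift-source triangle, a shift-source rectangle, or a shift rectangle with one side along $b_i$. Performing the corresponding shifting move from \Cref{subsec:shiftmoves} produces a new branched surface $B_{i+1}$ which, as verified following each of the three definitions, is again veering; moreover, the effect on the boundary train track is precisely the shift along $b_i$, so $B_{i+1}$ has boundary train track $\beta_{i+1}$. If $b_i$ happens not to be embedded in $\beta_i$, the shifting move is still well-defined: as explained in the discussion accompanying \Cref{fig:shiftlift}, even when the relevant triangle or rectangle passes more than once through a sector of $B_i$, the corresponding component of $\partial_v N(B_i)$ is embedded, and this provides a canonical lift of the triangle or rectangle to $N(B_i)$ along which we may dynamically split. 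The resulting branched surface is veering by the same sector-by-sector verification as in the embedded case.

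For the final assertion, I observe that each of the three flavors of shifting move is defined by a dynamic splitting of $B_i$. As recalled at the end of \Cref{subsec:dynbranchedsurfaces}, dynamic splitting of a branched surface carrying a lamination $\Lambda$ produces a branched surface that still carries $\Lambda$, simply because one is cutting the standard neighborhood along a surface transverse to the ties and then recollapsing. Consequently, if $B$ carries $\Lambda$ then so does $B_i$ for every $i$, and in particular $B' = B_n$ carries $\Lambda$.

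The only genuinely delicate point is the non-embedded case of the single shift, since all of \Cref{defn:sstriangle}, \Cref{defn:ssrectangle}, and \Cref{defn:srectangle} and their associated moves were stated under the hypothesis that the mixed branch $b$ is embedded. I would therefore spend a little time verifying carefully that the lift-to-$N(B_i)$ prescription illustrated in \Cref{fig:shiftlift} extends uniformly to all three kinds of shift-region, so that \Cref{prop:vbsshift} can be applied in every case; once that bookkeeping is done, the inductive scheme above completes the proof.
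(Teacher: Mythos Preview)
Your approach is essentially the same as the paper's: decompose into elementary shifts, apply \Cref{prop:vbsshift} at each step, and note that dynamic splittings preserve carried laminations. The paper's proof is exactly this, in two sentences.

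One small point of confusion: you spend effort on the case where the mixed branch $b_i$ is not embedded, but this case does not arise. By the definition of a train track, branches are $C^1$ embedded arcs, so a mixed branch of $\beta_i$ is automatically embedded in $R_+$. The discussion around \Cref{fig:shiftlift} concerns the possible non-embeddedness of the shift-source \emph{triangle or rectangle} in $B$ (it may pass twice through the same sector), not of $b$ itself; the paper already addresses that when defining the moves. So the ``delicate point'' you flag is a non-issue, and your inductive scheme goes through without further bookkeeping.
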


\begin{proof}
Each shift in the sequence of shifts from $\beta$ to $\beta'$ is performed on an embedded mixed branch. Hence by \Cref{prop:vbsshift} there exists a (2-dimensional) shifting move as defined above. Each of these moves preserves the property of being a veering branched surface, so we obtain a veering branched surface with boundary track $\beta'$.
Furthermore, as reasoned above, if $B$ carries a lamination $\Lambda$ in $M$, then $B'$ also carries $\Lambda$.
\end{proof}

By applying our uniqueness result \Cref{thm:hmvbsunique}, we obtain our goal of this subsection:

\begin{corollary} \label{cor:hmvbsshifts}
Let $f:L \to L$ be an endperiodic map, $Q$ be the compactified mapping torus with depth one foliation $\mathcal{F}$, and $\mathcal{L}$ be the unstable Handel-Miller lamination on $Q$. Then if $B$ and $B'$ are two veering branched surfaces compatibly carrying $\mathcal{L}$, then they are related by shifting moves.
\end{corollary}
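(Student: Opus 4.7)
The plan is to leverage the uniqueness result \Cref{thm:hmvbsunique}, which says that a veering branched surface compatibly carrying $\mathcal{L}$ is determined up to isotopy by its boundary train track on $R_+(Q)$. Consequently, it suffices to produce a finite sequence of shifting moves carrying $B$ to a veering branched surface whose boundary train track equals $\beta' := B' \cap R_+(Q)$; the result then follows by applying \Cref{thm:hmvbsunique} to compare this final branched surface with $B'$.

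To set up the 2-dimensional input to this argument, I would first analyze the boundary train tracks $\beta := B \cap R_+$ and $\beta'$. By \Cref{defn:vbs}(2) they are efficient and have no large branches, so by \Cref{lemma:nolargebranchmeansspiraling} each carries a canonical spiraling structure (determined on circular sinks by the dynamic orientations of adjacent $\u$-faces). They both fully carry the consistent spiraling lamination $\Lambda_+^\infty = \mathcal{L} \cap R_+$: consistency and the spiraling property come from \Cref{lem:bdylams}, and full carrying is inherited from the hypothesis that $B$ and $B'$ fully carry $\mathcal{L}$. Moreover, since a very full veering branched surface is disjoint from $\gamma$ (see \Cref{def:veryfull}), neither $\beta$ nor $\beta'$ has stops. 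This makes the $\Lambda$-compatibility condition of \Cref{def:lambdacomp} vacuous, so $\beta$ and $\beta'$ are trivially $\Lambda_+^\infty$-compatible.

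Next I would apply \Cref{lem:diffbyshifts}(a) to produce, up to isotopy, a finite sequence of shifts on mixed branches transforming $\beta$ into $\beta'$. For each such shift, \Cref{prop:vbsshift} supplies either a shift-source triangle, a shift-source rectangle, or a shift rectangle whose boundary includes the given mixed branch, and performing the associated 3-dimensional shifting move gives a new veering branched surface whose boundary train track is the result of the 2-dimensional shift. Iterating, I obtain a veering branched surface $\widetilde{B}$ with $\widetilde{B} \cap R_+ = \beta'$. Because each shifting move is performed by a localized dynamic splitting that does not alter the lamination carried, the combinatorics of complementary regions, or the dynamic orientations on faces and circular sinks, $\widetilde{B}$ still compatibly carries $\mathcal{L}$ in the sense of \Cref{defn:compatiblycarry}. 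Then \Cref{thm:hmvbsunique} gives that $\widetilde{B}$ is isotopic to $B'$, completing the proof.

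The main technical obstacle I anticipate is verifying at each intermediate step that the mixed branch being shifted is embedded so that \Cref{prop:vbsshift} applies cleanly; one may need to perform a small preliminary isotopy of $\beta$ (using the isotopy already furnished by \Cref{lem:diffbyshifts}(a)) to ensure this, and keep track of the fact that the 2-dimensional shifts in \Cref{lem:diffbyshifts}(a) are stated only up to ambient isotopy on $R_+$. A secondary point, easily dispatched but worth confirming explicitly, is that the preservation of conditions (2) and (3) of \Cref{defn:compatiblycarry} under a shifting move follows because the move does not change any circular sink component of the boundary train track nor the $\u$-faces of complementary regions, only redistributing branches among existing sectors.
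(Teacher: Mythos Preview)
Your proposal is correct and follows essentially the same approach as the paper: reduce to the boundary train tracks via \Cref{thm:hmvbsunique}, use \Cref{lem:diffbyshifts}(a) to relate $\beta$ and $\beta'$ by shifts, and lift each shift to a 3-dimensional shifting move via \Cref{prop:vbsshift}. You actually spell out more than the paper does---in particular the application of \Cref{lem:diffbyshifts} (which the paper only alludes to) and the verification that \emph{compatible} carrying is preserved under shifting moves (which the paper's preceding proposition states only for ordinary carrying)---so your write-up would make a fine expanded version of the paper's one-line proof.
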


\section{Examples} \label{sec:eg}

In general it is difficult to explicitly describe the Handel-Miller lamination of an endperiodic map $f$. However, by iterating $f$ and observing the images of junctures, it is sometimes possible by inspection to see that the iterated images are accumulating on a lamination carried by a specific endperiodic train track, as in \cite[Example 4.13]{CCF19}. Then one can use this track to find an $f$-periodic splitting sequence of endperiodic train tracks. We now present some examples of periodic splitting sequences of endperiodic train tracks carrying Handel-Miller laminations, which give rise to veering branched surfaces using the methods of this paper. Since the intent of this section is simply to help give intuition for our methods, we suppress the work of actually drawing the iterated junctures to find the train tracks.

\begin{example}[Translation] \label{eg:translation}
This is the simplest possible example of our construction. If $f\colon L\to L$ is a translation (recall \Cref{eg:translate}), then the Handel-Miller laminations of $f$ are empty. In this case the associated veering branched surface is empty as well.
\end{example}

\begin{example}[Stack of chairs] \label{eg:stackofchairs}
Consider the map $f$ of \Cref{ex:stackofchairsflow} and \Cref{example:stackofchairs}. In this case the positive Handel-Miller lamination $\Lambda_+$ is a single properly embedded line $\lambda$, which is preserved by the map $f$ preserving orientation. Thus $\lambda$ itself is an endperiodic train track carrying $\Lambda_+$ and there is a trivial splitting sequence from $\lambda$ to $f(\lambda)$. The associated veering branched surface is an annulus which we can identify with the unstable Handel-Miller lamination $\mc L^u$ (see \Cref{fig:stackofchairsflow}).
\end{example}

It is convenient for us to use the language of sutured manifold decompositions to describe examples of depth one foliations (see \cite{Gab83}). Roughly, a sutured manifold decomposition is the operation of cutting a sutured manifold along a suitable properly embedded surface, keeping track of boundary information.

\begin{example} \label{eg:azeg1}
Let $Q$ be the \emph{complement} in $S^3$ of the handlebody shown in \Cref{fig:azegmfd} left. Here $R_+$ is shaded in green, $R_-$ is shaded in purple, and the sutures are drawn as black lines. This is conjectured to be the minimal volume acylindrical taut sutured manifold by Zhang \cite[Conjecture 1.4]{Zha23}.

In \Cref{fig:azegmfd} right we consider a disk decomposition of $Q$, i.e. a sutured manifold decomposition along a union of disks. 
The disks are the two `holes' of the handlebody. We convey the information of how we coorient the disks by labeling a side of a decomposing disk $\pm$ if it belongs to $R_\pm$ after decomposition.

\begin{figure}
    \centering
    \fontsize{6pt}{6pt}\selectfont
    \resizebox{!}{3cm}{\import{basecase-fig}{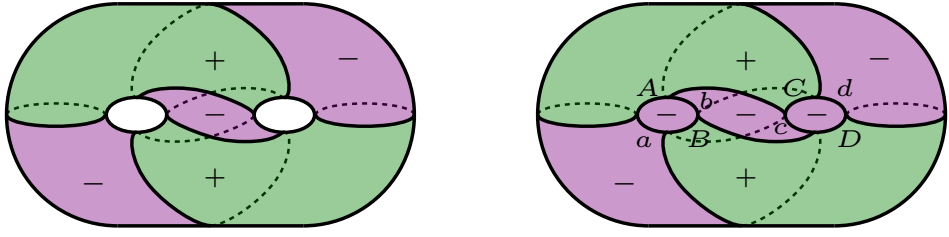}}
    \caption{Left: The sutured manifold $Q$ considered in \Cref{eg:azeg1}. Right: A disk decomposition of $Q$.}
    \label{fig:azegmfd}
\end{figure}

One can check that this disk decomposition reduces $Q$ to the product sutured manifold $D^2 \times I$, hence represents $Q$ as a depth one sutured manifold and in particular determines an endperiodic map. In \Cref{fig:AZex} middle we illustrate this endperiodic map. Here we label the junctures by the same letters as \Cref{fig:azegmfd} to aid the reader's understanding.

Using the method described at the start of the section, we find a periodic splitting sequence of endperiodic train tracks that carries the positive Handel-Miller lamination, which we illustrate in \Cref{fig:AZex} bottom.

\begin{figure}
    \centering
    \fontsize{6pt}{6pt}\selectfont
    \resizebox{!}{15cm}{\import{basecase-fig}{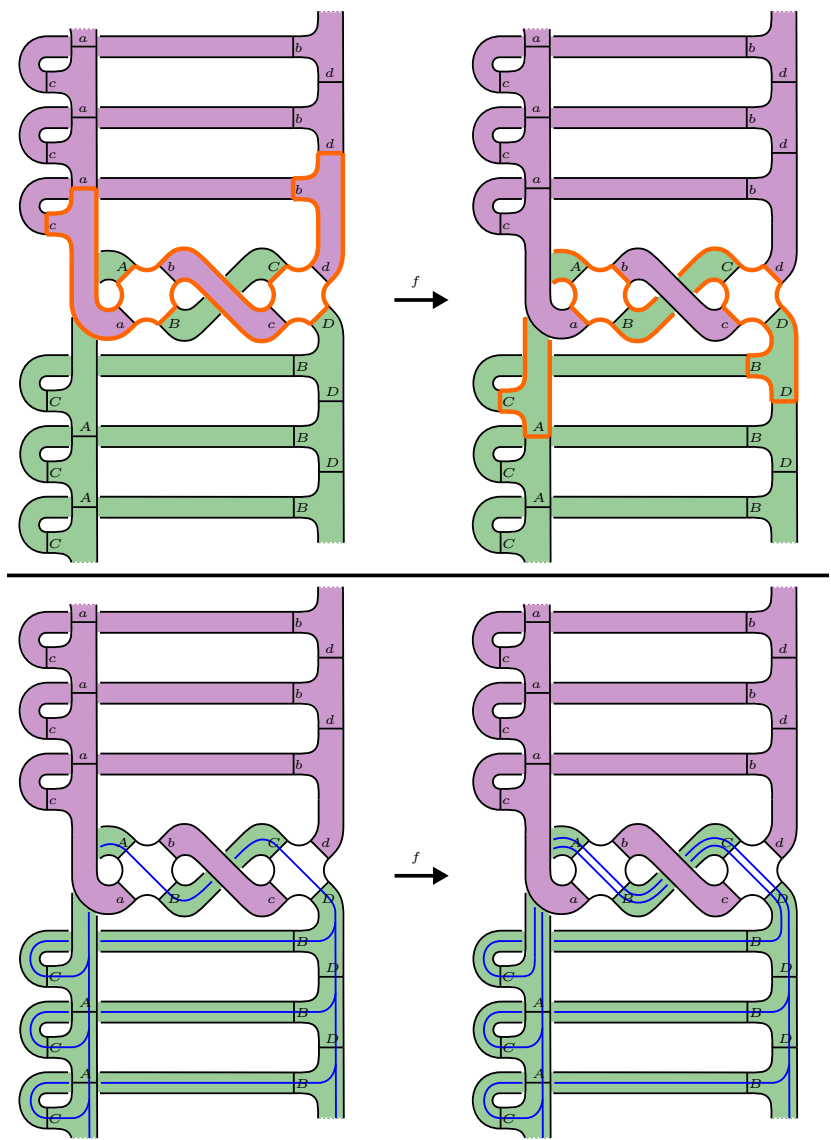}}
    \caption{Top: the monodromy of the depth one foliation is determined by the fact that it maps the outlined region on the left to the outlined region on the right so that labels match. Bottom: an $f$-periodic splitting sequence of endperiodic train tracks carrying the positive Handel-Miller lamination.}
    \label{fig:AZex}
\end{figure}

By suspending this periodic splitting sequence as in \Cref{sec:bsconstruct}, we obtain an unstable veering branched surface on $Q$. We draw its boundary train track in \Cref{fig:egs} first row middle.

As pointed out in the introduction, we could have done everything in this paper for negative/stable laminations instead. In particular, one can find a periodic \emph{folding} sequence of train tracks carrying the negative Handel-Miller lamination, which suspends to a stable veering branched surface. The boundary train track of this branched surface is illustrated in \Cref{fig:egs} first row right.
\end{example}

\begin{figure}
    \centering
    \fontsize{6pt}{6pt}\selectfont
    \resizebox{!}{9cm}{\import{basecase-fig}{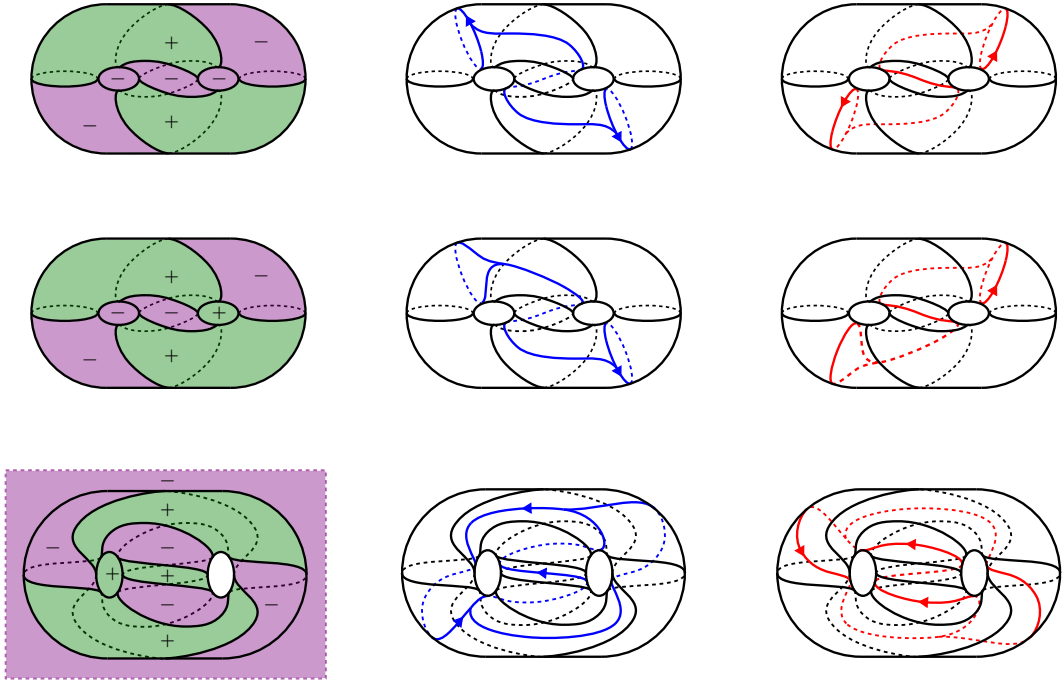}}
    \caption{Boundary train tracks of unstable and stable veering branched surfaces associated to some depth one foliations.}
    \label{fig:egs}
\end{figure}

\begin{example} \label{eg:azeg2}
In \Cref{fig:egs} second row, we consider a different disk decomposition of the sutured manifold $Q$ in \Cref{eg:azeg1}, which determines another endperiodic map. 

Using a similar computation as above, we find periodic splitting/folding sequences of train tracks for this endperiodic map. These suspend to unstable/stable veering branched surfaces on $Q$, the boundary train tracks of which we record in \Cref{fig:egs} second row middle and right. 
\end{example}

\begin{example} \label{eg:cceg3}
In \Cref{fig:egs} third row, we consider a different sutured manifold. This sutured manifold, as well as the illustrated disk decomposition, was considered in \cite[Example 5]{CC99}.

Using a similar computation as above, we find periodic splitting/folding sequences of train tracks for the corresponding endperiodic map. These suspend to unstable/stable veering branched surfaces, the boundary train tracks of which we record in \Cref{fig:egs} third row middle and right. 
\end{example}

\begin{example} \label{eg:fenleyexample}
In \cite[Section 5]{Fen97}, Fenley gave an example of an endperiodic map $f: L \to L$ where the positive Handel-Miller lamination $\Lambda_+$ of $f$ does not admit an $f$-invariant transverse measure of full support. 

We present a version of Fenley's map in \Cref{fig:fenleymap}. The map is the composition of a Dehn twist $\tau_2$ of the indicated sign on $c_2$, a Dehn twist $\tau_1$ on $c_1$ of the opposite sign, and a downward shift by one unit $\sigma$, i.e. $f=\sigma \circ \tau_1 \circ \tau_2$.

\begin{figure}
    \centering
    \fontsize{6pt}{6pt}\selectfont
    \resizebox{!}{2in}{
\begingroup%
  \makeatletter%
  \providecommand\color[2][]{%
    \errmessage{(Inkscape) Color is used for the text in Inkscape, but the package 'color.sty' is not loaded}%
    \renewcommand\color[2][]{}%
  }%
  \providecommand\transparent[1]{%
    \errmessage{(Inkscape) Transparency is used (non-zero) for the text in Inkscape, but the package 'transparent.sty' is not loaded}%
    \renewcommand\transparent[1]{}%
  }%
  \providecommand\rotatebox[2]{#2}%
  \newcommand*\fsize{\dimexpr\f@size pt\relax}%
  \newcommand*\lineheight[1]{\fontsize{\fsize}{#1\fsize}\selectfont}%
  \ifx\svgwidth\undefined%
    \setlength{\unitlength}{60.96078002bp}%
    \ifx\svgscale\undefined%
      \relax%
    \else%
      \setlength{\unitlength}{\unitlength * \real{\svgscale}}%
    \fi%
  \else%
    \setlength{\unitlength}{\svgwidth}%
  \fi%
  \global\let\svgwidth\undefined%
  \global\let\svgscale\undefined%
  \makeatother%
  \begin{picture}(1,1.69546944)%
    \lineheight{1}%
    \setlength\tabcolsep{0pt}%
    \put(0,0){\includegraphics[width=\unitlength,page=1]{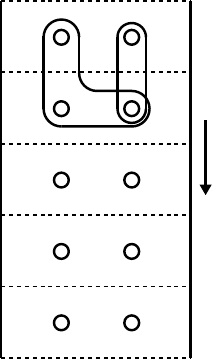}}%
    \put(0.11025928,1.59641563){\color[rgb]{0,0,0}\makebox(0,0)[lt]{\lineheight{1.25}\smash{\begin{tabular}[t]{l}$c_1$\end{tabular}}}}%
    \put(0.67225215,1.5964018){\color[rgb]{0,0,0}\makebox(0,0)[lt]{\lineheight{1.25}\smash{\begin{tabular}[t]{l}$c_2$\end{tabular}}}}%
    \put(0,0){\includegraphics[width=\unitlength,page=2]{fig_fenleymap.pdf}}%
  \end{picture}%
\endgroup%
}
    \caption{A endperiodic map $f$ where the positive Handel-Miller lamination $\Lambda_+$ of $f$ does not admit a $f$-invariant transverse measure of full support. The map is obtained by performing a twist around curve 2, then a twist around curve 1, and then shifting downward by one unit.}
    \label{fig:fenleymap}
\end{figure}

In \Cref{fig:fenleyexample}, we show a $f$-periodic splitting sequence $\tau=\tau_0 \to \tau_1 \to \tau_2 \to \tau_3 \to \tau_4=f(\tau) \to ...$ carrying the positive Handel-Miller lamination $\Lambda_+$.

\begin{figure}
    \centering
    \fontsize{12pt}{12pt}\selectfont
    \resizebox{!}{1.95in}{
\begingroup%
  \makeatletter%
  \providecommand\color[2][]{%
    \errmessage{(Inkscape) Color is used for the text in Inkscape, but the package 'color.sty' is not loaded}%
    \renewcommand\color[2][]{}%
  }%
  \providecommand\transparent[1]{%
    \errmessage{(Inkscape) Transparency is used (non-zero) for the text in Inkscape, but the package 'transparent.sty' is not loaded}%
    \renewcommand\transparent[1]{}%
  }%
  \providecommand\rotatebox[2]{#2}%
  \newcommand*\fsize{\dimexpr\f@size pt\relax}%
  \newcommand*\lineheight[1]{\fontsize{\fsize}{#1\fsize}\selectfont}%
  \ifx\svgwidth\undefined%
    \setlength{\unitlength}{366.20838142bp}%
    \ifx\svgscale\undefined%
      \relax%
    \else%
      \setlength{\unitlength}{\unitlength * \real{\svgscale}}%
    \fi%
  \else%
    \setlength{\unitlength}{\svgwidth}%
  \fi%
  \global\let\svgwidth\undefined%
  \global\let\svgscale\undefined%
  \makeatother%
  \begin{picture}(1,0.33350974)%
    \lineheight{1}%
    \setlength\tabcolsep{0pt}%
    \put(0,0){\includegraphics[width=\unitlength,page=1]{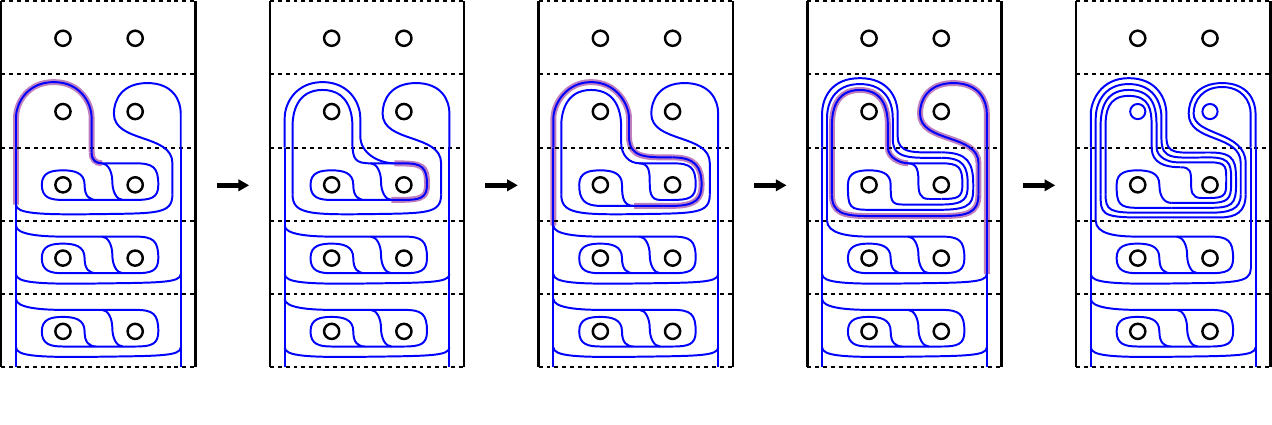}}%
    \put(0.06232361,0.00439558){\color[rgb]{0,0,0}\makebox(0,0)[lt]{\lineheight{1.25}\smash{\begin{tabular}[t]{l}$\tau$\end{tabular}}}}%
    \put(0.89879769,0.0043947){\color[rgb]{0,0,0}\makebox(0,0)[lt]{\lineheight{1.25}\smash{\begin{tabular}[t]{l}$f(\tau)$\end{tabular}}}}%
  \end{picture}%
\endgroup%
}
    \caption{A splitting sequence carrying the unstable Handel-Miller lamination $\Lambda_+$. At each step, the edge to be split next is highlighted in purple.}
    \label{fig:fenleyexample}
\end{figure}

This splitting sequence admits a sub-splitting sequence, i.e. sub-train tracks $\tau'_i \subset \tau_i$ such that each splitting move $\tau_i \to \tau_{i+1}$ takes $\tau'_i$ to $\tau'_{i+1}$. 
The corresponding sub-lamination $\Lambda'_+$ admits a transverse measure $\mu$ of full support such that $f_*(\mu)=2\mu$, which we represent as weights on the branches of $\tau'$ in \Cref{fig:fenleyexamplesub}.

The reason why $\Lambda_+$ does not admit an $f$-invariant transverse measure of full support is roughly because the branches in $\tau'$ fold up at a higher rate than those outside of $\tau'$, so any invariant measure will vanish on the latter branches. See \cite[Section 5]{Fen97} for details of this argument.
\end{example}

\begin{figure}
    \centering
    \fontsize{8pt}{8pt}\selectfont
    \resizebox{!}{2in}{\import{basecase-fig}{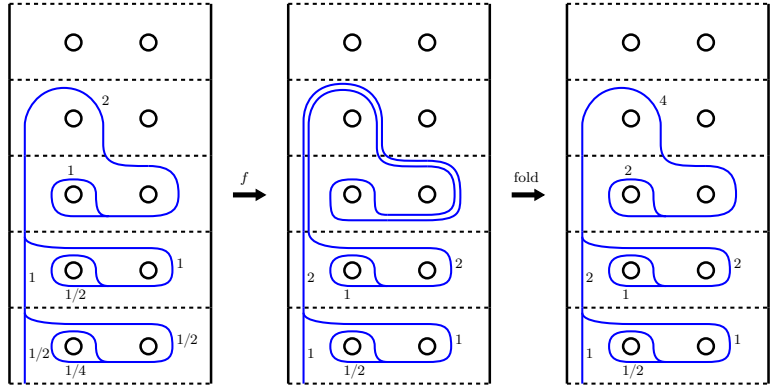}}
    \caption{A sub-splitting sequence of the splitting sequence in \Cref{fig:fenleyexample} that carries an $f$-invariant sublamination of $\Lambda_+$. The sublamination admits a transverse measure $\mu$ such that $f_*(\mu)=2\mu$.}
    \label{fig:fenleyexamplesub}
\end{figure}

\begin{example} \label{eg:bookofIbundles}
Take two copies $C_1,C_2$ of the compactified mapping torus in \Cref{eg:stackofchairs} (see \Cref{fig:stackofchairsflow}), each admitting an associated disk decomposition intersecting each of the $4$ sutures once, and take the product sutured manifold $P = S_{1,2} \times [0,1]$ where $S_{1,2}$ is a torus with two boundary components. 

Note that the core of each suture of $C_i$ has a natural orientation arising from the coorientation of the decomposing disk. By fixing an orientation of $S_{1,2}$, we also get orientations on the cores of the sutures of $P$.

We define $Q$ to be the sutured manifold obtained by gluing one suture of $P$ to a suture of $C_1$, and the other suture of $P$ to a suture of $C_2$. Here the former gluing is done in a way that preserves the orientations on the cores, while the latter gluing reverses the orientations on the cores. See \Cref{fig:bookeg} top.

\begin{figure}
    \centering
    \resizebox{!}{9cm}{\import{basecase-fig}{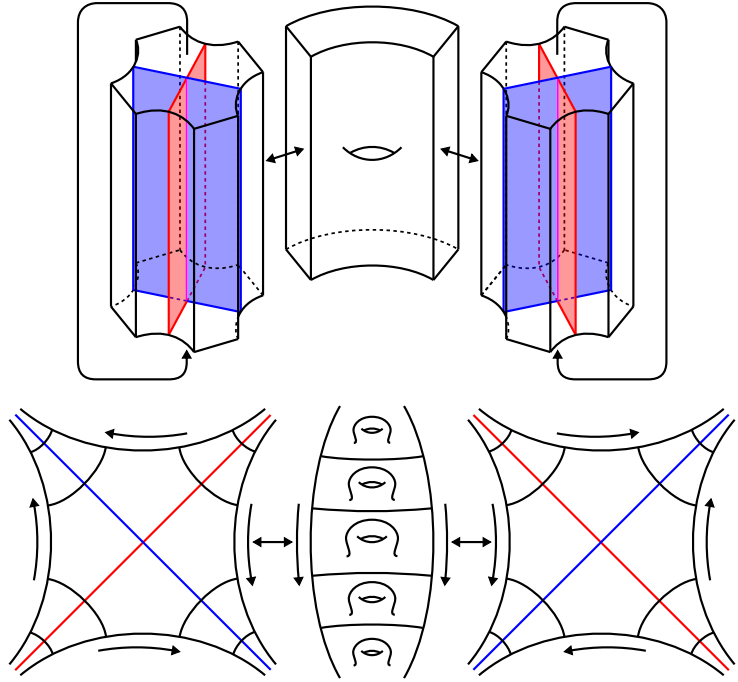}}
    \caption{Top: A cartoon of the sutured manifold $Q$ considered in \Cref{eg:bookofIbundles}. Bottom: The endperiodic monodromy of a depth one foliation on $Q$.}
    \label{fig:bookeg}
\end{figure}

Up to an isotopy we can assume that the decomposing disk $D_i$ of $C_i$ intersects $P$ in an arc of the form $\{x_i\} \times [0,1]$ for $x_i \in \partial S_{1,2}$. By our orientation requirements, we can take an arc $a$ in $S_{1,2}$ connecting $x_1$ and $x_2$ and coorient $a \times [0,1] \subset P$ such that $D_1 \cup (a \times [0,1]) \cup D_2$ determines a disk decomposition of $Q$ to a product sutured manifold. 

This disk decomposition determines a depth one foliation on $Q$. We illustrate the endperiodic monodromy of this foliation, together with its positive and negative Handel-Miller laminations, in \Cref{fig:bookeg} bottom.

One can construct a veering branched surface $B$ on $Q$ compatibly carrying the suspended unstable Handel-Miller lamination $\mathcal{L}^u$ by simply taking the union of the veering branched surfaces $B_i$ constructed on $C_i$ in \Cref{eg:stackofchairs} (and extending the vector field to go from $S_{1,2} \times \{0\}$ to $S_{1,2} \times \{1\}$ on $P$).

Now notice that in the construction, if we instead glue both pairs of sutures in orientation preserving ways, we would end up with the same sutured manifold $Q$. Under this choice of gluings, we can again take the union of the veering branched surfaces $B_i$ constructed on $C_i$ in \Cref{eg:stackofchairs} to get a veering branched surface $B'$ on $Q$. 

Intuitively, $B'$ is obtained from $B$ by reversing the dynamic orientation in exactly one of $C_i$. In particular, $B$ and $B'$ have the same underlying branched surface but differ as dynamic branched surfaces. Hence $B'$ fully carries the unstable Handel-Miller lamination $\mathcal{L}^u$ considered above, but does not \emph{compatibly} carry it.

The flow graph $\Phi'$ of $B'$ consists of two cycles, which are the oriented cores of the $C_i$ respectively. By assumption on the orientations, the two cycles are mutually inverse in $H_1(Q)$, hence the dual of the cone generated by the $\Phi'$-cycles, $\mathcal{C}^{\vee}_{\Phi'}$, cannot be top-dimensional in $H_2(Q, \partial Q)$. In particular $\mathcal{C}^{\vee}_{\Phi'}$ is not a foliation cone. 

Thus $B'$ gives an example illustrating why `compatibly carrying' cannot be replaced by `fully carrying' in \Cref{thm:folcone} and \Cref{thm:hmvbsunique}.

This example can be generalized to any sutured manifold that is a \textbf{book of $I$-bundles}. These are sutured manifolds that can be written as $Q=T \bigcup_A (S \times I)$, where $T$ is a nonempty disjoint union of solid torus, called the \textbf{bindings}, $S$ is a (possibly disconnected) surface with components of negative Euler characteristic, and $S \times I$ is glued along certain components of $\partial S$ $\times I$ onto homotopically nontrivial annuli on $\partial T$.

There are $2^{\text{\#bindings}}$ ways of picking dynamic orientations on each binding of $Q$. For each choice one can construct a veering branched surface on $Q$ which is the disjoint union of suspensions of prongs inside each binding and possessing the prescribed dynamic orientation. As we vary the choice of dynamic orientations, these veering branched surfaces all have the same underlying branched surface but differ as dynamic branched surfaces. In particular, the cones generated by their dual/flow graph cycles in $H_1(Q)$ are not in general foliation cones.
\end{example}

\section{Questions} \label{sec:questions}
As explained in the introduction, the subject of our work-in-progress \cite{LT23} is an alternate approach to Mosher's gluing step. Together with the base step which we tackled in this paper, such a construction would give a way of building a veering triangulation from a sutured hierarchy $M=Q_0 \sut \cdots \sut Q_{n+1} = \text{surface} \times I$: First construct a veering branched surface on $Q_n$ then induct up the hierarchy to construct a veering branched surface on $M=Q_0$, finally take the dual triangulation (but notice that the veering triangulation is not of $M$ but rather $M$ minus some closed orbits in general).

In turn, the goal of this is to obtain a pseudo-Anosov flow without perfect fits on $M$ that is almost transverse to the finite depth foliation $\mathcal{F}$ corresponding to the sutured hierarchy. By a theorem of Mosher \cite{Mos92b} (see also \cite[Theorem A]{Lan22}), such a flow would recover the Thurston face containing the compact leaves of $\mathcal{F}$. This flow would exist by the correspondence theory between veering triangulations and pseudo-Anosov flows, provided that the final veering branched surface on $M$ has no index $0$ cusped solid tori complementary regions. This is the reason for our aversion to index $0$ cusped solid tori in \Cref{sec:dynamicpairs}. 

However, this should not be possible for all hierarchies. To see why, consider the depth $1$ case, i.e. the case when $n=1$. Applying \Cref{thm:depthonetovbs}, we get a veering branched surface $B_1$ on $Q_1$, and we want to extend this into a veering branched surface $B_0$ on $M=Q_0$, say without index $0$ cusped solid tori complementary regions. Since $B_0$ is an extension of $B_1$, the dynamics on $B_0$ should contain that of $B_1$, in particular the (isotopy classes of) closed orbits of the vector field on $B_1$ should be a subset of that on $B_0$. But the latter is the set of closed orbits of a pseudo-Anosov flow $\phi$ with no perfect fits, in particular $\phi$ has \textit{no oppositely oriented parallel orbits}, i.e. closed orbits $c_1$, $c_2$ for which $c_1$ is homotopic to $-c_2$ in $M$. So the same has to be true for the closed orbits of the vector field on $B_1$ as well.

In other words, a necessary condition for the construction to work is for the gluing map $R_+(Q_1) \cong R_-(Q_1)$ determined by the sutured decomposition to be so that no closed orbits on $B_1$ become oppositely oriented parallel in $M$. By the same reasoning, such a condition is necessary for the intermediate gluing steps as well. We believe that this condition is also sufficient, and plan to show this in the sequel to this paper.

Below we explain a few other possible lines of inquiry raised by our work here.

\subsection{Veering and taut polynomials}
In this paper we generalized some aspects of the theory of veering triangulations to veering branched surfaces on sutured manifolds, namely existence (\Cref{thm:depthonetovbs}) and uniqueness (\Cref{thm:hmvbsunique}) in the `fibered' case, and the equality between cones of cycles (\Cref{thm:folcone}).
There are more aspects of the veering triangulation theory that should admit direct generalizations. 

One of these is the veering and taut polynomials defined in \cite{LMT24}. The edge and face modules defined in \cite{LMT24} should generalize to the sutured setting, and possibly up to a slight modification accounting for annulus/Möbius band sectors, one should be able repeat much of the theory developed in \cite{LMT24}.

Here a generalization of the taut polynomial is particularly interesting. In the non-sutured layered case, it is shown in \cite{LMT24} that the taut polynomial is equal to the Teichm\"uller polynomial defined in \cite{McM00}. Thus a generalization of the taut polynomial in the sutured case would provide a generalization of the Teichm\"uller polynomial for endperiodic maps, possibly giving a polynomial invariant. 

In \cite{LMT23b}, it will be shown that any endperiodic map $f$ is isotopic to a ``spun pseudo-Anosov (spA) map," and the entropy of the restriction of an spA map to its maximal compact invariant subset generalizes the entropy of a pseudo-Anosov map. In particular, it equals the growth rate of closed orbits of the Handel-Miller suspension flow, as well as the maximal growth rate of the intersection number between $\alpha$ and $f^n(\beta)$ over all closed curves $\alpha$, $\beta$. A generalization of the Teichm\"uller polynomial is expected to contain information about this entropy, analogous to the non-sutured case as explained in \cite{LMT23a}.

\subsection{(Foliation) cones}\label{sec:conequestions}
Another particular result to hope for a generalization of is \cite[Theorem 5.15]{LMT24}, more specifically, the statement that the cone generated by cycles of the dual/flow graph is dual to a foliation cone in $H^1(Q)$ if and only if these cycles lie in an open half space of $H_1(Q)$. The backwards direction is the difficult part here. One should be able generalize the argument of \cite[Proposition 5.16]{LMT24} to show that the veering branched surface $B$ is layered, thus corresponds to a splitting sequence of endperiodic train tracks. In the non-sutured setting, we know that the train tracks in such a splitting sequence must carry the unstable foliation of the pseudo-Anosov monodromy, due to some strong uniqueness results (see for example \cite{FLP79}). However in the sutured setting we do not have a strong enough uniqueness statement that allows us to relate the dynamics of the Handel-Miller suspension flow to $B$.

In general, the cone generated by dual cycles of a veering triangulation determines the cone over some (not necessarily top dimensional) face of the Thurston norm ball, as shown in \cite{Lan22}. The cones associated to veering branched surfaces on sutured manifolds are a generalization of these cones. Meanwhile, there is a generalization of the Thurston norm to the \textit{sutured Thurston norm} on sutured manifolds. One can ask if the cones associated to veering branched surfaces are related to the cones over faces of the sutured Thurston norm ball. The answer might be rather subtle here however, since foliation cones do not generally agree with cones over sutured Thurston norm faces.

\subsection{Correspondence with pA flows}
As mentioned before, a key fact about veering triangulations is that they correspond to pseudo-Anosov flows. In the appendix, we will show that a veering branched surface forms one half of a dynamic pair \'a la Mosher, hence in particular induces what Mosher calls a pA flow on the sutured manifold, see \cite[Section 4.10]{Mos96}. One can ask whether a construction in the other direction: a pA flow to a veering branched surface is possible, and whether the two constructions are inverse to each other (in a suitable sense) as in the non-sutured case. 

Note that the technical condition of no perfect fits plays a key role in this correspondence theory in the non-sutured setting. One might have to invent a similar condition before developing a correspondence theory in the sutured case.

\subsection{Stable veering branched surfaces}
In the non-sutured setting, a veering triangulation is dual to both its stable and unstable branched surfaces. In this paper we have been dealing with unstable dynamical branched surfaces mainly. Symmetrically, we could have dealt with stable dynamical branched surfaces. However, since there is no middle triangulation to tie them together, it is not clear if the two approaches fit together exactly.

More precisely, suppose we have the compactified mapping torus of an endperiodic map. One constructs an unstable veering branched surface carrying the unstable Handel-Miller lamination as in \Cref{thm:depthonetovbs}, and symmetrically constructs a stable veering branched surface carrying the stable Handel-Miller lamination. Is there a combinatorial way to recover the stable and unstable veering branched surfaces from some cellular decomposition, as in the non-sutured case?

If one can find a positive answer to this last question than it should be possible to associate a stable veering branched surface to a general unstable veering branched surface (and vice versa). One can then ask whether such a pair of stable and unstable veering branched surface can be isotoped to form a dynamic pair. In the non-sutured case this is shown to be possible by Schleimer and Segerman in \cite{SS24}.

\subsection{Census of veering branched surfaces}
The examples of veering branched surfaces which we worked out in \Cref{sec:eg} are all quite simple. In particular many of them just have one interior vertex in their branch locus, i.e. one triple point or one source. It might be an interesting question to classify all veering branched surfaces with one interior vertex. In particular, do all of these carry the unstable Handel-Miller lamination of some endperiodic map? We note that in comparison, the veering branched surfaces with no interior vertices should be easy to classify.

In general, it might be interesting to generate a census of veering branched surfaces with a small number of interior vertices. This has been done in the non-sutured case up to 16 vertices by Giannopolous, Schleimer, and Segerman \cite{GSS}. Such a census might in particular give more examples of veering branched surfaces that are not layered, hence do not carry the unstable Handel-Miller lamination of an endperiodic map.

\appendix

\section{Dynamic pairs} \label{sec:dynamicpairs}

This appendix proves that given a veering branched surface in an atoroidal sutured manifold $Q$, one can construct a ``dynamic pair" in $Q$. We also prove that this implies the existence of a dynamic pair as defined by Mosher (called a ``Mosher pair" in what follows). This recovers the base step in Mosher's program discussed in the introduction. The ideas in this appendix will also feature prominently in the sequel paper \cite{LT23}, where we will explore the gluing step of Mosher's program. Although in some cases one might ultimately care only about the existence of a single dynamic branched surface $B^u$ at the top of a sutured hierarchy, it is necessary for us to keep track of a complementary branched surface $B^s$ to guide the construction of $B^u$.

As the name suggests, a dynamic pair consists of both a stable and an unstable branched surface. In \cite{Mos96}, Mosher introduced a tool called a ``dynamic train track" for promoting an unstable branched surface to a dynamic pair. We will prove that the flow graph of a veering branched surface is a dynamic train track. Thus by Mosher's results together with our construction of veering branched surfaces, any depth one sutured manifold contains a dynamic pair $(B^u,B^s)$ as defined by Mosher.

Crucially, however, Mosher's recipe for producing a dynamic pair involves dynamically splitting along annuli and Möbius bands, thereby introducing index 0 cusped tori. In using the correspondence between veering branched surfaces and pseudo-Anosov flows, one needs to avoid index 0 cusped tori in order to ensure a flow has no perfect fits. Since a main goal of our ongoing project is to understand when it is possible to produce a pseudo-Anosov flow with no perfect fits at the top level of a sutured hierarchy (see \Cref{sec:questions} for more discussion), we need to modify Mosher's theory of dynamic pairs and dynamic train tracks. In particular we give a definition of dynamic pair which is slightly different than Mosher's, which then requires us to rework the proof that promotes a branched surface with a dynamic train track to a dynamic pair. 
Many of the arguments liberally use ideas of Mosher, rearranged so as to be compatible with our setup.

\subsection{Dynamic pairs}

A dynamic pair consists of a pair of dynamic branched surfaces, one stable and one unstable. To give a precise definition, we first have to define a few more classes of dynamic manifolds (continuing from \Cref{defn:cuspedtorus} and \Cref{defn:cuspedproduct}) which can arise as complementary regions of such a pair of branched surfaces.

\begin{definition}[Dynamic tori] \label{defn:dyntorus}
Let $\Delta$ be a closed disk whose boundary is smooth with the exception of $2n\ge 4$ corners, 
and let $f\colon \Delta\to \Delta$ be a diffeomorphism. The mapping torus $M$ of $f$ is a 3-manifold with corners homeomorphic to a solid torus. There are $\frac{2n}{p}$ circular corner edges of $M$, where $p$ is the period of a cusp of $\Delta$ under $f$. Likewise there are $\frac{2n}{p}$ annular faces of $M$. We label these faces $\u$ and $\s$ alternatingly. Take a circular vector field $V$ that gives it the structure of a dynamic manifold. Equipped with such a vector field, $M$ is called a \textbf{dynamic solid torus}. See \Cref{fig:dynpairpieces} left.
Define the \textbf{index} of $M$ to be $1-\frac{n}{2}$.

A \textbf{dynamic torus shell} is defined similarly, but replacing $\Delta$ by a closed annulus whose boundary is smooth with the exception of $2n\ge2$ corners on a single boundary component. The annulus faces are labeled $\u$ and $\s$ alternatingly, the torus face is labeled $\b$, and circularity of the vector field is defined as for the dynamic solid torus.

We refer to a dynamic solid torus or a dynamic torus shell as a \textbf{dynamic torus}.
\end{definition}

\begin{definition}[Maw piece] \label{defn:mawpiece}
Let $\Delta$ be a closed disk whose boundary is smooth with the exception of $2$ corners and $1$ cusp. $M=\Delta \times S^1$ is a 3-manifold with corners homeomorphic to a solid torus, with $2$ corner edges and $1$ cusp edge, and $3$ annulus faces. Label the $2$ annulus faces adjacent to the cusp edge $\u$, and the remaining annulus face $\s$, and take a circular vector field $V$ that gives it the structure of a dynamic manifold. We call $M$ a \textbf{$\s\u\u$-maw piece}. See \Cref{fig:dynpairpieces} middle.
The definition of a \textbf{$\u\s\s$-maw piece} is symmetric.
\end{definition}

\begin{construction}[Pinching an edge]
If $e$ is a $\p\s$-edge connecting two $\p\s\u$-corners of a dynamic manifold, then we can \textbf{pinch} $e$ as shown in \Cref{fig:pinch}. This produces a $\u\u$-edge connecting a $\p\u\u$-gable to a $\s\u\u$-gable. Symmetrically we can pinch a $\m\u$-edge connecting two $\m\s\u$-corners to produce an $\s\s$-edge connecting an $\m\s\s$-gable to a $\u\s\s$-gable.
\end{construction}

\begin{figure}
    \centering
    \resizebox{!}{4cm}{\import{basecase-fig}{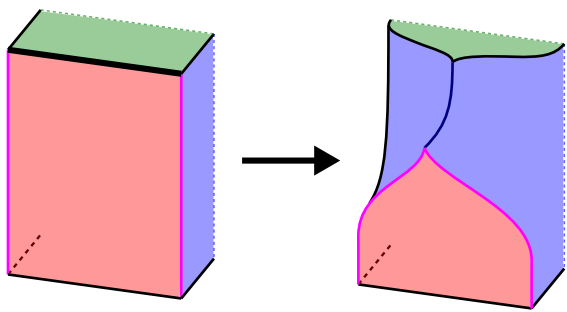}}
    \caption{Pinching a $\p\s$-edge (thickened) connecting two $\p\s\u$-corners.}
    \label{fig:pinch}
\end{figure}

\begin{definition}[Drum] \label{defn:drumpiece}
Let $S$ be a surface with an even number of corners at each boundary component and with $\ind(S) \leq 0$. For each boundary component $c$ of $S$, make some choice as follows:
\begin{itemize}
    \item If $c$ has $2n \geq 2$ corners, label the sides by $\u$ and $\s$  alternatingly
    \item If $c$ has no corners, either label the side by $\b$ or choose an orientation of $c$.
\end{itemize}

Now $S \times [0,1]$ is a 3-manifold with corners. We can modify it in the following ways:
\begin{itemize}
    \item Let $c$ be a boundary component of $S$ with $2n \geq 2$ corners. Pinch each $\p\s$ edge of $c\times\{1\}$, and pinch each $\m\u$-edge of $c\times\{0\}$.
    \item For a boundary component $c$ of $S$ with no corners and labeled $\b$, label $c \times [0,1]$ by $\b$.
    \item For a boundary component $c$ of $S$ with no corners and not labeled $\b$, label $c \times [0,1/2]$ by $\s$ and $c \times [1/2,1]$ by $\u$.
    \item If $S$ is not a rectangle, label the image of $S \times \{1\}$ by $\p$ and the image of $S \times \{0\}$ by $\m$. 
    \item If $S$ is a rectangle, either label the image of $S \times \{1\}$ by $\p$ or pinch it into a $\u\u$-cusp edge, symmetrically, either label the image of $S \times \{0\}$ by $\m$ or pinch it into a $\s\s$-cusp edge.
\end{itemize}
Now choose a vector field $V$ that gives the resulting $3$-manifold with corners the structure of a dynamic manifold, and such that $V$ is circular on the annulus faces on $c \times [0,1]$ for every boundary component $c$ without corners and not labeled $\b$, and induces the dynamic orientation on them which is specified by the chosen orientation on $c$. We call this dynamic manifold a \textbf{drum}.
\end{definition}

In \Cref{fig:dynpairpieces} right we show one example of a drum. Here we start with an annulus with $6$ corners on one boundary component and no corners on the other. We label the boundary component without corners by $\b$.

Among all drums, there are some that deserve specific names. A \textbf{coherent annulus drum} is the drum obtained by taking $S$ to be an annulus without corners and with both boundary components oriented in the same direction around $S$. A \textbf{$\p$-pinched rectangle drum} is the drum obtained by taking $S$ to be a rectangle and pinching only the $\p$-face in the last step above. A \textbf{$\m$-pinched rectangle drum} is the drum obtained by taking $S$ to be a rectangle and pinching only the $\m$-face. A \textbf{pinched tetrahedron} is the drum obtained by taking $S$ to be a rectangle and pinching both the $\p$- and $\m$-faces. See \Cref{fig:ind0pieces}.

\begin{figure}
    \centering
    \resizebox{!}{5cm}{\import{basecase-fig}{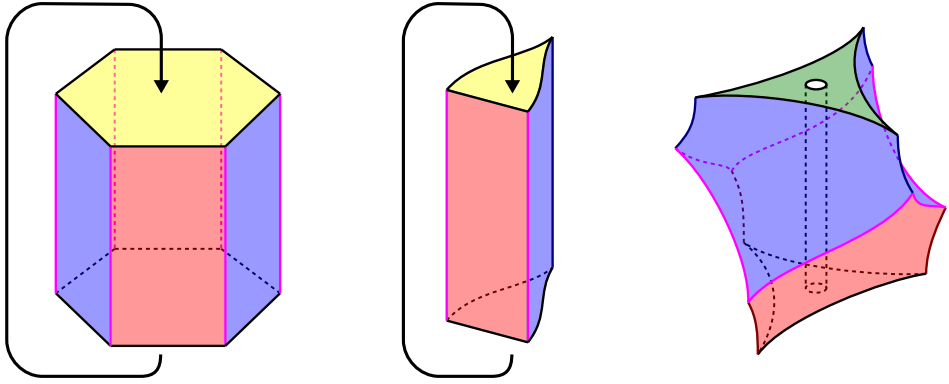}}
    \caption{Left: A dynamic solid torus. Middle: An $\s\u\u$-maw piece. Right: A drum.}
    \label{fig:dynpairpieces}
\end{figure}

\begin{figure}
    \centering
    \resizebox{!}{2.5cm}{\import{basecase-fig}{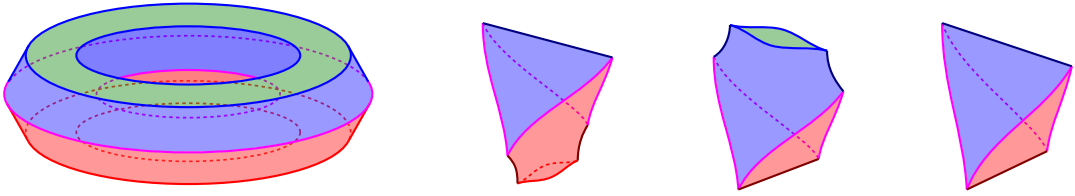}}
    \caption{From left to right: A coherent annulus drum, a $\p$-pinched rectangle drum, an $\m$-pinched rectangle drum, and a pinched tetrahedron.}
    \label{fig:ind0pieces}
\end{figure}

\begin{definition} \label{defn:dynamicpair}
Let $Q$ be a sutured manifold, $B^u$ and $B^s$ be branched surfaces in $Q$, and $V$ be a $C^0$ vector field on $Q$. $(B^u,B^s,V)$ is said to be a \textbf{dynamic pair} if:
\begin{enumerate}
    \item $(Q,V)$ is a dynamic manifold
    \item $(B^u,V)$ is an unstable dynamic branched surface and $(B^s,V)$ is a stable dynamic branched surface. Note that unlike in the main text, we are not assuming that $V$ is smooth, and in fact $V$ cannot be smooth here.
    \item $V$ is smooth on $Q$ except along $\brloc(B^u)$ where it has locally unique forward trajectories and along $\brloc(B^s)$ where it has locally unique backward trajectories.
    \item  Each component of $Q \cut (B^u \cup B^s)$ is a dynamic torus, a drum, or a maw piece. (Notice that $Q \cut (B^u \cup B^s)$ is a dynamic manifold with the faces corresponding to $B^{u/s}$, $R_\pm$, and $\gamma$ labeled $\u/\s$, $\p/\m$, and $\b$ respectively).
    \item There is a collection $S^u$ of annuli and Möbius bands, called the \textbf{sinks}, which are carried by $B^u \cut B^s$ and with boundary components on $B^u \cap B^s$, such that:
    \begin{enumerate}
        \item Every $\u\s\s$-maw piece component $\mu$ of $Q \cut (B^u \cup B^s)$ is attached to an element $F$ of $S^u$, i.e. the $\u$-face of $\mu$ is identified with $F$, and is \textbf{boundary parallel}, i.e. there exists an annulus carried by $B^s$ with one boundary component on the maw circle of $\mu$ and the other boundary component on $R_-$.
        \item Every component $K$ of $B^u \cut B^s$ either contains an element $F$ of $S^u$ and $F$ is a sink of $K$, or every forward trajectory of $K \backslash (B^u \cap B^s)$ is finite and ends on $R_+$.
        \item The boundary components of elements of $S^u$ do not overlap, i.e. there does not exist boundary components $c_1, c_2$ of elements of $S^u$ which map to the same curve in $Q$, nor can a boundary component $c$ of an element of $S^u$ double cover a curve in $Q$.
    \end{enumerate}
    Similarly, there is a collection $S^s$ of annuli and Möbius bands, called the \textbf{sources}, satisfying the symmetric properties.
    \item The boundary train tracks $\beta^u=B^u \cap R_+$ and $\beta^s=B^s \cap R_-$ do not carry Reeb annuli.
    \item No component of $Q \cut (B^u \cup B^s)$ is a coherent annulus drum.\qedhere
\end{enumerate}
\end{definition}

In \Cref{defn:dynamicpair} a dynamic pair is slightly more general than what Mosher calls a ``dynamic pair" in \cite[Section 4.5]{Mos96}, which we will call a \textbf{Mosher pair}.

The following lemma explains the relationship between dynamic pairs and Mosher pairs for those interested. We do not use Mosher pairs here, so we will not provide the definition, instead referring the reader to \cite[\S 4.5]{Mos96}.

\begin{lemma}
Any Mosher pair is a dynamic pair, and any dynamic pair $(B^u, B^s, V)$ can be transformed into a Mosher pair by dynamically splitting along elements of $S^u$ and $S^s$.
\end{lemma}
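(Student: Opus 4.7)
The forward direction is a definitional verification. In a Mosher pair the sinks are circles embedded in $\brloc(B^u)$ (not honest annuli or M\"obius bands), so we take $S^u$ to be the collection of these sink circles. Then axiom~(5a) holds vacuously because a Mosher pair has no $\u\s\s$-maw piece complementary regions; axiom~(5b) reduces directly to Mosher's sink axiom; and (5c) is disjointness, which Mosher's sinks are assumed to satisfy. The remaining axioms (1)--(4), (6), and (7) of \Cref{defn:dynamicpair} either match Mosher's axioms verbatim or are implied by them (once one observes, for (7), that Mosher explicitly disallows coherent annulus drums, and for (6) that he explicitly disallows Reeb annuli in the boundary train tracks).

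For the reverse direction I plan to dynamically split away the obstructions to being a Mosher pair, namely the annular and M\"obius-band elements of $S^u$ and $S^s$. Let $F \in S^u$ be such an essential sink. By axiom~(5a), the adjacent $\u\s\s$-maw piece $\mu$ has $F$ as its $\u$-face, and there is a $B^s$-carried annulus $A$ connecting the maw circle of $\mu$ to $R_-$. Dynamically splitting $B^u$ along $F$ pushes $F$ forward across $\mu$; the two $\s$-faces of $\mu$ then fuse into a single $\s$-face, and the maw cusp circle of $\mu$ becomes an $\s\s$-cusp circle of the merged complementary region. The resulting region should be an $\s$-cusped solid torus or $\s$-cusped torus shell (using $A$ to form the $\b$-face in the shell case) as in \Cref{defn:dyntorus}, with its circular vector field inherited from $V|_\mu$ patched with $V$ along a collar of $A$. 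Performing this operation on every essential element of $S^u$ and symmetrically on every essential element of $S^s$ should produce a triple $(B^u_\ast,B^s_\ast,V_\ast)$ with all sinks and sources circular, no maw piece complementary regions, and boundary train tracks still carrying no Reeb annuli (dynamic splitting is simplifying for these train tracks).

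The main obstacle I expect is the combinatorial bookkeeping in the second step: confirming that the splits along different essential sinks and sources do not interfere with one another, and that the merged complementary regions are exactly as advertised. Here axiom~(5c) of \Cref{defn:dynamicpair} is essential, since it ensures that distinct sinks have disjoint boundaries and no sink is double-covered by its boundary, making the splits commute and allowing them to be performed independently. A secondary delicate point will be verifying the circularity and dynamic-orientation conditions of the new cusped torus regions (as in \Cref{defn:dyntorus}); this should follow from circularity of $V$ on $\mu$ and on a collar of $A$, together with the dynamic orientation data already present in the dynamic pair. Finally, one should check that the splits introduce no coherent annulus drum components, which should follow from the boundary parallel hypothesis on $\mu$ combined with axiom~(7).
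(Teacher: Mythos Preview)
Your proposal contains substantive errors in both directions.

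\textbf{Forward direction.} You claim that in a Mosher pair the sinks are circles and that one may take $S^u$ to be this collection of circles. But \Cref{defn:dynamicpair}(5) requires $S^u$ to consist of annuli and M\"obius bands carried by $B^u \cut B^s$, so a collection of circles is not an admissible choice. The paper instead takes $S^u$ to be the annulus and M\"obius band sinks of the components of $B^u \cut B^s$. You also assert that (5a) holds vacuously because Mosher pairs have no $\u\s\s$-maw pieces; this is false. Mosher pairs do admit maw pieces---the relevant Mosher axiom is that every maw piece is attached to a \emph{torus} piece, not that maw pieces are absent.

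\textbf{Reverse direction.} Your geometric picture of the splitting is inverted. You write that splitting $B^u$ along $F$ ``pushes $F$ forward across $\mu$,'' fuses the $\s$-faces of the adjacent $\u\s\s$-maw piece, and produces an $\s$-cusped torus; and that the end result has no maw pieces. None of this is correct. First, axiom (5a) says every $\u\s\s$-maw piece is attached to some element of $S^u$, not that every $F\in S^u$ has an adjacent maw piece, so your setup need not apply. Second, and more fundamentally, dynamically splitting $B^u$ along (a slight enlargement $F'$ of) a sink $F$ cuts open a neighborhood of $F$; since $\partial F$ lies on $B^u\cap B^s$ and $F$ is a sink, this produces an index $0$ \emph{dynamic} solid torus (alternating $\u$- and $\s$-faces) together with new $\s\u\u$-maw pieces and/or pinched tetrahedra along the edges of $F'\setminus F$. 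The splitting \emph{creates} maw pieces rather than removing them; the point is that after splitting, all sinks and sources become faces of dynamic tori, which is exactly what Mosher's maw-piece and transience axioms require. One must then verify that the augmented $S^u$ and $S^s$ (adding in the new $\u$- and $\s$-faces of the created torus) still satisfy (5a)--(5c), and that the process terminates because each split reduces the number of sinks or sources not already lying on torus faces.
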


\begin{proof}
If $(B^u, B^s, V)$ is a Mosher pair one can take $S^u$ to be the set of annulus/Möbius band sinks of components of $B^u \cut B^s$ and choose $S^s$ symmetrically. Then it is straightforward to check that $(B^u, B^s, V)$ satisfy \Cref{defn:dynamicpair}.

Conversely, if one has a dynamic pair $(B^u, B^s, V)$ à la \Cref{defn:dynamicpair}, the only axioms in the definition of a Mosher pair that could possibly fail are: \textit{every maw piece is attached to a torus piece}, \textit{transience of forward/backward trajectories}, and \textit{separation of torus pieces}.
\textit{Separation of torus pieces} in fact holds because if two torus pieces are glued along some pair of say $\u$-faces, then the $\s$-faces of the torus pieces must lie in $S^s$, and the boundary components of the $\s$-faces that are adjacent to the glued $\u$-faces will then be identified.

We can transform $(B^u, B^s, V)$ into a Mosher pair by splitting along sources and sinks so that all the resulting sources and sinks are faces of dynamic tori; this property implies the transience and maw piece axioms. 

Here is how the splitting works. Suppose there is a sink $F$ of $S^u$ that is not a face of a torus piece. Enlarge $F$ to a slightly larger annulus/Möbius band $F'$ still carried by $B^u$ and containing $\partial F$ in its interior. Dynamically split $B^u$ along $F'$. This creates an index $0$ dynamic solid torus and some $\s\u\u$-maw pieces and/or pinched tetrahedra, while the topology of the other components of $Q \backslash (B^u \cap B^s)$ is unchanged. One can modify $V$ so that this new $(B^u, B^s, V)$ satisfies (1)-(4) in \Cref{defn:dynamicpair}. 

The new $S^u$ is obtained by replacing $F$ in the original collection by the annuli double covering it under the splitting. That axioms (5a-c) for $S^u$ are preserved is clear from construction.
The new $S^s$ is obtained by adding in the $\s$-faces of the dynamic solid torus that is created. New $\s\u\u$-maw pieces are created on the sides of $F' \backslash F$ with no branches spiraling out. Those components of $F' \backslash F$ are contained in components of $B^u \backslash B^s$ for which every forward trajectory ends on $R_+$, otherwise (5c) will fail for $F$. Hence the new $\s\u\u$-maw pieces are attached to the new $\s$-faces and are boundary parallel, verifying (5a) for $S^s$ for the new $(B^u, B^s, V)$. (5b) is clear from construction. For (5c), the only way this could fail is if there were elements of $S^s$ sharing a boundary curve with $F$. But then $F$ is a face of a dynamic torus in the first place.

Axioms (6) and (7) are clearly preserved from construction as well. Now we repeat the argument by splitting elements in the new $S^u$ or $S^s$ that are not faces of dynamic tori. Since this kind of splitting always reduces the number of elements in $S^u$ or $S^s$ that are not faces of dynamic tori, the process terminates eventually and we get a Mosher pair.
\end{proof}

In particular, a sutured manifold contains a dynamic pair if and only if it contains a Mosher pair. 
We will use this fact implicitly going forward.

\begin{proposition}[{\cite[Proposition 4.10.1]{Mos96}}] \label{prop:dynamicpairprop}
If a sutured manifold $Q$ contains a dynamic pair, then $Q$ is irreducible, each face of $Q$ is incompressible, and no two torus components of $\gamma$ are isotopic.
\end{proposition}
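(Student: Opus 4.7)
The plan is to carry out innermost-disk and outermost-arc simplifications of embedded spheres, compressing disks, and annuli against $B^u \cup B^s$, using the classification of complementary regions provided by axiom (4) of \Cref{defn:dynamicpair}. The preliminary step is to verify that each complementary region $N$ --- whether a dynamic torus, a maw piece, or a drum --- is itself irreducible and has each of its labeled faces $\pi_1$-injective in $N$. For dynamic tori and maw pieces this is immediate since the underlying manifold is a solid torus and every face is either a full torus or an essential annulus carrying the core. For drums one observes that, up to pinching $\p\s$- and $\m\u$-edges, $N$ is a product $S\times[0,1]$ with $\ind(S)\le 0$, so $N$ deformation retracts onto $S$ and every face is either a subsurface of $S\times\{0,1\}$ or a piece of $\partial S\times[0,1]$, all of which inject on $\pi_1$.

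For irreducibility, given an embedded $2$-sphere $\Sigma\subset Q$, put it transverse to $B^u\cup B^s$ and let $C=\Sigma\cap(B^u\cup B^s)$. If $C=\varnothing$ then $\Sigma$ lies in a single complementary region, which is irreducible by the preliminary step, so $\Sigma$ bounds a ball. Otherwise select an innermost disk $D\subset\Sigma$; then $\partial D$ is a circle contained in a single face of the complementary region $N$ containing $D$, and face incompressibility provides a disk $D'\subset\partial N$ with $\partial D'=\partial D$. Irreducibility of $N$ then implies $D\cup D'$ bounds a ball, across which we isotope $D$, strictly reducing $|C|$. Induction finishes the argument. The same strategy, with compressing disks in place of spheres and with outermost-arc reductions handling intersections with $\partial Q$, proves that every face of $Q$ is incompressible. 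For the torus claim, an isotopy between two torus components $T_1,T_2\subset\gamma$ traces an essential annulus $A\subset Q$; simplifying $A$ as above, we may assume $A$ lies in a single complementary region $N$ and has its two boundary circles on two distinct torus $\b$-faces of $N$. But dynamic tori and maw pieces each have at most one torus face, and a drum has all its torus faces realized as $\partial S\times[0,1]$ for distinct annular boundary components of $S$, so no such annulus can exist (axiom (7) excluding coherent annulus drums removes the only borderline case).

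The main technical obstacle will be the simplification step when an innermost circle or outermost arc of $\Sigma\cap(B^u\cup B^s)$ meets $B^u\cap B^s$, so that $\partial D$ is not a priori contained in a single face of $N$. Here one needs a finer innermost argument: pick a disk innermost among those whose boundary lies on only one of $B^u$ or $B^s$, and use the controlled structure of $\brloc(B^u)\cup\brloc(B^s)$ inside $N$ provided by the source/sink axioms (5a)--(5c). The no-Reeb-annulus axiom (6) is what prevents the process from stalling at $R_\pm$, since it rules out exactly the configurations in $\beta^u$ and $\beta^s$ that would produce an irreducible pattern of boundary arcs that cannot be further pushed off. Once this care is taken, the three assertions follow from standard innermost-disk reduction as outlined above.
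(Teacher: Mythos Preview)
The paper does not prove this proposition; it simply cites \cite[Proposition 4.10.1]{Mos96} and moves on. So there is no ``paper's own proof'' to compare against---the result is imported from Mosher's monograph. Your sketch is in the spirit of how such statements are typically proven (innermost-disk simplification against the branched surfaces, using that each complementary piece is irreducible with $\pi_1$-injective faces), and this is presumably what Mosher does.

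That said, some of the details you give are inaccurate. Drums have no torus $\b$-faces: the $\b$-faces of a drum arise as $c\times[0,1]$ for a boundary component $c$ of $S$, hence are annuli. The only complementary pieces with a torus $\b$-face are dynamic torus shells, and each has exactly one; that observation alone handles the ``no two torus components of $\gamma$ are isotopic'' claim once the annulus is pushed into a single piece. Your invocation of axiom (7) (no coherent annulus drums) for this step is misplaced---that axiom concerns annuli with both boundary components oriented, not $\b$-labeled, and plays no role here. Similarly, the appeal to axiom (6) for the compressing-disk argument is not obviously relevant; the no-Reeb-annulus condition on $\beta^u,\beta^s$ is about the boundary train tracks on $R_\pm$, not about simplifying disks against $B^u\cup B^s$. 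Finally, your ``main technical obstacle'' paragraph is too vague to be convincing: when $\Sigma$ meets $B^u\cap B^s$, the intersection $\Sigma\cap(B^u\cup B^s)$ is a graph rather than a collection of circles, and one really needs to work with standard neighborhoods $N(B^u)$, $N(B^s)$ or to carefully stage the argument (e.g.\ first push off $B^u$, then $B^s$) rather than speak of ``innermost among those whose boundary lies on only one of $B^u$ or $B^s$.''
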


When constructing a dynamic pair, it will be useful to ignore axioms (5c) and (7) during the initial stages,  later adjusting for them to hold. We record a proposition explaining this adjustment.

\begin{proposition} \label{prop:protodynamicpair}
Let $Q$ be an atoroidal sutured manifold with no torus components of $R_\pm$, let $B^u$ and $B^s$ be dynamic branched surfaces in $Q$, and let $V$ be a vector field on $Q$. Suppose $(B^u,B^s,V)$ satisfies all but (5c), (7) in \Cref{defn:dynamicpair}. If $B^u$ and $B^s$ do not carry closed surfaces, then $Q$ contains a dynamic pair $(B'^u, B'^s, V')$.

Furthermore,
\begin{itemize}
    \item if $(B^u, B^s, V)$ satisfies (7) as well, then $B'^u$ can be chosen to be a sub-branched surface of $B^u$ and $B'^s$ can be chosen to be a sub-branched surface of $B^s$;
    \item if $(B^u,B^s,V)$ satisfies (7) and $B^u$ satisfies (5c) as well, then $B'^s$ can be chosen to be $B^s$ and $B'^u$ can be chosen to be a sub-branched surface of $B^u$. The symmetric statement holds.
\end{itemize}
\end{proposition}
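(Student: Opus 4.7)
The plan is to start from the proto-dynamic pair $(B^u, B^s, V)$ and iteratively eliminate violations of axioms (7) and (5c), arriving at an honest dynamic pair. Each step will be a local modification of $B^u$ or $B^s$, either a small isotopy or the removal of a parallel sector, supported in a neighborhood of the offending configuration. The hypotheses that $Q$ is atoroidal, that $R_\pm$ has no torus components, and that $B^u, B^s$ carry no closed surfaces, will be used repeatedly to verify that each modification preserves axioms (1)--(6), (5a), (5b), and introduces no new violations; in particular, every merged complementary region must remain a dynamic torus, a drum, or a maw piece.

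First I would treat violations of (7). Suppose $D$ is a coherent annulus drum among the complementary regions of $Q\cut(B^u\cup B^s)$. Its $I$-bundle structure $D\cong S\times[0,1]$, with $S$ an uncornered annulus, identifies the two lateral $\u$-subannuli of $\partial D$ by a product, and coherence of the dynamic orientations on these subannuli means the identification is compatible with $V$. Hence I can isotope $B^u$ to push one of the two $\u$-subannuli across $D$ onto the other, then delete the now-redundant copy, so that $D$ merges with the complementary region on the far side; the symmetric operation eliminates the $\s$-parallelism in $D$. The main point to check is that the merged region is of an allowed type: this is precisely where atoroidality of $Q$, the absence of torus components in $R_\pm$, and the assumption that $B^u, B^s$ carry no closed surfaces intervene, ruling out (for instance) the creation of an essential non-boundary-parallel torus, or of a closed $\u$-face carrying a torus in $B^u$. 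I would iterate this operation until no coherent annulus drum remains.

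Next I would address violations of (5c). If two sinks $F_1, F_2\in S^u$ share a boundary curve $c\subset B^u\cap B^s$, the configuration near $c$ forms a locally coherent $I$-bundle that can be resolved by a small dynamic splitting of $B^u$ along a parallel push-off of $F_1$, which removes a thin transient annulus sector of $B^u$ bordering $c$ and separates the two boundaries; the case of a M\"obius band sink double-covering a curve is handled by cutting $B^u$ along the sink and replacing it with its orientation double cover, which is an annular sink whose boundary consists of a single embedded curve. In both cases the move only \emph{removes} sectors of $B^u$, so that $B'^u\subseteq B^u$; symmetric operations handle (5c) for $S^s$, yielding $B'^s\subseteq B^s$. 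From this the two "furthermore" clauses are immediate: if (7) already holds, only (5c)-fixes are needed and both inclusions persist; if additionally $B^u$ satisfies (5c), all (5c)-fixes happen inside $B^s$ and we may take $B'^u=B^u$. The main obstacle in the entire argument is the case-by-case verification that each merged or cut complementary region remains a dynamic torus, drum, or maw piece; this is a local dynamical-topological check, but it must be performed carefully and is the step in which the global hypotheses on $Q$ and on carried closed surfaces are genuinely used.
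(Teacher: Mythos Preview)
Your proposal has genuine gaps in both steps, and the resulting ``furthermore'' claims do not match the statement you are trying to prove.

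For axiom (7), the paper's approach is the opposite of yours: it does not collapse a coherent annulus drum by pushing a $\u$-face across $D$ and deleting it; instead it \emph{adds} a new annulus sector to $B^u$ and another to $B^s$ along the boundary circles of $D$, then zips up the $\p$- and $\m$-faces of $D$ (using that $R_\pm$ has no torus components), thereby converting $D$ into a dynamic torus together with some maw pieces, pinched tetrahedra, or rectangle drums. Your ``push and delete'' move is not clearly well defined---the two $\u$-subannuli of $D$ lie on different components of $\partial S\times[0,1]$ and are not parallel across $D$ in the way you suggest---and in any case it modifies $B^u$, which is incompatible with the first ``furthermore'' clause (when (7) holds, neither branched surface needs to be enlarged \emph{or} altered near drums).

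For axiom (5c), your description reverses the roles of $B^u$ and $B^s$. When two sinks $F_1,F_2\in S^u$ share a boundary curve $c$, that curve is the core of an annulus sector $s$ of $B^s$ (not $B^u$), and the fix is to delete $s$ from $B^s$; symmetrically, overlapping sources in $S^s$ force deletion of a sector of $B^u$. This is exactly why the second ``furthermore'' bullet reads as it does: if $S^u$ already satisfies (5c), no $B^s$-sectors need be removed, so $B'^s=B^s$, while fixing $S^s$ may still shrink $B^u$. Your version would yield $B'^u=B^u$ instead, which is not what the proposition asserts.

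More seriously, you assert that after each deletion ``every merged complementary region must remain a dynamic torus, a drum, or a maw piece.'' This is false in general: when the sector $s\subset B^s$ is removed, several dynamic tori are glued together along their $\s$-faces, and the result is only known to be Seifert fibered, not a dynamic torus. The paper handles this by cutting $Q$ along a family of tori $T_i$ parallel to the boundaries of these Seifert fibered pieces, iterating the deletion process on the resulting components, and finally invoking atoroidality of $Q$ (together with \Cref{prop:dynamicpairprop}) to conclude that each $T_i$ in fact bounds a dynamic torus shell, so the cuts were unnecessary and a dynamic pair exists on $Q$ itself. This cutting-and-reassembly step is where the atoroidality hypothesis is genuinely used, and it is missing entirely from your outline.
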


\begin{proof}
This is essentially proven in \cite[\S 4.12]{Mos96}. We outline an argument here, emphasizing the places that are slightly different due to our definition of a dynamic pair, and referring to the relevant sections of \cite{Mos96} for details. 

Given a coherent annulus drum $D$, we now describe how to add an annulus sector to $B^u$ and an annulus sector to $B^s$, and then eliminate the $\m$ and $\p$ faces of $D$, to create a dynamic torus and some other types of pieces.
This is depicted in \Cref{fig:cohanndrummawcase} together with an additional zipping step, and is essentially \cite[`An example' on P.193-195]{Mos96}. 
When attaching the new sectors, we need to take into consideration the branching of $B^s$ and $B^u$ on $\del D$. Let $A$ be a $\u$-face of $D$ which has a $\u\s$-circle $c$ of $D$ on its boundary. 
\begin{enumerate}[label=(\alph*)]

\item If $c\cap \brloc(B^u)$ is nonempty, then we also attach the extra $s$ sector very close to $c$ outside $D$. This creates some number of pinched tetrahedra.
\item If $c\cap \brloc(B^u)=\varnothing$ and $\brloc(B^u)\cap A=\varnothing$, then we attach the extra $B^s$ sector very close to $c$ outside $D$. The component of $Q\cut (B^u\cup B^s)$ on the other side of $A$ from $D$ is necessarily a drum in this case, so this attachment creates a boundary parallel maw piece.
\item If $c\cap \brloc(B^u)=\varnothing$, then one can show that $\brloc(B^u)\cap A$ contains some loop that is the $\u\u$-cusp circle of an $\s\u\u$-maw piece, and that there is a (possibly larger) maximal family of maw pieces whose union $w$ has the smooth structure of a maw piece as shown in \Cref{fig:cohanndrummawcase}. Let $c'$ be the other $\u\s$-curve of $w$ besides $c$. We attach the extra $B^s$ sector just outside of $w$ near $c'$ so that it passes through $w$ near its $\s$-face as shown in \Cref{fig:cohanndrummawcase}. This creates some additional index 0 dynamic solid tori and either pinched tetrahedra (as in (a)) or a boundary parallel $\u\s\s$-maw piece (as in (b)), depending on whether $c'$ intersects $\brloc(B^u)$.
\end{enumerate}

We attach the other boundary component of the new annulus sector of $B^s$ using the same recipe applied to the other $\u$-face of $D$. Then we attach a new annulus sector of $B^u$ symmetrically.

Finally we zip up the $\p$ and $\m$ faces of $D$, as shown in \Cref{fig:cohanndrummawcase} right. This is possible by our assumption that there are no torus components of $R_\pm$.

We augment $S^u$ and $S^s$ by adding the $\u$-faces and $\s$-faces, respectively, of all dynamic tori created by the above steps. This process reduces the number of coherent annulus drums by one. After performing it finitely many times, we can arrange so that $(B^u, B^s, V)$ satisfies all but (5c) in \Cref{defn:dynamicpair}.

\begin{figure}
    \centering
    \fontsize{10pt}{10pt}
    \resizebox{!}{3.5cm}{\import{basecase-fig}{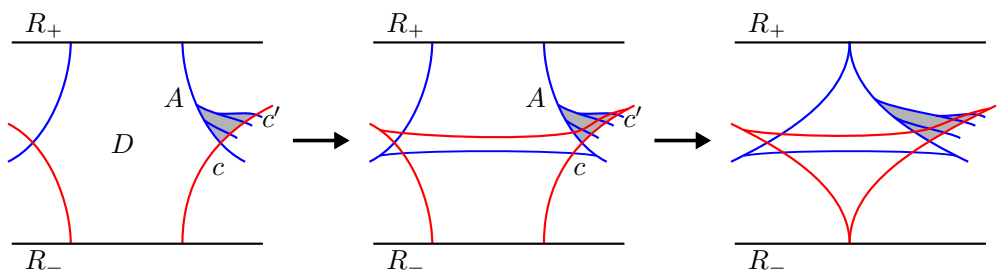}}
    \caption{Modifying a coherent annulus drum to obtain a dynamic solid torus and some other pieces. Middle: Attaching a sector to $B^s$ near a $\u\s$-circle of a coherent annulus drum when a $\u$-face of the drum intersects $\brloc(B^u)$ in a collection of circles, as in case (c) from the proof of \Cref{prop:protodynamicpair}. The region $w$ is shaded.}
    \label{fig:cohanndrummawcase}
\end{figure}

Suppose there exists boundary components $c_1, c_2$ of elements of $S^u$ which map to the same curve $c$ in $Q$. Then $c$ is a core of an annulus sector $s$ of $B^s$; this uses the hypothesis that $B^s$ does not carry a closed surface, for otherwise $c$ could lie in a torus or Klein bottle sector. Also $B^u \cap s$ is a train track on $s$ with only converging switches and it contains the cycle $c$, hence it must be a union of parallel cycles with branches going from $\partial s$ to the outermost cycles. See \Cref{fig:protodynamicpairdelsector}.

\begin{figure}
    \centering
    \resizebox{!}{6cm}{\import{basecase-fig}{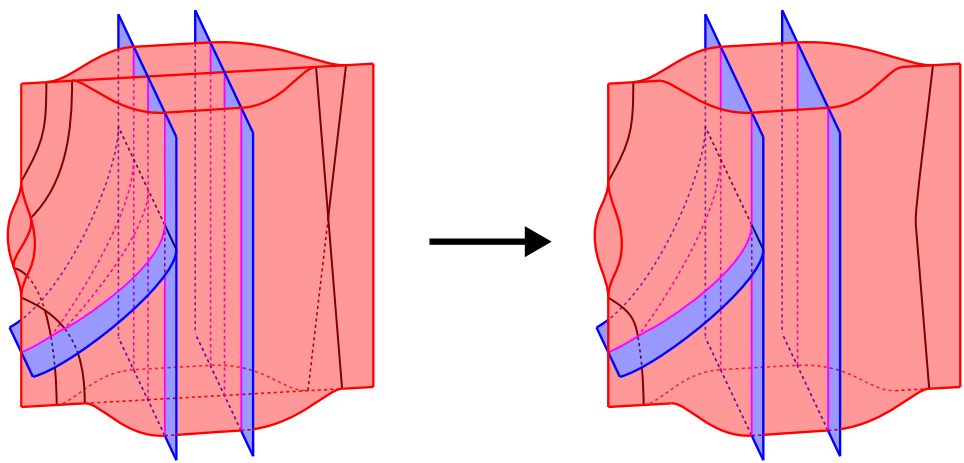}}
    \caption{Locating a sector at which (5c) is violated and deleting it.}
    \label{fig:protodynamicpairdelsector}
\end{figure}

We first claim that each of the annulus regions between the cycles abuts a torus piece on both of its sides. This is true for those regions that meet $c$, since the components of $Q \cut (B^u \cup B^s)$ meeting those regions has a sequence of $\u, \s, \u, \s$-annulus faces, 
and only dynamic tori satisfy this property. Then we can repeat the argument on the annulus regions of $s$ that meet the $\s$-faces of these torus pieces, and induct outwards.

Then we claim that a region of $s$ outside of an outermost cycle containing no branches of $\tau$ is adjacent to maw pieces on both of its sides. This follows from a similar argument as above: each of the components of $Q \cut (B^u \cup B^s)$ meeting those regions has a sequence of $\s, \u, \s$-annulus faces, meaning it is either a maw piece or dynamic torus; they cannot both be torus pieces because then the cycle would not have been outermost. Hence at least one is a maw piece.
If the other component of $Q \cut (B^u \cup B^s)$ is a dynamic solid torus, then there would be a cusp circle on an annulus $\s$-face of a dynamic torus, contradicting (5b) in \Cref{defn:dynamicpair}. Hence the other component of $Q \cut (B^u \cup B^s)$ is also a maw piece.

For a region of $s$ outside of an outermost cycle containing branches of $\tau$, we claim that it is adjacent to pinched tetrahedra or $\p$-pinched rectangle drums on both of its sides. This follows from the same line of argument: the components of $Q \cut (B^u \cup B^s)$ meeting those regions have two $\u$-faces meeting at a $\u\u$-cusp and some $V$-trajectories starting on those $\u$-faces enter $\u$-faces of dynamic tori hence never end at $R_+$. Only pinched tetrahedra and $\p$-pinched rectangle drums satisfy these properties. 

Now remove $s$ from $B^s$, as indicated in \Cref{fig:protodynamicpairdelsector}. 
This glues up maw pieces to form bigger maw pieces, and glues up pinched tetrahedra and $\p$-pinched rectangle drums to form bigger pinched tetrahedra/$\p$-pinched rectangle drums. (This uses the hypothesis that $B^u$ does not carry closed surfaces, otherwise at some point a maw piece/pinched tetrahedron/$\p$-pinched rectangle drum might be glued onto itself.) Dynamic torus pieces are glued up along $\s$-faces. Any dynamic torus admits a Seifert fibration with at most 1 singular fiber such that the restriction to any $\u$- or $\s$-face is a foliation by circles, and hence the pieces obtained gluing the dynamic tori admit Seifert fibrations.

If there exists a boundary component $c$ of an element of $S^u$ which double covers a curve in $Q$, we can locate a sector and remove it similarly as above. And symmetrically the same thing can be done with $S^u$ replaced by $S^s$.
This idea of removing sectors is essentially \cite[Proposition 4.12.1 Step 2]{Mos96}.

Let $T_i$ be a disjoint collection of tori parallel to the boundary components of the Seifert fibered pieces. We may modify $V$ so that it is tangent to the $T_i$. Cutting along $\bigcup T_i$ returns sutured manifolds with the restricted $(B^u, B^s, V)$ satisfying all but (5c) in \Cref{defn:dynamicpair} and some Seifert fibered spaces disjoint from $B^u$ and $B^s$. This is \cite[Proposition 4.12.1 Step 3]{Mos96}.

On the components of $Q \cut \cup T_i$ with nonempty $B^u$ and $B^s$, we inductively repeat the above argument, deleting sectors and cutting along tori at each stage. Eventually the process will stop since there are only finitely many sectors of $B^u$ and $B^s$. When the process terminates, we get a disjoint collection of tori $T_i$ such that each component of $Q \cut \cup T_i$ either contains a dynamic pair or is a Seifert fibered space. 
We claim that each $T_i$ corresponds to the boundary of a dynamic torus. Indeed, otherwise $T_i$ would be essential in $Q$ by \Cref{prop:dynamicpairprop} and an innermost disk argument, violating atoroidality. Hence we have in fact produced a dynamic pair in $Q$.

For the additional statements: If there are no coherent annulus drums, then we do not have to modify the branched surfaces as in the first part of the proof. Thus we only have to carry out the part of the proof where we remove sectors. If furthermore (5c) is satisfied for $S^u$, then we do not even need to carry out that part of the proof for $B^s$, and so $B^s$ is left unchanged throughout. The symmetric statement is of course true for $B^u$ as well. 
\end{proof}

We remark that without the atoroidality assumption, the above proof still produces a family of essential tori separating $Q$ into a collection of pieces admitting dynamic pairs, and a collection of pieces admitting Seifert fibrations.

\subsection{Dynamic train tracks}

\begin{definition} \label{defn:dtt}
Let $B$ be an unstable dynamic branched surface. A oriented train track $\tau$ embedded in $B$ is said to be a \textbf{dynamic train track} if $\tau$ is disjoint from $\partial Q$ and if there exists a dynamic vector field $V$ such that:
\begin{enumerate}
    \item $V$ is tangent to $\tau$
    \item The set of converging switches of $\tau$ is equals to $\tau \cap \brloc(B)$
    \item $V$ is smooth on $B-\brloc(B)$ except at diverging switches of $\tau$.
    \item Every component $K$ of $B \cut \tau$ either carries an annulus or Möbius band $A$ such that $\partial A \subset \partial K$ and $A$ is a sink of $K$, or every forward trajectory of $K \backslash \tau$ is finite and ends at a point of $B \cap R_+$.
\end{enumerate}

A dynamic train track on a stable dynamic branched surface is defined symmetrically.
\end{definition}

Our terminology differs from \cite{Mos96} slightly. In \cite{Mos96}, dynamic train tracks were only required to satisfy (1)-(3) in \Cref{defn:dtt}, and those that in addition satisfy (4) are said to be \textbf{filling}. However, since we do not need to deal with non-filling dynamic train tracks, we include (4) in our definition for brevity.

Before stating the next proposition, we explain a construction called `splitting for goodness'. This construction is a slightly streamlined version of the material in \cite[Section 4.7 and Proposition 4.11.1 Step 2e]{Mos96}.

\begin{construction}[Splitting for goodness]

Let $B$ be an unstable dynamic branched surface and $\tau$ be a dynamic train track on $B$. Let $K$ be a component of $B \cut \tau$ that meets $R_+$. By definition all forward trajectories starting in the interior of $K$ are finite and end on $R_+$. Take a small regular neighborhood of $\tau \cap K$ in $K$ and let $\tau'$ be the boundary of this regular neighborhood in $K$. Meanwhile let $\beta'$ be $K \cap R_+$. The forward trajectories in the interior of $K$ induce a map from $\tau'$ to $\beta'$. 

For each interval component of $\brloc(K)$ with one endpoint on $\tau$ and one endpoint on $R_+$, dynamically split $K$ along a triangle so that this component of $\brloc(K)$ now lies close to the forward trajectory of a point on the component close to the endpoint on $\tau$. This is \cite[Figure 4.10]{Mos96}. See \Cref{fig:splittingforgoodness} first row.

For each interval component of $\brloc(K)$ with both endpoints on $R_+$, dynamically split $K$ along the bigon with one side on this component, so that this component of $\brloc(K)$ disappears. See \Cref{fig:splittingforgoodness} second row.

We refer to the operation of performing these dynamic splittings as \textbf{splitting $B$ for goodness}.
\end{construction}

\begin{figure}
    \centering
    \resizebox{!}{6cm}{\import{basecase-fig}{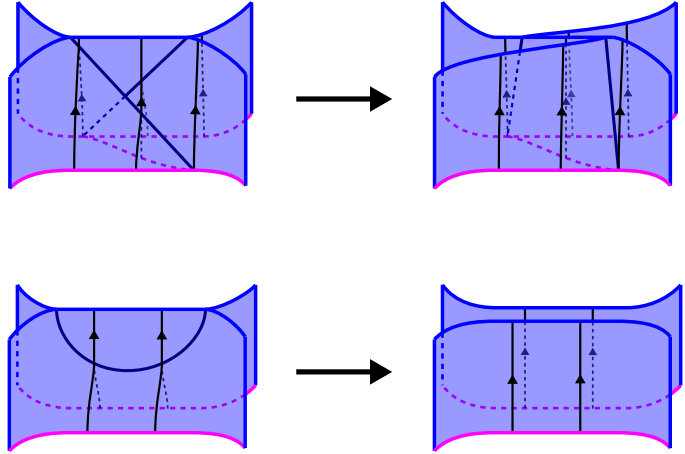}}
    \caption{Splitting for goodness refers to the operation of doing these two types of splittings.}
    \label{fig:splittingforgoodness}
\end{figure}

\begin{proposition} \label{prop:dtttodynamicpair}
Let $(B^u,V)$ be a very full unstable dynamic branched surface equipped with a dynamic train track $\tau$. If $\beta^u=B^u \cap R_+$ does not carry Reeb annuli, $B^u$ does not carry closed surfaces, and $Q$ is atoroidal, then there is a dynamic pair $(B'^u, B^s,V')$ on $Q$.

Furthermore, if:
\begin{itemize}
    \item $\beta^u$ has no annulus or cusped bigon complementary regions,
    \item the $\u$-cusped product complementary regions of $B^u$ have no cusp circles, 
    and
    \item there is no annulus or Möbius band sector of $B^u$ which meets $\u$-cusped torus pieces on both sides,
\end{itemize} 
then $B'^u$ can be chosen to be $B^u$ after splitting for goodness. 
\end{proposition}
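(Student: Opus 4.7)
The plan is to construct $B^s$ by ``spreading out'' the dynamic train track $\tau$ transversely to $B^u$ through $Q$, and then to clean up using \Cref{prop:protodynamicpair}. More precisely, I first split $B^u$ for goodness, producing a new unstable dynamic branched surface $(B^u_0, V_0)$ carrying the same lamination as $B^u$, such that every component of $B^u_0 \cut \tau$ either carries a sink annulus/Möbius band or is a disk whose forward trajectories end on $R_+$. The image of $\tau$ under splitting for goodness is still a dynamic train track on $B^u_0$. Now for each point $x \in \tau$, the backward $V_0$-trajectory through $x$ enters the complement $Q \cut B^u_0$ on one or two sides (depending on the combinatorics of $\tau \cap \brloc(B^u_0)$), and I let $B^s$ be the union of these backward trajectories, continued until they meet $R_-$, hit another sheet of $B^u_0$ from the correct side, or wrap onto source annuli/Möbius bands. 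The train track $\tau$ becomes $B^u_0 \cap B^s$, and by construction $V_0$ is tangent to $B^s$ with $-V_0|_{\brloc(B^s)} = -V_0|_\tau$ being a maw vector field for $B^s$. I modify $V_0$ into a new vector field $V'$ smooth away from $\brloc(B^u_0) \cup \brloc(B^s)$ to make $(B^u_0, B^s, V')$ satisfy the differentiability conditions of \Cref{defn:dynamicpair}(3).

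Next I verify axioms (1)--(4), (5a,b), and (6) of \Cref{defn:dynamicpair} for $(B^u_0, B^s, V')$. Axioms (1)--(3) are immediate from the construction, and (4) reduces to a case analysis on the components of $Q \cut (B^u_0 \cup B^s)$: a component either lives in a $\u$-cusped product of $B^u_0$ (where the backward trajectories from $\tau$ cut it into drums and possibly maw pieces) or in a $\u$-cusped torus (where $\tau$ either meets the torus, producing dynamic tori and maw pieces, or does not, leaving a cusped torus intact, which is still allowed). I take $S^u$ to be the collection of annulus/Möbius band sinks of components of $B^u_0 \cut \tau$ guaranteed by \Cref{defn:dtt}(4), and $S^s$ to be the collection of analogous sources of components of $B^s \cut \tau$, which exist because the backward $V'$-trajectories of $B^s$ starting along $B^u_0 \cap B^s$ can only fail to reach $R_-$ by wrapping onto a source, by the no-closed-surface and transience arguments used in \Cref{prop:vfulldbslaminar} and in the proof of \Cref{lem:nonprincipal regionescaping}. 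The maw pieces that appear all come with the $B^s$ sink-annulus immediately adjacent along the $\u$-face, giving (5a) and (5b), and the symmetric statement for $S^s$. Axiom (6) for $\beta^u$ is given by hypothesis, and for $\beta^s$ it follows from the fact that a Reeb annulus in $\beta^s \subset R_-$ would force $B^s$ to carry a torus, which would in turn produce (via pushing along $V'$ into the interior) a torus carried by $B^u_0$, contradicting the no-closed-surface hypothesis.

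I also check that $B^s$ carries no closed surface: any such surface would have Euler characteristic zero (Poincar\'e-Hopf using $V'$) and would combine with the veering-style triple-point structure at $\brloc(B^s) = \tau$ to force a torus carried by $B^u_0$ as well, again a contradiction. At this point $(B^u_0, B^s, V')$ satisfies all axioms of \Cref{defn:dynamicpair} except possibly (5c) and (7). Applying the first part of \Cref{prop:protodynamicpair} produces a genuine dynamic pair $(B'^u, B^s, V')$ on $Q$, proving the main statement. The main technical obstacle throughout is the interface near the cusped-torus complementary regions of $B^u$, where one must ensure that the backward-flow construction of $B^s$ produces only dynamic tori, maw pieces, and drums and does not create pathologies; this is handled by choosing $V$ to be circular on each annular face of a cusped torus, as in \Cref{lem:principalregioncircular}.

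For the furthermore statement, I verify that the three extra hypotheses prevent the initial construction from producing coherent annulus drums and from violating (5c) on the $B^u$ side. The absence of annulus and cusped bigon complementary regions of $\beta^u$ rules out the two-sided maw pieces adjacent to a single $\u$-sector, the absence of cusp circles in $\u$-cusped products of $B^u$ rules out components of $B^u \cut \tau$ that are annuli with both boundary components on $\tau$ and hence rules out coherent annulus drums arising from them, and the absence of annulus or Möbius band sectors of $B^u$ meeting cusped tori on both sides prevents the overlap-of-boundary-components-of-$S^u$ scenario. Thus (5c) holds for $B^u$ and (7) holds outright, so the second bullet of \Cref{prop:protodynamicpair} applies and $B'^u$ can be taken to be $B^u_0$, which is $B^u$ after splitting for goodness, as claimed.
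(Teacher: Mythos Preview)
Your overall architecture matches the paper's: split for goodness, build $B^s$ from $\tau$, verify all axioms of \Cref{defn:dynamicpair} except possibly (5c) and (7), then invoke \Cref{prop:protodynamicpair}. But your construction of $B^s$ has a genuine gap inside the $\u$-cusped torus complementary regions, and you are missing several preprocessing steps that the paper needs to make the verification go through.

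Inside a $\u$-cusped torus piece the vector field $V$ is circular, so backward trajectories from points of $\tau$ on the annulus faces do not terminate on $R_-$ and do not ``wrap onto source annuli'' in any finite sense; they simply spiral or are periodic. The paper does not flow backward here. Instead it explicitly inserts annuli into the $\u$-cusped torus, with boundary along cycles of $\tau$ on the annulus faces, dividing the region into a dynamic torus and (breached) $\s\u\u$-maw pieces, and then attaches tongues to cut the latter into maw pieces, pinched tetrahedra, and $\m$-pinched rectangle drums. For this to produce a dynamic torus one needs \emph{exactly two} cycles of $\tau$ on each annulus face, which is why the paper first removes extraneous sinks of $\tau$ and then, whenever a face carries only one cycle, splits that cycle into two. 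You have neither of these steps. There is also a preprocessing step you omit entirely: in a $\u$-cusped product, parallel annulus $\u$-faces with parallel dynamic orientations cobounding cusped bigon $\p$-faces must first be separated off by adding an annulus sector to $B^u$, creating ``breached $\u$-cusped tori''; without this, your claim that $\beta^s$ carries no Reeb annuli fails. The paper's actual argument for (6) on $\beta^s$ uses exactly this property (iv), not your torus-in-$B^u$ argument, which does not follow as stated.

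In the furthermore statement you have the roles of $S^u$ and $S^s$ reversed. The third bulleted hypothesis (no annulus or M\"obius band sector of $B^u$ meeting $\u$-cusped tori on both sides) is what prevents overlap among boundaries of elements of $S^s$, not $S^u$, because the sources $S^s$ are precisely the annuli inserted into the $\u$-cusped tori. The first two bulleted hypotheses are used to avoid adding sectors to $B^u$ (so that $B^u$ is unchanged beyond splitting for goodness) and to rule out coherent annulus drums; one then applies the \emph{symmetric} version of the second bullet of \Cref{prop:protodynamicpair} to keep $B'^u = B^u$.
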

\begin{proof}
Again, this is essentially proven in \cite{Mos96}. We outline an argument here, emphasizing the places that are slightly different due to our definition of a dynamic pair and splitting for goodness, and referring to the relevant sections of \cite{Mos96} for details.

We first modify $\tau$ by removing any ``extraneous sinks." An extraneous sink is a sink $\gamma$ of $\tau$ for which there exists an annulus or Möbius band $R$ carried by $B^u$ containing $\gamma$ in its interior, with boundary components lying on $\tau$. As Mosher describes in \cite[Proposition 4.11.1 Step 2b]{Mos96}, there is a process for removing $\gamma$ along with branches of $\tau$ spiraling into $\gamma$, such that the result still satisfies our definition of dynamic train track. We remove $\gamma$ in this way and denote the new dynamic train track by $\tau$ also.

Then we split $B^u$ for goodness. As remarked above, this step is essentially \cite[Proposition 4.11.1 Step 2e]{Mos96}.

Next, consider a $\u$-cusped product complementary region of $B^u$, homeomorphic to $S \times [0,1]$. Suppose there are annulus $\u$-faces on $S \times \{1 \}$ that are parallel, have parallel dynamic orientations, and cobound a collection of cusped bigon $\p$-faces. Consider a maximal collection of such annulus $\u$-faces and add an annulus sector $s$ to $B^u$ with boundary components on the outermost faces. Also augment $\tau$ by adding 2 copies of the core of $s$, oriented by the dynamic orientation of the $\u$-faces. See \Cref{fig:dividebreachtorus}.

\begin{figure}
    \centering
    \resizebox{!}{6cm}{\import{basecase-fig}{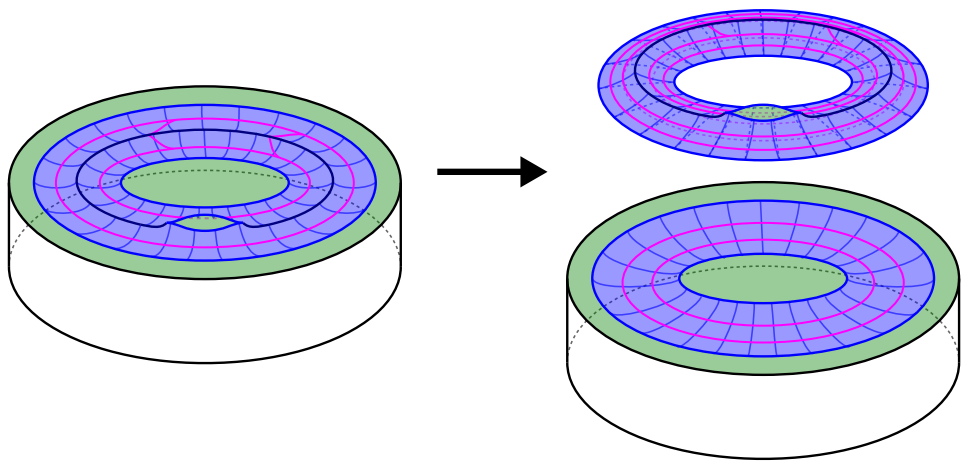}}
    \caption{Cutting off breached $\u$-cusped torus pieces from $\u$-cusped product pieces (and splitting lone cycles on annulus faces of the breached $\u$-cusped torus).}
    \label{fig:dividebreachtorus}
\end{figure}

This cuts off pieces from the $\u$-cusped product that we call breached $\u$-cusped tori, and are defined as follows.
\begin{definition}
A \textbf{breached $\u$-cusped torus} is a $\u$-cusped torus with some cusped bigon $\p$-faces added along the $\u\u$-cusp edges. \textbf{Breached $\s$-cusped tori} are similarly defined.

A \textbf{breached $\s\u\u$-maw piece} is a dynamic manifold which is a $\s\u\u$-maw piece with some cusped bigon $\p$-faces added along the $\u\u$-cusp edge. \textbf{Breached $\u\s\s$-maw pieces} are similarly defined.
\end{definition}

Now modify $V$ so that $B^u$ is still a very full unstable dynamic branched surface, with $\tau$ a dynamic train track on $B^u$.

Finally, for every complementary region of $B^u$ which is a (breached) $\u$-cusped torus piece, we inspect the restriction of $\tau$ to each of its annulus faces. Since we have removed all extraneous sinks, the restriction of $\tau$ to each face contains one or two cycles. If the restriction of $\tau$ only has one cycle, split $\tau$ along the cycle. The result is still a dynamic train track, which we denote again by $\tau$. This is exactly as described in \cite[Proposition 4.11.1 Step 2d]{Mos96}.

We record some useful facts about $B^u$ and $\tau$ after these modifications:
\begin{enumerate}[label=(\roman*)]
    \item The complementary regions of $B^u$ are (breached) $\u$-cusped torus pieces and $\u$-cusped product pieces, and $\tau$ is still a dynamic train track on $B^u$.
    \item Every loop component of $\brloc(B^u)$ that does not meet $\tau$ is boundary parallel. This follows from the definition of dynamic train tracks.
    \item Every component of $\brloc(B^u)$ which is not a loop meets $\tau$. This is a consequence of splitting for goodness.
    \item No $\u$-cusped product complementary region of $B^u$ has parallel dynamically oriented $\u$-annulus faces that meet along $\u\u$-cusp circles or cobound cusped bigons. This follows from the axiom that cusp circles have to be incoherent in a cusped product piece (\Cref{defn:cuspedproduct}(g)), and also from the annulus sectors we chose to add to $B^u$.
\end{enumerate}

Next, we construct the stable branched surface $B^s$. We will do so by capping off $\tau$ within each complementary region of $B^u$.

For every complementary region of $B^u$ which is a (breached) $\u$-cusped torus, place annuli with boundary components lying along cycles of $\tau$ on the faces to divide the $\u$-cusped torus into a dynamic torus and (breached) $\s\u\u$-maw pieces. Then attach tongues to the annuli to subdivide the (breached) $\s\u\u$-maw pieces into $\s\u\u$-maw pieces, pinched tetrahedra, and $\m$-pinched rectangle drums. 
See \Cref{fig:dtttodynpairtorus}. This is essentially as described in \cite[Proposition 4.11.1 Step 3]{Mos96}.

\begin{figure}
    \centering
    \resizebox{!}{7cm}{\import{basecase-fig}{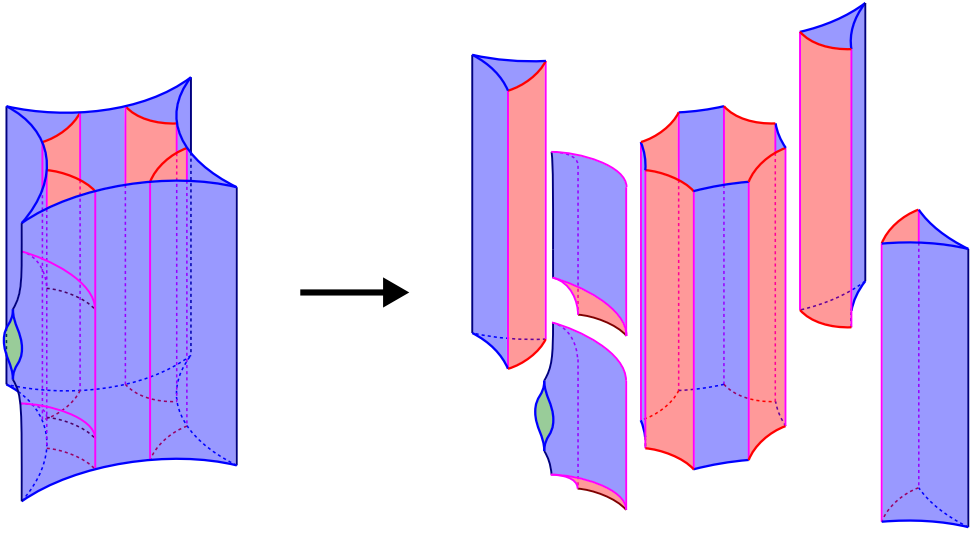}}
    \caption{Cutting (breached) $\u$-cusped torus pieces into dynamic torus pieces, $\s\u\u$-maw pieces, pinched tetrahedra and $\m$-pinched rectangle drums.}
    \label{fig:dtttodynpairtorus}
\end{figure}

For every complementary region of $B^u$ which is a $\u$-cusped product homeomorphic to $D \times [0,1]$, consider the restriction of $\tau$ to $D \times \{1\}$, which we temporarily denote as $\tau'$. Take $\tau' \times [0,1]$ inside $D \times [0,1]$. If an annulus $\u$-face on $D \times \{1\}$ contains two cycles $c_1, c_2$ of $\tau'$, zip together $c_1 \times [0,\epsilon]$ and $c_2 \times [0,\epsilon]$ for small $\epsilon$. This divides the $\u$-cusped product piece into drums and $\u\s\s$-maw pieces. See \Cref{fig:dtttodynpairproduct}. This is exactly as described in \cite[Proposition 4.11.1 Step 3]{Mos96}, and uses properties (iii) and (iv) above.

\begin{figure}
    \centering
    \resizebox{!}{13cm}{\import{basecase-fig}{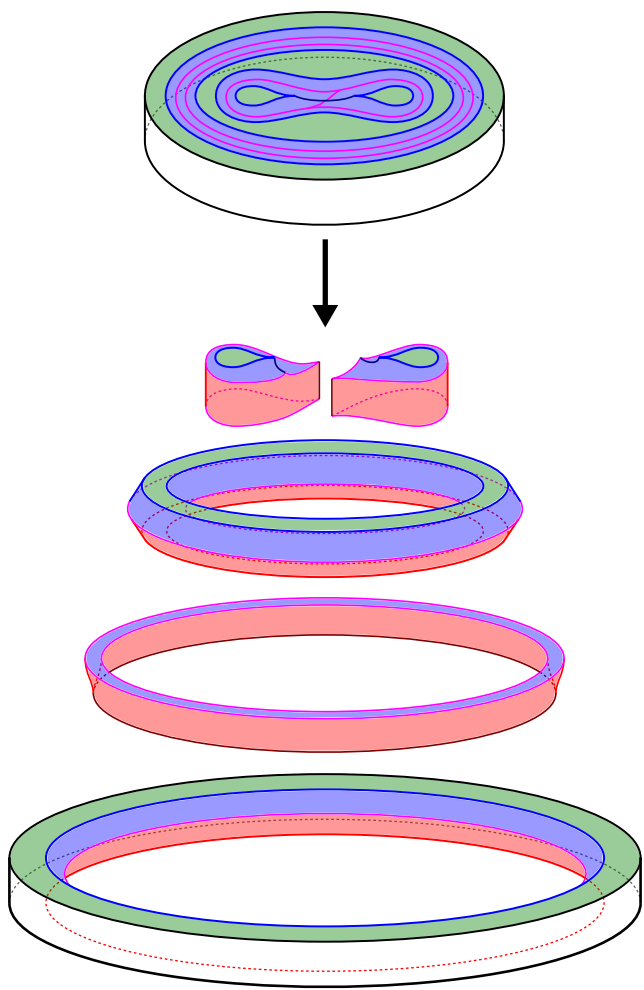}}
    \caption{Cutting $\u$-cusped product pieces into drums and $\u\s\s$-maw pieces.}
    \label{fig:dtttodynpairproduct}
\end{figure}

Now take the union over all the complementary regions of $B^u$ to get $B^s$, and modify the vector field $V$ such that $(B^u, B^s, V)$ satisfies (1)-(4) of \Cref{defn:dynamicpair}.

The rest of the construction is checking (5a), (5b), (6) of \Cref{defn:dynamicpair} on $(B^u, B^s, V)$, as well as showing that $B^s$ does not carry closed surfaces, for then we can apply \Cref{prop:protodynamicpair}.

Take $S^u$ to be the set of annulus and M\"obius band sinks of the components of $B^u \cut \tau$. Since every $\u\s\s$-maw piece arises from parallel cycles of $\tau$ on the boundary of a $\u$-cusped product piece, it is clear that these are attached to elements of $S^u$ and are boundary parallel. The components of $B^u \cut B^s$ are exactly the components of $B^u \cut \tau$, hence (5b) for $S^u$ follows from the axioms of a dynamic train track.

Take $S^s$ to be the set of annuli added to (breached) $\u$-cusped torus pieces. By construction, every $\s\u\u$-maw piece is attached to an element of $S^s$. By property (ii) above, every $\s\u\u$-maw piece is boundary parallel as well. Property (5b) for $S^s$ is clear from construction.

That $B^u \cap R_+$ does not carry Reeb annuli is part of the hypothesis. If $B^s \cap R_-$ carries a Reeb annulus, then the restriction of $\tau$ to the boundary of some $\u$-cusped product piece must carry a Reeb annulus. The boundary components of such a Reeb annulus will then lie on coherently dynamically oriented annulus $\u$-faces that meet along a $\u\u$-cusp circle or bound cusped bigons, contradicting property (iii) above.

Now suppose $B^s$ carries a closed surface $C$. By existence of the vector field $V$ and atoroidality of $Q$, $C$ is a torus which is either peripheral or bounds a solid torus.

If $C$ bounds a solid torus, call it $T$. Otherwise let $T$ be the component of $Q\cut C$ containing a component of $\del Q$ parallel to $C$ (necessarily a component of $\gamma$).

Since $C \cap B^u$ is an oriented train track on $C$ with only converging switches, it is in fact a union of disjoint loops that cut $C$ up into a collection of annuli. These annuli must be elements of $S^s$, which we defined above to be the set of annuli added to (breached) $\u$-cusped torus pieces. The complementary regions of $B^u \cup S^s$ are (breached) $\s\u\u$-maw pieces, dynamic tori, and $\u$-cusped product pieces, and $T$ is a union of such regions. The $\u$-cusped product pieces cannot appear inside $T$ since $T$ has no $\m$-faces, and the breached $\s\u\u$-maw pieces cannot appear inside $T$ since $T$ has no $\p$-faces. Thus $T$ is a union of $\s\u\u$-maw pieces and dynamic tori. But by an index argument on a meridional disk (if $T$ is a solid torus) or annulus (if $T$ is a thickened torus) we see that this is impossible.

We conclude by applying \Cref{prop:protodynamicpair} to obtain a dynamic pair in $Q$. 

For the ``furthermore" statement, if the $\u$-cusped product complementary regions of $B^u$ have no cusp circles and if $\beta^u$ has no cusped bigon complementary regions, then we do not need to add sectors to $B^u$ before constructing $B^s$. If $\beta^u$ has no annulus complementary region, then (7) in \Cref{defn:dynamicpair} is automatically satisfied by the constructed $(B'^u, B^s)$.  If $B^u$ does not have annulus or Möbius band sectors which meet $\u$-cusped torus pieces on both sides, then (5c) for $S^s$ in \Cref{defn:dynamicpair} is satisfied, for by construction the only places where the boundary components of these sources can meet are along such sectors. We apply the second bullet point of \Cref{prop:protodynamicpair} to finish the proof.
\end{proof}

\subsection{From veering branched surfaces to dynamic pairs}
\label{subsec:vbstodp}

Recall the definition of veering branched surfaces in \Cref{defn:vbs} and the definition of flow graphs in \Cref{defn:flowgraph} and \Cref{fig:vbssectors}.

\begin{proposition} \label{prop:flowgraphdtt}
Let $B$ be a veering branched surface in a sutured manifold $Q$. The flow graph $\Phi$ of $B$ is a dynamic train track on $B$.
\end{proposition}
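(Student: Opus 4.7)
The plan is to verify the four conditions of \Cref{defn:dtt} for $\tau = \Phi$, with $V$ the given dynamic vector field of $B$ (modified locally as needed).

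The approach for conditions (1) and (3) is to adjust $V$ within each sector of $B$ so that it becomes tangent to the branches of $\Phi_+$ (and hence $\Phi$). This is possible because, by the construction in \Cref{defn:flowgraph}, every branch of $\Phi_+$ inside a sector connects a point at which the boundary is cooriented inward to one at which it is cooriented outward, which is the direction of $V$. After this adjustment, the only places where $V$ fails to be smooth on $B \setminus \brloc(B)$ are the diverging switches of $\Phi$---the bottom vertices and sources of diamonds, points on cores of source sectors, and inward vertices of transient annuli---which is exactly what (3) requires.

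For (2), I would perform a sector-by-sector inspection of the switches of $\Phi_+$. Every converging switch of $\Phi_+$ lies on $\brloc(B) \cup R_+$: in diamonds, converging switches occur at corners, triple points, and sources on the top sides, and analogous statements hold for source and transient annulus sectors. By the definition of $\Phi$, all corners (which lie on $R_+$) and any branches of $\Phi_+$ whose every forward continuation ends on $R_+$ are removed, so the remaining converging switches of $\Phi$ all lie on $\brloc(B)$.

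The main effort goes into condition (4). The components of $B \cut \Phi_+$ are triangles, rectangles, and tongues by \Cref{defn:dynamicplanepieces}, and a component $K$ of $B \cut \Phi$ is formed by joining such pieces across the arcs of $\Phi_+ \setminus \Phi$. I would split into two cases based on whether $\partial K$ meets the core cycle $c$ of some source sector $s$ of $B$ (such cores are cycles of $\Phi_+$ and hence of $\Phi$, so they appear in $\partial K$ for the adjacent components). If yes, I would build a sink annulus or M\"obius band $A$ of $K$ as an immersed surface in $B$ double-covering $c$ near $c$, with $\partial A$ mapping into $c \subset \partial K$, using that $c$ is a recurrent cycle of the flow. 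If no, I would show that every forward $V$-trajectory starting in $K$ ends on $R_+$ in finite time: any such trajectory either leaves $B$ through $R_+$, or it eventually enters the removed portion $\Phi_+ \setminus \Phi$, and points there have all their forward train routes ending on $R_+$ by the very definition of $\Phi$.

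The hard part will be the careful handling of case (a)---producing the immersed sink $A$ with $\partial A \subset \partial K$ and verifying that it is a sink of $K$ in the sense of \Cref{defn:dtt}---together with ruling out in case (b) any recurrent forward trajectory in $K$ that is not associated to a source sector core. For the latter, the classification of sectors in \Cref{prop:vbssectors}, the absence of carried tori and Klein bottles from \Cref{defn:vbs}(5), and \Cref{lemma:dynamicplanepieces} on the impossibility of infinite chains of following rectangles together imply that the only closed $V$-orbits on $B$ are cores of annulus/M\"obius band sectors.
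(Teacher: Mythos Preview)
Your verification of conditions (1)--(3) is broadly in line with the paper's approach, but your treatment of condition (4) has a genuine gap: the case split is wrong, and the accompanying claim about closed $V$-orbits is false.

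The sinks required by \Cref{defn:dtt}(4) are \emph{not} related to cores of source sectors. The core $c$ of a source sector is a cycle of $\Phi$ and hence lies on $\partial K$ for adjacent components $K$, but the vector field points \emph{away} from $c$ on both sides (the whole point of a source sector is that $V$ flows outward from its core). So no annulus near $c$ can be a sink of $K$, and your proposed construction in case (a) does not work. Conversely, a component $K$ whose boundary misses every source-sector core can still carry an annulus or M\"obius band sink, so case (b) also fails: such sinks arise from cycles of \emph{triangles}---pieces of diamond sectors---that follow one another across $\brloc(B)$. In particular, your assertion that ``the only closed $V$-orbits on $B$ are cores of annulus/M\"obius band sectors'' is incorrect: there are closed $V$-orbits that thread through a periodic sequence of diamond sectors, crossing $\brloc(B)$ transversely, and these are precisely the sinks one must account for.

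The paper's argument organizes this as follows. One builds a directed graph $G$ whose vertices are the triangles, rectangles, and tongues of \Cref{defn:dynamicplanepieces} and whose edges record the ``followed by'' relation. Each vertex has at most one outgoing edge (none exactly when the piece has a side on $R_+$), and tongues have no incoming edges. Hence each component of $G$ has a unique sink which is either a single vertex (yielding a component of $B \cut \Phi_+$ in which all forward trajectories terminate on $R_+$) or a cycle (necessarily of triangles, by \Cref{lemma:dynamicplanepieces}), whose union is the desired annulus or M\"obius band sink. Components of the first type merge when passing from $B \cut \Phi_+$ to $B \cut \Phi$, while components of the second type already have boundary entirely on $\Phi$ and persist unchanged.
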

\begin{proof}
From construction, it is clear that $\Phi$ is disjoint from $\partial Q$ and the set of converging switches of $\Phi$ is equal to $\Phi \cap \brloc(B)$. Recall that $\Phi$ is obtained from the semiflow graph $\Phi_+$ by deleting the branches whose forward trajectories all end on $R_+$ (\Cref{defn:flowgraph}). We will show the remaining properties by first analyzing the situation for the semiflow graph $\Phi_+$. 

Recall from \Cref{defn:dynamicplanepieces} that for each sector $s$, the components of $s \cut \Phi_+$ are triangles, rectangles, and tongues.

We define a vector field $V_+$ on $B$ by requiring:
\begin{itemize}
    \item $V_+$ is tangent to $\Phi_+$.
    \item On a triangle, $V_+$ flows from the side cooriented inwards to the side cooriented outwards.
    \item On a rectangle, $V_+$ flows from the side cooriented inwards to the side cooriented outwards.
    \item On a tongue, $V_+$ flows from the cusp to the side on $\brloc(B)$ or $R_+$.
    \item $V_+$ is smooth on $B$ except at the diverging switches of $\Phi_+$ where backward trajectories are unique, and on $\brloc(B)$ where forward trajectories are  unique.
\end{itemize}

We can recover the components of $B \cut \Phi_+$ by gluing together components of $s \cut \Phi_+$. To describe this gluing, construct a directed graph $G$ by setting the vertices to be the set of components of $s \cut \Phi_+$ for all sectors $s$, and an edge from a component $c_1$ to another $c_2$ if $c_1$ is followed by $c_2$. The definitions of $\Phi_+$ and $V_+$ imply that $G$ has the following properties:
\begin{itemize}
    \item Each vertex has at most one outgoing edge; it has no outgoing edges if and only if it has a side on $R_+$
    \item A vertex has no incoming edges if and only if it is a tongue
    \item Only tongues and rectangles can have edges entering a rectangle
\end{itemize}
Moreover, the components of $G$ are in one-to-one correspondence with the components of $B \cut \Phi_+$.

The first property above implies that each component $H$ of $G$ either has a vertex or a cycle as a sink. In the former case, every forward trajectory in the corresponding component of $B \cut \Phi_+$ is finite and ends on $R_+$. In the latter case, the corresponding component of $B \cut \Phi_+$ carries an annulus or Möbius band, formed by the union of triangles that make up the sink in $H$, for which every forward trajectory in the component ends on. Moreover, in this latter case, the boundary of the component of $B \cut \Phi_+$ lies on $\Phi_+$ (and does not meet $R_+$). Indeed, none of the vertices in the corresponding component of $G$ can have a side along $R_+$, or else it would be a sink.

We use this information to analyze the components of $B \cut \Phi$. The components of $B \cut \Phi_+$ that contain an annulus or Möbius band sink remain as components of $B \cut \Phi$, since points of $\Phi_+$ on their boundary have paths that go around the annulus/Möbius band hence do not end on $R_+$. See \Cref{fig:flowgraphdttglue} right.

\begin{figure}
    \centering
    \resizebox{!}{6cm}{\import{basecase-fig}{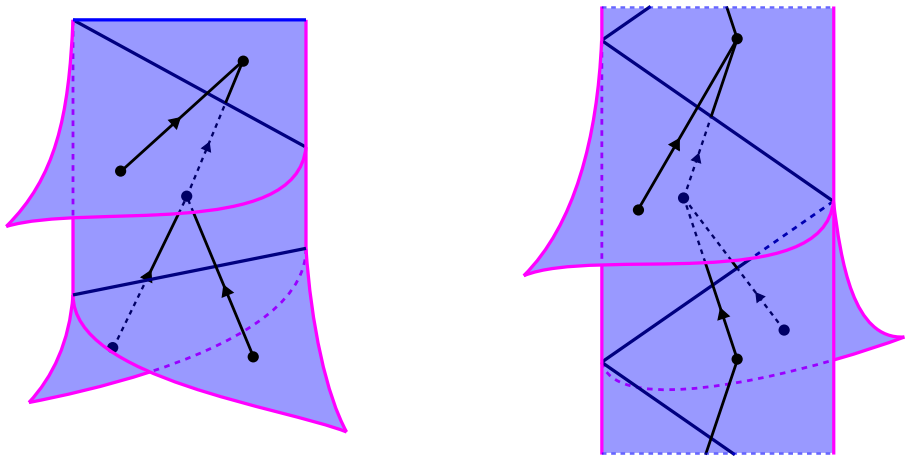}}
    \caption{Analyzing the components of $B \cut \Phi_+$ by gluing up the components of $s \cut \Phi_+$.}
    \label{fig:flowgraphdttglue}
\end{figure}

The other components of $B \cut \Phi_+$ are glued together to form components of $B \cut \Phi$, and every forward $V_+$-trajectory in the interior of the latter is finite and ends on $R_+$. See \Cref{fig:flowgraphdttglue} left. We remark that this property is not necessarily true at the boundary: along $\Phi_+$, forward $V_+$-trajectories are not unique in general and certain $V_+$-trajectories may glue up to form a loop.

In any case, it only remains to smooth $V_+$ near the switches of $\Phi_+$ that are no longer switches of $\Phi$.
\end{proof}

\begin{theorem}\label{thm:VBStoDP}
If an atoroidal sutured manifold $Q$ admits an unstable veering branched surface $B$, then it admits a dynamic pair $(B^u, B^s, V)$.
\end{theorem}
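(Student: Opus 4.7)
The plan is to apply Proposition \ref{prop:dtttodynamicpair} directly, taking the flow graph $\Phi$ of $B$ as the required dynamic train track. The result then follows almost immediately, with essentially all the work having been done in the preceding two propositions. The strategy is to verify each hypothesis of Proposition \ref{prop:dtttodynamicpair} from the veering axioms.

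First I would check that $(B, V)$ satisfies the hypotheses of Proposition \ref{prop:dtttodynamicpair}. By Definition \ref{defn:vbs}, $B$ is a \emph{very full} unstable dynamic branched surface, so the ambient hypothesis is met. The boundary train track $\beta^u = B \cap R_+(Q)$ is efficient by Definition \ref{defn:vbs}(2); efficient train tracks are good by definition (\Cref{defn:efficienttt}), and good train tracks contain no Reeb annuli or half-Reeb annuli by definition (\Cref{defn:essentialttlam}), so $\beta^u$ does not carry Reeb annuli. The branched surface $B$ does not carry any closed surface by Definition \ref{defn:vbs}(5) (which rules out carried tori and Klein bottles; carried closed surfaces of negative Euler characteristic are already ruled out by the existence of the nonsingular tangent vector field $V$). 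Finally, $Q$ is atoroidal by hypothesis of the theorem.

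Next I would invoke Proposition \ref{prop:flowgraphdtt}, which guarantees that the flow graph $\Phi$ of $B$ is a dynamic train track on $B$. Combining these observations, all the hypotheses of Proposition \ref{prop:dtttodynamicpair} are satisfied by the pair $(B, \Phi)$, and its conclusion furnishes a dynamic pair $(B'^u, B^s, V')$ on $Q$, completing the proof.

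There is really no obstacle at this stage: the genuinely delicate work is packaged into Proposition \ref{prop:flowgraphdtt} (showing that the flow graph is a dynamic train track, which required a careful analysis of the complementary regions of $\Phi_+$ in sectors via the triangle/rectangle/tongue decomposition of \Cref{defn:dynamicplanepieces}) and into Proposition \ref{prop:dtttodynamicpair} (Mosher's construction, adapted to our setting, which passes from a dynamic train track to a dynamic pair via splitting for goodness, capping off of cycles on boundary faces of cusped tori, and application of \Cref{prop:protodynamicpair}). The present theorem is essentially the assembly of these two results.
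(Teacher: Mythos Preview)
Your proposal is correct and takes essentially the same approach as the paper: invoke \Cref{prop:flowgraphdtt} to obtain a dynamic train track, then apply \Cref{prop:dtttodynamicpair}. The paper's proof is two sentences long and omits your verification of the hypotheses of \Cref{prop:dtttodynamicpair}, but your checks (efficiency of $\beta^u$ giving no Reeb annuli, Definition~\ref{defn:vbs}(5) plus the nonvanishing vector field ruling out carried closed surfaces) are accurate and make explicit what the paper leaves implicit.
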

\begin{proof}
By \Cref{prop:flowgraphdtt}, the branched surface contains a dynamic train track. By \Cref{prop:dtttodynamicpair}, $Q$ contains a dynamic pair.
\end{proof}

\begin{corollary} \label{cor:hmveeringdynamicpair}
Let $Q$ be the compactified mapping torus of an endperiodic map $f:L \to L$. If $Q$ is atoroidal, then there is a dynamic pair $(B^u, B^s)$ on $Q$ such that $B^u$ is an unstable veering branched surface compatibly carrying the unstable Handel-Miller lamination of $f$.
\end{corollary}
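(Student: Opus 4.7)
The plan is to build the veering branched surface $B^u$ from \Cref{thm:depthonetovbs}, verify that it satisfies all the extra hypotheses needed to apply the ``furthermore'' clause of \Cref{prop:dtttodynamicpair}, and then apply that clause (via \Cref{prop:flowgraphdtt}) to obtain a dynamic pair whose unstable branched surface is $B^u$ itself (after a controlled modification by splittings for goodness).

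First, by \Cref{thm:depthonetovbs} there exists an unstable veering branched surface $B^u$ on $Q$ compatibly carrying $\mathcal{L}^u$. By \Cref{prop:flowgraphdtt}, its flow graph $\Phi$ is a dynamic train track on $B^u$. We now check the hypotheses of \Cref{prop:dtttodynamicpair}. The boundary train track $\beta^u = B^u \cap R_+$ is efficient (axiom (2) of \Cref{defn:vbs}) and has no large branches, so it cannot carry a Reeb annulus. By axiom (5) of \Cref{defn:vbs}, $B^u$ carries no tori or Klein bottles, and since it also carries no spheres (a carried sphere would bound an $\mathbb{R}$-invariant region violating Poincaré--Hopf for the nonsingular $V$), $B^u$ carries no closed surfaces. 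Atoroidality of $Q$ is given.

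Next I verify the three bulleted conditions of the ``furthermore'' statement of \Cref{prop:dtttodynamicpair}. Efficiency of $\beta^u$ (and the absence of carried annuli/bigons built into \Cref{defn:efficienttt}) rules out annulus or cusped-bigon complementary regions. The $\u$-cusped product complementary regions of $B^u$ are precisely the nonprincipal regions constructed in \Cref{lem:ucuspedproduct}, and that proposition states they have no $\u\u$-cusp circles. Finally, by \Cref{lem:sectorannulus} every annulus sector of $B^u$ is incident to at least one nonprincipal region (a $\u$-cusped product), and by \Cref{lem:sectormobius} every Möbius band sector has its boundary on $R_+$; in particular no annulus or Möbius band sector of $B^u$ is sandwiched between two $\u$-cusped torus pieces.

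Applying the ``furthermore'' of \Cref{prop:dtttodynamicpair} therefore yields a dynamic pair $(B'^u, B^s, V')$ in which $B'^u$ is obtained from $B^u$ by splitting for goodness. It remains to observe that splitting for goodness preserves the property of compatibly carrying $\mathcal{L}^u$. The operations it performs are dynamic splittings along triangles and bigons in the components of $B^u \cut \Phi$ that meet $R_+$; each such dynamic splitting is realized by an isotopy of a standard neighborhood of $B^u$ which preserves carrying of any lamination transverse to the ties, and which does not alter the dynamic orientations of any annulus/Möbius band sector, any face of a complementary region, or any circular sink component of $\beta^u$. Hence $B'^u$ still compatibly carries $\mathcal{L}^u$ in the sense of \Cref{defn:compatiblycarry}, and $B'^u$ is still veering. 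The pair $(B'^u, B^s)$ is the desired dynamic pair, which (after renaming $B'^u$ back to $B^u$) proves the corollary. The only point that required genuine care was ensuring that the three bulleted hypotheses of \Cref{prop:dtttodynamicpair}'s ``furthermore'' clause were all preserved by our construction of $B^u$, so that $B^s$ could be built around a \emph{fixed} $B^u$ rather than around an auxiliary simplification of it.
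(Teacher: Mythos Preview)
Your proof follows the same overall strategy as the paper's, but diverges at the final step. The paper argues that splitting for goodness is \emph{vacuous} for the $B^u$ constructed in \Cref{thm:depthonetovbs}: any such splitting would induce a splitting move on the boundary train track $\beta^u$, but $\beta^u$ is already spiraling (no large branches), so no such move is possible and hence $B'^u = B^u$ on the nose. You instead try to argue that even after splitting for goodness, the resulting $B'^u$ remains veering and compatibly carries $\mathcal{L}^u$. This is a weaker route, and your justification has gaps: dynamic splitting is \emph{not} ``realized by an isotopy of a standard neighborhood'' (it genuinely changes the branched surface), and your assertion that $B'^u$ is ``still veering'' is not argued---you would need to re-verify all five axioms of \Cref{defn:vbs}, including that the new boundary train track remains efficient with no large branches. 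The paper's observation that splitting for goodness simply does not occur sidesteps all of this.

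There is also a small gap in your verification of the third bullet of the ``furthermore'' clause: \Cref{lem:sectorannulus} and \Cref{lem:sectormobius} only apply to annulus and M\"obius band sectors \emph{without corners}. The paper handles this by first noting that a sector meeting $\u$-cusped torus pieces on both sides cannot have corners (a corner would force a source on $\partial s$, but cusp circles of torus pieces contain no sources), and only then invoking those lemmas.
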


\begin{proof}
\Cref{thm:depthonetovbs} provides an unstable veering branched surface compatibly carrying the Handel-Miller lamination. We claim that when we apply \Cref{prop:dtttodynamicpair} and \Cref{prop:flowgraphdtt} to $B^u$ to get a dynamic pair, we do not need to modify $B^u$. To establish this, it suffices to check the hypotheses in the additional statement of \Cref{prop:dtttodynamicpair}, and to show that splitting for goodness does not change $B^u$.

The boundary train track $B^u\cap R_+$ is efficient by the definition of veering branched surfaces, and hence has no annulus or cusped bigon complementary regions. The fact that the $\u$-cusped product complementary regions of $B^u$ do not have cusp circles is shown in the proof of \Cref{thm:depthonetovbs}, specifically in \Cref{lem:ucuspedproduct}. 

As for the last additional condition of \Cref{prop:dtttodynamicpair}, suppose there is an annulus or Möbius band sector $s$ of $B^u$ which meets $\u$-cusped torus pieces on both sides. Notice that $s$ cannot have corners, otherwise there will be some source on $\partial s$, but the cusp circles of a $\u$-cusped torus piece cannot have sources. With this in mind, the condition follows from \Cref{lem:sectorannulus} and \Cref{lem:sectormobius}.

Finally, when one splits for goodness, one changes the boundary train track by a splitting move. But this cannot occur since the boundary train track of a veering branched surface is already spiraling.
\end{proof}

We remark that the above does not claim that $B^s$ carries the stable Handel-Miller lamination, although presumably this is true. We would be interested to see a proof of this. 

\section*{Acknowledgements}
We thank John Cantwell and Larry Conlon for telling us about their work on foliation cones and Handel-Miller theory, David Gabai for explaining some of the history of his work on pseudo-Anosov flows transverse to finite depth foliations, and Ian Agol and Eriko Hironaka for helpful conversations. We are additionally grateful to the anonymous referee for their numerous helpful comments and corrections.

This collaboration traces back to a topic group run by James Farre at the first Nearly Carbon Neutral Geometric Topology Conference in June 2020, organized by Martin Bobb and Allison N. Miller. We are grateful to James, Martin, and Allison for putting on a great event in a creative way at a stressful time.

M.L. was supported by the NSF under DMS 2013073 and thanks the Mathematisches Forschungsinstitut Oberwolfach for providing an ideal working environment during the final stages of this project.
C.C.T. was supported by the Simons Foundation \#376200.

\bibliographystyle{alphaurl}
\bibliography{bibliography}
\label{sec:biblio}

\end{document}